\documentclass[11pt,twoside,a4paper,final]{report}

\usepackage[DM,newLogo]{uaThesis}

\def\ThesisYear{2012}


\usepackage[english]{babel}
\usepackage{hyperref}
\usepackage{amsmath}
\usepackage{amssymb}
\usepackage{amsthm}
\usepackage{graphicx}
\usepackage{makeidx}

\usepackage{fancyhdr}

\usepackage{pagesInBib}


\addto\captionsenglish{}

\pagestyle{fancy}


\newtheorem{theorem}{Theorem}[section]
\newtheorem{lemma}[theorem]{Lemma}
\newtheorem{corollary}[theorem]{Corollary}
\newtheorem{proposition}[theorem]{Proposition}
\newtheorem{ex}[theorem]{Example}

\newtheorem{definition}[theorem]{Definition}
\newtheorem{example}[theorem]{Example}

\newtheorem{remark}[theorem]{Remark}


\numberwithin{equation}{section}


\def\I{\mathtt{i}}

\def\EXP#1.{e^{2\pi\I #1}}


\def\topfigrule{\kern 7.8pt \hrule width\textwidth\kern -8.2pt\relax}
\def\dblfigrule{\kern 7.8pt \hrule width\textwidth\kern -8.2pt\relax}
\def\botfigrule{\kern -7.8pt \hrule width\textwidth\kern 8.2pt\relax}


\makeindex


\begin{document}

\fancyhf{}

\fancyhead[LO]{\slshape \rightmark}
\fancyhead[RE]{\slshape \leftmark}
\fancyfoot[C]{\thepage}


\TitlePage
\HEADER{\BAR\FIG{\includegraphics[height=60mm]{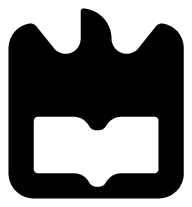}}}{\ThesisYear}
\TITLE{Artur Miguel C. \newline Brito da Cruz}{Symmetric Quantum Calculus}
\EndTitlePage


\titlepage\ \endtitlepage


\TitlePage
\HEADER{}{\ThesisYear}
\TITLE{Artur Miguel \newline C. Brito da Cruz}{C\'alculo Qu\^antico Sim\'etrico}
\vspace*{15mm}
\TEXT{}{Tese de doutoramento apresentada \`a Universidade de Aveiro para cumprimento dos requisitos
necess\'arios \`a obten\c c\~ao do grau de Doutor em Matem\'atica,
Programa Doutoral em Matem\'{a}tica e Aplica\c{c}\~{o}es --- PDMA 2008-2012 --
da Universidade de Aveiro e Universidade do Minho,
rea\-li\-za\-da sob a orienta\c c\~ao
cient\'\i fica do Doutor Delfim Fernando Marado Torres, Professor Associado com Agrega\c{c}\~{a}o
do Departamento de Matem\'atica da Universidade de Aveiro, e co-orienta\c c\~ao da Doutora Nat\'alia da Costa Martins,
Professora Auxiliar do Depar\-ta\-men\-to de Matem\'atica da Universidade de Aveiro.}
\EndTitlePage


\titlepage\ \endtitlepage


\TitlePage
\vspace*{55mm}
\TEXT{\textbf{o j\'uri~/~the jury\newline}}{}
\TEXT{presidente~/~president}{\textbf{Prof. Doutor Ant\'{o}nio Manuel Melo de Sousa Pereira}\newline {\small
        Professor Catedr\'atico da Universidade de Aveiro (por delega\c c\~ao da Reitoria da
        Universidade de Aveiro)}}
\vspace*{5mm}
\TEXT{vogais~/~examiners committee}{\textbf{Prof. Doutora Lisa Maria de Freitas Santos}\newline {\small
        Professora Associada com Agrega\c{c}\~ao da Escola de Ci\'{e}ncias da Universidade do Minho}}
\vspace*{5mm}
\TEXT{}{\textbf{Prof. Doutor Delfim Fernando Marado Torres}\newline {\small
        Professor Associado com Agrega\c{c}\~ao da Universidade de Aveiro (Orientador)}}
\vspace*{5mm}
\TEXT{}{\textbf{Prof. Doutor Jos\'{e} Carlos Soares Petronilho}\newline {\small
        Professor Associado da Faculdade de Ci\^{e}ncias e Tecnologia da Universidade de Coimbra}}
\vspace*{5mm}
\TEXT{}{\textbf{Prof. Doutor Jos\'{e} Lu\'{\i}s dos Santos Cardoso}\newline {\small
        Professor Associado da Escola de Ci\^{e}ncias e Tecnologias da Universidade de
        Tr\'{a}s- -Os-Montes e Alto Douro}}
\vspace*{5mm}
\TEXT{}{\textbf{Prof. Doutora Nat\'{a}lia da Costa Martins}\newline {\small
        Professora Auxiliar da Universidade de Aveiro (Coorientadora)}}
\vspace*{5mm}
\TEXT{}{\textbf{Prof. Doutor Ricardo Miguel Moreira de Almeida}\newline {\small
        Professor Auxiliar da Universidade de Aveiro}}
\EndTitlePage


\titlepage\ \endtitlepage


\TitlePage
\vspace*{55mm}
\TEXT{\textbf{agradecimentos~/\newline acknowledgements}}{}
\TEXT{}{Uma tese de Doutoramento \'{e} um processo solit\'{a}rio a que um aluno est\'{a} destinado.
S\~{a}o quatro anos de trabalho que s\~{a}o mais pass\'{\i}veis
de suportar gra\c{c}as ao apoio de v\'{a}rias pessoas e institui\c{c}\~{o}es.\\
Assim, e antes dos demais, gostaria de agradecer aos meus orientadores,
Professora Doutora Nat\'{a}lia Martins e Professor Doutor Delfim F. M. Tor\-res,
pelo apoio, pela partilha de saber e por estimularem o meu interesse pela Matem\'{a}tica.\\
Estou igualmente grato aos meus colegas e aos meus Professores do Programa Doutoral
pelo constante incentivo e pela boa disposi\c{c}\~{a}o que me transmitiram durante estes anos.\\
Gostaria de agradecer \`{a} FCT (``Fundac\~ao para a Ci\^encia e a Tecnologia'')
pelo apoio financeiro atribu\'{\i}do atrav\'{e}s da bolsa de Doutoramento
com a refer\^{e}ncia SFRH/BD/33634/2009.
Agrade\c{c}o \`{a} Escola Superior de Tec\-no\-lo\-gia de Set\'{u}bal,
nomeadamente aos meus colegas do Departamento de Matem\'{a}tica, pela sua disponibilidade. \\
Por \'{u}ltimo, mas sempre em primeiro lugar, agrade\c{c}o \`{a} minha fam\'{i}lia.}
\vfill

\TEXT{}{
\hspace*{-0.35cm} \includegraphics [scale=0.3]{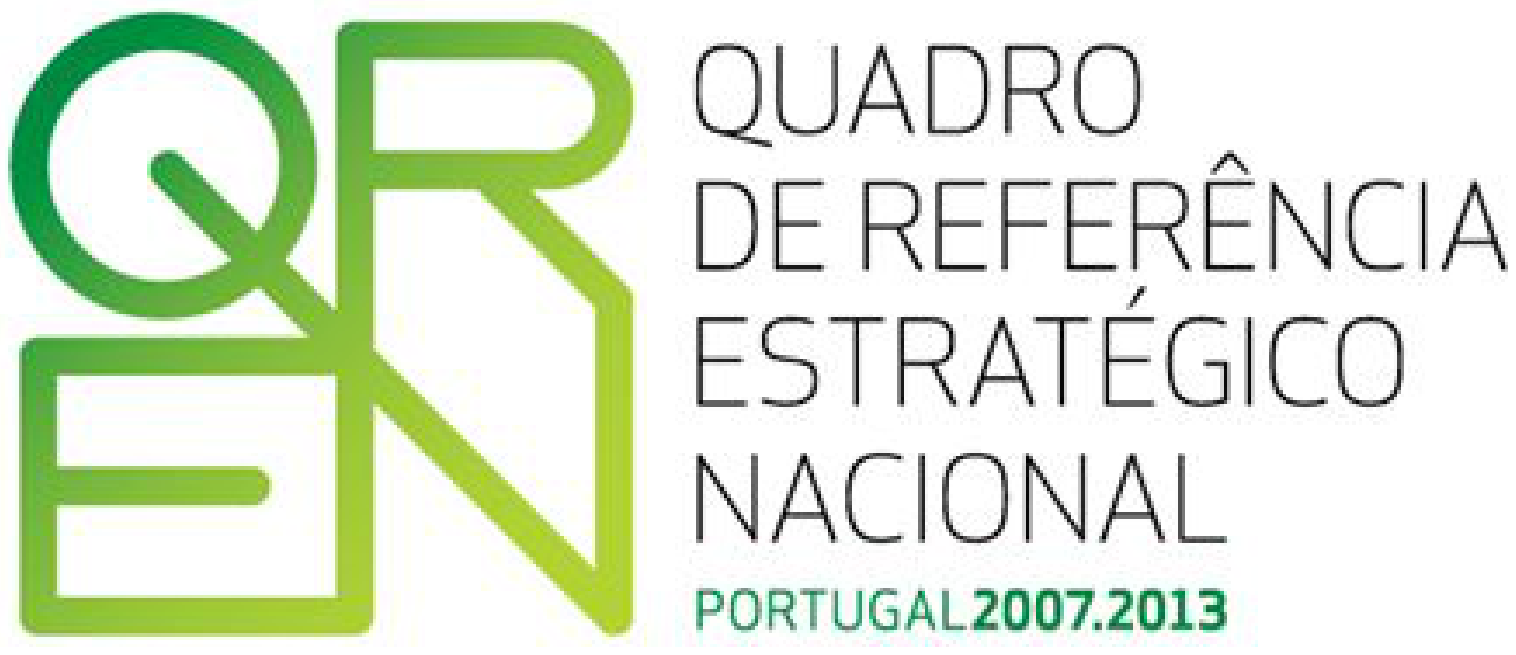}\includegraphics [scale=0.32]{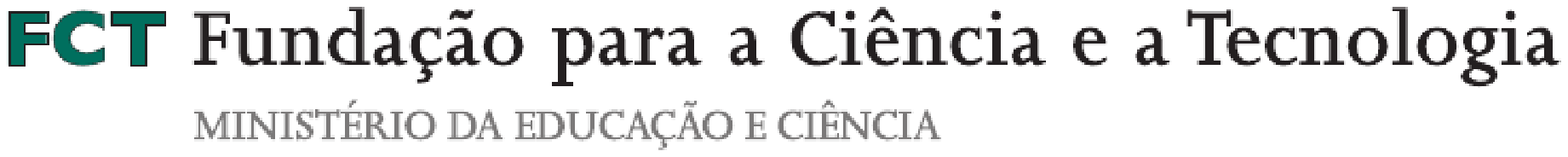}\\
\hspace*{+1.60cm} \includegraphics [scale=0.3]{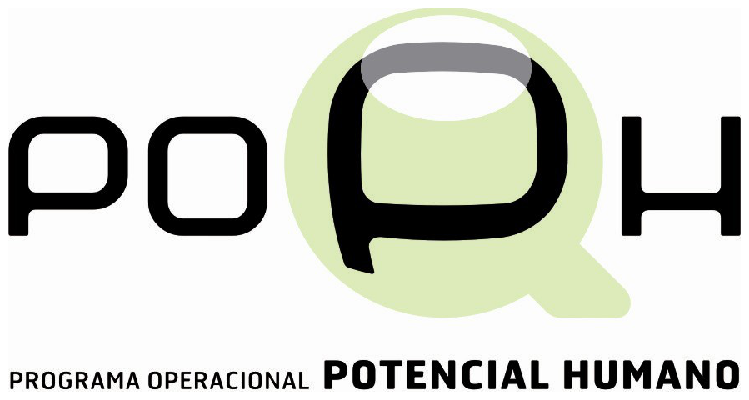} \qquad \hspace*{+3.60cm} \includegraphics [scale=0.5]{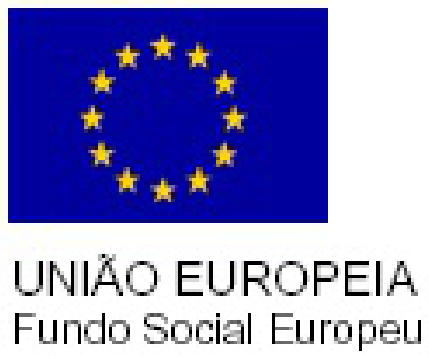}
}

\EndTitlePage


\titlepage\ \endtitlepage


\TitlePage
\vspace*{55mm}
\TEXT{\textbf{Resumo}}{Generalizamos o c\'{a}lculo Hahn variacional para problemas
do c\'{a}lculo das varia\c{c}\~{o}es que envolvem derivadas de ordem superior.
Estudamos o c\'{a}lculo qu\^{a}ntico sim\'{e}trico, nomeadamente o c\'{a}lculo qu\^{a}ntico $\alpha,\beta$-sim\'{e}trico,
$q$-sim\'{e}trico e Hahn-sim\'{e}trico. Introduzimos o c\'{a}lculo qu\^{a}ntico sim\'{e}trico variacional
e deduzimos equa\c{c}\~{o}es do tipo Euler--Lagrange para o c\'{a}lculo $q$-sim\'{e}trico e Hahn sim\'{e}trico.
Definimos a derivada sim\'{e}trica em escalas temporais e deduzimos algumas das suas propriedades.
Finalmente, introduzimos e estudamos o integral diamond que generaliza o integral diamond-$\alpha$
das escalas temporais.\\ \\ \\}

\TEXT{\textbf{Palavras-chave}}{C\'{a}lculo qu\^{a}ntico, c\'{a}lculo das varia\c{c}\~{oes},
condi\c{c}\~{o}es necess\'{a}rias do tipo Euler-–Lagrange, derivada sim\'{e}trica,
integral diamond, desigualdades integrais, escalas temporais.\\ \\ \\}

\TEXT{\textbf{2010 Mathematics Subject Classification:}}{\\34N05, 39A12, 39A13, 49K05, 49K15.}


\EndTitlePage
\titlepage\ \endtitlepage


\TitlePage
\vspace*{55mm}
\TEXT{\textbf{Abstract}}{We generalize the Hahn variational calculus
by studying problems of the calculus of variations with higher-order derivatives.
The symmetric quantum calculus is studied, namely
the $\alpha,\beta$-symmetric, the $q$-symmetric,
and the Hahn symmetric quantum calculus. We introduce the symmetric
quantum variational calculus and an Euler--Lagrange type
equation for the $q$-symmetric and Hahn's symmetric quantum calculus is proved.
We define a symmetric derivative on time scales and derive some of its properties.
Finally, we introduce and study the diamond integral,
which is a refined version of the diamond-$\alpha$ integral on time scales.\\ \\ \\}

\TEXT{\textbf{Keywords}}{Quantum calculus, calculus of variations,
necessary optimality conditions of Euler-–Lagrange type,
symmetric derivative, diamond integral, integral inequalities, time scales.\\ \\ \\}

\TEXT{\textbf{2010 Mathematics Subject Classification:}}{\\34N05, 39A12, 39A13, 49K05, 49K15.}
\EndTitlePage


\titlepage\ \endtitlepage


\pagenumbering{roman}
\tableofcontents

\cleardoublepage
\pagenumbering{arabic}


\chapter*{Introduction}\markboth{INTRODUCTION}{}

Four years ago I started the Ph.D. Doctoral Programme in Mathematics and
Applications offered jointly by University of Aveiro and University of
Minho.\ In the first year we, students, had several one-semester courses in
distinct fields of mathematics. Many interesting subjects were discussed in
these courses and one day, one of this discussions led my Advisor, Professor
Delfim F. M. Torres, to propose me that the subject of my Ph.D. thesis
should be symmetric quantum calculus and/or symmetric calculus on time scales.
Gladly, I accepted the offer and here we present the result of our work.

In classical calculus, the symmetric derivative is defined by
\[
f^{s}\left( x\right) =\lim_{h\rightarrow 0}\frac{f\left( x+h\right) -f\left(
x-h\right) }{2h}.
\]
The definition of the symmetric derivative emerged from the necessity to
extend the notion of differentiability to points that do not have classical
derivative and its applications are important in many problems, namely in
the study of trigonometric series. Its study began in the mid-19th century
and since then, according to Thomson \cite{Thomson}, many important
figures in analysis contributed for this topic. Thomson also highlights the
importance of authors such Charzy\'{n}ski, Denjoy, Kijtchine, Marcinkiewicz,
Sierpinski and Zygmund (see, e.g.,
\cite{Charzynski,Denjoy,Khintchine,Marcinkiewicz,Sierpinski,Zygmund}).
Thomson refers that the study of the symmetric properties of real functions
is as relevant as ever. In the last decades we have seen many new
material produced in this topic (see, e.g.,
\cite{Aull,Belna:2,Belna,Belna:3,Buczolich,Evans:3,Evans:2,Evans,Foran,Freiling,Larson,Larson:2,Mazurkiewicz}),
in spite the difficulty of obtaining new results.

However, the idea of my advisor was not to develop new results for the classical
symmetric calculus, but to introduce the symmetric quantum
calculus. The fusion of these two subjects, symmetric calculus and quantum calculus,
seemed natural to him since he is an active researcher
on the time scale theory (some approaches of quantum
calculus can be seen as a particular case of time scale calculus) and knew
the absence of the notion of the symmetric derivative on time scale calculus.

Quantum from the Latin word ``quantus'' (in Portuguese ``quantos'') literally
means how much. Usually we associate the term quantum to the minimum amount
of any measure or entity. In mathematics, the quantum calculus refers to the
``calculus without limits''. Usually, the quantum calculus is identified with
 the $q$-calculus and the $h$-calculus and both of them were studied by
Kac and Cheung in their book \cite{Kac}.

In 1750 Euler proved the pentagonal number theorem which was the first example
of a q-series and, in some sense, he introduced the $q$-calculus.
The $q$-derivative was (re)introduced by Jackson \cite{Jackson:old} and for
$q\in \left] 0,1\right[$, the $q$-derivative of a function
$f:\overline{q^\mathbb{Z}}\rightarrow \mathbb{R}$ is defined by
\[
D_{q}\left[ f\right] \left( t\right) :=\frac{f\left( qt\right)
- f\left(t\right) }{\left( q-1\right) t}, \ \ t\neq 0.
\]
Today, and according to Ernst \cite{Ernst,Ernst:1meio}, the
majority of scientists who use $q$-calculus are physicists,
and he cites Jet Wimp \cite{Wimp}:
\begin{quote}
``The field has expanded explosively, due to the fact that applications of
basic hypergeometric series to the diverse subjects of combinatorics,
quantum theory, number theory, statistical mechanics, are constantly being
uncovered. The subject of q-hypergeometric series is the frog-prince of
mathematics, regal inside, warty outside.''
\end{quote}

For $h>0$, the $h$-derivative of a function
$f : h \mathbb{Z} \rightarrow \mathbb{R}$ is defined by
\[
D_{h}\left[ f\right] \left( t\right)
:=\frac{f\left( t+h\right) -f\left(t\right)}{h}
\]
and is also known as finite difference operator. Taylor's ``Methods
Incrementorum'' \cite{Taylor} is considered the first reference of the
$h$-calculus or the calculus of finite differences but it is Jacob Stirling
\cite{Stirling} who is considered the founder of the $h$-calculus. In 1755,
Leonhard Euler \cite{Eulero} introduced the symbol $\Delta $ for
differences. Some of the more important works about $h$-differences are from
authors like Boole \cite{Boole:old}, Markoff \cite{Markoff},
Whittaker and Ronbison \cite{Whittaker}, N\"{o}rlund \cite{Norlund}, Milne-Thomson
\cite{Milne} or Jordan \cite{Jordan} (we ordered them historically).
Note that the finite difference operator is a discretization
of the classical derivative and an immediate application of the
$h$-derivative (but also the $q$-derivative) is in numerical analysis,
especially in numerical differential equations, which aim at the numerical
solution of ordinary and partial differential equations. For a
deeper understanding of quantum calculus and its history we refer the reader
to \cite{Ernst,Ernst:2,Gasper,Kac,Koekoek}.

Another type of quantum calculus is the Hahn's quantum calculus which can be
seen as a generalization of both $q$-calculus and $h$-calculus.
For $q\in \left] 0,1\right[$ and $\omega \geqslant0$,
Hahn's difference operator is defined by
\[
D_{q,\omega }\left[ f\right] :=\frac{f\left( qt+\omega \right) - f\left(
t\right) }{\left( q-1\right) t+\omega }, \ \ t\neq\frac{\omega}{1-q},
\]
where $f$ is a function defined on a real interval
that contains $\omega_{0}:=\displaystyle\frac{\omega}{1-q}$.
Although Hahn defined this operator in 1949, only in 2009 Aldwoah
\cite{Aldwoah,Aldwoah:2} constructed its inverse operator. This
construction also provided us tools to generalize both $q$-calculus and the
$h$-calculus to functions defined in real intervals instead of the sets
$\overline{q^{\mathbb{Z}}}$ and $h \mathbb{Z}$. We remark here that the term
``quantum calculus'' is applied in the literature (and in this thesis)
in different contexts: is used to denote the $h$-calculus and $q$-calculus,
but also to denote the Hahn calculus (that is also a ``calculus without limits''
but in this theory the domain of each function is a real interval
instead of the quantum sets $\overline{q^{\mathbb{Z}}}$ or $h \mathbb{Z}$).

In a first step towards the development of the symmetric quantum calculus we
decided to study an application of Hahn's quantum calculus. Since both my
advisors mainly work in Calculus of Variations and Optimal Control, we
decided to study the higher-order Hahn quantum variational calculus. The
calculus of variations is concerned with the problem of extremising
functionals and in Chapter~\ref{Calculus of Variations} the reader
can find a brief introduction to this subject. For a study of the
calculus of variations within the quantum calculus, we suggest the articles
\cite{Bangerezako,Bangerezako:2,Malinowska:3,Martins:2}.

In this work, we have tried to develop the calculus of variations
in the context for symmetric quantum calculus.
But soon we understood that this is not always possible,
since for example for the $\alpha,\beta$-symmetric quantum calculus
(see Chapter~\ref{A Symmetric Quantum Calculus}) we do not have (yet?)
a Fundamental Theorem of Integral Calculus and/or an Integration by Parts
formula. For the symmetric $q$-calculus and Hahn's symmetric quantum calculus,
we were able to introduce and develop the respective calculus of variations.

Another subject that was of our interest was the symmetric calculus on time
scales. In 1988, Hilger \cite{Hilger} introduced the theory of time scales
which is a theory that was created in order to unify and to extend discrete
and continuous analysis into a single theory. A time scale is a nonempty closed
subset of $\mathbb{R}$ and both $q$-calculus and $h$-calculus can be seen as a particular case
of time scale calculus. Since then, many works arose and
we highlight \cite{Agarwal:2,Agarwal,Bohner,Bohner:2} for the time scale calculus and
\cite{Almeida,Bartosiewicz,Bartosiewicz05,Bohner:3,Bohner:4,Ferreira:3,Ferreira,Ferreira:2,Hilscher,Hilscher:2,
Malinowska35,Malinowska,Malinowska:4,Malinowska:34,Malinowska45,Martins,Martins15,Martins:25,Martins:4,Torres}
for the calculus of variations on time scales. We have made progresses in this field
and we present our results in Chapter~\ref{The Symmetric Calculus on Time Scales}. We were not successful to
define a symmetric integral in the time scale context, mainly because there is not a symmetric
integral for the classical case. However, we were able to generalize the
diamond-$\alpha $ integral which, in some sense, is an attempt to define a
symmetric integral on time scales.

We divided this thesis in two parts. In the first part we present some
preliminaries about time scale calculus, quantum calculus and calculus of variations. The second
part is divided in six chapters where we present the original work and our
conclusions. Specifically, in Chapter~\ref{Higher-order Hahn's Quantum Variational Calculus}
we develop the higher-order Hahn Quantum Variational Calculus proving the Euler--Lagrange
equation for the higher-order problem of the calculus of variations within the Hahn quantum calculus.
In Chapter~\ref{A Symmetric Quantum Calculus} we develop a Symmetric Quantum Calculus:
in Section~\ref{hs:sec:3.3} we present some mean value theorems for the symmetric calculus
and in Section~\ref{hi:sec:ineq} we prove H\"{o}lder's, Cauchy-Schwarz's and Minkowski's inequalities
in the setting of the $\alpha,\beta$-symmetric calculus. In Chapter~\ref{The $q$-Symmetric Variational Calculus}
we develop the $q$-Symmetric Calculus presenting a necessary optimality condition
and a sufficient optimality condition for variational problems involving the $q$-symmetric derivative.
In Chapter~\ref{Hahn's Symmetric Quantum Variational Calculus} we generalize
for the Hahn Symmetric Quantum Variational Calculus the results obtained in the preceding chapter.
We also prove, in Section~\ref{qhs:L}, that Leitmann's Direct Method can be applied
in Hahn's symmetric quantum variational calculus. In Chapter~\ref{The Symmetric Calculus on Time Scales}
we introduce the symmetric derivative on time scales and prove some properties of this new derivative.
In Section~\ref{ts:sec:int} we introduce the diamond integral and deduce some results for this new integral.

In Chapter~\ref{Conclusions}, we write our conclusions
and some possible directions for future work.


\clearpage{\thispagestyle{empty}\cleardoublepage}


\part{Synthesis}
\label{Synthesis}


\clearpage{\thispagestyle{empty}\cleardoublepage}


\chapter{Time Scale Calculus}
\label{ts:sec:pre}

The theory of time scales was born in 1988 with the Ph.D. thesis of Stefan Hilger,
done under the supervision of Bernd Aulbach \cite{Hilger}. The aim of this theory
was to unify various definitions and results from the theories of discrete and
continuous dynamical systems, and to extend such theories
to more general classes of dynamical systems.

The calculus of time scales is nowadays an area of great interest of many mathematicians;
this can be showed by the numerous papers published in this field.

For a general introduction to the theory of time scales we refer the reader
to the excellent books \cite{Bohner,Bohner:2}. Here we only give those definitions
and results needed in this thesis.

As usual, $\mathbb{R,Z}$ and $\mathbb{N}$ denote, respectively,
the set of real, integer and natural numbers.

A\ nonempty closed subset of $\mathbb{R}$ is called a \emph{time scale}\index{Time scale}
and is denoted by $\mathbb{T}$. Thus $\mathbb{R,Z}$ and $\mathbb{N}$ are trivial examples
of time scales. Other examples of time scales are: $\left[-2,5\right]\cup\mathbb{N}$,
$h\mathbb{Z}:=\{hz: z\in\mathbb{Z}\}$ for some $h>0$,
$\overline{q^{\mathbb{Z}}}:=\{q^z:z\in\mathbb{Z}\}\cup{0}$ for some $q\neq 1$ and the Cantor set.

We assume that a time scale has the topology inherited
from $\mathbb{R}$ with the standard topology.

We consider two jump operators:
the \emph{forward jump operator}\index{Forward jump operator}
$\sigma : \mathbb{T}\rightarrow \mathbb{T}$, defined by
\begin{equation*}
\sigma \left( t\right) :=\inf \left \{ s\in \mathbb{T}:s>t\right \},
\end{equation*}
and the \emph{backward jump operator}\index{Backward jump operator}
$\rho :\mathbb{T} \rightarrow \mathbb{T}$ defined by
\begin{equation*}
\rho \left( t\right) :=\sup \left \{ s\in \mathbb{T}:s<t\right \}
\end{equation*}
with $\inf \emptyset =\sup \mathbb{T}$ (i.e., $\sigma \left( M\right) =M$ if
$\mathbb{T}$ has a maximum $M$) and $\sup \emptyset =\inf \mathbb{T}$ (i.e.,
$\rho \left( m\right) =m$ if $\mathbb{T}$ has a minimum $m$). A point $t\in
\mathbb{T}$ is said to be \emph{right-dense}\index{Right-dense point},
\emph{right-scattered}\index{Right-scattered point},
\emph{left-dense}\index{Left-dense point} and
\emph{left-scattered}\index{Left-scattered point}
if $\sigma \left( t\right) =t$, $\sigma \left( t\right) >t$,
$\rho \left( t\right) =t$ and $\rho \left( t\right) <t$, respectively.
A point $t\in \mathbb{T}$ is called \emph{dense}\index{Dense point}
if it is right and left dense.
The \emph{backward graininess function}\index{Backward graininess function}
$\nu:\mathbb{T}\rightarrow\left[0,+\infty\right[$ is defined by
$$
\nu \left( t\right):=t-\rho\left(t\right), \ \ t\in\mathbb{T}.
$$
The \emph{forward graininess function}\index{Forward graininess function}
$\mu:\mathbb{T}\rightarrow\left[0,+\infty\right[$ is defined by
$$
\mu \left( t\right):=\sigma\left(t\right)-t, \ \ t\in\mathbb{T}.
$$
It is clear that when $\mathbb{T}=\mathbb{R}$ one has
$\sigma\left(t\right)=t=\rho\left(t\right)$
and $\nu\left(t\right)=\mu\left(t\right)=0$ for any $t\in\mathbb{T}$.
When $\mathbb{T}=h\mathbb{Z}$ (for some $h>0$), then $\sigma\left(t\right)=t+h$,
$\rho\left(t\right)=t-h$ and $\nu\left(t\right)=\mu\left(t\right)=h$.

In order to introduce the definition of the delta derivative and the nabla derivative
we define the sets $\mathbb{T}^{\kappa}$ and $\mathbb{T}_{\kappa}$ by
\begin{equation*}
\begin{array}{l}
\mathbb{T}^{\kappa}:=\left \{
\begin{array}{l}
\mathbb{T}\backslash \left \{ t_{1}\right \} \text{ ,\ if }\mathbb{T}\text{
has a left-scattered maximum }t_{1} \\
\\
\mathbb{T}\text{ , otherwise}
\end{array}
\right.  \\
\\
\mathbb{T}_{\kappa}:=\left \{
\begin{array}{l}
\mathbb{T}\backslash \left \{ t_{2}\right \} \text{ ,\ if }\mathbb{T}\text{
has a right-scattered minimum }t_{2} \\
\\
\mathbb{T}\text{ , otherwise.}
\end{array}
\right .  \\
\\
\end{array}
\end{equation*}

We say that a function $f:\mathbb{T} \rightarrow \mathbb{R}$
is delta differentiable at $t\in \mathbb{T}^{\kappa}$ if there exists a
number $f^{\Delta }\left( t\right) $ such that, for all $\varepsilon >0$,
there exists a neighborhood $U$ of $t$ such that
\begin{equation*}
\left \vert f\left( \sigma \left( t\right) \right) -f\left( s\right)
-f^{\Delta }\left( t\right) \left( \sigma \left( t\right) -s\right) \right
\vert \leqslant \varepsilon \left \vert \sigma \left( t\right) -s\right
\vert ,
\end{equation*}
for all $s\in U$. We call $f^{\Delta }\left( t\right)$
the \emph{delta derivative}\index{Delta derivative}
of $f$ at $t$ and we say that $f$ is delta differentiable
if $f$ is delta differentiable for all $t\in \mathbb{T}^{\kappa}$.

We say that a function $f:\mathbb{T} \rightarrow \mathbb{R}$
is nabla differentiable at $t\in \mathbb{T}_{\kappa}$ if there exists a
number $f^{\nabla }\left( t\right) $ such that, for all $\varepsilon >0$,
there exists a neighborhood $V$ of $t$ such that
\begin{equation*}
\left \vert f\left( \rho \left( t\right) \right) -f\left( s\right)
-f^{\nabla }\left( t\right) \left( \rho \left( t\right) -s\right) \right
\vert \leqslant \varepsilon \left \vert \rho \left( t\right) -s\right \vert ,
\end{equation*}
for all $s\in V$. We call $f^{\nabla }\left( t\right)$
the \emph{nabla derivative}\index{Nabla derivative}
of $f$ at $t$ and we say that $f$ is nabla differentiable
if $f$ is nabla differentiable for all $t\in \mathbb{T}_{\kappa}$.

In order to simplify expressions, we denote
$f\circ\sigma$ by $f^{\sigma}$ and $f\circ\rho$ by $f^{\rho}$.

The delta and nabla derivatives verify the following properties.

\begin{theorem}[\protect cf. \cite{Bohner,Bohner:2}]
Let $f:\mathbb{T} \rightarrow \mathbb{R}$ be a function.
Then we have the following:
\begin{enumerate}
\item If $f$ is delta differentiable at $t\in\mathbb{T}^{\kappa}$
or nabla differentiable at $t\in\mathbb{T}_{\kappa}$,
then $f$ is continuous on $t$;

\item If $f$ is continuous at $t\in \mathbb{T}^{\kappa}$ and $t$ is
right-scattered, then $f$ is delta differentiable at $t$ with
\begin{equation*}
f^{\Delta }\left( t\right) =\frac{f^{\sigma}\left( t\right)
-f\left( t\right) }{\sigma \left( t\right)-t};
\end{equation*}

\item $f$ is delta differentiable at a right-dense point $t\in \mathbb{T}^{\kappa}$
if the limit
\begin{equation*}
\lim_{s\rightarrow t}\frac{f\left( t\right) -f\left( s\right) }{t-s}
\end{equation*}
exists and in that case
\begin{equation*}
f^{\Delta }\left( t\right) =\lim_{s\rightarrow t}\frac{f\left( t\right)
-f\left( s\right) }{t-s};
\end{equation*}

\item If $f$ is delta differentiable at $t\in\mathbb{T}^{\kappa}$, then
\begin{equation*}
f^{\sigma }\left( t\right) =f\left( t\right)+\mu\left( t\right)f^\Delta\left(t\right);
\end{equation*}

\item If $f$ is continuous at $t\in \mathbb{T}_{{k}}$ and $t$ is
left-scattered, then $f$ is nabla differentiable at $t$ with
\begin{equation*}
f^{\nabla }\left( t\right) =\frac{f^{\rho}\left( t\right)
-f\left( t\right) }{\rho \left( t\right) -t};
\end{equation*}

\item $f$ is nabla differentiable at a left-dense
point $t\in \mathbb{T}_{{k}}$ if the limit
\begin{equation*}
\lim_{s\rightarrow t}\frac{f\left( t\right) -f\left( s\right) }{t-s}
\end{equation*}
exists and in that case
\begin{equation*}
f^{\nabla }\left( t\right) =\lim_{s\rightarrow t}\frac{f\left( t\right)
-f\left( s\right) }{t-s};
\end{equation*}

\item If $f$ is nabla differentiable at $t\in\mathbb{T}_{\kappa}$, then
\begin{equation*}
f^{\rho }\left( t\right) =f\left( t\right)-\nu\left( t\right)f^\nabla\left(t\right).
\end{equation*}
\end{enumerate}
\end{theorem}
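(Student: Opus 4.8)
The plan is to prove each of the seven items separately, since they are standard facts about delta and nabla derivatives; most follow by carefully unwinding the $\varepsilon$-definition given just above. I would organize the argument around the symmetry between the delta case (items 1--4) and the nabla case (items 1, 5--7), so that the nabla statements follow from the delta ones by replacing $\sigma$ with $\rho$ and reversing inequalities, and I would only write out the delta side in full.

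For item~1 (differentiability implies continuity), I would fix $t$ and consider a right-scattered or right-dense point separately. If $t$ is right-scattered, continuity at $t$ is automatic since $t$ is isolated on the right, and one only needs the left side; more cleanly, I would just estimate $|f(s)-f(t)|$ using the defining inequality with the triangle inequality, writing $f(s)-f(t) = -\bigl(f(\sigma(t))-f(s)-f^\Delta(t)(\sigma(t)-s)\bigr) + \bigl(f(\sigma(t))-f(t)-f^\Delta(t)(\sigma(t)-t)\bigr) + f^\Delta(t)(s-t)$ and noting each term $\to 0$ as $s\to t$. For item~2 (right-scattered points), I would plug $s = \sigma(t)$ into the definition: the inequality becomes $|f(\sigma(t))-f(\sigma(t)) - f^\Delta(t)(\sigma(t)-\sigma(t))| = 0 \le \varepsilon|\sigma(t)-\sigma(t)| = 0$, which is vacuous, so instead I should take $s=t$, giving $|f(\sigma(t)) - f(t) - f^\Delta(t)(\sigma(t)-t)| \le \varepsilon(\sigma(t)-t)$ for every $\varepsilon>0$, hence the bracket is zero and the formula follows after dividing by $\sigma(t)-t\neq 0$. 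Item~4 is then immediate by rearranging the formula in item~2 in the right-scattered case and is trivially true ($\mu(t)=0$) in the right-dense case once item~3 is in hand. For item~3, at a right-dense point $\sigma(t)=t$, so the defining inequality reads $|f(t)-f(s) - f^\Delta(t)(t-s)| \le \varepsilon|t-s|$ for all $s$ in a neighborhood, which is exactly the statement that $\lim_{s\to t}\frac{f(t)-f(s)}{t-s}$ exists and equals $f^\Delta(t)$; conversely, existence of that limit yields such a neighborhood for each $\varepsilon$.

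Items~5, 6, 7 are the verbatim nabla analogues: item~5 by substituting $s=t$ into the nabla defining inequality at a left-scattered point, item~6 by observing $\rho(t)=t$ at a left-dense point, and item~7 by rearranging item~5 (and noting $\nu(t)=0$ in the left-dense case). I would remark that all of this is standard and cite \cite{Bohner,Bohner:2} for the details, presenting only the key substitutions. I do not anticipate a genuine obstacle here; the only mild subtlety is being careful at scattered points about which value of $s$ to substitute (using $s=t$ rather than $s=\sigma(t)$ or $s=\rho(t)$, which would give a vacuous $0\le 0$), and making sure that in item~1 the right-dense and right-scattered cases are both covered by a single estimate so that continuity is genuinely two-sided.
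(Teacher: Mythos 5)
The paper does not actually prove this theorem: it is stated as a standard result quoted from \cite{Bohner,Bohner:2}, so there is no internal proof to compare your route against. Judged on its own merits, your sketch follows the classical Bohner--Peterson argument and is essentially right for items 1, 3, 4, 6, 7 (your three-term decomposition of $f(s)-f(t)$ in item~1 telescopes correctly, though you should note that the first two terms are only bounded by $\varepsilon\,\mu(t)$ plus something small, so one must shrink $\varepsilon$ as well as $|s-t|$ rather than claim each term tends to $0$ for fixed $\varepsilon$).

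There is, however, a genuine gap in your treatment of item~2 (and, by symmetry, item~5). Substituting $s=t$ into the defining inequality presupposes that the number $f^\Delta(t)$ already exists; what you obtain is only the \emph{uniqueness} statement ``if $f$ is delta differentiable at a right-scattered $t$, then $f^\Delta(t)$ must equal $\frac{f^\sigma(t)-f(t)}{\sigma(t)-t}$.'' The claim of item~2 is the \emph{existence} statement: continuity at a right-scattered point implies delta differentiability. Tellingly, your argument never uses the continuity hypothesis. The missing step is to define the candidate $L:=\frac{f^\sigma(t)-f(t)}{\mu(t)}$ and verify the $\varepsilon$-condition directly: one computes
\begin{equation*}
f^{\sigma}(t)-f(s)-L\left(\sigma(t)-s\right)=f(t)-f(s)+L\left(s-t\right),
\end{equation*}
and since $t$ is right-scattered one has $\left\vert \sigma(t)-s\right\vert \geqslant \mu(t)/2>0$ for $s$ near $t$, so it suffices to make $\left\vert f(t)-f(s)\right\vert+\left\vert L\right\vert\left\vert s-t\right\vert\leqslant\varepsilon\,\mu(t)/2$, which is exactly where continuity of $f$ at $t$ enters. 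Without this verification the ``then $f$ is delta differentiable'' half of items~2 and~5 is unproved.
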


\begin{remark}
\begin{enumerate}
\item If $\mathbb{T=
\mathbb{R}
}$, then the delta and nabla derivative are the usual derivative.

\item If $\mathbb{T=}\mathbb{
\mathbb{Z},
}$ then the delta derivative is the
\emph{forward difference operator}\index{Forward difference operator} defined by
\begin{equation*}
f^{\Delta }\left( t\right) =f\left( t+1\right) -f\left( t\right)
\end{equation*}
and the nabla derivative is the
\emph{backward difference operator}\index{Backward difference operator} defined by
\begin{equation*}
f^{\nabla }\left( t\right) =f\left( t\right) -f\left( t-1\right) .
\end{equation*}

\item For any time scale $\mathbb{T}$, if $f$ is constant, then
$
f^\Delta=f^\nabla\equiv0;
$
if $f\left(t\right)=kt$ for some constant $k$, then
$
f^\Delta=f^\nabla\equiv k.
$
\end{enumerate}
\end{remark}

The delta derivative satisfies the following properties.

\begin{theorem}[\protect cf. \cite{Bohner,Bohner:2}]
Let $f,g:\mathbb{T} \rightarrow \mathbb{R}$
be delta differentiable functions. Let $t\in \mathbb{T}^{\kappa}$
and $\lambda \in \mathbb{R}$. Then
\begin{enumerate}
\item The function $f+g$ is delta differentiable with
$$
\left( f+g\right) ^{\Delta }\left( t\right)  =f^{\Delta }\left( t\right)
+g^{\Delta }\left( t\right)   ;$$

\item The function $\lambda f$ is delta differentiable with
$$
\left( \lambda f\right) ^{\Delta }\left( t\right)  =\lambda f^{\Delta
}\left( t\right) ;
$$

\item The function $fg$ is delta differentiable with
$$
\left( fg\right) ^{\Delta }\left( t\right)  =f^{\Delta }\left( t\right)
g\left( t\right) +f^{\sigma }\left( t\right) g^{\Delta }\left( t\right)
;$$

\item The function $f/g$ is delta differentiable with
\begin{equation*}
\left( \frac{f}{g}\right) ^{\Delta }\left( t\right) =\frac{f^{\Delta }\left(
t\right) g\left( t\right) -f\left( t\right)
g^{\Delta }\left( t\right)}{g\left( t\right) g^{\sigma }\left( t\right)}
\end{equation*}
provided that $g\left( t\right) g^{\sigma }\left( t\right) \neq 0$.
\end{enumerate}
\end{theorem}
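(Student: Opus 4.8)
The plan is to argue everything straight from the $\varepsilon$-neighborhood definition of $f^{\Delta}$, using repeatedly that a delta differentiable function is continuous at the point in question (item~1 of the first theorem above) and is therefore locally bounded there.

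First I would dispatch the linear part. For (1), fix $\varepsilon>0$, choose neighborhoods $U_{f}$ and $U_{g}$ of $t$ on which the defining inequality holds for $f$ and $g$ with tolerance $\varepsilon/2$, and on $U_{f}\cap U_{g}$ add the two inequalities and apply the triangle inequality; this shows $f+g$ is delta differentiable at $t$ with derivative $f^{\Delta}(t)+g^{\Delta}(t)$. Item (2) is the same computation for a single function with tolerance $\varepsilon/(|\lambda|+1)$.

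Next, the product rule (3). Here I would use the algebraic identity
\[
f^{\sigma}(t)g^{\sigma}(t)-f(s)g(s) = f^{\sigma}(t)\left[g^{\sigma}(t)-g(s)\right] + g(s)\left[f^{\sigma}(t)-f(s)\right],
\]
and then compare the right-hand side with the target quantity $f^{\Delta}(t)g(t)\left(\sigma(t)-s\right)+f^{\sigma}(t)g^{\Delta}(t)\left(\sigma(t)-s\right)$, replacing $g(s)$ by $g(t)$ along the way. The resulting error splits into three pieces: one controlled by the defining inequality for $g$ (carrying the factor $|f^{\sigma}(t)|$), one controlled by the defining inequality for $f$ (carrying the factor $|g(s)|\le|g(t)|+1$, valid near $t$ by continuity), and one of the form $|f^{\Delta}(t)|\,|g(s)-g(t)|\,|\sigma(t)-s|$, which is made small because $g$ is continuous at $t$. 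Choosing the initial tolerance so as to absorb the constant $|f^{\sigma}(t)|+|g(t)|+1+|f^{\Delta}(t)|$ finishes (3); by the symmetric role of $f$ and $g$ one also obtains the equivalent form $(fg)^{\Delta}(t)=f(t)g^{\Delta}(t)+f^{\Delta}(t)g^{\sigma}(t)$.

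Finally, for (4) I would first prove the reciprocal rule $\left(1/g\right)^{\Delta}(t)=-g^{\Delta}(t)/\left(g(t)g^{\sigma}(t)\right)$. Since $g(t)g^{\sigma}(t)\neq0$ forces $g(t)\neq0$ and $g(\sigma(t))\neq0$, continuity of $g$ guarantees $g$ is nonzero on a neighborhood of $t$, so $1/g$ is defined there; from
\[
\frac{1}{g^{\sigma}(t)}-\frac{1}{g(s)} = -\frac{g^{\sigma}(t)-g(s)}{g^{\sigma}(t)g(s)}
\]
the same estimates as in (3) — approximating $g^{\sigma}(t)-g(s)$ by $g^{\Delta}(t)(\sigma(t)-s)$ and using $1/(g^{\sigma}(t)g(s))\to 1/(g^{\sigma}(t)g(t))$ by continuity — give the reciprocal rule. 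Writing $f/g=f\cdot(1/g)$ and applying the symmetric form of (3) then yields
\[
\left(\frac{f}{g}\right)^{\Delta}(t) = f(t)\left(-\frac{g^{\Delta}(t)}{g(t)g^{\sigma}(t)}\right) + f^{\Delta}(t)\frac{1}{g^{\sigma}(t)} = \frac{f^{\Delta}(t)g(t)-f(t)g^{\Delta}(t)}{g(t)g^{\sigma}(t)}.
\]
I expect the only genuinely delicate point to be the bookkeeping of the cross terms in the product rule: one must replace $g(s)$ by $g(t)$ (not the reverse) and invoke local boundedness and continuity at precisely the right spots so that every error term retains the factor $|\sigma(t)-s|$ required by the definition. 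If one prefers to avoid these estimates, the same conclusions follow from items 2 and 3 of the first theorem by treating right-scattered and right-dense points separately, which reduces (3) and (4) to the classical product and quotient rules together with the jump formula $f^{\sigma}(t)=f(t)+\mu(t)f^{\Delta}(t)$.
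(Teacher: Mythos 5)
The paper states this result without proof, citing it directly from Bohner--Peterson \cite{Bohner,Bohner:2}, so there is no internal argument to compare against. Your proposal is correct and is essentially the standard proof from those references: the linearity claims follow by splitting the tolerance, the product rule follows from the decomposition $f^{\sigma}(t)g^{\sigma}(t)-f(s)g(s)=f^{\sigma}(t)\left[g^{\sigma}(t)-g(s)\right]+g(s)\left[f^{\sigma}(t)-f(s)\right]$ with the cross term $f^{\Delta}(t)\left[g(s)-g(t)\right]\left(\sigma(t)-s\right)$ absorbed by continuity, and the quotient rule follows from the reciprocal rule combined with the symmetric form $(fg)^{\Delta}=fg^{\Delta}+f^{\Delta}g^{\sigma}$. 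All the delicate points you flag (replacing $g(s)$ by $g(t)$, local boundedness, keeping the factor $\left\vert\sigma(t)-s\right\vert$ in every error term) are handled correctly.
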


The nabla derivative satisfies the following properties.
\begin{theorem}[\protect cf. \cite{Bohner,Bohner:2}]
Let $f,g:\mathbb{T} \rightarrow \mathbb{R}$
be nabla differentiable functions. Let $t\in \mathbb{T}_{\kappa}$
and $\lambda \in \mathbb{R}$. Then
\begin{enumerate}
\item The function $f+g$ is nabla differentiable with
$$
\left( f+g\right) ^{\nabla }\left( t\right)  =f^{\nabla }\left( t\right)
+g^{\nabla }\left( t\right) ;
$$

\item The function $\lambda f$ is nabla differentiable with
$$
\left( \lambda f\right) ^{\nabla }\left( t\right)  =\lambda f^{\nabla
}\left( t\right) ;
$$

\item The function $fg$ is nabla differentiable with
$$
\left( fg\right) ^{\nabla }\left( t\right) =f^{\nabla }\left( t\right)
g\left( t\right) +f^{\rho }\left( t\right) g^{\nabla }\left( t\right);
$$

\item The function $f/g$ is nabla differentiable with
$$
\left( \frac{f}{g}\right)^{\nabla }\left( t\right) =\frac{f^{\nabla }\left(
t\right) g\left( t\right) -f\left( t\right)
g^{\nabla }\left( t\right) }{g\left( t\right) g^{\rho }\left( t\right) }
$$
provided that $g\left( t\right) g^{\rho }\left( t\right) \neq 0$.
\end{enumerate}
\end{theorem}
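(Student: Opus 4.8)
The plan is to argue directly from the $\varepsilon$--neighbourhood definition of the nabla derivative, in exact parallel with the standard proof of the corresponding delta rules. Fix $t\in\mathbb{T}_{\kappa}$. Since $f$ and $g$ are nabla differentiable at $t$, both are continuous there (nabla differentiability implies continuity, as noted above), and for each $\varepsilon>0$ we may pick a single neighbourhood $V$ of $t$ on which the defining inequalities for $f^{\nabla}(t)$ and $g^{\nabla}(t)$ hold and on which $\vert f(s)-f(t)\vert$ and $\vert g(s)-g(t)\vert$ are as small as we wish, using that a finite intersection of neighbourhoods is again a neighbourhood. The whole proof then reduces to estimating, for $s\in V$, the quantity $h(\rho(t))-h(s)-L(\rho(t)-s)$ for the right candidate $L$, where $h$ is $f+g$, $\lambda f$, $fg$ or $f/g$.

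Parts (1) and (2) are immediate. For the sum, adding the two defining inequalities and using the triangle inequality gives
\[
\bigl\vert (f+g)(\rho(t))-(f+g)(s)-\bigl(f^{\nabla}(t)+g^{\nabla}(t)\bigr)(\rho(t)-s)\bigr\vert\leqslant 2\varepsilon\,\vert\rho(t)-s\vert ,
\]
and since $\varepsilon>0$ is arbitrary this identifies $(f+g)^{\nabla}(t)$; for $\lambda f$ one multiplies the inequality for $f$ by $\vert\lambda\vert$, the case $\lambda=0$ being trivial.

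The product rule (3) is the main technical point. I would use the add-and-subtract decomposition
\[
(fg)(\rho(t))-(fg)(s)=f(\rho(t))\bigl(g(\rho(t))-g(s)\bigr)+\bigl(f(\rho(t))-f(s)\bigr)g(s),
\]
replace $g(\rho(t))-g(s)$ by $g^{\nabla}(t)(\rho(t)-s)$ and $f(\rho(t))-f(s)$ by $f^{\nabla}(t)(\rho(t)-s)$ up to errors controlled by the two nabla estimates, and replace $g(s)$ by $g(t)$ up to an error controlled by continuity of $g$; since $f(\rho(t))=f^{\rho}(t)$ is a fixed number and $\vert\rho(t)-s\vert$ is bounded on $V$, collecting the error terms shows
\[
(fg)^{\nabla}(t)=f^{\rho}(t)g^{\nabla}(t)+f^{\nabla}(t)g(t).
\]
The only delicate bookkeeping is verifying that each error term is bounded by a constant multiple of $\varepsilon\,\vert\rho(t)-s\vert$ with the constant independent of $s\in V$; continuity of $g$ at $t$ is precisely what makes $f^{\nabla}(t)(\rho(t)-s)g(s)$ behave like $f^{\nabla}(t)g(t)(\rho(t)-s)$.

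For the quotient rule (4), assuming $g(t)g^{\rho}(t)\neq 0$, first shrink $V$ so that $g$ does not vanish on it (possible since $g(t)\neq 0$ and $g$ is continuous), and write
\[
\frac{f}{g}(\rho(t))-\frac{f}{g}(s)=\frac{\bigl(f(\rho(t))-f(s)\bigr)g(s)-f(s)\bigl(g(\rho(t))-g(s)\bigr)}{g(\rho(t))\,g(s)} .
\]
The same substitutions as before turn the numerator into $\bigl(f^{\nabla}(t)g(t)-f(t)g^{\nabla}(t)\bigr)(\rho(t)-s)$ plus a term of order $\varepsilon\,\vert\rho(t)-s\vert$, while the denominator tends to $g^{\rho}(t)g(t)\neq 0$; dividing gives the asserted
\[
\left(\frac{f}{g}\right)^{\nabla}(t)=\frac{f^{\nabla}(t)g(t)-f(t)g^{\nabla}(t)}{g(t)g^{\rho}(t)} .
\]
Alternatively one may first prove $\left(1/g\right)^{\nabla}(t)=-g^{\nabla}(t)/\bigl(g(t)g^{\rho}(t)\bigr)$ by this same method and then apply part (3) to $f\cdot(1/g)$; the resulting $\bigl(f^{\nabla}(t)g^{\rho}(t)-f^{\rho}(t)g^{\nabla}(t)\bigr)/\bigl(g(t)g^{\rho}(t)\bigr)$ reconciles with the displayed formula after using $f^{\rho}(t)=f(t)-\nu(t)f^{\nabla}(t)$ and $g^{\rho}(t)=g(t)-\nu(t)g^{\nabla}(t)$ from the first theorem above. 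I expect that keeping the denominator bounded away from zero and making the error estimates uniform in $s\in V$ are the only steps that genuinely need care.
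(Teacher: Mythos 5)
Your argument is correct: the paper itself gives no proof of this theorem (it is quoted from Bohner--Peterson \cite{Bohner,Bohner:2}), and your proof is precisely the standard one from those references, dualized from the delta to the nabla case -- the same add-and-subtract decompositions for $fg$ and $f/g$, the same use of continuity (which follows from nabla differentiability) to control $g(s)-g(t)$, and the same uniform bookkeeping of error terms of size $O(\varepsilon\,\vert\rho(t)-s\vert)$ on a common neighbourhood. Your remark reconciling the two forms of the quotient rule via $f^{\rho}(t)=f(t)-\nu(t)f^{\nabla}(t)$ is also correct and matches how the two equivalent formulas coexist in the literature.
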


In this chapter all the intervals are time scales intervals, that is, for
$a,b \in \mathbb{T}$, with $a<b$,
$$
\left[ a,b\right]_{\mathbb{T}}
$$
denotes the set $\{t\in\mathbb{T}: a \leqslant t \leqslant b\}$;
open intervals and half-open intervals are defined accordingly.

\begin{theorem}[\protect Mean value theorem for the delta derivative
(cf. \cite{Bohner:2})]\index{Mean value theorem for the delta derivative}
Let $f$ be a continuous function on $\left[ a,b\right] _{\mathbb{T}}$ that
is delta differentiable on $\left[ a,b\right[ _{\mathbb{T}}$. Then there
exist $\xi ,\tau \in \left[ a,b\right[ _{\mathbb{T}}$ such that
\begin{equation*}
f^{\Delta }\left( \tau \right) \leqslant \frac{f\left( b\right) -f\left(
a\right) }{b-a}\leqslant f^{\Delta }\left( \xi \right) .
\end{equation*}
\end{theorem}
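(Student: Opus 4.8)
The plan is to mimic the classical proof of the mean value theorem, splitting into a ``left inequality'' claim and a ``right inequality'' claim. Set $k := \dfrac{f(b)-f(a)}{b-a}$. First I would argue by contradiction for the right inequality: suppose $f^{\Delta}(t) < k$ for every $t \in [a,b[_{\mathbb{T}}$. The idea is to show this forces the ``secant slope'' over $[a,b]_{\mathbb{T}}$ to be strictly less than $k$, contradicting the definition of $k$. The left inequality follows by the symmetric argument (or by applying the right inequality to $-f$ and noting $(-f)^{\Delta} = -f^{\Delta}$). So the real work is the one-sided statement: if $f$ is continuous on $[a,b]_{\mathbb{T}}$, delta differentiable on $[a,b[_{\mathbb{T}}$, and $f^{\Delta}(t) < k$ for all $t \in [a,b[_{\mathbb{T}}$, then $f(b)-f(a) < k(b-a)$.

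To prove that one-sided claim I would introduce the auxiliary function $g(t) := f(t) - f(a) - k(t-a)$, which is continuous on $[a,b]_{\mathbb{T}}$ with $g(a) = 0$ and $g^{\Delta}(t) = f^{\Delta}(t) - k < 0$ on $[a,b[_{\mathbb{T}}$; the goal becomes $g(b) < 0$. The key step is a monotonicity lemma: a function with everywhere-negative delta derivative on a time-scale interval is strictly decreasing. For this I would use the neighborhood characterization in the definition of delta derivative. Fix any $t \in [a,b[_{\mathbb{T}}$; choosing $\varepsilon = -g^{\Delta}(t)/2 > 0$ gives a neighborhood $U$ of $t$ with
\begin{equation*}
g(s) - g(\sigma(t)) \leqslant g^{\Delta}(t)(s - \sigma(t)) + \varepsilon\,|\sigma(t)-s|
\end{equation*}
for $s \in U$; taking $s$ slightly larger than $t$ (or $s = t$ when $t$ is right-dense, $s = \sigma(t)$ handled separately when $t$ is right-scattered) yields that $g$ strictly decreases as we move rightward locally at $t$. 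Then a standard time-scale ``continuous induction'' argument — considering the supremum of the set where $g(t) \leqslant g(a)$ fails to be beaten, i.e. $\sup\{t \in [a,b]_{\mathbb{T}} : g \text{ is strictly decreasing on } [a,t]_{\mathbb{T}}\}$, and using continuity at left-dense points plus the local-decrease property at the supremum — pushes strict monotonicity all the way to $b$, giving $g(b) < g(a) = 0$.

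The main obstacle I anticipate is handling the different point types uniformly in the monotonicity lemma: at right-scattered points one needs the explicit formula $g^{\Delta}(t) = (g(\sigma(t)) - g(t))/\mu(t)$ (Theorem as stated above) to conclude $g(\sigma(t)) < g(t)$, while at right-dense points one must pass through infinitely many intermediate values and invoke continuity, and the ``jump to the supremum'' case requires care when that supremum is itself left-dense. The cleanest route is probably to first prove directly, using only continuity plus the $\varepsilon$-neighborhood inequality, that for each $t_0 < b$ there is some $t_0 < t_1 \leqslant b$ with $g(t_1) < g(t_0)$, and separately that the set of $t$ with $g(t) \geqslant g(a)$ has no largest element below $b$ other than possibly $a$ — then conclude $g(b) < 0$. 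Once the strict monotonicity of $g$ is in hand, evaluating at $b$ gives $f(b) - f(a) < k(b-a)$, hence $f^{\Delta}(\xi) < k$ is \emph{violated} somewhere, which is exactly the contradiction; unwinding both one-sided arguments produces the required $\xi$ and $\tau$ in $[a,b[_{\mathbb{T}}$.
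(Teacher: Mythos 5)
The paper itself contains no proof of this theorem: it is quoted from \cite{Bohner:2} in the preliminary chapter, so I can only compare your attempt with the standard argument in that reference. Your reduction to the one-sided claim and your local analysis are correct: at a right-scattered $t$ the formula $g^{\Delta}(t)=\frac{g(\sigma(t))-g(t)}{\mu(t)}$ gives $g(\sigma(t))<g(t)$, and at a right-dense $t$ the choice $\varepsilon=-g^{\Delta}(t)/2$ in the defining inequality gives $g(s)<g(t)$ for every $s>t$ in a neighborhood of $t$. The genuine gap is in passing from these local statements to $g(b)<g(a)$. Your primary device, the supremum $c$ of $\{t\in[a,b]_{\mathbb{T}}: g \text{ is strictly decreasing on } [a,t]_{\mathbb{T}}\}$, fails precisely when $c<b$ is right-dense: the local inequality at $c$ compares $g(s)$ with $g(c)$ only and says nothing about $g(s_2)$ versus $g(s_1)$ for $c<s_1<s_2$ inside the neighborhood, so you cannot place any $s>c$ into your set and the induction does not advance. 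Your fallback does not close the argument either: from ``for each $t_0<b$ there is $t_1>t_0$ with $g(t_1)<g(t_0)$'' and your second assertion you simply announce ``then conclude $g(b)<0$'' without supplying the mechanism, and those two facts alone do not exclude the scenario in which $b$ is left-dense and $g$ creeps back up to the value $g(a)$ at $b$.

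The missing ingredient is compactness of $[a,b]_{\mathbb{T}}$ (a closed, bounded subset of $\mathbb{R}$), i.e.\ the extreme value theorem, and once it is invoked no monotonicity lemma is needed at all. The continuous function $h(t)=f(t)-f(a)-k(t-a)$ satisfies $h(a)=h(b)=0$ and attains its maximum on $[a,b]_{\mathbb{T}}$; the maximum is attained at some $t_M\in[a,b[_{\mathbb{T}}$ (if only at $b$ it would equal $0=h(a)$), and at such a point $h^{\Delta}(t_M)\leqslant 0$: for $t_M$ right-scattered this is $\frac{h(\sigma(t_M))-h(t_M)}{\mu(t_M)}\leqslant 0$, and for $t_M$ right-dense one inserts $s>t_M$ and $h(s)\leqslant h(t_M)$ into the defining inequality to get $h^{\Delta}(t_M)\leqslant\varepsilon$ for every $\varepsilon>0$. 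This gives $f^{\Delta}(t_M)\leqslant k$, so $\tau=t_M$, and the minimum point supplies $\xi$. The same observation repairs your contradiction scheme with minimal change: under the hypothesis $g^{\Delta}<0$ on $[a,b[_{\mathbb{T}}$, your local estimates show that the minimum of $g$ over $[a,b]_{\mathbb{T}}$ cannot be attained at any point of $[a,b[_{\mathbb{T}}$, hence is attained only at $b$, whence $g(b)<g(a)=0$, contradicting $g(b)=0$.
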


\begin{corollary}[\protect Rolle's theorem for the delta derivative (cf. \cite{Bohner:2})]
\label{ts:pre valor medio}\index{Rolle's theorem for the delta derivative}
Let $f$ be a continuous function on $\left[ a,b\right] _{\mathbb{T}}$ that
is delta differentiable on $\left[ a,b\right[ _{\mathbb{T}}$ and satisfies
\begin{equation*}
f\left( a\right) =f\left( b\right).
\end{equation*}
Then there exist $\xi ,\tau \in \left[ a,b\right[ _{\mathbb{T}}$ such that
\begin{equation*}
f^{\Delta }\left( \tau \right) \leqslant 0\leqslant f^{\Delta }\left( \xi
\right) .
\end{equation*}
\end{corollary}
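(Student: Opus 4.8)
The plan is to deduce this corollary directly from the \emph{Mean value theorem for the delta derivative} stated immediately above, exactly mirroring the way Rolle's theorem is obtained from the mean value theorem in classical calculus. First I would observe that the hypotheses of the corollary---$f$ continuous on $\left[ a,b\right]_{\mathbb{T}}$ and delta differentiable on $\left[ a,b\right[_{\mathbb{T}}$---are precisely the hypotheses needed to invoke the mean value theorem, so no extra regularity has to be checked and the additional assumption $f(a)=f(b)$ is the only new ingredient.

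Next I would apply that theorem to produce points $\xi ,\tau \in \left[ a,b\right[_{\mathbb{T}}$ satisfying
\begin{equation*}
f^{\Delta }\left( \tau \right) \leqslant \frac{f\left( b\right) -f\left( a\right) }{b-a}\leqslant f^{\Delta }\left( \xi \right).
\end{equation*}
Invoking the hypothesis $f\left( a\right) =f\left( b\right)$, the quotient in the middle is $0$, and the chain of inequalities collapses to $f^{\Delta }\left( \tau \right) \leqslant 0\leqslant f^{\Delta }\left( \xi \right)$, which is exactly the assertion. The same two points $\xi ,\tau$ delivered by the mean value theorem serve verbatim, so no further selection argument is required.

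There is essentially no obstacle in this argument; the only point meriting a word is bookkeeping---making sure that $a<b$ (so that the difference quotient $\frac{f(b)-f(a)}{b-a}$ is well defined), which is already built into the standing convention that $\left[ a,b\right]_{\mathbb{T}}$ is considered for $a<b$. Accordingly I expect the proof to be a one-line reduction to the preceding theorem, with no additional machinery needed.
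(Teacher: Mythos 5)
Your proof is correct and is exactly the intended derivation: the paper states this as a corollary of the preceding mean value theorem (citing \cite{Bohner:2} without reproducing a proof), and substituting $f(a)=f(b)$ into the chain of inequalities furnished by that theorem immediately yields $f^{\Delta}(\tau)\leqslant 0\leqslant f^{\Delta}(\xi)$. Nothing further is needed.
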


Another useful result is the following consequence of the Mean Value Theorem.

\begin{corollary}[cf. \cite{Bohner:2}]
Let $f$ be a continuous function on $\left[ a,b\right] _{\mathbb{T}}$ that
is delta differentiable on $\left[ a,b\right[ _{\mathbb{T}}$. If $f^\Delta >0$,
$f^\Delta <0$, $f^\Delta \geqslant0$ or $f^\Delta \leqslant0$, then $f$
is increasing, decreasing, nondecreasing or nonincreasing
on $\left[a,b\right]_{\mathbb{T}}$, respectively.
\end{corollary}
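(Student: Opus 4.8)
The plan is to reduce the statement to the Mean Value Theorem for the delta derivative stated above, applied not to $[a,b]_{\mathbb{T}}$ itself but to arbitrary subintervals. Fix two points $s,t\in[a,b]_{\mathbb{T}}$ with $s<t$. Since $f$ is continuous on $[a,b]_{\mathbb{T}}$, it is continuous on $[s,t]_{\mathbb{T}}$; and since $[s,t[_{\mathbb{T}}\subseteq[a,b[_{\mathbb{T}}$, the function $f$ is delta differentiable on $[s,t[_{\mathbb{T}}$. Hence the Mean Value Theorem yields $\xi,\tau\in[s,t[_{\mathbb{T}}$ with
\[
f^{\Delta}(\tau)\leqslant\frac{f(t)-f(s)}{t-s}\leqslant f^{\Delta}(\xi).
\]

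First I would treat the case $f^{\Delta}>0$. Then in particular $f^{\Delta}(\tau)>0$, so the left inequality together with $t-s>0$ gives $f(t)-f(s)>0$, i.e. $f(s)<f(t)$. As $s<t$ were arbitrary in $[a,b]_{\mathbb{T}}$, $f$ is increasing. The case $f^{\Delta}\geqslant0$ is identical with $>$ replaced by $\geqslant$, giving $f(s)\leqslant f(t)$, hence $f$ nondecreasing. Symmetrically, for $f^{\Delta}<0$ I would use the right inequality: $f^{\Delta}(\xi)<0$ forces $\frac{f(t)-f(s)}{t-s}<0$, so $f(s)>f(t)$ and $f$ is decreasing; and $f^{\Delta}\leqslant0$ gives $f(s)\geqslant f(t)$, so $f$ is nonincreasing.

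There is no serious obstacle here: the only point requiring a little care is checking that the hypotheses of the Mean Value Theorem are inherited by every subinterval $[s,t]_{\mathbb{T}}$, which is immediate from continuity on $[a,b]_{\mathbb{T}}$ and from the inclusion $[s,t[_{\mathbb{T}}\subseteq[a,b[_{\mathbb{T}}$. One should also observe that in each of the four cases only one of the two inequalities in the displayed estimate is actually used, and that the strict or non-strict nature of the sign condition on $f^{\Delta}$ transfers directly to the corresponding monotonicity conclusion. Finally, since the argument permits $s$ or $t$ to equal $a$ or $b$, the monotonicity holds on the full closed interval $[a,b]_{\mathbb{T}}$, as claimed.
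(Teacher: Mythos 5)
Your proof is correct and follows exactly the route the paper intends: the corollary is stated there without proof (cited to Bohner--Peterson) and explicitly described as ``a consequence of the Mean Value Theorem,'' which is precisely your argument of applying the mean value theorem on an arbitrary subinterval $[s,t]_{\mathbb{T}}$ and reading off the sign of the difference quotient. The one point you rightly flag --- that the hypotheses pass to subintervals because $[s,t[_{\mathbb{T}}\subseteq[a,b[_{\mathbb{T}}$ and the jump operators of $\mathbb{T}$ restricted to $[s,t[_{\mathbb{T}}$ are unchanged --- is handled correctly.
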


Note that similar mean values Theorems can be enunciate for the nabla derivative.

As usually expected when we generalize some theory, we can lose some nice properties.
This situation happens with the chain rule on time scale calculus.
The chain rule as we know it in classical calculus is not valid for a general time scale.
For a simple example, let $\mathbb{T}=\mathbb{Z}$ and $f,g:\mathbb{T}\rightarrow\mathbb{R}$
be such that $f\left(t\right)=t^2$ and $g\left(t\right)=3t$. It is simple to verify that
$f^\Delta\left(t\right)=t+\sigma\left(t\right)=2t+1$ and $g^\Delta\left(t\right)=3$, and hence
$$
\left(f\circ g\right)^\Delta\left(t\right)
=18t+9\neq f^\Delta\left(g\left(t\right)\right)
\cdot g^\Delta\left(t\right) =18t+3.
$$
However, there are some special chain rules in the context of time scale calculus.

\begin{theorem}[cf. \cite{Bohner}]
Assume that $\nu:\mathbb{T}\rightarrow\mathbb{R}$ is strictly increasing
and $\tilde{\mathbb{T}}:=\nu\left(\mathbb{T}\right)$ is a time scale.
Let $f:\mathbb{\tilde{T}}\rightarrow\mathbb{R}$. If $\nu^\Delta\left(t\right)$
and $f^{\tilde{\Delta}}\left(\nu\left(t\right)\right)$
exist for $t\in\mathbb{T}^{\kappa}$, then
$$
\left(f\circ\nu\right)^\Delta\left(t\right)
=\left(f^{\tilde{\Delta}}\circ\nu\right)\left(t\right)
\cdot\nu^\Delta\left(t\right)
$$
(where $\tilde{\Delta}$ denotes the delta derivative
with respect to the time scale $\tilde{\mathbb{T}}$).
\end{theorem}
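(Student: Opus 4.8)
The plan is to verify the chain rule pointwise at an arbitrary $t\in\mathbb{T}^{\kappa}$ by splitting into the two cases according to whether $t$ is right-scattered or right-dense, exactly as the structure of the delta derivative (Theorem with the seven parts above) suggests. First I would fix $t\in\mathbb{T}^{\kappa}$ and set $s=\nu(t)\in\tilde{\mathbb{T}}$; since $\nu$ is strictly increasing, it is injective, and one should first check that $\nu$ maps right-scattered points of $\mathbb{T}$ to right-scattered points of $\tilde{\mathbb{T}}$ and right-dense points to right-dense points, so that $\tilde\sigma(\nu(t))=\nu(\sigma(t))$. This last identity, $\tilde\sigma\circ\nu=\nu\circ\sigma$, is the real workhorse of the proof and I would establish it carefully from the definitions of $\sigma$, $\tilde\sigma$ and the monotonicity (together with continuity of $\nu$, which follows from delta differentiability by part 1 of the first Theorem).

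In the right-scattered case, part 2 of the differentiation theorem gives closed formulas: $(f\circ\nu)^{\Delta}(t)=\dfrac{f(\nu(\sigma(t)))-f(\nu(t))}{\sigma(t)-t}$ and, using $\tilde\sigma(\nu(t))=\nu(\sigma(t))$, also $f^{\tilde\Delta}(\nu(t))=\dfrac{f(\nu(\sigma(t)))-f(\nu(t))}{\nu(\sigma(t))-\nu(t)}$ and $\nu^{\Delta}(t)=\dfrac{\nu(\sigma(t))-\nu(t)}{\sigma(t)-t}$; multiplying the last two and cancelling $\nu(\sigma(t))-\nu(t)$ (nonzero because $\nu$ is strictly increasing and $\sigma(t)>t$) yields the claim immediately. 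In the right-dense case, $\sigma(t)=t$, and since $\nu$ is strictly increasing and continuous, $\nu(t)$ is right-dense in $\tilde{\mathbb{T}}$; here I would pass to limits, writing for $s\to t$ in $\mathbb{T}$
\[
\frac{(f\circ\nu)(t)-(f\circ\nu)(s)}{t-s}
=\frac{f(\nu(t))-f(\nu(s))}{\nu(t)-\nu(s)}\cdot\frac{\nu(t)-\nu(s)}{t-s},
\]
valid whenever $\nu(s)\neq\nu(t)$, which holds for $s\neq t$ by injectivity. As $s\to t$ we have $\nu(s)\to\nu(t)$ by continuity, so the first factor tends to $f^{\tilde\Delta}(\nu(t))$ (which exists by hypothesis, via part 3 applied on $\tilde{\mathbb{T}}$ at the right-dense point $\nu(t)$) and the second tends to $\nu^{\Delta}(t)$; part 3 then identifies the product as $(f\circ\nu)^{\Delta}(t)$.

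The main obstacle I anticipate is the limit argument in the right-dense case: one must be sure that $\nu(t)$ is genuinely a right-dense point of $\tilde{\mathbb{T}}=\nu(\mathbb{T})$ so that part 3 of the theorem applies on $\tilde{\mathbb{T}}$, and one must handle the algebra of dividing by $\nu(t)-\nu(s)$ while taking a composite limit (in principle one should argue that the map $s\mapsto\nu(s)$ restricted to a neighborhood of $t$ in $\mathbb{T}$ has range approaching $\nu(t)$, and invoke continuity of $f$ at $\nu(t)$, itself a consequence of its differentiability there). A minor subtlety is that $t\in\mathbb{T}^{\kappa}$ should force $\nu(t)\in\tilde{\mathbb{T}}^{\kappa}$ so that $f^{\tilde\Delta}(\nu(t))$ makes sense; this again follows from $\tilde\sigma\circ\nu=\nu\circ\sigma$ together with strict monotonicity. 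Once the identity $\tilde\sigma\circ\nu=\nu\circ\sigma$ is in hand, both cases are short, so I would front-load the proof with that lemma-like observation.
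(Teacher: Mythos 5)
Your argument is correct, and every step you flag as delicate does go through: continuity of $\nu$ at $t$ and of $f$ at $\nu(t)$ follow from the assumed differentiability, the identity $\tilde\sigma(\nu(t))=\nu(\sigma(t))$ holds (in the right-scattered case because $\nu(\sigma(t))$ is both a lower bound for and a member of $\{\nu(s):s>t\}$, in the right-dense case by right-continuity of $\nu$ at $t$), and injectivity licenses the division by $\nu(t)-\nu(s)$. Note, however, that the paper states this theorem without proof, as a preliminary cited from Bohner--Peterson; the proof in that reference is a single uniform $\varepsilon$-estimate: one sets $\varepsilon^{*}=\varepsilon\bigl[1+|\nu^{\Delta}(t)|+|f^{\tilde\Delta}(\nu(t))|\bigr]^{-1}$, writes down the defining inequalities for $\nu^{\Delta}(t)$ on a neighborhood $U$ of $t$ and for $f^{\tilde\Delta}(\nu(t))$ on a neighborhood of $\nu(t)$, substitutes $r=\nu(s)$, and combines them via $\tilde\sigma(\nu(t))=\nu(\sigma(t))$ and the triangle inequality, with no case distinction. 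Your route trades that single estimate for the scattered/dense dichotomy plus the quotient and limit characterizations (parts 2 and 3 of the differentiation theorem), which makes the mechanism more transparent but forces you to verify separately that $\nu$ sends right-scattered points to right-scattered points and right-dense points to right-dense points, and to justify a composition of limits; the direct $\varepsilon$-argument subsumes all of this in one computation. One small caution if you follow your route: part 3 as stated in this chapter is phrased only as ``differentiable if the limit exists,'' so when you extract the limit formula for $f^{\tilde\Delta}(\nu(t))$ from the mere existence of the derivative you should either invoke the full equivalence from the reference or derive the limit directly from the defining inequality with $\tilde\sigma(\nu(t))=\nu(t)$. Both approaches rest on the same workhorse identity $\tilde\sigma\circ\nu=\nu\circ\sigma$, which you correctly isolate and prove.
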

For other special chain rules on time scales
we refer the reader to \cite{Bohner,Bohner:2}.

A function $f:\mathbb{T} \rightarrow \mathbb{R}$
is called \emph{rd-continuous}\index{Rd-continuous function}
if it is continuous at all right-dense points and if
its left-sided limits exist and are finite at all left-dense points. We denote the set of
all rd-continuous functions on $\mathbb{T}$ by $C_{rd}(\mathbb{T},\mathbb{R})$ or simply by $C_{rd}$.
Analogously, a function $f:\mathbb{T} \rightarrow \mathbb{R}$ is called
\emph{ld-continuous}\index{Ld-continuous function} if it is continuous at all left-dense points and
if its right-sided limits exist and are finite at all right-dense points. We denote the set of all
ld-continuous functions on $\mathbb{T}$ by $C_{ld}(\mathbb{T},\mathbb{R})$ or simply by $C_{ld}$.
The following results concerning rd-continuity and ld-continuity are useful.

\begin{theorem}[cf. \cite{Bohner}]
Let  $\mathbb{T}$ be a time scale and $f:\mathbb{T}\rightarrow\mathbb{R}$ a given function.
\begin{enumerate}

\item If $f$ is continuous, then $f$ is rd-continuous and ld-continuous;

\item The forward jump operator, $\sigma$, is rd-continuous
and the backward jump operator, $\rho$, is ld-continuous;

\item If $f$ is rd-continuous, then $f^\sigma$ is also rd-continuous;
if $f$ is ld-continuous, then $f^\rho$ is also ld-continuous;

\item If $\mathbb{T}=\mathbb{R}$, then $f$ is continuous if, and only if,
$f$ is ld-continuous and if, and only if, $f$ is rd-continuous;

\item If $\mathbb{T}=h\mathbb{Z}$ (for some $h>0$),
then $f$ is rd-continuous and ld-continuous.
\end{enumerate}
\end{theorem}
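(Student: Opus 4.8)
The plan is to verify the five items one at a time, using only the definitions of $\sigma$, $\rho$, right-/left-dense points, and rd-/ld-continuity, together with two elementary observations: (a) $s\leqslant\sigma(s)$ for every $s\in\mathbb{T}$; and (b) if $s,t\in\mathbb{T}$ with $s<t$ then $\sigma(s)\leqslant t$, since $t$ belongs to the set $\{u\in\mathbb{T}:u>s\}$ whose infimum defines $\sigma(s)$. The mirror statements hold for $\rho$. Combining (a) and (b), for $s<t$ in $\mathbb{T}$ one has $s\leqslant\sigma(s)\leqslant t$, so the squeeze theorem gives $\sigma(s)\to t$ as $s\to t^{-}$; this single estimate drives most of the proof.

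Item (1) is immediate: continuity of $f$ yields continuity at every right-dense (respectively left-dense) point, and at a left-dense point the existence of the two-sided limit forces the left-hand limit to exist and be finite, so $f$ is rd-continuous, and likewise ld-continuous. For item (2), let $t$ be right-dense, so $\sigma(t)=t$. Approaching $t$ from the left, the estimate above gives $\sigma(s)\to t$; approaching from the right, right-density produces for each $\varepsilon>0$ a point $s_{0}\in\mathbb{T}\cap(t,t+\varepsilon)$, and then $t<s\leqslant\sigma(s)\leqslant s_{0}<t+\varepsilon$ for $t<s<s_{0}$ by (b), so $\sigma$ is continuous at $t$. If $t$ is left-dense, the same estimate shows $\lim_{s\to t^{-}}\sigma(s)=t$, which is finite. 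Hence $\sigma$ is rd-continuous, and the symmetric argument shows $\rho$ is ld-continuous.

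For item (3), assume $f$ is rd-continuous. At a right-dense point $t$ we have $\sigma(t)=t$, $\sigma$ is continuous at $t$ by item (2), and $f$ is continuous at the right-dense point $t=\sigma(t)$, so $f^{\sigma}=f\circ\sigma$ is continuous at $t$. At a left-dense point $t$, I would first note that no $s\in\mathbb{T}$ with $s<t$ can satisfy $\sigma(s)=t$: such an equality would mean $(s,t)\cap\mathbb{T}=\emptyset$, hence $\rho(t)=s<t$, contradicting left-density of $t$. Together with $s\leqslant\sigma(s)\leqslant t$, this shows that $\sigma(s)$ approaches $t$ strictly from the left as $s\to t^{-}$; since $f$ has a finite left-hand limit $\ell$ at the left-dense point $t$, a short limiting argument gives $f(\sigma(s))\to\ell$, so $f^{\sigma}$ has a finite left-hand limit at $t$. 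Thus $f^{\sigma}$ is rd-continuous, and dually $f^{\rho}$ is ld-continuous whenever $f$ is.

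Finally, items (4) and (5) are the degenerate cases. If $\mathbb{T}=\mathbb{R}$, every point is dense, so ``continuous at all right-dense points'' simply means ``continuous'' and the left-hand-limit clause is automatic; the same holds with ``left-dense'', so rd-continuity, ld-continuity, and continuity all coincide. If $\mathbb{T}=h\mathbb{Z}$, every point is both left- and right-scattered, so there are no right-dense and no left-dense points, and both defining conditions hold vacuously for every $f$. I do not expect a genuine obstacle anywhere; the only point needing care is item (3) at a left-dense point, where one must rule out $\sigma(s)=t$ so that the values $f(\sigma(s))$ actually detect the left-hand limit of $f$ rather than the value $f(t)$, which may differ from it.
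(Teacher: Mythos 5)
Your proof is correct. Note that the paper itself offers no proof of this theorem: it is quoted as a known result with the citation cf.~\cite{Bohner}, so there is nothing in the text to compare your argument against. Your self-contained verification is sound: the two elementary estimates $s\leqslant\sigma(s)$ and $\sigma(s)\leqslant t$ for $s<t$ in $\mathbb{T}$ do indeed carry items (2) and (3), and you correctly identified and resolved the one delicate point, namely item (3) at a left-dense point $t$, where one must rule out $\sigma(s)=t$ for $s<t$ (your argument via $\rho(t)=s<t$ contradicting left-density is exactly right) so that $f^{\sigma}(s)$ tracks the left-hand limit of $f$ at $t$ rather than the possibly different value $f(t)$. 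Items (1), (4) and (5) are handled correctly as the trivial or vacuous cases.
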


\emph{Delta derivatives of higher-order}\index{Delta derivatives of higher-order}
are defined in the standard way: for $n\in\mathbb{N}$, we define the
$n^{\text{th}}$-delta derivative of $f$ to be the function
$f^{\Delta^n}:\mathbb{T}^{k^n}\rightarrow\mathbb{R}$, defined by
$f^{\Delta^n}=\left(f^{\Delta^{n-1}}\right)^\Delta$ provided $f^{\Delta^{n-1}}$
is delta differentiable on $\mathbb{T}^{k^n}:=\left(\mathbb{T}^{k^{n-1}}\right)^k$.
Analogously, we can define the
\emph{nabla derivatives of higher-order}\index{Nabla derivatives of higher-order}.

The set of all delta differentiable functions with rd-continuous delta derivatives
is denoted by $C^1_{rd}$ or $C^1_{rd}\left(\mathbb{T},\mathbb{R}\right)$. In general,
for a fixed $n\in\mathbb{N}$, we say that $f\in C^n_{rd}$ if an only if $f^\Delta\in C^{n-1}_{rd}$,
where $C^0_{rd}=C_{rd}$. Similarly, for a fixed $n\in\mathbb{N}$, we say that $f\in C^n_{ld}$
if an only if $f^\nabla\in C^{n-1}_{ld}$, where $C^0_{ld}=C_{ld}$

A function $F:\mathbb{T}\rightarrow \mathbb{R}$ is said to be a
\emph{delta antiderivative}\index{Delta antiderivative} of
$f:\mathbb{T}\rightarrow \mathbb{R}$, provided
\begin{equation*}
F^{\Delta }\left( t\right) =f\left( t\right)
\end{equation*}
for all $t\in \mathbb{T}^{\kappa}$. For all $a,b\in\mathbb{T}$, $a<b$,
we define the \emph{delta integral}\index{Delta integral} of $f$
from $a$ to $b$ (or on $\left[a,b\right]_{\mathbb{T}}$) by
\begin{equation*}
\int_{a}^{b}f\left( t\right) \Delta t=F\left( b\right) -F\left( a\right).
\end{equation*}

\begin{theorem}[\protect cf. \cite{Bohner,Bohner:2}]
Every rd-continuous function $f:\mathbb{T}\rightarrow\mathbb{R}$
has a delta antiderivative. In particular,
if $t_{0}\in\mathbb{T}$, then $F$ defined by
$$
F\left(t\right):=\int_{t_{0}}^{t}f\left(\tau\right)\Delta\tau
\ \    \text{for}   \ \ t\in\mathbb{T}
$$
is a delta antiderivative of $f$.
\end{theorem}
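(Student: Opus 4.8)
The plan is to work first on a compact time-scale interval $[a,b]_{\mathbb{T}}$ and afterwards glue the resulting local antiderivatives into a global one. On $[a,b]_{\mathbb{T}}$ I would follow the classical three-stage route: (i) observe that an rd-continuous function is \emph{regulated}, meaning it has finite right-hand limits at right-dense points (automatic, since $f$ is continuous there) and finite left-hand limits at left-dense points (this is part of the definition of rd-continuity); (ii) approximate such an $f$ uniformly on $[a,b)_{\mathbb{T}}$ by elementary ``staircase'' functions, that is, functions that are constant on the pieces of a finite partition $a = s_0 < s_1 < \cdots < s_n = b$ with $s_i \in \mathbb{T}$; (iii) antidifferentiate the staircase functions explicitly and pass to a uniform limit.

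For (ii) the natural tool is the induction principle on time scales: fixing $\varepsilon > 0$, one shows that the set of $s \in [a,b]_{\mathbb{T}}$ such that $f$ admits an $\varepsilon$-close staircase approximation on $[a,s)_{\mathbb{T}}$ is all of $[a,b]_{\mathbb{T}}$, checking the right-scattered, right-dense and left-dense cases in turn; the right-dense and left-dense steps are exactly where regulatedness of $f$ is used. For (iii), a staircase function $g$ has an obvious continuous, piecewise-affine pre-antiderivative $G$ (on each piece add the constant value of $g$ times the increment of the independent variable); and if $g_k \to f$ uniformly with pre-antiderivatives $G_k$ normalized so that $G_k(a) = 0$, then $\{G_k\}$ is uniformly Cauchy, because the increment of $G_k - G_j$ over any subinterval is bounded by $\|g_k - g_j\|_{\infty}$ times its length (a telescoping estimate, or the mean value theorem stated earlier). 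Hence $G_k \to F$ uniformly and $F$ is a pre-antiderivative of $f$: it is delta differentiable with $F^{\Delta} = f$ everywhere on $\mathbb{T}^{\kappa}$ except possibly on a countable set of right-dense points.

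The step I expect to require the most care is upgrading this pre-antiderivative to a genuine antiderivative using rd-continuity. At a right-scattered $t \in \mathbb{T}^{\kappa}$ one has $F^{\Delta}(t) = (F^{\sigma}(t) - F(t))/\mu(t)$, which equals $f(t)$ directly from the construction. At a right-dense $t \in \mathbb{T}^{\kappa}$ one must show $\lim_{s \to t}(F(t) - F(s))/(t - s) = f(t)$; I would do this by sandwiching $F(t) - F(s)$ between $(f(t) \mp \varepsilon)(t-s)$ on a small neighborhood of $t$, using continuity of $f$ at $t$ together with the monotonicity consequence of the mean value theorem applied to $F(\cdot) - f(t)\,(\cdot - t)$. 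So the countable exceptional set disappears precisely because $f$ is rd-continuous, and $F$ is an antiderivative on $\mathbb{T}^{\kappa}$. Finally, for an arbitrary (possibly unbounded) $\mathbb{T}$: antiderivatives on overlapping compact subintervals differ by a constant (two functions with the same delta derivative differ by a constant, by the corollary to the mean value theorem, since $g^{\Delta} \geqslant 0$ and $g^{\Delta} \leqslant 0$ force $g$ constant), so the local antiderivatives can be shifted by constants to agree on overlaps and patched into a global antiderivative; normalizing it to vanish at $t_0$ yields exactly $F(t) = \int_{t_0}^{t} f(\tau)\,\Delta\tau$, proving the ``in particular'' claim.
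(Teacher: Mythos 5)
The thesis does not prove this theorem: it appears in the preliminary ``Synthesis'' part and is quoted verbatim from Bohner and Peterson, so there is no in-paper argument to compare against. Your outline is, in substance, the standard proof from that reference: rd-continuity implies $f$ is regulated; a regulated function admits a \emph{pre}-antiderivative (constructed either by the induction principle on time scales or, as you do, by uniform approximation with staircase functions and a uniformly Cauchy telescoping estimate); and rd-continuity is then used exactly where you say, to remove the countable exceptional set of right-dense points, since at a right-scattered $t$ the identity $F^{\Delta}(t)=\bigl(F^{\sigma}(t)-F(t)\bigr)/\mu(t)=f(t)$ is immediate from the construction. The one step to phrase carefully is your appeal to ``the mean value theorem applied to $F(\cdot)-f(t)(\cdot-t)$'': the mean value theorem as stated in this chapter assumes delta differentiability on all of $[a,b[_{\mathbb{T}}$, whereas at that stage $F$ is only pre-differentiable, so you either need the pre-differentiable version of the mean value theorem (which Bohner--Peterson prove for exactly this purpose) or you should rely solely on the direct sandwich $\bigl(f(t)-\varepsilon\bigr)(s-t)\leqslant F(s)-F(t)\leqslant\bigl(f(t)+\varepsilon\bigr)(s-t)$ inherited from the staircase approximants, which you also indicate and which suffices. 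With that caveat the argument is correct, including the final patching of local antiderivatives by constants on an unbounded $\mathbb{T}$ and the normalization $F(t_{0})=0$.
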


The delta integral satisfies the following property:
\begin{equation*}
\int_{t}^{\sigma \left( t\right) }f\left( \tau \right) \Delta \tau =\left(
\sigma \left( t\right) -t\right) f\left( t\right) .
\end{equation*}

A function $G:\mathbb{T} \rightarrow \mathbb{R}$
is said to be a \emph{nabla antiderivative}\index{Nabla antiderivative}
of $g:\mathbb{T} \rightarrow \mathbb{R}$ provided
\begin{equation*}
G^{\nabla }\left( t\right) =g\left( t\right)
\end{equation*}
for all $t\in \mathbb{T}_{{k}}$. For all $a,b\in\mathbb{T}$, $a<b$,
we define the \emph{nabla integral} \index{Nabla integral}of $g$ from
$a$ to $b$ (or on $\left[a,b\right]_{\mathbb{T}}$) by
\begin{equation*}
\int_{a}^{b}g\left( t\right) \nabla t=G\left( b\right) -G\left( a\right).
\end{equation*}

\begin{theorem}[\protect cf. \cite{Bohner,Bohner:2}]
Every ld-continuous function $f:\mathbb{T}\rightarrow\mathbb{R}$
has a nabla antiderivative. In particular,
if $t_{0}\in\mathbb{T}$, then $F$ defined by
$$
F\left(t\right):=\int_{t_{0}}^{t}f\left(\tau\right)\nabla\tau
\ \    \text{for}   \ \ t\in\mathbb{T}
$$
is a nabla antiderivative of $f$.
\end{theorem}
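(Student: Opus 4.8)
The plan is to deduce this from its already-proven delta counterpart (``every rd-continuous function has a delta antiderivative'', established earlier in this chapter) by means of the standard reflection between the nabla and delta calculi, rather than repeating the patching construction from scratch. Concretely, I would pass from $\mathbb{T}$ to the reflected time scale, transport $f$ and the integral, invoke the delta theorem there, and pull everything back.

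First I would introduce the reflected set $\mathbb{T}^{*}:=\{-t:t\in\mathbb{T}\}$. Since $\mathbb{T}$ is a nonempty closed subset of $\mathbb{R}$, so is $\mathbb{T}^{*}$, hence $\mathbb{T}^{*}$ is a time scale. A direct check then yields the ``dictionary'' relating the two structures: writing $\hat\sigma$ for the forward jump operator of $\mathbb{T}^{*}$, one has $\hat\sigma(-t)=-\rho(t)$ and $\hat\mu(-t)=\nu(t)$ for every $t\in\mathbb{T}$; consequently $t$ is left-dense (resp. left-scattered) in $\mathbb{T}$ exactly when $-t$ is right-dense (resp. right-scattered) in $\mathbb{T}^{*}$, and $(\mathbb{T}^{*})^{\kappa}=-\mathbb{T}_{\kappa}$.

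Next, to $f:\mathbb{T}\to\mathbb{R}$ I associate $\hat f:\mathbb{T}^{*}\to\mathbb{R}$ by $\hat f(s):=f(-s)$. Since composition with $t\mapsto -t$ preserves continuity and interchanges one-sided limits, $f$ is ld-continuous on $\mathbb{T}$ if and only if $\hat f$ is rd-continuous on $\mathbb{T}^{*}$. Working directly from the $\varepsilon$-definitions of the two derivatives, substituting $s=-u$ and using $\rho(t)=-\hat\sigma(-t)$, one verifies that a function $g:\mathbb{T}\to\mathbb{R}$ is nabla differentiable at $t\in\mathbb{T}_{\kappa}$ if and only if $s\mapsto g(-s)$ is delta differentiable at $-t\in(\mathbb{T}^{*})^{\kappa}$, and in that case $\bigl(g(-\cdot)\bigr)^{\hat\Delta}(-t)=-g^{\nabla}(t)$, where $\hat\Delta$ denotes the delta derivative with respect to $\mathbb{T}^{*}$.

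Now I would apply the delta antiderivative theorem to $\hat f$ on $\mathbb{T}^{*}$: fixing $t_{0}\in\mathbb{T}$ and setting $s_{0}:=-t_{0}$, the function $\hat F(s):=\int_{s_{0}}^{s}\hat f(\tau)\,\hat\Delta\tau$ is a delta antiderivative of $\hat f$ on $(\mathbb{T}^{*})^{\kappa}$. Define $F(t):=-\hat F(-t)$. Applying the dictionary to $h(t):=\hat F(-t)$ gives $h^{\nabla}(t)=-\hat F^{\hat\Delta}(-t)=-\hat f(-t)=-f(t)$, hence $F^{\nabla}(t)=f(t)$ for all $t\in\mathbb{T}_{\kappa}$, so $F$ is a nabla antiderivative of $f$; this proves existence. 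Since $F(t_{0})=-\hat F(s_{0})=0$, the definition of the nabla integral gives $\int_{t_{0}}^{t}f(\tau)\,\nabla\tau=F(t)-F(t_{0})=F(t)$, so the function $t\mapsto\int_{t_{0}}^{t}f(\tau)\,\nabla\tau$ is precisely the nabla antiderivative just constructed. The only delicate point here is the sign bookkeeping in the reflection dictionary — getting the relation $\bigl(g(-\cdot)\bigr)^{\hat\Delta}(-t)=-g^{\nabla}(t)$ and matching the normalization of the two integrals — since all the analytic content (existence at dense points, forced increments at scattered points) is carried by the delta theorem being invoked. Alternatively, one can give a direct proof mirroring the rd-continuous argument: build a candidate $F$ whose increments across left-scattered points are dictated by $F^{\nabla}=f$ and which is ``integrated'' across left-dense stretches, then check nabla differentiability at left-dense points; in that route the main obstacle is showing $F$ is well defined and that ld-continuity of $f$ is exactly what is needed to obtain $F^{\nabla}(t)=f(t)$ at left-dense $t$.
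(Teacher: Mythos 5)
Your proposal is correct. Note, however, that the thesis itself offers no proof of this statement: it appears in the preliminary ``Synthesis'' chapter and is simply cited from Bohner--Peterson, so there is no in-paper argument to compare against. That said, your reflection argument is exactly the standard way to transfer the delta antiderivative theorem to the nabla setting, and it is in fact the technique the thesis alludes to a few paragraphs later when it mentions Caputo's duality between the delta and nabla calculi. Your dictionary checks out: $\hat\sigma(-t)=-\rho(t)$ follows directly from $\inf\{-u: u<t\}=-\sup\{u: u<t\}$, the correspondence between ld-continuity of $f$ and rd-continuity of $\hat f$ is immediate from the exchange of one-sided limits under $t\mapsto -t$, and the sign computation $\bigl(g(-\cdot)\bigr)^{\hat\Delta}(-t)=-g^{\nabla}(t)$ is right, since $\hat\sigma(-t)-u=-(\rho(t)-s)$ under $s=-u$, so the two minus signs in $F(t)=-\hat F(-t)$ cancel to give $F^{\nabla}=f$ on $\mathbb{T}_{\kappa}=-(\mathbb{T}^{*})^{\kappa}$. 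The only points worth making explicit if you were to write this up fully are (i) that the delta theorem you invoke is itself only cited, not proved, in this chapter, so your argument is a reduction rather than a self-contained proof, and (ii) that the final identification $\int_{t_{0}}^{t}f(\tau)\,\nabla\tau=F(t)$ tacitly uses the well-definedness of the nabla integral, i.e.\ that two nabla antiderivatives differ by a constant; this is built into the paper's definition of the integral and is not a gap in your reasoning, but it deserves a sentence.
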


The nabla integral satisfies the following property:
\begin{equation*}
\int_{\rho \left( t\right) }^{t}g\left( \tau \right) \nabla \tau
=\left(t-\rho \left( t\right) \right) g\left( t\right) .
\end{equation*}

Next, we review the properties of the delta integral.

\begin{theorem}[\protect cf. \cite{Bohner,Bohner:2}]
\label{ts:props integral delta}
Let $f,g:\mathbb{T} \rightarrow \mathbb{R}$ be delta integrable on
$\left[ a,b\right]_{\mathbb{T}}$.
Let $c\in \left[ a,b\right]_{\mathbb{T}}$
and $\lambda \in \mathbb{R}$. Then
\begin{enumerate}
\item $\displaystyle \int_{a}^{a}f\left( t\right) \Delta t=0$;

\item $\displaystyle \int_{a}^{b}f\left( t\right) \Delta
t=\int_{a}^{c}f\left( t\right) \Delta t+\int_{c}^{b}f\left( t\right)
\Delta t $;

\item $\displaystyle \int_{a}^{b}f\left( t\right) \Delta t
=-\displaystyle \int_{b}^{a}f\left( t\right) \Delta t$;

\item $f+g$ is delta integrable on $\left[ a,b\right]_{\mathbb{T}} $ and
\begin{equation*}
\int_{a}^{b}\left( f+g\right) \left( t\right) \Delta t=\int_{a}^{b}f\left(
t\right) \Delta t+\int_{a}^{b}g\left( t\right) \Delta t\text{;}
\end{equation*}

\item $\lambda f$ is delta integrable on $\left[ a,b\right]_{\mathbb{T}} $ and
\begin{equation*}
\int_{a}^{b}\lambda f\left( t\right) \Delta t=\lambda \int_{a}^{b}f\left(
t\right) \Delta t\text{;}
\end{equation*}

\item $fg$ is delta integrable on $\left[ a,b\right]_{\mathbb{T}} $;

\item For $p>0$, $|f|^{p}$ is delta integrable on $\left[ a,b\right]_{\mathbb{T}}$;

\item  If $f$ and $g$ are delta differentiable, then
$$
\displaystyle \int_{a}^{b}f^{\sigma}\left( t\right) g^{\Delta}\left(t\right) \Delta t
=f\left(t\right)g\left(t\right)\bigg{|}^{b}_{a}
-\int_{a}^{b}f^{\Delta}\left( t\right) g\left(t\right) \Delta t;
$$

\item If $f$ and $g$ are delta differentiable, then
$$
\displaystyle \int_{a}^{b}f\left( t\right) g^{\Delta}\left(t\right) \Delta t
=f\left(t\right)g\left(t\right)\bigg{|}^{b}_{a}
-\int_{a}^{b}f^{\Delta}\left( t\right) g^{\sigma}\left(t\right) \Delta t;
$$

\item If $f\left( t\right) \geqslant 0$
on $\left[ a,b\right]_{\mathbb{T}}$, then
\begin{equation*}
\int_{a}^{b}f\left( t\right) \Delta t\geqslant 0;
\end{equation*}

\item If $\left \vert f\left( t\right) \right \vert \leqslant
g\left(t\right) $ on $\left[ a,b\right]_{\mathbb{T}} $, then
\begin{equation*}
\left \vert \int_{a}^{b}f\left( t\right) \Delta t\right \vert
\leqslant \int_{a}^{b}g\left( t\right) \Delta t.
\end{equation*}
\end{enumerate}
\end{theorem}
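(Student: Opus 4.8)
The plan is to reduce each item to the defining identity $\int_a^b f(t)\,\Delta t = F(b)-F(a)$, where $F$ denotes a delta antiderivative of $f$, together with the algebraic rules for the delta derivative established earlier in this chapter.

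Items 1--5 are purely formal. Fix delta antiderivatives $F$ of $f$ and $G$ of $g$. Item 1 is $F(a)-F(a)=0$, and item 3 is $F(b)-F(a)=-(F(a)-F(b))$. For item 2, if $c\in[a,b]_{\mathbb T}$ then
\[
\int_a^c f(t)\,\Delta t+\int_c^b f(t)\,\Delta t
=\bigl(F(c)-F(a)\bigr)+\bigl(F(b)-F(c)\bigr)=F(b)-F(a).
\]
For items 4 and 5, the sum rule and the scalar rule for the delta derivative show that $F+G$ is a delta antiderivative of $f+g$ and $\lambda F$ a delta antiderivative of $\lambda f$; evaluating at $b$ and $a$ and subtracting yields the two equalities.

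For the integration by parts formulas I would start from the product rule for the delta derivative. From $(fg)^{\Delta}=f^{\Delta}g+f^{\sigma}g^{\Delta}$ we see that $fg$ is a delta antiderivative of $f^{\Delta}g+f^{\sigma}g^{\Delta}$; this integrand is delta integrable (a sum of products of delta integrable functions, by item 6), so integrating over $[a,b]_{\mathbb T}$ and using linearity (items 4 and 5) gives
\[
f(t)g(t)\big|_a^b=\int_a^b f^{\Delta}(t)g(t)\,\Delta t+\int_a^b f^{\sigma}(t)g^{\Delta}(t)\,\Delta t,
\]
which is item 8 after transposing one term, and item 9 follows in exactly the same way from the companion product rule $(fg)^{\Delta}=f\,g^{\Delta}+f^{\Delta}g^{\sigma}$. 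For the order relations, suppose $f(t)\ge 0$ on $[a,b]_{\mathbb T}$; then a delta antiderivative $F$ is continuous on $[a,b]_{\mathbb T}$ and has $F^{\Delta}=f\ge 0$ on $[a,b[_{\mathbb T}$, so by the corollary of the mean value theorem for the delta derivative $F$ is nondecreasing, whence $\int_a^b f(t)\,\Delta t=F(b)-F(a)\ge 0$, which is item 10. Item 11 then follows by applying item 10 together with linearity to the two nonnegative functions $g-f$ and $g+f$, which gives both $\int_a^b f\le\int_a^b g$ and $-\int_a^b f\le\int_a^b g$, hence $\bigl|\int_a^b f(t)\,\Delta t\bigr|\le\int_a^b g(t)\,\Delta t$.

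The one genuinely non-formal point, and the step I expect to be the main obstacle, is the \emph{existence} assertions in items 6 and 7: that $fg$ and $|f|^{p}$ ($p>0$) are again delta integrable. This does not follow from the antiderivative description alone, so here I would invoke the characterization of delta integrability from the time-scale integration theory of \cite{Bohner,Bohner:2} --- a bounded function that is rd-continuous (or, more generally, whose set of discontinuities is suitably small) is delta integrable, and this class is closed under products and under $t\mapsto|f(t)|^{p}$. Granting this closure property, items 6 and 7 are immediate, and they in turn legitimise the use of item 6 made above in the proofs of items 8 and 9.
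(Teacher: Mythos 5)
Your proposal is correct, but note that the paper itself offers no proof of this theorem: it is stated in the preliminary ``Synthesis'' part and simply attributed to \cite{Bohner,Bohner:2}, so there is no in-text argument to compare against. Your reconstruction is the standard one and fits the paper's antiderivative definition of the delta integral: items 1--5, 8 and 9 reduce to evaluating antiderivatives and to the product rules for the delta derivative (with $f^{\sigma}$, $f^{\Delta}$, $g^{\Delta}$ integrable because differentiable functions are continuous, hence rd-continuous, and because $f$, $g$ serve as antiderivatives of their own delta derivatives), and items 10--11 follow from the monotonicity corollary of the mean value theorem exactly as you argue. You are also right that the only genuinely non-formal content is the closure statements in items 6--7, which cannot be extracted from the antiderivative definition alone and require the Riemann/Cauchy delta-integration theory of \cite{Bohner,Bohner:2}; deferring to that theory is consistent with how the thesis treats the whole statement.
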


The formulas 8. and 9. in Theorem~\ref{ts:props integral delta}
are called \emph{integration by parts}\index{Delta integration by parts} formulas.
Analogously, the nabla integral satisfies the corresponding properties.

\begin{remark}
\begin{enumerate}
\item If $\mathbb{T}=\mathbb{R}$, then
\begin{equation*}
\int_{a}^{b}f\left( t\right) \Delta t
= \int_{a}^{b}f\left( t\right) \nabla t
=\int_{a}^{b}f\left( t\right) dt
\end{equation*} where the last integral is the usual Riemman integral;

\item If $\mathbb{T}=h\mathbb{Z}$ for some $h>0$, and $a,b\in\mathbb{T}$, $a<b$, then
\begin{equation*}
\int_{a}^{b}f\left( t\right) \Delta t
=\sum_{k=\frac{a}{h}}^{\frac{b}{h}-1}hf\left( kh\right)
\end{equation*}
and
\begin{equation*}
\int_{a}^{b}f\left( t\right) \nabla t
=\sum_{k=\frac{a}{h}+1}^{\frac{b}{h}}hf\left( kh\right);
\end{equation*}

\item If $a<b$ and $\left[ a,b\right]_{\mathbb{T}}$
consists of only isolated points, then
\begin{equation*}
\int_{a}^{b}f\left( t\right) \Delta t=\sum_{t\in \left[ a,b\right[_{\mathbb{T}}}
\mu \left( t\right)  f\left( t\right)
\end{equation*}
and
\begin{equation*}
\int_{a}^{b}f\left( t\right) \nabla t
=\sum_{t\in \left] a,b\right]_{\mathbb{T}}}
\nu \left( t\right)  f\left( t\right) .
\end{equation*}
\end{enumerate}
\end{remark}

The calculus on time scales using the delta derivative and the delta integral
is usually known by delta-calculus; the calculus done with the nabla derivative
and the nabla integral is known by nabla-calculus. The first developments on the
time scale theory was done essentially using the delta-calculus. However,
for some applications, in particular to solve problems of the Calculus of Variations
and Control Theory in economics \cite{Attici}, is often more convenient
to work backwards in time, that is, using the nabla-calculus.

Recently, Caputo provided a duality technique \cite{Caputo} which allows to obtain
nabla results on time scales from the delta theory and vice versa
(see also \cite{Girejko,Malinowska45,Martins15}).

Another approach to the theory of time scale is the diamond-$\alpha$ calculus,
that uses the notion of the diamond-$\alpha$ derivative. To introduce this notion
(as defined in \cite{Sheng:2,Sheng:3,Sheng}) let $t,s\in \mathbb{T}$ and define
$\mathbb{T}_k^k:=\mathbb{T}_k\cap\mathbb{T}^k$, $\mu _{ts}:=\sigma \left( t\right)
-s$ and $\eta _{ts}:=\rho \left( t\right) -s$. We say that a function
$f:\mathbb{T}\rightarrow\mathbb{R}$ is \emph{diamond-$\alpha$
differentiable}\index{Diamond-$\alpha $ derivative}
on $t\in\mathbb{T}_{\kappa}^{\kappa}$ if there exists a number
$f^{\diamondsuit _{\alpha }}\left(t\right)$
such that, for all $\varepsilon >0$, there exists
a neighborhood $U$ of $t$ such that, for all $s\in U$,
\begin{equation*}
\left \vert \alpha \left[ f^{\sigma }\left( t\right) -f\left( s\right) \right]
\eta _{ts}+\left( 1-\alpha \right) \left[ f^{\rho }\left( t\right) -f\left(
s\right) \right] \mu _{ts}-f^{\diamondsuit_{\alpha }}\left(t \right)
\mu_{ts}\eta_{ts}\right \vert \leqslant \varepsilon \left \vert \mu _{ts}\eta _{ts}\right \vert .
\end{equation*}
A function $f$ is said to be diamond-$\alpha $ differentiable provided
$f^{\diamondsuit _{\alpha }}\left(t\right)$ exists for all
$t\in \mathbb{T}_{\kappa}^{\kappa}$.

\begin{theorem}[\cite{Sheng:2}]
Let $0\leqslant \alpha \leqslant 1$ and let $f$ be both nabla and delta
differentiable on $t\in \mathbb{T}_{\kappa}^{\kappa}$. Then $f$ is diamond-$\alpha$
differentiable at $t$ and
\begin{equation}
f^{\diamondsuit _{\alpha }}\left( t\right) =\alpha f^{\Delta }\left(
t\right) +\left( 1-\alpha \right) f^{\nabla }\left( t\right) \text{.}
\label{ts:2}
\end{equation}
\end{theorem}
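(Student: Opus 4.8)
The plan is to verify directly that the number $f^{\diamondsuit_\alpha}(t):=\alpha f^{\Delta}(t)+(1-\alpha)f^{\nabla}(t)$ satisfies the defining inequality for the diamond-$\alpha$ derivative. Fix $t\in\mathbb{T}_\kappa^\kappa$ and $\varepsilon>0$. Since $f$ is delta differentiable at $t$, there is a neighborhood $U_1$ of $t$ with $|f^{\sigma}(t)-f(s)-f^{\Delta}(t)\mu_{ts}|\le\varepsilon_1|\mu_{ts}|$ for all $s\in U_1$, and since $f$ is nabla differentiable at $t$, there is a neighborhood $U_2$ of $t$ with $|f^{\rho}(t)-f(s)-f^{\nabla}(t)\eta_{ts}|\le\varepsilon_1|\eta_{ts}|$ for all $s\in U_2$; here $\varepsilon_1$ will be chosen at the end. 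Take $U:=U_1\cap U_2$.

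The key computation is to rewrite the quantity inside the absolute value in the definition of diamond-$\alpha$ differentiability in terms of the two ``error'' expressions just bounded. Concretely, I would multiply the delta-difference estimate by $\alpha\eta_{ts}$ and the nabla-difference estimate by $(1-\alpha)\mu_{ts}$, then add them. This gives
\begin{equation*}
\alpha\bigl[f^{\sigma}(t)-f(s)\bigr]\eta_{ts}+(1-\alpha)\bigl[f^{\rho}(t)-f(s)\bigr]\mu_{ts}
-\bigl(\alpha f^{\Delta}(t)+(1-\alpha)f^{\nabla}(t)\bigr)\mu_{ts}\eta_{ts}
\end{equation*}
equal to $\alpha\eta_{ts}\bigl(f^{\sigma}(t)-f(s)-f^{\Delta}(t)\mu_{ts}\bigr)+(1-\alpha)\mu_{ts}\bigl(f^{\rho}(t)-f(s)-f^{\nabla}(t)\eta_{ts}\bigr)$, since the cross terms $\alpha f^{\Delta}(t)\mu_{ts}\eta_{ts}$ and $(1-\alpha)f^{\nabla}(t)\mu_{ts}\eta_{ts}$ match up exactly. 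Applying the triangle inequality and the two estimates, together with $0\le\alpha\le1$, the modulus is at most $\alpha\varepsilon_1|\eta_{ts}||\mu_{ts}|+(1-\alpha)\varepsilon_1|\mu_{ts}||\eta_{ts}|=\varepsilon_1|\mu_{ts}\eta_{ts}|$. Choosing $\varepsilon_1:=\varepsilon$ (no shrinking is even needed) yields exactly the required bound $\le\varepsilon|\mu_{ts}\eta_{ts}|$, so $f$ is diamond-$\alpha$ differentiable at $t$ with the asserted value, and since $t\in\mathbb{T}_\kappa^\kappa$ was arbitrary the formula \eqref{ts:2} holds throughout.

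There is no serious obstacle here; the only thing to be careful about is the bookkeeping in the algebraic identity — making sure the terms $f^{\Delta}(t)\mu_{ts}\eta_{ts}$ and $f^{\nabla}(t)\mu_{ts}\eta_{ts}$ reassemble into $f^{\diamondsuit_\alpha}(t)\mu_{ts}\eta_{ts}$ — and noting that multiplying an $\varepsilon_1|\mu_{ts}|$ bound by $|\eta_{ts}|$ (and symmetrically) is legitimate because these are genuine inequalities between nonnegative reals. One should also remark that $\mathbb{T}_\kappa^\kappa$ is precisely the set on which both $f^{\Delta}$ and $f^{\nabla}$ are defined, so the hypothesis that $f$ be both delta and nabla differentiable at $t$ is exactly what makes the right-hand side of \eqref{ts:2} meaningful.
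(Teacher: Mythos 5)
Your verification is correct: the weighted combination $\alpha\eta_{ts}\cdot(\text{delta error})+(1-\alpha)\mu_{ts}\cdot(\text{nabla error})$ reassembles exactly into the expression in the definition of $f^{\diamondsuit_\alpha}(t)$, and the triangle inequality with $\alpha+(1-\alpha)=1$ gives the bound $\varepsilon\vert\mu_{ts}\eta_{ts}\vert$ with no loss. The thesis itself states this result without proof (it is quoted from Sheng et al.), and your argument is precisely the standard one from that source, so there is nothing to add.
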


\begin{remark}
If $\alpha =1$, then the diamond-$\alpha $ derivative reduces to
the delta derivative; if $\alpha=0$, the diamond-$\alpha$
derivative coincides with the nabla derivative.
\end{remark}

Note that equality \eqref{ts:2} is given as the definition
of the diamond-$\alpha$ derivative in \cite{Sheng}.

\begin{theorem}[cf. \protect \cite{Sheng}]
Let $f,g:\mathbb{T}\rightarrow \mathbb{R}$
be diamond-$\alpha $ differentiable functions.
Let $t\in \mathbb{T}_{\kappa}^{\kappa}$ and
$\lambda \in \mathbb{R}$. Then

\begin{enumerate}
\item The function $f+g$ is diamond-$\alpha $ differentiable with
\begin{equation*}
\left( f+g\right) ^{\diamondsuit _{\alpha }}\left( t\right)
=f^{\diamondsuit_{\alpha }}\left( t\right)
+ g^{\diamondsuit _{\alpha }}\left( t\right) ;
\end{equation*}

\item The function $\lambda f$ is diamond-$\alpha$ differentiable with
\begin{equation*}
\left( \lambda f\right) ^{\diamondsuit _{\alpha }}\left( t\right)
=\lambda f^{\diamondsuit _{\alpha }}\left( t\right) ;
\end{equation*}

\item The function $fg$ is diamond-$\alpha$ differentiable with
\begin{equation*}
\left( fg\right) ^{\diamondsuit _{\alpha }}\left( t\right)
=f^{\diamondsuit_{\alpha }}\left( t\right) g\left( t\right) +\alpha f^{\sigma }\left(
t\right) g^{\Delta }\left( t\right) +\left( 1-\alpha \right) f^{\rho }\left(
t\right) g^{\nabla }\left( t\right) .
\end{equation*}
\end{enumerate}
\end{theorem}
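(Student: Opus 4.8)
The plan is to reduce every claim to the corresponding rules for the delta and nabla derivatives, using the characterization \eqref{ts:2}. As the remark following \eqref{ts:2} indicates, in \cite{Sheng} the identity \eqref{ts:2} is taken as the \emph{definition} of the diamond-$\alpha$ derivative, so throughout this proof ``$f$ is diamond-$\alpha$ differentiable at $t$'' is read as ``$f$ is both delta and nabla differentiable at $t$,'' with $f^{\diamondsuit_{\alpha}}(t)=\alpha f^{\Delta}(t)+(1-\alpha)f^{\nabla}(t)$. The first step is therefore a closure remark: since $f$ and $g$ are delta differentiable and nabla differentiable at $t$, the earlier theorems on the delta derivative and on the nabla derivative show that $f+g$, $\lambda f$ and $fg$ are again delta and nabla differentiable at $t$, hence diamond-$\alpha$ differentiable at $t$ by the theorem preceding that remark. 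After that, only the formulas remain.

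For items 1 and 2 I would simply expand each side through \eqref{ts:2}: writing $(f+g)^{\diamondsuit_{\alpha}}(t)=\alpha(f+g)^{\Delta}(t)+(1-\alpha)(f+g)^{\nabla}(t)$ and applying additivity of $\Delta$ and $\nabla$ gives $\alpha f^{\Delta}(t)+\alpha g^{\Delta}(t)+(1-\alpha)f^{\nabla}(t)+(1-\alpha)g^{\nabla}(t)$, which regroups as $f^{\diamondsuit_{\alpha}}(t)+g^{\diamondsuit_{\alpha}}(t)$; the homogeneity $(\lambda f)^{\diamondsuit_{\alpha}}(t)=\lambda f^{\diamondsuit_{\alpha}}(t)$ follows the same way from $(\lambda f)^{\Delta}=\lambda f^{\Delta}$ and $(\lambda f)^{\nabla}=\lambda f^{\nabla}$.

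For item 3 the key step is to substitute the delta Leibniz rule $(fg)^{\Delta}(t)=f^{\Delta}(t)g(t)+f^{\sigma}(t)g^{\Delta}(t)$ and the nabla Leibniz rule $(fg)^{\nabla}(t)=f^{\nabla}(t)g(t)+f^{\rho}(t)g^{\nabla}(t)$ into \eqref{ts:2}. This produces
\[
\alpha f^{\Delta}(t)g(t)+\alpha f^{\sigma}(t)g^{\Delta}(t)+(1-\alpha)f^{\nabla}(t)g(t)+(1-\alpha)f^{\rho}(t)g^{\nabla}(t),
\]
and collecting the two terms carrying the factor $g(t)$ gives $\bigl(\alpha f^{\Delta}(t)+(1-\alpha)f^{\nabla}(t)\bigr)g(t)=f^{\diamondsuit_{\alpha}}(t)g(t)$, leaving the cross terms $\alpha f^{\sigma}(t)g^{\Delta}(t)+(1-\alpha)f^{\rho}(t)g^{\nabla}(t)$ untouched, which is exactly the asserted formula.

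There is no serious obstacle here; the proof is essentially bookkeeping once \eqref{ts:2} is in hand. The only points that demand a little care are the preliminary closure step --- one must know $f+g$, $\lambda f$, $fg$ are diamond-$\alpha$ differentiable before \eqref{ts:2} may be invoked for them --- and the observation that, unlike the delta and nabla product rules, the diamond-$\alpha$ product rule cannot be made symmetric: the cross terms do not merge into a single $f^{\diamondsuit_{\alpha}}g^{\diamondsuit_{\alpha}}$-type expression because the forward shift $f^{\sigma}$ is paired with $g^{\Delta}$ while the backward shift $f^{\rho}$ is paired with $g^{\nabla}$. An alternative, if one prefers to avoid \eqref{ts:2}, is to argue directly from the $\varepsilon$-inequality defining the diamond-$\alpha$ derivative, adding and subtracting $\alpha f^{\sigma}(t)g(s)$ and $(1-\alpha)f^{\rho}(t)g(s)$ and estimating each block by the delta and nabla definitions; this works but is longer, and I would not take that route.
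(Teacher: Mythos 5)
The paper states this theorem without proof (it is a cited preliminary from \cite{Sheng}), and your derivation via \eqref{ts:2} --- linearity of the delta and nabla derivatives for items 1 and 2, and substitution of the delta and nabla Leibniz rules followed by regrouping of the two $g(t)$ terms for item 3 --- is the standard argument and is correct. Your explicit handling of the hypothesis, reading diamond-$\alpha$ differentiability as joint delta and nabla differentiability as licensed by the remark that \eqref{ts:2} is taken as the definition in \cite{Sheng}, closes the only point where the argument could otherwise be questioned.
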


Let $a,b\in \mathbb{T}$, $a<b$, $h:\mathbb{T} \rightarrow \mathbb{R}$
and $\alpha \in \left[ 0,1 \right]$. The
\emph{diamond-$\alpha$ integral}\index{Diamond-$\alpha$ integral}
of $h$ from $a$ to $b$ (or on $[a,b]_\mathbb{T}$) is defined by
\begin{equation*}
\int_{a}^{b}h\left( t\right) \diamondsuit _{\alpha }t=\alpha
\int_{a}^{b}h\left( t\right) \Delta t+\left( 1-\alpha \right)
\int_{a}^{b}h\left( t\right) \nabla t\text{, \ }
\end{equation*}
provided that $h$ is delta integrable
and nabla integrable on $\left[a,b\right]_\mathbb{T}$.

\begin{theorem}[\protect cf. \cite{Malinowska:2}]\label{ts:props integral diamond}
Let $f,g:\mathbb{T} \rightarrow \mathbb{R}$ be diamond-$\alpha$ integrable on
$\left[ a,b\right]_{\mathbb{T}}$. Let $c\in \left[ a,b\right]_{\mathbb{T}}$
and $\lambda \in \mathbb{R}$. Then
\begin{enumerate}
\item $\displaystyle \int_{a}^{a}f\left( t\right) \diamondsuit _{\alpha } t=0$;

\item $\displaystyle \int_{a}^{b}f\left( t\right) \diamondsuit _{\alpha } t
=\int_{a}^{c}f\left( t\right) \diamondsuit_{\alpha } t
+\int_{c}^{b}f\left( t\right) \diamondsuit _{\alpha } t$;

\item $\displaystyle \int_{a}^{b}f\left( t\right)\diamondsuit _{\alpha } t
=-\displaystyle \int_{b}^{a}f\left( t\right) \diamondsuit _{\alpha } t$;

\item $f+g$ is diamond-$\alpha$ integrable on $\left[ a,b\right]_{\mathbb{T}} $ and
\begin{equation*}
\int_{a}^{b}\left( f+g\right) \left( t\right) \diamondsuit _{\alpha } t=\int_{a}^{b}f\left(
t\right) \diamondsuit _{\alpha } t+\int_{a}^{b}g\left( t\right) \diamondsuit _{\alpha } t\text{;}
\end{equation*}

\item $\lambda f$ is diamond-$\alpha$ integrable on $\left[ a,b\right]_{\mathbb{T}}$ and
\begin{equation*}
\int_{a}^{b}\lambda f\left( t\right) \diamondsuit _{\alpha } t
=\lambda \int_{a}^{b}f\left(t\right) \diamondsuit _{\alpha } t\text{;}
\end{equation*}

\item $fg$ is diamond-$\alpha$ integrable on $\left[ a,b\right]_{\mathbb{T}} $;

\item For $p>0$, $|f|^{p}$ is diamond-$\alpha$ integrable on $\left[ a,b\right]_{\mathbb{T}} $;

\item If $f\left( t\right) \geqslant 0$ on $\left[ a,b\right]_{\mathbb{T}}$, then
\begin{equation*}
\int_{a}^{b}f\left( t\right) \diamondsuit _{\alpha } t\geqslant 0;
\end{equation*}

\item If $\left \vert f\left( t\right) \right \vert \leqslant g\left(
t\right) $ on $\left[ a,b\right]_{\mathbb{T}} $, then
\begin{equation*}
\left \vert \int_{a}^{b}f\left( t\right) \diamondsuit _{\alpha } t\right \vert
\leqslant \int_{a}^{b}g\left( t\right) \diamondsuit_{\alpha } t.
\end{equation*}
\end{enumerate}
\end{theorem}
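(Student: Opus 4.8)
The plan is to reduce every one of the nine assertions to the corresponding statement for the delta integral (Theorem~\ref{ts:props integral delta}) together with its nabla counterpart (mentioned immediately after that theorem), using the defining identity
\[
\int_a^b h(t)\diamondsuit_\alpha t
= \alpha\int_a^b h(t)\Delta t + (1-\alpha)\int_a^b h(t)\nabla t .
\]
First I would record what ``diamond-$\alpha$ integrable on $[a,b]_{\mathbb T}$'' means in terms of this identity: the right-hand side requires the two one-sided integrals to exist --- both of them when $0<\alpha<1$, only the delta one when $\alpha=1$, only the nabla one when $\alpha=0$. Hence the hypothesis that $f$ and $g$ are diamond-$\alpha$ integrable already supplies exactly the delta and/or nabla integrability that the delta and nabla theorems take as input, and throughout the proof one only ever invokes an integral that is actually assumed to exist in the relevant range of $\alpha$.

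With this in hand, items 1--8 are routine transport through the convex combination. For item 1 the right-hand side is $\alpha\cdot0+(1-\alpha)\cdot0=0$; for item 2 one writes $\int_a^b f\,\Delta t=\int_a^c f\,\Delta t+\int_c^b f\,\Delta t$ and the analogous nabla splitting, multiplies by $\alpha$ and $1-\alpha$ respectively, and regroups the four resulting terms into $\int_a^c f\diamondsuit_\alpha t+\int_c^b f\diamondsuit_\alpha t$; item 3 is the same manoeuvre with the two-term identity $\int_a^b=-\int_b^a$. Items 4 and 5 follow from linearity of the delta and nabla integrals (items 4 and 5 of Theorem~\ref{ts:props integral delta} and their nabla analogues) by rearranging the weighted sum, and item 8 follows from non-negativity of the delta and nabla integrals (item 10 of Theorem~\ref{ts:props integral delta} and its nabla analogue) together with $\alpha\geqslant0$ and $1-\alpha\geqslant0$. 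Items 6 and 7 are even more immediate: by items 6 and 7 of Theorem~\ref{ts:props integral delta} and their nabla versions, $fg$ and $|f|^p$ ($p>0$) are both delta and nabla integrable on $[a,b]_{\mathbb T}$, hence diamond-$\alpha$ integrable by definition.

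The only assertion needing a genuine, if short, estimate is item 9. Here I would combine the triangle inequality on $\mathbb{R}$ with the non-negativity of the weights and item 11 of Theorem~\ref{ts:props integral delta} (and its nabla analogue) applied to $|f|\leqslant g$:
\begin{align*}
\left|\int_a^b f(t)\diamondsuit_\alpha t\right|
&\leqslant \alpha\left|\int_a^b f(t)\Delta t\right| + (1-\alpha)\left|\int_a^b f(t)\nabla t\right| \\
&\leqslant \alpha\int_a^b g(t)\Delta t + (1-\alpha)\int_a^b g(t)\nabla t
= \int_a^b g(t)\diamondsuit_\alpha t .
\end{align*}
No single step here is hard; the main point to watch is bookkeeping: one must be sure the right-hand side is meaningful, which it is since $g$ is diamond-$\alpha$ integrable by hypothesis (and in any case $0\leqslant|f|\leqslant g$ forces $g$ to inherit the needed delta and nabla integrability from $f$), and one must keep the cases $\alpha=0$, $\alpha=1$, $0<\alpha<1$ in view so that only integrals known to exist are used. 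The essence of the proof is thus entirely the systematic pushing of each delta/nabla property through the fixed convex combination defining $\diamondsuit_\alpha$.
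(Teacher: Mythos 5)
Your proposal is correct, and it follows exactly the route the paper intends: the theorem is stated as a cited preliminary (the paper gives no proof of its own), and for the analogous result on the diamond integral the author simply writes that everything ``follows straightforwardly from the properties of the nabla and delta integrals,'' which is precisely your systematic transport of each property through the convex combination $\alpha\int\Delta t+(1-\alpha)\int\nabla t$. The only cosmetic remark is that the paper's definition of diamond-$\alpha$ integrability already demands both delta and nabla integrability for every $\alpha\in[0,1]$, so your case distinction at $\alpha=0$ and $\alpha=1$, while harmless, is not needed.
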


\begin{remark}
In \cite{Ammi}, Ammi et al. show that, in general, we do not have
$$
\left(s\rightarrow \int_{a}^{s}f\left( \tau\right)
\diamondsuit_{\alpha } \tau \right)^{\diamondsuit _{\alpha }}\left(t\right)
= f\left( t\right), \ \ \ t\in\mathbb{T}.
$$
Hence, we do not have an integral by parts formula for the diamond-$\alpha$ integral
and this is a great limitation for the development of the Calculus of Variations
for problems involving diamond-$\alpha$ integrals.
\end{remark}

\begin{remark}
\begin{enumerate}
\item If $\mathbb{T}=\mathbb{R}$, then a bounded function $f$ on $[a,b]$
is diamond-$\alpha$ integrable on $\left[a,b\right]$
if, and only if, is Riemann integrable on $[a,b]$, and in this case
\begin{equation*}
\int_{a}^{b}f\left( t\right) \diamondsuit _{\alpha } t=\int_{a}^{b}f\left( t\right) dt;
\end{equation*}

\item If $\mathbb{T}=\mathbb{Z}$ and $a,b \in\mathbb{T}$, $a<b$, then $f:\mathbb{T}\rightarrow\mathbb{R}$
is diamond-$\alpha$ integrable on $[a,b]_\mathbb{T}$ and
\begin{equation*}
\int_{a}^{b}f\left( t\right) \diamondsuit_{\alpha } t
=\alpha f\left(a\right) + \left(1-\alpha\right)f\left(b\right)
+\sum_{t=a+1}^{b-1}f\left( t\right).
\end{equation*}
\end{enumerate}
\end{remark}


\clearpage{\thispagestyle{empty}\cleardoublepage}


\chapter{Quantum Calculus}
\label{Quantum Calculus}

Quantum calculus is usually known as ``calculus without limits''.
There are several types of quantum calculus. In this thesis
we are concerned with $h$-calculus,
$q$--calculus and Hahn's calculus. In each type of quantum calculus,
we can make a study towards the future, known as the forward quantum
calculus, or towards the past, the backward quantum calculus.
Moreover, there are different approaches for each type of quantum calculus.
Some authors choose the set of study to be a subset of the real numbers containing isolated points,
 others use subintervals of $\mathbb{R}$.

In this chapter we review some definitions
and basic results about the quantum calculus.


\section{The $h$-calculus}

The $h$-calculus is also known as the calculus of finite
differences and Boole at \cite{Boole:old} described it as:

\begin{quote}
``The calculus of finite differences may be strictly defined as the science which is occupied about the
ratios of the simultaneous increments of quantities mutually dependent. The differential calculus is
occupied about the limits of which such ratios approach as the increments are indefinitely diminished.''
\end{quote}

Many authors contributed to the calculus of finite differences like Boole \cite{Boole:old,Boole},
Milne-Thomson \cite{Milne}, N\"{o}rlund \cite{Norlund}, just to name a few.
In this section we present the $h$-calculus as Kac and Cheung do in their book \cite{Kac}.

We consider that the set of study is
\begin{equation*}
h\mathbb{Z}:=\left\{ hz:z\in
\mathbb{Z}
\right\}
\end{equation*}
for some $h>0$.

\begin{definition}[cf. \cite{Kac}]
Let $f:h
\mathbb{Z}
\rightarrow
\mathbb{R}
$ be a function and let $t\in h
\mathbb{Z}
$. The \emph{$h$-derivative}\index{$h$-derivative} of $f$ or the
\emph{forward difference operator}\index{Forward difference operator}
of $f$ at $t$ is given by
\begin{equation*}
\Delta _{h}\left[ f\right] \left( t\right)
:=\frac{f\left( t+h\right) -f\left(t\right)}{h}.
\end{equation*}
\end{definition}

Note that, if a function $f$ is differentiable (in the classical sense) at $t$, then
\begin{equation*}
\lim_{h\rightarrow 0}\Delta _{h}\left[ f\right] \left( t\right)
=f^{\prime }\left( t\right) ,
\end{equation*}
where $f^{\prime }$ is the usual derivative.

\begin{example}
Let $f\left( t\right) =t^{n}$. Then
\begin{equation*}
\Delta _{h}\left[ f\right] \left( t\right) =nt^{n-1}+\frac{n\left( n-1\right) }{2}
t^{n-2}h+\ldots +h^{n-1}.
\end{equation*}
\end{example}

The $h$-derivative has the following properties.

\begin{theorem}[cf. \cite{Kac}]
Let $f,g:h\mathbb{Z}\rightarrow\mathbb{R}$ be functions, $t\in h\mathbb{Z}$ and $\alpha \in
\mathbb{R}.$ Then

\begin{enumerate}
\item $\displaystyle \Delta _{h}\left[ f+g\right] \left( t\right) =\Delta _{h}\left[ f
\right] \left( t\right) +\Delta _{h}\left[ g\right] \left( t\right) $;

\item $\displaystyle \Delta _{h}\left[ \alpha f\right] \left( t\right) =\alpha
\Delta _{h}\left[ f\right] \left( t\right) $;

\item $\displaystyle \Delta _{h}\left[ fg\right] \left( t\right) =f\left( t\right)
\Delta _{h}\left[ g\right] \left( t\right) +\Delta _{h}\left[ f\right] \left( t\right)
g\left( t+h\right) $;

\item $\displaystyle \Delta _{h}\left[ \frac{f}{g}\right] \left( t\right)
=\frac{\Delta _{h}\left[ f\right] \left( t\right) g\left( t\right) -f\left( t\right)
\Delta_{h}\left[ g\right] \left( t\right)}{g\left( t\right) g\left( t+h\right) }$.
\end{enumerate}
\end{theorem}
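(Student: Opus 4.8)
The plan is to prove all four identities by unwinding the definition $\Delta_{h}[f](t)=\bigl(f(t+h)-f(t)\bigr)/h$ directly; no limiting argument is needed, since on $h\mathbb{Z}$ the difference quotient is just a finite algebraic expression and $t+h\in h\mathbb{Z}$ whenever $t\in h\mathbb{Z}$.

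First I would dispatch items 1 and 2 together. From $(f+g)(t+h)-(f+g)(t)=\bigl(f(t+h)-f(t)\bigr)+\bigl(g(t+h)-g(t)\bigr)$, dividing by $h$ gives additivity, and from $(\alpha f)(t+h)-(\alpha f)(t)=\alpha\bigl(f(t+h)-f(t)\bigr)$, dividing by $h$ gives homogeneity. For item 3 the only thing to watch is splitting the numerator so as to match the asymmetric right-hand side, which involves $g(t+h)$ rather than $g(t)$. I would add and subtract $f(t)g(t+h)$:
\[
f(t+h)g(t+h)-f(t)g(t)=f(t)\bigl(g(t+h)-g(t)\bigr)+\bigl(f(t+h)-f(t)\bigr)g(t+h),
\]
and then divide by $h$. (Adding and subtracting $f(t+h)g(t)$ instead would give the mirror-image product rule, equally valid.)

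For item 4 there are two routes. One is to apply the already-proven product rule to the factorization $f=(f/g)\cdot g$ and solve the resulting identity for $\Delta_{h}[f/g](t)$. The other is a direct computation: place the two fractions over the common denominator $g(t)g(t+h)$ and add and subtract $f(t)g(t)$ in the numerator,
\[
\frac{f(t+h)}{g(t+h)}-\frac{f(t)}{g(t)}=\frac{\bigl(f(t+h)-f(t)\bigr)g(t)-f(t)\bigl(g(t+h)-g(t)\bigr)}{g(t)g(t+h)},
\]
after which dividing by $h$ yields the stated formula. Both routes tacitly require $g(t)\neq 0$ and $g(t+h)\neq 0$, which is precisely what makes the quotient $f/g$ meaningful at $t$ and $t+h$.

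There is essentially no obstacle here: each step is a one-line manipulation. The only point deserving a moment's care is the bookkeeping in items 3 and 4 — choosing which mixed term ($f(t)g(t+h)$ in the product case, $f(t)g(t)$ in the quotient case) to add and subtract so that the final expression reproduces the particular asymmetric form in the statement rather than its mirror image.
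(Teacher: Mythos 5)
Your proposal is correct and complete: items 1 and 2 follow immediately from the definition, the add-and-subtract of $f(t)g(t+h)$ gives exactly the asymmetric product rule in item 3, and your common-denominator computation (with the tacit hypothesis $g(t)g(t+h)\neq 0$) gives item 4. The paper itself states this theorem without proof, citing Kac and Cheung, and your direct verification is precisely the standard argument that reference supplies, so there is nothing to compare beyond noting that you have filled in the omitted routine details correctly.
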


\begin{definition}[cf. \cite{Kac}]
A function $F:h
\mathbb{Z}
\rightarrow
\mathbb{R}
$ is said to be an \emph{$h$-antiderivative}\index{$h$-antiderivative} of $f:h
\mathbb{Z}
\rightarrow
\mathbb{R}
$ provided
\begin{equation*}
\Delta _{h}\left[ F\right] \left( t\right) =f\left( t\right) ,
\end{equation*}
for all $t\in h \mathbb{Z}$.
\end{definition}

\begin{definition}[cf. \cite{Kac}]
\label{h-integral}
Let $f:h
\mathbb{Z}
\rightarrow
\mathbb{R}
$ be a function. If $a,b\in h
\mathbb{Z}
$, we define the \emph{$h$-integral}\index{$h$-integral} of $f$ from $a$ to $b$ by
\begin{equation*}
\int_{a}^{b}f\left( t\right) \Delta _{h}t=\left\{
\begin{array}{lcc}
h\bigg{[}f\left( a\right) +f\left( a+h\right)
+\ldots +f\left( b-h\right)\bigg{]} & \text{ \ if \ } & a<b \\
&  &  \\
0 & \text{ \ if \ } & a=b \\
&  &  \\
-h\bigg{[}f\left( b\right) +f\left( b+h\right) +\ldots +f\left( a-h\right)
\bigg{]} & \text{ \ if \ } & a>b.
\end{array}
\right.
\end{equation*}
\end{definition}

\begin{theorem}[Fundamental theorem of the $h$-integral calculus (cf. \cite{Kac})]
Let $F:h
\mathbb{Z}
\rightarrow
\mathbb{R}
$ be an $h$-antiderivative of $f:h
\mathbb{Z}
\rightarrow
\mathbb{R}
$. If $a,b\in h
\mathbb{Z}
$, then
\begin{equation*}
\int_{a}^{b}f\left( t\right) \Delta _{h}t=F\left( b\right) -F\left( a\right) .
\end{equation*}
\end{theorem}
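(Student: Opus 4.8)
The plan is to reduce the statement to a single telescoping identity. Being an $h$-antiderivative of $f$ means exactly that $\Delta_h[F](t) = f(t)$ for all $t \in h\mathbb{Z}$, which after clearing the denominator reads
\begin{equation*}
F\left( t+h\right) - F\left( t\right) = h\,f\left( t\right), \qquad t \in h\mathbb{Z}.
\end{equation*}
So the first step is just to record this reformulation; everything else is bookkeeping over the three cases of Definition~\ref{h-integral}.

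In the case $a < b$, since $a,b \in h\mathbb{Z}$ we may write $b = a + Nh$ for some $N \in \mathbb{N}$, and then the points occurring in Definition~\ref{h-integral} are precisely $a + kh$ for $k = 0,1,\ldots,N-1$ (the last one being $a+(N-1)h = b-h$, as required). Substituting the displayed relation gives
\begin{equation*}
\int_{a}^{b}f\left( t\right) \Delta _{h}t = \sum_{k=0}^{N-1} h\,f\left( a+kh\right) = \sum_{k=0}^{N-1}\bigl[F\left( a+(k+1)h\right) - F\left( a+kh\right)\bigr],
\end{equation*}
and the right-hand side telescopes to $F\left( a+Nh\right) - F\left( a\right) = F\left( b\right) - F\left( a\right)$. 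The case $a = b$ is immediate, since both sides equal $0$ by definition. For $a > b$, apply the case already proved to the pair $b < a$ and use the sign convention built into the definition: $\int_{a}^{b}f\left( t\right) \Delta _{h}t = -\int_{b}^{a}f\left( t\right) \Delta _{h}t = -\bigl(F\left( a\right) - F\left( b\right)\bigr) = F\left( b\right) - F\left( a\right)$.

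There is essentially no genuine obstacle here; the proof is a one-line telescoping sum dressed up in case analysis. The only points that need a little care are the index bookkeeping (checking that the terms in the finite sum of Definition~\ref{h-integral} really run from $f(a)$ to $f(b-h)$) and confirming that the sign in the $a>b$ branch of the definition is exactly the one that makes the reduction to the $a<b$ case go through. Both are routine, so the "hard part" is purely organizational: stating the three cases cleanly and invoking the antiderivative identity once.
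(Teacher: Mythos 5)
Your proof is correct: the reformulation $F(t+h)-F(t)=h\,f(t)$, the telescoping sum over $k=0,\dots,N-1$ with $b=a+Nh$, and the reduction of the $a>b$ case to the $a<b$ case via the sign convention in Definition~\ref{h-integral} are all exactly right. The paper states this theorem without proof, citing Kac and Cheung, and your telescoping argument is the standard one that reference uses, so there is nothing to compare beyond noting that your case analysis and index bookkeeping check out.
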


The $h$-integral has the following properties.

\begin{theorem}[cf. \cite{Kac}]
Let $f,g:h
\mathbb{Z}
\mathbb{\rightarrow \mathbb{R}}$ be\ functions.
Let $a,b,c\in h \mathbb{Z}$ and $\alpha \in \mathbb{R}$. Then
\begin{enumerate}
\item $\displaystyle\int_{a}^{b}f\left( t\right) \Delta _{h}t
=\int_{a}^{c}f\left(t\right) \Delta_{h}t+\int_{c}^{b}f\left( t\right) \Delta _{h}t$;

\item $\displaystyle\int_{a}^{b}f\left( t\right) \Delta _{h}t
=-\displaystyle \int_{b}^{a}f\left( t\right) \Delta _{h}t$;

\item $\displaystyle\int_{a}^{b}\left( f+g\right) \left( t\right)
\Delta _{h}t=\int_{a}^{b}f\left( t\right) \Delta _{h}t+\int_{a}^{b}g\left( t\right)
\Delta _{h}t$;

\item $\displaystyle\int_{a}^{b}\alpha f\left( t\right) \Delta _{h}t=\alpha
\int_{a}^{b}f\left( t\right) \Delta _{h}t$;

\item $
\displaystyle\int_{a}^{b}f\left( t\right) \Delta _{h}\left[ g\right] \left( t\right)
d_{h}t=f\left( t\right) g\left( t\right) \bigg{|}_{a}^{b}-\int_{a}^{b}D_h\left[
f\right] \left( t\right) g\left( t+h\right) \Delta _{h}t.
$
\end{enumerate}
\end{theorem}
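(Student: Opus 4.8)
The plan is to derive properties 1--4 directly from Definition~\ref{h-integral} by elementary manipulation of finite sums, and to obtain property 5 (summation by parts) from the product rule for the $h$-derivative combined with the Fundamental Theorem of the $h$-integral calculus.

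First I would record that $\int_a^a f(t)\,\Delta_h t = 0$ is immediate, and prove property 2 directly: for $a<b$ the definition gives $\int_a^b f(t)\,\Delta_h t = h[f(a)+f(a+h)+\cdots+f(b-h)]$, whereas $\int_b^a f(t)\,\Delta_h t$ is the negative of the same quantity; the cases $a=b$ and $a>b$ are symmetric. Next I would prove property 1 first in the ordered case $a\leqslant c\leqslant b$, where it amounts to regrouping a finite sum, $h[f(a)+\cdots+f(b-h)] = h[f(a)+\cdots+f(c-h)] + h[f(c)+\cdots+f(b-h)]$, and then deduce the general statement (arbitrary relative positions of $a$, $b$, $c$ in $h\mathbb Z$) by combining the ordered case with property 2. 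Properties 3 and 4 I would prove termwise: for $a<b$ each summand of $\int_a^b(f+g)(t)\,\Delta_h t$ is the sum of the corresponding summands of $\int_a^b f$ and $\int_a^b g$, and each summand of $\int_a^b \alpha f$ is $\alpha$ times the corresponding summand of $\int_a^b f$; the cases $a\geqslant b$ then follow via property 2.

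For property 5 I would observe that $fg$ is an $h$-antiderivative of $\Delta_h[fg]$, so the Fundamental Theorem of the $h$-integral calculus gives $\int_a^b \Delta_h[fg](t)\,\Delta_h t = f(t)g(t)\big|_a^b$. Expanding the integrand by the product rule $\Delta_h[fg](t) = f(t)\Delta_h[g](t) + \Delta_h[f](t)g(t+h)$ and splitting the integral using the additivity from property 3 yields $f(t)g(t)\big|_a^b = \int_a^b f(t)\Delta_h[g](t)\,\Delta_h t + \int_a^b \Delta_h[f](t)g(t+h)\,\Delta_h t$, which is property 5 after rearrangement.

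None of these steps is hard; the only point requiring care is the bookkeeping in the general case of property 1, where one must enumerate the possible orderings of $a$, $b$, $c$ on the grid $h\mathbb Z$ and reduce each to the ordered case rather than tacitly assuming $a<c<b$. One should also make sure that the Fundamental Theorem invoked in property 5 is the one proved earlier in this section, so that the reasoning is not circular.
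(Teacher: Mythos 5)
Your proof is correct. The paper itself gives no proof of this theorem --- it is stated as a background result cited from \cite{Kac} --- and your argument (properties 1--4 by direct manipulation of the defining finite sums with a case analysis on the ordering of $a$, $b$, $c$ in $h\mathbb{Z}$, and property 5 by integrating the product rule $\Delta_h[fg](t)=f(t)\Delta_h[g](t)+\Delta_h[f](t)g(t+h)$ via the fundamental theorem of the $h$-integral calculus) is the standard one and is sound, including your care about reducing the unordered case of property 1 to the ordered case via property 2.
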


We can also define a backward $h$-calculus where the
\emph{backward $h$-derivative}\index{Backward $h$-derivative},
or the \emph{backward difference operator}\index{Backward difference operator},
is defined by the following quotient
\begin{equation*}
\nabla_h\left[f\right](t):=\frac{f\left( t\right) -f\left( t-h\right) }{h}.
\end{equation*}
The results of the backward $h$-calculus are similar to the forward $h$-calculus.

It is worth noting that the $h$-calculus (as presented by Kac and Cheung \cite{Kac})
can be seen as a particular case of the time scale calculus.


\section{The $q$-calculus}

The $q$-derivative, like the $h$-derivatve,
is a discretization of the classical derivative and therefore,
has immediate applications in numerical analysis.
However, and according to Ernst \cite{Ernst}, is also a generalization
of many subjects, like hypergeometric series, complex analysis and particle physics.
The $q$-difference operator and its inverse operator, the Jackson $q$-integral, were first defined by
Jackson \cite{Jackson:old,Jackson} and due to its applications the $q$-calculus is a popular subject today.
In this section we present the $q$-calculus as Kac and Cheung do in their book \cite{Kac}.

Let $q \in \left]0,1\right[$ and let $I$ be a real interval containing $0$.

\begin{definition}[cf. \cite{Kac}]
Let $f:I\rightarrow \mathbb{R}$ be a function and let $t\in I$.
The \emph{$q$-derivative}\index{$q$-derivative},
or \emph{Jackson's difference operator}\index{Jackson's difference operator},
of $f$ at $t$ is given by
\begin{equation*}
D_{q}\left[ f\right] \left( t\right) :=\frac{f\left( qt\right) -f\left(
t\right) }{\left( q-1\right) t},\text{ if }t\neq 0\,,
\end{equation*}
and $D_{q}\left[ f\right] \left( 0\right) :=f^{\prime }\left( 0\right)$,
provided $f$ is differentiable at $0$.
\end{definition}

Note that, if a function is $f$ differentiable (in the classical sense) at $t$, then
\begin{equation*}
\lim_{q\rightarrow 1}D_{q}\left[ f\right] \left( t\right)
=\lim_{q\rightarrow 1}\frac{f\left( qt\right) -f\left( t\right) }{\left(
q-1\right) t}=f^{\prime }\left( t\right) ,
\end{equation*}
where $f^{\prime }$ is the usual derivative.

\begin{example}
Let $f\left( t\right) =t^{n}$. Then
\begin{equation*}
D_{q}\left[ f\right] \left( t\right) =\frac{q^{n}-1}{q-1}t^{n-1}=\left(
q^{n-1}+\ldots +1\right) t^{n-1}.
\end{equation*}
\end{example}

The $q$-derivative has the following properties.

\begin{theorem}[cf. \cite{Kac}]
Let $f,g:I\rightarrow
\mathbb{R}
$ be functions and $\alpha \in
\mathbb{R}
.$ Then

\begin{enumerate}
\item $\displaystyle D_{q}\left[ f+g\right] \left( t\right)
=D_{q}\left[ f\right]\left( t\right) + D_{q}\left[ g\right] \left( t\right) $;

\item $\displaystyle D_{q}\left[ \alpha f\right] \left( t\right)
=\alpha D_{q}\left[ f\right] \left( t\right) $;

\item $\displaystyle D_{q}\left[ fg\right] \left( t\right) =f\left( t\right)
D_{q}\left[ g\right] \left( t\right) +D_{q}\left[ f\right] \left( t\right)
g\left( qt\right) $;

\item $\displaystyle D_{q}\left[ \frac{f}{g}\right] \left( t\right)
=\frac{D_{q}\left[ f\right] \left( t\right) g\left( t\right)
-f\left( t\right) D_{q}\left[ g\right] \left( t\right)}{g\left( t\right) g\left( qt\right) }$.
\end{enumerate}
\end{theorem}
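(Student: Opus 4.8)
The plan is to verify each identity directly from the definition of $D_q$, treating the generic case $t\neq0$ and the special case $t=0$ separately. For $t\neq0$, everything reduces to algebra with the difference quotient $\frac{F(qt)-F(t)}{(q-1)t}$; for $t=0$, each claim follows from the corresponding classical rule for the ordinary derivative at $0$, using that sums, scalar multiples, products and quotients (with nonvanishing denominator) of functions differentiable at $0$ are again differentiable at $0$.

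First I would dispatch parts 1 and 2: for $t\neq0$,
\[
D_q[f+g](t)=\frac{(f+g)(qt)-(f+g)(t)}{(q-1)t}
=\frac{f(qt)-f(t)}{(q-1)t}+\frac{g(qt)-g(t)}{(q-1)t},
\]
and $D_q[\alpha f](t)=\frac{\alpha f(qt)-\alpha f(t)}{(q-1)t}=\alpha D_q[f](t)$; at $t=0$ these are just linearity of $f'\mapsto$ derivative at $0$. For part 3 (the $q$-Leibniz rule) the key trick at $t\neq0$ is to add and subtract $f(t)g(qt)$ in the numerator:
\[
f(qt)g(qt)-f(t)g(t)=g(qt)\bigl(f(qt)-f(t)\bigr)+f(t)\bigl(g(qt)-g(t)\bigr),
\]
so that dividing by $(q-1)t$ yields $D_q[fg](t)=D_q[f](t)\,g(qt)+f(t)\,D_q[g](t)$; at $t=0$ invoke the classical product rule. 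Note the asymmetry between $g(qt)$ and $g(t)$ is genuine and mirrors the $\mathbb T$-Leibniz rule $(fg)^\Delta=f^\Delta g+f^\sigma g^\Delta$ recorded earlier in the excerpt.

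For part 4 (the quotient rule), under the hypothesis $g(t)g(qt)\neq0$, I would avoid a separate messy computation by writing $f=\left(\frac{f}{g}\right)g$ and applying part 3:
\[
D_q[f](t)=D_q\!\left[\tfrac{f}{g}\right](t)\,g(qt)+\tfrac{f(t)}{g(t)}\,D_q[g](t),
\]
then solving for $D_q\!\left[\frac{f}{g}\right](t)$ and simplifying over the common denominator $g(t)g(qt)$ to obtain
$\dfrac{D_q[f](t)g(t)-f(t)D_q[g](t)}{g(t)g(qt)}$; the case $t=0$ is the classical quotient rule. There is no real obstacle here — the only point requiring a little care is the bookkeeping of which argument ($t$ versus $qt$) each factor is evaluated at, and checking that the relevant functions remain differentiable at $0$ in the $t=0$ case so that $D_q[\,\cdot\,](0)$ is defined.
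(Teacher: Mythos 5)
Your proposal is correct. The paper itself gives no proof of this theorem — it is quoted from Kac and Cheung's book as background material — and your argument is exactly the standard direct verification: for $t\neq 0$ each identity is pure algebra on the difference quotient (your add-and-subtract of $f(t)g(qt)$ for the Leibniz rule is the right decomposition), and at $t=0$ everything reduces to the classical rules, given the differentiability needed for $D_q[\,\cdot\,](0)$ to be defined. Deriving part 4 from part 3 by writing $f=\left(\frac{f}{g}\right)g$ and solving is a legitimate shortcut (for $t\neq 0$ the product identity is unconditional algebra, and for $t=0$ the classical quotient rule supplies differentiability of $f/g$), and you are right to supply the hypothesis $g(t)g(qt)\neq 0$, which the statement as printed omits.
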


\begin{definition}[cf. \cite{Kac}]
A function $F:I\rightarrow \mathbb{R}$ is said to be a
\emph{$q$-antiderivative}\index{$q$-antiderivative} of $f:I\rightarrow \mathbb{R}$ provided
\begin{equation*}
D_{q}\left[ F\right] \left( x\right) =f\left( x\right) ,
\end{equation*}
for all $t\in I$.
\end{definition}

\begin{definition}[cf. \cite{Kac}]
Let $a,b\in I$ and $a<b$. For $f:I\rightarrow \mathbb{R}$ the \emph{$q$-integral}\index{$q$-integral},
 or \emph{Jackson integral}\index{Jackson integral}, of $f$ from $a$ to $b$ is given by
\begin{equation*}
\int_{a}^{b}f\left( t\right) d_{q}t:=\int_{0}^{b}f\left( t\right)
d_{q}t-\int_{0}^{a}f\left( t\right) d_{q}\text{,}
\end{equation*}
where
\begin{equation*}
\int_{0}^{x}f\left( t\right) d_{q }t:=\left( 1-q\right)
x\sum_{k=0}^{+\infty }q^{k}f\left( xq^{k}\right) \text{, }x\in I\,,
\end{equation*}
provided that the series converges at $x=a$ and $x=b$. In that case, $f$ is
called $q$-\emph{integrable on} $\left[ a,b\right] $. We say that $f$ is
$q$-\emph{integrable over} $I$ if it is $q $-\emph{integrable} over
$[a,b] $ for all $a,b\in I$.
\end{definition}

\begin{theorem}[Fundamental theorem of the $q$-integral calculus (cf. \cite{Kac})]
Let $F:I\rightarrow
\mathbb{R}
$ be an anti-derivative of $f:I\rightarrow
\mathbb{R}
$ and let $F$ be continuous at $0$. For $a,b\in I$ we have
\begin{equation*}
\int_{a}^{b}f\left( t\right) d_{q}t=F\left( b\right) -F\left( a\right) .
\end{equation*}
\end{theorem}

The $q$-integral has the following properties.

\begin{theorem}[cf. \cite{Kac}]
Let $f,g:I\mathbb{\rightarrow \mathbb{R}}$ be\ functions. Let $a,b,c\in I$
and $\alpha \in
\mathbb{R}
$. Then

\begin{enumerate}
\item $\displaystyle\int_{a}^{b}f\left( t\right) d_{q}t=\int_{a}^{c}f\left(
t\right) d_{q}t+\int_{c}^{b}f\left( t\right) d_{q}t$;

\item $\displaystyle\int_{a}^{b}f\left( t\right) d_{q}t
=-\displaystyle \int_{b}^{a}f\left( t\right) d_{q}t$;

\item $\displaystyle\int_{a}^{b}\left( f+g\right) \left( t\right)
d_{q}t=\int_{a}^{b}f\left( t\right) d_{q}t+\int_{a}^{b}g\left( t\right)
d_{q}t$;

\item $\displaystyle\int_{a}^{b}\alpha f\left( t\right) d_{q}t=\alpha
\int_{a}^{b}f\left( t\right) d_{q}t$;

\item $
\displaystyle\int_{a}^{b}f\left( t\right) D_q\left[ g\right] \left( t\right)
d_{h}t=f\left( t\right) g\left( t\right) \bigg{|}_{a}^{b}-\int_{a}^{b}D_q\left[
f\right] \left( t\right) g\left( qt\right) d_{h}t.
$
\end{enumerate}
\end{theorem}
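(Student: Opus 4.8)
The plan is to reduce every item to two ingredients already available in the excerpt: the Jackson series definition $\int_0^x f(t)\,d_qt = (1-q)\,x\sum_{k=0}^{\infty} q^{k} f(x q^{k})$ together with the splitting rule $\int_a^b f(t)\,d_qt = \int_0^b f(t)\,d_qt - \int_0^a f(t)\,d_qt$ (which we may take as the definition of the $q$-integral for \emph{all} endpoints $a,b\in I$), and the Fundamental Theorem of the $q$-integral calculus proved above. With this, items 1 and 2 are immediate from the splitting rule alone. For item 1, $\int_a^c f\,d_qt + \int_c^b f\,d_qt = \big(\int_0^c f\,d_qt - \int_0^a f\,d_qt\big) + \big(\int_0^b f\,d_qt - \int_0^c f\,d_qt\big)$ and the two $\int_0^c$ terms telescope, leaving $\int_0^b f\,d_qt - \int_0^a f\,d_qt = \int_a^b f\,d_qt$. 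For item 2, $\int_a^b f\,d_qt = \int_0^b f\,d_qt - \int_0^a f\,d_qt = -\big(\int_0^a f\,d_qt - \int_0^b f\,d_qt\big) = -\int_b^a f\,d_qt$.

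For items 3 and 4 I would first prove the corresponding identities for the base integrals $\int_0^x$ directly from the series: since $(f+g)(xq^k)=f(xq^k)+g(xq^k)$ and $(\alpha f)(xq^k)=\alpha f(xq^k)$ termwise, and the series $\sum_k q^k f(xq^k)$ and $\sum_k q^k g(xq^k)$ converge by the standing integrability hypothesis, the sum and the scalar multiple of convergent series converge to the sum and the scalar multiple of the limits; multiplying through by $(1-q)x$ gives $\int_0^x(f+g)\,d_qt=\int_0^x f\,d_qt+\int_0^x g\,d_qt$ and $\int_0^x \alpha f\,d_qt=\alpha\int_0^x f\,d_qt$. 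Applying the splitting rule at $x=a$ and $x=b$ then lifts both identities to an arbitrary interval $[a,b]$.

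Item 5 (the $q$-integration by parts formula) is the one with real content. I would begin from the product rule for the $q$-derivative, $D_q[fg](t)=f(t)D_q[g](t)+D_q[f](t)g(qt)$, and rewrite it as $f(t)D_q[g](t)=D_q[fg](t)-D_q[f](t)g(qt)$. Integrating over $[a,b]$ and using items 3 and 4 to split the right-hand side yields $\int_a^b f(t)D_q[g](t)\,d_qt = \int_a^b D_q[fg](t)\,d_qt - \int_a^b D_q[f](t)g(qt)\,d_qt$; then the Fundamental Theorem of the $q$-integral calculus, applied to the $q$-antiderivative $fg$ of $D_q[fg]$, identifies the first term on the right with $(fg)(b)-(fg)(a)=f(t)g(t)\big|_a^b$, which is exactly the asserted formula (written with $d_qt$; the $d_ht$ appearing in the displayed statement is a misprint for $d_qt$).

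The difficulty here is not conceptual but a matter of hypotheses: to state items 3--5 cleanly one must know that all the integrals written down exist, i.e.\ that the relevant Jackson series converge at $a$ and $b$, and for item 5 one additionally needs $fg$ continuous at $0$ so that the Fundamental Theorem applies, together with $q$-integrability of $fD_q[g]$ and of $D_q[f]\,g(q\cdot)$ on $[a,b]$ — and the product rule shows that any two of these three integrability facts imply the third. Under the implicit assumption that $f$ and $g$ are $q$-integrable and suitably regular at $0$, all of this is routine, so the proof amounts to bookkeeping with the defining series plus a single invocation of the Fundamental Theorem.
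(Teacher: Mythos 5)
Your proof is correct and follows the standard route: the paper itself gives no proof of this theorem (it is quoted from Kac--Cheung as background in the Synthesis part), and your argument---items 1--4 by telescoping and termwise linearity of the Jackson series, item 5 by integrating the $q$-product rule and invoking the fundamental theorem of the $q$-integral calculus---is exactly the argument that reference supplies. Your observations that $d_ht$ is a misprint for $d_qt$ and that the fundamental theorem step needs $fg$ continuous at $0$ are both accurate.
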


The \emph{backward $q$-derivative}\index{Backward $q$-derivative}
of $f:I\rightarrow\mathbb{R}$ at $t\neq 0$ is defined by the quotient
\begin{equation*}
\frac{f\left( t\right) -f\left(q^{-1}
t\right) }{\left( 1-q^{-1}\right) t}
\end{equation*}
and from it one can obtain the backward $q$-quantum calculus.
Note that the $q$-calculus as defined in this section is not a particular case
of the time scale calculus. Namely, the $q$-integral does not coincide with
the delta integral for $\mathbb{T}=\overline{q^{\mathbb{Z}}}$.


\section{The Hahn calculus}
\label{sec:hoprelim}

Hahn, in \cite{Hahn}, introduced his difference operator
as a tool for constructing families of orthogonal polynomials.
Hahn's quantum difference operator unifies (in the limit) Jackson's $q$-difference operator
and the forward difference operator. Recently, Aldwoah \cite{Aldwoah} defined a proper inverse operator
of Hahn's difference operator, and the associated integral calculus was developed
in \cite{Aldwoah,Aldwoah:2,Annaby}. In this section we present the Hahn calculus
as Aldwoah did in his Ph.D. thesis \cite{Aldwoah}.

Let $q\in\left]0,1\right[$ and $\omega\geqslant0$. We introduce the real number
\index{$\omega_{0}$}
\begin{equation*}
\omega_{0} := \frac{\omega}{1-q}.
\end{equation*}
Let $I$ be a real interval containing $\omega_{0}$. For a function $f$
defined on $I$, the \emph{Hahn difference operator}\index{Hahn's difference operator}
of $f$ is given by
\begin{equation*}
D_{q,\omega}\left[f\right]\left(t\right)
:= \frac{f\left( qt+\omega\right) - f\left(t\right)}{\left(q-1\right)t + \omega}
\, , \text{ if } t\neq\omega_{0}\, ,
\end{equation*}
and $D_{q,\omega}\left[f\right]\left(\omega_{0}\right)
:= f^{\prime}\left(\omega_{0}\right)$, provided $f$ is differentiable at
$\omega_{0}$. We usually call $D_{q,\omega}\left[f\right]$ the
$q,\omega$-\emph{derivative of}\index{$q,\omega$-derivative} $f$,
and $f$ is said to be $q,\omega$-\emph{differentiable on} $I$ if
$D_{q,\omega}\left[f\right]\left(\omega_{0}\right)$ exists.

\begin{remark}
The $D_{q,\omega }$ operator generalizes (in the limit) the well known forward
difference and the Jackson difference operators \cite{Ernst,Kac}.
Indeed, when $q\rightarrow 1$ we obtain the
\emph{forward difference operator}\index{Forward difference operator}
\begin{equation*}
\Delta _{\omega}\left[ f\right] \left( t\right)
:= \frac{f\left( t+\omega\right) -f\left( t\right) }{\omega};
\end{equation*}
when $\omega =0$ we obtain the
\emph{Jackson difference operator}\index{Jackson $q$-difference operator}
\begin{equation*}
D_{q}\left[ f\right] \left( t\right)
:= \frac{f\left( qt\right) -f\left( t\right) }{\left( q-1\right) t}
\,,\text{ if }t\neq 0\,,
\end{equation*}
and $D_{q}[f]\left( 0\right) =f^{\prime }\left( 0\right)$ provided
$f^{\prime }\left( 0\right)$ exists. Notice also that,
under appropriate conditions,
\begin{equation*}
\lim_{q\rightarrow 1}D_{q,0}\left[ f\right]
\left( t\right) =f^{\prime }\left( t\right) .
\end{equation*}
\end{remark}

Hahn's difference operator has the following properties.

\begin{theorem}[\protect\cite{Aldwoah,Aldwoah:2,Annaby}]
Let $f$ and $g$ be $q,\omega$-differentiable on $I$ and $t\in I$. One has,
\begin{enumerate}
\item $D_{q,\omega}[f](t) \equiv 0$ on $I$ if, and only if, $f$ is constant;

\item $D_{q,\omega}\left[ f+g\right] \left( t\right) =D_{q,\omega}\left[
f\right] \left( t\right) +D_{q,\omega}\left[ g\right] \left( t\right)$;

\item $D_{q,\omega}\left[ fg\right] \left( t\right) =D_{q,\omega}\left[
f \right] \left( t\right) g\left( t\right) +f\left( qt+\omega\right)
D_{q,\omega}\left[ g\right] \left( t\right)$;

\item $\displaystyle D_{q,\omega}\left[ \frac{f}{g}\right] \left( t\right)
= \frac{D_{q,\omega}\left[ f\right] \left( t\right) g\left( t\right) -f\left(
t\right) D_{q,\omega}\left[ g\right] \left( t\right) }{g\left( t\right)
g\left( qt+\omega\right) }$ if $g\left( t\right) g\left( qt+\omega\right)
\neq0$;

\item $f\left( qt+\omega \right) =f\left( t\right) +\left( t\left(
q-1\right) +\omega \right) D_{q,\omega }\left[ f\right] \left( t\right) $.
\end{enumerate}
\end{theorem}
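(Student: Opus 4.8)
The plan is to prove each of the five properties directly from the definition of $D_{q,\omega}$, treating the generic point $t\neq\omega_{0}$ by algebraic manipulation of difference quotients and handling the fixed point $t=\omega_{0}$ separately by appealing to the corresponding property of the classical derivative. Throughout I write $\phi(t):=qt+\omega$ for the contraction map, so that the denominator appearing in the operator is $\phi(t)-t=(q-1)t+\omega$, which is nonzero precisely when $t\neq\omega_{0}$.

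First I would dispatch the linear properties. For item 2, at a point $t\neq\omega_{0}$ one just writes
\[
D_{q,\omega}[f+g](t)=\frac{(f+g)(\phi(t))-(f+g)(t)}{\phi(t)-t}
=\frac{f(\phi(t))-f(t)}{\phi(t)-t}+\frac{g(\phi(t))-g(t)}{\phi(t)-t},
\]
and at $t=\omega_{0}$ the claim is just linearity of the classical derivative (which exists by the standing hypothesis that $f,g$ are $q,\omega$-differentiable on $I$). Item 5 is an immediate rearrangement: for $t\neq\omega_{0}$, multiply the defining identity $D_{q,\omega}[f](t)=\bigl(f(\phi(t))-f(t)\bigr)/(\phi(t)-t)$ by $\phi(t)-t=(q-1)t+\omega$ and solve for $f(\phi(t))=f(qt+\omega)$; for $t=\omega_{0}$ one has $\phi(t)-t=0$, so both sides reduce to $f(\omega_{0})$ and the identity is trivially true.

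Next the product rule, item 3, which is the step I expect to be the main (if modest) obstacle, since it requires the right ``splitting'' trick rather than a purely mechanical computation. For $t\neq\omega_{0}$ I would write the numerator of $D_{q,\omega}[fg](t)$ as a telescoping sum,
\[
f(\phi(t))g(\phi(t))-f(t)g(t)
=\bigl(f(\phi(t))-f(t)\bigr)g(t)+f(\phi(t))\bigl(g(\phi(t))-g(t)\bigr),
\]
divide by $\phi(t)-t$, and recognize the two resulting quotients as $D_{q,\omega}[f](t)g(t)$ and $f(qt+\omega)D_{q,\omega}[g](t)$. One remark worth making is that the asymmetric placement of $f(qt+\omega)$ versus $g(t)$ is a genuine feature (one could equally split the other way to get $f(t)D_{q,\omega}[g](t)+g(qt+\omega)D_{q,\omega}[f](t)$), and the stated form is the conventional choice. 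At $t=\omega_{0}$, since $\phi(\omega_{0})=\omega_{0}$ we have $f(qt+\omega)=f(\omega_{0})=f(t)$ there, so the stated formula specializes to the Leibniz rule $(fg)'(\omega_{0})=f'(\omega_{0})g(\omega_{0})+f(\omega_{0})g'(\omega_{0})$, which holds classically. Item 4, the quotient rule, follows the same pattern — split the numerator of $D_{q,\omega}[f/g](t)$ over the common denominator $g(t)g(\phi(t))$ as $\bigl(f(\phi(t))-f(t)\bigr)g(t)-f(t)\bigl(g(\phi(t))-g(t)\bigr)$, divide by $\phi(t)-t$, and invoke the hypothesis $g(t)g(qt+\omega)\neq0$ to ensure the expression is well defined; alternatively one can derive it from items 3 and 5 applied to $h:=f/g$, writing $f=hg$.

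Finally, item 1. The ``if'' direction is trivial: if $f\equiv c$ then every numerator $f(\phi(t))-f(t)$ vanishes, so $D_{q,\omega}[f](t)=0$ for $t\neq\omega_{0}$ and $D_{q,\omega}[f](\omega_{0})=c'=0$. For the ``only if'' direction, assume $D_{q,\omega}[f]\equiv0$ on $I$. Then for every $t\in I$ with $t\neq\omega_{0}$ we get $f(\phi(t))=f(t)$, i.e.\ $f$ is constant along the orbits of the contraction $\phi$. Since $q\in\,]0,1[$, iterating gives $f(t)=f(\phi^{[k]}(t))$ for all $k$, and $\phi^{[k]}(t)\to\omega_{0}$ as $k\to\infty$; by $q,\omega$-differentiability $f$ is in particular continuous at $\omega_{0}$ (differentiability at a point forces continuity there), so $f(t)=\lim_{k\to\infty}f(\phi^{[k]}(t))=f(\omega_{0})$. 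Hence $f$ is the constant $f(\omega_{0})$ on $I$. This orbit/continuity argument is the one genuinely non-formal point in the proof; everything else is bookkeeping with difference quotients.
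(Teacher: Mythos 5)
Your proof is correct and is the standard one. The thesis does not actually prove this theorem itself (it is quoted from Aldwoah and Annaby--Hamza--Aldwoah), but both your telescoping decompositions for the product and quotient rules and your orbit-plus-continuity argument for item~1 (namely $f(t)=f(\sigma^{k}(t))\to f(\omega_{0})$ because $\sigma^{k}(t)=q^{k}t+\omega[k]_{q}\to\omega_{0}$ and $q,\omega$-differentiability forces continuity at $\omega_{0}$) coincide exactly with the arguments the thesis uses when it proves the analogous properties for the $q$-symmetric and Hahn symmetric difference operators, so you are on the intended route.
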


For $k\in\mathbb{N}_{0}:=\mathbb{N}\cup\left\{ 0\right\}$ define
$\displaystyle\left[ k\right] _{q}:=\frac{1-q^{k}}{1-q}$ and let
$\sigma\left(t\right) :=qt+\omega$, $t\in I$. Note that $\sigma$ is a
contraction, $\sigma(I)\subseteq I$, $\sigma\left(t\right)<t$ for
$t>\omega_{0}$, $\sigma\left( t\right)>t$ for $t<\omega_{0}$, and
$\sigma\left( \omega_{0}\right) =\omega_{0}$.
The following technical result is used several times in this thesis.

\begin{lemma}[\protect\cite{Aldwoah,Annaby}]
Let $k\in\mathbb{N}$ and $t\in I$. Then,

\begin{enumerate}
\item $\sigma^{k}\left( t\right) =\underset{k\text{-times}}{\underbrace{\sigma
\circ\sigma\circ \cdots \circ\sigma}}\left( t\right) =q^{k}t
+\omega\left[ k\right]_{q}$;

\item $\displaystyle \left( \sigma^{k}\left( t\right) \right)^{-1}
=\sigma^{-k}\left(t\right)=\frac{t-\omega\left[ k\right] _{q}}{q^{k}}$.
\end{enumerate}
\end{lemma}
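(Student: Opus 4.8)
The plan is to prove both statements by induction on $k$, using nothing beyond the definition $\sigma(t) = qt + \omega$ and the closed form $[k]_q = \frac{1-q^k}{1-q}$.

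For part 1, I would first check the base case $k = 1$: here $\sigma^{1}(t) = qt + \omega$, and since $[1]_q = \frac{1-q}{1-q} = 1$ this is exactly $q^{1}t + \omega[1]_q$. For the inductive step, assuming $\sigma^{k}(t) = q^{k}t + \omega[k]_q$, I would compute
\[
\sigma^{k+1}(t) = \sigma\bigl(\sigma^{k}(t)\bigr) = q\bigl(q^{k}t + \omega[k]_q\bigr) + \omega = q^{k+1}t + \omega\bigl(q[k]_q + 1\bigr),
\]
so that it only remains to record the algebraic identity $q[k]_q + 1 = [k+1]_q$, which is immediate from $q\cdot\frac{1-q^{k}}{1-q} + 1 = \frac{(q - q^{k+1}) + (1-q)}{1-q} = \frac{1-q^{k+1}}{1-q}$.

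For part 2, the quickest route is to observe that, since $q \neq 0$, the affine map $\sigma^{k}: t \mapsto q^{k}t + \omega[k]_q$ obtained in part 1 is injective, so $\sigma^{-k}(t)$ --- the unique $s$ with $\sigma^{k}(s) = t$, when such $s$ exists --- is found by solving $t = q^{k}s + \omega[k]_q$ for $s$, giving $s = \frac{t - \omega[k]_q}{q^{k}}$, which is the asserted formula. Alternatively, one can argue by a second induction on $k$, taking $\sigma^{-1}(t) = \frac{t-\omega}{q}$ as the base case and using $\sigma^{-(k+1)} = \sigma^{-1}\circ\sigma^{-k}$ together with the companion identity $[k]_q + q^{k} = [k+1]_q$ in the inductive step.

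No genuine difficulty arises here; the only point deserving a word of care is the interpretation of $\sigma^{-k}$: since $\sigma$ need not be surjective onto $I$, the symbol $\sigma^{-k}(t)$ should be read as the preimage of $t$ under $\sigma^{k}$ whenever that preimage exists (injectivity of $\sigma^{k}$ makes it unique), and the claimed equality is then an identity holding wherever the left-hand side is defined. Everything else is a routine computation.
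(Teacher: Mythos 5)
Your proof is correct. The paper itself gives no proof of this lemma --- it is quoted from the references \cite{Aldwoah,Annaby} --- and the induction you carry out (base case $k=1$, the identity $q\left[k\right]_{q}+1=\left[k+1\right]_{q}$ for the step, and solving the affine equation $t=q^{k}s+\omega\left[k\right]_{q}$ for part~2) is exactly the standard argument one would expect there; your remark that $\sigma^{-k}(t)$ must be read as the unique preimage under the injective map $\sigma^{k}$, defined only where that preimage exists, is a sensible point of care that the source also leaves implicit.
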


Following \cite{Aldwoah,Aldwoah:2,Annaby} we define the notion of
$q,\omega$-\emph{integral}\index{$q,\omega$-integral}
(also known as the \emph{Jackson--N\"{o}rlund integral}
\index{Jackson--N\"{o}rlund integral}) as follows.

\begin{definition}
Let $a,b\in I$ and $a<b$. For $f:I\rightarrow\mathbb{R}$
the $q,\omega$-\emph{integral of} $f$ from $a$ to $b$ is given by
\begin{equation*}
\int_{a}^{b}f\left( t\right) d_{q,\omega}t:=\int_{\omega_{0}}^{b}f\left(
t\right) d_{q,\omega}t-\int_{\omega_{0}}^{a}f\left( t\right) d_{q,\omega }t ,
\end{equation*}
where
\begin{equation*}
\int_{\omega_{0}}^{x}f\left( t\right) d_{q,\omega}t:=\left( x\left(
1-q\right) -\omega\right) \sum_{k=0}^{+\infty}q^{k}f\left(q^{k}x +\omega
\left[ k\right] _{q}\right) \text{, }x\in I\, ,
\end{equation*}
provided that the series converges at $x=a$ and $x=b$. In that case, $f$ is
called $q,\omega$-\emph{integrable on} $\left[a,b\right]$. We say that $f$
is $q,\omega$-\emph{integrable over} $I$ if it is $q,\omega$-\emph{integrable}
over $[a,b]$ for all $a,b\in I$.
\end{definition}

\begin{remark}
The $q,\omega$-\emph{integral} generalizes (in the limit)
the Jackson $q$-integral and the N\"{o}rlund sum (cf. \cite{Kac}).
When $\omega=0$, we obtain the \emph{Jackson $q$-integral}\index{Jackson $q$-integral}
\begin{equation*}
\int_{a}^{b}f\left( t\right) d_{q}t:=\int_{0}^{b}f\left( t\right)
d_{q}t-\int_{0}^{a}f\left( t\right) d_{q}t ,
\end{equation*}
where
\begin{equation*}
\int_{0}^{x}f\left( t\right) d_{q}t:=x\left( 1-q\right) \sum_{k=0}
^{+\infty}q^{k}f\left( xq^{k}\right)
\end{equation*}
provided that the series converge at $x=a$ and $x=b$.
When $q\rightarrow1$, we obtain the \emph{N\"{o}rlund sum}\index{N\"{o}rlund sum}
\begin{equation*}
\int_{a}^{b}f\left( t\right) \Delta_{\omega}t :=\int_{+\infty}^{b}f\left(
t\right) \Delta_{\omega}t-\int_{+\infty}^{a}f\left( t\right) \Delta
_{\omega}t,
\end{equation*}
where
\begin{equation*}
\int_{+\infty}^{x}f\left( t\right) \Delta_{\omega}t :=-\omega\sum
_{k=0}^{+\infty}f\left( x+k\omega\right)
\end{equation*}
provided that the series converge at $x=a$ and $x=b$. Note that the N\"{o}rlund sum is,
in some sense, a generalization of the $h$-integral for the case where $a$ and $b$
are any real numbers (not necessarily $a,b\in h\mathbb{Z}$ like in Definition~\ref{h-integral}).
\end{remark}

\begin{proposition}[cf. \cite{Aldwoah,Aldwoah:2,Annaby}]
If $f:I\rightarrow\mathbb{R}$ is continuous at $\omega_{0}$,
then $f$ is $q,\omega$-\emph{integrable over} $I$.
\end{proposition}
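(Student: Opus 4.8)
The plan is to reduce $q,\omega$-integrability over $I$ to absolute convergence of a single geometric-type series. By definition it suffices to show that for every $x\in I$ the series
\[
\sum_{k=0}^{+\infty} q^{k} f\bigl(q^{k}x+\omega[k]_{q}\bigr)
\]
converges; applying this with $x=a$ and $x=b$ then yields $q,\omega$-integrability on $[a,b]$ for all $a,b\in I$, which is precisely $q,\omega$-integrability over $I$.

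First I would identify the argument of $f$ with an iterate of $\sigma$: by the preceding Lemma, $q^{k}x+\omega[k]_{q}=\sigma^{k}(x)$, and since $\omega[k]_{q}=\omega\frac{1-q^{k}}{1-q}=\omega_{0}(1-q^{k})$ this can be written as
\[
\sigma^{k}(x)=\omega_{0}+q^{k}(x-\omega_{0}).
\]
As $0<q<1$, this shows $\sigma^{k}(x)\to\omega_{0}$ when $k\to+\infty$; moreover each $\sigma^{k}(x)$ lies in $I$ because $\sigma(I)\subseteq I$, so $f$ is defined at every term of the series.

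Next I would invoke the hypothesis. Continuity of $f$ at $\omega_{0}$ (taking $\varepsilon=1$) provides $\delta>0$ such that $|f(t)|\le M:=|f(\omega_{0})|+1$ for all $t\in I$ with $|t-\omega_{0}|<\delta$. Since $q^{k}|x-\omega_{0}|\to 0$, there is $N\in\mathbb{N}$ with $|\sigma^{k}(x)-\omega_{0}|=q^{k}|x-\omega_{0}|<\delta$, hence $|f(\sigma^{k}(x))|\le M$, for all $k\ge N$. Splitting the series, the finitely many terms with $k<N$ contribute a finite sum, while
\[
\sum_{k=N}^{+\infty} q^{k}\bigl|f(\sigma^{k}(x))\bigr|\le M\sum_{k=N}^{+\infty}q^{k}=\frac{Mq^{N}}{1-q}<+\infty,
\]
so the series converges absolutely; the scalar prefactor $x(1-q)-\omega$ is a fixed real number and does not affect convergence.

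I do not expect a genuine obstacle here: the only points requiring care are that all iterates $\sigma^{k}(x)$ remain in $I$ (guaranteed by $\sigma(I)\subseteq I$) so that $f$ is evaluated only where it is defined, and that the convergence obtained is absolute, so that it holds simultaneously at $x=a$ and $x=b$ for arbitrary $a,b\in I$ with no rearrangement subtleties. The essential mechanism is simply that the iterates eventually enter a neighbourhood of $\omega_{0}$ on which $f$ is bounded, after which the tail of the series is dominated by a convergent geometric series.
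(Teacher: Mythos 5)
Your proof is correct. Note that the thesis itself does not prove this proposition — it is quoted with a citation to Aldwoah, Aldwoah--Hamza and Annaby--Hamza--Aldwoah — so there is no in-text argument to compare against line by line. The route taken in those sources, and mirrored in this thesis for the $q$-symmetric analogue (Lemma~\ref{qs:lema integral} and Corollary~\ref{qs:corolario integral}), is to first establish that the sequence $\left(f\left(\sigma^{k}(s)\right)\right)_{k}$ converges \emph{uniformly} to $f(\omega_{0})$ on $I$ and then deduce uniform convergence of the series. Your argument is a pointwise, more elementary version of the same mechanism: you only need that the iterates $\sigma^{k}(x)=\omega_{0}+q^{k}(x-\omega_{0})$ eventually enter a fixed neighbourhood of $\omega_{0}$ on which continuity bounds $|f|$, after which the factor $q^{k}$ dominates the tail by a geometric series. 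This is entirely sufficient for the statement as given (convergence of the defining series at each $x\in I$, hence at $x=a$ and $x=b$ for all $a,b\in I$); the uniform version proved in the cited references buys more — it is what underlies the continuity of $x\mapsto\int_{\omega_{0}}^{x}f(t)\,d_{q,\omega}t$ at $\omega_{0}$ in the Fundamental Theorem — but that extra strength is not needed here. Your explicit checks that $\sigma(I)\subseteq I$ keeps all evaluation points inside the domain, and that the convergence obtained is absolute, close the only places where care is required.
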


\begin{theorem}[Fundamental theorem of Hahn's calculus \protect\cite{Aldwoah,Annaby}]
\index{Fundamental theorem of Hahn's calculus}
Assume that $f:I\rightarrow \mathbb{R}$ is continuous at
$\omega_{0}$ and, for each $x\in I$, define
\begin{equation*}
F\left( x\right) :=\int_{\omega_{0}}^{x}f\left( t\right) d_{q,\omega }t .
\end{equation*}
Then $F$ is continuous at $\omega_{0}$. Furthermore, $D_{q,\omega}\left[ F
\right] \left( x\right)$ exists for every $x\in I$ with
\begin{equation*}
D_{q,\omega}\left[ F\right] \left( x\right) =f\left( x\right).
\end{equation*}
Conversely,
\begin{equation*}
\int_{a}^{b}D_{q,\omega}\left[ f\right] \left( t\right) d_{q,\omega}t
=f\left( b\right) -f\left( a\right)
\end{equation*}
for all $a,b\in I$.
\end{theorem}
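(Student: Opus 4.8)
The plan is to prove the Fundamental Theorem of Hahn's calculus in two parts, mirroring the structure of the statement. First I would establish that $F(x):=\int_{\omega_0}^x f(t)\,d_{q,\omega}t$ is continuous at $\omega_0$ and satisfies $D_{q,\omega}[F]=f$; then I would derive the converse identity $\int_a^b D_{q,\omega}[f](t)\,d_{q,\omega}t = f(b)-f(a)$ as a consequence.

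For the first part, I would start from the defining series
\begin{equation*}
F(x)=\bigl(x(1-q)-\omega\bigr)\sum_{k=0}^{\infty}q^k f\bigl(q^k x+\omega[k]_q\bigr)
=\bigl(x(1-q)-\omega\bigr)\sum_{k=0}^{\infty}q^k f\bigl(\sigma^k(x)\bigr),
\end{equation*}
using the identity $\sigma^k(x)=q^k x+\omega[k]_q$ from the Lemma. The key computational step is to evaluate $D_{q,\omega}[F](x)$ for $x\neq\omega_0$ directly from the difference quotient $\dfrac{F(\sigma(x))-F(x)}{\sigma(x)-x}$, where $\sigma(x)-x=(q-1)x+\omega=-(x(1-q)-\omega)$. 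Since the sum defining $F(\sigma(x))$ is $\bigl(\sigma(x)(1-q)-\omega\bigr)\sum_k q^k f(\sigma^{k+1}(x))$, a reindexing $k\mapsto k+1$ telescopes the two series against each other: the bracketed prefactors satisfy $\sigma(x)(1-q)-\omega = q\,\bigl(x(1-q)-\omega\bigr)$, so $F(\sigma(x))-F(x)$ collapses to $-\bigl(x(1-q)-\omega\bigr)f(x)$, giving $D_{q,\omega}[F](x)=f(x)$. For the point $x=\omega_0$ I would use continuity of $f$ at $\omega_0$ together with the convergence guarantee from the Proposition: one shows $F$ is continuous at $\omega_0$ by bounding the tail of the series (the terms $\sigma^k(x)$ all lie near $\omega_0$ for $x$ near $\omega_0$, since $\sigma$ is a contraction with fixed point $\omega_0$), and then that the difference quotient at $\omega_0$ tends to $f'(\omega_0)$, which is the definition of $D_{q,\omega}[F](\omega_0)$; here one needs $f$ continuous at $\omega_0$ to identify the limit with $f(\omega_0)$.

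For the converse, let $g:=D_{q,\omega}[f]$. Applying the first part to $g$ (note $g$ is continuous at $\omega_0$, so $q,\omega$-integrable), the function $G(x):=\int_{\omega_0}^x g(t)\,d_{q,\omega}t$ satisfies $D_{q,\omega}[G]=g=D_{q,\omega}[f]$, hence $D_{q,\omega}[G-f]\equiv 0$ on $I$; by property 1 of the $q,\omega$-derivative (constancy), $G-f$ is constant, so $G(x)=f(x)+c$. Evaluating at $x=\omega_0$ gives $c=-f(\omega_0)$, and therefore $\int_a^b g(t)\,d_{q,\omega}t = G(b)-G(a) = f(b)-f(a)$ using the additivity $\int_a^b = \int_{\omega_0}^b - \int_{\omega_0}^a$ from the definition of the $q,\omega$-integral.

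I expect the main obstacle to be the analysis at the special point $\omega_0$: verifying both the continuity of $F$ there and that the difference quotient converges to $f'(\omega_0)$ requires a careful tail estimate on the series $\sum q^k f(\sigma^k(x))$ uniformly for $x$ in a neighbourhood of $\omega_0$, exploiting that $|\sigma^k(x)-\omega_0|=q^k|x-\omega_0|\to 0$ and the boundedness of $f$ near $\omega_0$. The telescoping computation away from $\omega_0$ is routine once the reindexing and the prefactor identity $\sigma(x)(1-q)-\omega=q(x(1-q)-\omega)$ are in hand.
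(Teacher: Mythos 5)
Your argument is sound, but note first that the thesis does not actually prove this theorem: it is quoted from Aldwoah and Annaby et al.\ in the preliminary chapter. The closest internal comparison is with the proofs of the analogous fundamental theorems for the $q$-symmetric and Hahn symmetric integrals (Theorems~\ref{qs:Fundamental} and~\ref{qhs:Fundamental}), and against those your forward direction is essentially identical: the reindexing and telescoping of the two series via the prefactor identity $\sigma(x)(1-q)-\omega=q\left(x(1-q)-\omega\right)$, and the separate limit computation at $\omega_0$ using uniform convergence of $\sum_k q^k f(\sigma^k(x))$. Your converse, however, takes a genuinely different route. The thesis's analogues compute $\int_{\omega_0}^{x}D_{q,\omega}[f](t)\,d_{q,\omega}t$ directly: substituting the difference quotient into the defining series and using $(q-1)\sigma^{k}(x)+\omega=-q^{k}\left(x(1-q)-\omega\right)$ turns it into the Mengoli series $\sum_{k}\bigl(f(\sigma^{k}(x))-f(\sigma^{k+1}(x))\bigr)=f(x)-f(\omega_0)$, which uses only the continuity of $f$ at $\omega_0$. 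You instead apply the first part to $g:=D_{q,\omega}[f]$ and invoke uniqueness of antiderivatives up to a constant. This works, but it costs an extra hypothesis check that you assert without justification: $g$ must be continuous at $\omega_0$ for the first part to apply. That is in fact true here, because $q,\omega$-differentiability on $I$ includes classical differentiability of $f$ at $\omega_0$, so writing $f(\sigma(t))-f(t)=f'(\omega_0)\left(\sigma(t)-t\right)+o(\left\vert t-\omega_0\right\vert)$ together with $\sigma(t)-\omega_0=q(t-\omega_0)$ gives $D_{q,\omega}[f](t)\rightarrow f'(\omega_0)=D_{q,\omega}[f](\omega_0)$; but this needs saying, and the direct telescoping argument avoids the issue entirely. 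Finally, one slip in your first part: the difference quotient of $F$ at $\omega_0$ converges to $f(\omega_0)$ (which is then, by definition, the value $F'(\omega_0)=D_{q,\omega}[F](\omega_0)$), not to ``$f'(\omega_0)$''; your subsequent sentence shows you intended the former.
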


The $q,\omega$-\emph{integral} has the following properties.

\begin{theorem}[\protect\cite{Aldwoah,Aldwoah:2,Annaby}]
\label{hoPropriedades do integral} Let $f,g:I\rightarrow\mathbb{R}$
be $q,\omega$-\emph{integrable on} $I$, $a,b,c\in I$ and $k\in\mathbb{R}$. Then,

\begin{enumerate}
\item $\displaystyle\int_{a}^{a}f\left( t\right) d_{q,\omega}t=0$;

\item $\displaystyle\int_{a}^{b}kf\left( t\right) d_{q,\omega}t=k\int_{a}^{b}f\left(
t\right) d_{q,\omega}t$;

\item \label{eq:item3} $\displaystyle\int_{a}^{b}f\left( t\right)
d_{q,\omega}t=-\int_{b}^{a}f\left( t\right) d_{q,\omega}t$;

\item $\displaystyle\int_{a}^{b}f\left( t\right) d_{q,\omega}t=\int_{a}^{c}f\left(
t\right) d_{q,\omega}t+\int_{c}^{b}f\left( t\right) d_{q,\omega}t$;

\item $\displaystyle\int_{a}^{b}\left( f\left( t\right) +g\left( t\right) \right)
d_{q,\omega}t=\int_{a}^{b}f\left( t\right) d_{q,\omega}t
+\int_{a}^{b}g\left( t\right) d_{q,\omega}t$;

\item every Riemann integrable function $f$ on $I$ is
$q,\omega$-\emph{integrable on} $I$;

\item \label{itm:ip} if $f,g:I\rightarrow \mathbb{R}$ are
$q,\omega$-differentiable and $a,b\in I$, then
\begin{equation*}
\int_{a}^{b}f\left( t\right) D_{q,\omega}\left[ g\right] \left( t\right)
d_{q,\omega}t= f\left( t\right) g\left( t\right) \bigg|_{a}^{b}
-\int_{a}^{b}D_{q,\omega}\left[ f\right] \left( t\right) g\left(
qt+\omega\right) d_{q,\omega}t.
\end{equation*}
\end{enumerate}
\end{theorem}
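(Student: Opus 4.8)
The plan is to read off items 1.--5.\ directly from the definition of the $q,\omega$-integral, to obtain item 6.\ from absolute convergence of the defining series for a bounded integrand, and to derive the integration by parts formula 7.\ by feeding the product rule for $D_{q,\omega}$ into the Fundamental Theorem of Hahn's calculus.

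First I would handle items 1.--5. Recall that $\int_{a}^{b}f(t)\,d_{q,\omega}t=\int_{\omega_{0}}^{b}f(t)\,d_{q,\omega}t-\int_{\omega_{0}}^{a}f(t)\,d_{q,\omega}t$ and that $\int_{\omega_{0}}^{x}f(t)\,d_{q,\omega}t=(x(1-q)-\omega)\sum_{k=0}^{\infty}q^{k}f(q^{k}x+\omega[k]_{q})$. Item 1.\ is then immediate. Items 2.\ and 5.\ follow because, whenever the relevant series converge, $x\mapsto\int_{\omega_{0}}^{x}(\cdot)\,d_{q,\omega}t$ is linear in its integrand (a fixed scalar times a termwise-convergent series), and linearity passes to the difference $\int_{\omega_{0}}^{b}-\int_{\omega_{0}}^{a}$. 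Item 3.\ is the identity $\int_{\omega_{0}}^{b}-\int_{\omega_{0}}^{a}=-\bigl(\int_{\omega_{0}}^{a}-\int_{\omega_{0}}^{b}\bigr)$, and item 4.\ is the telescoping identity $\bigl(\int_{\omega_{0}}^{c}-\int_{\omega_{0}}^{a}\bigr)+\bigl(\int_{\omega_{0}}^{b}-\int_{\omega_{0}}^{c}\bigr)=\int_{\omega_{0}}^{b}-\int_{\omega_{0}}^{a}$. Nothing beyond rearranging the definition is needed.

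For item 6., a Riemann integrable function on $I$ is bounded, say $|f|\le M$ on $I$. Since $q^{k}x+\omega[k]_{q}=\sigma^{k}(x)\in I$ for every $k$ (as $\sigma(I)\subseteq I$), the series $\sum_{k=0}^{\infty}q^{k}f(q^{k}x+\omega[k]_{q})$ is dominated by $M\sum_{k=0}^{\infty}q^{k}=M/(1-q)<\infty$, hence converges absolutely at every $x\in I$, in particular at $x=a$ and $x=b$. Thus $f$ is $q,\omega$-integrable on $[a,b]$ for all $a,b\in I$; this boundedness argument is self-contained and is really all that is needed. For item 7., I would start from the product rule in the form $D_{q,\omega}[fg](t)=f(t)D_{q,\omega}[g](t)+g(qt+\omega)D_{q,\omega}[f](t)$, which is the stated product rule with the roles of $f$ and $g$ exchanged together with $fg=gf$. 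Solving for $f(t)D_{q,\omega}[g](t)$ and $q,\omega$-integrating from $a$ to $b$ (using items 3.--5.\ to split the integral of the sum) gives $\int_{a}^{b}f(t)D_{q,\omega}[g](t)\,d_{q,\omega}t=\int_{a}^{b}D_{q,\omega}[fg](t)\,d_{q,\omega}t-\int_{a}^{b}g(qt+\omega)D_{q,\omega}[f](t)\,d_{q,\omega}t$; by the second part of the Fundamental Theorem of Hahn's calculus the first integral on the right equals $f(b)g(b)-f(a)g(a)=f(t)g(t)\big|_{a}^{b}$, which is the asserted identity.

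The main obstacle is not the algebra but the bookkeeping of existence: before manipulating a $q,\omega$-integral in items 5.\ and 7.\ one must know it exists. For item 5.\ this is routine (sum of two convergent series). For item 7.\ one needs $fD_{q,\omega}[g]$, $D_{q,\omega}[fg]$ and $(g\circ\sigma)D_{q,\omega}[f]$ to be $q,\omega$-integrable on $[a,b]$; here the cleanest route is to note that the product rule already writes $D_{q,\omega}[fg]$ as the sum of the other two, so assuming two of the three are $q,\omega$-integrable forces the third to be, and under the hypothesis that $f,g$ are $q,\omega$-differentiable (with, e.g., continuity at $\omega_{0}$, or boundedness via item 6.) this is guaranteed. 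I would state this existence caveat explicitly rather than gloss over it.
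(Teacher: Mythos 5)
Your proposal is correct, but note that the paper itself offers no proof to compare against: Theorem~\ref{hoPropriedades do integral} is stated in the preliminary chapter as a background result imported from \cite{Aldwoah,Aldwoah:2,Annaby}. Your route is the standard one and all the steps check out. Items 1--5 are indeed pure bookkeeping with the defining series; for item 7 the swapped product rule $D_{q,\omega}[fg](t)=f(t)D_{q,\omega}[g](t)+g(qt+\omega)D_{q,\omega}[f](t)$ is legitimate (interchange $f$ and $g$ in the stated product rule and use $fg=gf$), and applying the converse part of the Fundamental Theorem of Hahn's calculus to $fg$ is justified because $q,\omega$-differentiability at $\omega_{0}$ means classical differentiability there, hence $fg$ is continuous at $\omega_{0}$ as that theorem requires. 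Your explicit existence caveat for item 7 is exactly the right thing to flag. Two small refinements: in item 6 you should not claim $|f|\leqslant M$ on all of $I$ (Riemann integrability on an unbounded interval does not give global boundedness); it suffices that, for fixed $a,b$, the points $q^{k}x+\omega[k]_{q}=\sigma^{k}(x)$ with $x\in\{a,b\}$ all lie in the compact interval with endpoints $\min(a,b,\omega_{0})$ and $\max(a,b,\omega_{0})$, on which Riemann integrability does give a bound $M$, whence the series is dominated by $M\sum_{k}q^{k}<\infty$. And in item 4 it is worth saying, as the paper does implicitly, that the identity holds whenever all three integrals exist, since $q,\omega$-integrability on $[a,b]$ is a condition at the two endpoints only.
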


Property~\ref{itm:ip} of Theorem~\ref{hoPropriedades do integral} is known
as $q,\omega$-\emph{integration by parts}\index{$q,\omega$-integration by parts}.
Note that
\begin{equation*}
\int_{\sigma\left( t\right) }^{t}f\left( \tau\right) d_{q,\omega}
\tau=\left( t\left( 1-q\right) -\omega\right) f\left( t\right)
\text{.}
\end{equation*}

\begin{lemma}[\textrm{cf.} \protect\cite{Brito:da:Cruz}]
\label{hopositividade} Let $b \in I$ and $f$ be $q,\omega$-\emph{integrable}
over $I$. Suppose that
\begin{equation*}
f(t)\geqslant 0, \quad \forall t\in\left\{ q^{n}b+\omega\left[ n\right] _{q}:n\in
\mathbb{N}_{0}\right\}.
\end{equation*}

\begin{enumerate}
\item If $\omega_0 \leqslant b$, then
\begin{equation*}
\int_{\omega_0}^b f(t)d_{q,\omega}t\geqslant 0.
\end{equation*}

\item If $\omega_0 > b$, then
\begin{equation*}
\int_b^{\omega_0} f(t)d_{q,\omega}t\geqslant 0.
\end{equation*}
\end{enumerate}
\end{lemma}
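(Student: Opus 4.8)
The plan is to reduce everything to the definition of the $q,\omega$-integral over $[\omega_0,x]$, namely
\[
\int_{\omega_0}^{x}f\left( t\right) d_{q,\omega}t=\left( x\left( 1-q\right) -\omega\right) \sum_{k=0}^{+\infty}q^{k}f\left(q^{k}x +\omega\left[ k\right]_{q}\right),
\]
and to inspect the sign of the prefactor $x(1-q)-\omega$ together with the sign of each summand. First I would observe that the hypothesis $f(t)\geqslant 0$ for all $t\in\{q^{n}b+\omega[n]_q : n\in\mathbb{N}_0\}$ is exactly the statement that $f$ is nonnegative at every node $\sigma^{n}(b)$ appearing in the series expansion of $\int_{\omega_0}^{b}f(t)d_{q,\omega}t$ (using Lemma on $\sigma^{k}$, i.e.\ $\sigma^{k}(b)=q^{k}b+\omega[k]_q$). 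Hence in both cases the infinite sum $\sum_{k=0}^{+\infty}q^{k}f\left(q^{k}b+\omega[k]_q\right)$ has all terms $\geqslant 0$ (note $q^{k}>0$ since $q\in\,]0,1[$), so the sum itself is $\geqslant 0$; convergence is guaranteed because $f$ is assumed $q,\omega$-integrable over $I$.

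For item 1, assume $\omega_0\leqslant b$. I claim the prefactor $b(1-q)-\omega\geqslant 0$: indeed $b\geqslant\omega_0=\frac{\omega}{1-q}$ and $1-q>0$ give $b(1-q)\geqslant\omega$, i.e.\ $b(1-q)-\omega\geqslant 0$. Therefore
\[
\int_{\omega_0}^{b}f(t)d_{q,\omega}t=\bigl(b(1-q)-\omega\bigr)\sum_{k=0}^{+\infty}q^{k}f\!\left(q^{k}b+\omega[k]_q\right)\geqslant 0,
\]
as a product of two nonnegative quantities. For item 2, assume $\omega_0>b$. Now $b<\frac{\omega}{1-q}$ forces $b(1-q)-\omega<0$, so the same expansion gives $\int_{\omega_0}^{b}f(t)d_{q,\omega}t\leqslant 0$; applying property~\ref{eq:item3} of Theorem~\ref{hoPropriedades do integral} (antisymmetry of the integral in its limits), $\int_{b}^{\omega_0}f(t)d_{q,\omega}t=-\int_{\omega_0}^{b}f(t)d_{q,\omega}t\geqslant 0$, which is the desired conclusion.

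There is really no deep obstacle here; the only point requiring a little care is making sure the index set of nodes in the hypothesis matches the index set appearing in the series — that is, confirming via the $\sigma^{k}$ formula that $\{q^{n}b+\omega[n]_q\}$ is precisely the set of arguments at which $f$ is evaluated in $\int_{\omega_0}^{b}f\,d_{q,\omega}$, regardless of whether $b$ lies above or below $\omega_0$ (the contraction $\sigma$ moves points toward $\omega_0$ in both cases, so all nodes stay in $I$). Once that bookkeeping is settled, both parts follow immediately from the sign of $b(1-q)-\omega$ and term-by-term nonnegativity of the series.
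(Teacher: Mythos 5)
Your proof is correct and is the direct argument from the definition of the $q,\omega$-integral: the paper itself states this lemma without proof (citing \cite{Brito:da:Cruz}), and the intended justification is exactly the sign analysis of the prefactor $b(1-q)-\omega$ combined with term-by-term nonnegativity of the convergent series, which you carry out accurately. No gaps.
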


\begin{remark}
\label{rem:diff:int}
In general it is not true that
\begin{equation*}
\left\vert \int_{a}^{b}f\left( t\right) d_{q,\omega}t\right\vert \leqslant
\int_{a}^{b}| f\left( t\right)| d_{q,\omega}t , \ \ \ a,b \in I.
\end{equation*}
For a counterexample, see \cite{Aldwoah,Annaby}. This illustrates well the
difference with other non-quantum integrals, \textrm{e.g.}, the time scale
integrals \cite{Malinowska:2,Mozyrska}.
\end{remark}

Similarly to the previous sections, one can define the
\emph{backward Hahn's derivative}\index{Backward Hahn's derivative} for $ t\neq\omega_{0}$ by
\begin{equation*}
\frac{f\left( t\right)
- f\left(q^{-1}\left(t-\omega\right)\right)}{\left(1-q^{-1}\right)t + q^{-1}\omega}.
\end{equation*}

To conclude this section, we note that the Hahn Calculus is not a particular case
of the time scale calculus. We stress that the basic difference between the Hahn calculus
and the time scale calculus is that in the Hahn calculus we deal with functions defined
in real intervals and the derivative is a ``discrete derivative'',
while in the time scale calculus, this kind of derivative
is only defined for discrete time scales.


\clearpage{\thispagestyle{empty}\cleardoublepage}


\chapter{Calculus of Variations}
\label{Calculus of Variations}

\setcounter{theorem}{0}

In 1696, Johann Bernoulli challenge the mathematical community (and in
particular his brother James) to solve the following problem: in a vertical
plane, consider two fixed points $A$ and $B$, with $A$ higher than $B$.
Determine the curve from $A$ to $B$ along which a particle (initially at
rest) slides in minimum time, with the only external force acting on the
particle being gravity. This curve is called the brachistochrone (from
Greek, brachistos - the shortest, chronos - time). This problem was solved
by both Bernoulli brothers, but also by Newton, L'H\^{o}pital, Leibniz and
later by Euler and Lagrange. This problem is considered the first problem in
calculus of variations and the solution provided by Euler was so
resourceful that gave us the principles to solve other variational problems,
such as:

-Find among all the closed curves with the same length, the curve enclosing
the greatest area (this problem is called the isoperimetric problem);

-Find the shortest path (i.e., geodesic) between two given points on a
surface;

-Find the curve between two given points in the plane that yields a surface
of revolution of minimum area when revolved around a given axis. This curve
is called the catenary and the surface is called a catenoid.

In the eighteenth century mathematicians like the Bernoulli brothers,
Leibniz, Euler, Lagrange and Legendre all worked in theory of calculus of
variations. In the nineteenth century names like Jacobi and Weierstrass
contributed to the subject and in the twentieth century Hilbert, Noether,
Tonelli, Lebesgue, just to name a few, made advances to the theory. For a
deeper understanding of the history of the calculus of variations we refer
the reader to Goldstine \cite{Goldstine}.

Historically, there is an intricate relation between calculus of variations
and mechanics. In fact, many physical principles may be formulated in terms
of variational problems, via Hamilton's principle of least action.
Applications can be found in general relativity theory, quantum field
theory and particle physics. Even recently, calculus of variations provided
tools to discover the solution to the three-body problem \cite{Montgomery}.
However, applications of calculus of variations can be found in other areas
such as economics \cite{Attici} and optimal control theory \cite{Pontryagin}.

The calculus of variations is concerned with the problem of extremizing
functionals, and so, we can consider it as a branch of optimization.

Next, we review the basics of the theory of the calculus of variations.

Let $L\left( x,y,z\right) $ be a function with continuous first and second
partial derivatives with respect to all its arguments.
The simplest variational problem is defined by\index{Variational problem}
\begin{equation}
\left \{
\begin{array}{lll}
\mathcal{L}\left[ y\right] =\displaystyle\int_{a}^{b}L\left( t,y\left( t\right),
y^{\prime }\left( t\right) \right) dt & \rightarrow & \text{extremize} \\
\\
y\in C^{1}\left( \left[ a,b\right] ,
\mathbb{R}
\right) &  &  \\
\\
y\left( a\right) =\alpha &  &  \\
\\
y\left( b\right) =\beta &  &
\end{array}
\right.  \label{P}
\end{equation}
where $a,b,\alpha ,\beta \in \mathbb{R}$ and $a<b$.
By extremize we mean maximize or minimize.

We say that $y$ is an admissible function for problem \eqref{P} if $y\in
C^{1}\left( \left[ a,b\right] ,
\mathbb{R}
\right) $ and $y$ satisfies the boundary conditions $y\left( a\right)
=\alpha $ and $y\left( b\right) =\beta $. We say that $y_{\ast }$ is a local
minimizer (resp. local maximizer) for problem \eqref{P} if $y_{\ast }$ is an
admissible function an there exists $\delta >0$ such that
\[
\mathcal{L}\left[ y_{\ast }\right] \leqslant \mathcal{L}\left[ y\right]
\text{ \ \ (resp. }\mathcal{L}\left[ y_{\ast }\right]
\geqslant \mathcal{L}\left[ y\right] )
\]
for all admissible $y$ with $\left \Vert y_{\ast }-y\right \Vert <\delta $ and where
\[
\left \Vert y\right \Vert =\max_{t\in \left[ a,b\right] }\left \{ \left \vert
y\left( t\right) \right \vert +\left \vert y^{\prime }\left( t\right)
\right \vert \right \}.
\]

A necessary condition for a function to be an extremizer for problem
\eqref{P} is given by the Euler--Lagrange equation.

In what follows, $\frac{\partial L}{\partial y}$ denotes the
partial derivative of $L$ with respect to its $2nd$ argument and $\frac{\partial L }{\partial y^{\prime }}$ denotes the
partial derivative of $L$ with respect to its $3rd$ argument.

\begin{theorem}[Euler--Lagrange equation (cf. \cite{Brunt})]\index{Euler--Lagrange equation}
Under the hypotheses of the problem\\ \eqref{P}, if an admissible function
$y_{\ast }$ is a local extremizer for problem \eqref{P}, then $y_{\ast }$
satisfies the Euler--Lagrange equation
\begin{equation}
\label{EL:C}
\frac{\partial L}{\partial y}\left( t,y\left( t\right), y^{\prime }\left( t\right) \right)
=\frac{d}{dt}\left[ \tau\rightarrow\frac{\partial L }{\partial y^{\prime }}\left(\tau,
y\left( \tau\right) ,y^{\prime}\left( \tau\right) \right)\right]\left(t\right)
\end{equation}
for all $t\in \left[ a,b\right]$.
\end{theorem}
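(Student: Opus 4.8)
The plan is to follow the classical variational argument due to Lagrange, specialised to the fixed-endpoint problem \eqref{P}. First I would fix a local extremizer $y_*$ and introduce a one-parameter family of admissible variations $y_* + \varepsilon \eta$, where $\eta \in C^1([a,b],\mathbb{R})$ satisfies the homogeneous boundary conditions $\eta(a) = \eta(b) = 0$; such perturbations remain admissible because they preserve both endpoint values. Then I would define $\phi(\varepsilon) := \mathcal{L}[y_* + \varepsilon \eta]$ and observe that, since $y_*$ is a local extremizer and $\varepsilon = 0$ lies in the interior of the set of admissible $\varepsilon$ (for $|\varepsilon|$ small enough the perturbed function stays in the $\delta$-neighbourhood), $\phi$ has a local extremum at $0$, hence $\phi'(0) = 0$.

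Next I would compute $\phi'(0)$ by differentiating under the integral sign, which is justified by the assumed continuity of the first and second partial derivatives of $L$ together with compactness of $[a,b]$. This yields the first variation
\begin{equation*}
\phi'(0) = \int_a^b \left[ \frac{\partial L}{\partial y}\bigl(t, y_*(t), y_*'(t)\bigr)\, \eta(t) + \frac{\partial L}{\partial y'}\bigl(t, y_*(t), y_*'(t)\bigr)\, \eta'(t) \right] dt = 0 .
\end{equation*}
The standard move is to remove the derivative from $\eta$. One option is integration by parts on the second term, but that requires differentiability of $t \mapsto \frac{\partial L}{\partial y'}$, which is not assumed; the cleaner route is to integrate the \emph{first} term instead, writing $\frac{\partial L}{\partial y}\,\eta = \left( \int_a^t \frac{\partial L}{\partial y}\, d\tau \right)' \eta$ and integrating by parts so that, after using $\eta(a)=\eta(b)=0$, the condition becomes
\begin{equation*}
\int_a^b \left[ \frac{\partial L}{\partial y'}\bigl(t, y_*(t), y_*'(t)\bigr) - \int_a^t \frac{\partial L}{\partial y}\bigl(\tau, y_*(\tau), y_*'(\tau)\bigr)\, d\tau \right] \eta'(t)\, dt = 0
\end{equation*}
for every admissible $\eta$.

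Finally I would invoke the fundamental lemma of the calculus of variations (the du Bois-Reymond lemma): if a continuous function $g$ satisfies $\int_a^b g(t)\, \eta'(t)\, dt = 0$ for all $\eta \in C^1$ with $\eta(a)=\eta(b)=0$, then $g$ is constant on $[a,b]$. Applying it gives
\begin{equation*}
\frac{\partial L}{\partial y'}\bigl(t, y_*(t), y_*'(t)\bigr) = \int_a^t \frac{\partial L}{\partial y}\bigl(\tau, y_*(\tau), y_*'(\tau)\bigr)\, d\tau + c ,
\end{equation*}
and differentiating this identity with respect to $t$ (the right-hand side being an integral of a continuous function plus a constant, hence differentiable) produces exactly the Euler--Lagrange equation \eqref{EL:C}. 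I expect the main subtlety to be the regularity bookkeeping: one must avoid assuming that $t \mapsto \frac{\partial L}{\partial y'}$ is a priori differentiable, which is why the du Bois-Reymond formulation is preferable to naive integration by parts; the differentiability of that map then emerges as a \emph{conclusion} of the argument rather than a hypothesis. The justification of differentiation under the integral sign and the proof of the fundamental lemma are routine and I would cite \cite{Brunt} for both.
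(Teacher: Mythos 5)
Your proof is correct. Note that the paper itself does not prove this theorem: it is stated as a classical result and referred to \cite{Brunt}, so there is no in-paper argument to compare against. Your du Bois--Reymond route is the appropriate one for the problem as formulated here, since the admissible class in \eqref{P} is only $C^{1}\left(\left[a,b\right],\mathbb{R}\right)$ and the map $t\mapsto\frac{\partial L}{\partial y^{\prime}}\left(t,y_{\ast}\left(t\right),y_{\ast}^{\prime}\left(t\right)\right)$ appearing under $\frac{d}{dt}$ in \eqref{EL:C} is not a priori differentiable; your argument correctly obtains its differentiability as a conclusion rather than assuming it, which a naive integration by parts on the $\eta^{\prime}$ term would have required.
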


\begin{ex}
\label{ex1}
The problem of discovering the curve $y$ which has a minimum
arc length between the points $(0,0)$ and $(1,1)$ can be enunciate by
\[
\left \{
\begin{array}{lll}
\mathcal{L}\left[ y\right] =\displaystyle\int_{0}^{1}\sqrt{1+\left[ y^{\prime }\left(
t\right) \right] ^{2}}dt & \rightarrow  & \text{minimize} \\
&  &  \\
y\in C^{1}\left( \left[ 0,1\right] ,\mathbb{R}\right)  &  &  \\
&  &  \\
y\left( 0\right) =0 &  &  \\
&  &  \\
y\left( 1\right) =1. &  &
\end{array}
\right.
\]
Clearly, our knowledge of ordinary geometry suggest that the curve which minimizes
the arc length is the straight line connecting $(0,0)$ to $(1,1)$. Using the tools
of the calculus of variations we prove, in a rigorous way,
that our geometric intuition is indeed correct.

The Euler–-Lagrange equation associated to $\mathcal{L}$ is
\begin{align*}
&\frac{d}{dt}\left(\frac{y'\left(t\right)}{\sqrt{1+\left[ y^{\prime }\left(
t\right) \right] ^{2}}}\right)=0
\Leftrightarrow \frac{y''\left(t\right)}{\left(1
+\left[y'\left(t\right)\right]^2\right)^\frac{3}{2}}=0
\end{align*}
for all $t\in\left[0,1\right]$. Obviously, the solution of this second-order
equation is $y(t)=At+B$ for some $A,B\in\mathbb{R}$. Using the boundary conditions
$y(0)=0$ and $y(1)=1$ we conclude that $y\left(t\right)=t$, $t\in[0,1]$,
is the only candidate to be a minimizer. To prove that $y\left(t\right)=t$
is indeed the solution, we use a sufficient condition (see Example~\ref{ex2}).
\end{ex}

Historically, variational problems were often formulated with some kind of
constraint. Dido, the queen founder of Carthage, when arrived on the coast
of Tunisia, asked for a piece of land. Her request was satisfied provided
that the land could be encompassed by an oxhide. She cut the oxhide into
very narrow strips and joined them together into a long thin strip and used
it to encircle the land. This land became Carthage. Dido's problem, the
\emph{isoperimetric }(same perimeter) \emph{problem}\index{Isoperimetric problem}
is to find a curve $y$ that satisfies the variational problem
\begin{equation}
\label{I}
\left\{
\begin{array}{lll}
\mathcal{L}\left[ y\right] =\displaystyle\int_{a}^{b}L\left( t,y\left( t\right)
,y^{\prime }\left( t\right) \right) dt & \rightarrow  & \text{extremize} \\
\\
y\in C^{1}\left( \left[ a,b\right] ,
\mathbb{R}
\right)  &  &  \\
\\
y\left( a\right) =\alpha  &  &  \\
\\
y\left( b\right) =\beta  &  &
\end{array}
\right.
\end{equation}
but subject to the constraint
\begin{equation}
\label{IC}
\gamma=\int_{a}^{b}G\left( t,y\left( t\right) ,y^{\prime }\left( t\right) \right) dt,
\end{equation}
where $G$ is a given function satisfying the same condition
that the Lagrangian $L$ and $\gamma$ is a real fixed value.

A necessary condition for a function to be an extremizer
for the isoperimetric problem \eqref{I}--\eqref{IC}
is given by the following result.

\begin{theorem}[cf. \cite{Brunt}]
Suppose that $\mathcal{L}$ has an extremum at
$y\in C^{1}\left( \left[ a,b\right],\mathbb{R}\right)$
subject to the boundary conditions $y\left( a\right) =\alpha$
and $y\left( b\right) =\beta$ and the isoperimetric constraint \eqref{IC}.
Then there exist two real numbers $\lambda_0$ and $\lambda_1$, not both zero, such that
$$
\frac{d}{dt}\frac{\partial K}{\partial y'} = \frac{\partial K}{\partial y},
$$
where $$K=\lambda_0 L - \lambda_1 G.$$
\end{theorem}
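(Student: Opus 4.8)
The plan is to freeze a finite-dimensional family of competitors and reduce the problem to the Lagrange multiplier rule in $\mathbb{R}^{2}$. Pick arbitrary $\eta_{1},\eta_{2}\in C^{1}([a,b],\mathbb{R})$ with $\eta_{i}(a)=\eta_{i}(b)=0$ and, for $(\epsilon_{1},\epsilon_{2})$ near the origin, set $\hat{y}:=y+\epsilon_{1}\eta_{1}+\epsilon_{2}\eta_{2}$, which is again admissible for \eqref{I}. Define
\begin{equation*}
\Phi(\epsilon_{1},\epsilon_{2}):=\int_{a}^{b}L\bigl(t,\hat{y}(t),\hat{y}'(t)\bigr)\,dt,
\qquad
\Psi(\epsilon_{1},\epsilon_{2}):=\int_{a}^{b}G\bigl(t,\hat{y}(t),\hat{y}'(t)\bigr)\,dt-\gamma .
\end{equation*}
By the smoothness assumed on $L$ and $G$, differentiation under the integral sign is legitimate, so $\Phi,\Psi$ are $C^{1}$ near $(0,0)$; moreover $\Psi(0,0)=0$ and, by hypothesis, $(0,0)$ is a local extremizer of $\Phi$ restricted to $\{\Psi=0\}$.

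First I would dispose of the degenerate case. If $\nabla\Psi(0,0)=0$ for every choice of $\eta_{1},\eta_{2}$, then computing $\partial_{\epsilon_{i}}\Psi(0,0)=\int_{a}^{b}\bigl(\partial_{y}G\,\eta_{i}+\partial_{y'}G\,\eta_{i}'\bigr)\,dt$ and integrating by parts (using $\eta_{i}(a)=\eta_{i}(b)=0$) gives $\int_{a}^{b}\bigl(\partial_{y}G-\frac{d}{dt}\partial_{y'}G\bigr)\eta_{i}\,dt=0$ for all admissible $\eta_{i}$; by the fundamental lemma of the calculus of variations (the tool underlying the Euler--Lagrange equation \eqref{EL:C}), $\frac{d}{dt}\partial_{y'}G=\partial_{y}G$ on $[a,b]$. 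Then $\lambda_{0}:=0$, $\lambda_{1}:=1$ are not both zero and $K=-G$ satisfies the asserted identity.

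Otherwise, fix once and for all a pair $\eta_{1},\eta_{2}$ for which $\nabla\Psi(0,0)\neq0$; after renaming assume $\partial_{\epsilon_{2}}\Psi(0,0)\neq0$. For any further admissible variation $\eta_{1}$ I claim $\nabla\Phi(0,0)$ is parallel to $\nabla\Psi(0,0)$. Indeed, were they linearly independent, the map $(\epsilon_{1},\epsilon_{2})\mapsto(\Phi,\Psi)$ would have invertible Jacobian at the origin, hence by the inverse function theorem be open near $(0,0)$; then arbitrarily close to $(0,0)$ there would be points with $\Psi=0$ and with $\Phi$ both above and below $\Phi(0,0)$, contradicting constrained extremality. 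So there is $\lambda\in\mathbb{R}$ with $\nabla\Phi(0,0)=\lambda\nabla\Psi(0,0)$, and from the $\epsilon_{2}$-component $\lambda=\partial_{\epsilon_{2}}\Phi(0,0)/\partial_{\epsilon_{2}}\Psi(0,0)$ depends only on the fixed $\eta_{2}$, not on $\eta_{1}$. Equating the $\epsilon_{1}$-components and integrating by parts as before yields
\begin{equation*}
\int_{a}^{b}\Bigl[\Bigl(\partial_{y}L-\frac{d}{dt}\partial_{y'}L\Bigr)-\lambda\Bigl(\partial_{y}G-\frac{d}{dt}\partial_{y'}G\Bigr)\Bigr]\eta_{1}\,dt=0
\end{equation*}
for every admissible $\eta_{1}$; the fundamental lemma forces the bracket to vanish identically on $[a,b]$. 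Taking $\lambda_{0}:=1$, $\lambda_{1}:=\lambda$ and $K:=L-\lambda G$, this is precisely $\frac{d}{dt}\frac{\partial K}{\partial y'}=\frac{\partial K}{\partial y}$, and $\lambda_{0}\neq0$, so $\lambda_{0},\lambda_{1}$ are not both zero.

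The main obstacle is the second case: one must keep $\eta_{2}$ frozen while letting $\eta_{1}$ range over all admissible functions, and check — as the explicit formula for $\lambda$ shows — that the multiplier produced is the \emph{same} for every $\eta_{1}$; only then can the pointwise Euler--Lagrange identity for $K$ be extracted via the fundamental lemma. Everything else (differentiation under the integral sign, integration by parts, the inverse function theorem in $\mathbb{R}^{2}$) is routine.
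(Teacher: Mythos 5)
Your argument is correct. The paper itself states this result without proof, merely citing van Brunt, and what you give is essentially the standard textbook argument: a two-parameter family of variations reducing the problem to the Lagrange multiplier rule in $\mathbb{R}^{2}$, with the degenerate case ($y$ an extremal of the constraint functional $G$) handled separately by taking $\lambda_{0}=0$. You correctly isolate and resolve the one point where such proofs usually go wrong, namely that the multiplier $\lambda=\partial_{\epsilon_{2}}\Phi(0,0)/\partial_{\epsilon_{2}}\Psi(0,0)$ is determined by the frozen $\eta_{2}$ alone and hence is the same for every test variation $\eta_{1}$, which is what licenses the application of the fundamental lemma. The only implicit assumption is that $t\mapsto\partial_{y'}L(t,y(t),y'(t))$ and $t\mapsto\partial_{y'}G(t,y(t),y'(t))$ are differentiable so that your integrations by parts are valid; this is the same regularity tacitly built into the paper's own formulation of the Euler--Lagrange equation \eqref{EL:C}, so it is consistent with the conventions in force (a fully $C^{1}$-clean version would route through the du Bois-Reymond lemma instead).
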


Another necessary condition for problem \eqref{P}
is given by the Legendre condition.\index{Legendre's condition}

\begin{theorem}[Legendre's condition (cf. \cite{Brunt})]
Let $\mathcal{L}$ be the functional of problem \eqref{P}
and suppose that $\mathcal{L}$ has a local minimum for the curve $y$.
Then,
\[
\frac{\partial }{\partial y^{^{\prime }}}\left[\tau\rightarrow \frac{\partial L }{\partial
y^{^{\prime }}}\left(
\tau,y\left( \tau\right) ,y^{\prime }\left( \tau\right) \right)\right] \left(t\right)\geqslant 0
\]
for all $t\in \left[ a,b\right]$.
\end{theorem}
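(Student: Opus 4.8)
The plan is to reduce the assertion to the nonnegativity of the second variation and then localize. Throughout, write $L_{yy}$, $L_{yy'}$, $L_{y'y'}$ for the second partial derivatives of $L$ with respect to its second and third arguments, always evaluated along $(t,y(t),y'(t))$; since $L$ has continuous second-order partials and $y\in C^{1}$, these are continuous functions of $t$ on $[a,b]$, and $L_{y'y'}$ is exactly the quantity denoted $\frac{\partial}{\partial y'}\big[\tau\to\frac{\partial L}{\partial y'}(\tau,y(\tau),y'(\tau))\big](t)$ in the statement. First I would fix an arbitrary $\eta\in C^{1}([a,b],\mathbb{R})$ with $\eta(a)=\eta(b)=0$ and set $\phi(\varepsilon):=\mathcal{L}[y+\varepsilon\eta]$. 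For every $\varepsilon$ the curve $y+\varepsilon\eta$ is admissible, so $\phi$ has a local minimum at $\varepsilon=0$; differentiating twice under the integral sign (justified by continuity of the second partials of $L$ and compactness of $[a,b]$) shows $\phi\in C^{2}$ near $0$, hence $\phi''(0)\geqslant 0$, i.e.
\[
\phi''(0)=\int_{a}^{b}\Big[L_{yy}\,\eta^{2}+2L_{yy'}\,\eta\eta'+L_{y'y'}\,(\eta')^{2}\Big]\,dt\geqslant 0 .
\]

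Next I would argue by contradiction. Suppose $L_{y'y'}(t_{0},y(t_{0}),y'(t_{0}))<0$ for some $t_{0}\in[a,b]$. By continuity there are $c>0$ and $\lambda_{0}>0$ such that, along the side of $t_{0}$ lying inside $[a,b]$ (say $[t_{0},t_{0}+\lambda_{0}]$, reflecting if $t_{0}=b$), one has $L_{y'y'}(t,y(t),y'(t))\leqslant -c<0$, and $|L_{yy}|,|L_{yy'}|$ are bounded there by some constant $M$. For $0<\lambda\leqslant\lambda_{0}$ I would use the classical concentrated variation
\[
\eta_{\lambda}(t):=\begin{cases}\sin^{2}\!\big(\tfrac{\pi(t-t_{0})}{\lambda}\big),& t\in[t_{0},t_{0}+\lambda],\\[4pt] 0,&\text{otherwise},\end{cases}
\]
which lies in $C^{1}([a,b],\mathbb{R})$ and vanishes at $a$ and $b$. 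The point of this choice is the scaling: a short computation gives $\int_{t_{0}}^{t_{0}+\lambda}(\eta_{\lambda}')^{2}\,dt=\pi^{2}/(2\lambda)$, $\int_{t_{0}}^{t_{0}+\lambda}\eta_{\lambda}^{2}\,dt=3\lambda/8$, and $\int_{t_{0}}^{t_{0}+\lambda}|\eta_{\lambda}\eta_{\lambda}'|\,dt\leqslant\pi$ (indeed $\int\eta_{\lambda}\eta_{\lambda}'\,dt=\tfrac12\eta_{\lambda}^{2}\big|_{t_{0}}^{t_{0}+\lambda}=0$). Substituting $\eta_{\lambda}$ into the second-variation formula, the $L_{y'y'}$ term is at most $-c\,\pi^{2}/(2\lambda)$, while the remaining two terms are bounded in absolute value by $3M\lambda/8$ and $2M\pi$; hence $\phi''(0)\to-\infty$ as $\lambda\to0^{+}$, contradicting $\phi''(0)\geqslant 0$. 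Therefore $L_{y'y'}\geqslant 0$ at every interior point, and by continuity along the curve the inequality extends to all $t\in[a,b]$.

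The routine parts are the differentiation under the integral sign and the explicit evaluation of the integrals of $\eta_{\lambda}$ and $\eta_{\lambda}'$. The main obstacle is the uniform control of the cross term $\int 2L_{yy'}\,\eta_{\lambda}\eta_{\lambda}'\,dt$: one must verify that it stays $O(1)$ (using boundedness of $L_{yy'}$ on a \emph{fixed} neighborhood of $t_{0}$ and the bound $\int|\eta_{\lambda}\eta_{\lambda}'|\,dt\leqslant\pi$) so that it cannot absorb the $1/\lambda$ blow-up produced by the negative $L_{y'y'}$ term. A secondary technical point is the placement of the supporting interval when $t_{0}$ is an endpoint, which is handled by choosing the interval on the side of $t_{0}$ contained in $[a,b]$.
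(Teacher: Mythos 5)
Your proof is correct, and the paper itself offers no proof of this theorem --- it is stated as background and referred to van Brunt --- so there is nothing internal to compare against; your argument is the classical one. The reduction to $\phi''(0)\geqslant 0$ for $\phi(\varepsilon)=\mathcal{L}[y+\varepsilon\eta]$ (legitimate here because $\|\varepsilon\eta\|\to 0$ in the $C^{1}$ norm used to define local minimizers), followed by the concentrated variation $\eta_{\lambda}(t)=\sin^{2}\bigl(\pi(t-t_{0})/\lambda\bigr)$, is exactly the standard route, and your scaling computations $\int(\eta_{\lambda}')^{2}\,dt=\pi^{2}/(2\lambda)$, $\int\eta_{\lambda}^{2}\,dt=3\lambda/8$, $\int|\eta_{\lambda}\eta_{\lambda}'|\,dt\leqslant\pi$ are all correct, so the cross term really is $O(1)$ and cannot offset the $-c\pi^{2}/(2\lambda)$ blow-up; the one-sided interval at an endpoint $t_{0}$ is also the right fix.
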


The first-order variational problem \eqref{P} can be extended to functionals
involving higher-order derivatives. We formulate the $r$-order variational
problem as follows
\begin{equation}
\left\{
\begin{array}{lll}\index{Higher-order variational problem}
\mathcal{L}\left[ y\right] =\displaystyle\int_{a}^{b}L\left( t,y\left( t\right)
,y^{\prime }\left( t\right) ,\ldots ,y^{\left( r\right) }\left( t\right)
\right) dt & \rightarrow  & \text{extremize} \\
\\
y\in C^{r}\left( \left[ a,b\right] ,
\mathbb{R}
\right)  &  &  \\
\\
y\left( a\right) =\alpha _{0},\text{ \ \ }y\left( b\right) =\beta _{0} &  &
\\
\vdots  &  &  \\
y^{\left( r-1\right) }\left( a\right) =\alpha _{r-1},\text{ \ \ }y^{\left(
r-1\right) }\left( b\right) =\beta _{r-1} &  &
\end{array}
\right.   \label{P_r}
\end{equation}
where $r\in
\mathbb{N}
$, $a,b,\in
\mathbb{R}
$ with \thinspace $a<b$ and $\alpha _{i},\beta _{i}\in
\mathbb{R}
$, $i=0,\ldots ,r-1$, are given. Also, we must assume that the Lagrangian
function $L$ has continuous partial derivatives up to the order $r+1$ with
respect to all its\ arguments. Clearly, in the problem of order $r$ we need
to restrict the space of functions and we say that $y$ is an admissible
function if $y\in C^{r}\left( \left[ a,b\right] ,
\mathbb{R}
\right) $ and satisfies all the boundary conditions of problem \eqref{P_r}.
Like the first-order case, we can obtain a necessary condition for a
function to be an extremizer.

\begin{theorem}[Higher-order Euler--Lagrange equation (cf. \cite{Brunt})]\index{Higher-order Euler--Lagrange equation}
Under the hypotheses of problem \eqref{P_r}, if an admissible function
$y_{\ast }$ is a local extremizer for problem \eqref{P}, then $y_{\ast }$
satisfies the Euler--Lagrange equation
$$
\sum_{i=0}^{r}\left(-1\right)^{i}
\frac{d^i}{d t^i}\left[\tau\rightarrow\partial_{i+2}L\left(\tau,
y\left(\tau\right),y'\left(\tau\right), \ldots, y^{(r)}(\tau)\right)\right]\left(  t\right)=0
$$
for all $t\in \left[ a,b\right] $ and where $\partial _{i}L$ denotes the
partial derivative of $L$ with respect to its $ith$ argument.
\end{theorem}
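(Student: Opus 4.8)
The plan is to run the classical first-variation argument in the higher-order setting. First I would fix an arbitrary variation $\eta\in C^{r}\left(\left[a,b\right],\mathbb{R}\right)$ subject to $\eta^{(k)}(a)=\eta^{(k)}(b)=0$ for $k=0,1,\ldots,r-1$, so that $y_{\ast}+\varepsilon\eta$ is an admissible function for problem \eqref{P_r} for every $\varepsilon\in\mathbb{R}$. Since $L$ has continuous partial derivatives up to order $r+1$ and $\left[a,b\right]$ is compact, the real function $\phi(\varepsilon):=\mathcal{L}\left[y_{\ast}+\varepsilon\eta\right]$ is differentiable and one may differentiate under the integral sign. Because $y_{\ast}$ is a local extremizer, $\phi$ has a local extremum at $\varepsilon=0$, hence $\phi'(0)=0$, which reads
\begin{equation*}
\int_{a}^{b}\sum_{i=0}^{r}\partial_{i+2}L\left(t,y_{\ast}(t),y_{\ast}'(t),\ldots,y_{\ast}^{(r)}(t)\right)\eta^{(i)}(t)\,dt=0 .
\end{equation*}

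Next I would transfer all the derivatives off $\eta$. For each fixed $i\in\{0,\ldots,r\}$, integrate the corresponding summand by parts $i$ times; at every step the boundary term carries a factor $\eta^{(j)}(a)$ or $\eta^{(j)}(b)$ with $0\leqslant j\leqslant i-1\leqslant r-1$, hence vanishes by the choice of $\eta$. After the $i$ integrations the summand becomes
\begin{equation*}
(-1)^{i}\int_{a}^{b}\frac{d^{i}}{dt^{i}}\left[\tau\rightarrow\partial_{i+2}L\left(\tau,y_{\ast}(\tau),\ldots,y_{\ast}^{(r)}(\tau)\right)\right](t)\,\eta(t)\,dt .
\end{equation*}
Summing over $i$ gives
\begin{equation*}
\int_{a}^{b}\left(\sum_{i=0}^{r}(-1)^{i}\frac{d^{i}}{dt^{i}}\left[\tau\rightarrow\partial_{i+2}L\left(\tau,y_{\ast}(\tau),\ldots,y_{\ast}^{(r)}(\tau)\right)\right](t)\right)\eta(t)\,dt=0
\end{equation*}
for every admissible variation $\eta$. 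Denoting by $M(t)$ the expression in parentheses, which is continuous on $\left[a,b\right]$, and observing that the identity holds in particular for every $\eta\in C^{\infty}$ with support contained in $\left]a,b\right[$ (such functions satisfy all the endpoint conditions), the fundamental lemma of the calculus of variations yields $M(t)=0$ on $\left]a,b\right[$, and then $M\equiv0$ on $\left[a,b\right]$ by continuity. This is exactly the asserted Euler--Lagrange equation.

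The delicate point I expect is the repeated integration by parts: differentiating $\tau\rightarrow\partial_{i+2}L\left(\tau,y_{\ast}(\tau),\ldots,y_{\ast}^{(r)}(\tau)\right)$ up to $i$ times is not guaranteed by $y_{\ast}\in C^{r}$ alone, since it would formally involve derivatives of $y_{\ast}$ of order up to $r+i$. The standard way around this is the du Bois-Reymond device: instead of differentiating the $\partial_{i+2}L$ terms one antidifferentiates the $\eta^{(i)}$ factors, obtains an integrated form of the equation whose coefficients need no extra smoothness, and only afterwards recovers the differentiated form via a higher-order du Bois-Reymond lemma; alternatively one restricts attention to extremals of class $C^{2r}$, under which the argument above is literally valid. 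I would present the direct argument and flag this regularity caveat, consistent with the treatment in \cite{Brunt}.
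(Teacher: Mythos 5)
Your argument is correct and is the standard one; note that the paper does not prove this theorem at all — it is quoted as a known classical result with the reference ``(cf.\ \cite{Brunt})'' in the preliminary chapter on the calculus of variations, so there is no in-paper proof to compare against. Your outline (first variation, $i$-fold integration by parts with vanishing boundary terms, fundamental lemma) matches the treatment in the cited source, and you are right to flag the regularity caveat: the $i$-fold integration by parts tacitly requires the maps $\tau\rightarrow\partial_{i+2}L\left(\tau,y_{\ast}(\tau),\ldots,y_{\ast}^{(r)}(\tau)\right)$ to be $i$ times differentiable, which is implicitly assumed by the very form of the conclusion and is usually secured either by taking $y_{\ast}\in C^{2r}$ or by a higher-order du Bois-Reymond argument.
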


Let us recall the definition of a convex set and a convex function.

\begin{definition}
Let $\Omega\subseteq\mathbb{R}^2$.
The set $\Omega$ is said to be convex if
$$
\left(1-t\right)z_1+tz_2\in\Omega
$$
for all $t\in\left[0,1\right]$ and $z_1,z_2\in\Omega$.
\end{definition}

\begin{definition}
Let $\Omega\subseteq\mathbb{R}^2$ be a convex set.
A function  $f:\Omega\rightarrow\mathbb{R}$ is said to be convex if
$$
f\left[\left(1-t\right)z_1+tz_2\right]\leqslant\left(1-t\right)f(z_1)+tf(z_2)
$$
for all $z_1,z_2\in\Omega$ and $t\in\left[0,1\right]$.
\end{definition}

A sufficient condition for a minimum for problem \eqref{I} is now presented.

\begin{theorem}[cf. \cite{Brunt}]
\label{SuffCondt}
Let $L:D\subseteq\mathbb{R}^3\rightarrow\mathbb{R}$ be a function.
Suppose that, for each $t\in\left[a,b\right]$, the set
$$
\Omega_t:=\{ \left( y,y' \right) \in \mathbb{R}^2: \left( t,y,y' \right) \in D \}
$$
is convex and that $L$ is a convex function of the variables
$\left(y,y' \right)\in\Omega_t$. If $y_*$ satisfies
the Euler--Lagrange equation \eqref{EL:C},
then $y_*$ is a minimizer to problem \eqref{I}.
\end{theorem}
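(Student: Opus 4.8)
The plan is to compare $\mathcal{L}[y_*]$ with $\mathcal{L}[y]$ for an arbitrary admissible function $y$, using the fact that a differentiable convex function lies above each of its tangent planes. Set $\eta:=y-y_*$; since both $y$ and $y_*$ satisfy the boundary conditions, $\eta\in C^1([a,b],\mathbb{R})$ with $\eta(a)=\eta(b)=0$. The first step is to record the tangent-plane (subgradient) inequality: if $g$ is differentiable and convex on a convex set $\Omega\subseteq\mathbb{R}^2$, then $g(z)-g(z_0)\geqslant\nabla g(z_0)\cdot(z-z_0)$ for all $z,z_0\in\Omega$; this follows from the definition of convexity by bounding the difference quotient of $s\mapsto g\bigl(z_0+s(z-z_0)\bigr)$ and letting $s\to0^+$. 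Applying this for each fixed $t$ with $g=L(t,\cdot,\cdot)$ on the convex set $\Omega_t$, $z=(y(t),y'(t))$ and $z_0=(y_*(t),y_*'(t))$, gives the pointwise estimate
\[
L\bigl(t,y(t),y'(t)\bigr)-L\bigl(t,y_*(t),y_*'(t)\bigr)\geqslant\frac{\partial L}{\partial y}\bigl(t,y_*,y_*'\bigr)\,\eta(t)+\frac{\partial L}{\partial y'}\bigl(t,y_*,y_*'\bigr)\,\eta'(t)
\]
for all $t\in[a,b]$.

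Next I would integrate this inequality over $[a,b]$ and integrate the last term by parts. Since $L\in C^2$ and $y_*\in C^1$, the map $t\mapsto\frac{\partial L}{\partial y'}(t,y_*(t),y_*'(t))$ is differentiable (its derivative already appears in \eqref{EL:C}), so the integration by parts is legitimate, and the boundary term $\frac{\partial L}{\partial y'}(t,y_*,y_*')\,\eta(t)\bigg|_a^b$ vanishes because $\eta(a)=\eta(b)=0$. This leaves
\[
\mathcal{L}[y]-\mathcal{L}[y_*]\geqslant\int_a^b\left(\frac{\partial L}{\partial y}\bigl(t,y_*,y_*'\bigr)-\frac{d}{dt}\!\left[\frac{\partial L}{\partial y'}\bigl(t,y_*,y_*'\bigr)\right]\right)\eta(t)\,dt.
\]
By hypothesis $y_*$ satisfies the Euler--Lagrange equation \eqref{EL:C}, so the integrand is identically zero; hence $\mathcal{L}[y]\geqslant\mathcal{L}[y_*]$ for every admissible $y$, and in particular for every admissible $y$ that also meets the isoperimetric constraint \eqref{IC}. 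Thus $y_*$ is a (global, hence local) minimizer for problem \eqref{I}.

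The only genuinely substantive ingredient is the tangent-plane inequality for convex functions; I expect that to be the one place requiring a short argument (or a citation), while the integration, the integration by parts, and the cancellation via \eqref{EL:C} are routine bookkeeping. One should also note that the convexity of $L$ in $(y,y')$ is used only through this first-order inequality, so no second-order information about $L$ is needed beyond what is already assumed; if $L$ were strictly convex in $(y,y')$ the same argument would yield uniqueness of the minimizer.
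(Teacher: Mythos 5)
Your proof is correct and is exactly the standard argument behind this result, which the paper does not prove itself but cites from van Brunt: the first-order (tangent-plane) inequality for convex differentiable functions, integration over $[a,b]$, integration by parts with vanishing boundary terms since $\eta(a)=\eta(b)=0$, and cancellation of the remaining integrand via the Euler--Lagrange equation \eqref{EL:C}. The only quirk you inherited from the paper is its reference to problem \eqref{I}: the theorem is really about the unconstrained problem \eqref{P} (as its use in Example~\ref{ex2} confirms), and your global inequality $\mathcal{L}[y]\geqslant\mathcal{L}[y_*]$ over all admissible $y$ handles either reading.
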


Note that when a function is smooth or has continuous first-
and second-order partial derivatives we can apply
the following result which is more practical
to prove that a function is convex.

\begin{theorem}[cf. \cite{Brunt}]
Let $\Omega\subseteq\mathbb{R}^2$ be a convex set and let $f:\Omega\rightarrow\mathbb{R}$
be a function with continuous first- and second-order partial derivatives.
The function $f$ is convex if, and only if, for each $(y,w)\in\Omega$:
\[
\begin{array}{rcc}

\displaystyle\frac{\partial ^{2}f}{\partial y^{2}}\left( y,w\right)  & \geqslant  & 0 \\
&  &  \\
\displaystyle\frac{\partial ^{2}f}{\partial w^{2}}\left( y,w\right)  & \geqslant  & 0 \\
&  &  \\
\displaystyle\frac{\partial ^{2}f}{\partial y^{2}}\left( y,w\right)
\frac{\partial^{2}f}{\partial w^{2}}\left( y,w\right)
- \frac{\partial ^{2}f}{\partial w\partial y}
\left( y,w\right)  & \geqslant  & 0.
\end{array}
\]
\end{theorem}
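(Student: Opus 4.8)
The plan is to reduce this two‑variable statement to the familiar one‑variable characterisation of convexity via the second derivative, and then to an elementary linear‑algebra fact about positive semidefinite quadratic forms in two variables. First I would recall that a function $f$ on a convex set $\Omega$ is convex if, and only if, for every pair of points $z_1,z_2\in\Omega$ the one‑variable function $\varphi(t):=f\big((1-t)z_1+tz_2\big)$ is convex on $[0,1]$; this is immediate by unravelling the definition of convexity on both sides. Since $f\in C^2$, each such $\varphi$ is $C^2$ on $[0,1]$, so by the classical single‑variable result (cf. \cite{Brunt}), $\varphi$ is convex on $[0,1]$ if, and only if, $\varphi''(t)\geqslant 0$ for all $t\in[0,1]$.

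Next I would carry out the chain‑rule computation. Writing $z_1=(y_1,w_1)$, $z_2=(y_2,w_2)$, $h=(h_1,h_2):=z_2-z_1$ and $z(t):=(1-t)z_1+tz_2$, differentiating twice and using the equality of mixed partials (valid since the second derivatives are continuous) gives
\[
\varphi''(t)=\frac{\partial^2 f}{\partial y^2}(z(t))\,h_1^2
+2\frac{\partial^2 f}{\partial w\partial y}(z(t))\,h_1 h_2
+\frac{\partial^2 f}{\partial w^2}(z(t))\,h_2^2 ,
\]
i.e. $\varphi''(t)$ is the quadratic form associated with the Hessian of $f$ at $z(t)$, evaluated on $h$. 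Hence $f$ is convex if, and only if, this quadratic form is nonnegative on every admissible segment; localising at interior points of $\Omega$ (where short segments in every direction remain inside $\Omega$) and using continuity of the second derivatives to pass to all of $\Omega$, this is equivalent to the Hessian of $f$ being positive semidefinite at every $(y,w)\in\Omega$.

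Finally I would prove the algebraic equivalence: a symmetric matrix $\left(\begin{smallmatrix}a&b\\ b&c\end{smallmatrix}\right)$ is positive semidefinite if, and only if, $a\geqslant 0$, $c\geqslant 0$ and $ac-b^2\geqslant 0$. The forward direction follows by testing the form on $(1,0)$ and $(0,1)$ and noting that a positive semidefinite symmetric matrix has nonnegative determinant. For the converse, if $a>0$ one completes the square, $ax^2+2bxy+cy^2=a\big(x+\tfrac{b}{a}y\big)^2+\tfrac{ac-b^2}{a}\,y^2\geqslant 0$; if $a=0$, then $ac-b^2\geqslant 0$ forces $b=0$ and the form is $cy^2\geqslant 0$. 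Applying this with $a=\partial^2 f/\partial y^2$, $b=\partial^2 f/\partial w\partial y$, $c=\partial^2 f/\partial w^2$ produces exactly the three displayed inequalities.

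I expect the only real subtlety to be the passage between ``$\varphi''\geqslant 0$ along every segment'' and ``pointwise positive semidefiniteness of the Hessian on all of $\Omega$'': one must be slightly careful at boundary points of $\Omega$ and must justify that an arbitrary direction $h$ at an arbitrary point is realised by some segment contained in $\Omega$, which is where the convexity of $\Omega$ (and the tacit assumption that the relevant points are interior, or that $\Omega$ agrees with the closure of its interior) enters. The chain‑rule computation and the square‑completion are routine.
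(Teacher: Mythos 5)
Your proof is correct and is the standard argument; the paper itself states this theorem only as a cited background result (cf.\ van Brunt) and gives no proof, so there is nothing to compare against. One point worth flagging: your argument establishes the determinant condition
\[
\frac{\partial ^{2}f}{\partial y^{2}}\left( y,w\right)
\frac{\partial^{2}f}{\partial w^{2}}\left( y,w\right)
- \left(\frac{\partial ^{2}f}{\partial w\partial y}\left( y,w\right)\right)^{2} \geqslant 0,
\]
with the mixed partial \emph{squared}, whereas the statement as printed omits the square; your version is the correct one (the printed inequality is not even invariant under changing the sign of $f_{wy}$ and is a typographical slip in the source). The only genuine care point is the one you already identified: deducing pointwise positive semidefiniteness of the Hessian from convexity requires testing short segments in every direction, which is only possible at interior points, and one then passes to the rest of $\Omega$ by continuity; this tacitly assumes $\Omega$ has nonempty interior (otherwise the partial derivatives in the statement are not even meaningful), which is the standing convention here.
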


\begin{ex}\label{ex2}
In Example~\ref{ex1}, we have proven that the function $y(t)=t$ is a candidate
to the problem of finding the curve with the shortest arc length connecting
the points $(0,0)$ and $(1,1)$ of the plane. In this example
$\Omega_t=\mathbb{R}^2$ is a convex set and since for all $(y,y')\in\Omega_t$ we have
$$
\frac{\partial ^{2}L}{\partial y^{2}}=\frac{\partial ^{2}L}{\partial y'\partial y}=0
$$
and
$$
\frac{\partial ^{2}L}{\partial y'^{2}}= \frac{1}{\left[\sqrt{1+\left[ y^{\prime }\left(
t\right) \right] ^{2}}\right]^3}>0,
$$
hence $L$ is convex. By Theorem~\ref{SuffCondt}, we can conclude
that the line segment $y(t)=t$ is the solution to the problem.
\end{ex}

For further information about the classical calculus of variations we suggest the reader
the following books \cite{Gelfand,Brunt}.

The study of the calculus of variations in the context of the time scale theory
is very recent (2004) and was initiated with the well known paper of Martin Bohner \cite{Bohner:3}.
One of the main results of \cite{Bohner:3} is the Euler--Lagrange equation
for the first-order variational problem involving the delta
derivative:\index{Variational problem on time scales}
\begin{equation}
\label{Variational problem TS}
\left \{
\begin{array}{lll}
\mathcal{L}\left[ y\right] =\displaystyle\int_{a}^{b}L\left( t,y^\sigma\left( t\right)
,y^\Delta\left( t\right) \right) \Delta t & \rightarrow & \text{extremize} \\
\\
y\left( a\right) =\alpha &  &  \\
\\
y\left( b\right) =\beta &  &
\end{array}
\right.
\end{equation}
where $a,b\in\mathbb{T}$, $a<b$; $\alpha ,\beta \in \mathbb{R}^n$
with $n\in\mathbb{N}$, and $L:\mathbb{T}\times\mathbb{R}^{2n}\rightarrow\mathbb{R}$ is a $C^2$ function.

\begin{theorem}[Euler--Lagrange equation on time scales \cite{Bohner:3}]\index{Euler--Lagrange equation on time scales}
If $y_{\ast }\in C_{rd}^1$ is a local extremum of \eqref{Variational problem TS},
then the Euler--Lagrange equation
\[
\left( \partial _{3}L\right) ^{\Delta }\left( t,y_{\ast }^{\sigma }\left(
t\right) ,y_{\ast }^{\Delta }\left( t\right) \right) =\partial _{2}L\left(
t,y_{\ast }^{\sigma }\left( t\right) ,y_{\ast }^{\Delta }\left( t\right)
\right)
\]
holds for all $t\in [a,b]^k$.
\end{theorem}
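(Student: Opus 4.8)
The plan is to run the classical first-variation argument inside the delta calculus. First I would fix an admissible extremizer $y_{*}\in C^1_{rd}$ and consider variations of the form $y_{*}+\varepsilon\eta$, where $\varepsilon\in\mathbb{R}$ and $\eta\in C^1_{rd}$ (taken componentwise, since the codomain is $\mathbb{R}^n$) satisfies $\eta(a)=\eta(b)=0$; such a curve is admissible for \eqref{Variational problem TS} for every $\varepsilon$. Setting $\phi(\varepsilon):=\mathcal{L}[y_{*}+\varepsilon\eta]$, the real function $\phi$ has a local extremum at $\varepsilon=0$, so a necessary condition is $\phi'(0)=0$.

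Next I would compute $\phi'(0)$ by differentiating under the delta integral sign, which the $C^2$ hypothesis on $L$ along the compact curve should justify, obtaining
\begin{equation*}
\phi'(0)=\int_{a}^{b}\Bigl[\partial_{2}L\bigl(t,y_{*}^{\sigma}(t),y_{*}^{\Delta}(t)\bigr)\,\eta^{\sigma}(t)+\partial_{3}L\bigl(t,y_{*}^{\sigma}(t),y_{*}^{\Delta}(t)\bigr)\,\eta^{\Delta}(t)\Bigr]\Delta t=0.
\end{equation*}
To remove $\eta^{\sigma}$, put $A(t):=\int_{a}^{t}\partial_{2}L\bigl(\tau,y_{*}^{\sigma}(\tau),y_{*}^{\Delta}(\tau)\bigr)\Delta\tau$, so that $A^{\Delta}=\partial_{2}L(\cdot,y_{*}^{\sigma},y_{*}^{\Delta})$ and $A(a)=0$. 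Applying the delta integration by parts formula of Theorem~\ref{ts:props integral delta} (item~9) with $f=A$ and $g=\eta$, and using $\eta(a)=\eta(b)=0$, gives $\int_{a}^{b}\partial_{2}L\cdot\eta^{\sigma}\,\Delta t=-\int_{a}^{b}A(t)\eta^{\Delta}(t)\,\Delta t$; hence
\begin{equation*}
\int_{a}^{b}\Bigl[\partial_{3}L\bigl(t,y_{*}^{\sigma}(t),y_{*}^{\Delta}(t)\bigr)-A(t)\Bigr]\eta^{\Delta}(t)\,\Delta t=0
\end{equation*}
for every admissible variation $\eta$.

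Finally I would invoke a Dubois--Reymond type fundamental lemma on time scales (not available in the excerpt, so it has to be proved separately or cited): if an rd-continuous function $g$ satisfies $\int_{a}^{b}g(t)\eta^{\Delta}(t)\,\Delta t=0$ for all $\eta\in C^1_{rd}$ with $\eta(a)=\eta(b)=0$, then $g$ is constant on $[a,b]^{\kappa}$. Taking $g=\partial_{3}L(\cdot,y_{*}^{\sigma},y_{*}^{\Delta})-A$ yields $\partial_{3}L(t,y_{*}^{\sigma}(t),y_{*}^{\Delta}(t))=A(t)+c$ on $[a,b]^{\kappa}$ for some constant $c$; delta-differentiating and using $A^{\Delta}=\partial_{2}L(\cdot,y_{*}^{\sigma},y_{*}^{\Delta})$ produces the Euler--Lagrange equation.

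The step I expect to be the main obstacle is the rigorous justification of differentiating under the delta integral sign, i.e.\ proving that $\phi$ is differentiable at $0$ with the stated value; this needs a mean value / uniform continuity argument for the delta integral combined with the $C^2$ bounds on $L$ on a neighbourhood of the graph of $y_{*}$. A secondary technical point is the proof of the Dubois--Reymond lemma on time scales: the usual device of testing against the delta antiderivative of $g-\frac{1}{b-a}\int_{a}^{b}g\,\Delta\tau$ works, but one must check that this test function indeed lies in $C^1_{rd}$ and vanishes at the endpoints.
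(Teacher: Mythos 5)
This theorem appears in the thesis only as a cited result from \cite{Bohner:3}, with no proof given in the text, so the comparison is against the cited source rather than the paper itself. Your argument is, in outline, exactly the proof of that reference: first variation, elimination of $\eta^{\sigma}$ through the delta antiderivative $A$ of $\partial_{2}L(\cdot,y_{*}^{\sigma},y_{*}^{\Delta})$ together with integration by parts, and a Dubois--Reymond lemma on time scales; note that the identity $\partial_{3}L(\cdot,y_{*}^{\sigma},y_{*}^{\Delta})=A+c$ also delivers, for free, the delta differentiability of $t\mapsto\partial_{3}L\left(t,y_{*}^{\sigma}(t),y_{*}^{\Delta}(t)\right)$ that the statement implicitly presupposes. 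The two technical points you flag (differentiation under the delta integral sign, and checking that the Dubois--Reymond test function $\eta(t)=\int_{a}^{t}\bigl(g(\tau)-c\bigr)\Delta\tau$ lies in $C_{rd}^{1}$ and vanishes at both endpoints) are genuine but routine and are carried out in \cite{Bohner:3}.
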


Since the pioneer work of Martin Bohner \cite{Bohner:3}, many researchers developed
the calculus of variations on time scales in several different directions, namely:
with non-fixed boundary conditions \cite{Hilscher,Hilscher:2},
with two independent variables \cite{Bohner:4},
with higher-order delta derivatives \cite{Ferreira,Ferreira:3},
with higher-order nabla derivatives \cite{Martins},
with isoperimetric constraints \cite{Almeida,Ferreira:2,Malinowska:34},
infinite horizon variational problems \cite{Malinowska35,Martins:4},
with a Lagrangian depending on a delta indefinite integral
that depends on the unknown function \cite{Martins:25},
and with an invariant group of parameter-transformations
\cite{Bartosiewicz,Bartosiewicz05,Martins15}.

To end this chapter we refer the reader to the survey paper \cite{Torres},
that gives an excellent overview on a more general approach of the calculus of variations
on time scales that allows to obtain as particular cases the delta and nabla calculus of variations.


\clearpage{\thispagestyle{empty}\cleardoublepage}


\part{Original Work}
\label{Original Work}

\clearpage{\thispagestyle{empty}\cleardoublepage}


\chapter{Higher-order Hahn's Quantum Variational Calculus}
\label{Higher-order Hahn's Quantum Variational Calculus}

In this chapter we present our achievements made in the calculus of variations
within the Hahn's quantum calculus. In Section~\ref{sub:sec:hoCV} we prove a
higher-order fundamental lemma of the calculus of variations
with Hahn's operator (Lemma~\ref{hoordem n}).
In Section~\ref{sub:sec:HOELHCV} we deduce a higher-order
Euler--Lagrange equation for Hahn's variational calculus
(Theorem~\ref{hoHigher order E-L}).
Finally, we provide in Section~\ref{subsec:Ex}
a simple example of a quantum optimization problem
where our Theorem~\ref{hoHigher order E-L} leads
to the global minimizer, which is not a continuous function.


\section{Introduction}

Many physical phenomena are described
by equations involving nondifferentiable functions,
\textrm{e.g.}, generic trajectories of quantum mechanics \cite{Feynman}.
Several different approaches to deal with nondifferentiable functions
are followed in the literature of variational calculus, including
the time scale approach, which typically deal with delta or nabla differentiable
functions \cite{Ferreira,Malinowska,Martins}, the fractional approach, allowing to consider
functions that have no first-order derivative but have fractional derivatives
of all orders less than one \cite{Almeida:3,El-Nabulsi,Frederico},
and the quantum approach, which is particularly useful to model physical
and economical systems \cite{Bangerezako,Cresson,Malinowska:5,Malinowska:3,Martins:2}.

Roughly speaking, a quantum calculus substitutes the classical derivative
by a difference operator, which allows one to deal with sets of nondifferentiable functions.
Several dialects of quantum calculus are available \cite{Ernst:2,Kac}.
For motivation to study a nondifferentiable quantum variational calculus
we refer the reader to \cite{Almeida:2,Bangerezako,Cresson}.

In 1949 Hahn introduced the difference operator $D_{q,\omega}$ defined by
\[
D_{q,\omega}\left[  f\right]  \left(  t\right)  :=\frac{f\left(  qt+\omega
\right)  -f\left(  t\right)  }{\left(  q-1\right)  t+\omega},
\]
where $f$ is a real function, $q\in ]0,1[$
and $\omega\geqslant0$ are real fixed numbers \cite{Hahn}.
Hahn's difference operator has been applied successfully in the construction
of families of orthogonal polynomials as well as in approximation problems
\cite{alvares,Dobrogowska,Petronilho}.
However, during 60 years, the construction of the proper inverse
of Hahn's difference operator remained an open question.
Eventually, the problem was solved in 2009 by Aldwoah \cite{Aldwoah}
(see also \cite{Aldwoah:2,Annaby}). Here we introduce
the higher-order Hahn's quantum variational calculus,
proving Hahn's quantum analog of the higher-order Euler--Lagrange equation.
As particular cases we obtain the $q$-calculus Euler--Lagrange equation
\cite{Bangerezako} and the $h$-calculus Euler--Lagrange equation \cite{Bastos,Kelley}.

Variational functionals that depend on higher-order
derivatives arise in a natural way
in applications of engineering, physics, and economics.
Let us consider, for example, the equilibrium of an elastic
bending beam. Let us denote by $y(x)$ the deflection of the point $x$ of the beam,
$E(x)$ the elastic stiffness of the material, that can vary with $x$,
and $\xi(x)$ the load that bends the beam.
One may assume that, due to some constraints of physical nature, the dynamics
does not depend on the usual derivative $y'(x)$ but on some quantum derivative
$D_{q,\omega}\left[y\right]\left(x\right)$. In this condition,
the equilibrium of the beam correspond to the solution of the following
higher-order Hahn's quantum variational problem:
\begin{equation}
\label{ex:intro:ar}
\int_0^L \left[\frac{1}{2} \left(E(x) D_{q,\omega}^{2}\left[y\right](x)\right)^2
- \xi(x) y\left(q^2 x + q \omega + \omega\right)\right] dx \longrightarrow \min.
\end{equation}
Note that we recover the classical problem of the equilibrium
of the elastic bending beam when $(\omega,q)\rightarrow ]0,1[$.
Problem \eqref{ex:intro:ar} is a particular case of
problem \eqref{hoP} investigated in Section~\ref{sec:homr}.
Our higher-order Hahn's quantum Euler--Lagrange equation
(Theorem~\ref{hoHigher order E-L}) gives the main tool to solve such problems.


\section{Main results}
\label{sec:homr}

Let $I$ be a real interval containing $\displaystyle\omega_{0} := \frac{\omega}{1-q}.$
We define the $q,\omega$-derivatives of higher-order in the usual way:
the $r$th $q,\omega$-derivative ($r \in \mathbb{N}$)
of $f:I\rightarrow \mathbb{R}$ is the function
$D_{q,\omega}^{r}[f]: I\rightarrow \mathbb{R}$ given by
$D_{q,\omega}^{r}[f]:=D_{q,\omega}[D_{q,\omega}^{r-1}[f]]$,
provided $D_{q,\omega}^{r-1}[f]$ is $q,\omega$-differentiable on $I$
and where $D_{q,\omega}^{0}[f]:=f$.

For $s\in I$ we define
\begin{equation}
\label{eq:def:tar}
\left[  s\right]_{q,\omega}:=\left\{  q^{n}s+\omega\left[  n\right]_q  :n\in
\mathbb{N}_{0}\right\}  \cup\left\{  \omega_{0}\right\}  \text{.}
\end{equation}

Let $a,b \in I$ and $a<b$. We define the $q,\omega$-interval from $a$ to $b$ by
\[
\left[  a,b\right]_{q,\omega}:=\left\{  q^{n}a+\omega\left[  n\right]
_{q}:n\in
\mathbb{N}_{0}\right\}  \cup
\left\{  q^{n}b+\omega\left[  n\right]_{q}:n\in\mathbb{N}_{0}\right\}
\cup\left\{  \omega_{0}\right\},
\]
\textrm{i.e.}, $\left[a,b\right]_{q,\omega}=[a]_{q,\omega}\cup [b]_{q,\omega}$,
where $[a]_{q,\omega}$ and $[b]_{q,\omega}$ are given by \eqref{eq:def:tar}.

We introduce the linear space
$\mathcal{Y}^{r} = \mathcal{Y}^{r}\left(\left[a,b\right],\mathbb{R}\right)$ by
$$
\mathcal{Y}^{r} :=
\left\{  y: I  \rightarrow \mathbb{R}\, |\,
D_{q,\omega}^{i}[y], i = 0,\ldots, r,
\text{ are bounded on $[a,b]_{q,\omega}$ and continuous at } \omega_{0}\right\}
$$
endowed with the norm
$\left\Vert y\right\Vert_{r,\infty}:=\sum_{i=0}^{r}\left\Vert D_{q,\omega}
^{i}\left[  y\right]  \right\Vert_{\infty}$,
where
$\left\Vert y\right\Vert_{\infty}:=\sup_{t\in\left[  a,b\right]_{q,\omega}  }\left\vert
y\left(  t\right)  \right\vert$. The following notations are in order:
$\sigma(t):=qt+\omega$, $y^{\sigma}(t) = y^{\sigma^1}(t)
= (y \circ \sigma)(t) = y\left(  qt+\omega\right)$,
and $y^{\sigma^{k}} = y^{\sigma^{k-1}} \circ \sigma$, $k = 2, 3, \ldots$
Our main goal is to establish necessary optimality conditions
for the higher-order $q,\omega$-variational
problem \index{Higher-order $q,\omega$-variational problem}
\begin{equation}
\label{hoP}
\left\{
\begin{array}[c]{l}
\mathcal{L}\left[  y\right] = \displaystyle \int_{a}^{b}L\left(
t,y^{\sigma^{r}}\left(  t\right)  ,D_{q,\omega}\left[  y^{\sigma^{r-1}}\right]
\left(  t\right)  ,\ldots,D_{q,\omega}^{r}\left[  y\right]  \left(  t\right)
\right)  d_{q,\omega}t \longrightarrow \textrm{extremize}\\
\\
y \in \mathcal{Y}^{r}\left(  \left[  a,b\right]  ,\mathbb{R}\right)\\
\\
y\left(  a\right)  =\alpha_{0} \, , \quad  y\left(  b\right)  =\beta_{0} \, ,\\
\vdots\\
D_{q,\omega}^{r-1}\left[  y\right]  \left(  a\right)  =\alpha_{r-1} \, , \quad
D_{q,\omega}^{r-1}\left[  y\right]  \left(  b\right)  =\beta_{r-1}\, ,
\end{array}
\right.
\end{equation}
where $r\in \mathbb{N}$ and $\alpha_{i}, \beta_{i}\in
\mathbb{R}$, $i=0,\ldots,r-1$, are given. By extremize we mean maximize or minimize.
\begin{definition}
We say that $y$ is an admissible function for \eqref{hoP}
if $y \in\mathcal{Y}^{r}\left(  \left[  a,b\right], \mathbb{R}\right)$
and $y$ satisfies the boundary conditions
$D_{q,\omega}^{i}\left[  y\right]  \left(  a\right)  =\alpha_{i}$
and $D_{q,\omega}^{i}\left[  y\right]  \left(  b\right)  =\beta_{i}$
of problem \eqref{hoP}, $i = 0,\ldots,r-1$.
\end{definition}

The Lagrangian $L$ is assumed to satisfy the following hypotheses:
\begin{enumerate}
\item[(H1)] $(u_0, \ldots, u_r)\rightarrow L(t,u_0, \ldots, u_r)$
is a $C^1(\mathbb{R}^{r+1}, \mathbb{R})$ function for any $t \in [a,b]$;

\item[(H2)] $t \rightarrow L(t, y(t), D_{q,\omega}\left[  y\right](t),
\ldots, D_{q,\omega}^{r}\left[  y\right](t))$ is continuous
at $\omega_0$ for any admissible $y$;

\item[(H3)]  functions $t \rightarrow \partial_{i+2}L(t, y(t),
D_{q,\omega}\left[  y\right](t), \cdots, D^{r}_{q,\omega}\left[  y\right](t))$,
$i=0, 1, \cdots, r$, belong to\\
$\mathcal{Y}^{1}\left(  \left[  a,b\right]  ,\mathbb{R}\right)$
for all admissible $y$,
where $\partial_{i}L$ denotes the partial derivative
of $L$ with respect to its $i$th argument.
\end{enumerate}

\begin{definition}
We say that $y_{\ast}$ is a local minimizer (resp. local maximizer) for problem
\eqref{hoP} if $y_{\ast}$ is an admissible function and there exists $\delta>0$ such that
\[
\mathcal{L}\left[  y_{\ast}\right]  \leqslant\mathcal{L}\left[  y\right]
\text{ \ \ (resp. }\mathcal{L}\left[  y_{\ast}\right]  \geqslant
\mathcal{L}\left[  y\right]  \text{) }
\]
for all admissible $y$ with
$\left\Vert y_{\ast}-y\right\Vert_{r,\infty}<\delta$.
\end{definition}

\begin{definition}
We say that $\eta\in \mathcal{Y}^{r}\left(  \left[  a,b\right],
\mathbb{R}\right) $ is a \emph{variation}
if $\eta\left(  a\right)  =\eta\left(  b\right)  =0$, \ldots,
$D_{q,\omega}^{r-1}\left[  \eta\right]  \left(  a\right)
=D_{q,\omega}^{r-1}\left[  \eta\right]  \left(  b\right)  =0$.
\end{definition}


\subsection{Higher-order fundamental lemma of Hahn's variational calculus}
\label{sub:sec:hoCV}

The chain rule, as known from classical calculus,
does not hold in Hahn's quantum context
(see a counterexample in \cite{Aldwoah,Annaby}).
However, we can prove the following.

\begin{lemma}
\label{hoq}
If $f$ is $q,\omega$-differentiable on $I$, then the following equality holds:
\[
D_{q,\omega}\left[  f^{\sigma}\right]  \left(  t\right)  =q\left(
D_{q,\omega}\left[  f\right]  \right)  ^{\sigma}\left(  t\right)  \text{,
\ }t\in I\text{.}
\]
\end{lemma}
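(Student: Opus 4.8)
The plan is to work directly from the definition of the Hahn difference operator, treating separately the generic point $t\neq\omega_{0}$ and the fixed point $t=\omega_{0}$ of $\sigma$, where $\sigma(t)=qt+\omega$.

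First, for $t\in I$ with $t\neq\omega_{0}$, I would expand both sides. Since $f^{\sigma}=f\circ\sigma$ and $\sigma^{2}(t)=\sigma(\sigma(t))$, the left-hand side is
\[
D_{q,\omega}\left[f^{\sigma}\right](t)=\frac{f^{\sigma}(\sigma(t))-f^{\sigma}(t)}{(q-1)t+\omega}=\frac{f(\sigma^{2}(t))-f(\sigma(t))}{(q-1)t+\omega},
\]
while, using that $\sigma$ fixes only $\omega_{0}$ so that $\sigma(t)\neq\omega_{0}$ and the difference-quotient formula is legitimate at $\sigma(t)$, the right-hand side is
\[
q\left(D_{q,\omega}[f]\right)^{\sigma}(t)=q\,D_{q,\omega}[f](\sigma(t))=q\cdot\frac{f(\sigma^{2}(t))-f(\sigma(t))}{(q-1)\sigma(t)+\omega}.
\]
Comparing these, the identity reduces to the scalar equality $q\left[(q-1)t+\omega\right]=(q-1)\sigma(t)+\omega$, which is immediate: the right side equals $(q-1)(qt+\omega)+\omega=q(q-1)t+q\omega$, matching the left side.

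For $t=\omega_{0}$ I would use $\sigma(\omega_{0})=\omega_{0}$. Since $f$ is $q,\omega$-differentiable on $I$, by definition $D_{q,\omega}[f](\omega_{0})=f'(\omega_{0})$ exists, i.e.\ $f$ is classically differentiable at $\omega_{0}$; as $\sigma$ is affine, $f^{\sigma}$ is then classically differentiable at $\omega_{0}$ with $(f^{\sigma})'(\omega_{0})=f'(\sigma(\omega_{0}))\,\sigma'(\omega_{0})=q\,f'(\omega_{0})$ by the ordinary chain rule. Hence
\[
D_{q,\omega}\left[f^{\sigma}\right](\omega_{0})=(f^{\sigma})'(\omega_{0})=q\,f'(\omega_{0})=q\,D_{q,\omega}[f](\omega_{0})=q\left(D_{q,\omega}[f]\right)^{\sigma}(\omega_{0}),
\]
the last step again because $\sigma(\omega_{0})=\omega_{0}$. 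This covers all $t\in I$ and completes the proof.

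The argument is essentially a routine calculation, so there is no serious obstacle; the only point that needs a little care is the case $t=\omega_{0}$, where one must invoke the classical differentiability of $f$ at $\omega_{0}$ (guaranteed by the standing hypothesis that $f$ is $q,\omega$-differentiable on $I$) in order to apply the ordinary chain rule, together with the observation that $\sigma$ maps $I\setminus\{\omega_{0}\}$ into itself so that the difference-quotient formula may be used at $\sigma(t)$ in the generic case.
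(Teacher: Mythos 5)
Your proof is correct and follows essentially the same route as the paper: a direct expansion of both sides of the identity for $t\neq\omega_{0}$, reducing to the scalar equality $(q-1)\sigma(t)+\omega=q\left[(q-1)t+\omega\right]$, together with the chain-rule computation $(f^{\sigma})'(\omega_{0})=q\,f'(\omega_{0})$ at the fixed point $t=\omega_{0}$. Your added remark that $\sigma(t)\neq\omega_{0}$ for $t\neq\omega_{0}$ (so the difference-quotient form of $D_{q,\omega}[f]$ may be used at $\sigma(t)$) is a small point of care the paper leaves implicit.
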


\begin{proof}
For $t\neq\omega_{0}$ we have
\begin{equation*}
\left(  D_{q,\omega}\left[  f\right]  \right)  ^{\sigma}\left(  t\right)
=\frac{f\left(  q\left(  qt+\omega\right)  +\omega\right)  -f\left(
qt+\omega\right)  }{\left(  q-1\right)  \left(  qt+\omega\right)  +\omega}
  =\frac{f\left(  q\left(  qt+\omega\right)  +\omega\right)  -f\left(
qt+\omega\right)  }{q\left(  \left(  q-1\right)  t+\omega\right)  }
\end{equation*}
and
\begin{equation*}
D_{q,\omega}\left[  f^{\sigma}\right]  \left(  t\right)
=\frac{f^{\sigma}\left(  qt+\omega\right)
-f^{\sigma}\left(  t\right)  }{\left(  q-1\right) t+\omega}
=\frac{f\left(  q\left(  qt+\omega\right)  +\omega\right)
-f\left(qt+\omega\right)}{\left(  q-1\right)  t+\omega}.
\end{equation*}
Therefore,
$D_{q,\omega}\left[  f^{\sigma}\right]  \left(  t\right)  =q\left(
D_{q,\omega}\left[  f\right]  \right)  ^{\sigma}\left(  t\right)$.
If $t=\omega_{0}$, then $\sigma\left(  \omega_{0}\right)  =\omega_{0}$. Thus,
\begin{equation*}
\left(  D_{q,\omega}\left[  f\right]  \right)  ^{\sigma}\left(  \omega
_{0}\right) =\left(  D_{q,\omega}\left[  f\right]  \right)  \left(
\sigma\left(  \omega_{0}\right)  \right)
=\left(  D_{q,\omega}\left[  f\right]  \right)  \left(  \omega_{0}\right)
=f^{\prime}\left(  \omega_{0}\right)
\end{equation*}
and
$D_{q,\omega}\left[  f^{\sigma}\right]  \left(  \omega_{0}\right) =\left(
f^{\sigma}\right)  ^{\prime}\left(  \omega_{0}\right)
=f^{\prime}\left(  \sigma\left(
\omega_{0}\right)  \right)  \sigma^{\prime}\left(  \omega_{0}\right)
=qf^{^{\prime}}\left(  \omega_{0}\right)$.
\end{proof}

\begin{lemma}
\label{hoanula}
If $\eta\in \mathcal{Y}^{r}\left(  \left[  a,b\right]  ,\mathbb{R}\right)$ is such that
$D_{q,\omega}^{i}\left[  \eta\right]  \left(
a\right)  =0$ (resp. $D_{q,\omega}^{i}\left[  \eta\right]  \left(  b\right)
=0$) for all $i\in\left\{  0,1,\ldots,r\right\}$, then $D_{q,\omega}
^{i-1}\left[  \eta^{\sigma}\right]  \left(  a\right)  =0$ (resp. $D_{q,\omega
}^{i-1}\left[  \eta^{\sigma}\right]  \left(  b\right)  =0$) for all $i\in\left\{
1,\ldots,r\right\}  $.
\end{lemma}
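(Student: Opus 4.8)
The plan is to reduce the statement to two ingredients that are already available: Lemma~\ref{hoq}, which commutes $D_{q,\omega}$ past the shift $\sigma$ at the cost of a factor $q$, and the basic identity $f(qt+\omega)=f(t)+\left(t(q-1)+\omega\right)D_{q,\omega}[f](t)$ of Hahn's operator (the fifth property in the theorem on $D_{q,\omega}$). The point is that commuting the derivative past $\sigma$ leaves one evaluating a $q,\omega$-derivative of $\eta$ at the shifted point $\sigma(a)=qa+\omega$ rather than at $a$ itself, and that last identity is precisely what converts the value at $qa+\omega$ into a combination of the prescribed boundary data at $a$.

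First I would establish, by induction on $k$, the iterated form of Lemma~\ref{hoq}:
\[
D_{q,\omega}^{k}\left[f^{\sigma}\right]=q^{k}\left(D_{q,\omega}^{k}[f]\right)^{\sigma}
\]
for every $k\in\mathbb{N}_{0}$ and every sufficiently often $q,\omega$-differentiable $f$ (in particular for $f=\eta\in\mathcal{Y}^{r}$ and $k\leqslant r-1$). The case $k=0$ is trivial and $k=1$ is exactly Lemma~\ref{hoq}; for the inductive step one writes $D_{q,\omega}^{k+1}[f^{\sigma}]=D_{q,\omega}\!\left[D_{q,\omega}^{k}[f^{\sigma}]\right]=D_{q,\omega}\!\left[q^{k}\left(D_{q,\omega}^{k}[f]\right)^{\sigma}\right]$ and applies linearity of $D_{q,\omega}$ together with Lemma~\ref{hoq} to the function $g=D_{q,\omega}^{k}[f]$, obtaining $q^{k}\cdot q\left(D_{q,\omega}[g]\right)^{\sigma}=q^{k+1}\left(D_{q,\omega}^{k+1}[f]\right)^{\sigma}$.

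Applying this with $f=\eta$ and $k=i-1$ for $i\in\{1,\ldots,r\}$ gives
\[
D_{q,\omega}^{i-1}\left[\eta^{\sigma}\right](a)=q^{i-1}\left(D_{q,\omega}^{i-1}[\eta]\right)^{\sigma}(a)=q^{i-1}\,D_{q,\omega}^{i-1}[\eta](qa+\omega).
\]
Now I would invoke the identity $f(qt+\omega)=f(t)+\left(t(q-1)+\omega\right)D_{q,\omega}[f](t)$ with the function $f=D_{q,\omega}^{i-1}[\eta]$ at the point $t=a$:
\[
D_{q,\omega}^{i-1}[\eta](qa+\omega)=D_{q,\omega}^{i-1}[\eta](a)+\left(a(q-1)+\omega\right)D_{q,\omega}^{i}[\eta](a).
\]
Since both $i-1$ and $i$ lie in $\{0,1,\ldots,r\}$, the hypothesis forces $D_{q,\omega}^{i-1}[\eta](a)=0$ and $D_{q,\omega}^{i}[\eta](a)=0$, so the right-hand side vanishes and therefore $D_{q,\omega}^{i-1}[\eta^{\sigma}](a)=0$. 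The argument at $b$ is word for word the same, with $b$ in place of $a$ throughout.

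The only genuinely delicate point is the one flagged at the outset: because $\sigma(a)=qa+\omega\neq a$ in general, a naive "substitute $t=a$" after commuting the derivative past $\sigma$ does not close the argument, and one must pass through the first-order $q,\omega$-Taylor-type identity to bring the shifted evaluation back to the boundary point. Everything else is routine bookkeeping with the recursive definition of the higher-order operators $D_{q,\omega}^{i}$.
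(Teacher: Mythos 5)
Your proposal is correct and follows essentially the same route as the paper: both arguments reduce the claim to showing $\left(D_{q,\omega}^{i-1}\left[\eta\right]\right)^{\sigma}(a)=0$ via the (iterated) commutation rule of Lemma~\ref{hoq}, and both extract that vanishing from the two boundary hypotheses at orders $i-1$ and $i$. The only cosmetic differences are that you spell out the induction behind the iterated form $D_{q,\omega}^{k}\left[f^{\sigma}\right]=q^{k}\left(D_{q,\omega}^{k}[f]\right)^{\sigma}$, which the paper uses implicitly, and that your use of the identity $f\left(qt+\omega\right)=f\left(t\right)+\left(t\left(q-1\right)+\omega\right)D_{q,\omega}\left[f\right]\left(t\right)$ handles the point $a=\omega_{0}$ uniformly, where the paper instead treats that case separately.
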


\begin{proof}
If $a=\omega_{0}$ the result is trivial (because $\sigma\left(  \omega
_{0}\right)  =\omega_{0}$). Suppose now that $a\neq\omega_{0}$ and fix
$i\in\left\{  1,\ldots,r\right\}  $. Note that
\[
D_{q,\omega}^{i}\left[  \eta\right]  \left(  a\right)  =\frac{\left(
D_{q,\omega}^{i-1}\left[  \eta\right]  \right)  ^{\sigma}\left(  a\right)
-D_{q,\omega}^{i-1}\left[  \eta\right]  \left(  a\right)  }{\left(
q-1\right)  a+\omega}\text{.}
\]
Since, by hypothesis, $D_{q,\omega}^{i}\left[  \eta\right]  \left(  a\right)
=0$ and $D_{q,\omega}^{i-1}\left[  \eta\right]  \left(  a\right)  =0$, then
$\left(  D_{q,\omega}^{i-1}\left[  \eta\right]  \right)  ^{\sigma}\left(
a\right)  =0$. \\Lemma~\ref{hoq} shows that
\[
\left(  D_{q,\omega}^{i-1}\left[  \eta\right]  \right)  ^{\sigma}\left(
a\right)  =\left(  \frac{1}{q}\right)  ^{i-1}D_{q,\omega}^{i-1}\left[
\eta^{\sigma}\right]  \left(  a\right).
\]
We conclude that
$D_{q,\omega}^{i-1}\left[  \eta^{\sigma}\right]  \left(  a\right)  =0$.
The case $t=b$ is proved in the same way.
\end{proof}

\begin{lemma}
\label{hoLema F CV}
Suppose that $f \in \mathcal{Y}^{1}\left(\left[a,b\right],\mathbb{R}\right)$.
One has
\[
\int_{a}^{b}f\left(  t\right)  D_{q,\omega}\left[  \eta\right]  \left(
t\right)  d_{q,\omega}t=0
\]
for all functions $\eta\in \mathcal{Y}^{1}\left(
\left[  a,b\right]  ,\mathbb{R}\right)$ such that
$\eta\left(  a\right)  =\eta\left(  b\right)  =0$ if, and only if,
$f\left(  t\right)  =c$, $c\in \mathbb{R}$, for all $t\in\left[  a,b\right]_{q,\omega}$.
\end{lemma}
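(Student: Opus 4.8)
The lemma has two directions, and the implication $(\Leftarrow)$ is immediate. If $f(t)\equiv c$ on $[a,b]_{q,\omega}$, then for every $\eta\in\mathcal{Y}^{1}$ with $\eta(a)=\eta(b)=0$, linearity of the $q,\omega$-integral (Theorem~\ref{hoPropriedades do integral}) together with the converse part of the fundamental theorem of Hahn's calculus gives $\int_{a}^{b}f(t)\,D_{q,\omega}[\eta](t)\,d_{q,\omega}t=c\int_{a}^{b}D_{q,\omega}[\eta](t)\,d_{q,\omega}t=c\bigl(\eta(b)-\eta(a)\bigr)=0$.

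For $(\Rightarrow)$ the plan is to transcribe the classical du~Bois-Reymond argument. Since the $q,\omega$-derivative of the identity function is $1$, the fundamental theorem of Hahn's calculus yields $\int_{a}^{b}1\,d_{q,\omega}t=b-a\neq0$, so the real number $c:=\frac{1}{b-a}\int_{a}^{b}f(t)\,d_{q,\omega}t$ is well defined. Because $f\in\mathcal{Y}^{1}$ is continuous at $\omega_{0}$, so is $f-c$, hence $\eta(t):=\int_{a}^{t}\bigl(f(s)-c\bigr)\,d_{q,\omega}s$ is well defined; by the fundamental theorem $D_{q,\omega}[\eta]=f-c$ (which is bounded on $[a,b]_{q,\omega}$ since $f$ is) and $\eta$ is continuous at $\omega_{0}$, and estimating the Jackson--N\"{o}rlund sums along the orbits shows $\eta$ is bounded on $[a,b]_{q,\omega}$, so $\eta\in\mathcal{Y}^{1}$. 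Moreover $\eta(a)=0$ and, by the choice of $c$, $\eta(b)=\int_{a}^{b}(f-c)\,d_{q,\omega}t=\int_{a}^{b}f\,d_{q,\omega}t-c(b-a)=0$, so $\eta$ is an admissible variation. Feeding $\eta$ into the hypothesis gives $0=\int_{a}^{b}f(t)\bigl(f(t)-c\bigr)\,d_{q,\omega}t$, and since $\int_{a}^{b}c\bigl(f(t)-c\bigr)\,d_{q,\omega}t=c\,\eta(b)=0$, subtracting yields $\int_{a}^{b}\bigl(f(t)-c\bigr)^{2}\,d_{q,\omega}t=0$.

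The hard part will be the last step: deducing $f\equiv c$ on $[a,b]_{q,\omega}$ from $\int_{a}^{b}(f-c)^{2}\,d_{q,\omega}t=0$. Here the Hahn setting is genuinely different from the classical and time-scale ones, because (cf. Remark~\ref{rem:diff:int}) the Jackson--N\"{o}rlund integral over $[a,b]$ is a difference of two convergent series, so positivity of the integrand does not make the integral nonnegative and one cannot simply invoke ``integral of a nonnegative function is zero $\Rightarrow$ function is zero''. The intended remedy is to argue orbit by orbit on $[a,b]_{q,\omega}=[a]_{q,\omega}\cup[b]_{q,\omega}$: using the one-sided positivity result Lemma~\ref{hopositividade} — applied, if necessary, after also testing the hypothesis against extra variations whose $q,\omega$-derivative is supported on a single orbit $\{q^{n}b+\omega[n]_{q}:n\in\mathbb{N}_{0}\}$ (respectively $\{q^{n}a+\omega[n]_{q}:n\in\mathbb{N}_{0}\}$) — one forces $(f-c)^{2}$ to vanish at every point of each orbit; continuity of $f$ at $\omega_{0}$ then gives $(f-c)(\omega_{0})=0$, and since $\omega_{0}$ belongs to both orbits the two orbit-constants agree. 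Hence $f(t)=c$ for all $t\in[a,b]_{q,\omega}$. Everything before this paragraph is routine; the care lies in making the positivity-to-pointwise-vanishing passage rigorous in the quantum integral.
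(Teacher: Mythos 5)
Your argument is sound up to the identity $\int_{a}^{b}\left(f(t)-c\right)^{2}d_{q,\omega}t=0$, but the step you yourself flag as ``the hard part'' is a genuine gap, and the deduction you want does not follow. The Jackson--N\"{o}rlund integral from $a$ to $b$ is by definition the difference $\int_{\omega_{0}}^{b}-\int_{\omega_{0}}^{a}$ of two series, each of which sees only the values of the integrand on its own orbit $\left\{q^{n}b+\omega\left[n\right]_{q}\right\}$, resp.\ $\left\{q^{n}a+\omega\left[n\right]_{q}\right\}$. Hence $\int_{a}^{b}(f-c)^{2}\,d_{q,\omega}t=0$ only says these two one-sided sums are equal; Lemma~\ref{hopositividade} assigns each a sign but does not force either to vanish. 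The abstract implication ``$g\geqslant 0$ on $\left[a,b\right]_{q,\omega}$ and $\int_{a}^{b}g\,d_{q,\omega}t=0$ imply $g\equiv 0$'' is simply false in this calculus: take $g$ nonnegative, supported at one point of each orbit, with the two weights tuned so that the one-sided sums cancel. So the du~Bois-Reymond computation, as written, proves only that the two one-sided integrals of $(f-c)^{2}$ coincide, which is not the conclusion.

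The remedy you gesture at --- testing against variations concentrated on a single orbit --- is the right idea, but it is not carried out, and once you do carry it out the whole $c$-and-$(f-c)^{2}$ detour becomes superfluous. Taking $\eta$ equal to a suitable constant at the single point $q^{k+1}a+\omega\left[k+1\right]_{q}$ and zero elsewhere (such an $\eta$ belongs to $\mathcal{Y}^{1}$ and vanishes at $a$ and $b$ when $a\neq\omega_{0}$), the hypothesis collapses to a two-term sum giving $f\left(q^{k}a+\omega\left[k\right]_{q}\right)=f\left(q^{k+1}a+\omega\left[k+1\right]_{q}\right)$ directly; iterating in $k$ over both orbits and invoking continuity of $f$ at $\omega_{0}$, their common limit point, yields $f\equiv c$ on $\left[a,b\right]_{q,\omega}$. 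This is essentially the paper's proof, except that the paper first integrates by parts to reduce the hypothesis to $\int_{a}^{b}D_{q,\omega}\left[f\right](t)\,\eta^{\sigma}(t)\,d_{q,\omega}t=0$ and then runs the point-mass argument on $D_{q,\omega}\left[f\right]$. Note also that a complete proof must handle the degenerate configurations your sketch omits: $a=\omega_{0}$ or $b=\omega_{0}$ (one orbit collapses to $\left\{\omega_{0}\right\}$), $a(1-q)-\omega=0$ or $b(1-q)-\omega=0$ (one one-sided integral vanishes identically), and the point $p=\omega_{0}$ itself, where no single-point test function works and one must instead use continuity at $\omega_{0}$ to find orbit points of a definite sign far down both orbits, as in case~(2) of the paper's argument.
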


\begin{proof}
The implication ``$\Leftarrow$'' is obvious.
We prove ``$\Rightarrow$''. We begin by noting that
\[
\underset{=0}{\underbrace{\int_{a}^{b}f\left(  t\right)  D_{q,\omega}\left[
\eta\right]  \left(  t\right)  d_{q,\omega}t}}=\underset{=0}{\underbrace
{f\left(  t\right)  \eta\left(  t\right)  \bigg|_{a}^{b}}}-\int_{a}
^{b}D_{q,\omega}\left[  f\right]  \left(  t\right)  \eta^{\sigma}\left( t
\right)  d_{q,\omega}t.
\]
Hence,
\[
\int_{a}^{b}D_{q,\omega}\left[  f\right]  \left(  t\right)  \eta\left(
qt+\omega\right)  d_{q,\omega}t=0
\]
for any $\eta\in \mathcal{Y}^{1}\left(  \left[  a,b\right]  ,
\mathbb{R}
\right)  $  such that $\eta\left(  a\right)  =\eta\left(  b\right)  =0$.
We need to prove that, for some $c\in\mathbb{R}$,
$f\left(  t\right)  =c$ for all $t\in\left[  a,b\right]  _{q,\omega}$,
that is,
$D_{q,\omega}\left[  f\right]  \left(  t\right)  =0$ for all $t\in\left[
a,b\right]  _{q,\omega}$.
Suppose, by contradiction, that there exists $p\in\left[  a,b\right]_{q,\omega}$
such that $D_{q,\omega}\left[  f\right]  \left(  p\right)  \neq 0$.

\noindent (1) If $p\neq\omega_{0}$, then
$p=q^{k}a+\omega\left[  k\right]_{q}$ or $p=q^{k}b+\omega\left[k\right]_{q}$
for some $k\in \mathbb{N}_{0}$. Observe that
$a\left(  1-q\right) -\omega$ and $b\left(  1-q\right)-\omega$
cannot vanish simultaneously.

\noindent (a) Suppose that $a\left(  1-q\right)  -\omega\neq0$ and $b\left(
1-q\right)  -\omega\neq0$. In this case we can assume, without loss of
generality, that
$p=q^{k}a+\omega\left[  k\right]_{q}$
and we can define
\begin{equation*}
\eta\left(  t\right) =
\begin{cases}
D_{q,\omega}\left[  f\right]\left( q^{k}a+\omega\left[  k\right]_{q}\right)
&  \text{ if } t=q^{k+1}a+\omega\left[  k+1\right]  _{q}\\
\\
0 &  \text{ otherwise.}
\end{cases}
\end{equation*}
Then,
\begin{multline*}
\int_{a}^{b}D_{q,\omega}\left[  f\right]  \left(  t\right)  \cdot\eta\left(
qt+\omega\right)  d_{q,\omega}t\\
=-\left(  a\left(  1-q\right)
-\omega\right)  q^{k}D_{q,\omega}\left[  f\right]  \left(  q^{k}
a+\omega\left[  k\right]  _{q}\right)\cdot
D_{q,\omega}\left[  f\right]  \left(  q^{k}a+\omega\left[  k\right]_{q}\right) \neq 0,
\end{multline*}
which is a contradiction.

\noindent (b) If $a\left(  1-q\right)  -\omega\neq0$ and $b\left(  1-q\right)
-\omega=0$, then $b=\omega_{0}$. Since
$q^{k}\omega_{0}+\omega\left[  k\right]_{q}=\omega_{0}$ for all
$k\in \mathbb{N}_{0}$, then
$p\neq q^{k}b+\omega\left[  k\right]  _{q}$ $\forall k\in
\mathbb{N}_{0}$ and, therefore,
\[
p=q^{k}a+\omega\left[  k\right]_{q,\omega}\text{ for some }
k\in \mathbb{N}_{0}\text{.}
\]
Repeating the proof of $\left(  a\right)$ we obtain again a contradiction.

\noindent (c) If $a\left(  1-q\right)  -\omega=0$ and $b\left(  1-q\right)
-\omega\neq0$ then the proof is similar to $\left(  b\right)$.\\

\noindent (2) If $p=\omega_{0}$ then, without loss of generality, we can assume
$D_{q,\omega}\left[  f\right]  \left(  \omega_{0}\right)  >0$. Since
\[
\lim_{n\rightarrow+\infty}\left(  q^{n}a+\omega\left[  k\right]  _{q}\right)
=\lim_{n\rightarrow+\infty}\left(  q^{n}b+\omega\left[  k\right]  _{q}\right)
=\omega_{0}
\]
(see \cite{Aldwoah}) and $D_{q,\omega}\left[  f\right]  $ is continuous at
$\omega_{0}$, then
\begin{equation*}
\lim_{n\rightarrow+\infty}D_{q,\omega}\left[  f\right]  \left(  q^{n}
a+\omega\left[  k\right]  _{q}\right)   =\lim_{n\rightarrow
+\infty}D_{q,\omega}\left[  f\right]
\left(  q^{n}b+\omega\left[  k\right]_{q}\right)
=D_{q,\omega}\left[  f\right]  \left(  \omega_{0}\right)>0.
\end{equation*}
Thus, there exists $N\in \mathbb{N}$ such that for all $n\geqslant N$
one has
$D_{q,\omega}\left[  f\right]  \left(  q^{n}a+\omega\left[  k\right]_{q}\right)  >0$
and $D_{q,\omega}\left[  f\right]  \left(  q^{n}b+\omega\left[  k\right]_{q}\right)  >0$.

\noindent (a) If $\omega_{0}\neq a$ and $\omega_{0}\neq b$, then we can define
\[
\eta\left(  t\right)  =\left\{
\begin{tabular}[c]{lll}
$D_{q,\omega}\left[  f\right]  \left(  q^{N}b+\omega\left[  N\right]
_{q}\right)  $ & if & $t=q^{N+1}a+\omega\left[  N+1\right]  _{q}$ \\
&  & \\
$D_{q,\omega}\left[  f\right]  \left(  q^{N}a+\omega\left[  N\right]
_{q}\right)  $ & if & $t=q^{N+1}b+\omega\left[  N+1\right]  _{q}$ \\
&  & \\
0 &  & \text{otherwise}.
\end{tabular}
\right.
\]
Hence,
\begin{multline*}
\int_{a}^{b}D_{q,\omega}\left[  f\right]  \left(  t\right)  \eta\left(
qt+\omega\right)  d_{q,\omega}t\\
=\left(  b-a\right)  \left(  1-q\right)q^{N}D_{q,\omega}\left[  f\right]
\left(  q^{N} b+\omega\left[ N\right]  _{q}\right)\cdot  D_{q,\omega}\left[  f\right]
\left(  q^{N}a+\omega\left[  N\right]  _{q}\right) \neq 0,
\end{multline*}
which is a contradiction.

\noindent (b) If $\omega_{0}=b$, then we define
\[
\eta\left(  t\right)  =\left\{
\begin{tabular}
[c]{lll}
$D_{q,\omega}\left[  f\right]  \left(  \omega_{0}\right)  $ & if &
$t=q^{N+1}a+\omega\left[  N+1\right]  _{q}$\\
&  & \\
$0$ &  & otherwise.
\end{tabular}
\right.
\]
Therefore,
\begin{equation*}
\begin{split}
\int_{a}^{b} & D_{q,\omega}\left[  f\right]  \left(  t\right)  \eta\left(
qt+\omega\right)  d_{q,\omega}t\\
 &=  -\int_{\omega_{0}}^{a}D_{q,\omega}\left[
f\right]  \left(  t\right)  \eta\left(  qt+\omega\right)  d_{q,\omega}t\\
 &= -\left(  a\left(  1-q\right)  -\omega\right)   q^{N}D_{q,\omega
}\left[  f\right]  \left(  q^{N}a+\omega\left[  k\right]  _{q}\right)
\cdot D_{q,\omega}\left[  f\right]  \left(  \omega_{0}\right) \neq  0,
\end{split}
\end{equation*}
which is a contradiction.

\noindent (c) When $\omega_{0}=a$, the proof is similar to $\left(  b\right)$.
\end{proof}

\begin{lemma}[Fundamental lemma of Hahn's variational calculus]
\label{hoOrdem 1}
Let\index{Fundamental lemma of Hahn's variational calculus}
$f,g \in \mathcal{Y}^{1}\left(  \left[  a,b\right], \mathbb{R}\right)$. If
\[
\int_{a}^{b}\left(  f\left(  t\right)  \eta^{\sigma}\left(  t\right)
+g\left(  t\right)  D_{q,\omega}\left[  \eta\right]  \left(  t\right)
\right)  d_{q,\omega}t=0
\]
for all $\eta\in \mathcal{Y}^{1}\left(  \left[  a,b\right], \mathbb{R}\right)$
such that $\eta\left(  a\right)=\eta\left(  b\right)  =0$, then
\[
D_{q,\omega}\left[  g\right]  \left(  t\right)  =f\left(  t\right)  \text{
\ }\forall t\in\left[a,b\right]_{q,\omega}.
\]
\end{lemma}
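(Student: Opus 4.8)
The plan is to reduce the statement to the previously established Lemma~\ref{hoLema F CV} by means of a $q,\omega$-integration by parts, mimicking the classical du~Bois-Reymond argument. First I would introduce the function $F:I\rightarrow\mathbb{R}$ defined by $F(t):=\int_{\omega_{0}}^{t}f(\tau)\,d_{q,\omega}\tau$. By the Fundamental theorem of Hahn's calculus, $F$ is continuous at $\omega_{0}$ and $D_{q,\omega}[F](t)=f(t)$ for every $t\in I$; combined with the hypothesis $f\in\mathcal{Y}^{1}$ and the explicit series representation of the $q,\omega$-integral, this also yields boundedness of $F$ on $[a,b]_{q,\omega}$, so that $F\in\mathcal{Y}^{1}\left([a,b],\mathbb{R}\right)$.

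Next I would rewrite the first term of the integrand by applying property~\ref{itm:ip} of Theorem~\ref{hoPropriedades do integral} ($q,\omega$-integration by parts) with $F$ playing the role of the differentiated factor and $\eta$ the role of the shifted factor, namely
\[
\int_{a}^{b}F(t)\,D_{q,\omega}[\eta](t)\,d_{q,\omega}t
= F(t)\eta(t)\Big|_{a}^{b}-\int_{a}^{b}D_{q,\omega}[F](t)\,\eta^{\sigma}(t)\,d_{q,\omega}t .
\]
Since $\eta(a)=\eta(b)=0$ and $D_{q,\omega}[F]=f$, the boundary term vanishes and I obtain $\int_{a}^{b}f(t)\eta^{\sigma}(t)\,d_{q,\omega}t=-\int_{a}^{b}F(t)\,D_{q,\omega}[\eta](t)\,d_{q,\omega}t$. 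Substituting this into the hypothesis gives
\[
\int_{a}^{b}\bigl(g(t)-F(t)\bigr)\,D_{q,\omega}[\eta](t)\,d_{q,\omega}t=0
\]
for every admissible variation $\eta$. Because $g-F\in\mathcal{Y}^{1}\left([a,b],\mathbb{R}\right)$, Lemma~\ref{hoLema F CV} applies and produces a constant $c\in\mathbb{R}$ with $g(t)-F(t)=c$ for all $t\in[a,b]_{q,\omega}$.

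Finally I would differentiate this identity. Using that $\sigma$ maps $[a,b]_{q,\omega}$ into itself (indeed $\sigma(q^{n}a+\omega[n]_{q})=q^{n+1}a+\omega[n+1]_{q}$, similarly for $b$, and $\sigma(\omega_{0})=\omega_{0}$), the difference quotient defining $D_{q,\omega}[g-F](t)$ is $0$ at every $t\in[a,b]_{q,\omega}\setminus\{\omega_{0}\}$; at $t=\omega_{0}$ the Hahn derivative equals the classical derivative $(g-F)'(\omega_{0})$, which I would show is $0$ by restricting the defining limit to the sequence $q^{n}a+\omega[n]_{q}\to\omega_{0}$ (or $q^{n}b+\omega[n]_{q}\to\omega_{0}$, whichever is not identically $\omega_{0}$), along which $g-F$ is constantly $c$. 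Hence $D_{q,\omega}[g](t)=D_{q,\omega}[F](t)=f(t)$ for all $t\in[a,b]_{q,\omega}$, as claimed.

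I expect the delicate points to be the verification that $F$ (hence $g-F$) genuinely lies in $\mathcal{Y}^{1}$, which rests on boundedness of the $q,\omega$-integral over $[a,b]_{q,\omega}$, and the handling of the accumulation point $\omega_{0}$ in the last step, where the Hahn derivative is a true classical derivative rather than a finite difference; the remaining manipulations are a routine transcription of the classical fundamental lemma of the calculus of variations.
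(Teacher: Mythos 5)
Your proposal is correct and follows essentially the same route as the paper: define the $q,\omega$-antiderivative of $f$, integrate by parts to convert the hypothesis into $\int_{a}^{b}\left(g-F\right)\left(t\right)D_{q,\omega}\left[\eta\right]\left(t\right)d_{q,\omega}t=0$, invoke Lemma~\ref{hoLema F CV} to conclude $g-F$ is constant on $\left[a,b\right]_{q,\omega}$, and differentiate. Your added care in checking that $F\in\mathcal{Y}^{1}$ and in justifying the differentiation of the constant relation at the accumulation point $\omega_{0}$ is a welcome refinement of steps the paper passes over silently.
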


\begin{proof}
Define the function $A$ by $A\left(  t\right)
:=\int_{\omega_{0}}^{t}f\left(  \tau\right)  d_{q,\omega}\tau$. Then,
$D_{q,\omega}\left[  A\right]  \left(  t\right)  =f\left(  t\right)$ for all
$t\in\left[  a,b\right]$ and
\begin{align*}
\int_{a}^{b}A\left(  t\right)  D_{q,\omega}\left[  \eta\right]  \left(
t\right)  d_{q,\omega}t  & =  A\left(  t\right)  \eta\left(  t\right)
\bigg|_{a}^{b}-\int_{a}^{b}D_{q,\omega}\left[  A\right]  \left(  t\right)
\eta^{\sigma}\left(  t\right)  d_{q,\omega}t\\
& =-\int_{a}^{b}D_{q,\omega}\left[  A\right]  \left(  t\right)  \eta^{\sigma
}\left(  t\right)  d_{q,\omega}t\\
& =-\int_{a}^{b}f\left(  t\right)  \eta^{\sigma}\left(  t\right)  d_{q,\omega}t.
\end{align*}
Hence,
\begin{equation*}
\begin{split}
&\int_{a}^{b}\left(  f\left(  t\right)  \eta^{\sigma}\left(  t\right)
+g\left(  t\right)  D_{q,\omega}\left[  \eta\right]  \left(  t\right)
\right)  d_{q,\omega}t=0\\
\Leftrightarrow & \int_{a}^{b}\left(  -A\left(  t\right)  +g\left(  t\right)
\right)  D_{q,\omega}\left[  \eta\right]  \left(  t\right)  d_{q,\omega}t=0.
\end{split}
\end{equation*}
By Lemma~\ref{hoLema F CV} there is a
$c\in \mathbb{R}$ such that $-A\left(  t\right)  +g\left(  t\right)  =c$ for all
$t\in\left[a,b\right]_{q,\omega}$. Hence $D_{q,\omega}\left[  A\right]  \left(
t\right)  =D_{q,\omega}\left[  g\right]  \left(  t\right)  $ for all $t\in\left[
a,b\right]_{q,\omega}$, which provides the desired result:
$$
D_{q,\omega}\left[  g\right]  \left(  t\right)  =f\left(  t\right) \text{
,\ }\forall t\in\left[  a,b\right]_{q,\omega}.
$$
\end{proof}

We are now in conditions to deduce a higher-order fundamental lemma of the
Hahn quantum variational calculus.\index{Higher-order fundamental lemma of the Hahn variational calculus}

\begin{lemma}[Higher-order fundamental lemma of the Hahn variational calculus]
\label{hoordem n}
Let \\$f_{0},f_{1},\ldots,f_{r} \in \mathcal{Y}^{1}\left(  \left[  a,b\right], \mathbb{R}\right)$. If
\[
\int_{a}^{b}\left(  \sum_{i=0}^{r}f_{i}\left(  t\right)  D_{q,\omega}^{i}\left[
\eta^{\sigma^{r-i}}\right]  \left(  t\right)  \right) d_{q,\omega}t=0
\]
for any variation $\eta$, then
\[
\sum_{i=0}^{r}\left(  -1\right)  ^{i}\left(  \frac{1}{q}\right)
^{\frac{\left(  i-1\right)  i}{2}}D_{q,\omega}^{i}\left[  f_{i}\right]
\left(  t\right) = 0
\]
for all $t\in\left[a,b\right]_{q,\omega}$.
\end{lemma}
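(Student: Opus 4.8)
The plan is to argue by induction on the order $r$. For $r=1$, since $(1/q)^{0}=1$, the assertion is exactly Lemma~\ref{hoOrdem 1} (with $f=f_{0}$ and $g=f_{1}$). So assume $r\geqslant 2$ and that the statement holds for order $r-1$. I would first record the iterate of Lemma~\ref{hoq}: for every $k\in\mathbb{N}$ and every sufficiently differentiable $h$ on $I$,
\[
D_{q,\omega}^{k}\big[h^{\sigma}\big](t)=q^{k}\big(D_{q,\omega}^{k}[h]\big)^{\sigma}(t),\qquad t\in I,
\]
which follows by an immediate induction on $k$ from Lemma~\ref{hoq}.

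Next I would integrate by parts the highest-order term. Writing $D_{q,\omega}^{r}[\eta]=D_{q,\omega}\big[D_{q,\omega}^{r-1}[\eta]\big]$ and using the $q,\omega$-integration by parts formula (item~\ref{itm:ip} of Theorem~\ref{hoPropriedades do integral}),
\[
\int_{a}^{b}f_{r}(t)\,D_{q,\omega}^{r}[\eta](t)\,d_{q,\omega}t
=f_{r}(t)\,D_{q,\omega}^{r-1}[\eta](t)\Big|_{a}^{b}
-\int_{a}^{b}D_{q,\omega}[f_{r}](t)\,\big(D_{q,\omega}^{r-1}[\eta]\big)^{\sigma}(t)\,d_{q,\omega}t .
\]
The boundary term vanishes because $\eta$ is a variation, so $D_{q,\omega}^{r-1}[\eta](a)=D_{q,\omega}^{r-1}[\eta](b)=0$; and by the iterated Lemma~\ref{hoq}, $\big(D_{q,\omega}^{r-1}[\eta]\big)^{\sigma}=q^{-(r-1)}D_{q,\omega}^{r-1}[\eta^{\sigma}]$, so the top term equals $-q^{-(r-1)}\int_{a}^{b}D_{q,\omega}[f_{r}]\,D_{q,\omega}^{r-1}[\eta^{\sigma}]\,d_{q,\omega}t$. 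Since $\eta^{\sigma^{r-i}}=(\eta^{\sigma})^{\sigma^{(r-1)-i}}$, setting $\xi:=\eta^{\sigma}$ turns the hypothesis into
\[
\int_{a}^{b}\Big(\sum_{i=0}^{r-1}\widetilde f_{i}(t)\,D_{q,\omega}^{i}\big[\xi^{\sigma^{(r-1)-i}}\big](t)\Big)\,d_{q,\omega}t=0 ,
\]
where $\widetilde f_{i}=f_{i}$ for $i\leqslant r-2$ and $\widetilde f_{r-1}=f_{r-1}-q^{-(r-1)}D_{q,\omega}[f_{r}]$.

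By Lemma~\ref{hoanula}, $\xi=\eta^{\sigma}$ satisfies the endpoint conditions defining a variation of order $r-1$; granting that enough such $\xi$ arise this way to run the argument of the order-$(r-1)$ case, the inductive hypothesis gives
\[
\sum_{i=0}^{r-1}(-1)^{i}\Big(\tfrac1q\Big)^{\frac{(i-1)i}{2}}D_{q,\omega}^{i}\big[\widetilde f_{i}\big]=0 .
\]
Substituting $\widetilde f_{r-1}$ and using $D_{q,\omega}^{r-1}\big[D_{q,\omega}[f_{r}]\big]=D_{q,\omega}^{r}[f_{r}]$, the $i=r-1$ summand splits into
\[
(-1)^{r-1}\Big(\tfrac1q\Big)^{\frac{(r-2)(r-1)}{2}}D_{q,\omega}^{r-1}[f_{r-1}]
+(-1)^{r}\Big(\tfrac1q\Big)^{\frac{(r-2)(r-1)}{2}+(r-1)}D_{q,\omega}^{r}[f_{r}] ,
\]
and since $\tfrac{(r-2)(r-1)}{2}+(r-1)=\tfrac{(r-1)r}{2}$, collecting all terms yields precisely $\sum_{i=0}^{r}(-1)^{i}(1/q)^{\frac{(i-1)i}{2}}D_{q,\omega}^{i}[f_{i}]=0$ on $[a,b]_{q,\omega}$, closing the induction.

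The step I expect to be the main obstacle is exactly this passage to the inductive hypothesis: one must verify that the reduced identity is a bona fide instance of the order-$(r-1)$ lemma. This has two facets. First, a regularity check for the new coefficient $f_{r-1}-q^{-(r-1)}D_{q,\omega}[f_{r}]$, so that the quantity $D_{q,\omega}^{r}[f_{r}]$ occurring in the conclusion is meaningful — this is of the same nature as the observation that the conclusion $D_{q,\omega}[g]=f$ of Lemma~\ref{hoOrdem 1} already upgrades the regularity of $g$. Second, and more delicate, the fact that restricting admissible test functions to those of the form $\eta^{\sigma}$ still forces the pointwise conclusion on all of $[a,b]_{q,\omega}$: one resolves this by re-running, rather than merely citing, the order-$(r-1)$ argument, whose proof — like that of Lemma~\ref{hoLema F CV} — only ever invokes variations localized at finitely many points sufficiently far from $a$ and $b$, and each such localized variation can be written as $\eta^{\sigma}$ for a suitable variation $\eta$ of order $r$. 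Everything else (the single integration by parts, the iterate of Lemma~\ref{hoq}, and the exponent identity $\binom{r-1}{2}+(r-1)=\binom{r}{2}$) is routine bookkeeping.
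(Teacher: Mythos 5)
Your proof is correct and follows essentially the same route as the paper's: induction on the order, a single $q,\omega$-integration by parts on the highest term, the iterate of Lemma~\ref{hoq} to absorb $D_{q,\omega}[f_r]$ into the coefficient of order $r-1$, Lemma~\ref{hoanula} to see that $\eta^{\sigma}$ is again a variation, and the exponent identity $\frac{(r-2)(r-1)}{2}+(r-1)=\frac{(r-1)r}{2}$. The only difference is that you explicitly flag --- and propose to repair by re-running the lower-order argument with the localized test functions --- the point that the induction hypothesis is only ever tested against variations of the special form $\eta^{\sigma}$, a subtlety the paper's own proof passes over in silence.
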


\begin{proof}
We proceed by mathematical induction. If $r=1$ the result is true
by Lemma~\ref{hoOrdem 1}. Assume that
\[
\int_{a}^{b}\left(  \sum_{i=0}^{r+1}f_{i}\left(  t\right)  D_{q,\omega}^{i}
\left[  \eta^{\sigma^{r+1-i}}\right]  \left(  t\right)  \right)  d_{q,\omega
}t=0
\]
for all functions $\eta$ such that $\eta\left(  a\right)
= \eta\left(b\right)  =0$, \ldots,
$D_{q,\omega}^{r}\left[  \eta\right]  \left(  a\right)
= D_{q,\omega}^{r}\left[  \eta\right]  \left(  b\right)  =0$. Note that
\begin{align*}
\int_{a}^{b} f_{r+1}&\left(  t\right)  D_{q,\omega}^{r+1}\left[  \eta\right]
\left(  t\right)  d_{q,\omega}t\\
 &=f_{r+1}\left(  t\right)
D_{q,\omega}^{r}\left[  \eta\right]  \left(  t\right)  \bigg|_{a}^{b}
 -\int_{a}^{b}D_{q,\omega}\left[  f_{r+1}\right]  \left(  t\right)  \left(
D_{q,\omega}^{r}\left[  \eta\right]  \right)  ^{\sigma}\left(  t\right)
d_{q,\omega}t\\
 &= -\int_{a}^{b}D_{q,\omega}\left[  f_{r+1}\right]  \left(  t\right)  \left(
D_{q,\omega}^{r}\left[  \eta\right]  \right)  ^{\sigma}
\left(  t\right)  d_{q,\omega}t
\end{align*}
and, by Lemma~\ref{hoq},
\[
\int_{a}^{b}f_{r+1}\left(  t\right)  D_{q,\omega}^{r+1}\left[  \eta\right]
\left(  t\right)  d_{q,\omega}t=-\int_{a}^{b}D_{q,\omega}\left[
f_{r+1}\right] \left(  t\right)  \left(  \frac{1}{q}\right)^{r}D_{q,\omega
}^{r}\left[  \eta^{\sigma}\right]  \left(  t\right)  d_{q,\omega}t.
\]
Therefore,
\begin{align*}
\int_{a}^{b}&\left(  \sum_{i=0}^{r+1}f_{i}\left(  t\right)  D_{q,\omega}^{i}
\left[  \eta^{\sigma^{r+1-i}}\right]  \left(  t\right)  \right)  d_{q,\omega}t\\
&=\int_{a}^{b}\left(  \sum_{i=0}^{r}f_{i}\left(  t\right)  D_{q,\omega}^{i}\left[
\eta^{\sigma^{r+1-i}}\right]  \left(  t\right)\right)  d_{q,\omega}t\\
&\qquad -\int_{a}^{b}D_{q,\omega}\left[  f_{r+1}\right]  \left(  t\right)
\left(\frac{1}{q}\right)  ^{r}D_{q,\omega}^{r}\left[  \eta^{\sigma}\right]
\left(t\right)  d_{q,\omega}t\\
&=\int_{a}^{b}
\biggl[
\sum_{i=0}^{r-1}f_{i}\left(  t\right)  D_{q,\omega}^{i}\left[  \left(
\eta^{\sigma}\right)  ^{\sigma^{r-i}}\right]  \left(  t\right)  d_{q,\omega
}t\\
& \qquad +\left(  f_{r}-\left(  \frac{1}{q}\right)  ^{r}D_{q,\omega}\left[
f_{r+1}\right]  \right)  \left(  t\right)  D_{q,\omega}^{r}\left[
\eta^{\sigma}\right]  \left(  t\right)
\biggr]
d_{q,\omega}t.
\end{align*}
By Lemma~\ref{hoanula}, $\eta^{\sigma}$ is a variation.
Hence, using the induction hypothesis,
\begin{align*}
\sum_{i=0}^{r-1}&\left(-1\right)^{i}\left(  \frac{1}{q}\right)^{\frac{\left(
i-1\right)  i}{2}}D_{q,\omega}^{i}\left[  f_{i}\right]
\left(  t\right)\\
& \qquad +\left(  -1\right)  ^{r}\left(  \frac{1}{q}\right)^{\frac{\left(
r-1\right)  r}{2}}D_{q,\omega}^{r}\left[  \left(  f_{r}
-\left(  \frac{1}{q}\right)  ^{r}D_{q,\omega}\left[
f_{r+1}\right]  \right)  \right]  \left(t\right)\\
&=\sum_{i=0}^{r-1}\left(  -1\right)  ^{i}\left(  \frac{1}{q}\right)^{\frac{\left(
i-1\right)  i}{2}}D_{q,\omega}^{i}\left[  f_{i}\right]
\left(  t\right)  +\left(  -1\right)  ^{r}\left(  \frac{1}{q}\right)^{\frac{\left(
r-1\right)  r}{2}}D_{q,\omega}^{r}\left[  f_{r}\right]
\left(  t\right) \\
& \qquad +\left(  -1\right)  ^{r+1}\left(  \frac{1}{q}\right)  ^{\frac{\left(
r-1\right)  r}{2}}\left(  \frac{1}{q}\right)  ^{r}D_{q,\omega}^{r}\left[  D_{q,\omega}\left[
f_{r+1}\right]  \right]  \left(  t\right) \\
&=0
\end{align*}
for all $t\in\left[  a,b\right]_{q,\omega}$, which leads to
\[
\sum_{i=0}^{r+1}\left(  -1\right)  ^{i}\left(  \frac{1}{q}\right)
^{\frac{\left(  i-1\right)  i}{2}}D_{q,\omega}^{i}\left[  f_{i}\right]
\left(  t\right)  =0\text{, \ }t\in\left[  a,b\right]  _{q,\omega}.
\]
\end{proof}


\subsection{Higher-order Hahn's quantum Euler--Lagrange equation}
\label{sub:sec:HOELHCV}

For a variation $\eta$ and an admissible function $y$, we define the function
$\phi:\left]-\bar{\epsilon},\bar{\epsilon}\right[  \rightarrow \mathbb{R}$ by
\[
\phi\left(  \epsilon\right)
:=\mathcal{L}\left[  y+\epsilon\eta\right]  .
\]
The first variation of the variational problem \eqref{hoP} is defined by
\[
\delta\mathcal{L}\left[  y,\eta\right]  :=
\phi^{\prime}\left(  0\right)  .
\]
Observe that
\begin{align*}
\mathcal{L}\left[  y+\epsilon\eta\right]
=&\int_{a}^{b}L\Bigg(t,y^{\sigma^{r}}\left(  t\right)
+\epsilon\eta^{\sigma^{r}}\left(  t\right),
D_{q,\omega}\left[  y^{\sigma^{r-1}}\right]\left(t\right)
+\epsilon D_{q,\omega}\left[  \eta^{\sigma^{r-1}}\right]  \left(  t\right),\\
& \qquad\qquad\qquad \ldots,
D_{q,\omega}^{r}\left[  y\right]  \left(  t\right)
+\epsilon D_{q,\omega}^{r}\left[
\eta\right]  \left(  t\right)  \Bigg)d_{q,\omega}t\\
&=\mathcal{L}_{b}\left[y+\epsilon\eta\right]
-\mathcal{L}_{a}\left[  y+\epsilon\eta\right]
\end{align*}
with
\begin{multline*}
\mathcal{L}_{\xi}\left[  y+\epsilon\eta\right]
=\int_{\omega_{0}}^{\xi}L\Bigg(t,y^{\sigma^{r}}\left(  t\right)
+\epsilon\eta^{\sigma^{r}}\left(t\right), D_{q,\omega}\left[
y^{\sigma^{r-1}}\right]  \left(  t\right)
+\epsilon D_{q,\omega}\left[  \eta^{\sigma^{r-1}}\right]  \left(  t\right),\\
\ldots, D_{q,\omega}^{r}\left[  y\right]  \left(  t\right)
+\epsilon D_{q,\omega}^{r}\left[  \eta\right]  \left(  t\right)\Bigg)d_{q,\omega}t,
\end{multline*}
$\xi \in \{a, b\}$. Therefore,
\begin{equation}
\label{hoeq:le}
\delta\mathcal{L}\left[  y,\eta\right]  =\delta\mathcal{L}_{b}\left[
y,\eta\right]  -\delta\mathcal{L}_{a}\left[  y,\eta\right] .
\end{equation}

The following definition and lemma are important for our purposes.

\begin{definition}
Let $s \in I$ and $g:I\times]-\bar{\theta},\bar{\theta}[ \rightarrow \mathbb{R}$.
We say that $g\left(t,\cdot\right)$ is differentiable at $\theta_{0}$ uniformly
in $\left[s\right]_{q,\omega}$ if, for every $\varepsilon>0$, there exists
$\delta>0$ such that
\[
0<\left\vert \theta-\theta_{0}\right\vert <\delta
\Rightarrow
\left\vert \frac{g\left(  t,\theta\right)  -g\left(  t,\theta_{0}\right)
}{\theta-\theta_{0}}-\partial_{2} g\left(  t,\theta_{0}\right)  \right\vert
<\varepsilon
\]
for all $t\in\left[  s\right]_{q,\omega}$,
where $\displaystyle\partial_{2}g=\frac{\partial g}{\partial\theta}$.
\end{definition}

\begin{lemma}[\textrm{cf.} \cite{Malinowska:3}]
\label{hoderivada do integral}
Let $s \in I$. Assume that
$g:I\times]-\bar{\theta},\bar{\theta}[ \rightarrow \mathbb{R}$
is differentiable at $\theta_{0}$ uniformly in $\left[s\right]_{q,\omega}$,
and $\displaystyle\int_{\omega_{0}}^{s}\partial_{2}g\left(t,\theta_{0}\right)
d_{q,\omega}t$ exist. Then,
$$
G\left(  \theta\right):=\int_{\omega_{0}}^{s}g\left(t,\theta\right) d_{q,\omega}t,
$$
for $\theta$ near $\theta_{0}$, is differentiable at
$\theta_{0}$ with
$$
G^{\prime}\left(  \theta_{0}\right)
=\displaystyle\int_{\omega_{0}}^{s}\partial_{2}g\left(t,\theta_{0}\right) d_{q,\omega}t.
$$
\end{lemma}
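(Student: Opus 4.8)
The plan is to imitate the classical proof that one may differentiate under an integral sign, adapted to the $q,\omega$-integral, which is here just a convergent series. First I would write the difference quotient of $G$ explicitly:
\[
\frac{G(\theta)-G(\theta_0)}{\theta-\theta_0}
= \int_{\omega_0}^{s}\frac{g(t,\theta)-g(t,\theta_0)}{\theta-\theta_0}\,d_{q,\omega}t
= \bigl(s(1-q)-\omega\bigr)\sum_{k=0}^{+\infty}q^{k}\,
\frac{g(s_k,\theta)-g(s_k,\theta_0)}{\theta-\theta_0},
\]
where $s_k:=q^{k}s+\omega[k]_q\in[s]_{q,\omega}$, and I would want to pass the limit $\theta\to\theta_0$ inside the series to obtain $\bigl(s(1-q)-\omega\bigr)\sum_k q^k\,\partial_2 g(s_k,\theta_0)=\int_{\omega_0}^{s}\partial_2 g(t,\theta_0)\,d_{q,\omega}t$, which is assumed to exist.

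Second, the tool that legitimizes this exchange is precisely the hypothesis that $g(t,\cdot)$ is differentiable at $\theta_0$ \emph{uniformly} in $[s]_{q,\omega}$. Fix $\varepsilon>0$; choose $\delta>0$ from the uniform-differentiability hypothesis, so that for every $t\in[s]_{q,\omega}$ and every $\theta$ with $0<|\theta-\theta_0|<\delta$ we have
\[
\left|\frac{g(t,\theta)-g(t,\theta_0)}{\theta-\theta_0}-\partial_2 g(t,\theta_0)\right|<\varepsilon .
\]
Subtracting the two series termwise and using this bound on each term,
\[
\left|\frac{G(\theta)-G(\theta_0)}{\theta-\theta_0}-\int_{\omega_0}^{s}\partial_2 g(t,\theta_0)\,d_{q,\omega}t\right|
\leqslant \bigl|s(1-q)-\omega\bigr|\sum_{k=0}^{+\infty}q^{k}\,\varepsilon
= \bigl|s(1-q)-\omega\bigr|\,\frac{\varepsilon}{1-q},
\]
since $\sum_{k\geqslant 0}q^k=\tfrac{1}{1-q}$. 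As $\varepsilon>0$ is arbitrary and the constant $\bigl|s(1-q)-\omega\bigr|/(1-q)$ does not depend on $\varepsilon$, this shows the difference quotient converges to $\int_{\omega_0}^{s}\partial_2 g(t,\theta_0)\,d_{q,\omega}t$ as $\theta\to\theta_0$, which is exactly the claim $G'(\theta_0)=\int_{\omega_0}^{s}\partial_2 g(t,\theta_0)\,d_{q,\omega}t$. The degenerate case $s=\omega_0$ (equivalently $s(1-q)-\omega=0$) is trivial, since then every $s_k=\omega_0$ and $G\equiv 0$ near $\theta_0$ after the prefactor vanishes; more carefully, $G(\theta)=\int_{\omega_0}^{\omega_0}g(t,\theta)\,d_{q,\omega}t=0$ by property~1 of Theorem~\ref{hoPropriedades do integral}.

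The main obstacle, and the only genuinely non-routine point, is the interchange of $\lim_{\theta\to\theta_0}$ with the infinite summation: a termwise pointwise derivative bound would not suffice, but the \emph{uniform} differentiability hypothesis gives a single $\delta$ valid for all terms simultaneously, so the error in each term is dominated by $\varepsilon q^k$ and the geometric series $\sum q^k$ converges. One should also note in passing that the hypothesis ``$\int_{\omega_0}^{s}\partial_2 g(t,\theta_0)\,d_{q,\omega}t$ exists'' is needed only to guarantee that the limiting series is finite, so that the inequality above is between two finite quantities; the estimate itself then forces convergence and identifies the limit. I would present the argument in this order: dispose of $s=\omega_0$; write both the difference quotient and the target as series with common prefactor; apply uniform differentiability to bound the term-by-term difference; sum the geometric series; let $\varepsilon\to 0$.
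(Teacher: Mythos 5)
Your proof is correct and follows essentially the same route the paper uses for its analogous lemmas (Lemma~\ref{qs:tecnico} and Lemma~\ref{qhs:tecnico}): invoke uniform differentiability at $\theta_0$ to get a single $\delta$ valid on all of $[s]_{q,\omega}$, bound the termwise error by $\varepsilon$, and sum the geometric series — your bound $|s(1-q)-\omega|\,\varepsilon/(1-q)=\varepsilon|s-\omega_0|$ is exactly what the paper obtains by integrating the constant $\varepsilon$ over $[\omega_0,s]$ via its one-sided inequality for integrals based at $\omega_0$. Working directly with the series also correctly avoids the general inequality $\left|\int f\right|\leqslant\int|f|$, which Remark~\ref{rem:diff:int} warns can fail for the $q,\omega$-integral.
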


Considering \eqref{hoeq:le}, the following lemma is a direct
consequence of Lemma~\ref{hoderivada do integral}.

\begin{lemma}
\label{houniformly}
For a variation $\eta$ and an admissible function $y$, let
\begin{multline*}
g\left(  t,\epsilon\right) :=L\bigg(t,y^{\sigma^{r}}\left(  t\right)
+\epsilon\eta^{\sigma^{r}}\left(  t\right)  ,D_{q,\omega}\left[
y^{\sigma^{r-1}}\right]  \left(  t\right)  +\epsilon D_{q,\omega}\left[
\eta^{\sigma^{r-1}}\right]  \left(  t\right),\\
\ldots, D_{q,\omega}^{r}\left[  y\right]  \left(  t\right)
+\epsilon D_{q,\omega}^{r}\left[  \eta\right]  \left(  t\right)  \bigg),
\end{multline*}
$\epsilon\in\left]-\bar{\epsilon},\bar{\epsilon}\right[$. Assume that

\noindent (1) $g\left(  t,\cdot\right)$ is differentiable at $\omega_0$ uniformly in
$\left[a,b\right]_{q,\omega}$;

\noindent (2) $\mathcal{L}_{a}\left[  y+\epsilon\eta\right]
=\displaystyle\int_{\omega_{0}}^{a}g\left(
t,\epsilon\right)  d_{q,\omega}t $ and $\mathcal{L}_{b}\left[
y+\epsilon\eta\right]=\displaystyle\int_{\omega_{0}}^{b}g\left(
t,\epsilon\right)  d_{q,\omega}t$ exist for $\epsilon \approx 0$;

\noindent (3) $\displaystyle\int_{\omega_{0}}^{a}\partial_{2}g\left(  t,0\right)
d_{q,\omega}t$ and $\displaystyle\int_{\omega_{0}}^{b}\partial_{2}g\left(
t,0\right)  d_{q,\omega}t$ exist.

\noindent Then,
\begin{multline*}
\phi^{\prime}\left(  0\right) = \delta\mathcal{L}\left[  y,\eta\right]
= \int_{a}^{b}\bigg[  \sum_{i=0}^{r}\partial_{i+2}L\left(t,
y^{\sigma^{r}}\left(  t\right), D_{q,\omega}\left[
y^{\sigma^{r-1}}\right]  \left(  t\right),
\ldots, D_{q,\omega}^{r}\left[  y\right]  \left(  t\right)\right)\\
\cdot D_{q,\omega}^{i}\left[
\eta^{\sigma^{r-i}}\right]  \left(  t\right)\bigg] d_{q,\omega}t.
\end{multline*}
\end{lemma}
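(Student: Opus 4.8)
The plan is to differentiate the perturbed functional $\phi(\epsilon)=\mathcal{L}\left[y+\epsilon\eta\right]$ under the $q,\omega$-integral sign by means of Lemma~\ref{hoderivada do integral}, and then to identify the resulting integrand by the classical chain rule. First I would use the decomposition already recorded above, $\phi(\epsilon)=\mathcal{L}_{b}\left[y+\epsilon\eta\right]-\mathcal{L}_{a}\left[y+\epsilon\eta\right]$ with $\mathcal{L}_{\xi}\left[y+\epsilon\eta\right]=\int_{\omega_{0}}^{\xi}g(t,\epsilon)\,d_{q,\omega}t$ for $\xi\in\{a,b\}$; hypothesis (2) guarantees that both $q,\omega$-integrals exist for $\epsilon$ near $0$, so that $\phi$ is defined on a neighbourhood of $0$ and this splitting is nothing but the definition of the $q,\omega$-integral from $a$ to $b$.

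Next I would check that Lemma~\ref{hoderivada do integral} applies, with $\theta_{0}=0$, to the function $\epsilon\mapsto\mathcal{L}_{\xi}\left[y+\epsilon\eta\right]=\int_{\omega_{0}}^{\xi}g(t,\epsilon)\,d_{q,\omega}t$, for each of $\xi=a$ and $\xi=b$. Its two hypotheses are exactly (1) and (3): since $[a,b]_{q,\omega}=[a]_{q,\omega}\cup[b]_{q,\omega}$, the uniform differentiability of $g(t,\cdot)$ at $0$ on $[a,b]_{q,\omega}$ supplied by (1) restricts to uniform differentiability on $[a]_{q,\omega}$ and on $[b]_{q,\omega}$ separately, while (3) supplies the existence of $\int_{\omega_{0}}^{a}\partial_{2}g(t,0)\,d_{q,\omega}t$ and $\int_{\omega_{0}}^{b}\partial_{2}g(t,0)\,d_{q,\omega}t$. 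Lemma~\ref{hoderivada do integral} then gives $\delta\mathcal{L}_{\xi}\left[y,\eta\right]=\int_{\omega_{0}}^{\xi}\partial_{2}g(t,0)\,d_{q,\omega}t$, so that, by \eqref{hoeq:le},
\[
\phi'(0)=\delta\mathcal{L}\left[y,\eta\right]=\int_{\omega_{0}}^{b}\partial_{2}g(t,0)\,d_{q,\omega}t-\int_{\omega_{0}}^{a}\partial_{2}g(t,0)\,d_{q,\omega}t=\int_{a}^{b}\partial_{2}g(t,0)\,d_{q,\omega}t.
\]

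It remains to compute $\partial_{2}g(t,0)$. For fixed $t$, the map $\epsilon\mapsto g(t,\epsilon)$ is the composition of $(u_{0},\ldots,u_{r})\mapsto L(t,u_{0},\ldots,u_{r})$, which is $C^{1}$ by (H1), with the affine curve whose $i$-th component ($i=0,\ldots,r$) is $D_{q,\omega}^{i}\left[y^{\sigma^{r-i}}\right](t)+\epsilon\,D_{q,\omega}^{i}\left[\eta^{\sigma^{r-i}}\right](t)$, and hence has $\epsilon$-derivative $D_{q,\omega}^{i}\left[\eta^{\sigma^{r-i}}\right](t)$. The classical chain rule therefore yields
\[
\partial_{2}g(t,0)=\sum_{i=0}^{r}\partial_{i+2}L\left(t,y^{\sigma^{r}}(t),D_{q,\omega}\left[y^{\sigma^{r-1}}\right](t),\ldots,D_{q,\omega}^{r}\left[y\right](t)\right)D_{q,\omega}^{i}\left[\eta^{\sigma^{r-i}}\right](t),
\]
and substituting this into the previous display gives the claimed formula.

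The only genuine difficulty is the interchange of derivative and $q,\omega$-integral, and it has been packaged into Lemma~\ref{hoderivada do integral}: since the $q,\omega$-integral is an infinite series, differentiating it termwise at $\epsilon=0$ requires precisely the uniform control assumed in (1). Once that lemma is taken as given, what is left is the routine chain-rule computation above together with the additivity of the $q,\omega$-integral. The one point I would make explicit is the passage from uniformity on $[a,b]_{q,\omega}$ to uniformity on each of $[a]_{q,\omega}$ and $[b]_{q,\omega}$, which is what permits Lemma~\ref{hoderivada do integral}, stated for an integral based at $\omega_{0}$ with a single endpoint, to be invoked for both $\mathcal{L}_{a}$ and $\mathcal{L}_{b}$.
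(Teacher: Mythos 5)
Your proof is correct and follows exactly the route the paper intends: the paper dispenses with a written proof by declaring the lemma ``a direct consequence of Lemma~\ref{hoderivada do integral}'' together with the splitting \eqref{hoeq:le}, and your argument simply fills in those details (restriction of the uniformity hypothesis from $\left[a,b\right]_{q,\omega}$ to each of $\left[a\right]_{q,\omega}$ and $\left[b\right]_{q,\omega}$, differentiation under the $q,\omega$-integral at each endpoint, and the chain-rule evaluation of $\partial_{2}g\left(t,0\right)$ via (H1)). No gaps.
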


The following result gives a necessary condition of Euler--Lagrange type
for an admissible function to be a local extremizer
for \eqref{hoP}.\index{Higher-order Hahn's Quantum \\ Euler--Lagrange Equation}

\begin{theorem}[Higher-order Hahn's quantum Euler--Lagrange equation]
\label{hoHigher order E-L}
Under hypotheses (H1)--(H3) and conditions (1)--(3)
of Lemma~\ref{houniformly} on the Lagrangian $L$, if
$y_{\ast}\in \mathcal{Y}^{r}$ is a local extremizer for problem \eqref{hoP},
then $y_{\ast}$ satisfies the Hahn quantum Euler--Lagrange equation

\begin{eqnarray}
&&\sum_{i=0}^{r}\left(  -1\right)  ^{i}\left(  \frac{1}{q}\right)^{\frac{\left(
i-1\right)  i}{2}}D_{q,\omega}^{i}
\left[\tau\rightarrow
\partial_{i+2}L\left(\tau,y^{\sigma^{r}}\left(  \tau\right), D_{q,\omega}\left[
y^{\sigma^{r-1}}\right]  \left(  \tau\right),
\ldots, D_{q,\omega}^{r}\left[  y\right]\left(\tau\right)\right)\right]\left(  t\right) \notag \\
&&=0\label{hoeq:E-L}
\end{eqnarray}
for all $t\in\left[  a,b\right]_{q,\omega}$.
\end{theorem}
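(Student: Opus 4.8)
The plan is to derive the Euler--Lagrange equation as a direct consequence of the higher-order fundamental lemma (Lemma~\ref{hoordem n}) applied to the first variation computed in Lemma~\ref{houniformly}. First I would recall that, since $y_{\ast}$ is a local extremizer for \eqref{hoP} and $\phi(\epsilon) := \mathcal{L}[y_{\ast}+\epsilon\eta]$ is differentiable at $0$ (which is guaranteed by hypotheses (H1)--(H3) together with conditions (1)--(3) of Lemma~\ref{houniformly}), the function $\phi$ has a local extremum at $\epsilon = 0$ for every variation $\eta$. By the classical Fermat theorem, this forces $\phi^{\prime}(0) = \delta\mathcal{L}[y_{\ast},\eta] = 0$ for all variations $\eta$.

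Next I would invoke Lemma~\ref{houniformly}, whose conclusion expresses the first variation explicitly as
\[
\delta\mathcal{L}[y_{\ast},\eta]
= \int_{a}^{b}\left(\sum_{i=0}^{r}\partial_{i+2}L\left(t,y_{\ast}^{\sigma^{r}}(t), D_{q,\omega}[y_{\ast}^{\sigma^{r-1}}](t),\ldots, D_{q,\omega}^{r}[y_{\ast}](t)\right) D_{q,\omega}^{i}[\eta^{\sigma^{r-i}}](t)\right) d_{q,\omega}t.
\]
Setting, for $i = 0,1,\ldots,r$,
\[
f_{i}(t) := \partial_{i+2}L\left(t,y_{\ast}^{\sigma^{r}}(t), D_{q,\omega}[y_{\ast}^{\sigma^{r-1}}](t),\ldots, D_{q,\omega}^{r}[y_{\ast}](t)\right),
\]
hypothesis (H3) guarantees that each $f_{i} \in \mathcal{Y}^{1}([a,b],\mathbb{R})$, so the functions $f_{0},\ldots,f_{r}$ satisfy the standing assumptions of Lemma~\ref{hoordem n}. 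Since $\int_{a}^{b}\left(\sum_{i=0}^{r}f_{i}(t)\, D_{q,\omega}^{i}[\eta^{\sigma^{r-i}}](t)\right) d_{q,\omega}t = 0$ for every variation $\eta$, Lemma~\ref{hoordem n} applies verbatim and yields
\[
\sum_{i=0}^{r}(-1)^{i}\left(\frac{1}{q}\right)^{\frac{(i-1)i}{2}} D_{q,\omega}^{i}[f_{i}](t) = 0
\]
for all $t\in[a,b]_{q,\omega}$, which is precisely \eqref{hoeq:E-L}.

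Most of the work in this theorem has in fact already been discharged by the auxiliary lemmas, so the proof itself is short; the only genuine points to check are that the hypotheses line up. The main obstacle — really the only subtlety — is the verification that conditions (1)--(3) of Lemma~\ref{houniformly} are in force, so that $\phi$ is differentiable at $0$ and its derivative has the integral form quoted above. Condition (1), the uniform differentiability of $g(t,\cdot)$ at $\epsilon = 0$ on $[a,b]_{q,\omega}$, is where the $C^{1}$ assumption (H1) on $L$ and the boundedness built into the space $\mathcal{Y}^{r}$ are used; conditions (2)--(3) amount to the existence of the relevant $q,\omega$-integrals, which follows from (H2) and the continuity at $\omega_{0}$ of the integrands (invoking the proposition that functions continuous at $\omega_{0}$ are $q,\omega$-integrable). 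Once these are in place, the rest is a mechanical chain: Fermat $\Rightarrow$ $\delta\mathcal{L}[y_{\ast},\eta]=0$ $\Rightarrow$ Lemma~\ref{hoordem n} $\Rightarrow$ \eqref{hoeq:E-L}.
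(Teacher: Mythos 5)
Your proposal is correct and follows exactly the paper's own argument: Fermat's theorem gives $\phi^{\prime}(0)=0$, Lemma~\ref{houniformly} expresses this as the vanishing of the integral $\int_{a}^{b}\sum_{i=0}^{r}f_{i}(t)\,D_{q,\omega}^{i}[\eta^{\sigma^{r-i}}](t)\,d_{q,\omega}t$ for every variation $\eta$, and Lemma~\ref{hoordem n} then yields \eqref{hoeq:E-L}. The only remark is that conditions (1)--(3) of Lemma~\ref{houniformly} are already assumed in the statement of the theorem, so the ``verification'' you flag as the main subtlety is not actually required.
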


\begin{proof}
Let $y_{\ast}$ be a local extremizer for problem
\eqref{hoP} and $\eta$ a variation. Define
$\phi:]-\bar{\epsilon},\bar{\epsilon}[ \rightarrow \mathbb{R}$
by $\phi\left(  \epsilon\right)  :=\mathcal{L}\left[  y_{\ast}+\epsilon\eta\right]$.
A necessary condition for $y_{\ast}$ to be an extremizer is given by
$\phi^{\prime}\left(  0\right)  =0$. By Lemma~\ref{houniformly} we conclude that
\begin{multline*}
\int_{a}^{b}\Biggl[  \sum_{i=0}^{r}\partial_{i+2}
L\left(  t,y^{\sigma^{r}}\left(  t\right)  ,D_{q,\omega}\left[
y^{\sigma^{r-1}}\right]  \left(  t\right),
\ldots, D_{q,\omega}^{r}\left[y\right] \left(  t\right)  \right)\\
\cdot D_{q,\omega}^{i}\left[\eta^{\sigma^{r-i}}\right]\left(
t\right)  \Biggr]  d_{q,\omega}t=0
\end{multline*}
and equation \eqref{hoeq:E-L} follows from Lemma~\ref{hoordem n}.
\end{proof}

\begin{remark}
In practical terms the hypotheses of Theorem~\ref{hoHigher order E-L}
are not so easy to verify \emph{a priori}. One can, however,
assume that all hypotheses are satisfied and apply the
Hahn quantum Euler--Lagrange equation \eqref{hoeq:E-L} heuristically
to obtain a \emph{candidate}. If such a candidate is, or not,
a solution to problem \eqref{hoP} is a different question that always requires
further analysis (see an example in Section~\ref{subsec:Ex}).
\end{remark}

When $\omega= 0$ one obtains from \eqref{hoeq:E-L}
the \emph{higher-order $q$-Euler--Lagrange equation}\index{Higher-order $q$-Euler--Lagrange equation}:
\[
\sum_{i=0}^{r}\left(  -1\right)  ^{i}\left(  \frac{1}{q}\right)
^{\frac{\left(  i-1\right)  i}{2}}D_{q}^{i}\left[ \tau\rightarrow \partial_{i+2}
L\left(  \tau,y^{\sigma^{r}}\left(  \tau\right)  ,D_{q}\left[
y^{\sigma^{r-1}}\right]  \left(  \tau\right),
\ldots, D_{q}^{r}\left[  y\right]
\left(  \tau\right)  \right)  \right]\left(  t\right)  =0
\]
for all $t\in\left\{  aq^{n}:n\in
\mathbb{N}_{0}\right\}  \cup\left\{  bq^{n}:n\in
\mathbb{N}_{0}\right\}  \cup\left\{  0\right\}$.
The \emph{higher-order $h$-Euler--Lagrange equation}\index{Higher-order $h$-Euler--Lagrange equation}
is obtained from \eqref{hoeq:E-L} taking the limit $q\rightarrow 1$:
\[
\sum_{i=0}^{r}\left(  -1\right)  ^{i}
\Delta_{h}^{i}\left[\tau\rightarrow  \partial_{i+2} L\left(\tau,
y^{\sigma^{r}}\left(  \tau\right),\Delta_{h}\left[
y^{\sigma^{r-1}}\right]  \left(  \tau\right),
\ldots, \Delta_{h}^{r}\left[  y\right]  \left(  \tau\right)\right)\right]\left(  t\right)=0
\]
for all $t\in\left\{  a+nh:n\in
\mathbb{N}_{0}\right\}
\cup\left\{  b+nh:n\in \mathbb{N}_{0}\right\}$.
The \emph{classical higher-order Euler--Lagrange equation}
\cite{Brunt}\index{Higher-order Euler--Lagrange equation}
is recovered when $(\omega, q)\rightarrow (0, 1)$:
$$
\sum_{i=0}^{r}\left(-1\right)^{i}
\frac{d^i}{d t^i}\left[\tau\rightarrow\partial_{i+2}L\left(\tau,
y\left(\tau\right),y'\left(\tau\right),
\ldots, y^{(r)}(\tau)\right)\right]\left(  t\right)=0
$$
for all $t \in [a,b]$.

We now illustrate the usefulness
of our Theorem~\ref{hoHigher order E-L}
by means of an example that is not covered
by previous available results in the literature.


\subsection{An example}
\label{subsec:Ex}

Let $q=\frac{1}{2}$ and $\omega=\frac{1}{2}$. Consider the following problem:
\begin{equation}
\label{example}
\mathcal{L}\left[  y\right]
=  \int_{-1}^{1}\left(y^\sigma(t) + \frac{1}{2}\right)^2  \left(
\left( D_{q,\omega}\left[  y\right] (t)\right)^2 -1 \right)^2 d_{q,\omega}t
\longrightarrow \min
\end{equation}
over all $y \in \mathcal{Y}^1$ satisfying the boundary conditions
\begin{equation}
\label{eq:bc:ex}
y(-1)=0 \ \ \ \text{and} \ \ \ y(1)=-1.
\end{equation}
This is an example of problem \eqref{hoP} with $r=1$.
Our Hahn's quantum Euler--Lagrange equation \eqref{hoeq:E-L}
takes the form
\[
D_{q,\omega}\left[\tau\rightarrow\partial_{3}L\left(
\tau,y^{\sigma}\left(  \tau\right),
D_{q,\omega}\left[  y\right]  \left(\tau\right)  \right)\right]\left(  t\right)
=\partial_{2}L\left(  t,y^{\sigma} \left(  t\right)  ,D_{q,\omega}\left[
y\right]  \left(  t\right)  \right).
\]
Therefore, we look for an admissible function $y_{\ast}$
of \eqref{example}--\eqref{eq:bc:ex} satisfying
\begin{multline}
\label{hoeq:EL:ex}
D_{q,\omega}\left[4 \left(y^\sigma
+ \frac{1}{2}\right)^2\left( \left( D_{q,\omega}\left[
y\right]\right)^2 -1 \right)D_{q,\omega}\left[  y\right] \right](t)\\
= 2\left(y^\sigma(t) + \frac{1}{2}\right) \left(
\left( D_{q,\omega}\left[  y\right](t)\right)^2 -1 \right)^2
\end{multline}
for all $ t\in\left[  -1,1\right]_{q,\omega}$. It is easy to see that
\begin{equation*}
y_{\ast}(t) =
\begin{cases}
-t & \text{ if } t \in ]-1,0[\cup ]0,1]\\
0 & \text{ if } t=-1\\
1 & \text{ if } t=0
\end{cases}
\end{equation*}
is an admissible function for \eqref{example}--\eqref{eq:bc:ex} with
\begin{equation*}
D_{q,\omega}\left[  y_{\ast}\right] (t) =
\begin{cases}
-1 & \text{ if } t \in ]-1,0[\cup ]0,1]\\
1 & \text{ if } t=-1\\
-3 & \text{ if }  t=0,
\end{cases}
\end{equation*}
satisfying the Hahn quantum Euler--Lagrange equation \eqref{hoeq:EL:ex}.
We now prove that the \emph{candidate} $y_{\ast}$ is indeed a minimizer
for \eqref{example}--\eqref{eq:bc:ex}.
Note that here $\omega_0=1$ and, by Lemma~\ref{hopositividade},
\begin{equation*}
\mathcal{L}\left[  y\right] = \int_{-1}^{1}\left(y^\sigma(t) + \frac{1}{2}\right)^2\left(
\left( D_{q,\omega}\left[  y\right] (t)\right)^2 -1 \right)^2 d_{q,\omega}t   \geqslant 0
\end{equation*}
for all admissible functions
$y \in \mathcal{Y}^1\left(\left[-1,1\right],\mathbb{R}\right)$.
Since $\mathcal{L}\left[  y_{\ast}\right]=0$,
we conclude that $y_{\ast}$ is a minimizer
for problem \eqref{example}--\eqref{eq:bc:ex}.

It is worth to mention that the minimizer $y_{\ast}$ of \eqref{example}--\eqref{eq:bc:ex}
is not continuous while the classical calculus of variations \cite{Brunt},
the calculus of variations on time scales \cite{Ferreira,Malinowska,Martins},
or the nondifferentiable scale variational calculus \cite{Almeida:2,Almeida:4,Cresson},
deal with functions which are necessarily continuous.
As an open question, we pose the problem of determining
conditions on the data of problem \eqref{hoP} ensuring, \emph{a priori},
the minimizer to be regular.


\section{State of the Art}

The results of this chapter are already published in \cite{Brito:da:Cruz} and were
presented by the author at EUROPT Workshop ``Advances in Continuous Optimization'',
July 9-10, 2010, University of Aveiro, Portugal.
It is worth mentioning that nowadays other
researchers are dedicating their time to the development of the theory
of Hahn's quantum calculus (see \cite{Aldwoah,Aldwoah:2,Koekoek,Malinowska:5}
and references within). The theory of calculus of variations within the Hahn
quantum calculus is quite new and was first presented
by Malinowska and Torres in 2010 \cite{Malinowska:3}.


\clearpage{\thispagestyle{empty}\cleardoublepage}


\chapter{A Symmetric Quantum Calculus}
\label{A Symmetric Quantum Calculus}

In this chapter we present a symmetric quantum calculus.
We define and prove the properties of
the $\alpha,\beta$-symmetric derivative (Section~\ref{hs:sec:3.1});
Section~\ref{hs:sec:3.2} is devoted to the development of the
$\alpha,\beta$-symmetric N\"{o}rlund sum as well to some
of its properties.
Section~\ref{hs:sec:3.3} is dedicated to mean value theorems for the
$\alpha,\beta$-symmetric calculus: we prove $\alpha,\beta$-symmetric
versions of Fermat's theorem for stationary points,
Rolle's, Lagrange's, and Cauchy's mean value theorems.
In Section~\ref{hi:sec:ineq} we present and
prove $\alpha,\beta$-symmetric versions of H\"{o}lder's,
Cauchy--Schwarz's and Minkowski's inequalities.


\section{Introduction}

Quantum derivatives and integrals play a leading role
in the understanding of complex physical systems.
The subject has been under strong development
since the beginning of the 20th century
\cite{Bangerezako,Cresson,Dobrogowska,Ernst:1meio,Ernst:2,Hahn,Jackson,Kac,Lavagno2,
Malinowska:5,Malinowska:3,Martins:2,Martins:3,Milne}.
Roughly speaking, two approaches to quantum calculus are available.
The first considers the set of points of study to be the lattice
$\overline{q^{\mathbb{Z}}}$ or $h\mathbb{Z}$ and is nowadays
part of the more general time scale calculus
\cite{Agarwal:2,Bohner,Bohner:2,Hilger,Malinowska:2};
the second uses the same formulas for the quantum
derivatives but the set of study is the set $\mathbb{R}$
\cite{Aldwoah,Aldwoah:2,Aldwoah:3,Almeida:5,Brito:da:Cruz,Malinowska:3}.
Here we take the second perspective.

Given a function $f$ and a positive real number $h$,
the $h$-derivative of $f$ at $x$ is defined by the ratio
$$\frac{f\left(x+h\right)-f\left(x\right)}{h}.$$
When $h\rightarrow0$, one obtains the usual
derivative of $f$ at $x$. The symmetric $h$-derivative of $f$ at $x$ ($h>0$) is defined by
$$
\frac{f\left(  x+h\right)  -f\left(  x-h\right)}{2h},
$$
which coincides with the standard symmetric derivative \cite{Thomson}
when we let $h\rightarrow 0$. The notion of symmetrically differentiable is interesting
because if a function is differentiable at a point then
it is also symmetrically differentiable, but the converse
is not true. The best known example of this fact
is the absolute value function: $f(x) = |x|$ is not
differentiable at $x = 0$ but is symmetrically differentiable
at $x = 0$ with symmetric derivative zero \cite{Thomson}.

The aim of this chapter is to introduce the $\alpha,\beta$-symmetric difference derivative
and N\"{o}rlund sum, and then develop the associated calculus.
Such $\alpha,\beta$-symmetric calculus gives
a generalization to (both forward and backward) quantum $h$-calculus.


\section{Forward and backward N\"{o}rlund sums}
\label{hi:sec:2}

In what follows we denote by $\left\vert I\right\vert $
the measure of the interval $I$.

\begin{definition}
Let $\alpha$ and $\beta$ be two positive real numbers,
$I\subseteq\mathbb{R}$ be an interval
with $\left\vert I\right\vert >\alpha$,
and $f:I\rightarrow\mathbb{R}$.
The \emph{$\alpha$-forward difference
operator} \index{$\alpha$-forward difference
operator} $\Delta_{\alpha}$ is defined by
\[
\Delta_{\alpha}\left[  f\right]  \left(  t\right)  :=\frac{f\left(
t+\alpha\right)  -f\left(  t\right)  }{\alpha}
\]
for all $t\in I\backslash\left[\sup I-\alpha,\sup I\right]$,
in case $\sup I$ is finite, or, otherwise, for all $t \in I$.
Similarly, for $\left\vert I\right\vert >\beta$
the \emph{$\beta$-backward difference operator}\index{$\beta$-backward difference operator}
$\nabla_{\beta}$ is defined by
\[
\nabla_{\beta}\left[  f\right]  \left(  t\right)  :=\frac{f\left(  t\right)
-f\left(t-\beta\right)}{\beta}
\]
for all $t\in I\backslash\left[  \inf I,\inf I+\beta\right]$,
in case $\inf I$ is finite, or, otherwise, for all $t\in I$.
We call to $\Delta_{\alpha}\left[  f\right]$ the \emph{$\alpha$-forward
difference derivative} \index{$\alpha$-forward
difference derivative} of $f$ and to $\nabla_{\beta}\left[f\right]$
the \emph{$\beta$-backward difference derivative}\index{$\beta$-backward difference derivative} of $f$.
\end{definition}

This section is dedicated to the inverse operators of
the $\alpha$-forward and $\beta$-backward difference operators.

\begin{definition}
\label{hi:def:o1}
Let $I \subseteq \mathbb{R}$ be such that
$a,b\in I$ with $a<b$ and $\sup I=+\infty$.
For $f:I\rightarrow\mathbb{R}$ and $\alpha >0$ we define the \emph{N\"{o}rlund sum} \index{N\"{o}rlund sum}
(the \emph{$\alpha$-forward integral} \index{$\alpha$-forward integral}) of $f$ from $a$ to $b$ by
\[
\int_{a}^{b}f\left(  t\right)  \Delta_{\alpha}t=\int_{a}^{+\infty}f\left(
t\right)  \Delta_{\alpha}t-\int_{b}^{+\infty}f\left(  t\right)
\Delta_{\alpha}t,
\]
where
$$
\int_{x}^{+\infty}f\left(  t\right)\Delta_{\alpha}t
=\alpha\sum_{k=0}^{+\infty}f\left(  x+k\alpha\right),
$$
provided the series converges at $x=a$ and $x=b$. In that case, $f$ is
said to be $\alpha$-forward integrable on $\left[  a,b\right]$. We say that $f$
is $\alpha$-forward integrable over $I$ if it is $\alpha$-forward integrable
for all $a,b\in I$.
\end{definition}

\begin{remark}
If $f:I\rightarrow\mathbb{R}$
is a function such that $\sup I<+\infty$,
then we can easily extend $f$ to $\tilde{f}:\tilde{I}\rightarrow\mathbb{R}$
with $\sup\tilde{I}=+\infty$ by letting
$\tilde{f}|_{I}=f$ and $\tilde{f}|_{\tilde{I}\backslash I}=0$.
\end{remark}

\begin{remark}
Definition~\ref{hi:def:o1} is valid for any two real points $a,b$
and not only for points belonging to $\alpha\mathbb{Z}$.
This is in contrast with the theory of time scales \cite{Agarwal:2,Bohner,Bohner:2}.
\end{remark}

Using the same techniques that Aldwoah used in his Ph.D. thesis \cite{Aldwoah},
it can be proved that the $\alpha$-forward integral has the following properties.

\begin{theorem}
If $f,g: I \rightarrow\mathbb{R}$ are
$\alpha$-forward integrable on $[a,b]$,
$c\in\left[a,b\right]$, $k\in\mathbb{R}$, then
\begin{enumerate}
\item $\displaystyle\int_{a}^{a}f\left(  t\right)\Delta_{\alpha}t=0$;

\item $\displaystyle\int_{a}^{b}f\left(  t\right)\Delta_{\alpha}t
=\int_{a}^{c}f\left(  t\right)  \Delta_{\alpha}t+\int_{c}^{b}f\left(  t\right)
\Delta_{\alpha}t$, when the integrals exist;

\item $\displaystyle\int_{a}^{b}f\left(  t\right)\Delta_{\alpha}t
=-\int_{b}^{a}f\left(  t\right)  \Delta_{\alpha}t$;

\item $kf$ is $\alpha$-forward integrable on $\left[a,b\right]$ and
$\displaystyle \int_{a}^{b}kf\left(t\right)\Delta_{\alpha}t
=k\int_{a}^{b}f\left(t\right)  \Delta_{\alpha}t$;

\item $f+g$ is $\alpha$-forward integrable on $\left[a,b\right]$ and
\[
\int_{a}^{b}\left(  f+g\right)  \left(  t\right)  \Delta_{\alpha}t=\int
_{a}^{b}f\left(  t\right)  \Delta_{\alpha}t+\int_{a}^{b}g\left(  t\right)
\Delta_{\alpha}t\text{;}
\]

\item if $f\equiv0$, then $\displaystyle\int_{a}^{b}f\left(  t\right)
\Delta_{\alpha}t=0$.
\end{enumerate}
\end{theorem}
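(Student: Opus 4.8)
The plan is to reduce every item to elementary manipulations of the series
$\int_{x}^{+\infty}f\left(t\right)\Delta_{\alpha}t=\alpha\sum_{k=0}^{+\infty}f\left(x+k\alpha\right)$,
since by Definition~\ref{hi:def:o1} the $\alpha$-forward integral of $f$ from $a$ to $b$ is nothing but the difference
$\int_{a}^{+\infty}f\left(t\right)\Delta_{\alpha}t-\int_{b}^{+\infty}f\left(t\right)\Delta_{\alpha}t$
of two such half-line integrals. With this observation, items~1, 3 and~6 are immediate. Indeed
\[
\int_{a}^{a}f\left(t\right)\Delta_{\alpha}t
=\int_{a}^{+\infty}f\left(t\right)\Delta_{\alpha}t-\int_{a}^{+\infty}f\left(t\right)\Delta_{\alpha}t=0;
\]
likewise $\int_{a}^{b}f\left(t\right)\Delta_{\alpha}t=-\left(\int_{b}^{+\infty}f\left(t\right)\Delta_{\alpha}t-\int_{a}^{+\infty}f\left(t\right)\Delta_{\alpha}t\right)=-\int_{b}^{a}f\left(t\right)\Delta_{\alpha}t$; and if $f\equiv0$ then every partial sum of $\alpha\sum_{k}f\left(x+k\alpha\right)$ vanishes, so the two half-line integrals are $0$ and hence so is $\int_{a}^{b}f\left(t\right)\Delta_{\alpha}t$.

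For items~4 and~5 (linearity) I would first establish the corresponding identities for the half-line integrals. If $\sum_{j}f\left(x+j\alpha\right)$ converges then so does $\sum_{j}kf\left(x+j\alpha\right)$, to $k$ times its sum, whence $\int_{x}^{+\infty}kf\left(t\right)\Delta_{\alpha}t=k\int_{x}^{+\infty}f\left(t\right)\Delta_{\alpha}t$; and if $\sum_{j}f\left(x+j\alpha\right)$ and $\sum_{j}g\left(x+j\alpha\right)$ both converge then $\sum_{j}\left(f+g\right)\left(x+j\alpha\right)$ converges to the sum of their limits, so $\int_{x}^{+\infty}\left(f+g\right)\left(t\right)\Delta_{\alpha}t=\int_{x}^{+\infty}f\left(t\right)\Delta_{\alpha}t+\int_{x}^{+\infty}g\left(t\right)\Delta_{\alpha}t$. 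These are just the usual algebraic rules for convergent series, and no absolute convergence is required. Applying them at $x=a$ and at $x=b$ and subtracting shows that $kf$ and $f+g$ are $\alpha$-forward integrable on $\left[a,b\right]$ with the stated linear behaviour of the integral. This follows the pattern used by Aldwoah for the $q,\omega$-integral, as indicated above.

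Item~2 is the only one needing the proviso ``when the integrals exist''. Being $\alpha$-forward integrable on $\left[a,b\right]$ only forces convergence of $\sum_{k}f\left(x+k\alpha\right)$ at $x=a$ and $x=b$; for a point $c\in\left[a,b\right]$ not of the form $a+m\alpha$ there is no reason for the series to converge at $x=c$, so $\int_{a}^{c}f\left(t\right)\Delta_{\alpha}t$ and $\int_{c}^{b}f\left(t\right)\Delta_{\alpha}t$ may fail to exist. (When $c=a+m\alpha$ with $m\in\mathbb{N}$ and $c\leqslant b$ they do exist, since $\sum_{k}f\left(c+k\alpha\right)$ is then a tail of the convergent series $\sum_{k}f\left(a+k\alpha\right)$.) Assuming the two integrals exist, the identity is simply a telescoping:
\[
\int_{a}^{c}f\left(t\right)\Delta_{\alpha}t+\int_{c}^{b}f\left(t\right)\Delta_{\alpha}t
=\left(\int_{a}^{+\infty}-\int_{c}^{+\infty}\right)f\left(t\right)\Delta_{\alpha}t
+\left(\int_{c}^{+\infty}-\int_{b}^{+\infty}\right)f\left(t\right)\Delta_{\alpha}t
=\int_{a}^{b}f\left(t\right)\Delta_{\alpha}t.
\]

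Overall I expect no serious obstacle: the whole statement is bookkeeping with convergent series, the definition of the $\alpha$-forward integral having been arranged precisely so that the half-line integral is additive and homogeneous and the two-point integral is its ``Newton--Leibniz'' difference. The only points deserving care are the convergence caveat in item~2 and, in items~4--5, the fact that one is invoking only the linearity of convergent (not necessarily absolutely convergent) series.
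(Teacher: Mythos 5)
Your proof is correct: the paper itself gives no proof of this theorem, stating only that it ``can be proved using the same techniques that Aldwoah used in his Ph.D.\ thesis,'' and your elementary bookkeeping with the half-line series $\alpha\sum_{k=0}^{+\infty}f(x+k\alpha)$ is exactly the intended argument. Your observation about item~2 --- that integrability on $[a,b]$ forces convergence only at $x=a$ and $x=b$, so the caveat ``when the integrals exist'' is genuinely needed for an arbitrary $c\in[a,b]$ --- is a worthwhile point that the paper leaves implicit.
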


\begin{remark}
Since for $a>b$ we have
\begin{align*}
\int_{a}^{b}f\left(  t\right)  \Delta_{\alpha}t
&=-\left(\int_{b}^{a}f\left(  t\right)  \Delta_{\alpha}t\right)\\
&=-\left(\int_{b}^{+\infty}f\left(t\right)  \Delta_{\alpha}t
-\int_{a}^{+\infty}f\left(  t\right)\Delta_{\alpha}t\right)\\
&=\int_{a}^{+\infty}f\left(
t\right)  \Delta_{\alpha}t-\int_{b}^{+\infty}f\left(  t\right)\Delta_{\alpha}t,
\end{align*}
then we could defined the N\"{o}rlund sum (Definition~\ref{hi:def:o1})
for any $a,b\in I$ instead for $a<b$.
\end{remark}

\begin{theorem}
Let $f: I \rightarrow\mathbb{R}$ be $\alpha$-forward integrable
on $\left[a,b\right]$.
If $g:I\rightarrow\mathbb{R}$ is a nonnegative
$\alpha$-forward integrable function on $\left[a,b\right]$,
then $fg$ is $\alpha$-forward integrable on $\left[a,b\right]$.
\end{theorem}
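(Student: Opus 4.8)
The plan is to reduce the statement to a convergence question about numerical series and then settle it by Abel summation (summation by parts). By Definition~\ref{hi:def:o1}, saying that $f$ is $\alpha$-forward integrable on $[a,b]$ means precisely that the two series $\sum_{k=0}^{\infty} f(a+k\alpha)$ and $\sum_{k=0}^{\infty} f(b+k\alpha)$ converge, and likewise $g$ being $\alpha$-forward integrable on $[a,b]$ means that $\sum_{k=0}^{\infty} g(a+k\alpha)$ and $\sum_{k=0}^{\infty} g(b+k\alpha)$ converge; all the relevant evaluations $x+k\alpha$ lie in $I$ since $\sup I=+\infty$ and $I$ is an interval. Hence it suffices to prove that $\sum_{k=0}^{\infty} f(x+k\alpha)\,g(x+k\alpha)$ converges for $x=a$ and for $x=b$. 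I fix $x\in\{a,b\}$ and abbreviate $a_{k}:=f(x+k\alpha)$, $b_{k}:=g(x+k\alpha)$, and $A_{n}:=\sum_{k=0}^{n}a_{k}$.

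First I would record what the nonnegativity of $g$ buys. Since $b_{k}\geqslant 0$ and $\sum_{k}b_{k}$ converges, the convergence is absolute; in particular $b_{k}\rightarrow 0$ and, using $\left|b_{k+1}-b_{k}\right|\leqslant b_{k+1}+b_{k}$,
$$\sum_{k=0}^{\infty}\left|b_{k+1}-b_{k}\right|\leqslant 2\sum_{k=0}^{\infty}b_{k}<+\infty,$$
so $(b_{k})$ has finite total variation. Moreover, since $\sum_{k}a_{k}$ converges, its sequence of partial sums is bounded: there is $M>0$ with $\left|A_{n}\right|\leqslant M$ for all $n\in\mathbb{N}_0$.

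Next comes the core computation. Summation by parts (with $A_{-1}:=0$) gives, for every $N\in\mathbb{N}_0$,
$$\sum_{k=0}^{N}a_{k}b_{k}=A_{N}b_{N}+\sum_{k=0}^{N-1}A_{k}\left(b_{k}-b_{k+1}\right).$$
As $N\rightarrow\infty$ the term $A_{N}b_{N}\rightarrow 0$, because $(A_{N})$ is bounded and $b_{N}\rightarrow 0$; and the series $\sum_{k=0}^{\infty}A_{k}\left(b_{k}-b_{k+1}\right)$ converges absolutely, since $\sum_{k}\left|A_{k}\right|\left|b_{k}-b_{k+1}\right|\leqslant M\sum_{k}\left|b_{k+1}-b_{k}\right|<+\infty$ by the previous paragraph. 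Therefore $\sum_{k=0}^{N}a_{k}b_{k}$ converges as $N\rightarrow\infty$. Applying this with $x=a$ and then with $x=b$ shows that both defining series of the $\alpha$-forward integral of $fg$ converge at $a$ and at $b$, i.e., $fg$ is $\alpha$-forward integrable on $[a,b]$.

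I do not expect a genuine obstacle; the single point that must not be glossed over is that the hypothesis ``$g$ nonnegative'' is exactly what upgrades the plain convergence of $\sum_{k}g(x+k\alpha)$ to absolute convergence, which is in turn what bounds the total variation $\sum_{k}\left|b_{k+1}-b_{k}\right|$ needed for the Abel estimate to close — for merely conditionally convergent data the product need not be summable. (If one wished to allow $a>b$ as well, it follows at once from the sign-change property of the $\alpha$-forward integral established just above, and $a=b$ is trivial.)
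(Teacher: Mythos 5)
Your proof is correct, but it takes a genuinely different (and heavier) route than the paper's. The paper argues by direct comparison: since $\sum_{k}f(x+k\alpha)$ converges, its terms tend to $0$, so $\left\vert f(x+k\alpha)\right\vert \leqslant 1$ for all $k$ beyond some order $N$; hence $\left\vert f(x+k\alpha)g(x+k\alpha)\right\vert \leqslant g(x+k\alpha)$ for $k>N$, and, because $g\geqslant 0$ and $\sum_{k}g(x+k\alpha)$ converges, the comparison test yields absolute convergence of $\sum_{k}f(x+k\alpha)g(x+k\alpha)$ --- essentially a one-line argument. You instead run a Dirichlet-test argument via Abel summation, using boundedness of the partial sums $A_{n}$ of $f$ together with $b_{k}\rightarrow 0$ and the total-variation bound $\sum_{k}\left\vert b_{k+1}-b_{k}\right\vert \leqslant 2\sum_{k}b_{k}$. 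Every step of yours checks out (the summation-by-parts identity, the vanishing of $A_{N}b_{N}$, the absolute convergence of $\sum_{k}A_{k}\left(b_{k}-b_{k+1}\right)$), and your closing remark correctly identifies where nonnegativity of $g$ enters in your scheme. What your route buys is a slightly stronger theorem: it only needs the partial sums of $\sum_{k}f(x+k\alpha)$ to be bounded, not convergent. What it costs is length, and the fact that, as stated, it delivers plain convergence of $\sum_{k}f(x+k\alpha)g(x+k\alpha)$ rather than the absolute convergence the comparison argument gives for free (though absolute convergence also follows in your setting by the same eventual bound $\left\vert a_{k}\right\vert\leqslant 1$).
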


\begin{proof}
Since $g$ is $\alpha$-forward integrable, then both series
$$\alpha\sum_{k=0}^{+\infty}g\left(  a+k\alpha\right)
\text{ \ \ and \ \ } \alpha\sum_{k=0}^{+\infty}g\left(  b+k\alpha\right)$$
converge. We want to study the nature of series
$$
\alpha\sum_{k=0}^{+\infty}fg\left(  a+k\alpha\right) \text{ \ \ and \ \ }
\alpha\sum_{k=0}^{+\infty}fg\left(  b+k\alpha\right).
$$
Since there exists an order $N\in\mathbb{N}$ such that
$$\left\vert fg\left(  b+k\alpha\right)  \right\vert \leqslant g\left(
b+k\alpha\right) \text{ \ \ and \ \ } \left\vert fg\left(  a+k\alpha\right)
\right\vert \leqslant g\left(  a+k\alpha\right)$$
for all $k>N$, then both
$$
\alpha\sum_{k=0}^{+\infty}fg\left(a+k\alpha\right) \text{ \ \ and \ \ }
\alpha\sum_{k=0}^{+\infty}fg\left(  b+k\alpha\right)
$$
converge absolutely. The intended conclusion follows.
\end{proof}

\begin{theorem}
\label{hi:p}
Let $f:I\rightarrow\mathbb{R}$ and $p>1$.
If $\left\vert f\right\vert $ is $\alpha$-forward integrable
on $\left[  a,b\right]$, then $\left\vert f\right\vert ^{p}$
is also $\alpha$-forward integrable on $\left[a,b\right]$.
\end{theorem}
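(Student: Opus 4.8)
The plan is to reduce $\alpha$-forward integrability of $|f|^p$ on $[a,b]$ to the convergence of two numerical series, exactly as in Definition~\ref{hi:def:o1}: namely $\alpha\sum_{k=0}^{+\infty}|f(a+k\alpha)|^p$ and $\alpha\sum_{k=0}^{+\infty}|f(b+k\alpha)|^p$. Since this is a statement about tails of series of nonnegative terms, it should follow from a straightforward comparison argument once we know the terms $|f(a+k\alpha)|$ and $|f(b+k\alpha)|$ are eventually bounded by $1$.

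First I would use the hypothesis: $|f|$ is $\alpha$-forward integrable on $[a,b]$, so by definition the series $\alpha\sum_{k=0}^{+\infty}|f(a+k\alpha)|$ and $\alpha\sum_{k=0}^{+\infty}|f(b+k\alpha)|$ converge. A convergent series has terms tending to zero, so $|f(a+k\alpha)|\to 0$ and $|f(b+k\alpha)|\to 0$ as $k\to+\infty$. Hence there is an order $N\in\mathbb{N}$ such that $|f(a+k\alpha)|\leqslant 1$ and $|f(b+k\alpha)|\leqslant 1$ for all $k>N$.

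Next, since $p>1$, for every real number $x$ with $0\leqslant x\leqslant 1$ one has $x^{p}\leqslant x$. Applying this with $x=|f(a+k\alpha)|$ and $x=|f(b+k\alpha)|$ for $k>N$ gives
\[
|f(a+k\alpha)|^{p}\leqslant |f(a+k\alpha)|
\qquad\text{and}\qquad
|f(b+k\alpha)|^{p}\leqslant |f(b+k\alpha)|.
\]
By the comparison test, the tails $\sum_{k>N}|f(a+k\alpha)|^{p}$ and $\sum_{k>N}|f(b+k\alpha)|^{p}$ converge, hence so do the full series $\alpha\sum_{k=0}^{+\infty}|f(a+k\alpha)|^{p}$ and $\alpha\sum_{k=0}^{+\infty}|f(b+k\alpha)|^{p}$. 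Therefore $|f|^{p}$ is $\alpha$-forward integrable on $[a,b]$, which is the desired conclusion.

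There is essentially no serious obstacle here; the only point deserving care is the justification that the summands of an absolutely convergent (nonnegative-term) series tend to zero, which legitimizes the choice of $N$ and hence the comparison. One could alternatively phrase the whole argument in terms of absolute convergence mirroring the proof of the preceding theorem on products, but the direct comparison above is the cleanest route.
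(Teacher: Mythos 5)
Your proof is correct and follows exactly the same route as the paper's own argument: the paper simply asserts the existence of an order $N$ with $|f(a+k\alpha)|^{p}\leqslant|f(a+k\alpha)|$ and $|f(b+k\alpha)|^{p}\leqslant|f(b+k\alpha)|$ for $k>N$ and concludes by comparison, while you supply the (implicit) justification that the terms of the convergent series tend to zero and hence are eventually at most $1$. No issues.
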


\begin{proof}
There exists $N\in\mathbb{N}$ such that
$$\left\vert f\left(b+k\alpha\right)\right\vert^{p}
\leqslant
\left\vert f\left(b+k\alpha\right) \right\vert$$
and
$$\left\vert f\left(a+k\alpha\right)\right\vert^{p}
\leqslant
\left\vert f\left(a+k\alpha\right) \right\vert$$
for all $k>N$. Therefore, $\left\vert f\right\vert^{p}$
is $\alpha$-forward integrable on $\left[a,b\right]$.
\end{proof}

\begin{theorem}
\label{hi:desigualdade}
Let $f,g:I\rightarrow\mathbb{R}$ be $\alpha$-forward integrable on $\left[  a,b\right]$.
If $\left\vert f\left(  t\right)  \right\vert \leqslant g\left(  t\right)$
for all $t\in\left\{  a+k\alpha:k\in\mathbb{N}_{0}\right\}$,
then for $b\in\left\{  a+k\alpha:k\in\mathbb{N}_{0}\right\}$ one has
\[
\left\vert \int_{a}^{b}f\left(  t\right)  \Delta_{\alpha}t\right\vert
\leqslant\int_{a}^{b}g\left(  t\right)  \Delta_{\alpha}t.
\]
\end{theorem}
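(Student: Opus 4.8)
The plan is to exploit the fact that when $b\in\{a+k\alpha:k\in\mathbb{N}_{0}\}$ the $\alpha$-forward integral from $a$ to $b$ collapses to a \emph{finite} sum, after which the asserted inequality is just the ordinary triangle inequality for finite sums. Write $b=a+n\alpha$ with $n\in\mathbb{N}_{0}$. First I would unfold Definition~\ref{hi:def:o1}:
$$
\int_{a}^{b}f\left(t\right)\Delta_{\alpha}t
=\int_{a}^{+\infty}f\left(t\right)\Delta_{\alpha}t-\int_{b}^{+\infty}f\left(t\right)\Delta_{\alpha}t
=\alpha\sum_{k=0}^{+\infty}f\left(a+k\alpha\right)-\alpha\sum_{k=0}^{+\infty}f\left(b+k\alpha\right),
$$
both series converging because $f$ is $\alpha$-forward integrable on $\left[a,b\right]$. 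Since $b+k\alpha=a+(n+k)\alpha$, the second series is exactly the tail $\alpha\sum_{k\geqslant n}f\left(a+k\alpha\right)$ of the first; subtracting two convergent series with a common tail, they telescope and leave
$$
\int_{a}^{b}f\left(t\right)\Delta_{\alpha}t=\alpha\sum_{k=0}^{n-1}f\left(a+k\alpha\right).
$$

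Next I would run the same computation for $g$, obtaining $\int_{a}^{b}g\left(t\right)\Delta_{\alpha}t=\alpha\sum_{k=0}^{n-1}g\left(a+k\alpha\right)$, and then estimate
$$
\left\vert\int_{a}^{b}f\left(t\right)\Delta_{\alpha}t\right\vert
=\left\vert\alpha\sum_{k=0}^{n-1}f\left(a+k\alpha\right)\right\vert
\leqslant\alpha\sum_{k=0}^{n-1}\left\vert f\left(a+k\alpha\right)\right\vert
\leqslant\alpha\sum_{k=0}^{n-1}g\left(a+k\alpha\right)
=\int_{a}^{b}g\left(t\right)\Delta_{\alpha}t,
$$
where the middle inequality is the triangle inequality for finite sums and the last inequality applies the hypothesis $\left\vert f\left(a+k\alpha\right)\right\vert\leqslant g\left(a+k\alpha\right)$ at each of the points $a,a+\alpha,\ldots,a+(n-1)\alpha$, all of which lie in $\{a+k\alpha:k\in\mathbb{N}_{0}\}$ (in particular each $g\left(a+k\alpha\right)\geqslant0$, so the right-hand side is a genuine nonnegative bound). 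The degenerate case $n=0$, i.e.\ $b=a$, gives $0\leqslant0$ since both sides vanish.

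There is no real obstacle here; the only step requiring a word of justification is the telescoping of the two infinite series into the finite sum $\alpha\sum_{k=0}^{n-1}f\left(a+k\alpha\right)$, which is legitimate precisely because both series converge (guaranteed by $\alpha$-forward integrability on $\left[a,b\right]$). Everything after that reduction is elementary. It is worth stressing in the proof that the hypothesis $b\in\{a+k\alpha:k\in\mathbb{N}_{0}\}$ is essential: without it the integral $\int_{a}^{b}f\left(t\right)\Delta_{\alpha}t$ is a true difference of two infinite series sampled on the distinct lattices $a+\alpha\mathbb{N}_{0}$ and $b+\alpha\mathbb{N}_{0}$, and the naive termwise estimate is unavailable --- the same obstruction already noted for Hahn's integral in Remark~\ref{rem:diff:int}.
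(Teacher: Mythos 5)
Your proof is correct and follows essentially the same route as the paper's: both reduce the integral to the finite sum $\alpha\sum_{k=0}^{n-1}f\left(a+k\alpha\right)$ by recognizing the series at $b$ as a tail of the series at $a$, and then apply the triangle inequality together with the pointwise bound $\left\vert f\right\vert\leqslant g$. Your explicit remarks on the convergence needed for the telescoping and on the necessity of the hypothesis $b\in\left\{a+k\alpha:k\in\mathbb{N}_{0}\right\}$ are accurate but do not change the argument.
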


\begin{proof}
Since $b\in\left\{  a+k\alpha:k\in\mathbb{N}_{0}\right\}$,
there exists $k_{1}$ such that $b=a+k_{1}\alpha$. Thus,
\begin{align*}
\left\vert \int_{a}^{b}f\left(  t\right)  \Delta_{\alpha}t\right\vert  &
=\left\vert \alpha\sum_{k=0}^{+\infty}f\left(  a+k\alpha\right)  -\alpha
\sum_{k=0}^{+\infty}f\left(  a+\left(  k_{1}+k\right)  \alpha\right)
\right\vert \\
&=\left\vert \alpha\sum_{k=0}^{+\infty}f\left(  a+k\alpha\right)  -\alpha
\sum_{k=k_{1}}^{+\infty}f\left(  a+k\alpha\right)  \right\vert
=\left\vert \alpha\sum_{k=0}^{k_{1}-1}f\left(  a+k\alpha\right)  \right\vert\\
&\leqslant \alpha\sum_{k=0}^{k_{1}-1}\left\vert f\left(  a+k\alpha\right)
\right\vert
\leqslant\alpha\sum_{k=0}^{k_{1}-1}g\left(  a+k\alpha\right) \\
& =\alpha\sum_{k=0}^{+\infty}g\left(  a+k\alpha\right)  -\alpha\sum_{k=k_{1}
}^{+\infty}g\left(  a+k\alpha\right)
=\int_{a}^{b}g\left(  t\right)  \Delta_{\alpha}t.
\end{align*}
\end{proof}

\begin{corollary}
\label{hi:desigualdade2}
Let $f,g:I\rightarrow\mathbb{R}$ be
$\alpha$-forward integrable on $\left[a,b\right]$
with $b = a+k\alpha$ for some $k\in\mathbb{N}_{0}$.
\begin{enumerate}
\item If $f\left(  t\right)  \geqslant 0$
for all $t\in\left\{  a+k\alpha:k\in\mathbb{N}_{0}\right\}$,
then $$\int_{a}^{b}f\left(  t\right)  \Delta_{\alpha}t \geqslant 0;$$

\item If $g\left(  t\right)  \geqslant f$ $\left(  t\right)$ for all
$t\in\left\{  a+k\alpha:k\in\mathbb{N}_{0}\right\}$, then
$$\int_{a}^{b}g\left(  t\right)  \Delta_{\alpha}t\geqslant\int_{a}^{b}f\left(
t\right)  \Delta_{\alpha}t.$$
\end{enumerate}
\end{corollary}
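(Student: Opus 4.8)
The plan is to derive both items directly from Theorem~\ref{hi:desigualdade} together with the elementary properties of the $\alpha$-forward integral established above, in particular linearity and the fact that a difference of $\alpha$-forward integrable functions is again $\alpha$-forward integrable.

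For item~1, since $b = a + k\alpha$ for some $k \in \mathbb{N}_{0}$, write $b = a + k_{1}\alpha$. One may argue in either of two equivalent ways. The quickest is to apply Theorem~\ref{hi:desigualdade} with $g$ replaced by $f$ itself: the hypothesis $f\left(t\right) \geqslant 0$ on $\left\{ a + k\alpha : k \in \mathbb{N}_{0} \right\}$ gives $\left\vert f\left(t\right) \right\vert = f\left(t\right) \leqslant f\left(t\right)$ there, so Theorem~\ref{hi:desigualdade} yields
\[
\left\vert \int_{a}^{b} f\left(t\right) \Delta_{\alpha}t \right\vert \leqslant \int_{a}^{b} f\left(t\right) \Delta_{\alpha}t,
\]
which is only possible if $\int_{a}^{b} f\left(t\right) \Delta_{\alpha}t \geqslant 0$. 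Alternatively, and just as easily, one unwinds the definition exactly as in the proof of Theorem~\ref{hi:desigualdade}:
\[
\int_{a}^{b} f\left(t\right) \Delta_{\alpha}t = \alpha\sum_{k=0}^{+\infty} f\left(a + k\alpha\right) - \alpha\sum_{k=k_{1}}^{+\infty} f\left(a + k\alpha\right) = \alpha\sum_{k=0}^{k_{1}-1} f\left(a + k\alpha\right),
\]
a finite sum of nonnegative terms, hence nonnegative.

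For item~2, set $h := g - f$. Since $f$ and $g$ are $\alpha$-forward integrable on $\left[a,b\right]$, so is $h$, being a difference of $\alpha$-forward integrable functions, and by hypothesis $h\left(t\right) = g\left(t\right) - f\left(t\right) \geqslant 0$ for every $t \in \left\{ a + k\alpha : k \in \mathbb{N}_{0} \right\}$. Applying item~1 to $h$, which is legitimate since the condition $b = a + k\alpha$ is in force, gives $\int_{a}^{b} h\left(t\right) \Delta_{\alpha}t \geqslant 0$, and linearity of the $\alpha$-forward integral turns this into $\int_{a}^{b} g\left(t\right) \Delta_{\alpha}t - \int_{a}^{b} f\left(t\right) \Delta_{\alpha}t \geqslant 0$, i.e. the claimed inequality.

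There is essentially no obstacle here: the statement is a direct corollary, and the only points requiring a word of care are that item~1 genuinely uses the hypothesis $b \in \left\{ a + k\alpha : k \in \mathbb{N}_{0} \right\}$, so that the telescoping leaves a finite sum (equivalently, so that Theorem~\ref{hi:desigualdade} applies), and that $g - f$ is again $\alpha$-forward integrable, so that item~1 can be invoked for it in the proof of item~2.
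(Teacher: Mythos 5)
Your proof is correct, and it follows the route the paper intends: the corollary is stated there without proof precisely because it is an immediate consequence of Theorem~\ref{hi:desigualdade} (your application with $g$ replaced by $f$, or equivalently the telescoping computation) together with linearity of the $\alpha$-forward integral for item~2. Both of your arguments for item~1 and the reduction $h:=g-f$ for item~2 are sound.
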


We can now prove the following fundamental
theorem of the $\alpha$-forward integral calculus.

\begin{theorem}[Fundamental theorem of N\"{o}rlund calculus]\index{Fundamental theorem of N\"{o}rlund's calculus}
Let $f:I\rightarrow\mathbb{R}$ be $\alpha$-forward integrable
over $I$. Let $a,b,x\in I$ and define
$$
F\left(  x\right)  :=\int_{a}^{x}f\left(  t\right)  \Delta_{\alpha}t.
$$
Then,
$$
\Delta_{\alpha}\left[  F\right]  \left(  x\right)  =f\left(  x\right).
$$
Conversely,
$$
\int_{a}^{b}\Delta_{\alpha}\left[  f\right]  \left(  t\right)  \Delta_{\alpha}t
=f\left(  b\right)  -f\left(  a\right).
$$
\end{theorem}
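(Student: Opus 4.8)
The plan is to unwind both identities directly from the definition of the $\alpha$-forward integral as a series, the key mechanism being a simple telescoping/reindexing argument; the only point requiring care is the justification of convergence, which will rest on the hypothesis that $f$ is $\alpha$-forward integrable over $I$.

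For the first identity I would start from $\Delta_{\alpha}\left[F\right]\left(x\right)=\frac{F\left(x+\alpha\right)-F\left(x\right)}{\alpha}$ and rewrite the numerator using the definition of $F$ together with the relation between $\int_{a}^{\cdot}$ and $\int_{\cdot}^{+\infty}$: namely $F\left(x+\alpha\right)-F\left(x\right)=\int_{x}^{+\infty}f\left(t\right)\Delta_{\alpha}t-\int_{x+\alpha}^{+\infty}f\left(t\right)\Delta_{\alpha}t$. Expanding both terms as $\alpha\sum_{k\geqslant0}f\left(x+k\alpha\right)$ and $\alpha\sum_{k\geqslant0}f\left(x+\left(k+1\right)\alpha\right)$ respectively, the two series differ only by their first term, so after reindexing the difference collapses to $\alpha f\left(x\right)$. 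Dividing by $\alpha$ gives $\Delta_{\alpha}\left[F\right]\left(x\right)=f\left(x\right)$. Since $f$ is $\alpha$-forward integrable over $I$, each of these series converges, so the termwise manipulation is legitimate.

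For the converse I would first observe that $\alpha$-forward integrability of $f$ forces $f\left(x+k\alpha\right)\to0$ as $k\to+\infty$, since this is a necessary condition for convergence of $\alpha\sum_{k\geqslant0}f\left(x+k\alpha\right)$. Then $\int_{a}^{+\infty}\Delta_{\alpha}\left[f\right]\left(t\right)\Delta_{\alpha}t=\alpha\sum_{k\geqslant0}\frac{f\left(a+\left(k+1\right)\alpha\right)-f\left(a+k\alpha\right)}{\alpha}=\sum_{k\geqslant0}\left(f\left(a+\left(k+1\right)\alpha\right)-f\left(a+k\alpha\right)\right)$, whose $N$th partial sum telescopes to $f\left(a+\left(N+1\right)\alpha\right)-f\left(a\right)$ and hence converges to $-f\left(a\right)$; in particular $\Delta_{\alpha}\left[f\right]$ is $\alpha$-forward integrable. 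The same computation at $b$ gives $\int_{b}^{+\infty}\Delta_{\alpha}\left[f\right]\left(t\right)\Delta_{\alpha}t=-f\left(b\right)$, and subtracting yields $\int_{a}^{b}\Delta_{\alpha}\left[f\right]\left(t\right)\Delta_{\alpha}t=f\left(b\right)-f\left(a\right)$.

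The main (and essentially only) obstacle is bookkeeping around convergence: one must ensure that every series manipulated — those obtained after reindexing in the first part, and the telescoping series in the second — actually converges, and this is exactly what the integrability hypothesis on $f$, together with its consequence that the sampled values tend to zero, supplies. Once that is in place, both statements follow by elementary algebra of convergent series; no appeal to the earlier theorems beyond the stated properties of the $\alpha$-forward integral is needed.
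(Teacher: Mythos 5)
Your proof is correct and follows essentially the same route as the paper: both parts reduce to expanding the defining series and telescoping/reindexing, the paper merely packaging the first part via the auxiliary function $G(x)=-\int_{x}^{+\infty}f(t)\,\Delta_{\alpha}t$ and the second via an appeal to ``Mengoli's series.'' Your explicit remark that integrability forces $f(a+k\alpha)\to 0$, which is what makes the telescoping sum converge to $-f(a)$, is a point the paper leaves implicit, and it is a welcome clarification rather than a deviation.
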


\begin{proof}
If
$$
G\left(  x\right)
=-\int_{x}^{+\infty}f\left(t\right)\Delta_{\alpha}t,
$$
then
\begin{align*}
\Delta_{\alpha}\left[  G\right]  \left(  x\right)
&=\frac{G\left(x+\alpha\right)  -G\left(  x\right)  }{\alpha}\\
&=\frac{-\alpha\sum_{k=0}^{+\infty}f\left(  x+\alpha+k\alpha\right)
+\alpha\sum_{k=0}^{+\infty}f\left(  x+k\alpha\right)  }{\alpha}\\
& =\sum_{k=0}^{+\infty}f\left(  x+k\alpha\right)
-\sum_{k=0}^{+\infty}f\left(  x+\left(  k+1\right)  \alpha\right)
=f\left(  x\right).
\end{align*}
Therefore,
$$\Delta_{\alpha}\left[  F\right]  \left(  x\right)
=\Delta_{\alpha}\left(\int_{a}^{+\infty}f\left(  t\right)
\Delta_{\alpha}t-\int_{x}^{+\infty}
f\left(  t\right)  \Delta_{\alpha}t\right)
=f\left(  x\right).$$
Using the definition of $\alpha$-forward difference operator, the second part
of the theorem is also a consequence of the properties of Mengoli's series.
Since
\begin{align*}
\int_{a}^{+\infty}\Delta_{\alpha}\left[  f\right]  \left(  t\right)
\Delta_{\alpha}t  & =\alpha\sum_{k=0}^{+\infty}\Delta_{\alpha}\left[
f\right]  \left(  a+k\alpha\right)\\
&=\alpha\sum_{k=0}^{+\infty}\frac{f\left(  a+k\alpha+\alpha\right)  -f\left(
a+k\alpha\right)  }{\alpha}\\
& =\sum_{k=0}^{+\infty}\bigg(f\left(  a+\left(  k+1\right)  \alpha\right)
-f\left(  a+k\alpha\right)  \bigg)\\
& =-f\left(  a\right)
\end{align*}
and
$$
\int_{b}^{+\infty}\Delta_{\alpha}\left[  f\right]  \left(  t\right)
\Delta_{\alpha}t=-f\left(  b\right),
$$
it follows that
\begin{equation*}
\int_{a}^{b}\Delta_{\alpha}\left[  f\right]  \left(  t\right)  \Delta_{\alpha}t
=\int_{a}^{+\infty}f\left(  t\right)  \Delta_{\alpha}t-\int_{b}^{+\infty}
f\left(  t\right)  \Delta_{\alpha}t
=f\left(  b\right)  -f\left(  a\right).
\end{equation*}
\end{proof}

\begin{corollary}[$\alpha$-forward integration by parts]
\label{hi:partes}
Let $f,g:I\rightarrow\mathbb{R}$.\index{$\alpha$-forward integration by parts}
If $f g$ and $f\Delta_{\alpha}\left[  g\right]  $ are $\alpha
$-forward integrable on $\left[  a,b\right]  $, then
\[
\int_{a}^{b}f\left(  t\right)  \Delta_{\alpha}\left[  g\right]  \left(
t\right)  \Delta_{\alpha}t=f\left(  t\right)  g\left(  t\right)
\bigg|_{a}^{b}-\int_{a}^{b}\Delta_{\alpha}\left[  f\right]  \left(  t\right)
g\left(  t+\alpha\right)  \Delta_{\alpha}t
\]
\end{corollary}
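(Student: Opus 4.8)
The plan is to derive a Leibniz-type product rule for the $\alpha$-forward difference operator and then integrate it, appealing to the Fundamental theorem of N\"{o}rlund calculus together with linearity of the N\"{o}rlund sum. First I would establish that, for every $t$ at which the expressions below make sense,
\[
\Delta_{\alpha}\left[fg\right]\left(t\right)=\Delta_{\alpha}\left[f\right]\left(t\right)\,g\left(t+\alpha\right)+f\left(t\right)\,\Delta_{\alpha}\left[g\right]\left(t\right),
\]
which follows at once by adding and subtracting $f\left(t\right)g\left(t+\alpha\right)$ in the numerator of $\Delta_{\alpha}\left[fg\right]\left(t\right)=\dfrac{f\left(t+\alpha\right)g\left(t+\alpha\right)-f\left(t\right)g\left(t\right)}{\alpha}$; this is the $\alpha$-analogue of the product rule already recorded for the $h$-calculus.

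Next I would verify that each of the three products occurring in this identity is $\alpha$-forward integrable on $\left[a,b\right]$, so that the N\"{o}rlund sum may be applied termwise. By hypothesis $fg$ and $f\Delta_{\alpha}\left[g\right]$ are $\alpha$-forward integrable. Moreover $\Delta_{\alpha}\left[fg\right]$ is $\alpha$-forward integrable because $fg$ is: convergence of $\alpha\sum_{k}\left(fg\right)\left(x+k\alpha\right)$ forces $\left(fg\right)\left(x+k\alpha\right)\to0$, hence the telescoping series $\alpha\sum_{k}\Delta_{\alpha}\left[fg\right]\left(x+k\alpha\right)$ converges at $x=a$ and $x=b$ — this is exactly the mechanism in the converse part of the Fundamental theorem of N\"{o}rlund calculus. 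Rewriting the product rule as $\Delta_{\alpha}\left[f\right]\left(t\right)g\left(t+\alpha\right)=\Delta_{\alpha}\left[fg\right]\left(t\right)-f\left(t\right)\Delta_{\alpha}\left[g\right]\left(t\right)$ then exhibits $t\mapsto\Delta_{\alpha}\left[f\right]\left(t\right)g\left(t+\alpha\right)$ as a difference of two $\alpha$-forward integrable functions, so it too is $\alpha$-forward integrable.

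Finally, integrating the product rule from $a$ to $b$ and using additivity and homogeneity of the N\"{o}rlund sum,
\[
\int_{a}^{b}\Delta_{\alpha}\left[fg\right]\left(t\right)\Delta_{\alpha}t=\int_{a}^{b}\Delta_{\alpha}\left[f\right]\left(t\right)g\left(t+\alpha\right)\Delta_{\alpha}t+\int_{a}^{b}f\left(t\right)\Delta_{\alpha}\left[g\right]\left(t\right)\Delta_{\alpha}t,
\]
and by the Fundamental theorem of N\"{o}rlund calculus the left-hand side equals $\left(fg\right)\left(b\right)-\left(fg\right)\left(a\right)=f\left(t\right)g\left(t\right)\big|_{a}^{b}$. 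Solving for $\int_{a}^{b}f\left(t\right)\Delta_{\alpha}\left[g\right]\left(t\right)\Delta_{\alpha}t$ gives the stated formula. The only delicate point — the ``hard part,'' such as it is — is the integrability bookkeeping in the second step, needed so that the integral of the sum splits into the sum of integrals; the rest is a one-line manipulation.
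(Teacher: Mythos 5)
Your proof is correct and follows essentially the same route as the paper's: both rest on the product rule $\Delta_{\alpha}\left[fg\right]=\Delta_{\alpha}\left[f\right]\,g(\cdot+\alpha)+f\,\Delta_{\alpha}\left[g\right]$, linearity of the N\"{o}rlund sum, and the fundamental theorem of N\"{o}rlund calculus applied to $\Delta_{\alpha}\left[fg\right]$. The only difference is that you make explicit the integrability bookkeeping (via the telescoping argument) that the paper leaves implicit, which is a welcome addition rather than a divergence.
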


\begin{proof}
Since
$$
\Delta_{\alpha}\left[  fg\right]  \left(  t\right)
=\Delta_{\alpha}\left[
f\right]  \left(  t\right)  g\left(  t+\alpha\right)  +f\left(  t\right)
\Delta_{\alpha}\left[  g\right]  \left(  t\right),
$$ then
\begin{align*}
\int_{a}^{b}f\left(  t\right)  \Delta_{\alpha}\left[  g\right]  \left(
t\right)  \Delta_{\alpha}t  & =\int_{a}^{b}\bigg(\Delta_{\alpha}\left[
fg\right]  \left(  t\right)  -\Delta_{\alpha}\left[  f\right]  \left(
t\right)  g\left(  t+\alpha\right)  \bigg)\Delta_{\alpha}t\\
& =\int_{a}^{b}\Delta_{\alpha}\left[  fg\right]  \left(  t\right)
\Delta_{\alpha}t-\int_{a}^{b}\Delta_{\alpha}\left[  f\right]  \left(
t\right)  g\left(  t+\alpha\right)  \Delta_{\alpha}t\\
& =f\left(  t\right)  g\left(  t\right)  \bigg|_{a}^{b}-\int_{a}^{b}
\Delta_{\alpha}\left[  f\right]  \left(  t\right)  g\left(  t+\alpha\right)
\Delta_{\alpha}t\text{.}
\end{align*}
\end{proof}

\begin{remark}
Our study of the N\"{o}rlund sum is in agreement with
the Hahn quantum calculus \cite{Aldwoah,Brito:da:Cruz,Malinowska:3}.
In \cite{Kac} the N\"{o}rlund sum is defined by
$$\int_{a}^{b}f\left(  t\right)  \Delta_{\alpha}t=\alpha\left[  f\left(
a\right)  +f\left(  a+\alpha\right)
+ \cdots +f\left(  b-\alpha\right)  \right]$$
for $a<b$ such that $b-a\in\alpha\mathbb{Z}$,
$\alpha\in\mathbb{R}^{+}$. In contrast with \cite{Kac}, our definition is valid
for any two real points $a,b$ and not only for those points belonging to the time
scale $\alpha\mathbb{Z}$. The definitions (only) coincide
if the function $f$ is $\alpha$-forward
integrable on $\left[a,b\right]$.
\end{remark}

Similarly, one can introduce the $\beta$-backward integral.

\begin{definition}
\label{hi:beta}
Let $I$ be an interval of $\mathbb{R}$ such that $a,b\in I$
with $a<b$ and $\inf I=-\infty$. For $f:I\rightarrow\mathbb{R}$
and $\beta >0$ we define the
\emph{$\beta$-backward integral}\index{$\beta$-backward integral}
of $f$ from $a$ to $b$ by
\[
\int_{a}^{b}f\left(  t\right)  \nabla_{\beta}t=\int_{-\infty}^{b}f\left(
t\right)  \nabla_{\beta}t-\int_{-\infty}^{a}f\left(  t\right)  \nabla_{\beta}t,
\]
where
$$
\int_{-\infty}^{x}f\left(  t\right)\nabla_{\beta}t
=\beta\sum_{k=0}^{+\infty}f\left(  x-k\beta\right),
$$
provided the series converges at $x=a$ and $x=b$. In that case, $f$ is
called $\beta$-backward integrable on $\left[a,b\right]$. We say that $f$
is $\beta$-backward integrable over $I$ if it is $\beta$-backward integrable
for all $a,b\in I$.
\end{definition}

The $\beta$-backward N\"{o}rlund sum has similar results and properties as the
$\alpha$-forward N\"{o}rlund sum. In particular, the $\beta$-backward integral
is the inverse operator of $\nabla_\beta$.


\section{The $\alpha,\beta$-symmetric quantum calculus}
\label{hs:sec:3}

We begin by introducing in Section~\ref{hs:sec:3.1}
the $\alpha,\beta$-symmetric derivative;
in Section~\ref{hs:sec:3.2} we define
the $\alpha,\beta$-symmetric N\"{o}rlund sum;
Section~\ref{hs:sec:3.3} is dedicated to mean value theorems
for the new $\alpha,\beta$-symmetric calculus and in the last
Section~\ref{hi:sec:ineq} we prove some
$\alpha,\beta$-Symmetric integral inequalities.


\subsection{The $\alpha,\beta$-symmetric derivative}
\label{hs:sec:3.1}

In what follows, $\alpha,\beta\in\mathbb{R}_{0}^{+}$
with at least one of them positive and $I$ is an interval such that
$\left\vert I\right\vert >\max\left\{  \alpha,\beta\right\}$.
We denote by $I_{\beta}^{\alpha}$ the set
\[
I_{\beta}^{\alpha}=\left\{
\begin{array}
[c]{ccc}
I\backslash\left(  \left[  \inf I,\inf I+\beta\right]  \cup\left[  \sup
I-\alpha,\sup I\right]  \right) & \text{if} & \inf I\neq-\infty \wedge \sup I\neq+\infty\\
\\
I\backslash\left(  \left[  \inf I,\inf I+\beta\right]  \right)  & \text{if}
& \inf I\neq-\infty\wedge\sup I=+\infty\\
\\
I\backslash\left(  \left[  \sup I-\alpha,\sup I\right]  \right)  & \text{if}
& \inf I=-\infty\wedge\sup I\neq+\infty\\
\\
I & \text{if} & \inf I=-\infty\wedge\sup I=+\infty.
\end{array}
\right.
\]

\begin{definition}
\label{hs:def:s:ab:dd}
The \emph{$\alpha,\beta$-symmetric difference
derivative}\index{$\alpha,\beta$-symmetric difference derivative}
of $f:I\rightarrow\mathbb{R}$ is given by
$$
D_{\alpha,\beta}\left[  f\right]  \left(  t\right)  =\frac{f\left(
t+\alpha\right)  -f\left(  t-\beta\right)}{\alpha+\beta}
$$
for all $t\in I_{\beta}^{\alpha}$.
\end{definition}

\begin{remark}
The $\alpha,\beta$-symmetric difference operator
is a generalization of both the $\alpha$-forward
and the $\beta$-backward difference operators.
Indeed, the $\alpha$-forward difference operator is obtained
for $\alpha>0$ and $\beta=0$; while for $\alpha=0$ and $\beta>0$
we obtain the $\beta$-backward difference operator.
\end{remark}

\begin{remark}
The classical symmetric derivative \cite{Thomson}
is obtained by choosing $\beta = \alpha$ and taking the limit
$\alpha \rightarrow 0$. When $\alpha=\beta=h > 0$, we call
$h$-symmetric derivative to the $\alpha,\beta$-symmetric difference operator.
\end{remark}

\begin{remark}
\label{hs:rem:lc:der}
If $\alpha,\beta\geqslant 0$ with $\alpha + \beta >0$, then
$$
D_{\alpha,\beta}\left[  f\right]\left(t\right)
=\frac{\alpha}{\alpha+\beta}\Delta_{\alpha}\left[  f\right]\left(t\right)
+\frac{\beta}{\alpha+\beta}\nabla_{\beta}\left[  f\right]\left(t\right),
$$
where $\Delta_{\alpha}$ and $\nabla_{\beta}$ are, respectively,
the $\alpha$-forward and the $\beta$-backward difference operators.
\end{remark}

The symmetric difference operator has the following properties.

\begin{theorem}
Let $f,g:I\rightarrow\mathbb{R}$ and $c,\lambda\in\mathbb{R}$.
For all $t\in I_{\beta}^{\alpha}$ one has:

\begin{enumerate}
\item $D_{\alpha,\beta}\left[  c\right]  \left(  t\right)  =0$;

\item $D_{\alpha,\beta}\left[  f+g\right]  \left(  t\right)=D_{\alpha,\beta
}\left[  f\right]  \left(  t\right)  +D_{\alpha,\beta}\left[  g\right]
\left(  t\right)$;

\item $D_{\alpha,\beta}\left[  \lambda f\right]  \left(  t\right)
=\lambda D_{\alpha,\beta}\left[  f\right]  \left(  t\right)$;

\item $D_{\alpha,\beta}\left[  fg\right]  \left(  t\right)  =D_{\alpha,\beta
}\left[  f\right]  \left(  t\right)  g\left(  t+\alpha\right)  +f\left(
t-\beta\right)  D_{\alpha,\beta}\left[  g\right]  \left(  t\right)$;

\item $D_{\alpha,\beta}\left[  fg\right]  \left(  t\right)  =D_{\alpha,\beta
}\left[  f\right]  \left(  t\right)  g\left(  t-\beta\right)  +f\left(
t+\alpha\right)  D_{\alpha,\beta}\left[  g\right]  \left(  t\right)$;

\item $\displaystyle D_{\alpha,\beta}\left[  \frac{f}{g}\right]  \left(
t\right)  =\frac{D_{\alpha,\beta}\left[  f\right]  \left(  t\right)  g\left(
t-\beta\right)  -f\left(  t-\beta\right)  D_{\alpha,\beta}\left[  g\right]
\left(  t\right)  }{g\left(  t+\alpha\right)  g\left(  t-\beta\right)}$
\newline \newline provided $g\left(  t+\alpha\right)  g\left(  t-\beta\right)
\neq 0$;

\item $\displaystyle D_{\alpha,\beta}\left[  \frac{f}{g}\right]  \left(
t\right)  =\frac{D_{\alpha,\beta}\left[  f\right]  \left(  t\right)  g\left(
t+\alpha\right)  -f\left(  t+\alpha\right)  D_{\alpha,\beta}\left[  g\right]
\left(  t\right)  }{g\left(  t+\alpha\right)  g\left(  t-\beta\right)  }$
\newline \newline provided $g\left(  t+\alpha\right)  g\left(  t-\beta\right)
\neq0$.
\end{enumerate}
\end{theorem}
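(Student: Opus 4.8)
The plan is to prove every item by direct computation from Definition~\ref{hs:def:s:ab:dd}, using only that $t \in I_\beta^\alpha$ guarantees $t+\alpha, t-\beta \in I$ so that all evaluations appearing below are legitimate. Items 1--3 are immediate: for a constant $c$ the numerator $c - c$ vanishes, and since the map $f \mapsto f(t+\alpha) - f(t-\beta)$ is linear, dividing by the fixed positive scalar $\alpha+\beta$ preserves that linearity, giving $D_{\alpha,\beta}[f+g](t) = D_{\alpha,\beta}[f](t) + D_{\alpha,\beta}[g](t)$ and $D_{\alpha,\beta}[\lambda f](t) = \lambda D_{\alpha,\beta}[f](t)$.

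For the two product rules (items 4 and 5) the key step is a telescoping trick in the numerator of $D_{\alpha,\beta}[fg](t) = \dfrac{f(t+\alpha)g(t+\alpha) - f(t-\beta)g(t-\beta)}{\alpha+\beta}$. To obtain item 4 I would add and subtract $f(t-\beta)g(t+\alpha)$, regroup the numerator as $\big(f(t+\alpha) - f(t-\beta)\big)g(t+\alpha) + f(t-\beta)\big(g(t+\alpha) - g(t-\beta)\big)$, and divide through by $\alpha+\beta$. For item 5 the same computation works verbatim after instead adding and subtracting $f(t+\alpha)g(t-\beta)$, which shifts the forward-evaluated factor from $g$ onto $f$.

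For the quotient rules (items 6 and 7) I would first write, under the hypothesis $g(t+\alpha)g(t-\beta) \neq 0$, $D_{\alpha,\beta}\!\left[\tfrac{f}{g}\right](t) = \dfrac{f(t+\alpha)g(t-\beta) - f(t-\beta)g(t+\alpha)}{(\alpha+\beta)\,g(t+\alpha)g(t-\beta)}$, and then apply the same telescoping idea to the numerator: adding and subtracting $f(t-\beta)g(t-\beta)$ yields item 6, while adding and subtracting $f(t+\alpha)g(t+\alpha)$ yields item 7. Alternatively, items 6--7 follow from items 4--5 by applying the product rule to the identity $f = (f/g)\cdot g$ and solving for $D_{\alpha,\beta}[f/g](t)$; I would mention this as the shorter route. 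There is no genuine obstacle here --- every step is an elementary algebraic identity --- so the only thing requiring care is bookkeeping of which shifted argument, $t+\alpha$ or $t-\beta$, multiplies which factor, and observing that the nonvanishing condition in 6--7 is exactly what makes every denominator well defined on $I_\beta^\alpha$.
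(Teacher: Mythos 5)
Your proposal is correct and follows essentially the same route as the paper: items 1--3 by linearity, and items 4--5 by the same add-and-subtract of a cross term such as $f\left(t-\beta\right)g\left(t+\alpha\right)$ in the numerator. The only (cosmetic) difference is in items 6--7, where the paper first derives $D_{\alpha,\beta}\left[\frac{1}{g}\right]\left(t\right)=-\frac{D_{\alpha,\beta}\left[g\right]\left(t\right)}{g\left(t+\alpha\right)g\left(t-\beta\right)}$ and then invokes the product rules for $f\cdot\frac{1}{g}$, whereas you put the difference quotient over the common denominator and telescope directly --- both are equally valid elementary computations.
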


\begin{proof}
Property 1 is a trivial consequence of Definition~\ref{hs:def:s:ab:dd}.
Properties 2, 3 and 4 follow by direct computations:
\begin{align*}
D_{\alpha,\beta}\left[  f+g\right]  \left(  t\right)   & =\frac{\left(
f+g\right)  \left(  t+\alpha\right)  -\left(  f+g\right)  \left(
t-\beta\right)  }{\alpha+\beta}\\
& =\frac{f\left(  t+\alpha\right)  -f\left(  t-\beta\right)  }{\alpha+\beta
}+\frac{g\left(  t+\alpha\right)  -g\left(  t-\beta\right)  }{\alpha+\beta}\\
& =D_{\alpha,\beta}\left[  f\right]  \left(  t\right)  +D_{\alpha,\beta
}\left[  g\right]  \left(  t\right);
\end{align*}
\begin{align*}
D_{\alpha,\beta}\left[  \lambda f\right]  \left(  t\right)   & =\frac{\left(
\lambda f\right)  \left(  t+\alpha\right)  -\left(  \lambda f\right)  \left(
t-\beta\right)  }{\alpha+\beta}\\
&=\lambda\frac{f\left(  t+\alpha\right)  -f\left(  t-\beta\right)}{\alpha+\beta}\\
& =\lambda D_{\alpha,\beta}\left[  f\right]  \left(  t\right);
\end{align*}
\begin{align*}
D_{\alpha,\beta}\left[  fg\right]  \left(  t\right)   & =\frac{\left(
fg\right)  \left(  t+\alpha\right)  -\left(  fg\right)  \left(  t-\beta
\right)  }{\alpha+\beta}\\
& =\frac{f\left(  t+\alpha\right)  g\left(  t+\alpha\right)  -f\left(
t-\beta\right)  g\left(  t-\beta\right)  }{\alpha+\beta}\\
& =\frac{f\left(  t+\alpha\right)  -f\left(  t-\beta\right)  }{\alpha+\beta
}g\left(  t+\alpha\right)
+\frac{g\left(  t+\alpha\right)  -g\left(  t-\beta\right)  }{\alpha+\beta
}f\left(  t-\beta\right) \\
& =D_{\alpha,\beta}\left[  f\right]  \left(  t\right)  g\left(  t+\alpha
\right)  +f\left(  t-\beta\right)  D_{\alpha,\beta}\left[  g\right]  \left(
t\right).
\end{align*}
Property 5 is obtained from property 4 interchanging the role of $f$ and $g$.
To prove property 6 we begin by noting that
\begin{align*}
D_{\alpha,\beta}\left[  \frac{1}{g}\right]  \left(  t\right)   & =\frac
{\frac{1}{g}\left(  t+\alpha\right)  -\frac{1}{g}\left(  t-\beta\right)
}{\alpha+\beta}
=\frac{\frac{1}{g\left(  t+\alpha\right)  }-\frac{1}{g\left(  t-\beta
\right)  }}{\alpha+\beta}\\
& =\frac{g\left(  t-\beta\right)  -g\left(  t+\alpha\right)  }{\left(
\alpha+\beta\right)  g\left(  t+\alpha\right)  g\left(  t-\beta\right)}
=-\frac{D_{\alpha,\beta}\left[  g\right]  \left(  t\right)  }{g\left(
t+\alpha\right)  g\left(  t-\beta\right)  }\text{.}
\end{align*}
Hence,
\begin{align*}
D_{\alpha,\beta}\left[  \frac{f}{g}\right]  \left(  t\right)
&=D_{\alpha,\beta}\left[  f\frac{1}{g}\right]  \left(  t\right)
=D_{\alpha,\beta}\left[  f\right]  \left(  t\right)  \frac{1}{g}\left(
t+\alpha\right)  +f\left(  t-\beta\right)  D_{\alpha,\beta}\left[  \frac{1}
{g}\right]  \left(  t\right) \\
& =\frac{D_{\alpha,\beta}\left[  f\right]  \left(  t\right)  }{g\left(
t+\alpha\right)  }-f\left(  t-\beta\right)  \frac{D_{\alpha,\beta}\left[
g\right]  \left(  t\right)  }{g\left(  t+\alpha\right)  g\left(
t-\beta\right)  }\\
& =\frac{D_{\alpha,\beta}\left[  f\right]  \left(  t\right)  g\left(
t-\beta\right)  -f\left(  t-\beta\right)  D_{\alpha,\beta}\left[  g\right]
\left(  t\right)  }{g\left(  t+\alpha\right)  g\left(  t-\beta\right)  }.
\end{align*}
Property 7 follows from simple calculations:
\begin{align*}
D_{\alpha,\beta}\left[  \frac{f}{g}\right]  \left(  t\right)
&=D_{\alpha,\beta}\left[  f\frac{1}{g}\right]  \left(  t\right)
=D_{\alpha,\beta}\left[  f\right]  \left(  t\right)  \frac{1}{g}\left(
t-\beta\right)  +f\left(  t+\alpha\right)  D_{\alpha,\beta}\left[  \frac{1}
{g}\right]  \left(  t\right) \\
& =\frac{D_{\alpha,\beta}\left[  f\right]  \left(  t\right)  }{g\left(
t-\beta\right)  }-f\left(  t+\alpha\right)  \frac{D_{\alpha,\beta}\left[
g\right]  \left(  t\right)  }{g\left(  t+\alpha\right)  g\left(
t-\beta\right)  }\\
& =\frac{D_{\alpha,\beta}\left[  f\right]  \left(  t\right)  g\left(
t+\alpha\right)  -f\left(  t+\alpha\right)  D_{\alpha,\beta}\left[  g\right]
\left(  t\right)  }{g\left(  t+\alpha\right)  g\left(  t-\beta\right)  }.
\end{align*}
\end{proof}


\subsection{The $\alpha,\beta$-symmetric N\"{o}rlund sum}
\label{hs:sec:3.2}

Having in mind Remark~\ref{hs:rem:lc:der},
we define the $\alpha,\beta$-symmetric integral
as a linear combination of the $\alpha$-forward and the
$\beta$-backward integrals.

\begin{definition}
\label{hi:def:3}
Let $f:\mathbb{R}\rightarrow\mathbb{R}$ and $a,b\in\mathbb{R}$, $a<b$.
If $f$ is $\alpha$-forward and $\beta$-backward integrable
on $\left[  a,b\right]$, $\alpha, \beta \ge 0$ with $\alpha + \beta > 0$,
then we define the \emph{$\alpha,\beta$-symmetric
integral}\index{$\alpha,\beta$-symmetric integral} of $f$ from $a$ to $b$ by
\[
\int_{a}^{b}f\left(  t\right)  d_{\alpha,\beta}t=\frac{\alpha}{\alpha+\beta
}\int_{a}^{b}f\left(  t\right)  \Delta_{\alpha}t+\frac{\beta}{\alpha+\beta
}\int_{a}^{b}f\left(  t\right)  \nabla_{\beta}t\text{.}
\]
Function $f$ is $\alpha,\beta$-symmetric integrable if it is
$\alpha,\beta$-symmetric integrable for all $a,b\in\mathbb{R}$.
\end{definition}

\begin{remark}
Note that if $ \alpha\in\mathbb{R}^{+}$ and $\beta=0$,
then $$\displaystyle\int_{a}^{b}f\left(  t\right)
d_{\alpha,\beta}t=\int_{a}^{b}f\left(  t\right)  \Delta_{\alpha}t$$
and we do not need to assume in Definition~\ref{hi:def:3} that
$f$ is $\beta$-backward integrable;
if $\alpha=0$ and $\beta\in\mathbb{R}^{+}$, then
$$\displaystyle\int_{a}^{b}f\left(  t\right)  d_{\alpha,\beta}t
=\int_{a}^{b}f\left(  t\right)  \nabla_{\beta}t$$
and we do not need to assume that
$f$ is $\alpha$-forward integrable.
\end{remark}

\begin{example}
Let $f\left(  t\right)=\displaystyle\frac{1}{t^{2}}$.
\begin{align*}
\int_{1}^{3}\frac{1}{t^{2}}d_{2,2}t  & =\frac{1}{2}\int_{1}^{3}\frac{1}{t^{2}}
\Delta_{2}t+\frac{1}{2}\int_{1}^{3}\frac{1}{t^{2}}\nabla_{2}t\\
& =\frac{1}{2}\left(  2\sum_{k=0}^{+\infty}f\left(  1+2k\right)  -2\sum
_{k=0}^{+\infty}f\left(  3+2k\right)  \right) \\
& +\frac{1}{2}\left(  2\sum_{k=0}^{+\infty}f\left(  3-2k\right)  -2\sum
_{k=0}^{+\infty}f\left(  1-2k\right)  \right) \\
& =\left(  \sum_{k=0}^{+\infty}f\left(  1+2k\right)  -\sum_{k=0}^{+\infty
}f\left(  1+2\left(  k+1\right)  \right)  \right) \\
& +\left(  \sum_{k=0}^{+\infty}f\left(  3-2k\right)  -\sum_{k=0}^{+\infty
}f\left(  3-2\left(  k+1\right)  \right)  \right) \\
& =f\left(  1\right)  +f\left(  3\right) \\
& =1+\frac{1}{9}=\frac{10}{9}.
\end{align*}
\end{example}

The $\alpha,\beta$-symmetric integral has the following properties.

\begin{theorem}
\label{hi:propriedades}
Let $f,g:\mathbb{R}\rightarrow\mathbb{R}$
be $\alpha,\beta$-symmetric integrable on $\left[a,b\right]$.
Let $c\in\left[  a,b\right]$ and $k\in\mathbb{R}$. Then,
\begin{enumerate}
\item $\displaystyle\int_{a}^{a}f\left(  t\right)  d_{\alpha,\beta}t=0$;

\item $\displaystyle\int_{a}^{b}f\left(  t\right)  d_{\alpha,\beta}t=\int
_{a}^{c}f\left(  t\right)  d_{\alpha,\beta}t+\int_{c}^{b}f\left(  t\right)
d_{\alpha,\beta}t$, when the integrals exist;

\item $\displaystyle\int_{a}^{b}f\left(  t\right)  d_{\alpha,\beta}t
=-\int_{b}^{a}f\left(  t\right)  d_{\alpha,\beta}t$;

\item $kf$ is $\alpha,\beta$-symmetric integrable on $\left[a,b\right]$
and
$\displaystyle \int_{a}^{b}kf\left(t\right) d_{\alpha,\beta}t
=k\int_{a}^{b}f\left(t\right) d_{\alpha,\beta}t$;

\item $f+g$ is $\alpha,\beta$-symmetric integrable
on $\left[a,b\right]$ and
\[
\int_{a}^{b}\left(  f+g\right)  \left(  t\right)  d_{\alpha,\beta}t=\int
_{a}^{b}f\left(  t\right)  d_{\alpha,\beta}t+\int_{a}^{b}g\left(  t\right)
d_{\alpha,\beta}t\text{;}
\]

\item if $g$ is  a nonnegative function, then $fg$
is $\alpha,\beta$-symmetric integrable on $\left[  a,b\right]$.
\end{enumerate}
\end{theorem}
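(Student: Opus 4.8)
The plan is to derive every item from the corresponding property of the $\alpha$-forward N\"{o}rlund sum established earlier in Section~\ref{hi:sec:2}, together with its $\beta$-backward analogue, exploiting that, by Definition~\ref{hi:def:3}, $\int_{a}^{b}f(t)\,d_{\alpha,\beta}t$ is the fixed convex combination $\frac{\alpha}{\alpha+\beta}\int_{a}^{b}f(t)\,\Delta_{\alpha}t+\frac{\beta}{\alpha+\beta}\int_{a}^{b}f(t)\,\nabla_{\beta}t$ of the two one-sided integrals, and that a function is $\alpha,\beta$-symmetric integrable on $\left[a,b\right]$ precisely when it is simultaneously $\alpha$-forward and $\beta$-backward integrable there (with the usual convention, noted after Definition~\ref{hi:def:3}, that the missing integrability hypothesis is dropped when one of $\alpha,\beta$ vanishes).

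First I would dispatch items 1--5, which are purely formal. For item 1 both $\int_{a}^{a}f\,\Delta_{\alpha}t$ and $\int_{a}^{a}f\,\nabla_{\beta}t$ vanish, hence so does their combination. For items 2 and 3 I would invoke the interval additivity and the sign-reversal property already proved for the $\alpha$-forward integral and its backward counterpart and combine them coefficientwise; the clause ``when the integrals exist'' in item 2 is inherited verbatim from the one-sided statements. For items 4 and 5 the linearity of $\int\cdot\,\Delta_{\alpha}t$ and of $\int\cdot\,\nabla_{\beta}t$, together with the fact that $kf$ and $f+g$ are again $\alpha$-forward and $\beta$-backward integrable whenever $f,g$ are, immediately gives that $kf$ and $f+g$ are $\alpha,\beta$-symmetric integrable and that the integral respects scalar multiples and sums.

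The substantive point is item 6. Here I would argue as follows: since $f$ and $g$ are $\alpha,\beta$-symmetric integrable on $\left[a,b\right]$, each of them is $\alpha$-forward integrable and $\beta$-backward integrable on $\left[a,b\right]$. Because $g$ is nonnegative, the product theorem for the $\alpha$-forward integral (the product of an $\alpha$-forward integrable function with a nonnegative $\alpha$-forward integrable function is $\alpha$-forward integrable, proved above) shows that $fg$ is $\alpha$-forward integrable on $\left[a,b\right]$, and the analogous backward statement shows that $fg$ is $\beta$-backward integrable on $\left[a,b\right]$; hence $fg$ is $\alpha,\beta$-symmetric integrable on $\left[a,b\right]$, and as $a,b$ were arbitrary, on $\mathbb{R}$. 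In the degenerate case $\alpha=0$ (resp. $\beta=0$) only the backward (resp. forward) product theorem is needed, which is consistent with the reduced integrability requirement in that situation.

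I do not expect a genuine obstacle: the whole argument is bookkeeping built on results already in hand. The only place that needs a little care is item 6, where one must remember that $\alpha,\beta$-symmetric integrability unpacks into the conjunction of the two one-sided integrabilities, so that the two previously proved product theorems may be applied separately, together with the minor attention to the edge cases where one of the weights $\alpha,\beta$ is zero.
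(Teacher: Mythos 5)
Your proposal is correct and follows exactly the route the paper takes: the paper's own proof consists of the single remark that all items are easy consequences of the corresponding $\alpha$-forward and $\beta$-backward integral properties, which is precisely the coefficientwise combination you carry out, including the appeal to the one-sided product theorem for item 6. You have simply written out the bookkeeping the paper leaves implicit.
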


\begin{proof}
These results are easy consequences of the $\alpha$-forward
and $\beta$-backward integral properties.
\end{proof}

The properties of the $\alpha,\beta$-symmetric integral
follow from the corresponding $\alpha$-forward
and $\beta$-backward integral properties.
It should be noted, however, that the equality
$$D_{\alpha,\beta}\left[s \mapsto  \int_{a}^{s}f\left(\tau\right) d_{\alpha,\beta}
\tau\right](t) =f\left(  t\right)$$
is not always true in the $\alpha,\beta$-symmetric calculus,
despite both forward and backward integrals satisfy the
corresponding fundamental theorem of calculus. Indeed, let
$$f\left(  t\right)
=\displaystyle
\begin{cases}
\frac{1}{2^{t}} & \text{ if } t\in\mathbb{N},\\
\\
0 & \text{otherwise}.
\end{cases}$$
Then, for a fixed $t \in \mathbb{N}$,
\begin{align*}
\int_{0}^{t}\frac{1}{2^{\tau}}d_{1,1}\tau & =\frac
{1}{2}\int_{0}^{t}\frac{1}{2^{\tau}}\Delta_{1}
\tau+\frac{1}{2}\int_{0}^{t}\frac{1}{2^{\tau}}
\nabla_{1}\tau\\
& =\frac{1}{2}\left(  \sum_{k=0}^{+\infty}f\left(  0+k\right)
-\sum_{k=0}^{+\infty}f\left(  t+k\right)  \right)
+\frac{1}{2}\left(  \sum_{k=0}^{+\infty}f\left(  t-k\right)
-\sum_{k=0}^{+\infty}f\left(  0-k\right)  \right)  \\
&=\frac{1}{2}\left(  1+\frac{1}{2}+\cdots+\frac{1}{2^{t-1}}\right)  +\frac
{1}{2}\left(  \frac{1}{2^{t}}+\frac{1}{2^{t-1}}+\cdots+\frac{1}{2}\right)  \\
&=\frac{1}{2}\frac{1-\frac{1}{2^{t}}}{1-\frac{1}{2}}+\frac{1}{4}\frac
{1-\frac{1}{2^{t}}}{1-\frac{1}{2}}=\frac{3}{2}\left(  1-\frac{1}{2^{t}}\right)
\end{align*}
and $$\displaystyle
D_{1,1}\left[s \mapsto \int_{0}^{s}\frac{1}{2^{\tau}}
d_{1,1}\tau\right](t) =\frac{3}{2}D_{1,1}\left[ s \mapsto  1-\frac{1}{2^{s}}\right](t)
=-\frac{3}{2}\frac{\frac{1}{2^{t+1}}-\frac{1}{2^{t-1}}}{2}
=\frac{9}{2^{t+3}}.$$
Therefore,
$$
\displaystyle D_{1,1}\left[s \mapsto \int_{0}^{s}\frac{1}{2^{\tau}}
d_{1,1}\tau\right](t) \neq \frac{1}{2^{t}}.
$$

The next result follows immediately from Theorem~\ref{hi:p} and the
corresponding \\$\beta$-backward version.

\begin{theorem}
\label{hi:modulo}
Let $f:\mathbb{R}\rightarrow\mathbb{R}$ and $p>1$.
If $\left\vert f\right\vert $ is $\alpha,\beta$-symmetric integrable
on $\left[  a,b\right]$, then $\left\vert f\right\vert ^{p}$
is also $\alpha,\beta$-symmetric integrable on $\left[a,b\right]$.
\end{theorem}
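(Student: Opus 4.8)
The plan is to reduce Theorem~\ref{hi:modulo} to the already-established $\alpha$-forward statement (Theorem~\ref{hi:p}) together with its $\beta$-backward analogue, via the defining linear combination in Definition~\ref{hi:def:3}. Recall that $f$ being $\alpha,\beta$-symmetric integrable on $[a,b]$ means, by definition, that $f$ is both $\alpha$-forward integrable and $\beta$-backward integrable on $[a,b]$ (and the integral is then the weighted sum of the two). So the hypothesis that $|f|$ is $\alpha,\beta$-symmetric integrable on $[a,b]$ unpacks into: $|f|$ is $\alpha$-forward integrable on $[a,b]$ \emph{and} $|f|$ is $\beta$-backward integrable on $[a,b]$.

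First I would handle the degenerate cases separately, since Definition~\ref{hi:def:3} only makes sense when $\alpha+\beta>0$, and the convention in this subsection is $\alpha,\beta\ge 0$ with at least one positive. If $\beta=0$ (so $\alpha>0$), then $\alpha,\beta$-symmetric integrability coincides with $\alpha$-forward integrability, and the claim is exactly Theorem~\ref{hi:p}. Symmetrically, if $\alpha=0$ the claim is the $\beta$-backward version of Theorem~\ref{hi:p}, whose proof is word-for-word the same as the proof of Theorem~\ref{hi:p} with the tails $\sum_k |f(a+k\alpha)|$, $\sum_k|f(b+k\alpha)|$ replaced by $\sum_k|f(a-k\beta)|$, $\sum_k|f(b-k\beta)|$. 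I would state this backward analogue explicitly (or cite the remark after Definition~\ref{hi:beta} that the $\beta$-backward integral has ``similar results and properties'').

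For the generic case $\alpha,\beta>0$: from $|f|$ being $\alpha$-forward integrable on $[a,b]$, Theorem~\ref{hi:p} gives that $|f|^p=\big||f|\big|^p$ is $\alpha$-forward integrable on $[a,b]$; likewise the $\beta$-backward version of Theorem~\ref{hi:p} applied to the $\beta$-backward integrable function $|f|$ gives that $|f|^p$ is $\beta$-backward integrable on $[a,b]$. Since $|f|^p$ is then both $\alpha$-forward and $\beta$-backward integrable on $[a,b]$, Definition~\ref{hi:def:3} tells us precisely that $|f|^p$ is $\alpha,\beta$-symmetric integrable on $[a,b]$, which is the assertion. Finally, if one wants the ``over all of $\mathbb{R}$'' phrasing, one notes the hypothesis then holds on every $[a,b]$ and the above argument applies verbatim on each interval.

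There is essentially no main obstacle here — the content is entirely carried by Theorem~\ref{hi:p} and its backward twin, and the proof is just bookkeeping with the linear-combination definition; the only thing to be careful about is the case split on whether $\alpha$ or $\beta$ vanishes, so that Definition~\ref{hi:def:3} is invoked legitimately. One remark I would include: the statement is about $p>1$ as in Theorem~\ref{hi:p}, but the same argument (via property 7 of Theorem~\ref{ts:props integral delta}-style arguments, or rather via the elementary tail estimate $|f(x)|^p\le|f(x)|$ once $|f(x)|\le 1$) in fact works for all $p\ge 1$; I would not belabor this unless the paper uses it.
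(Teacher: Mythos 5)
Your proposal is correct and matches the paper's argument exactly: the paper states that the theorem ``follows immediately from Theorem~\ref{hi:p} and the corresponding $\beta$-backward version,'' which is precisely your reduction via Definition~\ref{hi:def:3}. Your explicit handling of the degenerate cases $\alpha=0$ or $\beta=0$ is a small tidiness the paper leaves implicit, but the substance is the same.
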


\begin{theorem}
\label{hi:des}
Let $f,g:\mathbb{R}\rightarrow\mathbb{R}$
be $\alpha,\beta$-symmetric integrable functions on $\left[a,b\right]$,
$\mathcal{A} := \left\{  a+k\alpha:k\in\mathbb{N}_{0}\right\}$
and $\mathcal{B} := \left\{  b-k\beta:k\in\mathbb{N}_{0}\right\}$.
For $b\in\mathcal{A}$ and $a\in\mathcal{B}$ one has:
\begin{enumerate}
\item if $\left\vert f\left(  t\right)  \right\vert
\leqslant g\left(t\right)$ for all $t\in\mathcal{A}\cup\mathcal{B}$, then
$\displaystyle \left\vert \int_{a}^{b}f\left(  t\right)  d_{\alpha,\beta}t\right\vert
\leqslant\int_{a}^{b}g\left(  t\right)  d_{\alpha,\beta}t$;

\item if $f\left(  t\right) \geqslant0$ for all
$t\in\mathcal{A}\cup\mathcal{B}$, then
$\displaystyle \int_{a}^{b}f\left(  t\right)  d_{\alpha,\beta}t\geqslant0$;

\item if $g\left(  t\right)  \geqslant f\left(  t\right)$
for all $t\in\mathcal{A}\cup\mathcal{B}$, then
$\displaystyle \int_{a}^{b}g\left(  t\right)
d_{\alpha,\beta}t\geqslant\int_{a}^{b}f\left(t\right)d_{\alpha,\beta}t$.
\end{enumerate}
\end{theorem}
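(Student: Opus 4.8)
The plan is to reduce every assertion to the already-established one-sided inequalities --- Theorem~\ref{hi:desigualdade} and Corollary~\ref{hi:desigualdade2} for the $\alpha$-forward integral, together with the corresponding $\beta$-backward statements --- and then to recombine them through the defining identity
\[
\int_{a}^{b}f\left(t\right)d_{\alpha,\beta}t=\frac{\alpha}{\alpha+\beta}\int_{a}^{b}f\left(t\right)\Delta_{\alpha}t+\frac{\beta}{\alpha+\beta}\int_{a}^{b}f\left(t\right)\nabla_{\beta}t
\]
of Definition~\ref{hi:def:3}. If $\beta=0$ the hypothesis $a\in\mathcal{B}$ forces $a=b$ (and symmetrically $\alpha=0$ with $b\in\mathcal{A}$ forces $a=b$), so in those cases the statement is trivial and I may assume $\alpha,\beta>0$.

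First I would record the $\beta$-backward companions of the forward inequalities, proved by repeating verbatim the computations in Theorem~\ref{hi:desigualdade} and Corollary~\ref{hi:desigualdade2} with the points $a+j\alpha$ replaced by $b-j\beta$: if $a=b-k\beta$ for some $k\in\mathbb{N}_{0}$, then $\int_{a}^{b}h\left(t\right)\nabla_{\beta}t=\beta\sum_{j=0}^{k-1}h\left(b-j\beta\right)$ is a finite sum over points of $\mathcal{B}$, and hence $\left\vert f\right\vert\leqslant g$ on $\mathcal{B}$ gives $\left\vert\int_{a}^{b}f\left(t\right)\nabla_{\beta}t\right\vert\leqslant\int_{a}^{b}g\left(t\right)\nabla_{\beta}t$, $f\geqslant 0$ on $\mathcal{B}$ gives $\int_{a}^{b}f\left(t\right)\nabla_{\beta}t\geqslant 0$, and $g\geqslant f$ on $\mathcal{B}$ gives $\int_{a}^{b}g\left(t\right)\nabla_{\beta}t\geqslant\int_{a}^{b}f\left(t\right)\nabla_{\beta}t$.

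For item~1, since $b\in\mathcal{A}$ and $a\in\mathcal{B}$ each of the forward and backward integrals of $f$ and of $g$ over $\left[a,b\right]$ is a finite sum; using $\left\vert f\right\vert\leqslant g$ on $\mathcal{A}\cup\mathcal{B}$, the triangle inequality, and $\frac{\alpha}{\alpha+\beta},\frac{\beta}{\alpha+\beta}\geqslant 0$, I would estimate
\begin{align*}
\left\vert\int_{a}^{b}f\left(t\right)d_{\alpha,\beta}t\right\vert
&\leqslant\frac{\alpha}{\alpha+\beta}\left\vert\int_{a}^{b}f\left(t\right)\Delta_{\alpha}t\right\vert+\frac{\beta}{\alpha+\beta}\left\vert\int_{a}^{b}f\left(t\right)\nabla_{\beta}t\right\vert\\
&\leqslant\frac{\alpha}{\alpha+\beta}\int_{a}^{b}g\left(t\right)\Delta_{\alpha}t+\frac{\beta}{\alpha+\beta}\int_{a}^{b}g\left(t\right)\nabla_{\beta}t=\int_{a}^{b}g\left(t\right)d_{\alpha,\beta}t,
\end{align*}
where the two inner estimates are Theorem~\ref{hi:desigualdade} and its backward companion. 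Item~2 is the same convex combination applied to Corollary~\ref{hi:desigualdade2}(1) and its backward companion. Item~3 follows from item~2 applied to $g-f$, which is $\alpha,\beta$-symmetric integrable on $\left[a,b\right]$ by the linearity part of Theorem~\ref{hi:propriedades} and satisfies $g-f\geqslant 0$ on $\mathcal{A}\cup\mathcal{B}$; the same linearity then turns $\int_{a}^{b}\left(g-f\right)\left(t\right)d_{\alpha,\beta}t\geqslant 0$ into $\int_{a}^{b}g\left(t\right)d_{\alpha,\beta}t\geqslant\int_{a}^{b}f\left(t\right)d_{\alpha,\beta}t$.

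There is no genuinely hard analytic step here; the work is all bookkeeping. The points needing care are: checking that the finite-sum reductions of the forward and backward integrals really land in $\mathcal{A}$ and $\mathcal{B}$ respectively, so that the hypotheses get used on exactly the right points; noting that the coefficients $\frac{\alpha}{\alpha+\beta}$ and $\frac{\beta}{\alpha+\beta}$ make sense only because $\alpha+\beta>0$; and handling the degenerate cases $\alpha=0$ and $\beta=0$ first. The one ingredient not explicitly in the excerpt is the precise formulation of the $\beta$-backward analogues of Theorem~\ref{hi:desigualdade} and Corollary~\ref{hi:desigualdade2}, which the text only announces as holding ``similarly''; that is the step I expect would be worth writing out in full.
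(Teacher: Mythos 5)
Your proposal is correct and follows the same route as the paper, whose proof consists precisely of the one line ``it follows from Theorem~\ref{hi:desigualdade} and Corollary~\ref{hi:desigualdade2} and the corresponding $\beta$-backward versions.'' You have simply written out in full the convex-combination bookkeeping and the degenerate cases $\alpha=0$, $\beta=0$ that the paper leaves implicit.
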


\begin{proof}
It follows from Theorem~\ref{hi:desigualdade}
and Corollary~\ref{hi:desigualdade2}
and the corresponding $\beta$-backward versions.
\end{proof}


\subsection{Mean value theorems}
\label{hs:sec:3.3}

We begin by remarking that if $f$ assumes its local maximum at
$t_{0}$, then there exist $\alpha,\beta\in\mathbb{R}_{0}^{+}$
with at least one of them positive, such that
$f\left(  t_{0}+\alpha\right)  \leqslant f\left(  t_{0}\right)$
and
$f\left(  t_{0}\right)  \geqslant f\left(  t_{0}-\beta\right)$.
If $\alpha,\beta\in\mathbb{R}^{+}$ this means that
$\Delta_{\alpha}\left[  f\right]  \left(  t\right)  \leqslant 0$
and $\nabla_{\beta}\left[  f\right]  \left(  t\right)  \geqslant 0$.
Also, we have the corresponding result for a local minimum: if $f$ assumes its
local minimum at $t_{0}$, then there exist $\alpha,\beta\in\mathbb{R}^{+}$ such that
$\Delta_{\alpha}\left[  f\right]  \left(  t\right)  \geqslant 0$ and
$\nabla_{\beta}\left[  f\right]  \left(  t\right)  \leqslant 0$.

\begin{theorem}[The $\alpha,\beta$-symmetric Fermat theorem for stationary points]
\label{hs:Symmetric Fermat's Theorem}
Let $f:\left[  a,b\right]  \rightarrow\mathbb{R}$
be a continuous function.\index{$\alpha,\beta$-symmetric Fermat theorem for stationary points}
If $f$ assumes a local extremum at $t_{0} \in\left]  a,b\right[$, then there exist
two positive real numbers $\alpha$ and $\beta$ such that
$$
D_{\alpha,\beta}\left[  f\right]  \left(  t_{0}\right)  =0.
$$
\end{theorem}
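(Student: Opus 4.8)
The plan is to reduce first to the case of a local \emph{maximum}: if $t_0$ is a local minimum of $f$, then it is a local maximum of $-f$, and since $D_{\alpha,\beta}[-f]=-D_{\alpha,\beta}[f]$ (homogeneity of $D_{\alpha,\beta}$, proved above with $\lambda=-1$), a zero of $D_{\alpha,\beta}[-f](t_0)$ is a zero of $D_{\alpha,\beta}[f](t_0)$. So assume from now on that $f$ attains a local maximum at $t_0\in\,]a,b[$, and choose $\delta>0$ small enough that $[t_0-\delta,t_0+\delta]\subseteq[a,b]$ and $f(t)\leqslant f(t_0)$ for all $t$ with $|t-t_0|\leqslant\delta$. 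For any $\alpha,\beta>0$ with $\alpha+\beta\leqslant\delta$ one then has $a<t_0-\beta$ and $t_0+\alpha<b$, hence $t_0\in I_\beta^\alpha$ and $D_{\alpha,\beta}[f](t_0)=\frac{f(t_0+\alpha)-f(t_0-\beta)}{\alpha+\beta}$ is well defined, with $f(t_0\pm\cdot)$ evaluated inside the neighbourhood where $f\leqslant f(t_0)$.

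The key step is a one-parameter Intermediate Value argument along the segment $\alpha+\beta=\delta$. Define $\varphi:[0,\delta]\to\mathbb{R}$ by $\varphi(s):=f(t_0+\delta-s)-f(t_0-s)$; it is continuous, and for $s\in\,]0,\delta[$ it equals $\delta\,D_{\delta-s,\,s}[f](t_0)$. By the choice of $\delta$, $\varphi(0)=f(t_0+\delta)-f(t_0)\leqslant 0$ and $\varphi(\delta)=f(t_0)-f(t_0-\delta)\geqslant 0$, so the Intermediate Value Theorem produces $s^{\ast}\in[0,\delta]$ with $\varphi(s^{\ast})=0$, i.e. $f(t_0+(\delta-s^{\ast}))=f(t_0-s^{\ast})$. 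If $s^{\ast}\in\,]0,\delta[$, then $\alpha:=\delta-s^{\ast}$ and $\beta:=s^{\ast}$ are the required positive numbers and $D_{\alpha,\beta}[f](t_0)=0$, which is the conclusion.

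The main obstacle — and the step I expect to demand the most care — is the degenerate case in which $\varphi$ vanishes only at the endpoints $0$ and $\delta$ (geometrically, $f$ is constant, equal to $f(t_0)$, on one side of $t_0$ while being strictly below $f(t_0)$ on the other). To deal with it I would switch to a range-comparison argument: the sets $R:=f(]t_0,t_0+\delta])$ and $L:=f([t_0-\delta,t_0[)$ are nonempty intervals contained in $]-\infty,f(t_0)]$ whose closures both contain $f(t_0)$; whenever $R\cap L\neq\emptyset$, any common value $v$ gives $\alpha,\beta>0$ with $f(t_0+\alpha)=v=f(t_0-\beta)$ and hence $D_{\alpha,\beta}[f](t_0)=0$, while if $f\equiv f(t_0)$ on both $[t_0-\delta,t_0]$ and $[t_0,t_0+\delta]$ any $\alpha=\beta\in\,]0,\delta]$ works. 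Pinning down that these alternatives are exhaustive — equivalently, controlling precisely when the two one-sided value ranges fail to meet, and adjusting the choice of $\alpha,\beta$ (or of $\delta$) accordingly — is where the proof becomes genuinely delicate; the IVT step itself I do not expect to cause trouble, so this boundary/degeneracy analysis, together with the parallel bookkeeping for a local minimum, is the part I would write out in full detail.
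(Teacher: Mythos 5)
Your main line of attack is the same as the paper's: reduce to a local maximum, note that $D_{\alpha,\beta}[f](t_0)=0$ is equivalent to $f(t_0+\alpha)=f(t_0-\beta)$, and use the Intermediate Value Theorem to equalize the two one-sided values. The paper applies the IVT on the single interval $[t_0,t_0+\gamma]$ with target value $f(t_0-\gamma)$, whereas you run it along the segment $\alpha+\beta=\delta$ via $\varphi(s)=f(t_0+\delta-s)-f(t_0-s)$; these are the same argument in different parametrizations, and they fail in exactly the same place.

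That place is the degenerate case you flagged, and it cannot be repaired. If, say, $f\equiv f(t_0)$ on $[t_0-\delta,t_0]$ while $f<f(t_0)$ on all of $\left]t_0,t_0+\delta'\right]$ for every $\delta'>0$ (take $f(t)=\min(0,t_0-t)$, for which $t_0$ is a local maximum), then $f(t_0-\beta)=f(t_0)>f(t_0+\alpha)$ for every admissible pair $\alpha,\beta>0$, so $D_{\alpha,\beta}[f](t_0)<0$ always. Your two alternatives — the one-sided ranges $R$ and $L$ meet, or $f$ is constant at level $f(t_0)$ on both sides — are therefore not exhaustive, and the missing case is a genuine counterexample to the statement as written (with ``local extremum'' read in the usual non-strict sense). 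The paper's proof has the identical hole, merely hidden: from $f(t_0)\geqslant f(t_0-\gamma)>f(t_0+\gamma)$ it invokes the IVT to produce $\rho$ with $0<\rho<\gamma$ and $f(t_0+\rho)=f(t_0-\gamma)$, but when $f(t_0)=f(t_0-\gamma)$ the IVT only guarantees $\rho\in\left[0,\gamma\right[$, and $\rho=0$ may be the only solution. So your instinct that the boundary/degeneracy analysis is the delicate step was correct; the honest resolution is not a cleverer case split but an added hypothesis (for instance a strict local extremum, or excluding the situation where $f$ is identically $f(t_0)$ on one side and strictly below it on the other), under which either your argument or the paper's goes through.
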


\begin{proof}
We prove the case where $f$ assumes a local maximum at $t_{0}$. Then
there exist $\alpha_{1},\beta_{1}\in\mathbb{R}^{+}$ such that
$\Delta_{\alpha_{1}}\left[  f\right]  \left(  t_{0}\right)  \leqslant 0$
and
$\nabla_{\beta_{1}}\left[  f\right]  \left(  t_{0}\right)  \geqslant 0$.
If $f\left(  t_{0}+\alpha_{1}\right)  =f\left(  t_{0}-\beta_{1}\right)$,
then $D_{\alpha_{1},\beta_{1}}\left[  f\right]  \left(  t_{0}\right)  =0$.
If $f\left(  t_{0}+\alpha_{1}\right)  \neq f\left(  t_{0}-\beta_{1}\right)$,
then let us choose $\gamma=\min\left\{  \alpha_{1},\beta_{1}\right\}$.
Suppose (without loss of generality) that $f\left(  t_{0}-\gamma\right)
>f\left(  t_{0}+\gamma\right)$. Then,
$f\left(  t_{0}\right) \geqslant f\left(  t_{0}-\gamma\right)
>f\left(  t_{0} +\gamma\right)$
and, since $f$ is continuous, by the intermediate value theorem there
exists $\rho$ such that $0<\rho<\gamma$ and
$f\left(  t_{0}+\rho\right)  =f\left(  t_{0}-\gamma\right)$.
Therefore,
$D_{\rho,\gamma}\left[  f\right]  \left(  t_{0}\right)  =0$.
\end{proof}

\begin{theorem}[The $\alpha,\beta$-symmetric Rolle mean value theorem]
\label{hs:Symmetric Rolle's Mean Value Theorem}
Let $f:\left[  a,b\right] \rightarrow\mathbb{R}$
be a continuous function with $f\left(  a\right)
=f\left(  b\right)$.\index{$\alpha,\beta$-symmetric Rolle mean value theorem}
Then there exist $\alpha$, $\beta\in\mathbb{R}^{+}$
and $c\in\left]  a,b\right[$ such that
$$
D_{\alpha,\beta}\left[  f\right]  \left(  c\right)  =0.
$$
\end{theorem}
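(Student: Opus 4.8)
The plan is to combine the classical Weierstrass extreme value theorem with the $\alpha,\beta$-symmetric Fermat theorem already established (Theorem~\ref{hs:Symmetric Fermat's Theorem}). Since $f$ is continuous on the compact interval $\left[a,b\right]$, it attains a maximum value $M$ and a minimum value $m$ on $\left[a,b\right]$. The argument then splits into two cases according to whether one of these extreme values is realized at an interior point.

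First I would treat the case in which at least one of $M$, $m$ is attained at some point $t_{0}\in\,\left]a,b\right[$. In that situation $f$ has a local extremum at $t_{0}$, so Theorem~\ref{hs:Symmetric Fermat's Theorem} provides two positive real numbers $\alpha$ and $\beta$ with $D_{\alpha,\beta}\left[f\right]\left(t_{0}\right)=0$; taking $c=t_{0}$ settles this case immediately.

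Next I would dispose of the remaining case, in which both $M$ and $m$ are attained only at the endpoints $a$ and $b$. Whichever endpoint realizes $M$ and whichever realizes $m$, the hypothesis $f\left(a\right)=f\left(b\right)$ forces $M=f\left(a\right)=f\left(b\right)=m$, so $f$ is constant on $\left[a,b\right]$. Then any interior point together with sufficiently small increments works: choosing $c=\frac{a+b}{2}$ and $\alpha=\beta=\frac{b-a}{4}>0$ we have $c\in I_{\beta}^{\alpha}$ for $I=\left[a,b\right]$, so $D_{\alpha,\beta}\left[f\right]\left(c\right)$ is defined, and by property~1 of the $\alpha,\beta$-symmetric difference operator $D_{\alpha,\beta}\left[f\right]\left(c\right)=0$.

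I do not expect any serious obstacle: the only mildly delicate point is ensuring, in the constant case, that the chosen $c$ actually lies in $I_{\beta}^{\alpha}$ so that $D_{\alpha,\beta}\left[f\right]\left(c\right)$ makes sense, which is precisely why the increments are taken to be a quarter of the length of $\left[a,b\right]$. The whole substance of the statement is carried by the $\alpha,\beta$-symmetric Fermat theorem, and this Rolle-type result is essentially a packaging of it via the extreme value theorem.
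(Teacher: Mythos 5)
Your proof is correct and follows essentially the same route as the paper's: both reduce to the extreme value theorem plus the $\alpha,\beta$-symmetric Fermat theorem, with the dichotomy ``interior extremum versus $f$ constant'' (the paper phrases it as constant versus non-constant, which is the same split). Your explicit choice of $c=\frac{a+b}{2}$ and $\alpha=\beta=\frac{b-a}{4}$ in the constant case is a nice touch of care where the paper simply declares the result obvious.
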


\begin{proof}
If $f=const$, then the result is obvious.
If $f$ is not a constant function, then
there\ exists $t\in\left]a,b\right[$
such that $f\left(  t\right)  \neq f\left(  a\right)$.
Since $f$ is continuous on the compact set $\left[a,b\right]$,
$f$ has an extremum $M=f\left(  c\right)$
with $c\in\left]a,b\right[$. Since $c$ is also a local extremizer, then,
by Theorem~\ref{hs:Symmetric Fermat's Theorem},
there exist $\alpha$, $\beta\in\mathbb{R}^{+}$ such that
$D_{\alpha,\beta}\left[  f\right]  \left(  c\right)=0$.
\end{proof}

\begin{theorem}[The $\alpha,\beta$-symmetric Lagrange mean value theorem]
Let $f:\left[a,b\right]\rightarrow\mathbb{R}$ be a continuous
function.\index{$\alpha,\beta$-symmetric Lagrange mean value \\theorem}
Then there exist $c\in\left]  a,b\right[$
and $\alpha$, $\beta\in\mathbb{R}^{+}$ such that
$$D_{\alpha,\beta}\left[  f\right]  \left(  c\right)
=\frac{f\left(  b\right)-f\left(  a\right)  }{b-a}.$$
\end{theorem}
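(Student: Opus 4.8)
The plan is to mimic the classical proof of the Lagrange mean value theorem by reducing to the $\alpha,\beta$-symmetric Rolle theorem (Theorem~\ref{hs:Symmetric Rolle's Mean Value Theorem}) via an auxiliary function. First I would define
\[
g(t) := f(t) - \frac{f(b)-f(a)}{b-a}(t-a),
\]
which is continuous on $[a,b]$ as a difference of a continuous function and an affine one. A direct computation gives $g(a) = f(a)$ and $g(b) = f(b) - (f(b)-f(a)) = f(a)$, so $g(a)=g(b)$.

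Next I would apply the $\alpha,\beta$-symmetric Rolle mean value theorem to $g$: there exist $\alpha,\beta\in\mathbb{R}^{+}$ and $c\in\,]a,b[$ such that $D_{\alpha,\beta}[g](c)=0$. Then I would use the linearity properties of $D_{\alpha,\beta}$ (properties 2 and 3 of the theorem on the symmetric difference operator) together with the fact that $D_{\alpha,\beta}$ applied to the affine function $t\mapsto t-a$ equals $1$ at every point — indeed,
\[
D_{\alpha,\beta}\left[t\mapsto t-a\right](c) = \frac{(c+\alpha-a)-(c-\beta-a)}{\alpha+\beta} = \frac{\alpha+\beta}{\alpha+\beta} = 1.
\]
Hence $0 = D_{\alpha,\beta}[g](c) = D_{\alpha,\beta}[f](c) - \dfrac{f(b)-f(a)}{b-a}$, which rearranges to the desired conclusion
\[
D_{\alpha,\beta}[f](c) = \frac{f(b)-f(a)}{b-a}.
\]

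The only subtlety I anticipate is bookkeeping about the domain: Theorem~\ref{hs:Symmetric Rolle's Mean Value Theorem} is stated for functions on $[a,b]$ and produces $\alpha,\beta$ and $c\in\,]a,b[$, so one should make sure $c$ lies in $[a,b]_\beta^\alpha$ so that $D_{\alpha,\beta}[f](c)$ and $D_{\alpha,\beta}[g](c)$ are both defined — but this is exactly the situation guaranteed by the Rolle statement (the $\alpha,\beta$ are chosen, as in the proof of Fermat's theorem, small enough that $c\pm$ the relevant shift stays inside the interval), so no extra work is needed. The main "obstacle" is therefore not really an obstacle: once the affine correction is subtracted, everything reduces to the already-proved $\alpha,\beta$-symmetric Rolle theorem plus the elementary evaluation of $D_{\alpha,\beta}$ on a linear function. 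I would present the argument in three short displayed computations as above and conclude.
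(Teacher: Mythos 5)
Your proof is correct and is essentially the same as the paper's: the paper also subtracts the affine interpolant (using $g(t)=f(a)-f(t)+(t-a)\frac{f(b)-f(a)}{b-a}$, the negative of your auxiliary function up to a constant), applies the $\alpha,\beta$-symmetric Rolle theorem, and evaluates $D_{\alpha,\beta}$ on the affine part to conclude. No substantive differences.
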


\begin{proof}
Let function $g$ be defined on $\left[  a,b\right]$ by
$g\left(  t\right)  =f\left(  a\right)  -f\left(  t\right)  +\left(
t-a\right)  \frac{f\left(  b\right)  -f\left(  a\right)  }{b-a}$.
Clearly, $g$ is continuous on $\left[  a,b\right]$ and
$g\left(  a\right)  =g\left(  b\right)  =0$.
Hence, by Theorem~\ref{hs:Symmetric Rolle's Mean Value Theorem}, there exist
$\alpha$, $\beta\in\mathbb{R}^{+}$ and $c\in\left]  a,b\right[$ such that
$D_{\alpha,\beta}\left[  g\right]  \left(  c\right)  =0$.
Since
\begin{align*}
D_{\alpha,\beta}\left[  g\right]  \left(  t\right)   & =\frac{g\left(
t+\alpha\right)  -g\left(  t-\beta\right)  }{\alpha+\beta}\\
& =\frac{1}{\alpha+\beta}\left(  f\left(  a\right)  -f\left(  t+\alpha\right)
+\left(  t+\alpha-a\right)  \frac{f\left(  b\right)  -f\left(  a\right)
}{b-a}\right) \\
& -\frac{1}{\alpha+\beta}\left(  f\left(  a\right)  -f\left(  t-\beta\right)
+\left(  t-\beta-a\right)  \frac{f\left(  b\right)  -f\left(  a\right)  }
{b-a}\right) \\
& =\frac{1}{\alpha+\beta}\left(  f\left(  t-\beta\right)  -f\left(
t+\alpha\right)  +\left(  \alpha+\beta\right)  \frac{f\left(  b\right)
-f\left(  a\right)  }{b-a}\right) \\
& =\frac{f\left(  b\right)  -f\left(  a\right)  }{b-a}-D_{\alpha,\beta}\left[
f\right]  \left(  t\right),
\end{align*}
we conclude that
$$
D_{\alpha,\beta}\left[  f\right]  \left(  c\right)  =\frac{f\left(  b\right)
-f\left(  a\right)  }{b-a}.
$$
\end{proof}

\begin{theorem}[The $\alpha,\beta$-symmetric Cauchy mean value theorem]
Let $f,g:\left[a,b\right] \rightarrow\mathbb{R}$ be continuous
functions.\index{$\alpha,\beta$-symmetric Cauchy mean value theorem}
Suppose that $D_{\alpha,\beta}\left[  g\right]\left(  t\right)  \neq 0$
for all $t\in\left]  a,b\right[$ and all $\alpha$, $\beta\in\mathbb{R}^{+}$.
Then there exist $\bar{\alpha},\bar{\beta}\in\mathbb{R}^{+}$
and $c\in\left]  a,b\right[$ such that
$$\frac{f\left(  b\right)  -f\left(  a\right)  }{g\left(  b\right)  -g\left(
a\right)  }=\frac{D_{\bar{\alpha},\bar{\beta}}\left[  f\right]  \left(
c\right)  }{D_{\bar{\alpha},\bar{\beta}}\left[  g\right]  \left(  c\right)}.$$
\end{theorem}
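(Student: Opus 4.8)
The plan is to mimic the proof of the $\alpha,\beta$-symmetric Lagrange mean value theorem given just above, replacing the linear term $\left(t-a\right)$ there by $g\left(t\right)-g\left(a\right)$. First I would record a preliminary observation: the standing hypothesis forces $g\left(a\right)\neq g\left(b\right)$. Indeed, if we had $g\left(a\right)=g\left(b\right)$, then Theorem~\ref{hs:Symmetric Rolle's Mean Value Theorem} applied to $g$ would produce $\alpha,\beta\in\mathbb{R}^{+}$ and $c\in\left]a,b\right[$ with $D_{\alpha,\beta}\left[g\right]\left(c\right)=0$, contradicting the assumption that $D_{\alpha,\beta}\left[g\right]$ never vanishes on $\left]a,b\right[$ for positive $\alpha,\beta$. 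Hence the quotient $\frac{f\left(b\right)-f\left(a\right)}{g\left(b\right)-g\left(a\right)}$ is well defined.

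Next I would introduce the auxiliary function $h:\left[a,b\right]\rightarrow\mathbb{R}$ defined by
\[
h\left(t\right)=f\left(a\right)-f\left(t\right)+\left(g\left(t\right)-g\left(a\right)\right)\frac{f\left(b\right)-f\left(a\right)}{g\left(b\right)-g\left(a\right)}.
\]
Since $f$ and $g$ are continuous on $\left[a,b\right]$, so is $h$, and a direct substitution gives $h\left(a\right)=h\left(b\right)=0$. Applying Theorem~\ref{hs:Symmetric Rolle's Mean Value Theorem} to $h$ then yields $\bar{\alpha},\bar{\beta}\in\mathbb{R}^{+}$ and $c\in\left]a,b\right[$ such that $D_{\bar{\alpha},\bar{\beta}}\left[h\right]\left(c\right)=0$.

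Then I would use the linearity properties of the $\alpha,\beta$-symmetric difference operator (vanishing on constants, additivity, and homogeneity) to compute
\[
D_{\bar{\alpha},\bar{\beta}}\left[h\right]\left(t\right)=-D_{\bar{\alpha},\bar{\beta}}\left[f\right]\left(t\right)+\frac{f\left(b\right)-f\left(a\right)}{g\left(b\right)-g\left(a\right)}\,D_{\bar{\alpha},\bar{\beta}}\left[g\right]\left(t\right),
\]
so that $D_{\bar{\alpha},\bar{\beta}}\left[h\right]\left(c\right)=0$ rearranges to $D_{\bar{\alpha},\bar{\beta}}\left[f\right]\left(c\right)=\frac{f\left(b\right)-f\left(a\right)}{g\left(b\right)-g\left(a\right)}\,D_{\bar{\alpha},\bar{\beta}}\left[g\right]\left(c\right)$. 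Finally, since $\bar{\alpha},\bar{\beta}\in\mathbb{R}^{+}$ and $c\in\left]a,b\right[$, the standing hypothesis gives $D_{\bar{\alpha},\bar{\beta}}\left[g\right]\left(c\right)\neq0$, so I may divide by it and obtain the claimed identity $\frac{f\left(b\right)-f\left(a\right)}{g\left(b\right)-g\left(a\right)}=\frac{D_{\bar{\alpha},\bar{\beta}}\left[f\right]\left(c\right)}{D_{\bar{\alpha},\bar{\beta}}\left[g\right]\left(c\right)}$.

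I do not expect a genuine obstacle here; the argument is a routine reduction to the $\alpha,\beta$-symmetric Rolle theorem. The only point that deserves explicit care is the preliminary remark that $g\left(a\right)\neq g\left(b\right)$, which is precisely what makes both the auxiliary function and the conclusion meaningful; note also that, in contrast with classical Cauchy-type arguments, no extra regularity of $f$ or $g$ at shifted points is needed, since $h$ is assembled directly from $f$ and $g$ and inherits their continuity without further hypotheses.
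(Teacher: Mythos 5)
Your proposal is correct and follows essentially the same route as the paper: the same preliminary observation that $g(a)\neq g(b)$ via the $\alpha,\beta$-symmetric Rolle theorem, and the same auxiliary function (yours is the negative of the paper's $F(t)=f(t)-f(a)-\frac{f(b)-f(a)}{g(b)-g(a)}\left[g(t)-g(a)\right]$, which changes nothing). Your explicit remark that the hypothesis lets you divide by $D_{\bar{\alpha},\bar{\beta}}\left[g\right]\left(c\right)$ is a small point the paper leaves implicit.
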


\begin{proof}
From condition $D_{\alpha,\beta}\left[  g\right]  \left(  t\right)  \neq 0$
for all $t\in\left]a,b\right[$ and all $\alpha,\beta\in\mathbb{R}^{+}$
and the $\alpha,\beta$-symmetric Rolle mean value theorem
(Theorem~\ref{hs:Symmetric Rolle's Mean Value Theorem}), it follows that
$g\left(  b\right)  \neq g\left(  a\right)$.
Let us consider function $F$ defined on $\left[a,b\right]$ by
$$F\left(  t\right)  =f\left(  t\right)  -f\left(  a\right)  -\frac{f\left(
b\right)  -f\left(  a\right)  }{g\left(  b\right)  -g\left(  a\right)
}\left[  g\left(  t\right)  -g\left(  a\right)  \right].$$
Clearly, $F$ is continuous on $\left[  a,b\right]$
and $F\left(  a\right)  =F\left(b\right)$. Applying the
$\alpha,\beta$-symmetric Rolle mean value theorem to
function $F$, we conclude that there exist
$\bar{\alpha},\bar{\beta}\in\mathbb{R}^{+}$ and $c\in\left]a,b\right[$
such that
\begin{equation*}
0 = D_{\bar{\alpha},\bar{\beta}}\left[  F\right]  \left(  c\right)\\
=D_{\bar{\alpha},\bar{\beta}}\left[  f\right]  \left(  c\right)
-\frac{f\left(  b\right)-f\left(  a\right)}{g\left(  b\right)
-g\left(a\right)}D_{\bar{\alpha},\bar{\beta}}\left[g\right]\left(c\right),
\end{equation*}
proving the intended result.
\end{proof}

\begin{theorem}[Mean value theorem for the $\alpha,\beta$- integral]
\label{hi:thm:mvt}
Let $a,b\in\mathbb{R}$ such that\index{Mean value theorem for the $\alpha,\beta$-integral}
$a<b$ and $b\in\mathcal{A} := \left\{  a+k\alpha:k\in\mathbb{N}_{0}\right\}$ and
$a\in\mathcal{B} := \left\{  b-k\beta:k\in\mathbb{N}_{0}\right\}$, where
$\alpha,\beta\in\mathbb{R}_{0}^{+}$, $\alpha + \beta \ne 0$.
Let $f,g:\mathbb{R}\rightarrow\mathbb{R}$
be bounded and $\alpha,\beta$-symmetric integrable on $[a,b]$
with $g$ nonnegative. Let $m$ and $M$ be
the infimum and the supremum, respectively, of function $f$.
Then, there exists a real number $K$ satisfying the inequalities
$$
m\leqslant K\leqslant M
$$
such that
$$
\int_{a}^{b}f\left(  t\right)  g\left(  t\right) d_{\alpha,\beta}t
=K\int_{a}^{b}g\left(  t\right)  d_{\alpha,\beta}t.
$$
\end{theorem}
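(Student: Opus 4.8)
The plan is to obtain the claimed identity by the classical device of sandwiching $fg$ between $mg$ and $Mg$ and then invoking the monotonicity of the $\alpha,\beta$-symmetric integral (Theorem~\ref{hi:des}), whose hypotheses — that $b\in\mathcal{A}$ and $a\in\mathcal{B}$ — are exactly the ones assumed in the statement.

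First I would record the integrability facts that make the monotonicity theorem applicable. Since $g$ is nonnegative and $\alpha,\beta$-symmetric integrable on $[a,b]$, the product $fg$ is $\alpha,\beta$-symmetric integrable on $[a,b]$ by the last item of Theorem~\ref{hi:propriedades}; the functions $mg$ and $Mg$ are $\alpha,\beta$-symmetric integrable on $[a,b]$ by the scalar-multiple item of the same theorem. Next, because $m=\inf f\leqslant f(t)\leqslant M=\sup f$ and $g(t)\geqslant 0$ for every $t\in\mathcal{A}\cup\mathcal{B}$, one has the pointwise inequalities $mg(t)\leqslant f(t)g(t)\leqslant Mg(t)$ on $\mathcal{A}\cup\mathcal{B}$. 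Applying the last item of Theorem~\ref{hi:des} twice — once to the pair $mg,\,fg$ and once to the pair $fg,\,Mg$ — yields
\begin{equation*}
m\int_{a}^{b}g\left(t\right)d_{\alpha,\beta}t
\leqslant\int_{a}^{b}f\left(t\right)g\left(t\right)d_{\alpha,\beta}t
\leqslant M\int_{a}^{b}g\left(t\right)d_{\alpha,\beta}t .
\end{equation*}

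I would then finish by a case distinction on the value of $\int_{a}^{b}g\left(t\right)d_{\alpha,\beta}t$, which is nonnegative by the positivity item of Theorem~\ref{hi:des}. If it equals $0$, the displayed estimate forces $\int_{a}^{b}f\left(t\right)g\left(t\right)d_{\alpha,\beta}t=0$ as well, and, since $m\leqslant M$, any $K\in[m,M]$ (for instance $K=m$) satisfies the required identity. If $\int_{a}^{b}g\left(t\right)d_{\alpha,\beta}t>0$, I would set
\begin{equation*}
K:=\frac{\displaystyle\int_{a}^{b}f\left(t\right)g\left(t\right)d_{\alpha,\beta}t}{\displaystyle\int_{a}^{b}g\left(t\right)d_{\alpha,\beta}t},
\end{equation*}
so that the identity holds by construction, and dividing the sandwich inequality by the positive number $\int_{a}^{b}g\left(t\right)d_{\alpha,\beta}t$ gives $m\leqslant K\leqslant M$.

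There is no serious obstacle here; the only points needing care are (i) checking that $fg$ is genuinely $\alpha,\beta$-symmetric integrable so that the monotonicity theorem can be applied, and (ii) treating the degenerate case $\int_{a}^{b}g\,d_{\alpha,\beta}t=0$ separately, since then $K$ cannot be defined as a quotient. Both are handled by the results already established for the $\alpha,\beta$-symmetric integral.
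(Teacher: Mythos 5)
Your proof is correct and follows essentially the same route as the paper's: sandwich $fg$ between $mg$ and $Mg$, apply the monotonicity of the $\alpha,\beta$-symmetric integral (Theorems~\ref{hi:propriedades} and \ref{hi:des}), and split into the cases $\int_a^b g\,d_{\alpha,\beta}t=0$ and $\int_a^b g\,d_{\alpha,\beta}t>0$. Your treatment is if anything slightly more explicit about which integrability items are invoked and about the choice of $K$ in the degenerate case.
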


\begin{proof}
Since
$$
m\leqslant f\left(  t\right)  \leqslant M
$$
for all $t\in\mathbb{R}$ and
$$
g\left(  t\right)  \geqslant 0,
$$
then
$$
mg\left(  t\right)  \leqslant f\left(  t\right)  g\left(  t\right)
\leqslant Mg\left(  t\right)
$$
for all $t\in\mathcal{A} \cup \mathcal{B}$.
All functions $mg$, $fg$ and $Mg$ are
$\alpha,\beta$-symmetric integrable on $[a,b]$.
By Theorems~\ref{hi:propriedades} and \ref{hi:des},
$$
m\int_{a}^{b}g\left(  t\right)  d_{\alpha,\beta}t
\leqslant\int_{a}^{b}f\left(t\right) g\left(t\right) d_{\alpha,\beta}t
\leqslant M\int_{a}^{b}g\left(
t\right)  d_{\alpha,\beta}t.
$$
If
$$
\int_{a}^{b}g\left(  t\right)  d_{\alpha,\beta}t=0,
$$
then
$$
\int_{a}^{b}f\left(  t\right)  g\left(  t\right)
d_{\alpha,\beta}t=0;
$$
if
$$
\int_{a}^{b}g\left(  t\right) d_{\alpha,\beta}t>0,
$$
then
$$
m\leqslant\frac{\int_{a}^{b}f\left(  t\right)  g\left(  t\right)
d_{\alpha,\beta}t}{\int_{a}^{b}g\left(  t\right)
d_{\alpha,\beta}t} \leqslant M.
$$
Therefore, the middle term of these inequalities
is equal to a number $K$, which yields the intended result.
\end{proof}


\subsection{$\alpha,\beta$-Symmetric integral inequalities}
\label{hi:sec:ineq}

Inspired in the work by Agarwal et al. \cite{Agarwal},
we now present $\alpha,\beta$-symmetric versions of H\"{o}lder's,
Cauchy--Schwarz's and Minkowski's inequalities.

\begin{theorem}[$\alpha,\beta$-symmetric H\"{o}lder's inequality]
\label{hi:Holders Inequality}
Let\index{$\alpha,\beta$-symmetric H\"{o}lder's inequality}
$f,g:\mathbb{R}\rightarrow\mathbb{R}$
and $a,b\in\mathbb{R}$ with $a<b$ such that
$b\in\mathcal{A} := \left\{  a+k\alpha:k\in\mathbb{N}_{0}\right\}$
and $a\in\mathcal{B} := \left\{  b-k\beta:k\in\mathbb{N}_{0}\right\}$,
where $\alpha,\beta\in\mathbb{R}_{0}^{+}$, $\alpha + \beta \ne 0$.
If $\left\vert f\right\vert $ and $\left\vert g\right\vert$ are
$\alpha,\beta$-symmetric integrable on $\left[a,b\right]$, then
\begin{equation}
\label{hi:eq:hi}
\int_{a}^{b}\left\vert f\left(  t\right)  g\left(  t\right)  \right\vert
d_{\alpha,\beta}t\leqslant\left(  \int_{a}^{b}\left\vert f\left(  t\right)
\right\vert ^{p}d_{\alpha,\beta}t\right)  ^{\frac{1}{p}}\left(
\int_{a}^{b}\left\vert g\left(  t\right)\right\vert^{q}
d_{\alpha,\beta}t\right)^{\frac{1}{q}},
\end{equation}
where $p>1$ and $q=p/(p-1)$.
\end{theorem}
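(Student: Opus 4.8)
The plan is to follow the classical route to Hölder's inequality, with Young's inequality applied at the nodes of $\mathcal{A}\cup\mathcal{B}$ and the final estimate obtained by $\alpha,\beta$-symmetric integration. First I would record the elementary Young inequality: for $u,v\geqslant 0$ and $p>1$, $q=p/(p-1)$ (so $1/p+1/q=1$), one has $uv\leqslant u^{p}/p+v^{q}/q$. Since $\left\vert f\right\vert$ and $\left\vert g\right\vert$ are $\alpha,\beta$-symmetric integrable on $[a,b]$ and $p,q>1$, Theorem~\ref{hi:modulo} shows $\left\vert f\right\vert^{p}$ and $\left\vert g\right\vert^{q}$ are $\alpha,\beta$-symmetric integrable on $[a,b]$, so the right-hand side of \eqref{hi:eq:hi} is well defined; moreover $\left\vert fg\right\vert=\left\vert f\right\vert\left\vert g\right\vert$ is $\alpha,\beta$-symmetric integrable on $[a,b]$ by part 6 of Theorem~\ref{hi:propriedades}, so the left-hand side is too.

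Next I would split into cases. Put $A=\left(\int_{a}^{b}\left\vert f(t)\right\vert^{p}\,d_{\alpha,\beta}t\right)^{1/p}$ and $B=\left(\int_{a}^{b}\left\vert g(t)\right\vert^{q}\,d_{\alpha,\beta}t\right)^{1/q}$. If $A=0$ (or $B=0$), then, writing the $\alpha,\beta$-symmetric integral via Definition~\ref{hi:def:3} as a combination of the $\alpha$-forward and $\beta$-backward integrals and noting that, because $b\in\mathcal{A}$ and $a\in\mathcal{B}$, each of these reduces to a finite sum of nonnegative terms indexed by the nodes that $\int_{a}^{b}$ actually sees, I conclude $\left\vert f(t)\right\vert=0$ at each such node; hence $\left\vert f(t)g(t)\right\vert=0$ there and both sides of \eqref{hi:eq:hi} vanish. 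If $A,B>0$, I apply Young's inequality at each $t\in\mathcal{A}\cup\mathcal{B}$ with $u=\left\vert f(t)\right\vert/A$ and $v=\left\vert g(t)\right\vert/B$, obtaining the pointwise estimate
\[
\frac{\left\vert f(t)\,g(t)\right\vert}{AB}\leqslant\frac{\left\vert f(t)\right\vert^{p}}{p\,A^{p}}+\frac{\left\vert g(t)\right\vert^{q}}{q\,B^{q}}.
\]

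Finally, since the endpoints satisfy $b\in\mathcal{A}$ and $a\in\mathcal{B}$, the monotonicity and linearity of the $\alpha,\beta$-symmetric integral (Theorems~\ref{hi:propriedades} and \ref{hi:des}) apply to the last inequality and give
\[
\frac{1}{AB}\int_{a}^{b}\left\vert f(t)\,g(t)\right\vert\,d_{\alpha,\beta}t
\leqslant\frac{1}{p\,A^{p}}\int_{a}^{b}\left\vert f(t)\right\vert^{p}\,d_{\alpha,\beta}t
+\frac{1}{q\,B^{q}}\int_{a}^{b}\left\vert g(t)\right\vert^{q}\,d_{\alpha,\beta}t
=\frac{1}{p}+\frac{1}{q}=1,
\]
whence \eqref{hi:eq:hi} follows on multiplying by $AB$. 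The part to be careful with — rather than a genuine difficulty — is that the $\alpha,\beta$-symmetric integral is only known to be monotone when the limits of integration are nodes of $\mathcal{A}$ and $\mathcal{B}$; this is why the hypotheses $b\in\mathcal{A}$, $a\in\mathcal{B}$ are indispensable and why one must invoke Theorem~\ref{hi:des} rather than argue by a naive comparison of the defining series. Everything else is a routine transcription of the classical argument using the forward and backward results already established.
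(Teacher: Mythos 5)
Your proof is correct and follows essentially the same route as the paper: normalize by the $p$- and $q$-norms, apply Young's inequality pointwise on the nodes, and integrate using the linearity and monotonicity of the $\alpha,\beta$-symmetric integral (the latter being exactly where the hypotheses $b\in\mathcal{A}$, $a\in\mathcal{B}$ enter, as you correctly flag). The only difference is cosmetic — you treat the degenerate case $A=0$ or $B=0$ explicitly via the finite-sum structure of the integrals, where the paper simply assumes without loss of generality that the product of the two integrals is nonzero.
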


\begin{proof}
For $\alpha,\beta\in\mathbb{R}_{0}^{+}$, $\alpha + \beta \ne 0$,
the following inequality holds (Young's inequality):
$$
\alpha^{\frac{1}{p}}\beta^{\frac{1}{q}}
\leqslant\frac{\alpha}{p}+\frac{\beta}{q}.
$$
Without loss of generality, suppose that
$$\displaystyle \left(  \int_{a}^{b}\left\vert f\left(  t\right)  \right\vert ^{p}
d_{\alpha,\beta}t\right)  \left(  \int_{a}^{b}\left\vert g\left(  t\right)
\right\vert ^{q}d_{\alpha,\beta}t\right)  \neq 0$$
(note that both integrals exist by Theorem~\ref{hi:modulo}). Set
$$\xi\left( t\right)
:=\frac{\left\vert f\left(  t\right)  \right\vert ^{p}}{\int_{a}^{b}\left\vert f\left(  \tau\right)  \right\vert ^{p}d_{\alpha,\beta}\tau}
$$
and $$\gamma\left(  t\right)
:=\frac{\left\vert g\left(t\right)\right\vert ^{q}}{\int_{a}^{b}\left\vert g\left(\tau\right)
\right\vert ^{q}d_{\alpha,\beta}\tau}.$$
Since both functions $\xi$ and $\gamma$ are
$\alpha,\beta$-symmetric integrable on $\left[a,b\right]$, then
\begin{align*}
\int_{a}^{b}&\frac{\left\vert f\left(  t\right)  \right\vert }{\left(
\int_{a}^{b}\left\vert f\left(  \tau\right)  \right\vert ^{p}d_{\alpha,\beta
}\tau\right)  ^{\frac{1}{p}}}\frac{\left\vert g\left(  t\right)  \right\vert
}{\left(  \int_{a}^{b}\left\vert g\left(  \tau\right)  \right\vert
^{q}d_{\alpha,\beta}\tau\right)  ^{\frac{1}{q}}}d_{\alpha,\beta}t
=\int_{a}^{b}\xi\left(  t\right)  ^{\frac{1}{p}}\gamma\left(  t\right)
^{\frac{1}{q}}d_{\alpha,\beta}t\\
& \leqslant\int_{a}^{b}\left(  \frac{\xi\left(  t\right)  }{p}+\frac
{\gamma\left(  t\right)  }{q}\right)  d_{\alpha,\beta}t\\
& =\frac{1}{p}\int_{a}^{b}\left(  \frac{\left\vert f\left(  t\right)
\right\vert ^{p}}{\int_{a}^{b}\left\vert f\left(  \tau\right)  \right\vert
^{p}d_{\alpha,\beta}\tau}\right)  d_{\alpha,\beta}t
+\frac{1}{q}\int_{a}^{b}\left(  \frac{\left\vert g\left(  t\right)
\right\vert ^{q}}{\int_{a}^{b}\left\vert g\left(  \tau\right)
\right\vert ^{q}d_{\alpha,\beta}\tau}\right)  d_{\alpha,\beta}t = 1
\end{align*}
proving the intended result.
\end{proof}

The particular case $p=q=2$ of \eqref{hi:eq:hi}
gives the $\alpha,\beta$-symmetric Cauchy--Schwarz's inequality.

\begin{corollary}[$\alpha,\beta$-symmetric Cauchy--Schwarz's inequality]
Let\index{$\alpha,\beta$-symmetric Cauchy--Schwarz's inequality}
$f,g:\mathbb{R}\rightarrow\mathbb{R}$
and $a,b\in\mathbb{R}$ with $a<b$ such that
$b\in\mathcal{A} := \left\{  a+k\alpha:k\in\mathbb{N}_{0}\right\}$ and
$a\in\mathcal{B} := \left\{  b-k\beta:k\in\mathbb{N}_{0}\right\}$, where
$\alpha,\beta\in\mathbb{R}_{0}^{+}$, $\alpha + \beta \ne 0$. If $f$ and $g$
are $\alpha,\beta$-symmetric integrable on $\left[  a,b\right]$, then
\[
\int_{a}^{b}\left\vert f\left(  t\right)  g\left(  t\right)  \right\vert
d_{\alpha,\beta}t\leqslant\sqrt{\left(  \int_{a}^{b}\left\vert f\left(
t\right)  \right\vert ^{2}d_{\alpha,\beta}t\right)  \left(
\int_{a}^{b}\left\vert g\left(  t\right)
\right\vert ^{2}d_{\alpha,\beta}t\right)}\text{.}
\]
\end{corollary}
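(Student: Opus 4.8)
The plan is to obtain the $\alpha,\beta$-symmetric Cauchy--Schwarz inequality as the immediate specialization $p=q=2$ of the $\alpha,\beta$-symmetric H\"older inequality \eqref{hi:eq:hi} established in Theorem~\ref{hi:Holders Inequality}. First I would check that the hypotheses of the corollary indeed imply those of Theorem~\ref{hi:Holders Inequality}: we are given that $f$ and $g$ are $\alpha,\beta$-symmetric integrable on $[a,b]$, and since $p=2>1$, Theorem~\ref{hi:modulo} guarantees that $|f|^2$ and $|g|^2$ are also $\alpha,\beta$-symmetric integrable on $[a,b]$; moreover $|f|$ and $|g|$ are $\alpha,\beta$-symmetric integrable (again by the relevant closure properties), so H\"older's inequality applies verbatim with the conjugate pair $p=q=2$.

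Then I would simply substitute $p=2$ and $q=2$ into \eqref{hi:eq:hi}, noting that $\tfrac1p+\tfrac1q = 1$ is satisfied, to get
\[
\int_{a}^{b}\left\vert f\left(  t\right)  g\left(  t\right)  \right\vert
d_{\alpha,\beta}t\leqslant\left(  \int_{a}^{b}\left\vert f\left(  t\right)
\right\vert ^{2}d_{\alpha,\beta}t\right)  ^{\frac{1}{2}}\left(
\int_{a}^{b}\left\vert g\left(  t\right)\right\vert^{2}
d_{\alpha,\beta}t\right)^{\frac{1}{2}},
\]
and finally rewrite the product of the two square roots as a single square root $\sqrt{\left(\int_a^b|f|^2 d_{\alpha,\beta}t\right)\left(\int_a^b|g|^2 d_{\alpha,\beta}t\right)}$, which is legitimate since both integrals are nonnegative by Theorem~\ref{hi:des}. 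This yields exactly the claimed inequality.

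There is essentially no obstacle here: the entire content is in Theorem~\ref{hi:Holders Inequality}, whose proof rests on Young's inequality $\alpha^{1/p}\beta^{1/q}\le \alpha/p+\beta/q$ and the monotonicity/linearity properties of the $\alpha,\beta$-symmetric integral (Theorems~\ref{hi:propriedades} and \ref{hi:des}). The only point requiring a word of care — and the closest thing to a "hard part" — is the degenerate case where $\int_a^b|f|^2 d_{\alpha,\beta}t = 0$ or $\int_a^b|g|^2 d_{\alpha,\beta}t = 0$; but this is already handled inside the proof of Theorem~\ref{hi:Holders Inequality} (the "without loss of generality" reduction to the case where the product of the two integrals is nonzero, together with the observation that if one integral vanishes the left-hand side is then dominated by $0$), so nothing new is needed at the level of the corollary. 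Hence the proof is a one-line deduction, and I would present it as such.
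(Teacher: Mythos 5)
Your proposal is correct and matches the paper exactly: the paper derives this corollary as the immediate specialization $p=q=2$ of the $\alpha,\beta$-symmetric H\"older inequality, with no further argument. Your extra remarks on integrability of $|f|^2$, $|g|^2$ and on the degenerate case are sound but already absorbed into the proof of Theorem~\ref{hi:Holders Inequality}.
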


We prove the $\alpha,\beta$-symmetric Minkowski inequality using
$\alpha,\beta$-symmetric H\"{o}lder's inequality.

\begin{theorem}[$\alpha,\beta$-symmetric Minkowski's inequality]
Let $f,g:\mathbb{R}
\rightarrow\mathbb{R}$\index{$\alpha,\beta$-symmetric Minkowski's inequality}
and $a,b\in\mathbb{R}$ with $a<b$ such that $b\in\mathcal{A}
:= \left\{  a+k\alpha:k\in\mathbb{N}_{0}\right\}$ and
$a\in\mathcal{B} := \left\{  b-k\beta:k\in\mathbb{N}_{0}\right\}$, where
$\alpha,\beta\in\mathbb{R}_{0}^{+}$, $\alpha + \beta \ne 0$.
If $f$ and $g$ are $\alpha,\beta$-symmetric integrable
on $\left[a,b\right]$, then, for any $p>1$,
\begin{equation}
\label{A}
\left(  \int_{a}^{b}\left\vert f\left(  t\right)  +g\left(  t\right)
\right\vert ^{p}d_{\alpha,\beta}t\right)  ^{\frac{1}{p}}\leqslant\left(
\int_{a}^{b}\left\vert f\left(  t\right)  \right\vert ^{p}d_{\alpha,\beta
}t\right)  ^{\frac{1}{p}}+\left(  \int_{a}^{b}\left\vert g\left(  t\right)
\right\vert ^{p}d_{\alpha,\beta}t\right)  ^{\frac{1}{p}}.
\end{equation}
\end{theorem}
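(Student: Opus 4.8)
The plan is to derive \eqref{A} from the $\alpha,\beta$-symmetric H\"older inequality (Theorem~\ref{hi:Holders Inequality}) by the classical argument. First I would dispose of the degenerate case: if $\int_{a}^{b}\left\vert f\left(t\right)+g\left(t\right)\right\vert^{p}d_{\alpha,\beta}t=0$, then \eqref{A} holds trivially, since its left-hand side vanishes while its right-hand side is nonnegative (the integrals $\int_{a}^{b}\left\vert f\right\vert^{p}d_{\alpha,\beta}t$ and $\int_{a}^{b}\left\vert g\right\vert^{p}d_{\alpha,\beta}t$ exist by Theorem~\ref{hi:modulo} and are nonnegative by Theorem~\ref{hi:des}). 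So from now on assume $\int_{a}^{b}\left\vert f+g\right\vert^{p}d_{\alpha,\beta}t>0$.

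Next I would start from the pointwise estimate $\left\vert f\left(t\right)+g\left(t\right)\right\vert^{p}\leqslant\left\vert f\left(t\right)\right\vert\left\vert f\left(t\right)+g\left(t\right)\right\vert^{p-1}+\left\vert g\left(t\right)\right\vert\left\vert f\left(t\right)+g\left(t\right)\right\vert^{p-1}$, which follows from the triangle inequality $\left\vert f+g\right\vert\leqslant\left\vert f\right\vert+\left\vert g\right\vert$. Before integrating, one checks that every function in sight is $\alpha,\beta$-symmetric integrable on $\left[a,b\right]$: since $f+g$ is integrable (Theorem~\ref{hi:propriedades}), Theorem~\ref{hi:modulo} gives that $\left\vert f+g\right\vert^{p}$ and $\left\vert f+g\right\vert^{p-1}$ are integrable (writing $p-1=p/q$, so the $q$th power of $\left\vert f+g\right\vert^{p-1}$ is $\left\vert f+g\right\vert^{p}$), and the products $\left\vert f\right\vert\left\vert f+g\right\vert^{p-1}$, $\left\vert g\right\vert\left\vert f+g\right\vert^{p-1}$ are integrable as well. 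Integrating the pointwise bound and using linearity and monotonicity of the $\alpha,\beta$-symmetric integral (Theorems~\ref{hi:propriedades} and~\ref{hi:des}, whose hypotheses $b\in\mathcal{A}$ and $a\in\mathcal{B}$ are exactly those assumed here) yields
\[
\int_{a}^{b}\left\vert f+g\right\vert^{p}d_{\alpha,\beta}t\leqslant\int_{a}^{b}\left\vert f\right\vert\left\vert f+g\right\vert^{p-1}d_{\alpha,\beta}t+\int_{a}^{b}\left\vert g\right\vert\left\vert f+g\right\vert^{p-1}d_{\alpha,\beta}t.
\]

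Now apply Theorem~\ref{hi:Holders Inequality} to each summand on the right with conjugate exponents $p$ and $q=p/(p-1)$, using $(p-1)q=p$, to get
\[
\int_{a}^{b}\left\vert f\right\vert\left\vert f+g\right\vert^{p-1}d_{\alpha,\beta}t\leqslant\left(\int_{a}^{b}\left\vert f\right\vert^{p}d_{\alpha,\beta}t\right)^{\frac{1}{p}}\left(\int_{a}^{b}\left\vert f+g\right\vert^{p}d_{\alpha,\beta}t\right)^{\frac{1}{q}},
\]
and likewise with $g$ in place of $f$. Substituting these two estimates gives
\[
\int_{a}^{b}\left\vert f+g\right\vert^{p}d_{\alpha,\beta}t\leqslant\left[\left(\int_{a}^{b}\left\vert f\right\vert^{p}d_{\alpha,\beta}t\right)^{\frac{1}{p}}+\left(\int_{a}^{b}\left\vert g\right\vert^{p}d_{\alpha,\beta}t\right)^{\frac{1}{p}}\right]\left(\int_{a}^{b}\left\vert f+g\right\vert^{p}d_{\alpha,\beta}t\right)^{\frac{1}{q}}.
\]
Dividing both sides by $\left(\int_{a}^{b}\left\vert f+g\right\vert^{p}d_{\alpha,\beta}t\right)^{1/q}$ (permissible by the case assumption) and using $1-\frac{1}{q}=\frac{1}{p}$ produces \eqref{A}.

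The algebra here is the classical one; the part needing the most care is the integrability bookkeeping — making sure $\left\vert f+g\right\vert^{p}$, $\left\vert f+g\right\vert^{p-1}$ and the two products $\left\vert f\right\vert\left\vert f+g\right\vert^{p-1}$, $\left\vert g\right\vert\left\vert f+g\right\vert^{p-1}$ are all $\alpha,\beta$-symmetric integrable on $\left[a,b\right]$, so that H\"older's inequality and the linearity and monotonicity of the integral may legitimately be invoked — together with not overlooking the degenerate case $\int_{a}^{b}\left\vert f+g\right\vert^{p}d_{\alpha,\beta}t=0$, which must be handled separately at the outset.
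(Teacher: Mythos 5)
Your proposal is correct and follows essentially the same route as the paper's own proof: split $\left\vert f+g\right\vert^{p}=\left\vert f+g\right\vert^{p-1}\left\vert f+g\right\vert$, bound via the triangle inequality, apply the $\alpha,\beta$-symmetric H\"older inequality with $q=p/(p-1)$ to each summand, and divide by $\left(\int_{a}^{b}\left\vert f+g\right\vert^{p}d_{\alpha,\beta}t\right)^{1/q}$ after disposing of the degenerate case. The only difference is that you make the integrability bookkeeping explicit, which the paper leaves implicit.
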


\begin{proof}
If $\displaystyle\int_{a}^{b}\left\vert f\left(  t\right)
+g\left(  t\right)  \right\vert^{p}d_{\alpha,\beta}t=0$, the result is trivial.
Suppose that $$\displaystyle\int_{a}^{b}\left\vert f\left(  t\right)
+g\left(  t\right)  \right\vert^{p}d_{\alpha,\beta}t\neq0.$$
Since
\begin{multline*}
\int_{a}^{b}\left\vert f\left(  t\right)
+g\left(  t\right)  \right\vert^{p}d_{\alpha,\beta}t
=\int_{a}^{b}\left\vert f\left(  t\right)  +g\left(
t\right)  \right\vert ^{p-1}\left\vert f\left(  t\right)  +g\left(  t\right)
\right\vert d_{\alpha,\beta}t\\
\leqslant\int_{a}^{b}\left\vert f\left(  t\right)  \right\vert \left\vert
f\left(  t\right)  +g\left(  t\right)  \right\vert ^{p-1}d_{\alpha,\beta
}t+\int_{a}^{b}\left\vert g\left(  t\right)  \right\vert \left\vert f\left(
t\right)  +g\left(  t\right)  \right\vert ^{p-1}d_{\alpha,\beta}t,
\end{multline*}

\noindent then applying $\alpha,\beta$-symmetric H\"{o}lder's inequality
(Theorem~\ref{hi:Holders Inequality}) with $q=p/(p-1)$, we obtain
\begin{align*}
& \int_{a}^{b}\left\vert f\left(  t\right)  +g\left(  t\right)  \right\vert
^{p}d_{\alpha,\beta}t
\leqslant \left(  \int_{a}^{b}\left\vert f\left(
t\right)  \right\vert ^{p}d_{\alpha,\beta}t\right)  ^{\frac{1}{p}}\left(
\int_{a}^{b}\left\vert f\left(  t\right)  +g\left(  t\right)  \right\vert
^{\left(  p-1\right)  q}d_{\alpha,\beta}t\right)  ^{\frac{1}{q}}\\
& +\left(  \int_{a}^{b}\left\vert g\left(  t\right)  \right\vert ^{p}
d_{\alpha,\beta}t\right)  ^{\frac{1}{p}}\left(  \int_{a}^{b}\left\vert
f\left(  t\right)  +g\left(  t\right)  \right\vert ^{\left(  p-1\right)
q}d_{\alpha,\beta}t\right)  ^{\frac{1}{q}}\\
=& \left[  \left(  \int_{a}^{b}\left\vert f\left(  t\right)  \right\vert
^{p}d_{\alpha,\beta}t\right)  ^{\frac{1}{p}}+\left(  \int_{a}^{b}\left\vert
g\left(  t\right)  \right\vert ^{p}d_{\alpha,\beta}t\right)  ^{\frac{1}{p}
}\right]
\left(  \int_{a}^{b}\left\vert f\left(  t\right)  +g\left(
t\right)  \right\vert ^{p}d_{\alpha,\beta}t\right)
^{\frac{1}{q}}.
\end{align*}
and therefore we get
\begin{equation*}
\frac{\int_{a}^{b}\left\vert f\left(  t\right)
+g\left(  t\right)  \right\vert^{p}d_{\alpha,\beta}t}{\left(
\int_{a}^{b}\left\vert f\left(
t\right)  +g\left(  t\right)  \right\vert ^{p}
d_{\alpha,\beta}t\right)^{\frac{1}{q}}}
\leqslant
\left(  \int_{a}^{b}\left\vert f\left(
t\right)  \right\vert ^{p}d_{\alpha,\beta}t\right)^{\frac{1}{p}}
+\left(\int_{a}^{b}\left\vert g\left(  t\right)
\right\vert^{p}d_{\alpha,\beta}t\right)^{\frac{1}{p}},
\end{equation*}
proving inequality \eqref{A}.
\end{proof}

To conclude this section we remark that our
$\alpha,\beta$-symmetric calculus is more general
than the standard $h$-calculus and $h$-symmetric calculus.
In particular, all our results give, as corollaries,
results in the classical quantum $h$-calculus by choosing
$\alpha=h>0$ and $\beta=0$, and results in the
$h$-symmetric calculus by choosing $\alpha=\beta=h>0$.


\section{State of the Art}

The results of this chapter were accepted to be published
on the Proceedings of the International Conference
on Differential \& Difference Equations and Applications
\cite{Brito:da:Cruz:3,Brito:da:Cruz:4}.
Quantum calculus is receiving an increase of interest
due to its applications in physics, economics
and calculus of variations. In this chapter
we have developed new results in this field:
we introduced a new symmetric quantum calculus.


\clearpage{\thispagestyle{empty}\cleardoublepage}


\chapter{The \lowercase{$q$}-Symmetric Variational Calculus}
\label{The $q$-Symmetric Variational Calculus}

In this chapter we bring a new approach to the study of quantum calculus
and introduce the $q$-symmetric variational calculus. We prove a necessary
optimality condition of Euler--Lagrange type and a sufficient optimality
condition for symmetric quantum variational problems.
The results are illustrated with an example.


\section{Introduction}

Quantum calculus, also known as calculus without limits,
is a very interesting field in mathematics. Moreover,
it plays an important role in several fields of physics such as cosmic strings
and black holes \cite{Strominger}, conformal quantum mechanics \cite{Youm},
nuclear and high energy physics \cite{Lavagno}, just to name a few.
For a deeper understanding of quantum calculus we refer the reader
to \cite{Boole,Ernst:2,Kac,Koekoek}.

Usually we are concern with two types of quantum calculus:
the $q$-calculus and the $h$-calculus. In this chapter we are concerned with the $q$-calculus.
Historically, the $q$-calculus was first introduced by Jackson \cite{Jackson}
and is a calculus based on the notion of the $q$-derivative
$$
\frac{f(qt)-f(t)}{(q-1)t},
$$
where $q$ is a fixed number different from 1, $t\neq 0$ and $f$ is a real function.
In contrast to the classical derivative, which measures the rate of change of the
function of an incremental translation of its argument, the $q$-derivative measures
the rate of change with respect to a dilatation of its argument by a factor $q$.
It is clear that if $f$ is differentiable at $t\neq 0$, then
$$
f^\prime(t)= \displaystyle\lim_{q\rightarrow 1}\frac{f(qt)-f(t)}{(q-1)t}.
$$

For a fixed $q\in\left]  0,1\right[  $ and $t\neq0$ the $q$-symmetric
derivative of a function $f$ at point $t$ is defined by
\[
\frac{f\left(  qt\right)  -f\left(  q^{-1}t\right)  }{\left(  q-q^{-1}\right) t}.
\]

The $q$-symmetric quantum calculus has proven to be useful in several fields, in particular
in quantum mechanics \cite{Lavagno2}. As noticed in \cite{Lavagno2}, the $q$-symmetric
derivative has important properties for the $q$-exponential function
which turns out to be not true with the usual derivative.

It is well known that the derivative of a differentiable
real function $f$ at a point $t\neq 0$ can be approximated by
the $q$-symmetric derivative. We believe that the $q$-symmetric
derivative  has, in general, better convergence properties than the $h$-derivative
$$
\frac{f(t+h)-f(t)}{h}
$$
and the $q$-derivative
$$
\frac{f(qt)-f(t)}{(q-1)t},
$$
but this requires additional investigation.

Our goal is to establish a necessary optimality condition and a sufficient optimality
condition for the $q$-symmetric variational problem\index{$q$-symmetric variational problem}
\begin{equation}
\left\{
\begin{array}[c]{l}
\mathcal{L}\left[  y\right]
=\displaystyle\int_{a}^{b}L\left(  t,y\left(  qt\right),
\tilde{D}_{q}\left[  y\right]\left(  t\right)\right)
\tilde{d}_{q}t\rightarrow\text{extremize}\\
\\
y\in\mathcal{Y}^{1}\left(  \left[  a,b\right]_q  ,\mathbb{R}\right)  \\
\\
y\left(  a\right)  =\alpha\\
\\
y\left(  b\right)  =\beta,
\end{array}
\label{qs:P}
\right.
\end{equation}
where $\alpha,\beta\ $are fixed real numbers and $\tilde{D}_{q}$ denotes the $q$-symmetric derivative operator (see \ref{q:symmetric:derivative}). By extremize
we mean maximize or minimize. Also we must assume that the
Lagrangian function $L$  satisfies the following hypotheses:

\begin{enumerate}
\item[(H$_{q}$1)] $\left(  u,v\right)  \rightarrow L\left(  t,u,v\right)  $ is a
$C^{1}\left(\mathbb{R}^{2},\mathbb{R}\right)  $ function for any $t\in\left[  a,b\right]$;

\item[(H$_{q}$2)] $t\rightarrow L\left(  t,y\left(  qt\right)  ,\tilde{D}_{q}\left[
y\right] \left( t \right) \right)  $ is continuous at $0$ for any admissible function $y$;

\item[(H$_{q}$3)] functions $t\rightarrow\partial_{i+2}L\left(  t,y\left(
qt\right)  ,\tilde{D}_{q}\left[  y\right]\left(t\right)  \right)$, $i=0,1$ belong to
$\mathcal{Y}^{1}\left(  \left[  a,b\right]  _{q},\mathbb{R}\right)$ for all admissible functions
$y$, where $\partial_i L$ denotes the partial derivative of $L$ with respect to its $i$th argument.
\end{enumerate}

This chapter is organized as follows. In Section~\ref{qs:sec2} we review the necessary
definitions and prove some new results of the $q$-symmetric calculus. Usually the set of
study in $q$-calculus is the lattice $\left\{  a,aq,aq^{2},aq^{3},
\ldots\right\}$. In this chapter we work in an arbitrary real interval
containing $0$. This new approach follows the ideas that Aldwoah used in his
Ph.D. thesis \cite{Aldwoah} (see also \cite{Aldwoah:3}).
In Section~\ref{qs:sec3} we formulate and prove our results
for the $q$-symmetric variational calculus.


\section{Preliminaries}
\label{qs:sec2}

Let $q\in\left]  0,1\right[$ and let $I$ be an interval (bounded or unbounded)
of $\mathbb{R}$ containing $0$. We denote by $I^{q}$ the set
$$
I^{q}:=qI:=\{qx:x\in I\} .
$$
Note that $I^{q}\subseteq I$.

\begin{definition}[cf. \cite{Kac}]
\label{q:symmetric:derivative}
Let $f$ be a real function defined on $I$.
The \emph{$q$-symmetric difference operator}\index{$q$-symmetric difference operator}
of  $f$ is defined by
\[
\tilde{D}_{q}\left[  f\right]  \left(  t\right)  =\frac{f\left(  qt\right)
-f\left(  q^{-1}t\right)  }{\left(  q-q^{-1}\right)  t},\text{ if }
t\in I^{q}\backslash\{0\},
\]
and $\tilde{D}_{q}\left[  f\right]  \left(  0\right)  :=f^{\prime}\left(
0\right)$, provided $f$ is differentiable at $0$. We usually call
$\tilde{D}_{q}\left[  f\right]$ the
\emph{$q$-symmetric derivative}\index{$q$-symmetric derivative} of $f$.
\end{definition}

\begin{remark}
Notice that if $f$ is differentiable (in the classical sense) at $t\in I^{q}$, then
\[
\lim_{q\rightarrow1}\tilde{D}_{q}\left[  f\right]  \left(  t\right)
=f^{\prime}\left(  t\right)  .
\]
\end{remark}

The $q$-symmetric difference operator has the following properties.

\begin{theorem}[cf. \cite{Kac}]
\label{qs:props derivada}
Let $f$ and $g$ be $q$-symmetric differentiable
on $I$, let $\alpha,\beta\in\mathbb{R}$ and $t\in I^{q}$. One has
\begin{enumerate}
\item $\tilde{D}_{q}\left[  f\right]  \equiv0$ if and only if $f$ is constant on $I$;

\item $\tilde{D}_{q}\left[  \alpha f+\beta g\right]  \left(  t\right)
=\alpha\tilde{D}_{q}\left[  f\right]  \left(  t\right)
+ \beta\tilde{D}_{q}\left[  g\right]  \left(  t\right)$;

\item $\tilde{D}_{q}\left[  fg\right]  \left(  t\right)  =\tilde{D}_{q}\left[
f\right]  \left(  t\right)  g\left(  qt\right)  +f\left(  q^{-1}t\right)
\tilde{D}_{q}\left[  g\right]  \left(  t\right)$;

\item $\displaystyle\tilde{D}_{q}\left[  \frac{f}{g}\right]  \left(  t\right)
=\frac{\tilde{D}_{q}\left[  f\right]  \left(  t\right)  g\left(
q^{-1}t\right)  -f\left(  q^{-1}t\right)  \tilde{D}_{q}\left[  g\right]
\left(  t\right)  }{g\left(  qt\right)  g\left(  q^{-1}t\right)  }$ if
$g\left(  qt\right)  g\left(  q^{-1}t\right)  \neq0$.
\end{enumerate}
\end{theorem}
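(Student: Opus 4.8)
The plan is to verify each of the four items directly from Definition~\ref{q:symmetric:derivative}, handling the generic case $t\in I^{q}\setminus\{0\}$ by elementary algebra on the difference quotient and the exceptional case $t=0$ by invoking the corresponding classical differentiation rules, since $\tilde{D}_{q}[f](0)=f^{\prime}(0)$ by definition.

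For item~1, the implication ``$f$ constant $\Rightarrow\tilde{D}_{q}[f]\equiv 0$'' is immediate. For the converse, assume $\tilde{D}_{q}[f](t)=0$ for all $t\in I^{q}$. For $t\neq 0$ this forces $f(qt)=f(q^{-1}t)$; substituting $t=qx$ with $x\in I\setminus\{0\}$ (legitimate since $qx\in qI=I^{q}$, and then $q^{2}x=q(qx)\in I^{q}\subseteq I$) yields $f(q^{2}x)=f(x)$, hence $f(q^{2n}x)=f(x)$ for all $n\in\mathbb{N}_{0}$ by induction. Letting $n\to\infty$ we have $q^{2n}x\to 0$, and since $q$-symmetric differentiability on $I$ presupposes differentiability --- hence continuity --- of $f$ at $0$, we conclude $f(x)=\lim_{n}f(q^{2n}x)=f(0)$. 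Thus $f$ is constant on $I$.

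For items~2--4 the only real work is bookkeeping of the $q$-shifts. Linearity (item~2) follows by inserting $\alpha f+\beta g$ into the quotient and splitting. For the product rule (item~3) the key identity is $f(qt)g(qt)-f(q^{-1}t)g(q^{-1}t)=[f(qt)-f(q^{-1}t)]g(qt)+f(q^{-1}t)[g(qt)-g(q^{-1}t)]$; dividing by $(q-q^{-1})t$ gives the claim for $t\neq 0$. For the quotient rule (item~4), under the hypothesis $g(qt)g(q^{-1}t)\neq 0$ I would put the difference of the two fractions over the common denominator $g(qt)g(q^{-1}t)$ and rewrite the numerator as $f(qt)g(q^{-1}t)-f(q^{-1}t)g(qt)=[f(qt)-f(q^{-1}t)]g(q^{-1}t)-f(q^{-1}t)[g(qt)-g(q^{-1}t)]$ before dividing by $(q-q^{-1})t$; alternatively one first proves $\tilde{D}_{q}[1/g](t)=-\tilde{D}_{q}[g](t)/(g(qt)g(q^{-1}t))$ and combines it with item~3.

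At $t=0$ in items~2--4 one needs only the classical facts $(\alpha f+\beta g)^{\prime}(0)=\alpha f^{\prime}(0)+\beta g^{\prime}(0)$, $(fg)^{\prime}(0)=f^{\prime}(0)g(0)+f(0)g^{\prime}(0)$, and $(f/g)^{\prime}(0)=(f^{\prime}(0)g(0)-f(0)g^{\prime}(0))/g(0)^{2}$, together with the classical differentiability of $fg$ and $f/g$ at $0$ (so the left-hand sides are well defined). Since $f(q\cdot 0)=f(q^{-1}\cdot 0)=f(0)$ and likewise for $g$, the right-hand sides of items~2--4 collapse at $t=0$ to exactly these classical expressions, so the identities persist there. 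No genuine obstacle is expected; the points requiring slight care are the continuity-at-$0$ argument in item~1 and keeping straight which argument carries a factor $q$ versus $q^{-1}$ in items~3--4.
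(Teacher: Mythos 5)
Your proposal is correct and follows essentially the same route as the paper: for item~1 the paper likewise iterates $f(t)=f(q^{2}t)=\cdots=f(q^{2n}t)$ and passes to the limit using continuity of $f$ at $0$, while for items~2--4 it simply declares the case $t=0$ trivial and cites Kac--Cheung for $t\neq 0$. Your explicit decompositions of the product and quotient difference quotients, and your check that the right-hand sides collapse to the classical rules at $t=0$, are exactly the details the paper outsources, and they are carried out correctly.
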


\begin{proof}
If $f$ is constant, then it is clear that
$\tilde{D}_{q}\left[f\right]\equiv0$.
For each $t\in I$ if $\tilde{D}_{q}\left[
f\right]  \left(  qt\right)  =0$, then $f\left(  t\right)
=f\left(q^{2}t\right)$ and one has, for $n \in \mathbb{N}$,
\[
f\left(  t\right)  =f\left(  q^{2}t\right)
=\cdots=f\left(  q^{2n}t\right).
\]
Since
\[
\lim_{n\rightarrow+\infty}f\left(  t\right)
=\lim_{n\rightarrow+\infty}f\left(  q^{2n}t\right)
\]
and $f$ is continuous at $0$, then
\[
f\left(  t\right)  =f\left(  0\right)  \text{, }\forall t\in I.
\]
The other properties are trivial for $t=0$ and for $t\neq0$ see \cite{Kac}.
\end{proof}

\begin{definition}[cf. \cite{Kac}]
\label{$q$-symmetric integral}
Let $a,b\in I$ and $a<b$. For $f:I\rightarrow\mathbb{R}$ and for
$q\in\left]  0,1\right[$ the \emph{$q$-symmetric integral}\index{$q$-symmetric integral}
of $f$ from $a$ to $b$ is given by
\[
\int_{a}^{b}f\left(  t\right)  \tilde{d}_{q}t=\int_{0}^{b}f\left(  t\right)
\tilde{d}_{q}t-\int_{0}^{a}f\left(  t\right)  \tilde{d}_{q}t,
\]
where
\begin{align*}
\int_{0}^{x}f\left(  t\right)  \tilde{d}_{q}t  & =\left(  q^{-1}-q\right)
x\sum_{n=0}^{+\infty}q^{2n+1}f\left(  q^{2n+1}x\right)  \\
& =\left(  1-q^{2}\right)  x\sum_{n=0}^{+\infty}q^{2n}f\left(  q^{2n+1}
x\right)  \text{, \ \ \ \ }x\in I,
\end{align*}
provided that the series converges at $x=a$ and $x=b$. In that case, $f$ is
called $q$-symmetric integrable on $[a,b]$. We say that $f$ is $q$-integrable
on $I$ if it is $q$-integrable on $[a,b]$ for all $a,b\in I$.
\end{definition}

We now present two technical results that are useful to prove the
Fundamental Theorem of the $q$-Symmetric Integral Calculus (Theorem~\ref{qs:Fundamental}).

\begin{lemma}[\cite{Aldwoah}]
\label{qs:lema integral}
Let $a,b\in I $, $a<b$ and $f:I\rightarrow\mathbb{R}$ continuous at $0$.
Then for $s\in\left[  a,b\right]$ the sequence
$\left(  f\left(  q^{2n+1}s\right)\right)_{n\in\mathbb{N}}$
converges uniformly to $f\left(  0\right)$ on $I$.
\end{lemma}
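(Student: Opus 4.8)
The plan is to show uniform convergence of $\bigl(f(q^{2n+1}s)\bigr)_{n\in\mathbb N}$ to $f(0)$ on the interval $[a,b]$ (or on $I$, once we observe $q^{2n+1}s$ stays in a bounded set). First I would fix $\varepsilon>0$ and use the continuity of $f$ at $0$: there exists $\delta>0$ such that $|f(u)-f(0)|<\varepsilon$ whenever $u\in I$ and $|u|<\delta$. The key point is then to find a single index $N$, independent of $s\in[a,b]$, beyond which $|q^{2n+1}s|<\delta$ for all $n\geqslant N$.

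The second step is to make that uniformity explicit. Since $a,b$ are fixed, set $M:=\max\{|a|,|b|\}$; because $[a,b]\subseteq I$ and $0\in I$, for every $s\in[a,b]$ we have $|s|\leqslant M$. As $q\in\,]0,1[$, the quantity $q^{2n+1}$ decreases to $0$, so there is $N\in\mathbb N$ with $q^{2N+1}M<\delta$; then for all $n\geqslant N$ and all $s\in[a,b]$ we get $|q^{2n+1}s|\leqslant q^{2n+1}M\leqslant q^{2N+1}M<\delta$. Note also that $q^{2n+1}s\in I$ since $I$ is an interval containing both $0$ and $s$ and $0<q^{2n+1}<1$, so the convexity of $I$ places $q^{2n+1}s$ between $0$ and $s$. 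Consequently $|f(q^{2n+1}s)-f(0)|<\varepsilon$ for all $n\geqslant N$, uniformly in $s$, which is exactly the claimed uniform convergence.

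I do not anticipate a genuine obstacle here; the only subtlety worth stating carefully is the uniformity, i.e. that $N$ can be chosen depending on $\varepsilon$ (through $\delta$) and on $q,a,b$ but not on the particular $s$. This is what distinguishes the statement from mere pointwise convergence, and it is delivered by the uniform bound $|q^{2n+1}s|\leqslant q^{2n+1}M$. A remark could be added that the same argument shows the convergence is in fact uniform on every bounded subinterval of $I$, which is what is needed when applying this lemma inside the definition of the $q$-symmetric integral and in the proof of the Fundamental Theorem.
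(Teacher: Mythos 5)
Your proof is correct and complete. Note that the paper itself gives no proof of this lemma --- it is quoted from Aldwoah's Ph.D. thesis --- but your argument is the standard one: continuity at $0$ supplies $\delta$, the $s$-independent bound $\left\vert q^{2n+1}s\right\vert \leqslant q^{2n+1}\max\{\vert a\vert,\vert b\vert\}$ supplies an $N$ uniform in $s\in\left[a,b\right]$, and the convexity of $I$ (which contains $0$) guarantees $q^{2n+1}s\in I$, which together handle the only genuine subtleties in the statement.
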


\begin{corollary}[\cite{Aldwoah}]
\label{qs:corolario integral}
If $f:I\rightarrow\mathbb{R}$ is continuous at $0$,
then for $s\in\left[a,b\right]$ the series
$\sum_{n=0}^{+\infty}q^{2n}f\left(
q^{2n+1}s\right)$ is uniformly convergent on $I$,
and consequently, $f$ is $q$-symmetric integrable
on $\left[a,b\right]$.
\end{corollary}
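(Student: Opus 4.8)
The plan is to obtain the uniform convergence of the series as an immediate consequence of Lemma~\ref{qs:lema integral} together with a geometric-series majorant (a Weierstrass $M$-test argument), and then to read off $q$-symmetric integrability of $f$ on $[a,b]$ directly from Definition~\ref{$q$-symmetric integral}. This mirrors the corresponding statement in the Hahn calculus (the Proposition that continuity at $\omega_0$ implies $q,\omega$-integrability).

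First I would observe that every term of the series is well defined: since $0\in I$, $s\in[a,b]\subseteq I$, and $I$ is an interval, each point $q^{2n+1}s$ lies between $0$ and $s$ and hence belongs to $I$, so $f\bigl(q^{2n+1}s\bigr)$ makes sense for all $n\in\mathbb{N}_0$ and all $s\in[a,b]$. Next, by Lemma~\ref{qs:lema integral} the sequence $\bigl(f(q^{2n+1}s)\bigr)_n$ converges to $f(0)$ uniformly in $s\in[a,b]$; applying the definition of uniform convergence with $\varepsilon=1$ yields an index $N_0\in\mathbb{N}$ such that $\bigl|f(q^{2n+1}s)\bigr|\leqslant |f(0)|+1=:M$ for all $n\geqslant N_0$ and all $s\in[a,b]$.

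Then, for the tail of the series I would estimate, for every $N\geqslant N_0$ and every $s\in[a,b]$,
\[
\Biggl|\sum_{n=N+1}^{+\infty}q^{2n}f\bigl(q^{2n+1}s\bigr)\Biggr|
\leqslant\sum_{n=N+1}^{+\infty}q^{2n}\bigl|f(q^{2n+1}s)\bigr|
\leqslant M\sum_{n=N+1}^{+\infty}q^{2n}=\frac{M\,q^{2(N+1)}}{1-q^{2}},
\]
which tends to $0$ as $N\to+\infty$, uniformly in $s$, because $0<q<1$. Hence the partial sums of $\sum_{n\geqslant 0}q^{2n}f(q^{2n+1}s)$ form a uniformly Cauchy sequence on $[a,b]$, so the series converges uniformly there (the finitely many initial terms $n\leqslant N_0$ are harmless, being a finite sum of functions of $s$). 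Evaluating at $s=a$ and $s=b$ shows that the defining series converges at both endpoints, so by Definition~\ref{$q$-symmetric integral} the integrals $\int_0^a f(t)\,\tilde{d}_q t$ and $\int_0^b f(t)\,\tilde{d}_q t$ exist; therefore $\int_a^b f(t)\,\tilde{d}_q t=\int_0^b f(t)\,\tilde{d}_q t-\int_0^a f(t)\,\tilde{d}_q t$ is well defined, i.e.\ $f$ is $q$-symmetric integrable on $[a,b]$.

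I do not anticipate a genuine obstacle; the only point requiring a little care is that uniform convergence of $\bigl(f(q^{2n+1}s)\bigr)_n$ only furnishes the bound $M$ from some index $N_0$ onwards (a global bound is not available, since $f$ is assumed merely continuous at $0$), but this is enough because uniform convergence of a series is a property of its tail, while the first $N_0$ terms contribute only a fixed finite sum. An essentially equivalent alternative would be to apply the Weierstrass $M$-test to the shifted series $\sum_{n\geqslant N_0}q^{2n}f(q^{2n+1}s)$ with majorant $Mq^{2n}$ and then re-attach the finite initial segment $\sum_{n=0}^{N_0-1}q^{2n}f(q^{2n+1}s)$.
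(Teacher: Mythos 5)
Your argument is correct. The paper does not actually supply a proof of this corollary (it is quoted from Aldwoah's thesis, as is Lemma~\ref{qs:lema integral}), so there is nothing to compare against; your route --- using the uniform convergence of $\bigl(f(q^{2n+1}s)\bigr)_n$ to extract a tail bound $M$, majorising the tail by the geometric series $M\sum q^{2n}$, and concluding via the uniform Cauchy criterion before reading off integrability at $s=a$ and $s=b$ from Definition~\ref{$q$-symmetric integral} --- is the standard and expected one, and your remark that only a tail bound (not a global bound on $f$) is available, and that this suffices, is exactly the point that needs care here.
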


\begin{theorem}[Fundamental theorem of the $q$-symmetric integral calculus]
\label{qs:Fundamental}
Assume\index{Fundamental theorem of the $q$-symmetric integral calculus}
that $f:I\rightarrow\mathbb{R}$ is continuous at $0$ and, for each $x\in I$, define
\[
F(x):=\int_{0}^{x}f\left(  t\right)  \tilde{d}_{q}t\text{.}
\]
Then $F$ is continuous at $0$. Furthermore, $\tilde{D}_{q}[F](x)$ exists for
every $x\in I^{q}$ with $$\tilde{D}_{q}[F](x)=f(x).$$ Conversely,
\[
\int_{a}^{b}\tilde{D}_{q}\left[  f\right]  \left(  t\right)  \tilde{d}_{q}t
=f\left(  b\right)  -f\left(  a\right)
\]
for all $a,b\in I$.
\end{theorem}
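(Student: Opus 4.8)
The plan is to prove the Fundamental Theorem of the $q$-symmetric integral calculus in three parts: continuity of $F$ at $0$, the differentiation formula $\tilde{D}_q[F](x)=f(x)$, and the Newton--Leibniz formula for $\tilde{D}_q[f]$. First I would establish continuity of $F$ at $0$. From the definition, $F(x)=\int_0^x f(t)\tilde{d}_q t=(1-q^2)x\sum_{n=0}^{+\infty}q^{2n}f(q^{2n+1}x)$. Using Lemma~\ref{qs:lema integral} and Corollary~\ref{qs:corolario integral}, the series $\sum_{n=0}^{+\infty}q^{2n}f(q^{2n+1}x)$ converges uniformly on $I$ to a bounded function of $x$ (bounded near $0$ since $f$ is continuous at $0$, hence bounded on a neighborhood of $0$, so the partial sums are uniformly bounded by something like $M/(1-q^2)$). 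Therefore $|F(x)|\leqslant (1-q^2)|x|\cdot C$ for $x$ near $0$, which tends to $0$ as $x\to 0$; since $F(0)=0$, this gives continuity at $0$.

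Next I would verify the differentiation formula. For $x\in I^q\setminus\{0\}$, I would compute $\tilde{D}_q[F](x)=\dfrac{F(qx)-F(q^{-1}x)}{(q-q^{-1})x}$ directly from the series representation. Writing $F(qx)=(1-q^2)(qx)\sum_{n=0}^{+\infty}q^{2n}f(q^{2n+2}x)$ and $F(q^{-1}x)=(1-q^2)(q^{-1}x)\sum_{n=0}^{+\infty}q^{2n}f(q^{2n}x)$, I would reindex the first sum (shifting $n\mapsto n-1$) so the two series telescope against each other. After the algebra, the infinite tails cancel and one is left with exactly $f(x)$; the factor $(1-q^2)$ and the powers of $q$ combine with the denominator $(q-q^{-1})x$ to give the normalization. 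For $x=0$ the claim is that $\tilde{D}_q[F](0)=F'(0)=f(0)$: here I would use the continuity of $F$ at $0$ together with the estimate $F(x)=(1-q^2)x\sum q^{2n}f(q^{2n+1}x)$ and the fact that, by uniform convergence and continuity of $f$ at $0$, $\sum_{n=0}^{+\infty}q^{2n}f(q^{2n+1}x)\to \sum_{n=0}^{+\infty}q^{2n}f(0)=f(0)/(1-q^2)$ as $x\to 0$, so $F(x)/x\to f(0)$, i.e. $F'(0)=f(0)$.

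For the converse (Newton--Leibniz), I would first treat $\int_0^x \tilde{D}_q[f](t)\tilde{d}_q t$. By definition this equals $(1-q^2)x\sum_{n=0}^{+\infty}q^{2n}\tilde{D}_q[f](q^{2n+1}x)$, and $\tilde{D}_q[f](q^{2n+1}x)=\dfrac{f(q^{2n+2}x)-f(q^{2n}x)}{(q-q^{-1})q^{2n+1}x}$. Substituting, the factors simplify and the sum becomes $\sum_{n=0}^{+\infty}\bigl(f(q^{2n}x)-f(q^{2n+2}x)\bigr)$, a telescoping series whose sum is $f(x)-\lim_{n\to\infty}f(q^{2n}x)=f(x)-f(0)$ by continuity of $f$ at $0$ (applying Lemma~\ref{qs:lema integral}). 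Hence $\int_0^x \tilde{D}_q[f](t)\tilde{d}_q t = f(x)-f(0)$ for all $x\in I$, and then for general $a,b\in I$, $\int_a^b \tilde{D}_q[f](t)\tilde{d}_q t = \int_0^b - \int_0^a = (f(b)-f(0))-(f(a)-f(0)) = f(b)-f(a)$.

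The main obstacle I anticipate is the careful justification of the reindexing/telescoping manipulations in the presence of infinite series: one must be sure the series converge (absolutely, or at least that the rearrangements are legitimate), which is exactly where Corollary~\ref{qs:corolario integral} and the uniform convergence from Lemma~\ref{qs:lema integral} are needed — both for $f$ itself and, in the differentiation step, implicitly for the shifted series. A secondary technical point is handling the point $x=0$ separately throughout, since the quotient defining $\tilde{D}_q$ degenerates there and one must fall back on the classical derivative $F'(0)$, using the continuity of $F$ at $0$ proved in the first step. Everything else is routine algebra with the geometric weights $q^{2n}$.
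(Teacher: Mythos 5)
Your proposal is correct and follows essentially the same route as the paper's proof: continuity of $F$ at $0$ via Corollary~\ref{qs:corolario integral}, the identity $\tilde{D}_q[F](x)=f(x)$ for $x\neq 0$ by substituting the series for $F(qx)$ and $F(q^{-1}x)$ and telescoping, the case $x=0$ by computing $F'(0)$ with the limit--sum interchange justified by uniform convergence, and the converse by the telescoping sum $\sum_{n}\bigl(f(q^{2n}x)-f(q^{2(n+1)}x)\bigr)=f(x)-f(0)$ followed by $\int_a^b=\int_0^b-\int_0^a$. No gaps.
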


\begin{proof}
By Corollary~\ref{qs:corolario integral},
the function $F$ is continuous at $0$.
If $x \in I^{q}\setminus\{0\}$, then
\begin{align*}
& \tilde{D}_{q}\left(  \int_{0}^{x}f\left(  t\right)  \tilde{d}_{q}t\right)
=\frac{\int_{0}^{qx}f\left(  t\right)  \tilde{d}_{q}t-\int_{0}^{q^{-1} x}
f\left(  t\right)  \tilde{d}_{q}t}{\left(  q-q^{-1}\right)  x}\\
&  =\frac{q}{\left(  q^{2}-1\right)  x}\bigg[\left(  1-q^{2}\right)
qx\sum_{n=0}^{+\infty}q^{2n}f\left(  q^{2n+1}qx\right)  -\left(
1-q^{2}\right)  q^{-1}x\sum_{n=0}^{+\infty}q^{2n}f\left(  q^{2n+1}
q^{-1}x\right)  \bigg]\\
&  =\sum_{n=0}^{+\infty}q^{2n}f\left(  q^{2n}x\right)  -\sum_{n=0}^{+\infty
}q^{2n+2}f\left(  q^{2n+2}x\right)  \\
&  =\sum_{n=0}^{+\infty}\bigg[q^{2n}f\left(  q^{2n}x\right)  -q^{2\left(
n+1\right)  }f\left(  q^{2\left(  n+1\right)  }x\right)  \bigg]\\
&  =f\left(  x\right) .
\end{align*}
If $x=0$, then
\begin{align*}
\tilde{D}_{q}\left[F\right]\left(  0\right)   &  =\lim_{h\rightarrow0}\frac{F\left(
h\right)  -F\left(  0\right)  }{h}\\
&  =\lim_{h\rightarrow0}\frac{1}{h}\left(  1-q^{2}\right)  h\sum
_{n=0}^{+\infty}q^{2n}f\left(  q^{2n+1}h\right)  \\
&  =\lim_{h\rightarrow0}\left(  1-q^{2}\right)  \sum_{n=0}^{+\infty}
q^{2n}f\left(  q^{2n+1}h\right)  \\
&  =\left(  1-q^{2}\right)  \sum_{n=0}^{+\infty}q^{2n}f\left(  0\right)
\text{ (by the continuity of }f\text{ at }0\text{)}\\
&  =\left(  1-q^{2}\right)  \frac{1}{1-q^{2}}f\left(  0\right)  \\
&  =f\left(  0\right)  \,.
\end{align*}
Finally, since for $x \in I$,
\begin{align*}
\int_{0}^{x}\tilde{D}_{q}\left[  f\right]  \left(  t\right)  \tilde{d}_{q}t &
=\left(  1-q^{2}\right)  x\sum_{n=0}^{+\infty}q^{2n}\tilde{D}_{q}\left[
f\right]  \left(  q^{2n+1}x\right)  \\
&  =\left(  1-q^{2}\right)  x\sum_{n=0}^{+\infty}q^{2n}\frac{f\left(
qq^{2n+1}x\right)  -f\left(  q^{-1}q^{2n+1}x\right)  }{\left(  q-q^{-1}
\right)  \left(  q^{2n+1}x\right)  }\\
&  =\sum_{n=0}^{+\infty}\left[  f\left(  q^{2n}x\right)  -f\left(  q^{2\left(
n+1\right)  }x\right)  \right]  \\
&  =f\left(  x\right)  -f\left(  0\right),
\end{align*}
we have
\begin{align*}
\int_{a}^{b}\tilde{D}_{q}\left[  f\right]  \left(  t\right)  \tilde{d}_{q}t &
=\int_{0}^{b}\tilde{D}_{q}\left[  f\right]  \left(  t\right)  \tilde{d}_{q}t
-\int_{0}^{a}\tilde{D}_{q}\left[  f\right]  \left(  t\right)  \tilde{d}_{q}t\\
&  =f\left(  b\right)  -f\left(  a\right) .
\end{align*}
\end{proof}

The $q$-symmetric integral has the following properties.

\begin{theorem}
Let $f,g$ $:I\rightarrow\mathbb{R}$ be $q$-symmetric integrable on $I$,
$a,b,c\in I$ and $\alpha,\beta\in\mathbb{R}$. Then

\begin{enumerate}
\item $\displaystyle\int_{a}^{a}f\left(  t\right)  \tilde{d}_{q}t=0$;

\item $\displaystyle\int_{a}^{b}f\left(  t\right)  \tilde{d}_{q}t
=-\int_{b}^{a}f\left(t\right)  \tilde{d}_{q}t$;

\item $\displaystyle\int_{a}^{b}f\left(  t\right)\tilde{d}_{q}t
=\int_{a}^{c}f\left(t\right)  \tilde{d}_{q}t
+\int_{c}^{b}f\left(  t\right)  \tilde{d}_{q}t$;

\item $\displaystyle\int_{a}^{b}\left(  \alpha f
+\beta g\right)  \left(  t\right)
\tilde{d}_{q}t=\alpha\int_{a}^{b}f\left(  t\right)\tilde{d}_{q}t
+\beta \int_{a}^{b}g\left(  t\right)  \tilde{d}_{q}t$;

\item If $\displaystyle\tilde{D}_{q}\left[  f\right]$
and $\tilde{D}_{q}\left[  g\right]$ are continuous at $0$, then
\[
\int_{a}^{b}f\left(  q^{-1}t\right)  \tilde{D}_{q}\left[  g\right]  \left(
t\right)  \tilde{d}_{q}t=f\left(  t\right)  g\left(  t\right)  \bigg|_{a}^{b}
-\int_{a}^{b}\tilde{D}_{q}\left[  f\right]  \left(  t\right)  g\left(
qt\right)  \tilde{d}_{q}t .
\]
We call this formula \emph{$q$-symmetric
integration by parts}\index{$q$-symmetric integration by parts}.
\end{enumerate}
\end{theorem}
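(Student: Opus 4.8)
The plan is to obtain properties 1--4 by unwinding Definition~\ref{$q$-symmetric integral}, so that everything reduces to manipulations of the defining series $\int_{0}^{x}f(t)\tilde d_{q}t=(1-q^{2})x\sum_{n=0}^{+\infty}q^{2n}f(q^{2n+1}x)$ together with the identity $\int_{a}^{b}f(t)\tilde d_{q}t=\int_{0}^{b}f(t)\tilde d_{q}t-\int_{0}^{a}f(t)\tilde d_{q}t$; and to derive property 5 from the product rule in Theorem~\ref{qs:props derivada} combined with the Fundamental Theorem (Theorem~\ref{qs:Fundamental}). Throughout, $q$-symmetric integrability of $f$ and $g$ on $I$ is what guarantees that all series written below converge at the endpoints under consideration.

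First I would dispatch the formal statements. Property 1 is immediate, since $\int_{0}^{a}-\int_{0}^{a}=0$. Property 2 follows from $\int_{0}^{b}-\int_{0}^{a}=-\bigl(\int_{0}^{a}-\int_{0}^{b}\bigr)$. Property 3 is the telescoping $\bigl(\int_{0}^{c}-\int_{0}^{a}\bigr)+\bigl(\int_{0}^{b}-\int_{0}^{c}\bigr)=\int_{0}^{b}-\int_{0}^{a}$, where one only notes that integrability on $I$ makes each of the three integrals meaningful. For property 4 I would argue on $\int_{0}^{x}$ with $x\in\{a,b\}$: because $f$ and $g$ are $q$-symmetric integrable, the series $\sum_{n\ge 0}q^{2n}f(q^{2n+1}x)$ and $\sum_{n\ge 0}q^{2n}g(q^{2n+1}x)$ converge there, hence so does $\sum_{n\ge 0}q^{2n}(\alpha f+\beta g)(q^{2n+1}x)$, and it splits as $\alpha\sum+\beta\sum$; multiplying by $(1-q^{2})x$ and subtracting the values at $a$ and $b$ gives simultaneously the $q$-symmetric integrability of $\alpha f+\beta g$ on $[a,b]$ and the linearity formula.

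The only step that uses the heavier machinery is property 5. Here I would start from the product rule $\tilde D_{q}[fg](t)=\tilde D_{q}[f](t)\,g(qt)+f(q^{-1}t)\,\tilde D_{q}[g](t)$ of Theorem~\ref{qs:props derivada}, valid for all $t\in I^{q}$. Under the hypothesis that $\tilde D_{q}[f]$ and $\tilde D_{q}[g]$ are continuous at $0$ (and since $f,g$ are differentiable, hence continuous, at $0$, so that $\tilde D_{q}[fg](0)$ is defined and continuous there), both summands on the right are $q$-symmetric integrable on $[a,b]$, and therefore so is $\tilde D_{q}[fg]$. Integrating the identity from $a$ to $b$ and invoking property 4 yields $\int_{a}^{b}\tilde D_{q}[fg](t)\,\tilde d_{q}t=\int_{a}^{b}\tilde D_{q}[f](t)\,g(qt)\,\tilde d_{q}t+\int_{a}^{b}f(q^{-1}t)\,\tilde D_{q}[g](t)\,\tilde d_{q}t$; by the converse part of Theorem~\ref{qs:Fundamental} the left-hand side equals $(fg)(b)-(fg)(a)=f(t)g(t)\big|_{a}^{b}$, and rearranging gives the $q$-symmetric integration by parts formula. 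I expect the main obstacle to be bookkeeping: making sure that $q$-symmetric integrability is genuinely preserved under the sums and the product $f(q^{-1}t)\tilde D_{q}[g](t)$ appearing above, so that the Fundamental Theorem may legitimately be applied in the final line; once that is in place, properties 1--4 are purely computational and property 5 is an immediate consequence of the product rule.
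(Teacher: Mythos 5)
Your proposal is correct and follows essentially the same route as the paper: properties 1--4 are dispatched as direct consequences of the definition of the $q$-symmetric integral, and property 5 is obtained by rewriting the product rule of Theorem~\ref{qs:props derivada} as $f(q^{-1}t)\tilde{D}_{q}[g](t)=\tilde{D}_{q}[fg](t)-\tilde{D}_{q}[f](t)g(qt)$, integrating, and applying the converse part of Theorem~\ref{qs:Fundamental} to $\int_{a}^{b}\tilde{D}_{q}[fg](t)\,\tilde{d}_{q}t$. Your extra attention to the integrability bookkeeping (guaranteed by continuity at $0$ via Corollary~\ref{qs:corolario integral}) is a sound elaboration of what the paper leaves implicit.
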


\begin{proof}
Properties 1-4 are trivial. Property 5 follows from
Theorem~\ref{qs:props derivada} and Theorem~\ref{qs:Fundamental}:
\begin{align*}
&  \tilde{D}_{q}\left[  fg\right]  \left(  t\right)  =\tilde{D}_{q}\left[
f\right]  \left(  t\right)  g\left(  qt\right)  +f\left(  q^{-1}t\right)
\tilde{D}_{q}\left[  g\right]  \left(  t\right)  \\
&  \Leftrightarrow f\left(  q^{-1}t\right)  \tilde{D}_{q}\left[  g\right]  \left(
t\right)  =\tilde{D}_{q}\left[  fg\right]  \left(  t\right)
- \tilde{D}_{q}\left[  f\right]  \left(  t\right)  g\left(  qt\right)  \\
&  \Rightarrow\int_{a}^{b}f\left(  q^{-1}t\right)  \tilde{D}_{q}\left[
g\right]  \left(  t\right)  \tilde{d}_{q}t=f\left(  t\right)  g\left(
t\right)  \bigg|_{a}^{b}-\int_{a}^{b}\tilde{D}_{q}\left[  f\right]  \left(
t\right)  g\left(  qt\right)  \tilde{d}_{q}t.
\end{align*}
\end{proof}

\begin{remark}
Let us consider the function $\sigma$ defined by $\sigma\left(  t\right)  :=qt$
and use the notation $f^{\sigma}\left(  t\right)  :=f\left(  qt\right)  $.
Since, for each $t \in I^q\setminus\{0\}$,
$$
\tilde{D}_{q}\left[  f^{\sigma}\right]  \left(  t\right)     =\frac{f^{\sigma}\left(
qt\right)  -f^{\sigma}\left(  q^{-1}t\right)  }{\left(  q-q^{-1}\right)  t}
 =\frac{f\left(  q^{2}t\right)  -f\left(  t\right)  }{\left(  q-q^{-1}
\right)  t}
$$
and
$$
\tilde{D}_{q}\left[  f\right]  \left(  qt\right)  =\frac{f\left(  q^{2}t\right)  -f\left(
t\right)  }{\left(  q-q^{-1}\right)  qt}
$$
then we may conclude that
$$
\tilde{D}_{q}\left[  f^{\sigma}\right]  \left(  t\right)  =q\tilde{D}_{q}\left[  f\right]
\left(  qt\right), \quad \quad \forall t \in I^q\setminus\{0\}.
$$
Since
$\tilde{D}_{q}\left[  f^{\sigma}\right]  \left(  0\right)  =q\tilde{D}_{q}\left[  f\right]
\left(  0\right)$, we may conclude that
\begin{equation}
\label{qs:comp}
\tilde{D}_{q}\left[  f^{\sigma}\right]  \left(  t\right)  =q\tilde{D}_{q}\left[  f\right]
\left(  qt\right), \quad \quad \forall t \in I^q.
\end{equation}
Hence, an analogue formula for \emph{$q$-symmetric integration by parts} is given by
\begin{equation}
\int_{a}^{b}f\left(  t\right)  \tilde{D}_{q}\left[  g\right]  \left(
t\right)  \tilde{d}_{q}t=f\left(  qt\right)  g\left(  t\right)  \bigg|_{a}
^{b}-q\int_{a}^{b}\tilde{D}_{q}\left[  f\right]  \left(  qt\right)
g\left(  qt\right)  \tilde{d}_{q}t.\label{qs:partes}
\end{equation}
\end{remark}

\begin{proposition}
\label{qs:desigualdade}
Let $c\in I$ and let $f$ and $g$ be $q$-symmetric
integrable on  $I$. Suppose that
\[
\left\vert f\left( t\right) \right\vert \leqslant g\left( t\right)
\]
for all $t\in \left\{ q^{2n+1}c:n\in \mathbb{\mathbb{N}}_{0}\right\} \cup \left\{ 0\right\}$.

\begin{enumerate}
\item If $c\geqslant 0$, then
\[
\left\vert \int_{0}^{c}f\left( t\right) \tilde{d}_{q}t\right\vert
\leqslant\int_{0}^{c}g\left( t\right) \tilde{d}_{q}t\text{.}
\]

\item If $c<0$, then
\[
\left\vert \int_{c}^{0}f\left( t\right) \tilde{d}_{q}t\right\vert
\leqslant\int_{c}^{0}g\left( t\right) \tilde{d}_{q}t\text{.}
\]
\end{enumerate}
\end{proposition}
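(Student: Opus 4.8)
The plan is to reduce everything to the definition of the $q$-symmetric integral and the absolute convergence of the relevant series. Recall that for $x\in I$,
\[
\int_{0}^{x}f\left(t\right)\tilde{d}_{q}t
=\left(1-q^{2}\right)x\sum_{n=0}^{+\infty}q^{2n}f\left(q^{2n+1}x\right),
\]
and note that the sampling points $q^{2n+1}x$ occurring in this series for $x=c$ are precisely the elements of $\left\{q^{2n+1}c:n\in\mathbb{N}_{0}\right\}$, which is where the hypothesis $\left\vert f\left(t\right)\right\vert\leqslant g\left(t\right)$ is assumed to hold. So the whole argument is a term-by-term comparison of the two series, together with a sign bookkeeping step to handle the two cases on the sign of $c$.

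First I would treat case (1), $c\geqslant0$. Writing out the integral from its definition, applying the triangle inequality to the series (which is legitimate because $g$ being $q$-symmetric integrable forces $\sum_{n=0}^{+\infty}q^{2n}g\left(q^{2n+1}c\right)$ to converge, hence $\sum_{n=0}^{+\infty}q^{2n}f\left(q^{2n+1}c\right)$ converges absolutely by the comparison $\left\vert f\left(q^{2n+1}c\right)\right\vert\leqslant g\left(q^{2n+1}c\right)$), I get
\begin{align*}
\left\vert\int_{0}^{c}f\left(t\right)\tilde{d}_{q}t\right\vert
&=\left(1-q^{2}\right)c\left\vert\sum_{n=0}^{+\infty}q^{2n}f\left(q^{2n+1}c\right)\right\vert\\
&\leqslant\left(1-q^{2}\right)c\sum_{n=0}^{+\infty}q^{2n}\left\vert f\left(q^{2n+1}c\right)\right\vert\\
&\leqslant\left(1-q^{2}\right)c\sum_{n=0}^{+\infty}q^{2n}g\left(q^{2n+1}c\right)
=\int_{0}^{c}g\left(t\right)\tilde{d}_{q}t,
\end{align*}
where I used $\left(1-q^{2}\right)c\geqslant0$ to pull the nonnegative factor out of the absolute value and to preserve the inequality. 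For case (2), $c<0$, I would use property 2 of the $q$-symmetric integral, $\int_{c}^{0}=-\int_{0}^{c}$, to write $\left\vert\int_{c}^{0}f\tilde{d}_{q}t\right\vert=\left\vert\int_{0}^{c}f\tilde{d}_{q}t\right\vert$ and similarly for $g$; now the common factor is $\left(1-q^{2}\right)c<0$, so I factor $\left\vert\left(1-q^{2}\right)c\right\vert=-\left(1-q^{2}\right)c=\left(1-q^{2}\right)\left\vert c\right\vert$ out of the absolute value and run the identical term-by-term comparison, arriving at $\left\vert\int_{c}^{0}f\tilde{d}_{q}t\right\vert\leqslant\int_{c}^{0}g\tilde{d}_{q}t$ (the right-hand side being nonnegative since $g\geqslant0$ on the sampling set).

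There is essentially no hard obstacle here; the only point requiring a little care is the justification that the series $\sum q^{2n}f\left(q^{2n+1}c\right)$ converges absolutely so that the triangle inequality for infinite series applies — this follows from the assumption that $f$ (and $g$) are $q$-symmetric integrable on $I$, which by definition means the defining series converges at the endpoints, combined with the pointwise bound by $g$. One should also note that $0$ is included in the index set $\left\{q^{2n+1}c:n\in\mathbb{N}_{0}\right\}\cup\left\{0\right\}$ in the hypothesis precisely to cover the limiting behaviour at $t=0$, though for $c\neq0$ the points $q^{2n+1}c$ never equal $0$ and only accumulate there. Thus the statement is an immediate corollary of the series representation and monotonicity/triangle-inequality properties of absolutely convergent series, and I would present it in the two-case format above.
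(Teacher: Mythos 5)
Your proof is correct and follows essentially the same route as the paper's: write out the defining series, apply the triangle inequality termwise, compare with the series for $g$, and handle the sign of the factor $\left(1-q^{2}\right)c$ in the two cases. The only difference is that you explicitly justify the absolute convergence needed for the termwise triangle inequality, which the paper leaves implicit.
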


\begin{proof}
If $c\geqslant 0$, then
\begin{eqnarray*}
\left\vert \int_{0}^{c}f\left( t\right) \tilde{d}_{q}t\right\vert
&=&\left\vert \left( 1-q^{2}\right) c\sum_{n=0}^{+\infty }q^{2n}f\left(
q^{2n+1}c\right) \right\vert
\leqslant \left( 1-q^{2}\right) c\sum_{n=0}^{+\infty }q^{2n}\left\vert f\left(
q^{2n+1}c\right) \right\vert  \\
&\leqslant &\left( 1-q^{2}\right) c\sum_{n=0}^{+\infty }q^{2n}g\left(
q^{2n+1}c\right)  = \int_{0}^{c}g\left( t\right) \tilde{d}_{q}t.
\end{eqnarray*}
If $c<0$, then
\begin{eqnarray*}
\left\vert \int_{c}^{0}f\left( t\right) \tilde{d}_{q}t\right\vert
&=&\left\vert \left( 1-q^{2}\right) c\sum_{n=0}^{+\infty }q^{2n}f\left(
q^{2n+1}c\right) \right\vert \leqslant
-\left( 1-q^{2}\right) c\sum_{n=0}^{+\infty }q^{2n}\left\vert
f\left( q^{2n+1}c\right) \right\vert  \\
&\leqslant &-\left( 1-q^{2}\right) c\sum_{n=0}^{+\infty }q^{2n}g\left(
q^{2n+1}c\right)  = -\int_{0}^{c}g\left( t\right) \tilde{d}_{q}t
=\int_{c}^{0}g\left( t\right) \tilde{d}_{q}t
\end{eqnarray*}
proving the desired result.
\end{proof}

As an immediate consequence, we have the following result.

\begin{corollary}
Let $c\in I$ and $f$ be $q$-symmetric integrable on $I$. Suppose that
\[
f\left(  t\right)  \geqslant0\text{, }
\forall t\in\left\{  q^{2n+1}c:n\in\mathbb{N}_{0}\right\}  \cup\left\{  0\right\}.
\]
\begin{enumerate}
\item If $c\geqslant0$, then
\[
\int_{0}^{c}f\left(  t\right)  \tilde{d}_{q}t\geqslant 0.
\]

\item If $c<0$, then
\[
\int_{c}^{0}f\left(  t\right)  \tilde{d}_{q}t\geqslant 0.
\]
\end{enumerate}
\end{corollary}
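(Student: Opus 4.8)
The final statement is the Corollary about positivity of the $q$-symmetric integral, which follows immediately from Proposition~\ref{qs:desigualdade}. The plan is to apply that proposition with the special choice $g = f$. First I would observe that the hypothesis $f(t)\geqslant 0$ for all $t\in\{q^{2n+1}c:n\in\mathbb{N}_0\}\cup\{0\}$ means exactly that $|f(t)| = f(t)$ on that same set, so in particular $|f(t)|\leqslant f(t)$ holds there (with equality). Thus the pair $(f,f)$ satisfies the hypotheses of Proposition~\ref{qs:desigualdade}.

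Next I would split into the two cases according to the sign of $c$, mirroring the statement of the proposition. In the case $c\geqslant 0$, Proposition~\ref{qs:desigualdade}(1) gives
\[
\left\vert \int_0^c f(t)\,\tilde d_q t\right\vert \leqslant \int_0^c f(t)\,\tilde d_q t,
\]
and since the absolute value of a real number is at least that number, this forces $\int_0^c f(t)\,\tilde d_q t \geqslant 0$ (more directly: a number $x$ with $|x|\leqslant x$ must be nonnegative). In the case $c<0$, Proposition~\ref{qs:desigualdade}(2) gives $\left\vert \int_c^0 f(t)\,\tilde d_q t\right\vert \leqslant \int_c^0 f(t)\,\tilde d_q t$ by the same reasoning with $g=f$, and again this yields $\int_c^0 f(t)\,\tilde d_q t\geqslant 0$.

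There is essentially no obstacle here; the corollary is a one-line consequence of the preceding proposition once one notes that nonnegativity of $f$ on the relevant lattice set is precisely the condition $|f|\leqslant f$ there, and that $|x|\leqslant x \Rightarrow x\geqslant 0$ for real $x$. If one preferred a fully self-contained argument avoiding the proposition, one could instead argue directly from the series definition in Definition~\ref{$q$-symmetric integral}: for $c\geqslant 0$ the integral $\int_0^c f(t)\,\tilde d_q t = (1-q^2)c\sum_{n=0}^{+\infty}q^{2n}f(q^{2n+1}c)$ is a sum of a nonnegative prefactor $(1-q^2)c$ times a series whose terms $q^{2n}f(q^{2n+1}c)$ are all nonnegative by hypothesis, hence nonnegative; and for $c<0$ one has $\int_c^0 f(t)\,\tilde d_q t = -\int_0^c f(t)\,\tilde d_q t = -(1-q^2)c\sum_{n=0}^{+\infty}q^{2n}f(q^{2n+1}c)$, where now $-(1-q^2)c\geqslant 0$, giving the same conclusion. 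Either route works; I would present the short deduction from Proposition~\ref{qs:desigualdade}.
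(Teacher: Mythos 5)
Your proof is correct and matches the paper's intent exactly: the paper states this corollary as an immediate consequence of Proposition~\ref{qs:desigualdade}, and taking $g=f$ there (noting $f\geqslant 0$ on the lattice set is equivalent to $|f|\leqslant f$ there, and $|x|\leqslant x\Rightarrow x\geqslant 0$) is precisely that deduction. The alternative direct series argument you sketch is also valid and equally short.
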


\begin{remark}
In general it is not true that if $f$
is a nonnegative function on $\left[a,b\right]$, then
\[
\int_{a}^{b}f\left(  t\right)  \tilde{d}_{q}t \geqslant 0.
\]
For example, consider the function $f$ defined in $\left[  -1,1\right]$ by
\[
f\left(  t\right)  =\left\{
\begin{array}[c]{ccc}
1 & \text{\ if \ } & t=\frac{1}{2}\\
\\
6 & \text{\ \ if \ } & t=\frac{1}{6}\\
\\
0 & \text{\ \ if \ } & t\in\left[  -1,1\right]  \backslash\left\{  \frac{1}
{6},\frac{1}{2}\right\}  .
\end{array}
\right.
\]
For $q=\frac{1}{2}$ this function is $q$-symmetric integrable because is
continuous at $t=0$ and
\begin{align*}
\int_{\frac{1}{3}}^{1}f\left(  t\right)  \tilde{d}_{q}t &
=\int_{0}^{1} f\left(  t\right)  \tilde{d}_{q}t
-\int_{0}^{\frac{1}{3}}f\left(  t\right)
\tilde{d}_{q}t\\
&= \frac{3}{4}\sum_{n=0}^{+\infty}\left(  \frac{1}{2}\right)  ^{2n}f\left(
\left(  \frac{1}{2}\right)  ^{2n+1}\right)  -\frac{3}{4}\left(
\frac{1}{3}\right) \sum_{n=0}^{+\infty}\left(  \frac{1}{2}\right)  ^{2n}f\left(\frac{1}{3}\cdot
\left( \frac{1}{2}\right)  ^{2n+1}\right)  \\
&  =\frac{3}{4}\times1-\frac{1}{4}\times6=-\frac{3}{4}.
\end{align*}
This example also proves that in general it is not true that, for any $a,b\in I$,
\[
\left\vert \int_{a}^{b}f\left(  t\right)  \tilde{d}_{q}t\right\vert
\leqslant\int_{a}^{b}\left\vert f\left(  t\right)
\right\vert \tilde{d}_{q}t\text{.}
\]
\end{remark}

\noindent
We conclude this section with some useful definitions and notations.
For $s\in I$ we set
\[
\left[  s\right]  _{q}:=\left\{  q^{2n+1}s:n\in\mathbb{N}_{0}\right\}  \cup\left\{  0\right\}  .
\]
Let $a,b\in I$ with $a<b$. We define the $q$-symmetric interval from $a$ to $b$ by
\[
\left[  a,b\right]  _{q}:=\left\{  q^{2n+1}a:n\in\mathbb{N}_{0}\right\}  \cup\left\{  q^{2n+1}b:n\in
\mathbb{N}
_{0}\right\}  \cup\left\{  0\right\}  ,
\]
that is,
\[
\left[  a,b\right]  _{q}=\left[  a\right]_{q}\cup \left[  b\right]_{q}.
\]

We introduce the linear spaces
\begin{align*}
\mathcal{Y}^{0}\left(  \left[
a,b\right]  _{q},
\mathbb{R}\right):=&
\left\{  y:I\rightarrow
\mathbb{R}
|\text{ } y  \text{ is bounded on
}\left[  a,b\right]  _{q}\text{ and continuous at }0\right\} \\
\mathcal{Y}^{1}\left(  \left[
a,b\right]  _{q},
\mathbb{R}\right):=&
\left\{  y\in\mathcal{Y}^{0}\left(  \left[
a,b\right]  _{q},
\mathbb{R}
\right)
|\text{ } \tilde{D}_{q}\left[  y\right]  \text{ is bounded on
}\left[  a,b\right]  _{q}\text{ and continuous at }0\right\}
\end{align*}
and, for  $r=0,1$,  we endow these spaces with the norm
\[
\left\Vert y\right\Vert _{r}=\sum_{i=0}^{r}\sup_{t\in\left[  a,b\right]_{q}  }\left\vert
\tilde{D}_{q}^{i}\left[  y\right]  \left(  t\right)  \right\vert
\]
where $\tilde{D}_{q}^{0}\left[  y\right]\equiv y$.

\begin{definition}
We say that $y$ is an admissible function to problem \eqref{qs:P} if
$y\in \mathcal{Y}^{1}\left(  \left[  a,b\right]_{q},\mathbb{R}\right)$
and $y$ satisfies the boundary conditions $y\left(  a\right)
=\alpha$ and $y\left(  b\right) =\beta$.
\end{definition}

\begin{definition}
We say that $y_{\ast}$ is a local minimizer (resp. local maximizer) to
problem \eqref{qs:P} if $y_{\ast}$ is an admissible function and there exists
$\delta>0$ such that
\[
\mathcal{L}\left[  y_{\ast}\right]  \leqslant\mathcal{L}\left[  y\right]
\text{ (resp. }\mathcal{L}\left[  y_{\ast}\right]  \geqslant\mathcal{L}\left[
y\right]  \text{)}
\]
for all admissible $y$ with $\left\Vert y_{\ast}-y\right\Vert _{1}<\delta$.
\end{definition}

\begin{definition}
We say that $\eta\in\mathcal{Y}^{1}\left(\left[a,b\right]_{q},\mathbb{R}\right)$
is an admissible variation to problem \eqref{qs:P}
if $\eta\left(  a\right)  =0=\eta\left(  b\right)$.
\end{definition}


\section{Main results}
\label{qs:sec3}

In this section we apply similar techniques than those used in Hahn's quantum
variational calculus \cite{Brito:da:Cruz,Malinowska:3}.


\subsection{Basic lemmas}

We now present some lemmas which are useful to achieve
our main results.\index{Fundamental lemma of the $q$-symmetric variational calculus}

\begin{lemma}[Fundamental Lemma of the $q$-symmetric variational calculus]
\label{qs:fundamental}
Let $f\in\mathcal{Y}^{0}\left(\left[a,b\right]_q,\mathbb{R}\right)$. One has
\[
\int_{a}^{b}f\left(  t\right)  h\left(  qt\right)  \tilde{d}_{q}t=0
\]
for all functions $h\in\mathcal{Y}^{0}\left(\left[a,b\right]_q,\mathbb{R}\right)$ with
\[
h\left(  a\right)  =h\left(  b\right)  =0
\]
if, and only if,
\[
f\left(  t\right) = 0
\]
for all $t\in\left[  a,b\right]_{q}$.
\end{lemma}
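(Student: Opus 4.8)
The plan is to prove both implications of this fundamental lemma, with the ``$\Leftarrow$'' direction being immediate: if $f(t)=0$ for all $t\in[a,b]_q$, then the defining series of the $q$-symmetric integral $\int_a^b f(t)h(qt)\,\tilde d_q t$ has all terms equal to zero, since the points $q^{2n+1}a$ and $q^{2n+1}b$ lie in $[a,b]_q$, and hence the integral vanishes regardless of $h$. The substance is in the ``$\Rightarrow$'' direction, which I would prove by contraposition: assuming there exists a point $p\in[a,b]_q$ with $f(p)\neq 0$, I would construct an explicit admissible test function $h\in\mathcal{Y}^0([a,b]_q,\mathbb{R})$ with $h(a)=h(b)=0$ for which $\int_a^b f(t)h(qt)\,\tilde d_q t\neq 0$.

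The construction mirrors the proof of Lemma~\ref{hoLema F CV} (the Hahn analogue). I would split into cases according to whether $p=0$ or $p\neq 0$. If $p\neq 0$, then $p=q^{2k+1}a$ or $p=q^{2k+1}b$ for some $k\in\mathbb{N}_0$; assuming WLOG $p=q^{2k+1}a$, I would define $h$ to be supported at the single point $t=q^{2k+3}a$ (so that $qt=q^{2k+2}\cdot q a$ picks out exactly $p=q^{2k+1}a$ in the series expansion), setting $h(q^{2k+3}a):=f(p)$ and $h\equiv 0$ elsewhere. Since $0\notin\{q^{2n+1}a : n\in\mathbb{N}_0\}$ when $a\neq 0$, this $h$ is continuous at $0$, bounded, and satisfies the boundary conditions. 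Computing the $q$-symmetric integral via Definition~\ref{$q$-symmetric integral} then leaves exactly one surviving term, proportional to $f(p)^2\neq 0$, a contradiction. As in Lemma~\ref{hoLema F CV}, one must observe that $a(1-q^2)$ and $b(1-q^2)$ cannot both vanish (they would force $a=b=0$), and handle the subcase where one of the endpoints equals $0$ by noting $q^{2n+1}\cdot 0=0$, so $p$ must come from the other endpoint.

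The case $p=0$ requires the continuity hypothesis: assuming WLOG $f(0)>0$, then since $q^{2n+1}a\to 0$ and $q^{2n+1}b\to 0$ and $f$ is continuous at $0$, there is $N\in\mathbb{N}$ with $f(q^{2n+1}a)>0$ and $f(q^{2n+1}b)>0$ for all $n\geqslant N$. One then defines $h$ supported at $t=q^{2N+3}a$ (and/or $t=q^{2N+3}b$), with value(s) $f(q^{2N+1}b)$ (resp.\ $f(q^{2N+1}a)$), chosen so that the resulting integral is a sum of strictly positive terms, hence nonzero. The subcases $0\notin\{a,b\}$, $0=b$, $0=a$ are handled separately as in Lemma~\ref{hoLema F CV}, exploiting that $\int_0^0(\cdots)=0$ and the additivity $\int_a^b=\int_0^b-\int_0^a$.

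The main obstacle I anticipate is purely bookkeeping: ensuring that the finitely supported test functions $h$ are genuinely elements of $\mathcal{Y}^0([a,b]_q,\mathbb{R})$ — in particular continuous at $0$, which fails only if $0$ is an accumulation point of the support, so one must always place the spike at a point $q^{2m+1}a$ or $q^{2m+1}b$ with $a,b\neq 0$ — and carefully tracking which index in the series $\sum_{n}q^{2n}f(q^{2n+1}x)h(q^{2n+2}x)$ is hit by the spike, including the factor $(1-q^2)x$ and the sign when $x<0$. There is no deep difficulty; the argument is a routine but somewhat tedious adaptation of the already-established Hahn case, and I would present it compactly by pointing to the structural parallel with Lemma~\ref{hoLema F CV}.
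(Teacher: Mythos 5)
Your overall strategy is exactly the paper's: the ``$\Leftarrow$'' direction is immediate, and for ``$\Rightarrow$'' one argues by contradiction, splits on $p\neq 0$ versus $p=0$, uses finitely supported spike functions $h$, and invokes the continuity of $f$ at $0$ only in the case $p=0$. The one concrete problem is where you place the spike. By the definition of the $q$-symmetric integral,
\[
\int_{0}^{x}f\left(t\right)h\left(qt\right)\tilde{d}_{q}t
=\left(1-q^{2}\right)x\sum_{n=0}^{+\infty}q^{2n}f\left(q^{2n+1}x\right)h\left(q^{2n+2}x\right),
\]
so $h$ is only ever sampled at the points $q^{2n+2}a$ and $q^{2n+2}b$, i.e.\ at \emph{even} powers of $q$ times an endpoint. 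You support $h$ at $t=q^{2k+3}a$, an odd power; since $q^{2n+2}a=q^{2k+3}a$ has no solution $n\in\mathbb{N}_{0}$ (and, generically, neither does $q^{2n+2}b=q^{2k+3}a$), every term of the series vanishes, your integral equals $0$, and no contradiction is obtained. The parenthetical you offer as justification is itself inconsistent: with $t=q^{2k+3}a$ one has $qt=q^{2k+4}a$, not $q^{2k+2}\cdot qa$. The correct choice --- the one the paper makes --- is to support $h$ at $q^{2k+2}a$ with value $f\left(q^{2k+1}a\right)$, which isolates the $n=k$ term and yields $-\left(1-q^{2}\right)aq^{2k}\left[f\left(q^{2k+1}a\right)\right]^{2}\neq 0$. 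The same off-by-one correction is needed in your $p=0$ case: the spikes must sit at $q^{2k+2}a$ and $q^{2k+2}b$ (with the cross values $f\left(q^{2k+1}b\right)$ and $f\left(q^{2k+1}a\right)$, as you indicate) so that the two contributions combine to $\left(1-q^{2}\right)\left(b-a\right)q^{2k}f\left(q^{2k+1}a\right)f\left(q^{2k+1}b\right)>0$. With this indexing fixed, the remainder of your outline --- admissibility of $h$ (the support is a single nonzero point, so $h$ is bounded, continuous at $0$, and vanishes at $a$ and $b$), the subcases where an endpoint equals $0$, and the use of continuity at $0$ when $p=0$ --- coincides with the paper's proof.
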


\begin{proof}
The implication \textquotedblleft$\Leftarrow$\textquotedblright\ is obvious.
Let us prove the implication \textquotedblleft$\Rightarrow$\textquotedblright.
Suppose, by contradiction, that there exists $p\in\left[  a,b\right]  _{q}$ such
that $f\left(  p\right)  \neq0$.

\begin{enumerate}
\item If $p\neq0$, then $p=q^{2k+1}a$ or $p=q^{2k+1}b$ for some $k\in\mathbb{N}_{0}$.

\begin{enumerate}
\item Suppose that $a\neq0$ and $b\neq0$. In this case we can assume, without
loss of generality that $p=q^{2k+1}a$. Define
\[
h\left(  t\right)  =\left\{
\begin{array}[c]{ccc}
f\left(  q^{2k+1}a\right)   & \text{\ \ if \ } & t=q^{2k+2}a\\
&  & \\
0 &  & \text{otherwise.}
\end{array}
\right.
\]
Then
\begin{align*}
\int_{a}^{b}f\left(  t\right)  h\left(  qt\right)  \tilde{d}_{q}t &  =\int
_{0}^{b}f\left(  t\right)  h\left(  qt\right)  \tilde{d}_{q}t
-\int_{0}^{a} f\left(  t\right)  h\left(  qt\right)  \tilde{d}_{q}t\\
&  =0-\left(  1-q^{2}\right)  a\sum_{n=0}^{+\infty}q^{2n}f\left(
q^{2n+1}a\right)  h\left(  q^{2n+2}a\right)  \\
&  =-\left(  1-q^{2}\right)  aq^{2k}\left[  f\left(  q^{2k+1}a\right)
\right]  ^{2}\neq0,
\end{align*}
which is a contradiction.

\item If $a=0$ and $b\neq0$, then $p=q^{2k+1}b$ for some $k\in\mathbb{N}_{0}$. Define
\[
h\left(  t\right)  =\left\{
\begin{array}[c]{ccc}
f\left(  q^{2k+1}b\right)   & \text{\ \ if \ } & t=q^{2k+2}b\\
&  & \\
0 &  & \text{otherwise}
\end{array}
\right.
\]
and with a similar proof to (a) we obtain a contradiction.

\item The case $b=0$ and $a\neq0$ is similar to the previous one.
\end{enumerate}

\item If $p=0$, without loss of generality, we can assume $f\left(  p\right) >0$. Since
\[
\lim_{n\rightarrow+\infty}q^{2n+1}a=\lim_{n\rightarrow+\infty}q^{2n+1}b=0
\]
and $f$ is continuous at $0$ we have
\[
\lim_{n\rightarrow+\infty}f\left(  q^{2n+1}a\right)
=\lim_{n\rightarrow +\infty}
f\left(  q^{2n+1}b\right)  =f\left(  0\right) .
\]
Therefore, there exists an order $n_{0}\in\mathbb{N}$ such that for all $n>n_{0}$ the inequalities
\[
f\left(  q^{2n+1}a\right)  >0\text{ and }f\left(  q^{2n+1}b\right)  >0
\]
hold.

\begin{enumerate}
\item If $a,b\neq0$, then for some $k>n_{0}$ we define
\[
h\left(  t\right)  =\left\{
\begin{array}[c]{ccc}
f\left(  q^{2k+1}b\right)   & \text{\ \ if \ } & t=q^{2k+2}a\\
&  & \\
f\left(  q^{2k+1}a\right)   & \text{\ \ if \ } & t=q^{2k+2}b\\
&  & \\
0 &  & \text{otherwise.}
\end{array}
\right.
\]
Hence
\[
\int_{a}^{b}f\left(  t\right)  h\left(  qt\right)  \tilde{d}_{q}t=\left(
1-q^{2}\right)  \left(  b-a\right)  q^{2k}\left[  f\left(  q^{2k+1}a\right)
f\left(  q^{2k+1}b\right)  \right]  \neq0.
\]

\item If $a=0$, then we define
\[
h\left(  t\right)  =\left\{
\begin{array}[c]{ccc}
f\left(  q^{2k+1}b\right) & \text{\ \ if \ } & t=q^{2k+2}b\\
&  & \\
0 &  & \text{otherwise.}
\end{array}
\right.
\]
Therefore,
\[
\int_{0}^{b}f\left(  t\right)  h\left(  qt\right)  \tilde{d}_{q}t=\left(
1-q^{2}\right)  bq^{2k}\left[  f\left(  q^{2k+1}b\right)  \right]  ^{2}\neq 0.
\]

\item If $b=0$, the proof is similar to the previous case.
\end{enumerate}
\end{enumerate}
\end{proof}

\begin{definition}[\cite{Malinowska:3}]
Let $s\in I$ and $g:I\times\left]  -\bar{\theta},\bar{\theta}\right[
\rightarrow\mathbb{R}$. We say that $g\left(  t,\cdot\right)$
is differentiable at $\theta_{0}$ uniformly in $\left[  s\right]_{q}$
if, for every $\varepsilon>0$, there exists $\delta>0$ such that
\[
0<\left\vert \theta-\theta_{0}\right\vert <\delta\Rightarrow\left\vert
\frac{g\left(  t,\theta\right)  -g\left(  t,\theta_{0}\right)  }{\theta
-\theta_{0}}-\partial_{2}g\left(  t,\theta_{0}\right)  \right\vert
<\varepsilon
\]
for all $t\in\left[  s\right]_{q}$,
where $\partial_{2}g=\frac{\partial g}{\partial\theta}$.
\end{definition}

\begin{lemma}[cf. \cite{Malinowska:3}]
\label{qs:tecnico}
Let $s\in I$ and assume that $g:I\times\left]-\bar{\theta}
,\bar{\theta}\right[  \rightarrow\mathbb{R}$
is differentiable at $\theta_{0}$ uniformly in $\left[s\right]_{q}$.
If
$\displaystyle \int_{0}^{s}g\left(  t,\theta_{0}\right)  \tilde{d}_{q}t$
exists, then
$\displaystyle G\left(  \theta\right)
:=\int_{0}^{s}g\left(  t,\theta\right) \tilde{d}_{q}t$
for $\theta$\ near $\theta_{0}$, is differentiable at $\theta_{0}$ and
\[
G^{\prime}\left(  \theta_{0}\right)  =\int_{0}^{s}\partial_{2}g\left(
t,\theta_{0}\right)  \tilde{d}_{q}t.
\]
\end{lemma}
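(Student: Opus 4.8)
The plan is to differentiate the series defining $G$ term by term and control the error uniformly, exactly as one does for the classical $q$-calculus in \cite{Malinowska:3}. Recall that, by Definition~\ref{$q$-symmetric integral}, for $\theta$ near $\theta_0$ we have
\[
G\left(\theta\right)=\int_{0}^{s}g\left(t,\theta\right)\tilde{d}_{q}t
=\left(1-q^{2}\right)s\sum_{n=0}^{+\infty}q^{2n}g\left(q^{2n+1}s,\theta\right),
\]
and similarly the candidate for $G'\left(\theta_0\right)$ is the series $\left(1-q^{2}\right)s\sum_{n=0}^{+\infty}q^{2n}\partial_{2}g\left(q^{2n+1}s,\theta_0\right)$, which converges by hypothesis since it equals $\int_{0}^{s}\partial_{2}g\left(t,\theta_0\right)\tilde{d}_{q}t$.

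First I would form the difference quotient of $G$ and subtract the candidate derivative:
\[
\frac{G\left(\theta\right)-G\left(\theta_{0}\right)}{\theta-\theta_{0}}
-\int_{0}^{s}\partial_{2}g\left(t,\theta_{0}\right)\tilde{d}_{q}t
=\left(1-q^{2}\right)s\sum_{n=0}^{+\infty}q^{2n}
\left[\frac{g\left(q^{2n+1}s,\theta\right)-g\left(q^{2n+1}s,\theta_{0}\right)}{\theta-\theta_{0}}
-\partial_{2}g\left(q^{2n+1}s,\theta_{0}\right)\right].
\]
Fix $\varepsilon>0$. The uniform differentiability hypothesis gives $\delta>0$ such that each bracketed term has absolute value less than $\varepsilon$ whenever $0<\left\vert\theta-\theta_{0}\right\vert<\delta$, since every point $q^{2n+1}s$ lies in $\left[s\right]_{q}$. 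Hence I would bound the absolute value of the right-hand side by
\[
\left(1-q^{2}\right)\left\vert s\right\vert\varepsilon\sum_{n=0}^{+\infty}q^{2n}
=\left(1-q^{2}\right)\left\vert s\right\vert\varepsilon\cdot\frac{1}{1-q^{2}}
=\left\vert s\right\vert\varepsilon.
\]
Since $\varepsilon>0$ is arbitrary, this shows the difference quotient converges to $\int_{0}^{s}\partial_{2}g\left(t,\theta_{0}\right)\tilde{d}_{q}t$ as $\theta\to\theta_{0}$, which is precisely the claim.

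The one point that needs a little care — and which I expect to be the main (though minor) obstacle — is the interchange of the limit with the infinite sum: the termwise bound above already does this work, because it produces a bound on the \emph{tail-free} full series that is uniform in $\theta$, so no separate dominated-convergence argument is needed; the geometric series $\sum q^{2n}$ furnishes exactly the summable majorant. One should also note at the outset that $G\left(\theta\right)$ is well defined for $\theta$ near $\theta_{0}$: this follows because uniform differentiability at $\theta_{0}$ forces $\left\vert g\left(q^{2n+1}s,\theta\right)-g\left(q^{2n+1}s,\theta_{0}\right)\right\vert\leqslant\left(\varepsilon+\left\vert\partial_{2}g\left(q^{2n+1}s,\theta_{0}\right)\right\vert\right)\left\vert\theta-\theta_{0}\right\vert$, and combining the convergence of $\sum q^{2n}g\left(q^{2n+1}s,\theta_{0}\right)$ with that of $\sum q^{2n}\partial_{2}g\left(q^{2n+1}s,\theta_{0}\right)$ yields absolute convergence of $\sum q^{2n}g\left(q^{2n+1}s,\theta\right)$. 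With these observations in place the proof is just the three displays above.
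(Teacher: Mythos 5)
Your proof is correct and follows essentially the same route as the paper's: both hinge on the uniform differentiability bound at every point $q^{2n+1}s$ of $\left[s\right]_{q}$, followed by summing that bound against the geometric weights $q^{2n}$. The only cosmetic difference is that you unpack the $q$-symmetric integral as its defining series and bound it directly (which handles $s>0$, $s=0$ and $s<0$ uniformly via $\left\vert s\right\vert$), whereas the paper packages the same summation step as an application of Proposition~\ref{qs:desigualdade} and writes out only the case $s<0$, deferring $s>0$ to \cite{Malinowska:3}.
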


\begin{proof}
For $s>0$ the proof is similar to the proof given in Lemma~3.2 of \cite{Malinowska:3}.
The result is trivial for $s=0$. For $s<0$, let $\varepsilon >0$ be
arbitrary. Since $g\left( t,\cdot \right) $ is differentiable at $\theta _{0}
$ uniformly in $\left[ s\right] _{q}$, then there exists $\delta >0$ such that, for all
$t\in \left[ s\right] _{q}$ and for $0<\left\vert \theta -\theta_{0}\right\vert <\delta$,
\[
\left\vert \frac{g\left( t,\theta \right) -g\left( t,\theta _{0}\right) }{\theta -\theta_{0}}
-\partial _{2}g\left( t,\theta _{0}\right) \right\vert < - \frac{\varepsilon }{2s}.
\]
Applying Proposition~\ref{qs:desigualdade}, for $0<\left\vert \theta -\theta
_{0}\right\vert <\delta $, we have
\begin{align*}
\left\vert \frac{G\left( \theta \right) -G\left( \theta _{0}\right) }{\theta
-\theta _{0}}-\int_{0}^{s}\partial
_{2}g\left( t,\theta _{0}\right) \tilde{d}_{q}t \right\vert & =\left\vert
\frac{\int_{0}^{s}g\left( t,\theta \right) \tilde{d}_{q}t-\int_{0}^{s}g\left(
t,\theta _{0}\right) \tilde{d}_{q}t}{\theta -\theta _{0}}-\int_{0}^{s}\partial
_{2}g\left( t,\theta _{0}\right) \tilde{d}_{q}t\right\vert  \\
& =\left\vert \int_{0}^{s}\left[ \frac{g\left( t,\theta \right) -g\left(
t,\theta _{0}\right) }{\theta -\theta _{0}}-\partial _{2}g\left( t,\theta
_{0}\right) \right] \tilde{d}_{q}t\right\vert  \\
& < \int_{s}^{0}-\frac{\varepsilon }{2s}\tilde{d}_{q}t
= -\frac{\varepsilon }{2s}\int_{s}^{0}1\tilde{d}_{q}t = \frac{\varepsilon}{2}<\varepsilon
\end{align*}
proving that
$$
G'\left(\theta_0\right)=
\int_{0}^{s}\partial_{2}
g\left( t,\theta _{0}\right) \tilde{d}_{q}t.
$$
\end{proof}

For an admissible variation $\eta$ and
an admissible function $y$, we define the real function
$\phi$ by
\[
\phi\left(  \epsilon\right)
:=\mathcal{L}\left[  y+\epsilon\eta\right]  .
\]
The first variation of the functional $\mathcal{L}$
of the problem \eqref{qs:P} is defined by
\[
\delta\mathcal{L}\left[  y,\eta\right]  :=\phi^{\prime}\left(  0\right).
\]
Note that
\begin{align*}
\mathcal{L}\left[  y+\epsilon\eta\right]
&= \int_{a}^{b}L\left(  t,y\left(
qt\right)+\epsilon\eta\left(
qt\right), \tilde{D}_{q}\left[  y\right]\left(  t\right)
+\epsilon\tilde{D}_{q}\left[\eta
\right]  \left(  t\right)  \right)
\tilde{d}_{q}t\\
&= \mathcal{L}_{b}\left[  y+\epsilon\eta\right]
-\mathcal{L}_{a}\left[y+\epsilon\eta\right],
\end{align*}
where
\[
\mathcal{L}_{\xi}\left[  y+\epsilon\eta\right]
=\int_{0}^{\xi}L\left(t,y\left(  qt\right)
+\epsilon\eta\left(qt\right), \tilde{D}_{q}\left[  y\right]  \left(  t\right)
+\epsilon\tilde{D}_{q}\left[\eta\right]  \left(  t\right)\right)\tilde{d}_{q}t
\]
with $\xi\in\left\{  a,b\right\}  $. Therefore,
\[
\delta\mathcal{L}\left[  y,\eta\right]  =\delta\mathcal{L}_{b}\left[
y,\eta\right]  -\delta\mathcal{L}_{a}\left[  y,\eta\right]  .
\]
The following lemma is a direct consequence of Lemma~\ref{qs:tecnico}.

\begin{lemma}
\label{qs:tecnico 2}
For an admissible variation $\eta$ and an admissible function $y$, let
\[
g\left(  t,\epsilon\right)  :=L\left(  t,y\left(  qt\right)  +\epsilon
\eta\left(  qt\right)  ,\tilde{D}_{q}\left[  y\right]  \left(  t\right)
+\epsilon\tilde{D}_{q}\left[\eta
\right]  \left(  t\right)  \right)  .
\]
Assume that
\begin{enumerate}
\item $g\left(  t,\cdot\right)$ is differentiable at $0$ uniformly in
$\left[  a,b\right]_{q}$;

\item $\mathcal{L}_{a}\left[  y+\epsilon\eta\right]  =\int_{0}^{a}g\left(
t,\epsilon\right)  \tilde{d}_{q}t$ and $\mathcal{L}_{b}\left[  y+\epsilon
\eta\right]  =\int_{0}^{b}g\left(  t,\epsilon\right)  \tilde{d}_{q}t$ exist
for $\epsilon\approx 0$;

\item $\int_{0}^{a}\partial_{2}g\left(  t,0\right)  \tilde{d}_{q}t$
and $\int_{0}^{b}\partial_{2}g\left(  t,0\right)  \tilde{d}_{q}t$ exist.
\end{enumerate}
Then
\begin{align*}
\phi^{\prime}\left(  0\right) & =\delta\mathcal{L}\left[  y,\eta\right] \\
& = \int_{a}^{b}\bigg(\partial_{2}L\left(  t,y\left(  qt\right),
\tilde{D}_{q}\left[  y\right]  \left(  t\right)  \right)  \eta\left(  qt\right)
+\partial_{3}L\left(  t,y\left(  qt\right)  ,\tilde{D}_{q}\left[  y\right]
\left(  t\right)  \right)  \tilde{D}_{q}\left[\eta\right]\left(  t\right)
\bigg)\tilde{d}_{q}t.
\end{align*}
\end{lemma}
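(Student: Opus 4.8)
The statement to prove (Lemma~\ref{qs:tecnico 2}) asserts a formula for the first variation $\phi'(0) = \delta\mathcal{L}[y,\eta]$ of the $q$-symmetric variational functional. The plan is to exploit the decomposition $\mathcal{L}[y+\epsilon\eta] = \mathcal{L}_b[y+\epsilon\eta] - \mathcal{L}_a[y+\epsilon\eta]$ already recorded just before the statement, apply Lemma~\ref{qs:tecnico} to each of the two pieces $\mathcal{L}_a$ and $\mathcal{L}_b$, and then assemble the results using linearity of the $q$-symmetric integral (Theorem on its properties) together with the chain rule for partial derivatives. The three hypotheses (1)--(3) are precisely tailored so that Lemma~\ref{qs:tecnico} applies to $\theta_0 = 0$ with the integrand $g(t,\epsilon)$ as defined in the statement.

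First I would compute $\partial_2 g(t,\epsilon) = \frac{\partial}{\partial\epsilon} g(t,\epsilon)$ by the classical chain rule in the two last arguments of $L$: since $g(t,\epsilon) = L\bigl(t, y(qt)+\epsilon\eta(qt), \tilde{D}_q[y](t)+\epsilon\tilde{D}_q[\eta](t)\bigr)$, we get
\[
\partial_2 g(t,\epsilon) = \partial_2 L\bigl(t,\ldots\bigr)\,\eta(qt) + \partial_3 L\bigl(t,\ldots\bigr)\,\tilde{D}_q[\eta](t),
\]
and in particular at $\epsilon = 0$,
\[
\partial_2 g(t,0) = \partial_2 L\bigl(t,y(qt),\tilde{D}_q[y](t)\bigr)\,\eta(qt) + \partial_3 L\bigl(t,y(qt),\tilde{D}_q[y](t)\bigr)\,\tilde{D}_q[\eta](t).
\]
By hypothesis (1), $g(t,\cdot)$ is differentiable at $0$ uniformly in $[a,b]_q$, hence uniformly in $[a]_q$ and in $[b]_q$; by hypotheses (2) and (3), the hypotheses of Lemma~\ref{qs:tecnico} are met for $s = a$ and $s = b$. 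Therefore $\mathcal{L}_a[y+\epsilon\eta] = \int_0^a g(t,\epsilon)\,\tilde{d}_q t$ and $\mathcal{L}_b[y+\epsilon\eta] = \int_0^b g(t,\epsilon)\,\tilde{d}_q t$ are differentiable at $\epsilon = 0$ with derivatives $\int_0^a \partial_2 g(t,0)\,\tilde{d}_q t$ and $\int_0^b \partial_2 g(t,0)\,\tilde{d}_q t$ respectively.

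Then, using $\delta\mathcal{L}[y,\eta] = \phi'(0) = \frac{d}{d\epsilon}\big|_{\epsilon=0}\bigl(\mathcal{L}_b[y+\epsilon\eta] - \mathcal{L}_a[y+\epsilon\eta]\bigr)$ and the additivity property $\int_0^b - \int_0^a = \int_a^b$ of the $q$-symmetric integral, I would conclude
\[
\phi'(0) = \int_0^b \partial_2 g(t,0)\,\tilde{d}_q t - \int_0^a \partial_2 g(t,0)\,\tilde{d}_q t = \int_a^b \partial_2 g(t,0)\,\tilde{d}_q t,
\]
and substituting the expression for $\partial_2 g(t,0)$ obtained above yields exactly the claimed formula. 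The only genuinely non-routine ingredient is Lemma~\ref{qs:tecnico} itself (differentiation under the $q$-symmetric integral sign), but that is available from earlier in the text; the remaining work is the chain-rule computation and bookkeeping with the sign conventions of the integral, so I do not anticipate a serious obstacle here. One point to be careful about is that hypothesis (1) is stated for $[a,b]_q$ while Lemma~\ref{qs:tecnico} requires uniform differentiability in $[s]_q$; since $[a]_q \subseteq [a,b]_q$ and $[b]_q \subseteq [a,b]_q$, this inclusion is what legitimizes applying the lemma separately at $s=a$ and $s=b$, and I would make this explicit.
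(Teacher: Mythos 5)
Your proposal is correct and follows exactly the route the paper intends: the paper states this lemma as "a direct consequence of Lemma~\ref{qs:tecnico}" applied to $\mathcal{L}_a$ and $\mathcal{L}_b$ separately, combined via the decomposition $\delta\mathcal{L}[y,\eta]=\delta\mathcal{L}_b[y,\eta]-\delta\mathcal{L}_a[y,\eta]$ and the chain-rule evaluation of $\partial_2 g(t,0)$, which is precisely what you write out. Your explicit remark that $[a]_q,[b]_q\subseteq[a,b]_q$ justifies applying the lemma at $s=a$ and $s=b$ is a detail the paper leaves implicit, and it is correct.
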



\subsection{Optimality conditions}
\label{Optimality conditions}

In this section we present a necessary condition
(the $q$-symmetric Euler--Lagrange equation) and a sufficient
condition to our problem \eqref{qs:P}.\index{$q$-symmetric Euler--Lagrange equation}

\begin{theorem}[The $q$-symmetric Euler--Lagrange equation]
\label{qs:Euler}
Under hypotheses (H$_{q}$1)-(H$_{q}$3) and conditions 1-3 of Lemma~\ref{qs:tecnico 2}
on the Lagrangian $L$, if $y_{\ast}\in\mathcal{Y}^{1}\left(\left[a,b\right]_q,\mathbb{R}\right)$
is a local extremizer for problem \eqref{qs:P}, then $y_{\ast}$ satisfies
the $q$-symmetric Euler--Lagrange equation
\begin{equation}
\label{qs:eqEuler}
\partial_{2}L\left(  t,y\left(  qt\right)  ,\tilde{D}_{q}\left[  y\right]
\left(  t\right)  \right)  =\tilde{D}_{q}\left[\tau\rightarrow
\partial_{3}L\left(  q\tau,y\left(  q^{2}\tau\right)  ,\tilde{D}_{q}\left[  y\right]
\left(  q\tau\right)  \right)  \right]  \left(  t\right)
\end{equation}
for all $t\in\left[a,b\right]_{q}$.
\end{theorem}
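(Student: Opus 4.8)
The plan is to follow the classical variational recipe adapted to the $q$-symmetric setting, exactly as was done for the Hahn quantum case earlier in the thesis. First I would let $y_{\ast}$ be a local extremizer of \eqref{qs:P} and take an arbitrary admissible variation $\eta \in \mathcal{Y}^{1}\left(\left[a,b\right]_q,\mathbb{R}\right)$, i.e.\ $\eta(a)=\eta(b)=0$. Define $\phi(\epsilon) := \mathcal{L}\left[y_{\ast}+\epsilon\eta\right]$; since $y_{\ast}$ is a local extremizer, $\phi$ has a local extremum at $\epsilon=0$, so $\phi'(0)=0$. Using Lemma~\ref{qs:tecnico 2} (whose hypotheses are granted by assumption, via (H$_q$1)--(H$_q$3) and conditions 1--3) I get the first variation in the explicit form
\[
0 = \int_{a}^{b}\Bigl(\partial_{2}L\bigl(t,y_{\ast}(qt),\tilde{D}_{q}[y_{\ast}](t)\bigr)\,\eta(qt)
+\partial_{3}L\bigl(t,y_{\ast}(qt),\tilde{D}_{q}[y_{\ast}](t)\bigr)\,\tilde{D}_{q}[\eta](t)\Bigr)\tilde{d}_{q}t.
\]

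The next step is to eliminate $\tilde{D}_{q}[\eta]$ by $q$-symmetric integration by parts. Here one must be careful which of the two integration-by-parts formulas to use: the variant \eqref{qs:partes}, namely $\int_a^b f(t)\tilde{D}_q[g](t)\tilde{d}_qt = f(qt)g(t)\big|_a^b - q\int_a^b \tilde{D}_q[f](qt)\,g(qt)\,\tilde{d}_qt$, is the appropriate one since it pairs $\tilde{D}_q[\eta](t)$ with a function evaluated along $g(qt)=\eta(qt)$. Applying it with $g=\eta$ and $f(t)=\partial_{3}L(t,y_{\ast}(qt),\tilde{D}_{q}[y_{\ast}](t))$, the boundary term $f(qt)\eta(t)\big|_a^b$ vanishes because $\eta(a)=\eta(b)=0$, and I obtain
\[
0 = \int_{a}^{b}\Bigl(\partial_{2}L\bigl(t,y_{\ast}(qt),\tilde{D}_{q}[y_{\ast}](t)\bigr)
- q\,\tilde{D}_{q}\bigl[\tau\mapsto \partial_{3}L(\tau,y_{\ast}(q\tau),\tilde{D}_{q}[y_{\ast}](q\tau))\bigr](qt)\Bigr)\eta(qt)\,\tilde{d}_{q}t.
\]
Now I would invoke the Fundamental Lemma of the $q$-symmetric variational calculus (Lemma~\ref{qs:fundamental}): since the bracketed function is in $\mathcal{Y}^{0}$ (this uses (H$_q$3) and the fact that $\tilde D_q$ preserves the relevant regularity, together with composition with $\sigma(t)=qt$) and the identity holds for all admissible $\eta$, the bracket must vanish for all $t\in\left[a,b\right]_q$. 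Using the relation \eqref{qs:comp}, $\tilde{D}_{q}[f^\sigma](t)=q\tilde{D}_{q}[f](qt)$, applied to $f(\tau)=\partial_{3}L(\tau,y_{\ast}(q\tau),\tilde{D}_{q}[y_{\ast}](q\tau))$, the term $q\,\tilde D_q[f](qt)$ is rewritten as $\tilde D_q[f^\sigma](t) = \tilde D_q[\tau\mapsto \partial_3 L(q\tau, y_\ast(q^2\tau),\tilde D_q[y_\ast](q\tau))](t)$, which is precisely the right-hand side of \eqref{qs:eqEuler}.

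The main obstacle I anticipate is bookkeeping with the composition $\sigma(t)=qt$ and the correct placement of $q$-factors when combining the integration-by-parts formula \eqref{qs:partes} with the chain-rule-type identity \eqref{qs:comp}; getting these to line up so that the final equation is stated purely in terms of $\tilde D_q$ acting on $\tau\mapsto\partial_3 L(q\tau,\ldots)$ requires care, but it is a routine substitution rather than a conceptual difficulty. A secondary point worth checking is that the bracketed integrand genuinely lies in $\mathcal{Y}^{0}\left(\left[a,b\right]_q,\mathbb{R}\right)$ so that Lemma~\ref{qs:fundamental} applies; this follows from hypothesis (H$_q$3) (which puts $t\mapsto \partial_{i+2}L(t,y(qt),\tilde D_q[y](t))$ in $\mathcal{Y}^{1}$) together with boundedness and continuity-at-$0$ being preserved under the dilation $t\mapsto qt$ and under $\tilde D_q$. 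With these verified, the proof closes immediately. Everything else — the existence and differentiability of $\phi$ near $0$, the interchange of $\tilde D_q$/derivative and integral — is supplied by the cited lemmas and the standing hypotheses, so no further analysis is needed.
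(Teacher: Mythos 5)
Your proposal follows essentially the same route as the paper's own proof: compute the first variation via Lemma~\ref{qs:tecnico 2}, eliminate $\tilde{D}_{q}[\eta]$ with the integration-by-parts formula \eqref{qs:partes}, kill the bracket with the fundamental lemma (Lemma~\ref{qs:fundamental}), and convert $q\,\tilde{D}_{q}[f](qt)$ into $\tilde{D}_{q}[f^{\sigma}](t)$ using \eqref{qs:comp} to land on \eqref{qs:eqEuler}. The only blemish is a notational slip in your intermediate display: with your own choice $f(t)=\partial_{3}L\bigl(t,y_{\ast}(qt),\tilde{D}_{q}[y_{\ast}](t)\bigr)$, the third argument inside the $\tilde{D}_{q}$-bracket should read $\tilde{D}_{q}[y_{\ast}](\tau)$ rather than $\tilde{D}_{q}[y_{\ast}](q\tau)$; carried through consistently, $f^{\sigma}(\tau)=\partial_{3}L\bigl(q\tau,y_{\ast}(q^{2}\tau),\tilde{D}_{q}[y_{\ast}](q\tau)\bigr)$ and the stated Euler--Lagrange equation follows exactly as you claim.
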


\begin{proof}
Let $y_{\ast}$ be a local minimizer (resp. maximizer)
to problem \eqref{qs:P} and $\eta$ an admissible variation.
Define $\phi: \mathbb{R} \rightarrow\mathbb{R}$ by
\[
\phi\left(  \epsilon\right)  :=\mathcal{L}\left[  y_{\ast}+\epsilon
\eta\right]  .
\]
A necessary condition for $y_{\ast}$ to be an extremizer is given by
$\phi^{\prime}\left(  0\right)  =0$. By Lemma~\ref{qs:tecnico 2} we conclude that
\[
\int_{a}^{b}\bigg(\partial_{2}L\left(  t,y_{\ast}\left(  qt\right),
\tilde{D}_{q}\left[  y_{\ast}\right]  \left(  t\right)  \right)  \eta\left(  qt\right)
+\partial_{3}L\left(  t,y_{\ast}\left(  qt\right)  ,\tilde{D}_{q}\left[  y_{\ast}\right]
\left(  t\right)  \right)  \tilde{D}_{q}\left[  \eta\right]  \left(  t\right)
\bigg)\tilde{d}_{q}t=0.
\]
By integration by parts (equation \eqref{qs:partes}) we get
\begin{align*}
 & \int_{a}^{b}\partial_{3}L\left(  t,y_{\ast}\left(  qt\right)  ,\tilde{D}
_{q}\left[  y_{\ast}\right]  \left(  t\right)  \right)  \tilde{D}_{q}\left[
\eta\right]  \left(  t\right)  \tilde{d}_{q}t\\
=&\partial_{3}L\left(  qt,y_{\ast}\left(  q^{2}t\right)  ,\tilde{D}_{q}\left[
y_{\ast}\right]  \left(  qt\right)  \right)  \eta\left(  t\right)  \bigg|_{a}^{b}\\
 & -q\int_{a}^{b}\tilde{D}_{q}\left[\tau\rightarrow  \partial_{3}L\left(
\tau,y_{\ast}\left(  q\tau\right)  ,\tilde{D}_{q}\left[  y_{\ast}\right]  \left(
\tau\right)  \right)  \right]  \left(  qt\right)  \eta\left(  qt\right)
\tilde{d}_{q}t.
\end{align*}
Since $\eta\left(  a\right)  =\eta\left(  b\right)  =0$, then
\[
\int_{a}^{b}\bigg(\partial_{2}L\left(  t,y_{\ast}\left(  qt\right),
\tilde{D}_{q}\left[  y_{\ast}\right]  \left(  t\right)  \right)
-q\tilde{D}_{q}\left[\tau\rightarrow  \partial_{3}L\left(  \tau,y_{\ast}\left(  q\tau\right),
\tilde{D}_{q}\left[  y_{\ast}\right]  \left(  \tau\right)  \right)  \right]  \left(
qt\right)  \bigg)\eta\left(  qt\right)  \tilde{d}_{q}t=0.
\]
Finally, by Lemma~\ref{qs:fundamental}
and equation \eqref{qs:comp}, for all $t\in\left[ a,b\right]_{q}$
\begin{align*}
&\partial_{2}L\left(  t,y_{\ast}\left(  qt\right)  ,\tilde{D}_{q}\left[  y_{\ast}\right]
\left(  t\right)  \right)  =q\tilde{D}_{q}\left[\tau\rightarrow  \partial
_{3}L\left(  \tau,y_{\ast}\left(  q\tau\right)  ,\tilde{D}_{q}\left[  y_{\ast}\right]
\left(  \tau\right)  \right)  \right]  \left(  qt\right)\\
\Leftrightarrow & \partial_{2}L\left(  t,y_{\ast}\left(  qt\right)  ,\tilde{D}_{q}\left[  y_{\ast}\right]
\left(  t\right)  \right)  =\tilde{D}_{q}\left[\tau\rightarrow  \partial
_{3}L\left(  q\tau,y_{\ast}\left(  q^{2}\tau\right)  ,\tilde{D}_{q}\left[  y_{\ast}\right]
\left(  q\tau\right)  \right)  \right]  \left(  t\right) \text{.}
\end{align*}
\end{proof}

Observe that the $q$-symmetric Euler--Lagrange equation \eqref{qs:eqEuler}
is a second-order dynamic equation. To the best of our knowledge, there is no
general method to solve this type of equation.
We believe this is an interesting open problem.

To conclude this section we prove a sufficient
optimality condition for problem \eqref{qs:P}.

\begin{definition}
Given a function $L$, we say that $L\left(t,u,v\right)$
is \emph{jointly convex}\index{Function jointly convex}
(resp. \emph{jointly concave}\index{Function jointly concave})
in $\left(u,v\right)$, if, and only if, $\partial_{i}L$, $i=2,3$,
exist and are continuous and verify the following condition:
\[
L\left(  t,u+u_{1},v+v_{1}\right)  -L\left(  t,u,v\right)
\underset{\left(resp. \leqslant\right)}{\geqslant}\partial_{2}
L\left(  t,u,v\right) u_{1}+\partial_{3}L\left(  t,u,v\right)  v_{1}
\]
for all $\left(  t,u,v\right),\left(  t,u+u_{1},v
+v_{1}\right)  \in I\times\mathbb{R}^{2}$.
\end{definition}

\begin{theorem}
\label{qs:suff}
Suppose\index{$q$-symmetric optimality sufficient \\ condition}
that $a, b \in I$, $a<b$, and $a,b\in \left[ c \right]_{q}$ for some $c\in I$.
Also assume that  $L$ is a jointly convex (resp. concave) function
in $\left(u,v\right)$. If $y_{\ast}$ satisfies the $q$-symmetric
Euler--Lagrange equation \eqref{qs:eqEuler}, then $y_{\ast}$ is a global minimizer
(resp. maximizer) to the problem \eqref{qs:P}.
\end{theorem}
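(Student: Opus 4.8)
The plan is to follow the classical sufficiency recipe: compare $\mathcal{L}[y]$ with $\mathcal{L}[y_{\ast}]$ for an arbitrary admissible $y$, use joint convexity of $L$ to bound the difference below by the first variation, and then kill that first-variation term with the Euler--Lagrange equation \eqref{qs:eqEuler}. First I would fix an admissible $y$ and put $\eta:=y-y_{\ast}$. Since $y$ and $y_{\ast}$ both satisfy the boundary conditions of \eqref{qs:P}, $\eta$ is an admissible variation, i.e. $\eta(a)=\eta(b)=0$; and by linearity of $\tilde{D}_{q}$ (Theorem~\ref{qs:props derivada}) one has $\tilde{D}_{q}[y]=\tilde{D}_{q}[y_{\ast}]+\tilde{D}_{q}[\eta]$. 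Applying the defining inequality of joint convexity with $u=y_{\ast}(qt)$, $v=\tilde{D}_{q}[y_{\ast}](t)$, $u_{1}=\eta(qt)$, $v_{1}=\tilde{D}_{q}[\eta](t)$ gives, pointwise in $t$,
\[
L\bigl(t,y(qt),\tilde{D}_{q}[y](t)\bigr)-L\bigl(t,y_{\ast}(qt),\tilde{D}_{q}[y_{\ast}](t)\bigr)\geqslant \partial_{2}L(\cdots)\,\eta(qt)+\partial_{3}L(\cdots)\,\tilde{D}_{q}[\eta](t),
\]
where $(\cdots)$ stands for $\bigl(t,y_{\ast}(qt),\tilde{D}_{q}[y_{\ast}](t)\bigr)$.

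Next I would $q$-symmetric integrate this inequality from $a$ to $b$. This is the delicate step, because $\int_{a}^{b}(\cdot)\tilde{d}_{q}t$ is not positive for an arbitrary nonnegative integrand (cf.\ the Remark after the Corollary to Proposition~\ref{qs:desigualdade}). Here the hypothesis $a,b\in[c]_{q}$ does the work: it forces the abscissas sampled by $\int_{0}^{a}(\cdot)\tilde{d}_{q}t$ and by $\int_{0}^{b}(\cdot)\tilde{d}_{q}t$ to be nested subsets of $[a,b]_{q}\subseteq\{q^{2j}c\}$, and after cancelling the common tail $\int_{a}^{b}(\cdot)\tilde{d}_{q}t$ collapses to a \emph{finite} sum with a single fixed-sign weight, so the pointwise inequality above integrates to
\[
\mathcal{L}[y]-\mathcal{L}[y_{\ast}]\;\geqslant\;\int_{a}^{b}\Bigl(\partial_{2}L(\cdots)\,\eta(qt)+\partial_{3}L(\cdots)\,\tilde{D}_{q}[\eta](t)\Bigr)\tilde{d}_{q}t,
\]
which is exactly the expression appearing in Lemma~\ref{qs:tecnico 2}.

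Then I would process the right-hand side exactly as in the proof of Theorem~\ref{qs:Euler}: apply $q$-symmetric integration by parts \eqref{qs:partes} to the $\partial_{3}L$ term, note that the boundary term $\partial_{3}L\bigl(qt,y_{\ast}(q^{2}t),\tilde{D}_{q}[y_{\ast}](qt)\bigr)\eta(t)\big|_{a}^{b}$ vanishes since $\eta(a)=\eta(b)=0$, and rewrite $q\tilde{D}_{q}[\,\cdot\,](qt)$ via \eqref{qs:comp}. The right-hand side becomes $\int_{a}^{b}\bigl(\partial_{2}L(t,y_{\ast}(qt),\tilde{D}_{q}[y_{\ast}](t))-\tilde{D}_{q}[\tau\mapsto\partial_{3}L(q\tau,y_{\ast}(q^{2}\tau),\tilde{D}_{q}[y_{\ast}](q\tau))](t)\bigr)\eta(qt)\,\tilde{d}_{q}t$. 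Because $y_{\ast}$ satisfies \eqref{qs:eqEuler}, the parenthesised factor is zero for every $t\in[a,b]_{q}$; since (again by $a,b\in[c]_{q}$) these are precisely the points the $q$-symmetric integral from $a$ to $b$ evaluates, the integrand vanishes identically and the integral is $0$. Hence $\mathcal{L}[y]\geqslant\mathcal{L}[y_{\ast}]$ for every admissible $y$, i.e.\ $y_{\ast}$ is a global minimizer; the concave case is word-for-word the same with all inequalities reversed, yielding a global maximizer.

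The main obstacle is the second paragraph: rigorously licensing the passage from the pointwise convexity inequality to its $q$-symmetric integrated form, and, in the same vein, verifying that the Euler--Lagrange identity — which is only asserted on the discrete set $[a,b]_{q}$ — genuinely annihilates the final integrand at every abscissa the integral $\int_{a}^{b}\tilde{d}_{q}t$ actually samples. Both hinge on the hypothesis $a,b\in[c]_{q}$ and on Proposition~\ref{qs:desigualdade} together with its Corollary, so I would isolate a short monotonicity lemma for $\tilde{d}_{q}$-integrals over such pairs $(a,b)$ before running the argument above.
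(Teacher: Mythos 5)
Your proposal is correct and follows essentially the same route as the paper: apply joint convexity pointwise, integrate, use the integration by parts formula \eqref{qs:partes} with the vanishing boundary terms, and invoke \eqref{qs:eqEuler} to annihilate the remaining integrand. In fact you are more careful than the paper at the one delicate point — the paper integrates the pointwise convexity inequality without comment, whereas you correctly observe that this step (and the final vanishing of the integrand on exactly the sampled abscissas) is where the hypothesis $a,b\in\left[c\right]_{q}$ is actually needed, since it reduces $\int_{a}^{b}(\cdot)\,\tilde{d}_{q}t$ to a finite sum with weights of a single sign.
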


\begin{proof}
Let $L$ be a jointly convex function in $\left(u,v\right)$
(the concave case is similar). Then, for
any admissible variation $\eta$, we have
\begin{align*}
& \mathcal{L}\left[  y_{\ast}+\eta\right]  -\mathcal{L}\left[  y_{\ast}\right]  \\
& =\int_{a}^{b}\left[L\left(  t,y_{\ast}\left(  qt\right)  +\eta\left(  qt\right)
,\tilde{D}_{q}\left[  y_{\ast}\right]  \left(  t\right)  +\tilde{D}_{q}\left[
\eta\right]  \left(  t\right)  \right)  -L\left(  t,y_{\ast}\left(  qt\right)
,\tilde{D}_{q}\left[  y_{\ast}\right]  \left(  t\right)  \right)\right]  \tilde{d}_{q}t\\
& \geqslant\int_{a}^{b}\left[  \partial_{2}L\left(  t,y_{\ast}\left(  qt\right)
,\tilde{D}_{q}\left[  y_{\ast}\right]  \left(  t\right)  \right)\eta\left(qt\right)
+\partial_{3} L\left(  t,y_{\ast}\left(  qt\right)  ,\tilde{D}_{q}\left[  y_{\ast}\right]
\left(t\right)  \right)\tilde{D}_{q}\left[\eta\right]\left(t\right)  \right]  \tilde{d}_{q}t
\end{align*}
Using integrations by parts, formula \eqref{qs:partes}, we get
\begin{align*}
\mathcal{L}\left[  y_{\ast}+\eta\right] - \mathcal{L}\left[  y_{\ast}\right]
& \geqslant\partial_{3}L\left(  qt,y_{\ast}\left(  q^{2}t\right)  ,\tilde
{D}_{q}\left[  y_{\ast}\right]  \left(  qt\right)  \right)  \eta\left(  t\right)
\bigg|_{a}^{b}\\
& +\int_{a}^{b}\bigg(\partial_{2}L\left(  t,y_{\ast}\left(  qt\right)
,\tilde{D}_{q}\left[  y_{\ast}\right]  \left(  t\right)  \right)  \\
& -\tilde{D}_{q}\left[  \tau \rightarrow \partial_{3}
L\left( q \tau,y_{\ast}\left(
q^{2} \tau \right)  ,\tilde{D}_{q}\left[  y_{\ast}\right]  \left(  q \tau\right)  \right)
\right]  \left(  t\right)  \bigg)\eta\left(  qt\right)\tilde{d}_{q}t.
\end{align*}
Since $y_{\ast}$ satisfies \eqref{qs:eqEuler} and $\eta$ is an admissible
variation, we obtain
\[
\mathcal{L}\left[  y_{\ast}+\eta\right]  -\mathcal{L}\left[  y_{\ast}\right]
\geqslant0
\]
proving that $y_{\ast}$ is a minimizer of problem \eqref{qs:P}.
\end{proof}

\begin{ex}
Let $q\in\left]0,1\right[$ be a fixed real number and $I\subseteq\mathbb{R}$
be an interval such that $0,1\in I$. Consider the problem
\begin{align}
\label{problemaq}
\left\{
\begin{array}
[c]{l}
\mathcal{L}\left[  y\right]  =\displaystyle\int_{0}^{1}\left(1+\left(  \tilde{D}_{q}\left[
y\right]  \left(  t\right)  \right)  ^{2}\right)\tilde{d}_{q}t\rightarrow \text{minimize}\\
\\
y\in\mathcal{Y}^{1}\left(  \left[  0,1\right]_{q}  ,\mathbb{R}\right)\\
\\
y\left(  0\right)  =0\\
\\
y\left(  1\right)  =1.
\end{array}
\right.
\end{align}
If $y_{\ast}$ is a local minimizer of the problem,
then $y_{\ast}$ satisfies the $q$-symmetric Euler--Lagrange equation
\[
\tilde{D}_{q}\left[\tau\rightarrow  2\tilde{D}_{q}\left[  y\right]  \left(  q\tau\right)
\right]  \left(  t\right)  =0\text{ for all }t\in\left[  0,1\right]_{q}.
\]
It's simple to see that the function
\[
y_{\ast}\left(  t\right)  =t
\]
is a candidate to the solution of problem \eqref{problemaq}.
Since the Lagrangian function is jointly convex in $\left(u,v\right)$,
then we may conclude that the function $y_{\ast}$ is a minimizer
of problem \eqref{problemaq}.
\end{ex}


\section{State of the Art}

The study of the $q$-quantum calculus of variations
and its applications is under current research \cite{Bangerezako,Bangerezako:2}.
In this chapter not only we developed it with new techniques,
but also we introduced the $q$-symmetric quantum calculus of variations.
These results were presented by the author at the international conference
Progress on Difference Equations 2011, Dublin City University, Ireland,
and are published in \cite{Brito:da:Cruz:2}.


\clearpage{\thispagestyle{empty}\cleardoublepage}


\chapter{Hahn's Symmetric Quantum Variational Calculus}
\label{Hahn's Symmetric Quantum Variational Calculus}

In this chapter we introduce and develop the Hahn symmetric quantum calculus
with applications to the calculus of variations. Namely, we obtain a necessary
optimality condition of Euler--Lagrange type and a sufficient optimality condition
for variational problems within the context of Hahn's symmetric calculus.
Moreover, we show the effectiveness of Leitmann's direct method when applied
to Hahn's symmetric variational calculus. Illustrative examples are provided.

It should be noted that the $q$-symmetric derivative is a particular case
of Hahn's symmetric derivative, and so this chapter is a generalization
of Chapter~\ref{The $q$-Symmetric Variational Calculus}.


\section{Introduction}

Due to its many applications, quantum operators are recently subject
to an increase number of investigations \cite{Malinowska:3,Martins:2,Martins:3}.
The use of quantum differential operators, instead of classical derivatives,
is useful because they allow to deal with sets
of nondifferentiable functions \cite{Almeida:5,Cresson}.
Applications include several fields of physics, such as cosmic strings
and black holes \cite{Strominger}, quantum mechanics \cite{Feynman,Youm},
nuclear and high energy physics \cite{Lavagno}, just to mention a few.
In particular, the $q$-symmetric quantum calculus
has applications in quantum mechanics \cite{Lavagno2}.

In 1949, Hahn introduced his quantum difference operator \cite{Hahn},
which is a generalization of the quantum $q$-difference operator defined
by Jackson \cite{Jackson}. However, only in 2009, Aldwoah \cite{Aldwoah}
defined the inverse of Hahn's difference operator, and short after,
Malinowska and Torres \cite{Malinowska:3} introduced and investigated
the Hahn quantum variational calculus. For a deep understanding
of quantum calculus, we refer the reader to
\cite{Aldwoah:3,Boole,Brito:da:Cruz,Ernst:2,Kac,Koekoek} and references therein.

For a fixed $q\in \left] 0,1\right[$ and an $\omega \geqslant 0$, we introduce here
the Hahn symmetric difference operator of function $f$ at point $t\neq \displaystyle
\frac{\omega }{1-q}$ by
\begin{equation*}
\tilde{D}_{q,\omega }\left[ y\right] \left( t\right)
=\frac{f\left( qt+\omega \right) -f\left( q^{-1}\left( t-\omega \right)
\right) }{\left( q-q^{-1}\right) t+\left( 1+q^{-1}\right) \omega }.
\end{equation*}
Our main aim is to establish a necessary optimality condition
and a sufficient optimality condition for the Hahn symmetric
variational problem\index{Hahn's symmetric variational problem}
\begin{equation}
\label{qhs:P}
\left\{
\begin{array}[c]{l}
\mathcal{L}(y) =\displaystyle\int_{a}^{b}L\left( t,y^{\sigma }\left( t\right)
,\tilde{D}_{q,\omega }\left[ y\right] \left( t\right) \right)
\tilde{d}_{q,\omega }t \longrightarrow \textrm{extremize} \\
\\
y\in \mathcal{Y}^{1}\left( \left[ a,b\right]_{q,\omega} ,\mathbb{R}
\right)  \\
\\
y\left( a\right) =\alpha,
\quad y\left( b\right) =\beta ,
\end{array}
\right.
\end{equation}
where $\alpha$ and $\beta $ are fixed real numbers,
and extremize means maximize or minimize. Problem \eqref{qhs:P}
will be clear and precise after definitions of Section~\ref{qhs:Pre}.
We assume that the Lagrangian $L$ satisfies the following hypotheses:
\begin{enumerate}
\item[(H$_{q,\omega}$1)] $\left( u,v\right) \rightarrow L\left( t,u,v\right)$
is a $C^{1}\left(\mathbb{R}^{2}, \mathbb{R}\right)$ function for any $t\in I$;

\item[(H$_{q,\omega}$2)] $t\rightarrow L\left( t,y^{\sigma }\left( t\right),
\tilde{D}_{q,\omega }\left[ y\right]\left(t\right) \right)$
is continuous at $\omega_{0}$ for any admissible function $y$;

\item[(H$_{q,\omega}$3)] functions $t\rightarrow \partial_{i+2}L\left(t,
y^{\sigma}\left( t\right),
\tilde{D}_{q,\omega }\left[ y\right]\left(t\right) \right)$
belong to $\mathcal{Y}^{1}\left( \left[a,b\right]_{q,\omega},
\mathbb{R}\right)$ for all admissible $y$, $i=0,1$;
\end{enumerate}
where $I$ is an interval of $\mathbb{R}$ containing
$\omega _{0}:=\displaystyle\frac{\omega }{1-q}$,
$a,b\in I$, $a<b$, and $\partial_{j}L$ denotes
the partial derivative of $L$ with respect to its $j$th argument.

In Section~\ref{qhs:Pre} we introduce the necessary
definitions and prove some basic results for the Hahn symmetric calculus.
In Section~\ref{qhs:M} we formulate and prove our main results for the Hahn symmetric
variational calculus. New results include a necessary optimality
condition (Theorem~\ref{qhs:Euler}) and a sufficient optimality condition
(Theorem~\ref{qhs:Suficiente}) to problem \eqref{qhs:P}. In Section~\ref{qhs:L}
we show that Leitmann's direct method can also be applied to variational
problems within Hahn's symmetric variational calculus.
Leitmann introduced his direct method in the sixties of the 20th century
\cite{Leitmann3}, and the approach has recently proven to be universal: see, \textrm{e.g.},
\cite{Almeida:7,Leitmann1,Leitmann2,Leitmann4,Leitmann5,Leitmann6,Malinowska:4,Torres:2}.


\section{Hahn's symmetric calculus}
\label{qhs:Pre}

Let $q\in \left] 0,1\right[ $ and $\omega \geqslant 0$ be real fixed numbers.
Throughout this chapter, we make the assumption that $I$ is an interval
(bounded or unbounded) of $\mathbb{R}$ containing
$\omega _{0}:=\displaystyle\frac{\omega }{1-q}$. We denote by
$I^{q,\omega }$ the set $I^{q,\omega }:=qI+\omega :=\left\{ qt+\omega: t\in I\right\}$.
Note that $I^{q,\omega }\subseteq I$ and, for all $t\in I^{q,\omega }$, one has
$q^{-1}\left(t-\omega\right)\in I$ . For $k\in \mathbb{N}_{0}$,
$$
\left[ k\right] _{q}:=\displaystyle\frac{1-q^{k}}{1-q}.
$$

\begin{definition}
\label{qhs:def:dhsd}
Let $f$ be a real function defined on $I$. The \emph{Hahn symmetric difference
operator}\index{Hahn's symmetric difference operator} of $f$ at a point
$t\in I^{q,\omega }\backslash \left\{ \omega _{0}\right\}$ is defined by
\begin{equation*}
\tilde{D}_{q,\omega }\left[ f\right] \left( t\right) =\frac{f\left(
qt+\omega \right) -f\left( q^{-1}\left( t-\omega \right) \right) }{\left(
q-q^{-1}\right) t+\left( 1+q^{-1}\right) \omega },
\end{equation*}
while $\tilde{D}_{q,\omega }\left[ f\right] \left( \omega _{0}\right)
:=f^{\prime }\left( \omega _{0}\right)$, provided $f$ is differentiable at
$\omega _{0}$ (in the classical sense). We call to
$\tilde{D}_{q,\omega }\left[ f\right]$
the \emph{Hahn symmetric derivative}\index{Hahn's symmetric derivative} of $f$.
\end{definition}

\begin{remark}
If $\omega =0$, then the Hahn symmetric difference operator $\tilde{D}_{q,\omega }$
coincides with the $q$-symmetric difference operator $\tilde{D}_{q}$
(see Definition~\ref{q:symmetric:derivative}): if $t\neq 0$, then
\begin{equation*}
\tilde{D}_{q,0}\left[ f\right](t)
=\frac{f\left( qt\right)
-f\left( q^{-1}t\right) }{\left( q-q^{-1}\right) t}
=: \tilde{D}_{q}\left[ f\right] \left( t\right);
\end{equation*}
for $t = 0$ and $f$ differentiable at $0$,
$\tilde{D}_{q,0}\left[ f\right](0) = f^{\prime }\left( 0\right)
=: \tilde{D}_{q}\left[ f\right] \left( 0\right)$.
\end{remark}

\begin{remark}
If $\omega> 0$ and we let $q\rightarrow 1$ in Definition~\ref{qhs:def:dhsd},
then we obtain the well known symmetric difference operator $\tilde{D}_{\omega}$:
\begin{equation*}
\tilde{D}_{\omega}\left[ f\right] \left( t\right) :=\frac{f\left( t+\omega\right)
-f\left( t-\omega\right) }{2\omega}.
\end{equation*}
\end{remark}

\begin{remark}
If $f$ is differentiable at $t\in I^{q,\omega }$ in the classical sense, then
\begin{equation*}
\lim_{(q,\omega)\rightarrow (1,0)}\tilde{D}_{q,\omega }\left[ f
\right] \left( t\right) =f^{\prime }\left( t\right) .
\end{equation*}
\end{remark}

In what follows we make use of the operator
$\sigma$ defined by $\sigma \left( t\right) :=qt+\omega $, $t\in I$.
Note that the inverse operator of $\sigma$, $\sigma^{-1}$,
is defined by $\sigma^{-1}\left(t\right):=q^{-1}\left(t-\omega\right)$.
Moreover, Aldwoah \cite[Lemma~6.1.1]{Aldwoah} proved the following useful result.

\begin{lemma}[\cite{Aldwoah}]
\label{qhs:lemma:2.2}
Let $k\in \mathbb{N}$ and $t\in I$. Then,
\begin{enumerate}
\item $\sigma^{k}\left( t\right)
=\underset{k\text{ times}}{\underbrace{\sigma \circ \sigma \circ
\cdots \circ \sigma }}\left( t\right) =q^{k}t+\omega\left[ k\right]_{q}$;

\item $\left( \sigma^{k}\left( t\right) \right)^{-1}=\sigma^{-k}\left(
t\right) =\displaystyle q^{-k}\left( t-\omega \left[ k\right] _{q}\right)$.
\end{enumerate}
Furthermore, $\{\sigma^{k}\left( t\right)\}_{k=1}^{\infty}$
is a decreasing (resp. an increasing) sequence in $k$ when $t > \omega_0$
(resp. $t < \omega_0$) with
$$
\omega_0 = \inf_{k \in \mathbb{N}} \sigma^{k}(t)
\quad \left(resp. \ \ \omega_0 = \sup_{k \in \mathbb{N}} \sigma^{k}(t)\right).
$$
The sequence $\{\sigma^{-k}(t)\}_{k=1}^{\infty}$
is increasing (resp. decreasing) when $t > \omega_0$ (resp. $t < \omega_0$) with
$$
+\infty = \sup_{k \in \mathbb{N}} \sigma^{-k}(t)
\quad \left(resp. \ \ -\infty = \inf_{k \in \mathbb{N}} \sigma^{-k}(t)\right).
$$
\end{lemma}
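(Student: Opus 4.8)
The plan is to prove the two identities by induction on $k$ and then read off the monotonicity statements and the limits from a single explicit closed form for $\sigma^{k}(t)-\omega_{0}$.

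First I would establish part~1. For $k=1$ we have $[1]_{q}=\frac{1-q}{1-q}=1$, so $\sigma(t)=qt+\omega=q^{1}t+\omega[1]_{q}$. Assuming $\sigma^{k}(t)=q^{k}t+\omega[k]_{q}$, applying $\sigma$ once more gives $\sigma^{k+1}(t)=q\bigl(q^{k}t+\omega[k]_{q}\bigr)+\omega=q^{k+1}t+\omega\bigl(q[k]_{q}+1\bigr)$, and the one-line computation $q[k]_{q}+1=\frac{q-q^{k+1}+1-q}{1-q}=[k+1]_{q}$ closes the induction. For part~2 it suffices to check that the affine map $s\mapsto q^{-k}(s-\omega[k]_{q})$ inverts $\sigma^{k}$: substituting $\sigma^{k}(t)$ into it returns $q^{-k}\bigl(q^{k}t+\omega[k]_{q}-\omega[k]_{q}\bigr)=t$, and the composition in the other order is the same computation; since $\sigma^{k}$ is a bijection, its inverse is unique, so this map is $\sigma^{-k}$.

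The remaining assertions follow from the single identity
\[
\sigma^{k}(t)-\omega_{0}=q^{k}(t-\omega_{0}),\qquad k\in\mathbb{Z},
\]
which I would obtain by substituting $\omega[k]_{q}=\omega\frac{1-q^{k}}{1-q}=\omega_{0}(1-q^{k})$ into the formulas of parts~1 and~2 (negative values of $k$ being exactly part~2). From this, $\sigma$ fixes $\omega_{0}$; since $0<q<1$, the factor $q^{k}$ is positive and strictly decreasing in $k$, so $\sigma^{k}(t)$ and $t$ lie on the same side of $\omega_{0}$ for every $k$, and $\sigma^{k+1}(t)-\sigma^{k}(t)=q^{k}(q-1)(t-\omega_{0})$ has the sign of $-(t-\omega_{0})$, which is the claimed monotonicity; letting $k\to+\infty$ gives $\sigma^{k}(t)\to\omega_{0}$, so $\omega_{0}$ is the infimum (resp. supremum) of $\{\sigma^{k}(t)\}$ when $t>\omega_{0}$ (resp. $t<\omega_{0}$). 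Replacing $k$ by $-k$ yields $\sigma^{-k}(t)-\omega_{0}=q^{-k}(t-\omega_{0})$; now $q^{-k}\to+\infty$, so $\sigma^{-k}(t)$ diverges to $+\infty$ (resp. $-\infty$) when $t>\omega_{0}$ (resp. $t<\omega_{0}$), while $\sigma^{-(k+1)}(t)-\sigma^{-k}(t)=q^{-k}(q^{-1}-1)(t-\omega_{0})$ has the sign of $t-\omega_{0}$, giving the stated monotonicity of $\{\sigma^{-k}(t)\}$.

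I do not expect a serious obstacle: every step reduces to a routine computation with geometric sums. The only points that need a little care are the bookkeeping of $[k]_{q}$ under composition, which is handled once and for all by the identity $q[k]_{q}+1=[k+1]_{q}$, and the degenerate fixed point $t=\omega_{0}$, where all the sequences are constant; this is consistent with the strict monotonicity being asserted only in the cases $t>\omega_{0}$ and $t<\omega_{0}$.
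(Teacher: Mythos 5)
Your proof is correct. Note that the paper does not actually prove this lemma --- it is quoted verbatim from Aldwoah's thesis (Lemma~6.1.1 of \cite{Aldwoah}) --- so there is no in-paper argument to compare against; your induction for the closed form $\sigma^{k}(t)=q^{k}t+\omega[k]_{q}$, the verification of the inverse, and in particular the single identity $\sigma^{k}(t)-\omega_{0}=q^{k}(t-\omega_{0})$ for $k\in\mathbb{Z}$, from which all the monotonicity and limit claims fall out at once, constitute a complete and self-contained justification of everything asserted.
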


For simplicity of notation, we write
$f\left( \sigma \left( t\right) \right) :=f^{\sigma}\left( t\right)$.

\begin{remark}
With above notations, if $t\in I^{q,\omega }\backslash \left\{ \omega _{0}\right\}$,
then the Hahn symmetric difference operator of $f$ at point $t$ can be written as
\begin{equation*}
\tilde{D}_{q,\omega }\left[ f\right] \left( t\right) =\frac{f^{\sigma}\left( t\right)
-f^{\sigma ^{-1}}\left( t\right) }{\sigma \left( t\right)-\sigma ^{-1}\left( t\right)}.
\end{equation*}
\end{remark}

\begin{lemma}
\label{qhs:q^n}
Let $n \in \mathbb{N}_{0}$ and $t\in I$. Then,
\begin{equation*}
\sigma^{n+1}\left( t\right) -\sigma ^{n-1}\left( t\right) =q^{n}\left(
\sigma\left( t\right) - \sigma ^{-1}\left( t\right) \right),
\end{equation*}
where $\sigma^{0}\equiv id$ is the identity function.
\end{lemma}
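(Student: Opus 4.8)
The statement is a direct computational consequence of Lemma~\ref{qhs:lemma:2.2}, so the plan is simply to expand both sides in closed form and compare. First I would dispose of the degenerate case $n=0$ separately: there $\sigma^{n+1}=\sigma$ and $\sigma^{n-1}=\sigma^{-1}$, and the factor $q^{n}=q^{0}=1$, so the identity is trivially an equality. For $n\geqslant 1$, all the maps $\sigma^{n+1}$, $\sigma^{n-1}$, $\sigma$ and $\sigma^{-1}$ appearing are covered by the two formulas in Lemma~\ref{qhs:lemma:2.2}.

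Next I would substitute $\sigma^{k}\left(t\right)=q^{k}t+\omega\left[k\right]_{q}$ (part~1 of Lemma~\ref{qhs:lemma:2.2}) for $k=n+1$ and $k=n-1$, and $\sigma^{-1}\left(t\right)=q^{-1}\left(t-\omega\right)$ (part~2 with $k=1$, using $\left[1\right]_{q}=1$). This gives
\[
\sigma^{n+1}\left(t\right)-\sigma^{n-1}\left(t\right)
=\left(q^{n+1}-q^{n-1}\right)t+\omega\left(\left[n+1\right]_{q}-\left[n-1\right]_{q}\right),
\]
and the key arithmetic step is to simplify the coefficient of $\omega$: from the definition $\left[k\right]_{q}=\dfrac{1-q^{k}}{1-q}$ one gets $\left[n+1\right]_{q}-\left[n-1\right]_{q}=\dfrac{q^{n-1}-q^{n+1}}{1-q}=q^{n-1}\left(1+q\right)$. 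Hence
\[
\sigma^{n+1}\left(t\right)-\sigma^{n-1}\left(t\right)
=q^{n-1}\left(q^{2}-1\right)t+q^{n-1}\left(1+q\right)\omega .
\]

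Finally I would expand the right-hand side of the claim, using $\sigma\left(t\right)-\sigma^{-1}\left(t\right)=\left(qt+\omega\right)-q^{-1}\left(t-\omega\right)=\left(q-q^{-1}\right)t+\left(1+q^{-1}\right)\omega$, so that
\[
q^{n}\left(\sigma\left(t\right)-\sigma^{-1}\left(t\right)\right)
=\left(q^{n+1}-q^{n-1}\right)t+\left(q^{n}+q^{n-1}\right)\omega
=q^{n-1}\left(q^{2}-1\right)t+q^{n-1}\left(1+q\right)\omega ,
\]
which coincides term by term with the expression obtained for the left-hand side, completing the proof. There is essentially no obstacle here: the only thing to be slightly careful about is invoking Lemma~\ref{qhs:lemma:2.2} only for exponents in $\mathbb{N}$ (hence treating $n=0$ by hand) and recalling $\left[1\right]_{q}=1$; everything else is a one-line algebraic identity.
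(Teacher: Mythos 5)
Your proof is correct and follows essentially the same route as the paper: substitute the closed forms $\sigma^{k}(t)=q^{k}t+\omega[k]_{q}$ from Lemma~\ref{qhs:lemma:2.2}, simplify $[n+1]_{q}-[n-1]_{q}=q^{n-1}(1+q)$, and match against $q^{n}\bigl(\sigma(t)-\sigma^{-1}(t)\bigr)$. The only (harmless) difference is that you verify both sides expand to the same expression and treat $n=0$ separately, whereas the paper transforms the left side directly into the right in one chain of equalities.
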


\begin{proof}
The equality follows by direct calculations:
\begin{equation*}
\begin{split}
\sigma^{n+1}(t) -\sigma ^{n-1}\left( t\right)&=q^{n+1}t
+\omega \left[ n+1\right] _{q}-q^{n-1}t-\omega \left[ n-1\right]_{q}\\
&= q^{n}\left( q-q^{-1}\right) t+\omega \left( q^{n}+q^{n-1}\right)\\
&=q^{n}\left( qt+\omega -q^{-1}t+q^{-1}\omega \right)\\
&=q^{n}\left( \sigma \left( t\right) -\sigma ^{-1}\left( t\right) \right).
\end{split}
\end{equation*}
\end{proof}

The Hahn symmetric difference operator has the following properties.

\begin{theorem}
\label{qhs:props derivada}
Let $\alpha, \beta \in \mathbb{R}$ and $t\in I^{q,\omega }$.
If $f$ and $g$ are Hahn symmetric differentiable on $I$, then
\begin{enumerate}
\item $\tilde{D}_{q,\omega}\left[ \alpha f+\beta g\right] \left( t\right)
=\alpha\tilde{D}_{q,\omega}\left[ f\right] \left( t\right)
+\beta\tilde {D}_{q,\omega}\left[ g\right] \left( t\right)$;

\item $\tilde{D}_{q,\omega}\left[ fg\right] \left( t\right)
=\tilde {D}_{q,\omega}\left[ f\right] \left( t\right) g^{\sigma}\left( t\right)
+f^{\sigma^{-1}}\left(t\right) \tilde{D}_{q,\omega}\left[ g\right] \left(t\right)$;

\item $\tilde{D}_{q,\omega }\left[ \displaystyle\frac{f}{g}\right] \left(
t\right) =\displaystyle\frac{\tilde{D}_{q,\omega }\left[ f\right] \left(
t\right) g^{\sigma ^{-1}}\left( t\right) -f^{\sigma ^{-1}}\left( t\right)
\tilde{D}_{q,\omega }\left[ g\right] \left( t\right) }{g^{\sigma }\left(
t\right) g^{\sigma ^{-1}}\left( t\right) }$ if $g^{\sigma }\left( t\right)
g^{\sigma ^{-1}}\left( t\right) \neq 0$;

\item $\tilde{D}_{q,\omega }\left[ f\right]
\equiv 0$ if, and only if, $f$ is constant on $I$.
\end{enumerate}
\end{theorem}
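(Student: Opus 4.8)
The plan is to prove the four properties directly from Definition~\ref{qhs:def:dhsd}, treating the generic point $t \in I^{q,\omega}\setminus\{\omega_0\}$ and the special point $t = \omega_0$ separately. At $\omega_0$ we have $\sigma(\omega_0) = \omega_0 = \sigma^{-1}(\omega_0)$, so $\tilde{D}_{q,\omega}[f](\omega_0) = f'(\omega_0)$ is the ordinary derivative, and all four identities then reduce to the classical differentiation rules (linearity, the Leibniz rule, the quotient rule, and the fact that a function with zero derivative everywhere is constant); nothing new is needed there beyond noting that Hahn symmetric differentiability at $\omega_0$ means classical differentiability there. So the substance is in the case $t \neq \omega_0$, where I write, using the remark preceding Lemma~\ref{qhs:q^n},
\[
\tilde{D}_{q,\omega}[f](t) = \frac{f^{\sigma}(t) - f^{\sigma^{-1}}(t)}{\sigma(t) - \sigma^{-1}(t)},
\]
with $\sigma(t) - \sigma^{-1}(t) = (q - q^{-1})t + (1 + q^{-1})\omega \neq 0$ for $t \neq \omega_0$.

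First I would establish property 1 (linearity): since $(\alpha f + \beta g)^{\sigma} = \alpha f^{\sigma} + \beta g^{\sigma}$ and likewise for $\sigma^{-1}$, the quotient splits immediately. Next, for property 2 (the product rule) I would expand $(fg)^{\sigma}(t) - (fg)^{\sigma^{-1}}(t) = f^{\sigma}(t)g^{\sigma}(t) - f^{\sigma^{-1}}(t)g^{\sigma^{-1}}(t)$ and insert the telescoping term $\pm f^{\sigma^{-1}}(t) g^{\sigma}(t)$, giving
\[
f^{\sigma}(t)g^{\sigma}(t) - f^{\sigma^{-1}}(t)g^{\sigma^{-1}}(t)
= \bigl(f^{\sigma}(t) - f^{\sigma^{-1}}(t)\bigr) g^{\sigma}(t)
+ f^{\sigma^{-1}}(t)\bigl(g^{\sigma}(t) - g^{\sigma^{-1}}(t)\bigr);
\]
dividing by $\sigma(t) - \sigma^{-1}(t)$ yields the stated formula. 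For property 3 (the quotient rule) I would first compute $\tilde{D}_{q,\omega}[1/g]$ by the same telescoping trick applied to $1/g^{\sigma}(t) - 1/g^{\sigma^{-1}}(t) = \bigl(g^{\sigma^{-1}}(t) - g^{\sigma}(t)\bigr)/\bigl(g^{\sigma}(t)g^{\sigma^{-1}}(t)\bigr)$, obtaining $\tilde{D}_{q,\omega}[1/g](t) = -\tilde{D}_{q,\omega}[g](t)/\bigl(g^{\sigma}(t)g^{\sigma^{-1}}(t)\bigr)$, and then apply property 2 to $f \cdot (1/g)$ and simplify over the common denominator $g^{\sigma}(t)g^{\sigma^{-1}}(t)$, exactly mirroring the computation in the proof of Theorem~\ref{qhs:props derivada} for the $q$-symmetric case.

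Property 4 is the only one requiring a genuine argument rather than algebra, and it is the step I expect to be the main obstacle. The implication ``$f$ constant $\Rightarrow \tilde{D}_{q,\omega}[f] \equiv 0$'' is trivial. For the converse, $\tilde{D}_{q,\omega}[f] \equiv 0$ forces $f^{\sigma}(t) = f^{\sigma^{-1}}(t)$ for every $t \in I^{q,\omega}\setminus\{\omega_0\}$, hence $f(\sigma^{2}(s)) = f(s)$ for every $s \in I$ (substituting $t = \sigma(s)$ and using that $\sigma$ maps $I$ into $I^{q,\omega}$); iterating gives $f(s) = f(\sigma^{2n}(s))$ for all $n \in \mathbb{N}$. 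By Lemma~\ref{qhs:lemma:2.2}, $\sigma^{2n}(s) \to \omega_0$ as $n \to \infty$, and since $f$ is Hahn symmetric differentiable on $I$ it is in particular continuous at $\omega_0$ (differentiability at $\omega_0$ being classical differentiability), so $f(s) = \lim_{n} f(\sigma^{2n}(s)) = f(\omega_0)$ for all $s \in I$; thus $f$ is constant. The delicate points here are making sure the substitution $t = \sigma(s)$ covers all of $I$ (it does, because $\sigma(I) = I^{q,\omega} \subseteq I$ and the relation $f \circ \sigma^2 = f$ then propagates to all of $I$) and invoking continuity at $\omega_0$ as a consequence of the differentiability hypothesis built into ``Hahn symmetric differentiable on $I$.''
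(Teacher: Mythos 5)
Your proposal is correct and follows essentially the same route as the paper: the same case split at $\omega_0$, the same telescoping insertion of $f^{\sigma^{-1}}(t)g^{\sigma}(t)$ for the product rule, the quotient rule via $\tilde{D}_{q,\omega}[1/g]$ combined with the product rule, and the same iteration $f = f^{\sigma^{2n}}$ together with Lemma~\ref{qhs:lemma:2.2} and continuity at $\omega_0$ for property~4. Your explicit remark that continuity at $\omega_0$ is built into the differentiability hypothesis is a point the paper uses but leaves implicit.
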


\begin{proof}
For $t=\omega_{0}$ the equalities are trivial
(note that $\sigma(\omega_0)=\omega_0=\sigma^{-1}(\omega_0)$).
We do the proof for $t\neq\omega_{0}$:
\begin{enumerate}
\item
\begin{align*}
\tilde{D}_{q,\omega}\left[ \alpha f+\beta g\right] \left( t\right)
&= \frac{\left( \alpha f+\beta g\right)^{\sigma}\left( t\right)
-\left(\alpha f +\beta g\right)^{\sigma^{-1}}\left( t\right)}{\sigma\left( t\right)
-\sigma^{-1}\left( t\right)}\\
&= \alpha\frac{f^{\sigma}\left( t\right)
-f^{\sigma^{-1}}\left( t\right)}{\sigma\left( t\right)
-\sigma^{-1}\left( t\right) }+\beta \frac{g^{\sigma}\left( t\right)
-g^{\sigma^{-1}}\left( t\right)}{\sigma\left(t\right)
-\sigma^{-1}\left( t\right)}\\
&= \alpha\tilde{D}_{q,\omega}\left[ f\right] \left( t\right)
+\beta \tilde{D}_{q,\omega}\left[ g\right] \left( t\right) .
\end{align*}

\item
\begin{align*}
\tilde{D}_{q,\omega}\left[ fg\right] \left( t\right) & =\frac{\left(
fg\right) ^{\sigma}\left( t\right) -\left( fg\right) ^{\sigma^{-1}}\left(
t\right) }{\sigma\left( t\right) -\sigma^{-1}\left( t\right) } \\
&= \frac{f^{\sigma}\left( t\right) -f^{\sigma^{-1}}\left( t\right)}{\sigma
\left( t\right) -\sigma^{-1}\left( t\right) }g^{\sigma}\left( t\right)
+f^{\sigma^{-1}}\left( t\right) \frac{g^{\sigma}\left( t\right)
-g^{\sigma^{-1}}\left( t\right) }{\sigma\left( t\right)
-\sigma ^{-1}\left(t\right) } \\
&= \tilde{D}_{q,\omega}\left[ f\right] \left( t\right) g^{\sigma}\left(
t\right) +f^{\sigma^{-1}}\left( t\right) \tilde{D}_{q,\omega}\left[ g\right]
\left( t\right) .
\end{align*}

\item Because
\begin{align*}
\tilde{D}_{q,\omega }\left[ \frac{1}{g}\right] \left( t\right) & =\frac{
\frac{1}{g^{\sigma }\left( t\right) }-\frac{1}{g^{\sigma ^{-1}}\left(
t\right) }}{\sigma \left( t\right) -\sigma ^{-1}\left( t\right) } \\
& =-\frac{1}{g^{\sigma }\left( t\right) g^{\sigma ^{-1}}\left( t\right) }
\frac{g^{\sigma }\left( t\right) -g^{\sigma ^{-1}}\left( t\right) }{\sigma
\left( t\right) -\sigma ^{-1}\left( t\right) } \\
& =-\frac{\tilde{D}_{q,\omega }\left[ g\right]\left( t\right)}{g^{\sigma}\left(t\right)
g^{\sigma^{-1}}\left( t\right)},
\end{align*}
one has
\begin{align*}
\tilde{D}_{q,\omega }\left[ \frac{f}{g}\right] \left( t\right)
&= \tilde{D}_{q,\omega }\left[ f\frac{1}{g}\right] \left( t\right)\\
&= \tilde{D}_{q,\omega }\left[ f\right] \left( t\right) \frac{1}{g^{\sigma
}\left( t\right) }+f^{\sigma ^{-1}}\left( t\right) \tilde{D}_{q,\omega }
\left[ \frac{1}{g}\right] \left( t\right)  \\
& =\frac{\tilde{D}_{q,\omega }\left[ f\right] \left( t\right) }{g^{\sigma
}\left( t\right) }-f^{\sigma ^{-1}}\left( t\right) \frac{\tilde{D}_{q,\omega
}\left[ g\right] \left( t\right) }{g^{\sigma }\left( t\right) g^{\sigma
^{-1}}\left( t\right) } \\
& =\frac{\tilde{D}_{q,\omega }\left[ f\right] \left( t\right) g^{\sigma
^{-1}}\left( t\right) -f^{\sigma ^{-1}}\left( t\right) \tilde{D}_{q,\omega }
\left[ g\right] \left( t\right) }{g^{\sigma }\left( t\right) g^{\sigma
^{-1}}\left( t\right) }.
\end{align*}

\item If $f$ is constant on $I$,
then it is clear that $\tilde{D}_{q,\omega }\left[f\right] \equiv 0$.
Suppose now that $\tilde{D}_{q,\omega }\left[f\right] \equiv 0$.
Then, for each $t\in I$,
$\left( \tilde{D}_{q,\omega }\left[ f\right] \right)^{\sigma}(t) =0$
and, therefore, $f\left( t\right) =f^{\sigma ^{2}}\left( t\right)$.
Hence, $$f\left( t\right) =f^{\sigma ^{2}}\left( t\right)
=\cdots =f^{\sigma^{2n}}\left( t\right)$$
for each $n\in\mathbb{N}$ and $t\in I$.
Because
$$
\lim_{n\rightarrow +\infty }f\left( t\right)
=\lim_{n\rightarrow +\infty}f^{\sigma ^{2n}}\left( t\right),
$$
$$
\lim_{n\rightarrow + \infty}\sigma^{2n}\left(t\right)
=\omega_{0} \text{\  \  \ (by Lemma~\ref{qhs:lemma:2.2})},
$$
and $f$ is continuous at $\omega_{0}$, then
$$f\left( t\right) =f\left( \omega _{0}\right)$$
for all $t \in I$.
\end{enumerate}
\end{proof}

\begin{lemma}
\label{qhs:composta}
For $t\in I$ one has $\tilde{D}_{q,\omega }\left[ f^{\sigma }\right]\left(t\right)
=q\tilde{D}_{q,\omega }\left[f\right] \left( \sigma \left( t\right) \right)$.
\end{lemma}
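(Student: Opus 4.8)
The statement to prove is the identity $\tilde{D}_{q,\omega}\left[f^{\sigma}\right](t)=q\,\tilde{D}_{q,\omega}\left[f\right]\left(\sigma(t)\right)$ for all $t\in I$. This is the Hahn symmetric analogue of Lemma~\ref{hoq}, and the proof should follow the same two-case pattern used there and in the proof of the analogous $q$-symmetric formula \eqref{qs:comp}. First I would treat the generic point $t\neq\omega_{0}$ by straightforward computation from Definition~\ref{qhs:def:dhsd}; then I would handle the fixed point $t=\omega_{0}$ separately, since there the Hahn symmetric derivative is defined as a classical derivative and one must instead invoke the ordinary chain rule together with $\sigma'(\omega_{0})=q$.

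For $t\neq\omega_{0}$, the key computational input is Lemma~\ref{qhs:q^n} (or, equivalently, a direct use of Lemma~\ref{qhs:lemma:2.2}): applying $\tilde{D}_{q,\omega}$ to $f^{\sigma}$ produces the quotient $\dfrac{f^{\sigma\circ\sigma}(t)-f^{\sigma\circ\sigma^{-1}}(t)}{\sigma(t)-\sigma^{-1}(t)}=\dfrac{f^{\sigma^{2}}(t)-f(t)}{\sigma(t)-\sigma^{-1}(t)}$, using $\sigma\circ\sigma^{-1}=id$. On the other hand, evaluating $q\tilde{D}_{q,\omega}[f]$ at $\sigma(t)$ gives $q\cdot\dfrac{f^{\sigma}(\sigma(t))-f^{\sigma^{-1}}(\sigma(t))}{\sigma(\sigma(t))-\sigma^{-1}(\sigma(t))}=q\cdot\dfrac{f^{\sigma^{2}}(t)-f(t)}{\sigma^{2}(t)-t}$, again using $\sigma^{-1}\circ\sigma=id$. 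So the identity reduces to checking $\sigma^{2}(t)-t=q\left(\sigma(t)-\sigma^{-1}(t)\right)$, which is exactly the case $n=1$ of Lemma~\ref{qhs:q^n}. This settles the generic case. One should note in passing that $\sigma(t)\neq\omega_{0}$ whenever $t\neq\omega_{0}$ (since $\sigma$ fixes only $\omega_{0}$ and is injective), so the right-hand side is indeed given by the quotient formula and not by the classical-derivative branch.

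For $t=\omega_{0}$, since $\sigma(\omega_{0})=\omega_{0}$, the right-hand side is $q\,\tilde{D}_{q,\omega}[f](\omega_{0})=q f'(\omega_{0})$, while the left-hand side is $\tilde{D}_{q,\omega}[f^{\sigma}](\omega_{0})=(f^{\sigma})'(\omega_{0})=f'(\sigma(\omega_{0}))\,\sigma'(\omega_{0})=f'(\omega_{0})\cdot q$ by the classical chain rule; the two agree. Here one implicitly needs $f$ differentiable at $\omega_{0}$, which is part of what "$\tilde{D}_{q,\omega}[f]$ exists at $\omega_{0}$" means, and one should also observe that $f^{\sigma}$ is then differentiable at $\omega_{0}$ so that $\tilde{D}_{q,\omega}[f^{\sigma}](\omega_{0})$ is well defined. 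I do not expect any serious obstacle in this lemma: the only mild subtlety is bookkeeping with the composition conventions $\sigma\circ\sigma^{-1}=\sigma^{-1}\circ\sigma=id$ and making sure the algebraic identity $\sigma^{2}(t)-t=q(\sigma(t)-\sigma^{-1}(t))$ is cited from Lemma~\ref{qhs:q^n} rather than re-derived; otherwise it is a routine verification in two cases.
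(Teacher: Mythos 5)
Your proof is correct and follows essentially the same route as the paper: for $t\neq\omega_{0}$ both arguments reduce the claim to the identity $\sigma^{2}(t)-t=q\left(\sigma(t)-\sigma^{-1}(t)\right)$, which is the $n=1$ case of Lemma~\ref{qhs:q^n}, and at $t=\omega_{0}$ both use $\sigma(\omega_{0})=\omega_{0}$ together with the classical chain rule (a step the paper states without detail but which you correctly spell out). No gaps.
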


\begin{proof}
For each $t\in I\backslash \{\omega_{0}\}$ we have
\begin{equation*}
\tilde{D}_{q,\omega }\left[ f^{\sigma }\right] \left( t\right) =\frac{
f^{\sigma ^{2}}\left( t\right) -f\left( t\right) }{\sigma \left( t\right)
-\sigma ^{-1}\left( t\right) }
\end{equation*}
and
\begin{eqnarray*}
\tilde{D}_{q,\omega }\left[ f\right] \left( \sigma \left( t\right) \right)
&=&\frac{f^{\sigma ^{2}}\left( t\right) -f\left( t\right) }{\sigma
^{2}\left( t\right) -t} \\
&=&\frac{f^{\sigma ^{2}}\left( t\right) -f\left( t\right) }{q\left( \sigma
\left( t\right) -\sigma ^{-1}\left( t\right) \right)} \ \ \text{ (see Lemma~\ref{qhs:q^n})}.
\end{eqnarray*}
We conclude that
$$\tilde{D}_{q,\omega }\left[ f^{\sigma }\right]\left( t\right)=q\tilde{D}_{q,\omega }\left[
f\right] \left( \sigma \left( t\right) \right).$$
Finally, the intended result follows from the fact that
$$\tilde{D}_{q,\omega }\left[ f^{\sigma }\right] \left( \omega_{0}\right)
=q\tilde{D}_{q,\omega }\left[ f\right] \left( \omega_{0}\right).$$
\end{proof}

\begin{definition}
Let $a,b\in I$ and $a<b$. For $f:I\rightarrow \mathbb{R}$
the \emph{Hahn symmetric integral}\index{Hahn's symmetric integral}
of $f$ from $a$ to $b$ is given by
\begin{equation*}
\int_{a}^{b}f\left( t\right) \tilde{d}_{q,\omega }t
=\int_{\omega_{0}}^{b}f\left( t\right) \tilde{d}_{q,\omega }t
-\int_{\omega_{0}}^{a}f\left( t\right) \tilde{d}_{q,\omega }t,
\end{equation*}
where
\begin{equation*}
\int_{\omega _{0}}^{x}f\left( t\right) \tilde{d}_{q,\omega }t
=\left( \sigma^{-1}\left( x\right)
-\sigma \left( x\right) \right) \sum_{n=0}^{+\infty}
q^{2n+1}f^{\sigma ^{2n+1}}\left( x\right), \quad x\in I\text{,}
\end{equation*}
provided the series converges at $x=a$ and $x=b$. In that case, $f$ is said
to be Hahn symmetric integrable on $[a,b]$. We say that $f$ is Hahn symmetric
integrable on $I$ if it is Hahn symmetric integrable over $[a,b]$ for all $a,b\in I$.
\end{definition}

\begin{remark}
If $\omega =0$, then the Hahn symmetric integral of $f$ from $a$ to $b$
coincides with the $q$-symmetric integral of $f$ from $a$ to $b$
(see Definition~\ref{$q$-symmetric integral}) given by
\begin{equation*}
\int_{a}^{b}f\left(  t\right)  \tilde{d}_{q}t
:=\int_{a}^{b}f\left( t\right) \tilde{d}_{q,0 }t
=\int_{0}^{b}f\left( t\right) \tilde{d}_{q,0 }t
-\int_{0}^{a}f\left( t\right) \tilde{d}_{q,0 }t,
\end{equation*}
where
\begin{equation*}
\int_{0}^{x}f\left(  t\right)  \tilde{d}_{q}t:=\int_{0}^{x}f\left( t\right) \tilde{d}_{q,0 }t
=\left(  q^{-1}-q\right)
x\sum_{n=0}^{+\infty}q^{2n+1}f\left(  q^{2n+1}x\right), \quad x\in I\text{,}
\end{equation*}
provided the series converges at $x=a$ and $x=b$.
\end{remark}

We now present two technical results that are useful to prove the
fundamental theorem of Hahn's symmetric integral calculus
(Theorem~\ref{qhs:Fundamental}).

\begin{lemma}[cf. \cite{Aldwoah}]
\label{qhs:lema integral}
Let $a,b\in I$, $a<b$. If $f:I\rightarrow \mathbb{R}$
is continuous at $\omega_{0}$,
then, for $s\in \left[ a,b\right]$, the sequence
$\left( f^{\sigma ^{2n+1}}\left( s\right) \right)_{n\in\mathbb{N}}$
converges uniformly to $f\left( \omega _{0}\right)$ on $I$.
\end{lemma}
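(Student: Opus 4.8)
The plan is to prove that for a function $f : I \rightarrow \mathbb{R}$ continuous at $\omega_0$, and for any $s \in [a,b]$, the sequence $\left(f^{\sigma^{2n+1}}(s)\right)_{n\in\mathbb{N}}$ converges uniformly to $f(\omega_0)$ on $I$. Here "uniformly on $I$" means uniformly in the choice of $s$ ranging over $[a,b]$ (the wording is slightly loose, but this is the content, paralleling Lemma~\ref{qs:lema integral} of the previous chapter). First I would fix $\varepsilon > 0$. Since $f$ is continuous at $\omega_0$, there exists $\delta > 0$ such that $|f(t) - f(\omega_0)| < \varepsilon$ whenever $|t - \omega_0| < \delta$ and $t \in I$. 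The goal is then to produce an $N \in \mathbb{N}$, independent of $s \in [a,b]$, such that $|\sigma^{2n+1}(s) - \omega_0| < \delta$ for all $n \geq N$ and all $s \in [a,b]$; once this is in place, $|f^{\sigma^{2n+1}}(s) - f(\omega_0)| < \varepsilon$ for all such $n$ and $s$, which is exactly uniform convergence.

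The key estimate comes from Lemma~\ref{qhs:lemma:2.2}: $\sigma^{k}(s) = q^{k}s + \omega[k]_q$, and since $\omega_0 = \frac{\omega}{1-q}$ and $\omega[k]_q = \omega \frac{1-q^k}{1-q} = \omega_0(1-q^k)$, we get
\[
\sigma^{k}(s) - \omega_0 = q^{k}s + \omega_0(1-q^k) - \omega_0 = q^{k}(s - \omega_0).
\]
Hence $|\sigma^{2n+1}(s) - \omega_0| = q^{2n+1}|s - \omega_0|$. Since $[a,b]$ is bounded, let $M := \sup_{s\in[a,b]}|s - \omega_0| < +\infty$ (concretely $M = \max\{|a-\omega_0|, |b-\omega_0|\}$). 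Then $|\sigma^{2n+1}(s) - \omega_0| \leq q^{2n+1} M$ for every $s \in [a,b]$. Because $q \in \left]0,1\right[$, we have $q^{2n+1} M \to 0$ as $n \to \infty$, so there exists $N$ (depending only on $\varepsilon$, $q$, $M$, $\delta$, not on $s$) with $q^{2n+1} M < \delta$ for all $n \geq N$. This delivers the uniform bound sought in the first paragraph, and the proof concludes.

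There is essentially no hard part here: the argument is a direct transcription of the $q$-symmetric case (Lemma~\ref{qs:lema integral}) with $q^{2n+1}s$ replaced by $\sigma^{2n+1}(s)$, and the only genuinely new input is the algebraic identity $\sigma^{k}(s) - \omega_0 = q^{k}(s-\omega_0)$, which is immediate from Lemma~\ref{qhs:lemma:2.2}. If I wanted to be careful about one point, it would be confirming that $\sigma^{2n+1}(s) \in I$ so that the continuity estimate $|f(t)-f(\omega_0)|<\varepsilon$ applies: this holds because $\sigma(I) \subseteq I$ (as $\sigma$ is an affine contraction toward $\omega_0 \in I$, a fact already noted in the excerpt), hence $\sigma^{k}(s) \in I$ for all $k$ and all $s \in I \supseteq [a,b]$. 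With that observation recorded, the proof is complete in a few lines.
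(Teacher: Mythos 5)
Your proof is correct: the identity $\sigma^{k}(s)-\omega_0=q^{k}(s-\omega_0)$, the uniform bound $q^{2n+1}M$ over the bounded set $[a,b]$, and the check that $\sigma^{k}(s)\in I$ are exactly what is needed. The paper itself gives no proof of this lemma (it is quoted from Aldwoah's thesis, as is its $q$-symmetric analogue, Lemma~\ref{qs:lema integral}), and your argument is the standard one that such a proof would consist of.
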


The next result tell us that if a function $f$ is continuous
at $\omega_{0}$, then $f$ is Hahn's symmetric integrable.

\begin{corollary}[cf. \cite{Aldwoah}]
\label{qhs:corolario integral}
Let $a,b\in I$, $a<b$, and $f:I\rightarrow \mathbb{R}$
be continuous at $\omega_{0}$. Then, for $s\in \left[ a,b\right]$, the
series $\sum_{n=0}^{+\infty }q^{2n+1}f^{\sigma ^{2n+1}}\left( s\right)$
is uniformly convergent on $I$.
\end{corollary}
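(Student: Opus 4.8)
The statement to prove is Corollary~\ref{qhs:corolario integral}: for $a,b\in I$ with $a<b$ and $f:I\to\mathbb{R}$ continuous at $\omega_0$, the series $\sum_{n=0}^{+\infty}q^{2n+1}f^{\sigma^{2n+1}}(s)$ converges uniformly on $I$ for $s\in[a,b]$. This is the Hahn-symmetric analogue of Corollary~\ref{qs:corolario integral} in the $q$-symmetric chapter, and the natural plan is to mimic that proof, replacing $q^{2n+1}s$ by $\sigma^{2n+1}(s)=q^{2n+1}s+\omega[2n+1]_q$ throughout, and to feed it the uniform bound supplied by Lemma~\ref{qhs:lema integral}.

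The plan is as follows. First I would invoke Lemma~\ref{qhs:lema integral}, which gives that $f^{\sigma^{2n+1}}(s)\to f(\omega_0)$ uniformly in $s\in[a,b]$ (indeed on $I$). In particular the sequence $\bigl(f^{\sigma^{2n+1}}(s)\bigr)_n$ is uniformly bounded: there is $M>0$ with $\bigl|f^{\sigma^{2n+1}}(s)\bigr|\le M$ for all $n\in\mathbb{N}_0$ and all $s\in[a,b]$. Then, since $q\in\,]0,1[$, each term satisfies $\bigl|q^{2n+1}f^{\sigma^{2n+1}}(s)\bigr|\le M q^{2n+1}$, and $\sum_{n=0}^{+\infty}Mq^{2n+1}=\frac{Mq}{1-q^2}<+\infty$ is a convergent series of nonnegative constants independent of $s$. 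By the Weierstrass $M$-test, $\sum_{n=0}^{+\infty}q^{2n+1}f^{\sigma^{2n+1}}(s)$ converges uniformly on $[a,b]$, which is the assertion (the ``on $I$'' phrasing in the statement should be read as: uniformly in $s$ ranging over $[a,b]$, exactly as in Corollary~\ref{qs:corolario integral}). Consequently the series defining $\int_{\omega_0}^{x}f(t)\tilde d_{q,\omega}t$ converges at $x=a$ and $x=b$, so $f$ is Hahn symmetric integrable on $[a,b]$, as remarked after the corollary.

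There is really no deep obstacle here; the whole content has been pushed into Lemma~\ref{qhs:lema integral}. The one point that deserves a line of care is the passage from ``converges uniformly to $f(\omega_0)$'' to ``is uniformly bounded'': a uniformly convergent sequence of functions on a set, with uniformly bounded limit (here the constant $f(\omega_0)$), is uniformly bounded, since for $n$ beyond some $N$ one has $\bigl|f^{\sigma^{2n+1}}(s)-f(\omega_0)\bigr|<1$ uniformly, and the finitely many remaining indices $n=0,\dots,N$ each give a bounded function of $s\in[a,b]$ (bounded because $[a,b]$ is compact and each $s\mapsto f^{\sigma^{2n+1}}(s)=f(q^{2n+1}s+\omega[2n+1]_q)$ — wait, we do not know $f$ is continuous everywhere — but in fact we only need that $\sup_{s\in[a,b]}|f^{\sigma^{2n+1}}(s)|$ is finite, which already follows from uniform convergence: the tail is bounded, and adding back finitely many terms keeps boundedness provided each individual $\sup_{s}|f^{\sigma^{2n+1}}(s)|$ is finite, which it is because that supremum differs from $|f(\omega_0)|$ by at most $\sup_s|f^{\sigma^{2n+1}}(s)-f(\omega_0)|$, again finite by uniform convergence). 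So in fact the uniform bound $M$ can be taken as $|f(\omega_0)|+C$ where $C$ bounds the finitely many differences $\sup_s|f^{\sigma^{2n+1}}(s)-f(\omega_0)|$ for $n=0,\dots,N$ together with $1$.

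Thus the proof will be: (1) quote Lemma~\ref{qhs:lema integral} to get uniform convergence of $\bigl(f^{\sigma^{2n+1}}\bigr)_n$ on $[a,b]$ and hence a uniform bound $M$; (2) apply the Weierstrass $M$-test with majorant $\sum M q^{2n+1}$; (3) conclude uniform convergence of the series, and therefore convergence at $a$ and $b$ and Hahn-symmetric integrability on $[a,b]$. No step should run longer than a few lines, and the structure exactly parallels the already-established $q$-symmetric case (Corollary~\ref{qs:corolario integral}), which the reader can be referred to.
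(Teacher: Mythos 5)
Your overall route --- Lemma~\ref{qhs:lema integral} followed by the Weierstrass $M$-test against the geometric majorant $Mq^{2n+1}$ --- is the natural one; the paper gives no proof of Corollary~\ref{qhs:corolario integral} (it is quoted from Aldwoah's thesis), and this is surely the intended argument. There is, however, one genuine flaw in your justification of the uniform bound $M$. Uniform convergence of $\left(f^{\sigma^{2n+1}}\right)_{n}$ to the constant $f\left(\omega_{0}\right)$ controls only the indices $n\geqslant N$ for the $N$ attached to a given $\varepsilon$; it gives no information about $\sup_{s\in\left[a,b\right]}\left\vert f^{\sigma^{2n+1}}\left(s\right)-f\left(\omega_{0}\right)\right\vert$ for $n<N$, so your claim that these suprema are ``finite by uniform convergence'' is false. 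You sensed the difficulty (``we do not know $f$ is continuous everywhere'') but resolved it incorrectly: since $f$ is only assumed continuous at $\omega_{0}$, it may well be unbounded on the compact interval $\sigma^{2n+1}\left(\left[a,b\right]\right)$ for a small $n$ (take $f\left(t\right)=1/\left(t-c\right)$ for $t\neq c$ and $f\left(c\right)=0$, with $c\neq\omega_{0}$ a point of $\sigma\left(\left[a,b\right]\right)$). In that case no bound $M$ valid for all $n\in\mathbb{N}_{0}$ and all $s$ exists, and the $M$-test cannot be applied to the full series as you state it.

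The repair is one line and leaves the architecture of your proof intact. Fix $N$ with $\left\vert f^{\sigma^{2n+1}}\left(s\right)-f\left(\omega_{0}\right)\right\vert<1$ for all $n\geqslant N$ and all $s\in\left[a,b\right]$, set $M:=\left\vert f\left(\omega_{0}\right)\right\vert+1$, and apply the $M$-test only to the tail $\sum_{n\geqslant N}q^{2n+1}f^{\sigma^{2n+1}}\left(s\right)$, whose terms are dominated by $Mq^{2n+1}$ with $\sum_{n\geqslant N}Mq^{2n+1}<+\infty$. Uniform convergence of a series is a tail property: the partial sums of the full series differ from those of the tail by the single fixed (finite-valued, possibly unbounded) function $\sum_{n=0}^{N-1}q^{2n+1}f^{\sigma^{2n+1}}\left(s\right)$, so uniform convergence of the tail yields uniform convergence of the whole series. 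With that adjustment your argument, including the concluding remark on Hahn symmetric integrability over $\left[a,b\right]$, is correct.
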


\begin{theorem}[Fundamental theorem of the Hahn symmetric integral calculus]
\label{qhs:Fundamental}
Assume\index{Fundamental theorem of the Hahn symmetric integral calculus}
that $f:I\rightarrow \mathbb{R}$ is continuous at $\omega_{0}$ and,
for each $x\in I$, define
\begin{equation*}
F(x):=\int_{\omega _{0}}^{x}f\left( t\right) \tilde{d}_{q,\omega }t.
\end{equation*}
Then $F$ is continuous at $\omega _{0}$. Furthermore, $\tilde{D}_{q,\omega}[F](x)$
exists for every $x\in I^{q,\omega }$ with
$$
\tilde{D}_{q,\omega}[F](x)=f(x).
$$
Conversely,
\begin{equation*}
\int_{a}^{b}\tilde{D}_{q,\omega }\left[ f\right] \left( t\right)
\tilde{d}_{q,\omega }t=f\left( b\right) -f\left( a\right)
\end{equation*}
for all $a,b\in I$.
\end{theorem}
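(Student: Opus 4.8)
The plan is to mimic the proof of the Fundamental Theorem of Hahn's calculus and of the $q$-symmetric calculus (Theorem~\ref{qs:Fundamental}), transferring it verbatim with the obvious changes of notation, since the series defining the Hahn symmetric integral is structurally the same as the one in the $q$-symmetric case but with $q^{2n+1}x$ replaced by $\sigma^{2n+1}(x)$ and the factor $q^{-1}-q$ multiplied by $x$ replaced by $\sigma^{-1}(x)-\sigma(x)$. First I would establish that $F$ is continuous at $\omega_0$: by Corollary~\ref{qhs:corolario integral} the series $\sum_{n=0}^{+\infty}q^{2n+1}f^{\sigma^{2n+1}}(x)$ converges uniformly on $I$, hence the function $x\mapsto\sum q^{2n+1}f^{\sigma^{2n+1}}(x)$ is continuous at $\omega_0$; since $\sigma^{-1}(x)-\sigma(x)\to 0$ as $x\to\omega_0$ (by Lemma~\ref{qhs:lemma:2.2}, as $\sigma(\omega_0)=\sigma^{-1}(\omega_0)=\omega_0$), the product $F(x)=\left(\sigma^{-1}(x)-\sigma(x)\right)\sum q^{2n+1}f^{\sigma^{2n+1}}(x)$ tends to $0=F(\omega_0)$.

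Next I would prove $\tilde{D}_{q,\omega}[F](x)=f(x)$ for $x\in I^{q,\omega}$, splitting into the cases $x\neq\omega_0$ and $x=\omega_0$. For $x\neq\omega_0$, write $\tilde{D}_{q,\omega}[F](x)=\dfrac{F(\sigma(x))-F(\sigma^{-1}(x))}{\sigma(x)-\sigma^{-1}(x)}$, expand each of $F(\sigma(x))$ and $F(\sigma^{-1}(x))$ using the series definition (noting $\sigma^{-1}(\sigma(x))-\sigma(\sigma(x))=q(\sigma(x)-\sigma^{-1}(x))$ and similarly $\sigma^{-1}(\sigma^{-1}(x))-\sigma(\sigma^{-1}(x))=q^{-1}(\sigma(x)-\sigma^{-1}(x))$, which follow from Lemma~\ref{qhs:q^n}), and then, after cancelling the common factor $\sigma(x)-\sigma^{-1}(x)$, recognize the resulting difference of series as a telescoping (Mengoli) sum: $\sum_{n=0}^{+\infty}\left(q^{2n}f^{\sigma^{2n}}(x)-q^{2n+2}f^{\sigma^{2n+2}}(x)\right)=f(x)$, where the limit of $q^{2n}f^{\sigma^{2n}}(x)$ is zero because $f^{\sigma^{2n}}(x)\to f(\omega_0)$ (continuity at $\omega_0$ plus Lemma~\ref{qhs:lemma:2.2}) and $q^{2n}\to 0$. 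For $x=\omega_0$, one computes $\tilde{D}_{q,\omega}[F](\omega_0)=F'(\omega_0)=\lim_{h\to 0}\frac{F(h)-F(\omega_0)}{h-\omega_0}$ directly from the series, using the continuity of $f$ at $\omega_0$ and $\sum q^{2n+1}=\frac{q}{1-q^2}$ together with $\frac{\sigma^{-1}(h)-\sigma(h)}{h-\omega_0}\to q^{-1}-q$, to get $f(\omega_0)$.

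Finally, for the converse I would show $\int_{\omega_0}^{x}\tilde{D}_{q,\omega}[f](t)\,\tilde{d}_{q,\omega}t=f(x)-f(\omega_0)$ by plugging $\tilde{D}_{q,\omega}[f]$ into the series definition, using that $\tilde{D}_{q,\omega}[f]\left(\sigma^{2n+1}(x)\right)=\dfrac{f^{\sigma^{2n+2}}(x)-f^{\sigma^{2n}}(x)}{\sigma^{2n+2}(x)-\sigma^{2n}(x)}=\dfrac{f^{\sigma^{2n+2}}(x)-f^{\sigma^{2n}}(x)}{q^{2n}\left(\sigma(x)-\sigma^{-1}(x)\right)}$ (again Lemma~\ref{qhs:q^n}), so the prefactor cancels and the sum telescopes to $f(x)-f(\omega_0)$; then subtracting the identity at $x=b$ from that at $x=a$ gives $\int_a^b\tilde{D}_{q,\omega}[f](t)\,\tilde{d}_{q,\omega}t=f(b)-f(a)$. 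The main obstacle is purely bookkeeping: making sure the index shifts in the telescoping sums and the identities $\sigma^{n+1}(t)-\sigma^{n-1}(t)=q^n(\sigma(t)-\sigma^{-1}(t))$ are applied at the correct arguments, and justifying the interchange of limit and sum (i.e., continuity of the series sum at $\omega_0$), which is exactly what Corollary~\ref{qhs:corolario integral} provides. No genuinely new idea beyond the $q$-symmetric case is needed; the difficulty is only in handling the inhomogeneous shift $\omega$ carefully.
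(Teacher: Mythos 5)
Your plan reproduces the paper's own proof essentially step for step: continuity of $F$ at $\omega_0$ via Corollary~\ref{qhs:corolario integral}, the case split $x\neq\omega_0$ versus $x=\omega_0$ with the telescoping (Mengoli) series and the identities of Lemma~\ref{qhs:q^n}, and the same telescoping argument for the converse. The only blemish is a notational slip in the $x=\omega_0$ case, where you write $\lim_{h\to 0}\frac{F(h)-F(\omega_0)}{h-\omega_0}$ while evidently meaning the limit as the argument tends to $\omega_0$ (equivalently, the paper's $\lim_{h\to 0}\frac{F(\omega_0+h)-F(\omega_0)}{h}$); the computation you describe is otherwise exactly the paper's.
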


\begin{proof}
We note that function $F$ is continuous at $\omega _{0}$
by Corollary~\ref{qhs:corolario integral}.
Let us begin by considering $x\in I^{q,\omega}\backslash \{\omega _{0}\}$. Then,
\begin{align*}
\tilde{D}_{q,\omega }&\left[ \tau \mapsto \int_{0}^{\tau}
f\left( t\right) \tilde{d}_{q,\omega }t\right](x)\\
&= \frac{\int_{\omega _{0}}^{\sigma \left( x\right)}
f\left( t\right) \tilde{d}_{q,\omega }t-\int_{\omega _{0}}^{\sigma^{-1}\left( x\right)}
f\left( t\right) \tilde{d}_{q,\omega }t}{\sigma \left(x\right)
-\sigma ^{-1}\left( x\right) } \\
&= \frac{1}{\sigma \left( x\right) -\sigma ^{-1}\left( x\right) }\bigg\{\left[
\sigma ^{-1}\left( \sigma \left( x\right) \right) -\sigma \left( \sigma
\left( x\right) \right) \right] \sum_{n=0}^{+\infty }q^{2n+1}
f^{\sigma^{2n+1}}\left( \sigma \left( x\right) \right)  \\
& \qquad -\left[ \sigma ^{-1}\left( \sigma ^{-1}\left( x\right) \right) -\sigma
\left( \sigma ^{-1}\left( x\right) \right) \right] \sum_{n=0}^{+\infty}
q^{2n+1}f^{\sigma ^{2n+1}}\left( \sigma ^{-1}\left( x\right) \right) \bigg\}\\
& =\sum_{n=0}^{+\infty }q^{2n}f^{\sigma ^{2n}}\left( x\right)
-\sum_{n=0}^{+\infty }q^{2n+2}f^{\sigma ^{2n+2}}\left( x\right)\\
& =f\left( x\right) \text{.}
\end{align*}
If $x=\omega _{0}$, then
\begin{align*}
\tilde{D}&_{q,\omega }\left[F\right]\left( \omega _{0}\right)\\
&= \lim_{h\rightarrow 0}
\frac{F\left( \omega _{0}+h\right) -F\left( \omega _{0}\right) }{h} \\
& =\lim_{h\rightarrow 0}\frac{1}{h}\left[ \sigma ^{-1}\left( \omega
_{0}+h\right) -\sigma \left( \omega _{0}+h\right) \right] \sum_{n=0}^{+
\infty }q^{2n+1}f^{\sigma ^{2n+1}}\left( \omega _{0}+h\right)  \\
& =\lim_{h\rightarrow 0}\frac{1}{h}\left[q^{-1}\left(\omega_{0}
+h-\omega\right)-q\left(\omega_{0}+h\right)-\omega\right]\sum_{n=0}^{+
\infty }q^{2n+1}f^{\sigma ^{2n+1}}\left( \omega _{0}+h\right)  \\
& =\lim_{h\rightarrow 0}\frac{1}{h}\left[\left(q^{-1}-q\right)\omega_{0}
+\left(-q^{-1}-1\right)\omega+\left(q^{-1}-q\right)h\right]\sum_{n=0}^{+\infty }
q^{2n+1}f^{\sigma ^{2n+1}}\left( \omega _{0}+h\right)\\
& =\lim_{h\rightarrow 0}\frac{1}{h}\left[\frac{\left(q^{-1}-q\right)\omega}{1-q}
+\left(-q^{-1}-1\right)\omega+\left(q^{-1}-q\right)h\right]\sum_{n=0}^{+\infty}
q^{2n+1}f^{\sigma ^{2n+1}}\left( \omega _{0}+h\right)  \\
& =\lim_{h\rightarrow 0}\frac{1}{h}\left[\left(\frac{1+q}{q}
+\frac{-1-q}{q}\right)\omega+\left(q^{-1}-q\right)h\right]\sum_{n=0}^{+\infty}
q^{2n+1}f^{\sigma ^{2n+1}}\left( \omega _{0}+h\right)  \\
& =\lim_{h\rightarrow 0}\frac{1-q^{2}}{q}\sum_{n=0}^{+\infty}
q^{2n+1}f^{\sigma ^{2n+1}}\left( \omega _{0}+h\right)  \\
& =\left( 1-q^{2}\right) \sum_{n=0}^{+\infty }q^{2n}f\left( \omega_{0}\right)  \\
& =\left( 1-q^{2}\right) \frac{1}{1-q^{2}}f\left( \omega _{0}\right)  \\
& =f\left( \omega _{0}\right) .
\end{align*}
Finally, since for $x\in I\setminus \{\omega_{0}\}$ we have
\begin{align*}
\int_{\omega _{0}}^{x}  \tilde{D}_{q,\omega }\left[ f\right] \left( t\right)
\tilde{d}_{q,\omega }t&=\left[ \sigma ^{-1}\left( x\right) -\sigma \left(
x\right) \right] \sum_{n=0}^{+\infty }q^{2n+1}\tilde{D}_{q,\omega }\left[ f
\right] ^{\sigma ^{2n+1}}\left( x\right)  \\
& =\left[ \sigma ^{-1}\left( x\right) -\sigma \left( x\right) \right]
\sum_{n=0}^{+\infty }q^{2n+1}\frac{f^{\sigma }\left( \sigma ^{2n+1}\left(
x\right) \right) -f^{\sigma ^{-1}}\left( \sigma ^{2n+1}\left( x\right)
\right) }{\sigma \left( \sigma ^{2n+1}\left( x\right) \right) -\sigma
^{-1}\left( \sigma ^{2n+1}\left( x\right) \right) } \\
& =\left[ \sigma ^{-1}\left( x\right) -\sigma \left( x\right) \right]
\sum_{n=0}^{+\infty }q^{2n+1}\frac{f^{\sigma }\left( \sigma ^{2n+1}\left(
x\right) \right) -f^{\sigma ^{-1}}\left( \sigma ^{2n+1}\left( x\right)
\right) }{q^{2n+1}\left(\sigma \left(x\right)- \sigma ^{-1}\left( x\right)\right) }\\
& =\sum_{n=0}^{+\infty }\left[ f^{\sigma ^{2n}}\left( x\right)
-f^{\sigma^{2\left( n+1\right) }}\left( x\right) \right]  \\
& =f\left( x\right) -f\left( \omega _{0}\right),
\end{align*}
where in the third equality we use Lemma~\ref{qhs:q^n}, then
\begin{align*}
\int_{a}^{b}\tilde{D}_{q,\omega }\left[ f\right] \left( t\right) \tilde{d}
_{q,\omega }t& =\int_{\omega_0}^{b}\tilde{D}_{q,\omega }\left[ f\right] \left(
t\right) \tilde{d}_{q,\omega }t-\int_{\omega_0}^{a}\tilde{D}_{q,\omega }\left[ f
\right] \left( t\right) \tilde{d}_{q,\omega }t \\
& =f\left( b\right) -f\left( a\right) \text{.}
\end{align*}
\end{proof}

The Hahn symmetric integral has the following properties.

\begin{theorem}
Let $f,g$ $:I\rightarrow \mathbb{R}$ be Hahn's symmetric integrable
on $I$, $a,b,c\in I$, and $\alpha ,\beta \in \mathbb{R}$. Then,

\begin{enumerate}
\item $\displaystyle\int_{a}^{a}f\left( t\right) \tilde{d}_{q,\omega}t=0$;

\item $\displaystyle\int_{a}^{b}f\left( t\right) \tilde{d}_{q,\omega}t=-\int_{b}^{a}f
\left( t\right) \tilde{d}_{q,\omega}t$;

\item $\displaystyle\int_{a}^{b}f\left( t\right) \tilde{d}_{q,\omega}t=\int_{a}^{c}
f\left(t\right) \tilde{d}_{q,\omega}t+\int_{c}^{b}f\left( t\right)
\tilde{d}_{q,\omega}t$;

\item $\displaystyle\int_{a}^{b}\left( \alpha f+\beta g\right) \left( t\right) \tilde{d}
_{q,\omega}t=\alpha\int_{a}^{b}f\left( t\right) \tilde{d}_{q,\omega}t
+\beta \int_{a}^{b}g\left( t\right) \tilde{d}_{q,\omega}t$;

\item if $\displaystyle\tilde{D}_{q,\omega }\left[ f\right] $ and $\tilde{D}_{q,\omega }
\left[ g\right] $ are continuous at $\omega _{0}$, then
\begin{equation}
\label{qhs:eq:int:parts}
\int_{a}^{b}f^{\sigma ^{-1}}\left( t\right) \tilde{D}_{q,\omega }\left[ g
\right] \left( t\right) \tilde{d}_{q,\omega }t=f\left( t\right) g\left(
t\right) \bigg|_{a}^{b}-\int_{a}^{b}\tilde{D}_{q,\omega }\left[ f\right]
\left( t\right) g^{\sigma }\left( t\right) \tilde{d}_{q,\omega }t.
\end{equation}
\end{enumerate}
\end{theorem}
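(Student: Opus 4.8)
The plan is to treat the five items in turn, deriving each from properties already established for the Hahn symmetric derivative and integral, together with the fundamental theorem of the Hahn symmetric integral calculus (Theorem~\ref{qhs:Fundamental}) and the series definition of the integral. Properties 1--4 are essentially bookkeeping: item~1 follows from the definition $\int_a^b = \int_{\omega_0}^b - \int_{\omega_0}^a$, which manifestly vanishes when $a=b$; item~2 follows by swapping the two terms in that same splitting and picking up an overall sign; item~3 follows by inserting and cancelling $\int_{\omega_0}^c$; and item~4 follows from the termwise linearity of the series $\sum_{n=0}^{+\infty} q^{2n+1} f^{\sigma^{2n+1}}(x)$ defining $\int_{\omega_0}^x$, together with the fact (Corollary~\ref{qhs:corolario integral}) that the relevant series converge absolutely, so that linear combinations may be taken term by term. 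I would state these four quickly, citing the definition and Theorem~\ref{qhs:props derivada}(1) where appropriate, without grinding through the elementary rearrangements.

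The substance is item~5, the integration-by-parts formula \eqref{qhs:eq:int:parts}. First I would apply the product rule for the Hahn symmetric derivative, Theorem~\ref{qhs:props derivada}(2), to the product $fg$:
\begin{equation*}
\tilde{D}_{q,\omega}\left[  fg\right]  \left(  t\right)
= \tilde{D}_{q,\omega}\left[  f\right]  \left(  t\right)  g^{\sigma}\left(  t\right)
+ f^{\sigma^{-1}}\left(  t\right)  \tilde{D}_{q,\omega}\left[  g\right]  \left(  t\right).
\end{equation*}
Solving for the term $f^{\sigma^{-1}}(t)\tilde{D}_{q,\omega}[g](t)$ and integrating both sides from $a$ to $b$, I would use item~4 (linearity) to split the integral, and then invoke the converse part of the fundamental theorem, Theorem~\ref{qhs:Fundamental}, which gives $\int_a^b \tilde{D}_{q,\omega}[fg](t)\,\tilde{d}_{q,\omega}t = f(b)g(b) - f(a)g(a) = f(t)g(t)\big|_a^b$. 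Rearranging yields exactly \eqref{qhs:eq:int:parts}. The one point requiring care here is that Theorem~\ref{qhs:Fundamental} applies to $\tilde{D}_{q,\omega}[fg]$, so I need $fg$ to be Hahn symmetric differentiable and its derivative to be a legitimate integrand; this is covered by the hypothesis that $\tilde{D}_{q,\omega}[f]$ and $\tilde{D}_{q,\omega}[g]$ are continuous at $\omega_0$ (so that $f$ and $g$, and hence $fg$, are continuous at $\omega_0$, making $fg$ Hahn symmetric integrable by Corollary~\ref{qhs:corolario integral}, and $\tilde{D}_{q,\omega}[fg]$ is a sum of products of such functions).

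The main obstacle, if any, is purely a matter of verifying integrability/convergence at each step rather than any conceptual difficulty: one must check that each of the three integrals appearing in \eqref{qhs:eq:int:parts} is well defined under the stated continuity hypotheses before the linearity splitting in item~4 is legitimate. This mirrors exactly the situation in the $q$-symmetric case already treated in Chapter~\ref{The $q$-Symmetric Variational Calculus}, and the same argument goes through here with $\sigma(t) = qt+\omega$ in place of $\sigma(t) = qt$. I would therefore present the proof compactly, emphasizing the product rule plus the fundamental theorem and relegating the convergence remarks to a single sentence.
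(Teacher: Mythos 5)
Your proposal is correct and follows essentially the same route as the paper: items 1--4 are dismissed as immediate consequences of the definition and the termwise linearity of the defining series, and item 5 is obtained by rearranging the product rule of Theorem~\ref{qhs:props derivada}, integrating, and applying the converse part of Theorem~\ref{qhs:Fundamental} to $\tilde{D}_{q,\omega}[fg]$. The paper's proof is in fact terser than yours (it declares 1--4 trivial and does not comment on integrability), so your extra sentence on why $fg$ is a legitimate integrand under the continuity hypotheses is a harmless refinement rather than a deviation.
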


\begin{proof}
Properties~1 to 4 are trivial. Property~5 follows
from Theorem~\ref{qhs:props derivada}
and Theorem~\ref{qhs:Fundamental}: since
$$
\tilde{D}_{q,\omega }\left[ fg\right] \left( t\right)
=\tilde{D}_{q,\omega}\left[ f\right] \left( t\right) g^{\sigma }\left( t\right)
+f^{\sigma^{-1}}\left( t\right) \tilde{D}_{q,\omega }\left[ g\right] \left( t\right),
$$
then
$$
f^{\sigma ^{-1}}\left( t\right) \tilde{D}_{q,\omega }\left[ g
\right] \left( t\right) =\tilde{D}_{q,\omega }\left[ fg\right] \left(
t\right) -\tilde{D}_{q,\omega }\left[ f\right] \left( t\right)
g^{\sigma}\left( t\right)
$$
and hence,
$$
\int_{a}^{b}f^{\sigma ^{-1}}\left( t\right)
\tilde{D}_{q,\omega }\left[ g\right] \left( t\right) \tilde{d}_{q,\omega }t=f\left(
t\right) g\left( t\right) \bigg|_{a}^{b}-\int_{a}^{b}\tilde{D}_{q,\omega }
\left[ f\right] \left( t\right) g^{\sigma }\left( t\right) \tilde{d}_{q,\omega }t.
$$
\end{proof}

\begin{remark}
Relation \eqref{qhs:eq:int:parts} gives  a \emph{Hahn's symmetric integration
by parts}\index{Hahn's symmetric integration by parts} formula.
\end{remark}

\begin{remark}
Using Lemma~\ref{qhs:composta} and the Hahn symmetric
integration by parts formula \eqref{qhs:eq:int:parts},
we conclude that
\begin{equation}
\label{qhs:partes}
\int_{a}^{b}f\left( t\right) \tilde{D}_{q,\omega }\left[ g\right] \left(
t\right) \tilde{d}_{q,\omega }t=f^{\sigma }\left( t\right) g\left( t\right)
\bigg|_{a}^{b}-q\int_{a}^{b}\left( \tilde{D}_{q,\omega }\left[ f\right]
\right) ^{\sigma }\left( t\right) g^{\sigma }\left( t\right)
\tilde{d}_{q,\omega }t.
\end{equation}
\end{remark}

\begin{proposition}
\label{qhs:desigualdade}
Let $c\in I$, $f$ and $g$ be Hahn's symmetric
integrable on $I$. Suppose that
$$
\left\vert f\left( t\right) \right\vert \leqslant g\left( t\right)
$$
for all $t\in \left\{ \sigma ^{2n+1}\left( c\right) :n\in
\mathbb{N}_{0}\right\} \cup \left\{ \omega _{0}\right\}$.
\begin{enumerate}
\item If $c\geqslant \omega _{0}$, then
\begin{equation*}
\left\vert \int_{\omega _{0}}^{c}f\left( t\right)
\tilde{d}_{q,\omega}t\right\vert \leqslant \int_{\omega_{0}}^{c}
g\left( t\right) \tilde{d}_{q,\omega}t.
\end{equation*}

\item If $c<\omega _{0}$, then
\begin{equation*}
\left\vert \int_{c}^{\omega _{0}}f\left( t\right) \tilde{d}_{q,\omega}
t\right\vert \leqslant \int_{c}^{\omega _{0}}g\left( t\right)
\tilde{d}_{q,\omega }t.
\end{equation*}
\end{enumerate}
\end{proposition}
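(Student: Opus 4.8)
The plan is to argue directly from the series definition of the Hahn symmetric integral, in complete analogy with the proof of Proposition~\ref{qs:desigualdade} for the $q$-symmetric case. The only structural input needed is the sign of the leading factor $\sigma^{-1}(x)-\sigma(x)$ appearing in $\int_{\omega_0}^{x}f\left(t\right)\tilde{d}_{q,\omega}t$. First I would record that
$\sigma^{-1}(x)-\sigma(x)=\left(q^{-1}-q\right)(x-\omega_{0})$, so that, since $q\in\left]0,1\right[$, this factor is nonnegative when $x\geqslant\omega_{0}$ and nonpositive when $x<\omega_{0}$; equivalently, this follows from Lemma~\ref{qhs:lemma:2.2}, which says the orbit $\{\sigma^{k}(x)\}$ decreases to $\omega_{0}$ for $x>\omega_{0}$ and increases to $\omega_{0}$ for $x<\omega_{0}$. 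I would also note that for $c\geqslant\omega_{0}$ (resp. $c<\omega_{0}$) every iterate $\sigma^{2n+1}(c)$ stays in $\left[\omega_{0},+\infty\right[$ (resp. in $\left]-\infty,\omega_{0}\right[$), and in particular belongs to $\left\{\sigma^{2n+1}(c):n\in\mathbb{N}_{0}\right\}\cup\left\{\omega_{0}\right\}$, so the hypothesis gives $\left|f^{\sigma^{2n+1}}(c)\right|\leqslant g^{\sigma^{2n+1}}(c)$ for all $n$.

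For part~1, assume $c\geqslant\omega_{0}$. Since $g$ is Hahn symmetric integrable on $I$, the series $\sum_{n=0}^{+\infty}q^{2n+1}g^{\sigma^{2n+1}}(c)$ converges; by the comparison above, $\sum_{n=0}^{+\infty}q^{2n+1}f^{\sigma^{2n+1}}(c)$ converges absolutely, and the triangle inequality for series applies. Using $\sigma^{-1}(c)-\sigma(c)\geqslant0$ I would then chain
\[
\left|\int_{\omega_{0}}^{c}f\left(t\right)\tilde{d}_{q,\omega}t\right|
=\left(\sigma^{-1}(c)-\sigma(c)\right)\left|\sum_{n=0}^{+\infty}q^{2n+1}f^{\sigma^{2n+1}}(c)\right|
\leqslant\left(\sigma^{-1}(c)-\sigma(c)\right)\sum_{n=0}^{+\infty}q^{2n+1}\left|f^{\sigma^{2n+1}}(c)\right|
\leqslant\int_{\omega_{0}}^{c}g\left(t\right)\tilde{d}_{q,\omega}t.
\]

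For part~2, assume $c<\omega_{0}$. The argument is identical except that $\sigma^{-1}(c)-\sigma(c)\leqslant0$, so I would pull out $\left|\sigma^{-1}(c)-\sigma(c)\right|=-\left(\sigma^{-1}(c)-\sigma(c)\right)$ and use $\int_{c}^{\omega_{0}}=-\int_{\omega_{0}}^{c}$; the same termwise bound then yields $\left|\int_{c}^{\omega_{0}}f\right|\leqslant-\left(\sigma^{-1}(c)-\sigma(c)\right)\sum_{n=0}^{+\infty}q^{2n+1}g^{\sigma^{2n+1}}(c)=\int_{c}^{\omega_{0}}g$. There is essentially no hard step in this proposition: the whole proof is the termwise triangle inequality plus the sign bookkeeping for $\sigma^{-1}(x)-\sigma(x)$. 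The only point requiring a word of care is justifying absolute convergence of $\sum q^{2n+1}f^{\sigma^{2n+1}}(c)$ before invoking $\left|\sum\right|\leqslant\sum\left|\cdot\right|$, which is why I would first invoke the integrability of the majorant $g$. This result is the $\omega$-deformation of Proposition~\ref{qs:desigualdade}.
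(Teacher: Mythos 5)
Your proposal is correct and follows essentially the same route as the paper's proof: expand $\int_{\omega_0}^{c}$ via its defining series, apply the termwise triangle inequality and the hypothesis $\vert f\vert\leqslant g$ on the orbit of $c$, and track the sign of the prefactor $\sigma^{-1}(c)-\sigma(c)$ in the two cases. Your explicit justification of absolute convergence of $\sum q^{2n+1}f^{\sigma^{2n+1}}(c)$ via the integrable majorant $g$ is a small point of rigor the paper leaves implicit, but it does not change the argument.
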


\begin{proof}
If $c\geqslant \omega_{0}$, then
\begin{equation*}
\begin{split}
\left\vert \int_{\omega _{0}}^{c}f\left( t\right) \tilde{d}_{q,\omega}
t\right\vert  &=\left\vert \left[ \sigma ^{-1}\left( c\right) -\sigma
\left( c\right) \right] \sum_{n=0}^{+\infty }q^{2n+1}
f^{\sigma^{2n+1}}\left( c\right) \right\vert\\
&\leqslant \left[ \sigma ^{-1}\left( c\right)
-\sigma \left( c\right) \right] \sum_{n=0}^{+\infty }q^{2n+1}\left\vert
f^{\sigma ^{2n+1}}\left( c\right) \right\vert  \\
&\leqslant \left[ \sigma ^{-1}\left( c\right) -\sigma \left( c\right) \right]
\sum_{n=0}^{+\infty }q^{2n+1}g^{\sigma ^{2n+1}}\left( c\right)\\
&=\int_{\omega_{0}}^{c}g\left( t\right) \tilde{d}_{q,\omega }t.
\end{split}
\end{equation*}
If $c<\omega_{0}$, then
\begin{equation*}
\begin{split}
\left\vert \int_{c}^{\omega _{0}}
f\left( t\right) \tilde{d}_{q,\omega}t\right\vert
&= \left\vert -\left[ \sigma ^{-1}\left( c\right) -\sigma
\left( c\right) \right] \sum_{n=0}^{+\infty }q^{2n+1}
f^{\sigma^{2n+1}}\left( c\right) \right\vert\\
&\leqslant \left\vert \sigma ^{-1}\left( c\right)
-\sigma \left( c\right) \right\vert \sum_{n=0}^{+\infty }q^{2n+1}\left\vert
f^{\sigma ^{2n+1}}\left( c\right) \right\vert\\
&=-\left[ \sigma ^{-1}\left( c\right)
-\sigma \left( c\right) \right] \sum_{n=0}^{+\infty }q^{2n+1}\left\vert
f^{\sigma ^{2n+1}}\left( c\right) \right\vert\\
&\leqslant -\left[ \sigma ^{-1}\left( c\right) -\sigma \left( c\right) \right]
\sum_{n=0}^{+\infty }q^{2n+1}g^{\sigma ^{2n+1}}\left( c\right)\\
&=-\int_{\omega _{0}}^{c}g\left( t\right) \tilde{d}_{q,\omega }t\\
&=\int_{c}^{\omega _{0}}g\left( t\right) \tilde{d}_{q,\omega }t
\end{split}
\end{equation*}
providing the desired equality.
\end{proof}

As an immediate consequence, we have the following result.

\begin{corollary}
Let $c\in I$ and $f$ be Hahn's symmetric integrable on $I$. Suppose that
$$f\left( t\right) \geqslant 0$$ for all $t\in \left\{ \sigma ^{2n+1}\left(
c\right) :n\in \mathbb{N}_{0}\right\} \cup \left\{ \omega _{0}\right\}$.
\begin{enumerate}
\item If $c\geqslant \omega_{0}$, then
\begin{equation*}
\int_{\omega _{0}}^{c}f\left( t\right) \tilde{d}_{q,\omega }t\geqslant 0\text{;}
\end{equation*}
\item If $c<\omega_{0}$, then
\begin{equation*}
\int_{c}^{\omega _{0}}f\left( t\right) \tilde{d}_{q,\omega }t\geqslant 0\text{.}
\end{equation*}
\end{enumerate}
\end{corollary}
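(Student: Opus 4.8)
The plan is to derive this directly from Proposition~\ref{qhs:desigualdade} by taking $g=f$. First I would observe that the hypothesis $f(t)\geqslant 0$ for all $t\in\left\{\sigma^{2n+1}(c):n\in\mathbb{N}_{0}\right\}\cup\left\{\omega_{0}\right\}$ means precisely that $\left\vert f(t)\right\vert=f(t)\leqslant f(t)$ on that same set, so the pair $(f,g)=(f,f)$ satisfies the hypotheses of Proposition~\ref{qhs:desigualdade}; here both functions are Hahn symmetric integrable on $I$ by assumption, so nothing else needs to be checked.

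Then, in the case $c\geqslant\omega_{0}$, part~(1) of Proposition~\ref{qhs:desigualdade} yields $\left\vert\int_{\omega_{0}}^{c}f(t)\,\tilde{d}_{q,\omega}t\right\vert\leqslant\int_{\omega_{0}}^{c}f(t)\,\tilde{d}_{q,\omega}t$, and since the absolute value of a real number is never smaller than that number, this forces $\int_{\omega_{0}}^{c}f(t)\,\tilde{d}_{q,\omega}t\geqslant 0$. Symmetrically, in the case $c<\omega_{0}$, part~(2) of Proposition~\ref{qhs:desigualdade} gives $\left\vert\int_{c}^{\omega_{0}}f(t)\,\tilde{d}_{q,\omega}t\right\vert\leqslant\int_{c}^{\omega_{0}}f(t)\,\tilde{d}_{q,\omega}t$, whence $\int_{c}^{\omega_{0}}f(t)\,\tilde{d}_{q,\omega}t\geqslant 0$. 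This handles both items of the corollary.

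Alternatively, one can argue straight from the definition of the Hahn symmetric integral: $\int_{\omega_{0}}^{c}f(t)\,\tilde{d}_{q,\omega}t=\left(\sigma^{-1}(c)-\sigma(c)\right)\sum_{n=0}^{+\infty}q^{2n+1}f^{\sigma^{2n+1}}(c)$, where the series converges because $f$ is Hahn symmetric integrable. By Lemma~\ref{qhs:lemma:2.2}, $\sigma^{-1}(c)-\sigma(c)\geqslant 0$ when $c\geqslant\omega_{0}$ and $\sigma^{-1}(c)-\sigma(c)\leqslant 0$ when $c\leqslant\omega_{0}$, while each summand $q^{2n+1}f^{\sigma^{2n+1}}(c)$ is nonnegative since $\sigma^{2n+1}(c)$ lies in the lattice set on which $f$ is assumed nonnegative; combining the sign of the prefactor with the nonnegativity of the series gives both claims (using $\int_{c}^{\omega_{0}}=-\int_{\omega_{0}}^{c}$ in the second case). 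Because the statement reduces to Proposition~\ref{qhs:desigualdade}, there is no real obstacle here; the only point requiring (minor) care is verifying that the substitution $g=f$ is legitimate, i.e.\ that the nonnegativity hypothesis reproduces the hypothesis $\left\vert f\right\vert\leqslant g$ of the Proposition on exactly the right set $\left\{\sigma^{2n+1}(c):n\in\mathbb{N}_{0}\right\}\cup\left\{\omega_{0}\right\}$.
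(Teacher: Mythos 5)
Your proof is correct and matches the paper's intent exactly: the paper states this corollary as ``an immediate consequence'' of Proposition~\ref{qhs:desigualdade}, which is precisely your substitution $g=f$ combined with the observation that $\left\vert x\right\vert\leqslant x$ forces $x\geqslant 0$. Your alternative direct argument from the definition (sign of $\sigma^{-1}(c)-\sigma(c)$ via Lemma~\ref{qhs:lemma:2.2} times a nonnegative series) is also sound, but unnecessary given the first.
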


\begin{remark}
In general it is not true that if $f$ is a nonnegative function
on $\left[ a,b\right]$, then
\begin{equation*}
\int_{a}^{b}f\left( t\right) \tilde{d}_{q,\omega }t\geqslant 0\text{.}
\end{equation*}
As an example, consider the function $f$ defined in $\left[ -5,5\right]$ by
\begin{equation*}
f\left( t\right) =\left\{
\begin{array}{ccc}
6 & \text{\ \ if \ } & t=3 \\
\\
1 & \text{\ \ if \ } & t=4 \\
\\
0 & \text{\ \ if \ } & t\in \left[ -5,5\right]
\backslash \left\{ 3, 4\right\}.
\end{array}
\right.
\end{equation*}
For $q=\frac{1}{2}$ and $\omega =1$, this function is Hahn's symmetric
integrable because is continuous at $\omega_0=2$. However,
\begin{align*}
\int_{4}^{6} & f\left( t\right) \tilde{d}_{q,\omega }t
=\int_{2}^{6}f\left( t\right) \tilde{d}_{q,\omega }t-\int_{2}^{4}
f\left( t\right) \tilde{d}_{q,\omega }t \\
& =\left(10-4\right)\sum_{n=0}^{+\infty }\left( \frac{1}{2}\right)
^{2n+1}f^{\sigma ^{2n+1}}\left( 6\right)
-\left(6-3\right) \sum_{n=0}^{+\infty }\left( \frac{1}{2}\right)^{2n+1}
f^{\sigma^{2n+1}}\left( 4\right)\\
& =6\left( \frac{1}{2}\right) \times 1-3\left( \frac{1}{2}\right)
\times 6\\
&=-6.
\end{align*}
This example also proves that, in general, it is not true that
\begin{equation*}
\left\vert \int_{a}^{b}f\left( t\right) \tilde{d}_{q,\omega }t\right\vert
\leqslant \int_{a}^{b}\left\vert f\left( t\right) \right\vert \tilde{d}_{q,\omega}t
\end{equation*}
for any $a,b\in I$.
\end{remark}


\section{Hahn's symmetric variational calculus}
\label{qhs:M}

We begin this section with some useful definitions and notations.
For $s\in I$ we set
\begin{equation*}
\left[ s\right] _{q,\omega }:=\left\{ \sigma ^{2n+1}\left( s\right) :
n \in \mathbb{N}_{0}\right\} \cup \left\{ \omega _{0}\right\} .
\end{equation*}
Let $a,b\in I$ with $a<b$. We define the Hahn symmetric interval from $a$
to $b$ by
\begin{equation*}
\left[ a,b\right] _{q,\omega }:=\left\{ \sigma ^{2n+1}\left( a\right) :n\in
\mathbb{N}_{0}\right\} \cup \left\{ \sigma ^{2n+1}\left( b\right) :n\in
\mathbb{N}_{0}\right\} \cup \left\{ \omega _{0}\right\} ,
\end{equation*}
that is,
\begin{equation*}
\left[ a,b\right] _{q,\omega }=\left[ a\right] _{q,\omega }\cup \left[ b
\right]_{q,\omega }.
\end{equation*}
Let $r \in \{0,1\}$. We denote the linear space
\begin{equation*}
\left\{ y:I\rightarrow
\mathbb{R}\  |\  \tilde{D}_{q,\omega }^{i}\left[ y\right] ,
i=0,r, \text{ are bounded on }\left[a,b\right]_{q,\omega }
\text{ and continuous at }\omega _{0}\right\}
\end{equation*}
endowed with the norm
\begin{equation*}
\left\Vert y\right\Vert _{r}=\sum_{i=0}^{r}\sup_{t\in \left[ a,b\right]
_{q,\omega }}\left\vert \tilde{D}_{q,\omega }^{i}\left[ y\right] \left(
t\right) \right\vert,
\end{equation*}
where $\tilde{D}_{q,\omega }^{0}\left[ y\right]=y$, by
$\mathcal{Y}^{r}\left( \left[ a,b\right]_{q,\omega }, \mathbb{R}\right)$.

\begin{definition}
We say that $y$ is an admissible function to problem \eqref{qhs:P}
if  $\ y\in \ \mathcal{Y}^{1}\left( \left[ a,b\right] _{q,\omega }, \mathbb{R}\right)$
and $y$ satisfies the boundary conditions
$y\left( a\right)=\alpha $ and $y\left( b\right) =\beta$.
\end{definition}

\begin{definition}
We say that $y_{\ast }$ is a local minimizer (resp. local maximizer) to
problem \eqref{qhs:P} if $y_{\ast }$ is an admissible function and there exists
$\delta >0$ such that
\begin{equation*}
\mathcal{L}\left( y_{\ast }\right) \leqslant \mathcal{L}\left( y\right)
\quad \text{ (resp. }\mathcal{L}\left( y_{\ast }\right)
\geqslant \mathcal{L}\left( y\right) \text{)}
\end{equation*}
for all admissible $y$ with $\left\Vert y_{\ast }-y\right\Vert _{1}<\delta $.
\end{definition}

\begin{definition}
We say that $\eta \in \mathcal{Y}^{1}\left(\left[ a,b\right]_{q,\omega},
\mathbb{R}\right)$ is an admissible variation to problem \eqref{qhs:P} if $\eta \left(
a\right) =0=\eta \left( b\right) $.
\end{definition}


Before proving our main results,
we begin with three basic lemmas.


\subsection{Basic lemmas}

The following results are useful to prove Theorem~\ref{qhs:Euler}.

\begin{lemma}[Fundamental lemma of the Hahn symmetric variational calculus]
\label{qhs:fundamental}
Let\index{Fundamental lemma of the Hahn symmetric variational calculus}
$f\in \mathcal{Y}^{0}\left(\left[ a,b\right]_{q,\omega},
\mathbb{R}\right)$. One has
\begin{equation*}
\int_{a}^{b}f\left( t\right) h^{\sigma }\left( t\right) \tilde{d}_{q,\omega}t=0
\end{equation*}
for all $h\in \mathcal{Y}^{0}\left(\left[ a,b\right]_{q,\omega},
\mathbb{R}\right)$ with
$h\left( a\right) =h\left( b\right) =0$
if, and only if,
$f\left( t\right) =0$
for all $t\in \left[ a,b\right]_{q,\omega }$.
\end{lemma}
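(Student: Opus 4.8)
The plan is to mimic the proof of the analogous result in the $q$-symmetric setting (Lemma~\ref{qs:fundamental}), adapting the bump-function constructions from the lattice $\{q^{2n+1}c\}$ to the Hahn lattice $\{\sigma^{2n+1}(c)\}$. The implication ``$\Leftarrow$'' is trivial: if $f\equiv 0$ on $[a,b]_{q,\omega}$ then the integrand $f(t)h^{\sigma}(t)$ vanishes at every point $\sigma^{2n+1}(a)$ and $\sigma^{2n+1}(b)$ of the lattice on which the Hahn symmetric integral is computed, hence the integral is $0$. So the whole content is the ``$\Rightarrow$'' direction, which I would prove by contraposition: assuming there exists $p\in[a,b]_{q,\omega}$ with $f(p)\neq 0$, I will exhibit an admissible $h$ (bounded on $[a,b]_{q,\omega}$, continuous at $\omega_0$, with $h(a)=h(b)=0$) for which $\int_a^b f(t)h^{\sigma}(t)\,\tilde d_{q,\omega}t\neq 0$.

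First I would split into the cases $p\neq\omega_0$ and $p=\omega_0$, exactly as in Lemma~\ref{qs:fundamental}. If $p\neq\omega_0$ then $p=\sigma^{2k+1}(a)$ or $p=\sigma^{2k+1}(b)$ for some $k\in\mathbb{N}_0$; assuming WLOG $p=\sigma^{2k+1}(a)$ and (first) $a\neq\omega_0$, define $h$ to equal $f(\sigma^{2k+1}(a))$ at the single point $t=\sigma^{2k+2}(a)$ and $0$ elsewhere. This $h$ is trivially bounded and continuous at $\omega_0$ (it is eventually $0$ near $\omega_0$ by Lemma~\ref{qhs:lemma:2.2}, which tells us $\sigma^{2n+1}(a)\to\omega_0$) and satisfies $h(a)=h(b)=0$. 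Writing the Hahn symmetric integral over $[\omega_0,b]$ minus that over $[\omega_0,a]$ via the series definition, only the term with $n=k$ survives in the $a$-part and the $b$-part contributes nothing, giving a single nonzero term of the form $\pm(\sigma^{-1}(a)-\sigma(a))q^{2k+1}\bigl[f(\sigma^{2k+1}(a))\bigr]^2\neq 0$; here I use $\sigma^{-1}(a)\neq\sigma(a)$, which holds since $a\neq\omega_0$. The subcases $a=\omega_0$ (then necessarily $p=\sigma^{2k+1}(b)$, handled symmetrically) and $b=\omega_0$ are treated the same way.

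For $p=\omega_0$, WLOG $f(\omega_0)>0$. Since $f$ is continuous at $\omega_0$ and both $\sigma^{2n+1}(a)\to\omega_0$ and $\sigma^{2n+1}(b)\to\omega_0$ (Lemma~\ref{qhs:lemma:2.2}), there is $n_0$ with $f(\sigma^{2n+1}(a))>0$ and $f(\sigma^{2n+1}(b))>0$ for all $n>n_0$. If $a,b\neq\omega_0$, pick any $k>n_0$ and define $h$ to equal $f(\sigma^{2k+1}(b))$ at $t=\sigma^{2k+2}(a)$, $f(\sigma^{2k+1}(a))$ at $t=\sigma^{2k+2}(b)$, and $0$ elsewhere; the integral then reduces to $(\sigma^{-1}(a)-\sigma(a)-\sigma^{-1}(b)+\sigma(b))$ times $q^{2k+1}f(\sigma^{2k+1}(a))f(\sigma^{2k+1}(b))$ up to sign, which is nonzero because the scalar factor (essentially $(q^{-1}-q)(b-a)$) does not vanish and both $f$-values are positive. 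The remaining subcases $a=\omega_0$ or $b=\omega_0$ use a single-point $h$ as above. In each case the contradiction with the hypothesis completes the proof.

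The main obstacle — really the only nontrivial bookkeeping — is verifying the sign/nonvanishing of the scalar prefactors $\sigma^{-1}(x)-\sigma(x)$ and the two-point combination in the $p=\omega_0$, $a,b\neq\omega_0$ case, and making sure the ``otherwise $0$'' bump functions genuinely lie in $\mathcal{Y}^0([a,b]_{q,\omega},\mathbb{R})$ (boundedness is clear; continuity at $\omega_0$ follows because each $h$ is supported on a single lattice point distinct from $\omega_0$, hence is locally zero near $\omega_0$). Everything else is a routine transcription of the telescoping-series computations already carried out in the proof of Lemma~\ref{qs:fundamental}, with $q^{2n+1}x$ replaced throughout by $\sigma^{2n+1}(x)$ and the factor $(1-q^2)x$ replaced by $\sigma^{-1}(x)-\sigma(x)$.
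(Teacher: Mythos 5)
Your proposal is correct and follows essentially the same route as the paper's proof: contradiction via single-point (or two-point) bump functions supported at $\sigma^{2k+2}(a)$ or $\sigma^{2k+2}(b)$, with the same case split on $p\neq\omega_{0}$ versus $p=\omega_{0}$ and on whether $a$ or $b$ coincides with $\omega_{0}$. The only cosmetic difference is in the case $p=\omega_{0}$ with $a,b\neq\omega_{0}$: the paper normalizes the two bump heights by $\sigma^{-1}(a)-\sigma(a)$ and $\sigma^{-1}(b)-\sigma(b)$ so that the integral collapses to $2q^{2k+1}f^{\sigma^{2k+1}}(a)\,f^{\sigma^{2k+1}}(b)>0$, whereas you keep the unnormalized heights (as in the $q$-symmetric case) and correctly verify that the resulting scalar factor, essentially $(q^{-1}-q)(b-a)$, does not vanish.
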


\begin{proof}
The implication \textquotedblleft$\Leftarrow$\textquotedblright\ is obvious.
Let us prove the implication \textquotedblleft$\Rightarrow$\textquotedblright.
Suppose, by contradiction, that exists $p\in\left[ a,b\right]_{q,\omega}$
such that $f\left( p\right) \neq 0$.
\begin{enumerate}
\item If $p\neq\omega_{0}$, then $p=\sigma^{2k+1}\left( a\right)$ or
$p=\sigma^{2k+1}\left( b\right)$
for some $k \in \mathbb{N}_{0}$.
\begin{enumerate}
\item Suppose that $a\neq \omega_{0}$ and $b\neq \omega_{0}$. In this case
we can assume, without loss of generality, that $p=\sigma^{2k+1}\left(a\right)$.
Define
\begin{equation*}
h\left( t\right)
=
\begin{cases}
f^{\sigma^{2k+1}}\left( a\right)  & \text{if } t=\sigma^{2k+2}\left(a\right)\\
\\
0 & \text{otherwise.}
\end{cases}
\end{equation*}
Then,
\begin{align*}
\int_{a}^{b} & f\left( t\right) h^{\sigma }\left( t\right) \tilde{d}_{q,\omega}t \\
&=\left[ \sigma ^{-1}\left( b\right) -\sigma \left( b\right) \right]
\sum_{n=0}^{+\infty }q^{2n+1}f^{\sigma ^{2n+1}}\left( b\right) h^{\sigma^{2n+2}}\left( b\right)\\
&\qquad -\left[ \sigma ^{-1}\left( a\right) -\sigma \left( a\right) \right]
\sum_{n=0}^{+\infty }q^{2n+1}f^{\sigma ^{2n+1}}\left( a\right)
h^{\sigma^{2n+2}}\left( a\right)  \\
& =-\left[ \sigma ^{-1}\left( a\right) -\sigma \left( a\right) \right]
q^{2k+1}\left[ f^{\sigma ^{2k+1}}\left( a\right) \right] ^{2}\neq 0,
\end{align*}
which is a contradiction.

\item Suppose that $a\neq\omega_{0}$ and $b=\omega_{0}$. Therefore,
$p=\sigma^{2k+1}\left( a\right) $ for some $k\in \mathbb{N}_{0}$. Define
\begin{equation*}
h\left( t\right) =
\begin{cases}
f^{\sigma^{2k+1}}\left( a\right) & \text{if } t=\sigma^{2k+2}\left(a\right)\\
\\
0 & \text{otherwise.}
\end{cases}
\end{equation*}
We obtain a contradiction
with a similar proof as in case (a).

\item The case $a=\omega_{0}$ and $b\neq\omega_{0}$ is similar to (b).
\end{enumerate}

\item If $p=\omega_{0}$, we assume,
without loss of generality, that $f\left(p\right) >0$. Since
\begin{equation*}
\lim_{n\rightarrow +\infty }\sigma ^{2k+2}\left( a\right)
=\lim_{n\rightarrow +\infty }\sigma ^{2k+2}\left( b\right)
=\omega _{0}
\end{equation*}
and $f$ is continuous at $\omega_{0}$,
\begin{equation*}
\lim_{n\rightarrow +\infty }f^{\sigma ^{2k+1}}\left( a\right)
=\lim_{n\rightarrow +\infty }f^{\sigma ^{2k+1}}\left( b\right)
=f\left(\omega_{0}\right) .
\end{equation*}
Therefore, there exists an order $n_{0}\in \mathbb{N}$
such for all $n>n_{0}$ the inequalities
\begin{equation*}
f^{\sigma ^{2k+1}}\left( a\right) >0 \ \text{ and }\
f^{\sigma^{2k+1}}\left(b\right) >0
\end{equation*}
hold.

\begin{enumerate}
\item If $a,b\neq \omega_{0}$, then for some $k>n_{0}$ we define
\begin{equation*}
h\left( t\right)
=
\begin{cases}
-\frac{f^{\sigma ^{2k+1}}\left( b\right)}{\sigma ^{-1}\left( a\right)
-\sigma \left( a\right) } & \text{if } t=\sigma ^{2k+2}\left( a\right)\\
\\
\frac{f^{\sigma ^{2k+1}}\left( a\right) }{\sigma ^{-1}\left( b\right)
-\sigma \left( b\right) } & \text{if } t=\sigma ^{2k+2}\left( b\right)\\
\\
0 & \text{otherwise.}
\end{cases}
\end{equation*}
Hence,
\begin{equation*}
\int_{a}^{b}f\left( t\right) h^{\sigma }\left( t\right) \tilde{d}_{q,\omega}t
=2q^{2k+1}f^{\sigma ^{2k+1}}\left( a\right) f^{\sigma ^{2k+1}}\left(b\right)
> 0.
\end{equation*}

\item If $a=\omega_{0}$, then we define
\begin{equation*}
h\left( t\right) =
\begin{cases}
f^{\sigma^{2k+1}}\left( b\right)  & \text{if } t=\sigma ^{2k+2}\left(b\right)\\
\\
0 & \text{otherwise.}
\end{cases}
\end{equation*}
Therefore,
\begin{equation*}
\int_{\omega _{0}}^{b}f\left( t\right) h^{\sigma }\left( t\right) \tilde{d}_{q,\omega}t
=\left[ \sigma ^{-1}\left( b\right) -\sigma \left( b\right) \right]
q^{2k+1}\left[ f^{\sigma ^{2k+1}}\left( b\right) \right] ^{2}\neq 0.
\end{equation*}
\item If $b=\omega_{0}$, the proof is similar to the previous case.
\end{enumerate}
\end{enumerate}
\end{proof}

\begin{definition}[\cite{Malinowska:3}]
Let $s\in I$ and $g:I\times \left] -\bar{\theta},\bar{\theta}\right[
\rightarrow \mathbb{R}$. We say that $g\left( t,\cdot \right)$
is differentiable at $\theta_{0}$ uniformly in $\left[ s\right]_{q,\omega}$
if, for every $\varepsilon >0$, there exists $\delta >0$ such that
\begin{equation*}
0<\left\vert \theta -\theta _{0}\right\vert <\delta \Rightarrow
\left\vert \frac{g\left( t,\theta \right) -g\left( t,\theta _{0}\right) }{
\theta -\theta _{0}}-\partial _{2}g\left( t,\theta _{0}\right) \right\vert
<\varepsilon
\end{equation*}
for all $t\in \left[ s\right] _{q,\omega }$, where $\partial_{2}g
=\frac{\partial g}{\partial \theta }$.
\end{definition}

\begin{lemma}[\textrm{cf.} \cite{Malinowska:3}]
\label{qhs:tecnico}
Let $s\in I$ and assume that
$g:I\times \left] -\bar{\theta},\bar{\theta}\right[ \rightarrow \mathbb{R}$
is differentiable at $\theta _{0}$ uniformly in $\left[ s\right]_{q,\omega }$.
If $\int_{\omega _{0}}^{s}g\left( t,\theta _{0}\right) \tilde{d}_{q,\omega }t$
exists, then $G\left( \theta \right) :=\int_{\omega_{0}}^{s}g\left( t,\theta \right)
\tilde{d}_{q,\omega }t$ for $\theta$ near $\theta_{0}$,
is differentiable at $\theta_{0}$ with
\begin{equation*}
G^{\prime }\left( \theta _{0}\right) =\int_{\omega _{0}}^{s}\partial_{2}
g\left( t,\theta _{0}\right) \tilde{d}_{q,\omega }t.
\end{equation*}
\end{lemma}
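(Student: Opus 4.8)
The plan is to adapt the proof of the analogous Lemma~3.2 in Malinowska and Torres \cite{Malinowska:3} (already invoked in the $q$-symmetric chapter as Lemma~\ref{qs:tecnico}), splitting into the three cases $s>\omega_0$, $s=\omega_0$, and $s<\omega_0$. The case $s=\omega_0$ is trivial since $G(\theta)=\int_{\omega_0}^{\omega_0}g(t,\theta)\tilde d_{q,\omega}t=0$ for all $\theta$, so $G'(\theta_0)=0$, which coincides with $\int_{\omega_0}^{\omega_0}\partial_2 g(t,\theta_0)\tilde d_{q,\omega}t=0$. The case $s>\omega_0$ runs exactly as in \cite{Malinowska:3}: one writes down the difference quotient, uses linearity of the Hahn symmetric integral and the uniform differentiability hypothesis to estimate, via Proposition~\ref{qhs:desigualdade}, that
\[
\left\vert \frac{G(\theta)-G(\theta_0)}{\theta-\theta_0}-\int_{\omega_0}^{s}\partial_2 g(t,\theta_0)\,\tilde d_{q,\omega}t\right\vert
=\left\vert \int_{\omega_0}^{s}\left[\frac{g(t,\theta)-g(t,\theta_0)}{\theta-\theta_0}-\partial_2 g(t,\theta_0)\right]\tilde d_{q,\omega}t\right\vert,
\]
and then chooses, given $\varepsilon>0$, a $\delta>0$ from the uniform differentiability so that the bracketed integrand is bounded in absolute value by $\tfrac{\varepsilon}{2(s-\omega_0)}$ for all $t\in[s]_{q,\omega}$ and all $\theta$ with $0<|\theta-\theta_0|<\delta$; applying part~1 of Proposition~\ref{qhs:desigualdade} with $c=s$ gives the bound $\tfrac{\varepsilon}{2(s-\omega_0)}\int_{\omega_0}^{s}1\,\tilde d_{q,\omega}t=\tfrac{\varepsilon}{2}<\varepsilon$.

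The case $s<\omega_0$ is the one requiring a small sign adjustment, and I would handle it in close parallel to the $q$-symmetric proof just above (Lemma~\ref{qs:tecnico}). Here $s-\omega_0<0$, so one picks $\delta>0$ from uniform differentiability so that
\[
\left\vert \frac{g(t,\theta)-g(t,\theta_0)}{\theta-\theta_0}-\partial_2 g(t,\theta_0)\right\vert < -\frac{\varepsilon}{2(s-\omega_0)}
\]
for all $t\in[s]_{q,\omega}$ and $0<|\theta-\theta_0|<\delta$ (note the right-hand side is positive). Then, using part~2 of Proposition~\ref{qhs:desigualdade} with $c=s<\omega_0$ — which bounds $\left\vert\int_{s}^{\omega_0}(\cdot)\tilde d_{q,\omega}t\right\vert$, and recalling $\int_{\omega_0}^{s}=-\int_{s}^{\omega_0}$ — one estimates
\[
\left\vert \frac{G(\theta)-G(\theta_0)}{\theta-\theta_0}-\int_{\omega_0}^{s}\partial_2 g(t,\theta_0)\,\tilde d_{q,\omega}t\right\vert
\leqslant \int_{s}^{\omega_0}\left(-\frac{\varepsilon}{2(s-\omega_0)}\right)\tilde d_{q,\omega}t
=-\frac{\varepsilon}{2(s-\omega_0)}\int_{s}^{\omega_0}1\,\tilde d_{q,\omega}t
=\frac{\varepsilon}{2}<\varepsilon,
\]
since $\int_{s}^{\omega_0}1\,\tilde d_{q,\omega}t=\omega_0-s=-(s-\omega_0)$. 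This proves $G'(\theta_0)=\int_{\omega_0}^{s}\partial_2 g(t,\theta_0)\,\tilde d_{q,\omega}t$ in all three cases.

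The main (and really the only) obstacle is bookkeeping: making sure the signs and the orientation of the Hahn symmetric integral are tracked correctly when $s<\omega_0$, and verifying that $\int_{s}^{\omega_0}1\,\tilde d_{q,\omega}t=\omega_0-s$ — this follows from the fundamental theorem (Theorem~\ref{qhs:Fundamental}) applied to $f\equiv 1$, or directly from the defining series since $\tilde D_{q,\omega}$ of a linear function is constant. Everything else is a direct transcription of the already-established $q$-symmetric argument, replacing $q^{2n+1}x$ by $\sigma^{2n+1}(x)$, $0$ by $\omega_0$, and Proposition~\ref{qs:desigualdade} by Proposition~\ref{qhs:desigualdade}; uniform differentiability of $g(t,\cdot)$ is exactly what lets us pull the $\varepsilon$-estimate outside the (infinite) sum defining the integral without worrying about interchange of limit and summation.
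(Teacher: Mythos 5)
Your proposal is correct and follows essentially the same route as the paper: the same three-case split ($s>\omega_0$ handled as in Malinowska--Torres, $s=\omega_0$ trivial, $s<\omega_0$ done in detail), the same $\varepsilon/(2(\omega_0-s))$ bound from uniform differentiability, and the same application of Proposition~\ref{qhs:desigualdade} together with $\int_{s}^{\omega_0}1\,\tilde d_{q,\omega}t=\omega_0-s$ to conclude. No gaps.
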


\begin{proof}
For $s>\omega_{0}$ the proof is similar to the proof given in Lemma~3.2 of
\cite{Malinowska:3}. The result is trivial for $s = \omega_{0}$.
Suppose that  $s<\omega _{0}$ and  let $\varepsilon >0$ be arbitrary.
Since $g\left( t,\cdot \right)$ is differentiable at $\theta_{0}$
uniformly in $\left[ s\right] _{q,\omega }$, then there exists $\delta >0$
such that for all $t\in \left[ s\right] _{q,\omega }$ and for
$0<\left\vert \theta -\theta _{0}\right\vert <\delta$
the following inequality holds:
\begin{equation}
\label{qhs:treta}
\left\vert \frac{g\left( t,\theta \right) -g\left( t,\theta _{0}\right) }{
\theta -\theta _{0}}-\partial _{2}g\left( t,\theta_{0}\right) \right\vert
< \frac{\varepsilon}{2\left( \omega _{0}-s\right)}.
\end{equation}
Since, for $0<\left\vert \theta -\theta_{0}\right\vert <\delta $, we have
\begin{align*}
\Biggl\vert &\frac{G\left( \theta \right)-G\left( \theta_{0}\right) }{\theta
-\theta _{0}}-\int_{\omega _{0}}^{s}\partial _{2}g\left( t,\theta_{0}\right)
\tilde{d}_{q,\omega }t \Biggr\vert \\
&=\left\vert
\frac{\int_{\omega _{0}}^{s}g\left( t,\theta \right) \tilde{d}_{q,\omega}t
-\int_{\omega _{0}}^{s}g\left( t,\theta _{0}\right) \tilde{d}_{q,\omega }t}{\theta
-\theta _{0}}-\int_{\omega _{0}}^{s}\partial _{2}g\left( t,\theta_{0}\right)
\tilde{d}_{q,\omega }t\right\vert  \\
&=\left\vert \int_{\omega _{0}}^{s}\left[ \frac{g\left( t,\theta \right)
-g\left( t,\theta _{0}\right) }{\theta -\theta _{0}}
-\partial_{2}g\left(t,\theta _{0}\right) \right] \tilde{d}_{q,\omega }t\right\vert\\
&< \int_{s}^{\omega _{0}}\frac{\varepsilon }{2\left(\omega_{0}- s\right)}
\tilde{d}_{q,\omega }t    \text{\ \ \ \ (using Proposition~\ref{qhs:desigualdade}
and inequality \eqref{qhs:treta})}                              \\
&= \frac{\varepsilon }{2\left(
\omega _{0}-s\right) }\int_{s}^{\omega _{0}}1\tilde{d}_{q,\omega }t\\
&=\frac{\varepsilon }{2}\\
&<\varepsilon \ \
\end{align*}
then we can conclude that
$$
G^{\prime }\left( \theta \right)
=\int_{\omega _{0}}^{s}\partial _{2}g\left( t,\theta_{0}\right) \tilde{d}_{q,\omega}t.
$$
\end{proof}

For an admissible variation $\eta$ and an admissible function $y$,
we define $\phi :\left] -\bar{\epsilon},\bar{\epsilon}\right[
\rightarrow \mathbb{R}$ by $\phi \left( \epsilon \right)
:= \mathcal{L}\left[ y+\epsilon \eta \right]$.
The first variation of functional $\mathcal{L}$
of problem \eqref{qhs:P} is defined by
$\delta \mathcal{L}\left[ y,\eta \right] :=\phi ^{\prime }\left( 0\right)$.
Note that
\begin{align*}
\mathcal{L}\left[ y+\epsilon \eta \right]
&= \int_{a}^{b}L\left( t,y^{\sigma}\left( t\right)
+\epsilon \eta ^{\sigma }\left( t\right),
\tilde{D}_{q,\omega }\left[ y\right] \left( t\right)
+\epsilon \tilde{D}_{q,\omega}
\left[ \eta \right] \left( t\right) \right) \tilde{d}_{q,\omega }t \\
& =\mathcal{L}_{b}\left[ y+\epsilon \eta \right] -\mathcal{L}_{a}\left[
y + \epsilon \eta \right],
\end{align*}
where
\begin{equation*}
\mathcal{L}_{\xi }\left[ y+\epsilon \eta \right]
=\int_{\omega _{0}}^{\xi}L\left( t,y^{\sigma }\left( t\right)
+\epsilon \eta ^{\sigma }\left(t\right), \tilde{D}_{q,\omega }\left[ y\right]\left( t\right)
+\epsilon \tilde{D}_{q,\omega }\left[ \eta \right]\left(t\right)\right)\tilde{d}_{q,\omega }t
\end{equation*}
with $\xi \in \left\{ a,b\right\}$. Therefore,
$\delta \mathcal{L}\left[ y,\eta \right]
=\delta \mathcal{L}_{b}\left[ y,\eta\right]
-\delta \mathcal{L}_{a}\left[ y,\eta \right]$.

The following lemma is a direct consequence of Lemma~\ref{qhs:tecnico}.

\begin{lemma}
\label{qhs:tecnico 2}
For an admissible variation $\eta $ and an admissible function $y$, let
\begin{equation*}
g\left( t,\epsilon \right) :=L\left( t,y^{\sigma }\left( t\right) +\epsilon
\eta ^{\sigma }\left( t\right) ,\tilde{D}_{q,\omega }\left[ y\right] \left(
t\right) +\epsilon \tilde{D}_{q,\omega }\left[\eta\right]\left( t\right) \right) .
\end{equation*}
Assume that
\begin{enumerate}
\item $g\left( t,\cdot \right) $ is differentiable at $\omega_0$
uniformly in $\left[ a,b\right]_{q,\omega }$;

\item $\mathcal{L}_{\xi}\left[ y+\epsilon \eta \right]
=\int_{\omega_{0}}^{\xi}g\left( t,\epsilon \right) \tilde{d}_{q,\omega }t$,
$\xi \in \left\{ a,b\right\}$, exist for $\epsilon \approx 0$;

\item $\int_{\omega _{0}}^{a}\partial_{2}g\left( t,0\right) \tilde{d}_{q,\omega }t$
and $\int_{\omega_{0}}^{b}\partial_{2}g\left( t,0\right)
\tilde{d}_{q,\omega }t$ exist.\\
\end{enumerate}
Then,
\begin{multline*}
\phi^{\prime }\left( 0\right) :=\delta \mathcal{L}\left[ y,\eta \right]
= \int_{a}^{b}\Biggl[\partial _{2}L\left( t,y^{\sigma }\left( t\right) ,
\tilde{D}_{q,\omega }\left[ y\right] \left( t\right) \right)\eta^{\sigma }\left(t\right)\\
+\partial _{3}L\left( t,y^{\sigma }\left( t\right) ,\tilde{D}_{q,\omega }
\left[ y\right] \left( t\right) \right) \tilde{D}_{q,\omega }\left[\eta\right]\left(t\right)
\Biggr]\tilde{d}_{q,\omega }t.
\end{multline*}
\end{lemma}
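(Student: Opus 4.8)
The plan is to compute the first variation $\phi'(0)$ directly from the definition and reduce it to an application of Lemma~\ref{qhs:tecnico}. First I would recall that, by the decomposition established just before the statement,
\[
\mathcal{L}\left[y+\epsilon\eta\right]=\mathcal{L}_{b}\left[y+\epsilon\eta\right]-\mathcal{L}_{a}\left[y+\epsilon\eta\right]
=\int_{\omega_{0}}^{b}g\left(t,\epsilon\right)\tilde{d}_{q,\omega}t-\int_{\omega_{0}}^{a}g\left(t,\epsilon\right)\tilde{d}_{q,\omega}t,
\]
where $g$ is the function defined in the statement; this identity uses hypothesis (2) to guarantee that both Hahn symmetric integrals actually exist for $\epsilon$ near $0$. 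Hence $\phi(\epsilon)=G_{b}(\epsilon)-G_{a}(\epsilon)$ with $G_{\xi}(\epsilon):=\int_{\omega_{0}}^{\xi}g\left(t,\epsilon\right)\tilde{d}_{q,\omega}t$, $\xi\in\{a,b\}$.

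Next I would verify that Lemma~\ref{qhs:tecnico} applies to each $G_{\xi}$ at $\theta_{0}=0$. By hypothesis (1), $g\left(t,\cdot\right)$ is differentiable at $0$ uniformly in $\left[a,b\right]_{q,\omega}=\left[a\right]_{q,\omega}\cup\left[b\right]_{q,\omega}$, so in particular uniformly in each of $\left[a\right]_{q,\omega}$ and $\left[b\right]_{q,\omega}$; and by hypothesis (3), $\int_{\omega_{0}}^{\xi}\partial_{2}g\left(t,0\right)\tilde{d}_{q,\omega}t$ exists for $\xi\in\{a,b\}$. Therefore Lemma~\ref{qhs:tecnico} gives that each $G_{\xi}$ is differentiable at $0$ with
\[
G_{\xi}'\left(0\right)=\int_{\omega_{0}}^{\xi}\partial_{2}g\left(t,0\right)\tilde{d}_{q,\omega}t.
\]
Consequently $\phi'(0)=G_{b}'(0)-G_{a}'(0)=\int_{\omega_{0}}^{b}\partial_{2}g\left(t,0\right)\tilde{d}_{q,\omega}t-\int_{\omega_{0}}^{a}\partial_{2}g\left(t,0\right)\tilde{d}_{q,\omega}t=\int_{a}^{b}\partial_{2}g\left(t,0\right)\tilde{d}_{q,\omega}t$ by the additivity property of the Hahn symmetric integral.

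It then remains to identify $\partial_{2}g\left(t,0\right)$ explicitly. Differentiating $g\left(t,\epsilon\right)=L\left(t,y^{\sigma}\left(t\right)+\epsilon\eta^{\sigma}\left(t\right),\tilde{D}_{q,\omega}\left[y\right]\left(t\right)+\epsilon\tilde{D}_{q,\omega}\left[\eta\right]\left(t\right)\right)$ with respect to $\epsilon$ and using the chain rule (valid since, by (H$_{q,\omega}$1), $L$ is $C^{1}$ in its last two arguments), one gets
\[
\partial_{2}g\left(t,0\right)=\partial_{2}L\left(t,y^{\sigma}\left(t\right),\tilde{D}_{q,\omega}\left[y\right]\left(t\right)\right)\eta^{\sigma}\left(t\right)+\partial_{3}L\left(t,y^{\sigma}\left(t\right),\tilde{D}_{q,\omega}\left[y\right]\left(t\right)\right)\tilde{D}_{q,\omega}\left[\eta\right]\left(t\right).
\]
Substituting this into the expression for $\phi'(0)$ yields exactly the asserted formula for $\delta\mathcal{L}\left[y,\eta\right]$. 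The only genuinely delicate point is the bookkeeping in the second paragraph: one must be careful that the uniform differentiability and integrability hypotheses stated on the whole set $\left[a,b\right]_{q,\omega}$ indeed restrict to the pieces $\left[a\right]_{q,\omega}$ and $\left[b\right]_{q,\omega}$ needed to invoke Lemma~\ref{qhs:tecnico} for $G_{a}$ and $G_{b}$ separately — everything else is a routine application of the chain rule and the linearity/additivity of the Hahn symmetric integral.
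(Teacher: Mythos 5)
Your proof is correct and follows essentially the same route as the paper, which simply declares the lemma ``a direct consequence of Lemma~\ref{qhs:tecnico}'' after establishing the decomposition $\delta\mathcal{L}[y,\eta]=\delta\mathcal{L}_{b}[y,\eta]-\delta\mathcal{L}_{a}[y,\eta]$; you have merely written out the details (application of Lemma~\ref{qhs:tecnico} to each $G_{\xi}$, the chain-rule computation of $\partial_{2}g(t,0)$, and the recombination via the definition of $\int_{a}^{b}$). The bookkeeping point you flag is handled correctly, since $\left[a,b\right]_{q,\omega}=\left[a\right]_{q,\omega}\cup\left[b\right]_{q,\omega}$.
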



\subsection{Optimality conditions}
\label{qhs:sec:o:c}

In this section we present a necessary optimality condition
(the Hanh symmetric Euler--Lagrange equation)
and a sufficient optimality condition to problem \eqref{qhs:P}.

\begin{theorem}[The Hahn symmetric Euler--Lagrange equation]
\label{qhs:Euler}
Under\index{Hahn's symmetric Euler--Lagrange equation}
hypotheses (H$_{q,\omega}$1)-(H$_{q,\omega}$3) and conditions 1 to 3
of Lemma~\ref{qhs:tecnico 2} on the Lagrangian $L$,
if $y_{\ast}\in \mathcal{Y}^{1}\left(\left[ a,b\right]_{q,\omega},
\mathbb{R}\right)$ is a local extremizer
to problem \eqref{qhs:P}, then $y_{\ast }$ satisfies the
Hahn symmetric Euler--Lagrange equation
\begin{equation}
\label{qhs:EqEuler}
\partial_{2}L\left( t,y^{\sigma }\left( t\right) ,\tilde{D}_{q,\omega }
\left[ y\right] \left( t\right) \right)
=\tilde{D}_{q,\omega }\left[\tau \mapsto
\partial_{3}L\left( \sigma\left(\tau \right),
y^{\sigma ^{2}}\left( \tau \right),
\left(\tilde{D}_{q,\omega }\left[ y\right] \right) ^{\sigma }\left(
\tau\right) \right) \right] \left( t\right)
\end{equation}
for all $t\in \left[ a,b\right]_{q,\omega}$.
\end{theorem}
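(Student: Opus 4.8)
The plan is to follow the same pattern as the proof of the $q$-symmetric Euler--Lagrange equation (Theorem~\ref{qs:Euler}), adapted to the Hahn symmetric setting with the operator $\sigma(t)=qt+\omega$. First I would let $y_\ast$ be a local extremizer to problem \eqref{qhs:P} and $\eta$ an arbitrary admissible variation, so that $\eta(a)=\eta(b)=0$. Setting $\phi(\epsilon):=\mathcal{L}\left[y_\ast+\epsilon\eta\right]$, a necessary condition for $y_\ast$ to be a local extremizer is $\phi'(0)=0$. Invoking Lemma~\ref{qhs:tecnico 2} (whose hypotheses are guaranteed by (H$_{q,\omega}$1)--(H$_{q,\omega}$3) together with conditions 1--3), we obtain
\begin{equation*}
\int_{a}^{b}\left[\partial_{2}L\left(t,y_\ast^{\sigma}(t),\tilde{D}_{q,\omega}[y_\ast](t)\right)\eta^{\sigma}(t)+\partial_{3}L\left(t,y_\ast^{\sigma}(t),\tilde{D}_{q,\omega}[y_\ast](t)\right)\tilde{D}_{q,\omega}[\eta](t)\right]\tilde{d}_{q,\omega}t=0.
\end{equation*}

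Next I would remove the $\tilde{D}_{q,\omega}[\eta]$ term using Hahn's symmetric integration by parts in the form \eqref{qhs:partes}, namely $\int_a^b f(t)\tilde{D}_{q,\omega}[g](t)\tilde{d}_{q,\omega}t = f^{\sigma}(t)g(t)\big|_a^b - q\int_a^b \left(\tilde{D}_{q,\omega}[f]\right)^{\sigma}(t)g^{\sigma}(t)\tilde{d}_{q,\omega}t$, applied with $f(t)=\partial_3 L(t,y_\ast^{\sigma}(t),\tilde{D}_{q,\omega}[y_\ast](t))$ and $g=\eta$. Since $\eta(a)=\eta(b)=0$, the boundary term vanishes, and one is left with
\begin{equation*}
\int_{a}^{b}\left[\partial_{2}L\left(t,y_\ast^{\sigma}(t),\tilde{D}_{q,\omega}[y_\ast](t)\right)-q\left(\tilde{D}_{q,\omega}\left[\tau\mapsto\partial_{3}L\left(\tau,y_\ast^{\sigma}(\tau),\tilde{D}_{q,\omega}[y_\ast](\tau)\right)\right]\right)^{\sigma}(t)\right]\eta^{\sigma}(t)\tilde{d}_{q,\omega}t=0.
\end{equation*}
Because $\eta$ is an arbitrary admissible variation, $\eta^{\sigma}$ ranges over a large enough class of functions that vanish at the endpoints; applying the fundamental lemma of the Hahn symmetric variational calculus (Lemma~\ref{qhs:fundamental}) yields
\begin{equation*}
\partial_{2}L\left(t,y_\ast^{\sigma}(t),\tilde{D}_{q,\omega}[y_\ast](t)\right)=q\left(\tilde{D}_{q,\omega}\left[\tau\mapsto\partial_{3}L\left(\tau,y_\ast^{\sigma}(\tau),\tilde{D}_{q,\omega}[y_\ast](\tau)\right)\right]\right)^{\sigma}(t)
\end{equation*}
for all $t\in[a,b]_{q,\omega}$.

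Finally I would convert the right-hand side into the desired form using Lemma~\ref{qhs:composta}, which states $\tilde{D}_{q,\omega}[f^{\sigma}](t)=q\tilde{D}_{q,\omega}[f](\sigma(t))$; equivalently, $q\left(\tilde{D}_{q,\omega}[f]\right)^{\sigma}(t)=\tilde{D}_{q,\omega}[f^{\sigma}](t)$. Taking $f(\tau)=\partial_3 L(\tau,y_\ast^{\sigma}(\tau),\tilde{D}_{q,\omega}[y_\ast](\tau))$, so that $f^{\sigma}(\tau)=\partial_3 L(\sigma(\tau),y_\ast^{\sigma^2}(\tau),(\tilde{D}_{q,\omega}[y_\ast])^{\sigma}(\tau))$, the right-hand side becomes exactly $\tilde{D}_{q,\omega}\left[\tau\mapsto\partial_{3}L\left(\sigma(\tau),y_\ast^{\sigma^{2}}(\tau),\left(\tilde{D}_{q,\omega}[y_\ast]\right)^{\sigma}(\tau)\right)\right](t)$, giving \eqref{qhs:EqEuler}. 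The main obstacle I anticipate is the integration-by-parts step: one must be careful that the composed functions $\tau\mapsto\partial_3 L(\tau,y_\ast^{\sigma}(\tau),\tilde{D}_{q,\omega}[y_\ast](\tau))$ lie in $\mathcal{Y}^1$ (this is exactly hypothesis (H$_{q,\omega}$3), ensuring the needed continuity at $\omega_0$ so that formula \eqref{qhs:partes} applies), and that in Lemma~\ref{qhs:fundamental} the test functions $h=\eta^{\sigma}$ obtained from admissible variations are rich enough — but since $\eta$ itself ranges over all of $\mathcal{Y}^1$ with $\eta(a)=\eta(b)=0$, and the lemma only needs the values of $h$ on $[a,b]_{q,\omega}$ (with $h(a)=h(b)=0$), this poses no genuine difficulty.
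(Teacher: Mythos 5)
Your proposal is correct and follows essentially the same route as the paper's own proof: compute the first variation via Lemma~\ref{qhs:tecnico 2}, integrate by parts with formula \eqref{qhs:partes} so the boundary term dies from $\eta(a)=\eta(b)=0$, apply the fundamental lemma (Lemma~\ref{qhs:fundamental}) to the factor multiplying $\eta^{\sigma}$, and finish by rewriting $q\left(\tilde{D}_{q,\omega}\left[f\right]\right)^{\sigma}$ as $\tilde{D}_{q,\omega}\left[f^{\sigma}\right]$ via Lemma~\ref{qhs:composta}. No gaps.
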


\begin{proof}
Let $y_{\ast}$ be a local minimizer (resp. maximizer) to problem \eqref{qhs:P}
and $\eta $ an admissible variation. Define
$\phi : \mathbb{R} \rightarrow \mathbb{R} $ by
$\phi \left( \epsilon \right) :=\mathcal{L}\left[ y_{\ast}
+\epsilon \eta \right]$.
A necessary condition for $y_{\ast}$ to be an extremizer is given by
$\phi^{\prime }\left( 0\right) =0$. By Lemma~\ref{qhs:tecnico 2},
\begin{multline*}
\int_{a}^{b}\Biggl[ \partial_{2}L\left( t,y_{\ast}^{\sigma }\left( t\right),
\tilde{D}_{q,\omega }\left[ y_{\ast}\right] \left( t\right) \right)
\eta^{\sigma }\left(t\right) \\
+\partial_{3}L\left( t,y_{\ast}^{\sigma }\left( t\right),
\tilde{D}_{q,\omega }\left[ y_{\ast}\right] \left( t\right) \right)
\tilde{D}_{q,\omega}\left[\eta\right] \left(
t\right) \Biggr] \tilde{d}_{q,\omega }t =0.
\end{multline*}
Using the integration by parts formula \eqref{qhs:partes}, we get
\begin{align*}
\int_{a}^{b} & \partial _{3}L\left( t,y_{\ast}^{\sigma }\left( t\right),
\tilde{D}_{q,\omega }\left[ y_{\ast}\right] \left( t\right) \right)
\tilde{D}_{q,\omega}\left[\eta\right] \left( t\right) \tilde{d}_{q,\omega }t \\
& =\partial _{3}L\left( \sigma \left( t\right), y_{\ast}^{\sigma ^{2}}\left(
t\right), \left( \tilde{D}_{q,\omega }\left[
y_{\ast}\right] \right)^{\sigma}\left( t\right) \right)
\eta \left( t\right) \bigg|_{a}^{b}\\
&\qquad - q\int_{a}^{b}\left( \tilde{D}_{q,\omega }\left[\tau
\mapsto \partial_{3}L\left(\tau ,y_{\ast}^{\sigma }\left( \tau \right),
\left( \tilde{D}_{q,\omega }\left[ y_{\ast}\right] \right) \left(
\tau \right) \right) \right] \right)^{\sigma}\left( t\right)
\eta^{\sigma }\left( t\right) \tilde{d}_{q,\omega }t.
\end{align*}
Since $\eta \left( a\right) =\eta \left( b\right) =0$, then
\begin{multline*}
\int_{a}^{b}\bigg[\partial _{2}L\left( t,y_{\ast}^{\sigma }\left( t\right) ,
\tilde{D}_{q,\omega }\left[ y_{\ast}\right] \left( t\right) \right)\\
-q\left( \tilde{D}_{q,\omega }\left[ \tau \mapsto \partial_{3}L\left(\tau,
y_{\ast}^{\sigma}\left(\tau\right), \left( \tilde{D}_{q,\omega }\left[ y_{\ast}\right] \right)
\left( \tau \right) \right) \right] \right) ^{\sigma }\left( t\right) \bigg]
\eta^{\sigma }\left( t\right) \tilde{d}_{q,\omega }t = 0
\end{multline*}
and by Lemma~\ref{qhs:fundamental} we get
\begin{eqnarray*}
&\partial _{2}L\left( t,y_{\ast}^{\sigma }\left( t\right) ,\tilde{D}_{q,\omega }
\left[ y_{\ast}\right] \left( t\right) \right) =q\left( \tilde{D}_{q,\omega }\left[
\tau \mapsto \partial _{3}L\left(\tau, y_{\ast}^{\sigma }\left(\tau\right),
\tilde{D}_{q,\omega}\left[ y_{\ast}\right]\left(\tau\right)\right)
\right]\right)^{\sigma }\left( t\right)
\end{eqnarray*}
for all $t\in \left[ a,b\right]_{q,\omega}$.
Finally, using Lemma~\ref{qhs:composta}, we conclude that
\begin{align*}
\partial _{2}L\left( t,y_{\ast}^{\sigma }\left( t\right) ,
\tilde{D}_{q,\omega }\left[ y_{\ast}\right] \left( t\right) \right)
=\tilde{D}_{q,\omega}\left[ \tau \mapsto
\partial_{3}L\left( \sigma\left(\tau \right),
y_{\ast}^{\sigma ^{2}}\left(\tau\right),\left(\tilde{D}_{q,
\omega}\left[y_{\ast}\right]\right)^{\sigma}\left(
\tau\right)\right)\right] \left( t\right).
\end{align*}
\end{proof}

The particular case $\omega =0$ gives the
$q$-symmetric Euler--Lagrange equation (see Theorem~\ref{qs:Euler}).

\begin{corollary}[The $q$-symmetric Euler--Lagrange equation \cite{Brito:da:Cruz:2}]
\label{qhs:q-Euler}
Let $\omega =0$. Under hypotheses
(H$_{q,\omega}$1)--(H$_{q,\omega}$3) and conditions 1 to 3 of
Lemma~\ref{qhs:tecnico 2} on the Lagrangian $L$,
if $y_{\ast }\in \mathcal{Y}^{1}\left(\left[ a,b\right]_{q,0},
\mathbb{R}\right)$ is a local extremizer to problem \eqref{qhs:P}
(with $\omega =0$), then $y_{\ast }$ satisfies
the $q$-symmetric Euler--Lagrange equation
\begin{equation*}
\partial_{2}L\left( t,y\left( qt\right) ,\tilde{D}_{q}\left[ y\right]
\left( t\right) \right) =\tilde{D}_{q}\left[ \tau \mapsto
\partial_{3}L\left( q\tau, y\left( q^{2}\tau \right),
\tilde{D}_{q}\left[ y\right] \left( q\tau\right)
\right) \right] \left( t\right)
\end{equation*}
for all $t\in \left[ a,b\right]_{q}$.
\end{corollary}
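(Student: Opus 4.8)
The statement to prove is Corollary~\ref{qhs:q-Euler}, which asserts that the $q$-symmetric Euler--Lagrange equation is recovered from the Hahn symmetric Euler--Lagrange equation \eqref{qhs:EqEuler} by setting $\omega = 0$. The plan is to obtain this as a straightforward specialisation of Theorem~\ref{qhs:Euler}, using the identifications between the Hahn symmetric objects and the $q$-symmetric objects already recorded in the remarks of Section~\ref{qhs:Pre}.

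First I would note that, by the Remark following Definition~\ref{qhs:def:dhsd}, when $\omega = 0$ we have $\omega_0 = \frac{\omega}{1-q} = 0$, the operator $\sigma$ reduces to $\sigma(t) = qt$, its inverse to $\sigma^{-1}(t) = q^{-1}t$, and $\tilde{D}_{q,0} = \tilde{D}_q$ (the $q$-symmetric derivative of Definition~\ref{q:symmetric:derivative}). Likewise, by the Remark following the definition of the Hahn symmetric integral, $\int_a^b f(t)\,\tilde{d}_{q,0}t = \int_a^b f(t)\,\tilde{d}_q t$, so the functional $\mathcal{L}$ of \eqref{qhs:P} with $\omega = 0$ is exactly the functional of \eqref{qs:P}, and the admissible class $\mathcal{Y}^1([a,b]_{q,0},\mathbb{R})$ coincides with $\mathcal{Y}^1([a,b]_q,\mathbb{R})$ since $[a,b]_{q,0} = [a,b]_q$. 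Finally, hypotheses (H$_{q,\omega}$1)--(H$_{q,\omega}$3) with $\omega = 0$ are precisely (H$_q$1)--(H$_q$3), and conditions 1 to 3 of Lemma~\ref{qhs:tecnico 2} specialise to the corresponding conditions.

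Then I would simply apply Theorem~\ref{qhs:Euler} with $\omega = 0$: since $y_\ast \in \mathcal{Y}^1([a,b]_{q,0},\mathbb{R})$ is a local extremizer to \eqref{qhs:P} with $\omega = 0$, it satisfies \eqref{qhs:EqEuler}, which after substituting $\sigma(\tau) = q\tau$, $\sigma^2(\tau) = q^2\tau$, $\tilde{D}_{q,0} = \tilde{D}_q$, and $(\tilde{D}_{q,0}[y])^{\sigma}(\tau) = \tilde{D}_q[y](q\tau)$ becomes
\[
\partial_{2}L\left( t, y(qt), \tilde{D}_{q}[y](t) \right)
=\tilde{D}_{q}\left[ \tau \mapsto \partial_{3}L\left( q\tau, y(q^2\tau), \tilde{D}_{q}[y](q\tau) \right) \right](t)
\]
for all $t \in [a,b]_{q,0} = [a,b]_q$, which is the claimed $q$-symmetric Euler--Lagrange equation. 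I do not anticipate a genuine obstacle here; the only point requiring a little care is checking that the term $\left(\tilde{D}_{q,\omega}[y]\right)^{\sigma}(\tau)$ in \eqref{qhs:EqEuler} indeed unwinds to $\tilde{D}_q[y](q\tau)$ when $\omega = 0$, which is immediate from the definition of the $\sigma$-composition notation, and confirming that this matches verbatim the equation \eqref{qs:eqEuler} previously obtained in Theorem~\ref{qs:Euler} (it does, via the identity $\tilde{D}_q[f^\sigma](t) = q\,\tilde{D}_q[f](qt)$ already used there). Hence the corollary follows directly, and the proof is essentially a one-line invocation of Theorem~\ref{qhs:Euler} together with the $\omega = 0$ specialisation remarks.
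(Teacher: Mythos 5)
Your proof is correct and matches the paper's treatment: the corollary is presented there as an immediate specialisation of Theorem~\ref{qhs:Euler} to $\omega=0$ (with no separate proof given), and your careful unwinding of $\sigma$, $\tilde{D}_{q,0}$, the integral, and the set $[a,b]_{q,0}$ into their $q$-symmetric counterparts is exactly the intended argument. Nothing is missing.
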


To conclude this section, we prove
a sufficient optimality condition to \eqref{qhs:P}.

\begin{theorem}
\label{qhs:Suficiente}
Suppose that $a<b$ and $a,b\in \lbrack c]_{q,\omega}$ for some $c\in I$.
Also, assume that $L$ is a jointly convex (resp. concave) function in
$\left(u,v\right) $. If $y_{\ast }$ satisfies the Hahn symmetric Euler--Lagrange
equation \eqref{qhs:EqEuler}, then $y_{\ast }$ is a global minimizer (resp.
maximizer) to problem \eqref{qhs:P}.
\end{theorem}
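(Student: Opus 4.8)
The plan is to mirror the proof of the sufficient condition for the $q$-symmetric case (Theorem~\ref{qs:suff}), which in turn follows the classical pattern: use joint convexity of the Lagrangian to bound the increment $\mathcal{L}(y_{\ast}+\eta)-\mathcal{L}(y_{\ast})$ from below, then integrate by parts to extract the Euler--Lagrange expression, and finally invoke the boundary conditions on the admissible variation $\eta$ together with equation~\eqref{qhs:EqEuler} to conclude nonnegativity. Throughout, $\eta$ denotes an arbitrary admissible variation, so that $y_{\ast}+\eta$ is admissible and $\eta(a)=\eta(b)=0$; the hypothesis $a,b\in[c]_{q,\omega}$ guarantees the relevant Hahn symmetric integrals are well behaved (in particular that the integration by parts formula \eqref{qhs:partes} applies with the endpoint terms meaningful).

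First I would write, for the jointly convex case,
\begin{align*}
\mathcal{L}(y_{\ast}+\eta)-\mathcal{L}(y_{\ast})
&=\int_{a}^{b}\Big[L\big(t,y_{\ast}^{\sigma}(t)+\eta^{\sigma}(t),\tilde{D}_{q,\omega}[y_{\ast}](t)+\tilde{D}_{q,\omega}[\eta](t)\big)\\
&\qquad\qquad -L\big(t,y_{\ast}^{\sigma}(t),\tilde{D}_{q,\omega}[y_{\ast}](t)\big)\Big]\tilde{d}_{q,\omega}t\\
&\geqslant\int_{a}^{b}\Big[\partial_{2}L\big(t,y_{\ast}^{\sigma}(t),\tilde{D}_{q,\omega}[y_{\ast}](t)\big)\eta^{\sigma}(t)\\
&\qquad\qquad +\partial_{3}L\big(t,y_{\ast}^{\sigma}(t),\tilde{D}_{q,\omega}[y_{\ast}](t)\big)\tilde{D}_{q,\omega}[\eta](t)\Big]\tilde{d}_{q,\omega}t,
\end{align*}
where the inequality is exactly the defining property of a jointly convex $L$ applied pointwise with $u=y_{\ast}^{\sigma}(t)$, $v=\tilde{D}_{q,\omega}[y_{\ast}](t)$, $u_{1}=\eta^{\sigma}(t)$, $v_{1}=\tilde{D}_{q,\omega}[\eta](t)$, followed by monotonicity of the Hahn symmetric integral on $[a,b]$ (which is legitimate since $a,b\in[c]_{q,\omega}$, via the positivity results preceding Theorem~\ref{qhs:Fundamental}).

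Next I would apply the Hahn symmetric integration by parts formula \eqref{qhs:partes} to the second term, with $f(\tau)=\partial_{3}L\big(\tau,y_{\ast}^{\sigma}(\tau),\tilde{D}_{q,\omega}[y_{\ast}](\tau)\big)$ and $g=\eta$, obtaining
\[
\int_{a}^{b}f(t)\tilde{D}_{q,\omega}[\eta](t)\tilde{d}_{q,\omega}t
=f^{\sigma}(t)\eta(t)\Big|_{a}^{b}-q\int_{a}^{b}\big(\tilde{D}_{q,\omega}[f]\big)^{\sigma}(t)\eta^{\sigma}(t)\tilde{d}_{q,\omega}t.
\]
The boundary term vanishes because $\eta(a)=\eta(b)=0$. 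Substituting back and using Lemma~\ref{qhs:composta} to rewrite $q\big(\tilde{D}_{q,\omega}[f]\big)^{\sigma}$ as $\tilde{D}_{q,\omega}[f^{\sigma}]$ — i.e. precisely the right-hand side of \eqref{qhs:EqEuler} with $f^{\sigma}(\tau)=\partial_{3}L(\sigma(\tau),y_{\ast}^{\sigma^{2}}(\tau),(\tilde{D}_{q,\omega}[y_{\ast}])^{\sigma}(\tau))$ — collects the integrand into
\[
\Big[\partial_{2}L\big(t,y_{\ast}^{\sigma}(t),\tilde{D}_{q,\omega}[y_{\ast}](t)\big)-\tilde{D}_{q,\omega}\big[\tau\mapsto\partial_{3}L(\sigma(\tau),y_{\ast}^{\sigma^{2}}(\tau),(\tilde{D}_{q,\omega}[y_{\ast}])^{\sigma}(\tau))\big](t)\Big]\eta^{\sigma}(t),
\]
which is identically zero on $[a,b]_{q,\omega}$ since $y_{\ast}$ satisfies \eqref{qhs:EqEuler}. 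Hence $\mathcal{L}(y_{\ast}+\eta)-\mathcal{L}(y_{\ast})\geqslant 0$ for every admissible variation $\eta$, so $y_{\ast}$ is a global minimizer; the concave case is obtained by reversing all inequalities, giving a global maximizer. The main obstacle, and the one point needing care rather than routine manipulation, is justifying that the integration by parts identity and the termwise passage of the integral through the convexity inequality are valid on the set $[a,b]_{q,\omega}$ under the standing hypotheses — this is where the assumption $a,b\in[c]_{q,\omega}$ does its work, ensuring the series defining the Hahn symmetric integral behave well and that the endpoint evaluations in \eqref{qhs:partes} make sense; everything else is a direct transcription of the $q$-symmetric argument.
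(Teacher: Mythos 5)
Your proposal is correct and follows essentially the same route as the paper's own proof: the pointwise joint-convexity inequality, monotonicity of the Hahn symmetric integral (justified by the hypothesis $a,b\in[c]_{q,\omega}$), integration by parts via \eqref{qhs:partes} together with Lemma~\ref{qhs:composta}, vanishing of the boundary term since $\eta(a)=\eta(b)=0$, and finally the Euler--Lagrange equation \eqref{qhs:EqEuler} to kill the remaining integrand. No substantive differences to report.
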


\begin{proof}
Let $L$ be a jointly convex function in $\left( u,v\right) $
(the concave case is similar). Then, for any admissible variation $\eta$, we have
\begin{align*}
\mathcal{L}[ y_{\ast} &+ \eta ]
-\mathcal{L}\left[ y_{\ast}\right]  \\
&= \int_{a}^{b}\Biggl(L\left( t,y_{\ast }^{\sigma }\left( t\right) +\eta ^{\sigma }\left(
t\right) ,\tilde{D}_{q,\omega }\left[ y_{\ast }\right] \left( t\right)
+\tilde{D}_{q,\omega }\left[ \eta \right] \left( t\right) \right)\\
&\qquad\qquad -L\left(t,y_{\ast }^{\sigma}\left( t\right),
\tilde{D}_{q,\omega }\left[ y_{\ast }\right] \left( t\right)\right)\Biggr)\tilde{d}_{q,\omega }t\\
& \geqslant \int_{a}^{b}\Biggl( \partial _{2}L\left( t,y_{\ast }^{\sigma }\left( t\right) ,
\tilde{D}_{q,\omega }\left[ y_{\ast }\right] \left( t\right) \right) \eta^{\sigma}\left( t\right)\\
&\qquad\qquad +\partial_{3}L\left( t,y_{\ast }^{\sigma }\left( t\right),
\tilde{D}_{q,\omega }\left[ y_{\ast }\right] \left( t\right) \right) \tilde{D}_{q,\omega}
\left[ \eta \right] \left( t\right) \Biggr) \tilde{d}_{q,\omega }t.
\end{align*}
Using the integration by parts formula \eqref{qhs:partes} and Lemma~\ref{qhs:composta}, we get
\begin{align*}
\mathcal{L}\left[ y_{\ast }+\eta \right] &-\mathcal{L}\left[ y_{\ast }\right]
\geqslant \partial_{3}L\left( \sigma \left( t\right) ,y_{\ast }^{\sigma^{2}}\left( t\right)
,\left( \tilde{D}_{q,\omega }\left[ y_{\ast }\right] \right)^{\sigma }\left( t\right) \right)
\eta \left( t\right) \bigg|_{a}^{b} \\
& +\int_{a}^{b}\bigg[\partial _{2}L\left( t,y_{\ast }^{\sigma }\left(t\right)
,\tilde{D}_{q,\omega }\left[ y_{\ast }\right] \left( t\right)\right)\\
& -\tilde{D}_{q,\omega }\left[\tau \mapsto \partial_{3}L\left( \sigma\left(\tau \right),
y_{\ast }^{\sigma ^{2}}\left(\tau\right), \left( \tilde{D}_{q,\omega }\left[ y_{\ast }
\right] \right) ^{\sigma }\left(\tau\right) \right) \left( t\right)
\right] \bigg]\eta ^{\sigma }\left( t\right) \tilde{d}_{q,\omega }t.
\end{align*}
Since $y_{\ast }$ satisfies \eqref{qhs:EqEuler} and $\eta $ is an admissible variation, we obtain
\begin{equation*}
\mathcal{L}\left[ y_{\ast }+\eta \right]
-\mathcal{L}\left[ y_{\ast }\right] \geqslant 0,
\end{equation*}
proving that $y_{*}$ is a minimizer to problem \eqref{qhs:P}.
\end{proof}

\begin{ex}
Let $q\in \left] 0,1\right[ $ and $\omega \geqslant 0$ be  fixed real numbers.
Also, let $I \subseteq \mathbb{R}$ be an interval such that
$a:=\omega_{0},b\in I$ and $a<b$. Consider the problem
\begin{equation}
\label{qhs:eq:prb:ex1}
\left\{
\begin{array}[c]{l}
\mathcal{L}\left[ y\right] =\displaystyle\int_{a}^{b} \sqrt{1+\left( \tilde{D}_{q,\omega }
\left[ y\right] \left( t\right) \right) ^{2}} \tilde{d}_{q,\omega}t
\longrightarrow \min \\
\\
y\in \mathcal{Y}^{1}\left( \left[ a,b\right]_{q,\omega },\mathbb{R}\right)\\
\\
y\left( a\right) =a,\quad y\left( b\right) =b.
\end{array}
\right.
\end{equation}

If $y_{\ast }$ is a local minimizer to the problem,
then $y_{\ast}$ satisfies the Hahn symmetric Euler--Lagrange equation
\begin{equation}
\label{qhs:eq:el:ex1}
\tilde{D}_{q,\omega }\left[ \tau \mapsto
\frac{\left(\tilde{D}_{q,\omega }\left[ y\right]
\right)^{\sigma }\left( \tau \right)}{\sqrt{1+\left(\left( \tilde{D}_{q,\omega }\left[ y\right]
\right)^{\sigma }\left( \tau \right)\right) ^{2}}} \right] \left( t\right)
=0 \ \text{ for all }\ t \in \left[ a,b\right]_{q,\omega}.
\end{equation}
It is simple to check that function $y_{\ast }\left( t\right) =t$
is a solution to \eqref{qhs:eq:el:ex1} satisfying the given boundary conditions.
Since the Lagrangian is jointly convex in $(u,v)$, then we conclude from
Theorem~\ref{qhs:Suficiente} that function $y_{\ast }\left( t\right) =t$
is indeed a minimizer to problem \eqref{qhs:eq:prb:ex1}.
\end{ex}


\subsection{Leitmann's direct method}
\label{qhs:L}

Similarly to Malinowska and Torres \cite{Malinowska:3},
we show that Leitmann's direct method \cite{Leitmann3}
has also applications in the Hahn symmetric variational calculus.
Consider the variational functional integral
\begin{equation*}
\mathcal{\bar{L}}\left[ \bar{y}\right] =\int_{a}^{b}\bar{L}\left(
t,\bar{y}^{\sigma }\left( t\right) ,\tilde{D}_{q,\omega }\left[
\bar{y}\right]\left(t\right) \right) \tilde{d}_{q,\omega }t.
\end{equation*}
As before, we assume that function $\bar{L}:I\times
\mathbb{R} \times \mathbb{R} \rightarrow \mathbb{R}$
satisfies the following hypotheses:
\begin{enumerate}
\item[($\overline{\text{H$_{q,\omega}$1}}$)] $\left( u,v\right) \rightarrow \bar{L}\left( t,u,v\right) $ is a
$C^{1}\left( \mathbb{R}^{2},\mathbb{R}\right)$ function for any $t\in I $;

\item[($\overline{\text{H$_{q,\omega}$2}}$)] $t\rightarrow \bar{L}\left( t,\bar{y}^{\sigma }\left( t\right),
\tilde{D}_{q,\omega }\left[ \bar{y}\right] \left( t\right) \right) $ is
continuous at $\omega _{0}$ for any admissible function $\bar{y}$;

\item[($\overline{\text{H$_{q,\omega}$3}}$)] functions
$t\rightarrow \partial _{i+2}\bar{L}\left(t,
\bar{y}^{\sigma }\left( t\right),
\tilde{D}_{q,\omega }\left[ \bar{y}\right] \left(
t\right) \right)$ belong to
$\mathcal{Y}^{1}\left(\left[a,b\right]_{q,\omega},\mathbb{R}\right)$
for all admissible $\bar{y}$, $i=0,1$.
\end{enumerate}

\begin{lemma}[Leitmann's fundamental lemma via Hahn's symmetric quantum operator]
Let $y=z\left( t,\bar{y}\right) $ be a transformation having a unique
inverse $\bar{y}=\bar{z}\left( t,y\right)$ for all
$t\in \left[ a,b\right]_{q,\omega }$, such that there
is a one-to-one correspondence
\begin{equation*}
y\left( t\right) \leftrightarrow\bar{y}\left( t\right)
\end{equation*}
for all functions $y\in \mathcal{Y}^{1}\left(\left[a,b\right]_{q,\omega},\mathbb{R}\right)$
satisfying the boundary conditions $y\left( a\right) =\alpha $ and $y\left( b\right) =\beta$
and all functions $\bar{y}\in \mathcal{Y}^{1}\left(\left[a,b\right]_{q,\omega},\mathbb{R}\right)$
satisfying
\begin{equation}
\label{qhs:leit}
\bar{y}\left(a\right)=\bar{z}\left( a,\alpha \right)
\text{ and }\bar{y}\left(b\right)=\bar{z}\left(
b,\beta \right) \text{.}
\end{equation}
If the transformation $y=z\left( t,\bar{y}\right)$ is such that there
exists a function $G:I\times \mathbb{R} \rightarrow \mathbb{R}$
satisfying the identity
\begin{equation*}
L\left( t,y^{\sigma }\left( t\right) ,\tilde{D}_{q,\omega }\left[ y\right]
\left( t\right) \right) -\bar{L}\left( t,\bar{y}^{\sigma }\left( t\right),
\tilde{D}_{q,\omega }\left[ \bar{y}\right] \left( t\right) \right)
=\tilde{D}_{q,\omega }\left[\tau \mapsto G\left( \tau,\bar{y}\left( \tau\right) \right)\right](t) ,
\end{equation*}
for all $t\in\left[a,b\right]_{q,\omega}$, then if $\bar{y}_{\ast}$
is a maximizer (resp. a minimizer) of $\mathcal{\bar{L}}$ with
$\bar{y}_{\ast }$ satisfying \eqref{qhs:leit}, $y_{\ast }=z\left( t,\bar{y}_{\ast}\right)$
is a maximizer (resp. a minimizer) of $\mathcal{L}$ for $y_{\ast }$ satisfying
$y_{\ast }\left( a\right) =\alpha$ and $y_{\ast }\left( b\right) =\beta$.
\end{lemma}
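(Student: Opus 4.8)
The plan is to follow the classical Leitmann argument, adapted to the Hahn symmetric quantum setting, exploiting only the linearity of the Hahn symmetric integral, the fundamental theorem of Hahn's symmetric integral calculus (Theorem~\ref{qhs:Fundamental}), and the assumed one-to-one correspondence between admissible functions. First I would fix an arbitrary admissible $y\in\mathcal{Y}^{1}\left(\left[a,b\right]_{q,\omega},\mathbb{R}\right)$ with $y(a)=\alpha$, $y(b)=\beta$, and let $\bar{y}=\bar{z}(t,y)$ be its unique counterpart. By hypothesis $\bar{y}$ satisfies the transformed boundary conditions \eqref{qhs:leit}, and conversely every $\bar{y}$ meeting \eqref{qhs:leit} arises this way.

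Next I would compute the difference of the two functionals along this correspondence. Using the pointwise identity assumed in the lemma,
\begin{equation*}
L\left( t,y^{\sigma }(t),\tilde{D}_{q,\omega }[y](t)\right)
-\bar{L}\left( t,\bar{y}^{\sigma }(t),\tilde{D}_{q,\omega }[\bar{y}](t)\right)
=\tilde{D}_{q,\omega }\left[\tau\mapsto G(\tau,\bar{y}(\tau))\right](t)
\end{equation*}
for all $t\in\left[a,b\right]_{q,\omega}$. Integrating both sides from $a$ to $b$ with respect to $\tilde{d}_{q,\omega}t$, and invoking linearity of the Hahn symmetric integral together with the second (converse) part of Theorem~\ref{qhs:Fundamental}, I get
\begin{equation*}
\mathcal{L}[y]-\mathcal{\bar{L}}[\bar{y}]
=\int_{a}^{b}\tilde{D}_{q,\omega }\left[\tau\mapsto G(\tau,\bar{y}(\tau))\right](t)\,\tilde{d}_{q,\omega}t
=G\left(b,\bar{y}(b)\right)-G\left(a,\bar{y}(a)\right).
\end{equation*}
Since $\bar{y}(a)=\bar{z}(a,\alpha)$ and $\bar{y}(b)=\bar{z}(b,\beta)$ are determined solely by the fixed boundary data and the transformation, the right-hand side is a constant $c$ independent of the particular admissible pair $(y,\bar{y})$. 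Hence $\mathcal{L}[y]=\mathcal{\bar{L}}[\bar{y}]+c$ for every admissible $y$.

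From this constant-difference relation the conclusion follows immediately: if $\bar{y}_{\ast}$ maximizes (resp. minimizes) $\mathcal{\bar{L}}$ among all $\bar{y}$ satisfying \eqref{qhs:leit}, then for $y_{\ast}=z(t,\bar{y}_{\ast})$ and any admissible $y$ with counterpart $\bar{y}$ we have $\mathcal{L}[y_{\ast}]=\mathcal{\bar{L}}[\bar{y}_{\ast}]+c\geqslant\mathcal{\bar{L}}[\bar{y}]+c=\mathcal{L}[y]$ (resp. $\leqslant$), so $y_{\ast}$ is a maximizer (resp. minimizer) of $\mathcal{L}$ subject to $y_{\ast}(a)=\alpha$, $y_{\ast}(b)=\beta$. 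I would note that the hypotheses $(\overline{\text{H$_{q,\omega}$1}})$--$(\overline{\text{H$_{q,\omega}$3}})$ together with $(\text{H}_{q,\omega}1)$--$(\text{H}_{q,\omega}3)$ ensure all the integrals involved exist and the terms $G(b,\bar{y}(b))$, $G(a,\bar{y}(a))$ are well defined. The only genuinely delicate point — and the one I would state carefully rather than belabor — is the verification that the converse direction of the fundamental theorem applies to the function $\tau\mapsto G(\tau,\bar{y}(\tau))$, i.e. that this composite is Hahn symmetric differentiable on $\left[a,b\right]_{q,\omega}$ with the displayed derivative; this is exactly what the identity in the lemma's statement asserts, so no extra regularity argument is needed, but it is worth flagging that the whole method rests on being able to exhibit such a $G$ in concrete examples.
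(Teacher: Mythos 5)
Your proof is correct and follows essentially the same route as the paper: integrate the assumed identity over $[a,b]$, apply the converse part of Theorem~\ref{qhs:Fundamental} to evaluate the right-hand side as $G(b,\bar{z}(b,\beta))-G(a,\bar{z}(a,\alpha))$, and observe this constant is independent of the admissible pair. The only difference is cosmetic: you spell out the final inequality chain and flag the applicability of the fundamental theorem, both of which the paper leaves implicit.
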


\begin{proof}
Suppose $y\in \mathcal{Y}^{1}\left(
\left[ a,b\right]_{q,\omega },\mathbb{R}\right)$
satisfies the boundary conditions
$y\left( a\right) =\alpha $ and $y\left( b\right) =\beta$.
Define function $\bar{y}\in \mathcal{Y}^{1}\left(
\left[ a,b\right]_{q,\omega },\mathbb{R}\right)$
through the formula $\bar{y}=\bar{z}\left(
t,y\right) $, $t\in \left[ a,b\right] _{q,\omega }$.
Then, $\bar{y}$ satisfies \eqref{qhs:leit} and
\begin{equation*}
\begin{split}
\mathcal{L}&\left[ y\right] -\mathcal{\bar{L}}\left[ \bar{y}\right]\\
&=\int_{a}^{b}L\left( t,y^{\sigma }\left( t\right),
\tilde{D}_{q,\omega}\left[ y\right]\left( t\right) \right)
\tilde{d}_{q,\omega }t -\int_{a}^{b}\bar{L}\left( t,\bar{y}^{\sigma }\left( t\right),
\tilde{D}_{q,\omega }\left[ \bar{y}\right] \left( t\right) \right)
\tilde{d}_{q,\omega }t \\
&=\int_{a}^{b}\tilde{D}_{q,\omega }\left[\tau \mapsto G\left(\tau,
\bar{y}\left(\tau\right) \right)\right](t) \, \tilde{d}_{q,\omega }t \\
&= G\left( b,\bar{y}\left( b\right) \right)
- G\left( a,\bar{y}\left(a\right) \right)\\
&= G\left( b,\bar{z}\left( b,\beta \right) \right)
- G\left( a,\bar{z}\left(a,\alpha \right) \right) .
\end{split}
\end{equation*}
The desired result follows immediately because the right-hand side of the
above equality is a constant, depending only on the fixed-endpoint
conditions $y\left( a\right) =\alpha $ and $y\left( b\right) =\beta$.
\end{proof}

\begin{ex}
Let $q\in\left]0,1\right[$, $\omega \geqslant 0$, and $a:=\omega_0,b$
with $a<b$ be fixed real numbers. Also, let $I$ be
an interval of $\mathbb{R}$ such that $a,b\in I$. We consider the problem
\begin{equation}
\label{qhs:Lproblem}
\left\{
\begin{array}[c]{l}
\mathcal{L}\left[ y\right] =\displaystyle\int_{a}^{b}\left( \left(
\tilde{D}_{q,\omega}\left[ y\right]\left( t\right) \right)^{2}
+qy^{\sigma }\left( t\right)
+t \tilde{D}_{q,\omega }\left[ y\right] \left( t\right) \right)
\tilde{d}_{q,\omega }t \longrightarrow \min \\
\\
y\in \mathcal{Y}^{1}\left( \left[ a,b\right] _{q,\omega },\mathbb{R}\right)\\
\\
y\left( a\right) =\alpha, \quad y\left( b\right) =\beta,
\end{array}
\right.
\end{equation}
where $\alpha ,\beta \in \mathbb{R}$ and $\alpha \neq \beta$.
We transform problem \eqref{qhs:Lproblem} into the trivial problem
\begin{equation*}
\left\{
\begin{array}[c]{l}
\mathcal{\bar{L}}\left[ \bar{y}\right]
=\displaystyle\int_{a}^{b} \left(
\tilde{D}_{q,\omega }\left[ \bar{y}\right] \left( t\right) \right) ^{2}
\tilde{d}_{q,\omega }t\longrightarrow \min \\
\\
\bar{y}\in \mathcal{Y}^{1}\left( \left[ a,b\right] _{q,\omega },\mathbb{R}\right)\\
\\
\bar{y}\left( a\right) =0, \quad \bar{y}\left( b\right) =0,
\end{array}
\right.
\end{equation*}
which has solution $\bar{y}\equiv 0$.
For that we consider the transformation
\begin{equation*}
y\left( t\right) =\bar{y}\left( t\right) +ct+d,
\end{equation*}
where $c,d$ are real constants that will be chosen later. Since
$$
y^{\sigma }\left( t\right)
=\bar{y}^{\sigma }\left( t\right)
+c\sigma \left(t\right) + d
$$
and
$$
\tilde{D}_{q,\omega }\left[ y\right] \left( t\right) =\tilde{D}_{q,\omega}
\left[ \bar{y}\right] \left( t\right) +c,
$$
we have
\begin{equation*}
\begin{split}
&\left( \tilde{D}_{q,\omega }\left[ y\right] \left( t\right) \right)^{2}
+qy^{\sigma }\left( t\right) +t\tilde{D}_{q,\omega }\left[ y\right]
\left( t\right) \\
&=\left( \tilde{D}_{q,\omega }\left[ \bar{y}\right] \left( t\right) \right)
^{2}+2c\tilde{D}_{q,\omega }\left[ \bar{y}\right] \left( t\right)
+ c^{2}+q d + q\bar{y}^{\sigma}\left(t\right) + t\tilde{D}_{q,\omega }\left[
\bar{y}\right] \left( t\right) + c\left( q\sigma \left( t\right)
+t\right).
\end{split}
\end{equation*}
Therefore,
\begin{equation*}
\begin{split}
&\left[ \left( \tilde{D}_{q,\omega }\left[ y\right] \left( t\right) \right)^{2}
+qy^{\sigma }\left( t\right) +t\tilde{D}_{q,\omega }\left[ y\right]
\left( t\right) \right] -\left( \tilde{D}_{q,\omega }\left[ \bar{y}\right]
\left( t\right) \right)^{2}\\
&=\tilde{D}_{q,\omega }\left[ 2c\bar{y}\right] \left( t\right)
+\tilde{D}_{q,\omega }\left[ \left( c^{2}+qd\right) id\right] \left( t\right)
+ \tilde{D}_{q,\omega }\left[ \sigma \cdot \bar{y}\right] \left( t\right)
+c \tilde{D}_{q,\omega }\left[ \sigma \cdot id\right] \left( t\right) \\
&=\tilde{D}_{q,\omega }\left[ 2c\bar{y}+\left( c^{2}+qd\right) id+\sigma
\cdot \bar{y}+c\left( \sigma \cdot id\right) \right] \left( t\right),
\end{split}
\end{equation*}
where $id$ represents the identity function. In order to obtain the
solution to the original problem, it suffices to choose $c$ and $d$ such that
\begin{equation}
\label{qhs:sol}
\left\{
\begin{array}{c}
ca+d=\alpha \\
\\
cb+d=\beta .
\end{array}
\right.
\end{equation}
Solving the system of equations \eqref{qhs:sol}, we obtain
$$
c = \frac{\alpha -\beta}{a-b}
$$
and
$$
d = \frac{a\beta -b\alpha }{a-b}.
$$
Hence, the global minimizer to problem \eqref{qhs:Lproblem} is
\begin{equation*}
y\left( t\right) =\frac{\alpha -\beta }{a-b}t+\frac{a\beta -b\alpha }{a-b}.
\end{equation*}
\end{ex}


\section{State of the Art}

Since the recent construction of the inverse operator
of Hahn's derivative \cite{Aldwoah,Aldwoah:2},
the calculus of variations within Hahn's quantum calculus
is under current research. We provide here some references
within this subject: \cite{Brito:da:Cruz,Malinowska:5,Malinowska:3}.

The results of this chapter were presented by the author at the
International Conference on Differential \& Difference Equations
and Applications 2011, University of Azores, Portugal and are published in \cite{Brito:da:Cruz:5}.


\clearpage{\thispagestyle{empty}\cleardoublepage}


\chapter{The Symmetric Calculus on Time Scales}
\label{The Symmetric Calculus on Time Scales}

In this chapter we define a symmetric derivative on time scales and derive some of its properties.
A diamond integral, which is a refined version of the diamond-$\alpha$ integral,
is also introduced. A mean value theorem is proved for the diamond integral
as well as versions of Holder's, Cauchy-Schwarz's and Minkowski's inequalities.


\section{Introduction}

Symmetric properties of functions are very useful in a large number of
problems. In particular, in the theory of trigonometric series, applications
of these properties are well known \cite{Zygmund}. Differentiability of a
function is one of the most important properties in the theory of functions
of real variables. However, even simple functions such as
\begin{equation}
\begin{array}{l}
f\left( x\right) =\left \vert x\right \vert  \\
\\
g\left( x\right) =\left \{
\begin{array}{ccc}
x\sin \frac{1}{x} & \text{ , } & x\neq 0 \\
&  &  \\
0 & \text{ , } & x=0
\end{array}
\right.  \\
\\
h\left( x\right) =\frac{1}{x^{2}} \text{ , \ \ }  x \neq 0
\end{array}
\label{ts:1}
\end{equation}
do not have (classical) derivative at $x=0$.
Authors like Riemann, Schwarz, Peano, Dini
and de la Vall\'{e}e-Poussin extended the notion of the classical derivative
in different ways, depending on the purpose \cite{Zygmund}. One of such
generalizations is the symmetric derivative defined by
\begin{equation*}
f^{s}\left( x\right)
=\lim_{h\rightarrow 0}\frac{f\left( x+h\right) - f\left(x-h\right) }{2h}.
\end{equation*}
As notice before, the functions in \eqref{ts:1} do not have ordinary derivative at $x=0$,
however they have symmetric derivative at $x=0$
and $f^{s}\left( 0\right) =0$, $g^{s}\left( 0\right) =0$
and $h^{s}\left( 0\right) =0$. For a deeper understanding of the symmetric
derivative and its properties, we refer the reader to the book \cite{Thomson}.

On the other hand, the symmetric quotient
\begin{equation*}
\frac{f\left( x+h\right) -f\left( x-h\right) }{2h}
\end{equation*}
has, in general, better convergence properties than the ordinary difference
quotient \cite{Serafin}. The study of the symmetric quotient naturally
lead us to the quantum calculus. Kac and Cheung \cite{Kac} defined the
$h$-symmetric difference and the $q$-symmetric difference operators, respectively, by
\begin{equation*}
\tilde{D}_{h}\left( x\right) =\frac{f\left( x+h\right) -f\left( x-h\right) }{2h}
\end{equation*}
and
\begin{equation*}
\tilde{D}_{q}\left( x\right) =\frac{f\left( qx\right) -f\left(
q^{-1}x\right) }{\left( q-q^{-1}\right) x}
\end{equation*}
where $h \neq 0$ and $q \neq 1$.

In 1988, Hilger introduced the theory of time scales,
which is a theory that was created in order to unify, and extend, discrete
and continuous analysis into a single theory \cite{Hilger}.
In Chapter~\ref{ts:sec:pre} we reviewed the necessary
definitions and results on time scales \cite{Bohner,Bohner:2}.
Namely, we presented the nabla and the delta calculus. We also presented the
diamond-$\alpha$ dynamic derivative, which was introduced by Sheng, Fadag,
Henderson and Davis, as a linear combination of the delta and nabla derivatives \cite{Sheng}.

Our goal in this chapter is to define the symmetric
derivative on time scales and to develop the symmetric
time scale calculus. This chapter is organized as follows.
In Section~\ref{ts:sec:dif} we define the time scale
symmetric derivative and derive some of its properties.
In some special cases we will show that the diamond-$\alpha$
derivative coincides with the symmetric derivative on time scales, however
in general this is not true. In Section~\ref{ts:sec:int} we
introduce the diamond integral which is an attempt to obtain a symmetric
integral and whose construction is similar to the diamond-$\alpha$ integral.
Finally, we prove some inequalities for the diamond integral.

In what follows, $\mathbb{T}$ denotes a time scale with operators
$\sigma$, $\rho$, $\Delta$ and $\nabla$ (see Chapter~\ref{ts:sec:pre}).
We recall that $\mathbb{T}$ has the topology inherited
from $\mathbb{R}$ with the standard topology.


\section{Symmetric differentiation}
\label{ts:sec:dif}

\begin{definition}
We say that a function $f:\mathbb{T} \rightarrow \mathbb{R}$ is
\emph{symmetric continuous}\index{Symmetric continuity} at $t\in $ $\mathbb{T}$ if,
for any $\varepsilon >0$, there exists a neighborhood $U_{t}$ of $t$ such that for all
$s\in U_{t}$ for which  $2t-s\in U_{t}$ one has
\begin{equation*}
\left \vert f\left( s\right) -f\left( 2t-s\right) \right \vert \leqslant \varepsilon .
\end{equation*}
\end{definition}

It is easy to see that continuity implies symmetric continuity.

\begin{proposition}
Let $\mathbb{T}$ be a time scale. If $f:\mathbb{T\rightarrow\mathbb{R}}$ is a continuous function, then $f$ is
symmetric continuous.
\end{proposition}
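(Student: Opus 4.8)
The plan is to unwind both definitions and observe that the symmetric continuity condition at a point $t$ is a direct consequence of ordinary continuity at $t$, because the map $s \mapsto 2t - s$ is continuous and fixes $t$. First I would fix $t \in \mathbb{T}$ and $\varepsilon > 0$. Since $f$ is continuous at $t$, there is a neighborhood $V$ of $t$ such that $\left\vert f(s) - f(t) \right\vert \leqslant \varepsilon/2$ for all $s \in V \cap \mathbb{T}$.

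Next I would produce the required neighborhood $U_t$. The reflection $\phi(s) := 2t - s$ is continuous on $\mathbb{R}$ and satisfies $\phi(t) = t$, so $\phi^{-1}(V)$ is a neighborhood of $t$; set $U_t := V \cap \phi^{-1}(V)$, which is again a neighborhood of $t$. Then for any $s \in U_t$ with $2t - s \in U_t$ (in fact $2t-s \in \mathbb{T}$ is what matters), both $s \in V$ and $2t - s \in V$, so by the triangle inequality
\[
\left\vert f(s) - f(2t - s) \right\vert \leqslant \left\vert f(s) - f(t) \right\vert + \left\vert f(t) - f(2t - s) \right\vert \leqslant \frac{\varepsilon}{2} + \frac{\varepsilon}{2} = \varepsilon.
\]
This establishes symmetric continuity of $f$ at $t$, and since $t$ was arbitrary, $f$ is symmetric continuous.

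There is essentially no obstacle here; the only minor point to be careful about is the interaction with the time scale topology — but since $\mathbb{T}$ carries the subspace topology from $\mathbb{R}$, neighborhoods in $\mathbb{T}$ are just intersections with $\mathbb{T}$ of neighborhoods in $\mathbb{R}$, and the reflection $\phi$ need not map $\mathbb{T}$ into $\mathbb{T}$, which is precisely why the definition only demands the inequality for those $s$ with $2t - s$ also in the relevant set. So the argument goes through verbatim. I would keep the write-up to the three lines above: invoke continuity to get $\varepsilon/2$-control near $t$, intersect with the preimage under the reflection, and apply the triangle inequality.
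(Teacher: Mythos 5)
Your proof is correct and follows essentially the same route as the paper's: use ordinary continuity at $t$ to get $\varepsilon/2$-control of $\left\vert f(s)-f(t)\right\vert$ and $\left\vert f(2t-s)-f(t)\right\vert$ on a suitable neighborhood, then conclude by the triangle inequality. Your extra step of intersecting with the preimage under the reflection is harmless but not even needed, since the definition already restricts attention to those $s\in U_t$ with $2t-s\in U_t$.
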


\begin{proof}
Since $f$ is continuous at $t\in \mathbb{T}$, then, for any $\varepsilon >0$,
there exists an neighborhood $U_{t}$ such that
\[
\left\vert f\left( s\right) -f\left( t\right) \right\vert <\frac{\varepsilon}{2}
\]
and
\[
\left\vert f\left( 2t-s\right) -f\left( t\right) \right\vert <\frac{
\varepsilon }{2}
\]
for all $s\in U_{t}$ for which $2t-s\in U_t$. Thus,
\begin{eqnarray*}
\left\vert f\left( s\right) -f\left( 2t-s\right) \right\vert  &\leqslant
&\left\vert f\left( s\right) -f\left( t\right) \right\vert +\left\vert
f\left( t\right) -f\left( 2t-s\right) \right\vert  \\
&<&\varepsilon.
\end{eqnarray*}
\end{proof}

The next example shows that symmetric continuity does not imply continuity.

\begin{example}
Consider the function $f:\mathbb{
\mathbb{R}
\rightarrow
\mathbb{R}
}$ defined by
\[
f\left( t\right) =\left\{
\begin{array}{ccc}
0 &  if  & t\neq 0 \\
&  &  \\
1 &  \ if \  & t=0.
\end{array}
\right.
\]
Function $f$ is symmetric continuous at $0$: for any $\varepsilon> 0$, there exists a neighborhood $U_t$ of $t=0$
such that
\[
\vert f(s)-f(-s)\vert = 0 < \varepsilon
\]
for all $s\in U_t$ for which $-s\in U_t$. However, $f$ is not continuous
at $0$.
\end{example}

Now we define the symmetric derivative on time scales.

\begin{definition}
Let $f:\mathbb{T} \rightarrow \mathbb{R}$ be a function and let
$t\in \mathbb{T}_{\kappa}^{\kappa}$. The \emph{symmetric derivative}\index{Symmetric derivative}
of $f$ at $t$, denoted by $f^{\diamondsuit }\left( t\right) $, is the real number
(provided that exists) with the property that, for any $\varepsilon >0$,
there exists a neighborhood $U$ of $t$ such that,
for all $s\in U$ for which  $2t-s\in U$, we have
\begin{multline}
\label{ts:derivada}
\left \vert \left[ f^{\sigma }\left( t\right) -f\left( s\right) +f\left(
2t-s\right) -f^{\rho }\left( t\right) \right] -f^{\diamondsuit }\left(
t\right) \left[ \sigma \left( t\right) +2t-2s-\rho \left( t\right) \right]
\right \vert \\
\leqslant \varepsilon \left \vert \sigma \left( t\right) +2t-2s-\rho \left(
t\right) \right \vert  .
\end{multline}
A function $f$ is said to be symmetric differentiable provided $f^{\diamondsuit }\left(t\right)$
exists for all $t\in \mathbb{T}_{\kappa}^{\kappa}$.
\end{definition}

Some useful properties are given next.

\begin{theorem}
\label{ts:propriedade}
Let $f:\mathbb{T}\rightarrow \mathbb{R}$
be a function and let $t\in \mathbb{T}_{\kappa}^{\kappa}$.

\begin{enumerate}
\item Then $f$ has at most one symmetric
derivative at $t$;

\item If $f$ is symmetric differentiable at $t$, then $f$ is symmetric
continuous at $t$;

\item If $f$ is continuous at $t$ and if $t$ is not dense,
then $f$ is symmetric differentiable at $t$ and
\begin{equation*}
f^{\diamondsuit }\left( t\right) =\frac{f^ \sigma \left( t\right)
- f^\rho\left( t\right)  }{\sigma \left( t\right)
- \rho\left( t\right) };
\end{equation*}

\item If $t$ is dense, then $f$ is symmetric differentiable at $t$ if and
only if the limit
\begin{equation*}
\lim_{s\rightarrow t}\frac{-f\left( s\right) +f\left( 2t-s\right) }{2t-2s}
\end{equation*}
exists as a finite number. In this case
\begin{eqnarray*}
f^{\diamondsuit }\left( t\right)  &=&\lim_{s\rightarrow t}\frac{-f\left(
s\right) +f\left( 2t-s\right) }{2t-2s} \\
&=&\lim_{h\rightarrow 0}\frac{f\left( t+h\right) -f\left( t-h\right) }{2h};
\end{eqnarray*}

\item If $f$ is symmetric differentiable and continuous at $t$, then
\begin{equation*}
f^{\sigma }\left( t\right) =f^{\rho }\left( t\right) +f^{\diamondsuit
}\left( t\right) \left[ \sigma \left( t\right) -\rho \left( t\right) \right].
\end{equation*}
\end{enumerate}
\end{theorem}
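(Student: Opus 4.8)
The plan is to prove the five assertions of Theorem~\ref{ts:propriedade} in the order listed, since each one naturally feeds into the next. These arguments closely parallel the classical time-scale proofs for the delta and nabla derivatives found in \cite{Bohner,Bohner:2}, adapted to the symmetrized difference quotient appearing in \eqref{ts:derivada}.

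For item~1 (uniqueness), I would assume that both $\lambda_1$ and $\lambda_2$ satisfy the defining inequality \eqref{ts:derivada} and show $\lambda_1 = \lambda_2$. If $t$ is not dense then $\sigma(t) - \rho(t) > 0$, and evaluating \eqref{ts:derivada} at $s = t$ forces $\lambda_i = \frac{f^\sigma(t) - f^\rho(t)}{\sigma(t) - \rho(t)}$, so both numbers coincide; this simultaneously proves item~3 once continuity of $f$ at a non-dense $t$ is used to guarantee $f^\sigma(t), f^\rho(t)$ and hence the candidate number are well defined and that $s=t$ lies in every neighborhood $U$. If $t$ is dense, then $\sigma(t) = \rho(t) = t$ and \eqref{ts:derivada} reduces to
\[
\left\vert -f(s) + f(2t-s) - \lambda_i \left( 2t - 2s \right) \right\vert \leqslant \varepsilon \left\vert 2t - 2s \right\vert ;
\]
choosing $s \neq t$ close to $t$ and using the triangle inequality gives $|\lambda_1 - \lambda_2| \leqslant 2\varepsilon$ for all $\varepsilon > 0$, hence $\lambda_1 = \lambda_2$. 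The same reduction, rewritten as a limit and combined with the substitution $h = t - s$, yields item~4: symmetric differentiability at a dense point is equivalent to existence of the finite limit $\lim_{s \to t} \frac{-f(s) + f(2t-s)}{2t - 2s} = \lim_{h \to 0} \frac{f(t+h) - f(t-h)}{2h}$, and that limit equals $f^{\diamondsuit}(t)$.

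For item~2 (symmetric differentiability implies symmetric continuity), I would fix $\varepsilon > 0$, take the neighborhood $U$ from the definition, and estimate $|f(s) - f(2t-s)|$. Writing $f(s) - f(2t-s) = \bigl[ f^\sigma(t) - f(s) + f(2t-s) - f^\rho(t) \bigr] - \bigl[ f^\sigma(t) - f^\rho(t) \bigr]$, wait — more cleanly, from \eqref{ts:derivada} one gets $|{-f(s) + f(2t-s)}| \leqslant |f^\rho(t) - f^\sigma(t)| + (\varepsilon + |f^{\diamondsuit}(t)|)\, |\sigma(t) + 2t - 2s - \rho(t)|$, and on a sufficiently small neighborhood the quantities $|\sigma(t) - \rho(t)|$ and $|2t - 2s|$ can be made small when $t$ is dense, while when $t$ is non-dense one falls back on item~3 together with continuity. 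The delicate point is handling the non-dense case, where $|\sigma(t) - \rho(t)|$ is a fixed positive number and one cannot make it small; there I would instead invoke the explicit formula of item~3 and bound $|f(s) - f(2t-s)|$ directly using continuity of $f$ at the isolated-type point $t$. Finally item~5 is immediate: if $f$ is symmetric differentiable and continuous at $t$, then for non-dense $t$ the formula of item~3 rearranges at once to $f^\sigma(t) = f^\rho(t) + f^{\diamondsuit}(t)[\sigma(t) - \rho(t)]$, and for dense $t$ both sides equal $f(t)$ since $\sigma(t) = \rho(t) = t$.

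The main obstacle I anticipate is the bookkeeping in item~2 for points that are simultaneously, say, left-dense and right-scattered (or vice versa), where $\sigma(t) - \rho(t) > 0$ but one of the one-sided neighborhoods genuinely shrinks; there the clean dichotomy "dense versus non-dense" used in items 3--5 is not quite enough, and I would need to argue symmetric continuity by combining the scattered-side formula with ordinary (one-sided) continuity on the dense side. Everything else is routine manipulation of the inequality \eqref{ts:derivada} and the relations $f^\sigma(t) = f(t) + \mu(t) f^\Delta(t)$, $f^\rho(t) = f(t) - \nu(t) f^\nabla(t)$ recalled in Chapter~\ref{ts:sec:pre}.
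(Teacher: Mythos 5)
Items 1, 3, 4 and 5 of your plan are sound and essentially follow the paper's route: uniqueness via the triangle inequality after dividing by $\sigma(t)+2t-2s-\rho(t)$, the explicit quotient at non-dense points, the limit characterization at dense points via $h=t-s$, and the rearrangement for item~5. (Your shortcut of evaluating the defining inequality at $s=t$ to pin down the value at a non-dense point is legitimate, since $s=t$ always satisfies $2t-s\in U$.)

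The genuine gap is in item~2, and it is exactly at the point you flag as "delicate." Your bound
$\left\vert -f(s)+f(2t-s)\right\vert \leqslant \left\vert f^{\rho}(t)-f^{\sigma}(t)\right\vert +\left(\varepsilon+\left\vert f^{\diamondsuit}(t)\right\vert\right)\left\vert \sigma(t)+2t-2s-\rho(t)\right\vert$
leaves the fixed constant $\left\vert f^{\sigma}(t)-f^{\rho}(t)\right\vert$ uncontrolled when $t$ is not dense, and your proposed repair --- invoking item~3 "together with continuity of $f$ at $t$" --- is not available: item~2 assumes only symmetric differentiability, not continuity, and item~3 itself takes continuity as a hypothesis, so the argument would be circular. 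The fix, which is what the paper does, is to never isolate $f^{\sigma}(t)-f^{\rho}(t)$ on its own: applying the defining inequality \eqref{ts:derivada} at $s=t$ gives
$\left\vert \left[f^{\sigma}(t)-f^{\rho}(t)\right]-f^{\diamondsuit}(t)\left[\sigma(t)-\rho(t)\right]\right\vert \leqslant \varepsilon^{\ast}\left\vert \sigma(t)-\rho(t)\right\vert$,
so that term is itself of order $\varepsilon^{\ast}$. Writing $-f(s)+f(2t-s)$ as the difference of the two bracketed expressions plus $f^{\diamondsuit}(t)(2t-2s)$ and restricting to $\left\vert t-s\right\vert<\varepsilon^{\ast}$ yields the uniform bound $2\varepsilon^{\ast}\left[\left\vert \sigma(t)-\rho(t)\right\vert+\varepsilon^{\ast}+\left\vert f^{\diamondsuit}(t)\right\vert\right]$, which can be made smaller than any prescribed $\varepsilon$ by choosing $\varepsilon^{\ast}$ appropriately; no case split on the type of point is needed, and no continuity of $f$ is invoked.
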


\begin{proof}
\begin{enumerate}
\item Let us suppose that $f$ has two symmetric derivatives at $t$,
$f_{1}^{\diamondsuit }\left( t\right)$ and
$f_{2}^{\diamondsuit }\left(t\right)$.
Then there exists a neighborhood $U_{1}$ of $t$ such that
\begin{eqnarray*}
&&\left \vert \left[ f^{\sigma }\left( t\right) -f\left( s\right) +f\left(
2t-s\right) -f^{\rho }\left( t\right) \right] -f_{1}^{\diamondsuit }\left(
t\right) \left[ \sigma \left( t\right) +2t-2s-\rho \left( t\right) \right]
\right \vert\\
\leqslant &&\frac{\varepsilon }{2}\left \vert \sigma \left( t\right)
+2t-2s-\rho \left( t\right) \right \vert
\end{eqnarray*}
for all $s\in U_{1}$ for which $2t-s\in U_{1}$,
and there exists a neighborhood $U_{2}$ of $t$ such that
\begin{eqnarray*}
&&\left \vert \left[ f^{\sigma }\left( t\right) -f\left( s\right) +f\left(
2t-s\right) -f^{\rho }\left( t\right) \right] -f_{2}^{\diamondsuit }\left(
t\right) \left[ \sigma \left( t\right) +2t-2s-\rho \left( t\right) \right]
\right \vert  \\
\leqslant &&\frac{\varepsilon }{2}\left \vert \sigma \left( t\right)
+2t-2s-\rho \left( t\right) \right \vert
\end{eqnarray*}
for all $s\in U_{2}$ for which $2t-s \in U_{2}$.
Therefore, for all $s\in U_{1}\cap U_{2}$ for which $2t-s\in U_{1}\cap U_{2}$,
\begin{eqnarray*}
&&\bigg{|}f_{1}^{\diamondsuit }\left( t\right) -f_{2}^{\diamondsuit }\left(
t\right) \bigg{|}
=\left \vert \left[ f_{1}^{\diamondsuit }\left( t\right)
-f_{2}^{\diamondsuit }\left( t\right) \right] \frac{\sigma \left( t\right)
+2t-2s-\rho \left( t\right) }{\sigma \left( t\right) +2t-2s-\rho \left(
t\right) }\right \vert  \\
=&&\bigg{|}\left[ f^{\sigma }\left( t\right) -f\left( s\right) +f\left(
2t-s\right) -f^{\rho }\left( t\right) \right] -f_{2}^{\diamondsuit }\left(
t\right) \left[ \sigma \left( t\right) +2t-2s-\rho \left( t\right) \right]\\
&&-\left[ f^{\sigma }\left( t\right) -f\left( s\right) +f\left( 2t-s\right)
-f^{\rho }\left( t\right) \right] +f_{1}^{\diamondsuit }\left( t\right)
\left[ \sigma \left( t\right) +2t-2s-\rho \left( t\right) \right] \bigg{|} \\
&&\times \frac{1}{\left \vert \sigma \left( t\right) +2t-2s-\rho \left(
t\right) \right \vert } \\
\leqslant &&\bigg{(}\left \vert \left[ f^{\sigma }\left( t\right) -f\left(
s\right) +f\left( 2t-s\right) -f^{\rho }\left( t\right) \right]
-f_{2}^{\diamondsuit }\left( t\right) \left[ \sigma \left( t\right)
+2t-2s-\rho \left( t\right) \right] \right \vert  \\
&&+\left \vert \left[ f^{\sigma }\left( t\right) -f\left( s\right) +f\left(
2t-s\right) -f^{\rho }\left( t\right) \right] -f_{1}^{\diamondsuit }\left(
t\right) \left[ \sigma \left( t\right) +2t-2s-\rho \left( t\right) \right]
\right \vert \bigg{)} \\
&&\times \frac{1}{\left \vert \sigma \left( t\right) +2t-2s-\rho \left(
t\right) \right \vert } \\
\leqslant &&\varepsilon
\end{eqnarray*}
proving the desired result.

\item By definition of symmetric derivative, for any $\epsilon^{*}>0$
there exists a neighborhood $U$ of $t$ such that
\begin{multline*}
\left \vert \left[ f^{\sigma }\left( t\right) -f\left( s\right) +f\left(
2t-s\right) -f^{\rho }\left( t\right) \right] -f^{\diamondsuit }\left(
t\right) \left[ \sigma \left( t\right) +2t-2s-\rho \left( t\right) \right]
\right \vert  \\
\leqslant \varepsilon ^{\ast }\left \vert \sigma \left( t\right)
+2t-2s-\rho \left( t\right) \right \vert
\end{multline*}
for all $s\in U$ for which $2t-s \in U$.
Therefore we have for all $s\in U \cap \left] t
-\varepsilon^{\ast },t+\varepsilon ^{\ast }\right[$
\begin{eqnarray*}
&&\left \vert -f\left( s\right) +f\left( 2t-s\right) \right \vert \\
\leqslant&& \left \vert \left[ f^{\sigma }\left( t\right) -f\left( s\right)
+f\left( 2t-s\right) -f^{\rho }\left( t\right) \right] -f^{\diamondsuit
}\left( t\right) \left[ \sigma \left( t\right) +2t-2s-\rho \left( t\right)
\right] \right \vert  \\
&&+\left \vert \left[ f^{\sigma }\left( t\right) -f^{\rho }\left( t\right)
\right] -f^{\diamondsuit }\left( t\right) \left[ \sigma \left( t\right)
+2t-2s-\rho \left( t\right) \right] \right \vert  \\
\leqslant& &\left \vert \left[ f^{\sigma }\left( t\right) -f\left( s\right)
+f\left( 2t-s\right) -f^{\rho }\left( t\right) \right] -f^{\diamondsuit
}\left( t\right) \left[ \sigma \left( t\right) +2t-2s-\rho \left( t\right)
\right] \right \vert  \\
&&+\left \vert \left[ f^{\sigma }\left( t\right) -f\left( t\right) +f\left(
t\right) -f^{\rho }\left( t\right) \right] -f^{\diamondsuit }\left( t\right)
\left[ \sigma \left( t\right) +2t-2t-\rho \left( t\right) \right]
\right \vert  \\
&&+2\left \vert f^{\diamondsuit }\left( t\right) \right \vert \left \vert
t-s\right \vert\\
\leqslant& &\varepsilon ^{\ast }\left \vert \sigma \left( t\right)
+2t-2s-\rho \left( t\right) \right \vert +\varepsilon ^{\ast }\left \vert
\sigma \left( t\right) +2t-2t-\rho \left( t\right) \right \vert +2\left \vert
f^{\diamondsuit }\left( t\right) \right \vert \left \vert t-s\right \vert  \\
\leqslant&& \varepsilon ^{\ast }\left \vert \sigma \left( t\right) -\rho
\left( t\right) \right \vert +2\varepsilon ^{\ast }\left \vert t-s\right \vert
+\varepsilon ^{\ast }\left \vert \sigma \left( t\right) -\rho \left( t\right)
\right \vert +2\left \vert f^{\diamondsuit }\left( t\right) \right \vert
\left \vert t-s\right \vert  \\
= &&2\varepsilon ^{\ast }\left \vert \sigma \left( t\right) -\rho
\left( t\right) \right \vert +2\left( \varepsilon ^{\ast }+\left \vert
f^{\diamondsuit }\left( t\right) \right \vert \right) \left \vert
t-s\right \vert  \\
\leqslant &&2\varepsilon ^{\ast }\left \vert \sigma \left( t\right) -\rho \left(
t\right) \right \vert +2\left( \varepsilon ^{\ast }+\left \vert
f^{\diamondsuit }\left( t\right) \right \vert \right) \varepsilon ^{\ast } \\
= &&2\varepsilon ^{\ast }\left[ \left \vert \sigma \left( t\right)
-\rho \left( t\right) \right \vert +\varepsilon ^{\ast }+\left \vert
f^{\diamondsuit }\left( t\right) \right \vert \right]  \\
\end{eqnarray*}
proving that $f$ is symmetric continuous at $t$.

\item Suppose that $t\in\mathbb{T}_{\kappa}^{\kappa}$
is not dense and $f$ is continuous at $t$. Then,
\begin{equation*}
\lim_{s\rightarrow t}\frac{f^{\sigma }\left( t\right) -f\left( s\right)
+f\left( 2t-s\right) -f^{\rho }\left( t\right) }{\sigma \left( t\right)
+2t-2s-\rho \left( t\right) }=\frac{f^{\sigma }\left( t\right)
-f^{\rho}\left( t\right)}{\sigma \left( t\right) -\rho \left( t\right)}.
\end{equation*}
Hence, for any $\varepsilon >0$, there exists a neighborhood $U$ of $t$ such that
\begin{equation*}
\left \vert \frac{f^{\sigma }\left( t\right) -f\left( s\right) +f\left(
2t-s\right) -f^{\rho }\left( t\right) }{\sigma \left( t\right) +2t-2s-\rho
\left( t\right) }-\frac{f^{\sigma }\left( t\right) -f^{\rho }\left(
t\right)}{\sigma \left( t\right) -\rho \left( t\right) }\right \vert \leqslant \varepsilon
\end{equation*}
for all $s\in U$ for which  $2t-s \in U$. It follows that
\begin{multline*}
\left \vert \left[ f^{\sigma }\left( t\right) -f\left( s\right) +f\left(
2t-s\right) -f^{\rho }\left( t\right) \right] -\frac{f^{\sigma }\left(
t\right) -f^{\rho }\left( t\right) }{\sigma \left( t\right) -\rho \left(
t\right) }\left[ \sigma \left( t\right) +2t-2s-\rho \left( t\right) \right]
\right \vert  \\
\leqslant \varepsilon \left[ \sigma \left( t\right) +2t-2s
-\rho \left(t\right) \right],
\end{multline*}
which proves that
\begin{equation*}
f^{\diamondsuit }\left( t\right) =\frac{f^{\sigma }\left( t\right)
-f^{\rho}\left( t\right) }{\sigma \left( t\right) - \rho\left( t\right)}.
\end{equation*}

\item Assume that $f$ is symmetric differentiable at $t$ and $t$ is dense.
Let $\varepsilon >0$ be given.
Hence there exists a neighborhood $U$ of $t$ such that
\begin{multline*}
\left \vert \left[ f^{\sigma }\left( t\right) -f\left( s\right) +f\left(
2t-s\right) -f^{\rho }\left( t\right) \right] -f^{\diamondsuit }\left(
t\right) \left[ \sigma \left( t\right) +2t-2s-\rho \left( t\right) \right]
\right \vert \\
\leqslant\varepsilon \left \vert \sigma \left( t\right) +2t-2s-\rho \left(
t\right) \right \vert
\end{multline*}
for all $s\in U$ for which $2t-s \in U$.
Since $t$ is dense we have that
\begin{equation*}
\left \vert \left[ -f\left( s\right) +f\left( 2t-s\right) \right]
-f^{\diamondsuit }\left( t\right) \left[ 2t-2s\right] \right \vert \leqslant
\varepsilon \left \vert 2t-2s\right \vert
\end{equation*}
for all $s\in U$ for which  $2t-s\in U$. It follows that
\begin{equation*}
\left \vert \frac{-f\left( s\right) +f\left( 2t-s\right) }{2t-2s}
-f^{\diamondsuit }\left( t\right) \right \vert \leqslant \varepsilon
\end{equation*}
for all $s\in U$ with $s\neq t$. Therefore we get the desired result:
\begin{equation*}
f^{\diamondsuit }\left( t\right) =\lim_{s\rightarrow t}\frac{-f\left(
s\right) +f\left( 2t-s\right) }{2t-2s}.
\end{equation*}
Conversely, let us suppose that $t$ is dense and the limit
\begin{equation*}
\lim_{s\rightarrow t}\frac{-f\left( s\right) +f\left( 2t-s\right) }{2t-2s}
\end{equation*}
exists. Let $L:=\displaystyle \lim_{s\rightarrow t}\frac{-f\left( s\right)
+f\left( 2t-s\right) }{2t-2s}$. Hence, there exists a neighborhood $U$ of $t$
such that
\begin{equation*}
\left \vert \frac{-f\left( s\right) +f\left( 2t-s\right) }{2t-2s}
-L\right \vert \leqslant \varepsilon
\end{equation*}
for all  $s\in U$ for which  $2t-s\in U$,
and since $t$ is dense, we have
\begin{eqnarray*}
&&\left \vert \frac{f^{\sigma }\left( t\right) -f\left( s\right) +f\left(
2t-s\right) -f^{\rho }\left( t\right) }{\sigma \left( t\right) +2t-2s-\rho
\left( t\right) }-L\right \vert \leqslant \varepsilon
\end{eqnarray*}
and hence
\begin{eqnarray*}
\left \vert \left[ f^{\sigma }\left( t\right) -f\left( s\right)
+f\left( 2t-s\right) -f^{\rho }\left( t\right) \right] -L\left[ \sigma
\left( t\right) +2t-2s-\rho \left( t\right) \right] \right \vert
\leqslant \varepsilon \left \vert \sigma \left( t\right) +2t-2s-\rho \left(
t\right) \right \vert
\end{eqnarray*}
which lead us to the conclusion that $f$ is symmetric differentiable and
\begin{equation*}
f^{\diamondsuit }\left( t\right) =\lim_{s\rightarrow t}\frac{-f\left(
s\right) +f\left( 2t-s\right) }{2t-2s}\text{.}
\end{equation*}
Note that if we use the substitution $s=t+h$, then
\begin{eqnarray*}
f^{\diamondsuit }\left( t\right)  &=&\lim_{s\rightarrow t}\frac{-f\left(
s\right) +f\left( 2t-s\right) }{2t-2s} \\
&=&\lim_{h\rightarrow 0}\frac{-f\left( t+h\right) +f\left( 2t-\left(
t+h\right) \right) }{2t-2\left( t+h\right) } \\
&=&\lim_{h\rightarrow 0}\frac{f\left( t+h\right) -f\left( t-h\right)}{2h}.
\end{eqnarray*}

\item If $t$ is a dense point, then
$\sigma \left( t\right) =\rho \left(t\right)$ and
\begin{eqnarray*}
&&f^{\sigma }\left( t\right) -f^{\rho }\left( t\right) =f^{\diamondsuit}\left(
t\right) \left[ \sigma \left( t\right) -\rho \left( t\right) \right]\\
&\Leftrightarrow &f^{\sigma }\left( t\right) =f^{\rho }\left( t\right)
+f^{\diamondsuit }\left( t\right) \left[ \sigma \left( t\right)
-\rho \left(t\right) \right] .
\end{eqnarray*}
If $t$ is not dense and since $f$ is continuous, then
\begin{eqnarray*}
&&f^{\diamondsuit }\left( t\right) =\frac{f^{\sigma }\left( t\right)
-f^{\rho }\left( t\right) }{\sigma \left( t\right) -\rho \left( t\right) } \\
&\Leftrightarrow &f^{\sigma }\left( t\right) =f^{\rho }\left( t\right)
+f^{\diamondsuit }\left( t\right) \left[ \sigma \left( t\right)
-\rho \left(t\right) \right] .
\end{eqnarray*}
\end{enumerate}
\end{proof}

\begin{ex}
\begin{enumerate}
\item If $\mathbb{T} = h \mathbb{Z}$ ($h>0$),
then the symmetric derivative is the symmetric
difference operator given by
\begin{equation*}
f^{\diamondsuit }\left( t\right) = \frac{f\left( t+h\right)
-f\left(t-h\right)}{2h}.
\end{equation*}

\item If $\mathbb{T} = \overline{q^{\mathbb{Z}}}$ and $0<q<1$,
then the symmetric derivative is the $q$-symmetric difference operator given by
\begin{equation*}
f^{\diamondsuit }\left( t\right) =\frac{f\left( qt\right)
-f\left(q^{-1}t\right)}{\left( q-q^{-1}\right) t}, \ \ \ \text{for } t\neq 0.
\end{equation*}

\item If $\mathbb{T} = \mathbb{R}$, then the symmetric derivative
is the classic symmetric derivative given by
\begin{equation*}
f^{\diamondsuit }\left( t\right) =\lim_{h\rightarrow 0}\frac{f\left(
t+h\right) -f\left( t-h\right) }{2h}.
\end{equation*}
\end{enumerate}
\end{ex}

\begin{remark}
It is clear that the symmetric derivative of a constant function is zero
and the symmetric derivative of the identity functions is one.
\end{remark}

\begin{remark}
An alternative way to define the symmetric derivative of $f$ at
$t\in\mathbb{T}_{\kappa}^{\kappa}$ is saying
that the following limit exists:
\begin{eqnarray*}
f^{\diamondsuit }\left( t\right) &=&\lim_{s\rightarrow t}\frac{f^{\sigma
}\left( t\right) -f\left( s\right) +f\left( 2t-s\right) -f^{\rho }\left(
t\right) }{\sigma \left( t\right) +2t-2s-\rho \left( t\right) } \\
&=&\lim_{h\rightarrow 0}\frac{f^{\sigma }\left( t\right) -f\left( t+h\right)
+f\left( t-h\right) -f^{\rho }\left( t\right) }{\sigma \left( t\right)
-2h-\rho \left( t\right) }.
\end{eqnarray*}
\end{remark}

\begin{ex}
Let $f:\mathbb{T} \rightarrow \mathbb{R}$ be defined by
$f\left( t\right) =\left \vert t\right \vert $. Then
\begin{eqnarray*}
f^{\diamondsuit }\left( 0\right)  &=&\lim_{h\rightarrow 0}\frac{f^{\sigma
}\left( 0\right) -f\left( 0+h\right) +f\left( 0-h\right) -f^{\rho }\left(
0\right) }{\sigma \left( 0\right) -2h-\rho \left( 0\right) } \\
&=&\lim_{h\rightarrow 0}\frac{\sigma \left( 0\right) -\left \vert
h\right \vert +\left \vert -h\right \vert +\rho \left( 0\right) }{\sigma \left(
0\right) -2h-\rho \left( 0\right) } \\
&=&\lim_{h\rightarrow 0}\frac{\sigma \left( 0\right)
+\rho \left( 0\right) }{\sigma \left( 0\right) -2h-\rho \left( 0\right)}.
\end{eqnarray*}
In the particular case $\mathbb{T} = \mathbb{R}$, we have
\begin{equation*}
f^{\diamondsuit }\left( 0\right) =0.
\end{equation*}
\end{ex}

\begin{proposition}
\label{ts:delta nabla}
If $f$ is delta and nabla differentiable, then $f$ is symmetric
differentiable and, for each $t\in\mathbb{T}_{\kappa}^{\kappa}$,
\begin{eqnarray*}
f^{\diamondsuit }\left( t\right)
&=&\gamma _{1}\left( t\right) f^{\Delta}\left( t\right)
+\bigg{(}1-\gamma _{1}\left( t\right)\bigg{)} f^{\nabla }\left( t\right),
\end{eqnarray*}
where
\begin{eqnarray*}
\gamma _{1}\left( t\right)  &=&\lim_{s\rightarrow t}\frac{\sigma \left(
t\right) -s}{\sigma \left( t\right) +2t-2s-\rho \left( t\right)}.
\end{eqnarray*}
\end{proposition}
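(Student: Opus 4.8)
The plan is to exploit the fact, established in Theorem~\ref{ts:propriedade}, that the symmetric derivative is completely governed by whether the point $t$ is dense or not, and to show that in either case $f^{\diamondsuit}(t)$ decomposes as the claimed convex-type combination of $f^{\Delta}(t)$ and $f^{\nabla}(t)$. First I would fix $t\in\mathbb{T}_{\kappa}^{\kappa}$ and recall that a function which is both delta and nabla differentiable at $t$ is, in particular, continuous at $t$ (by the first theorem of the excerpt), so all the ``continuous at $t$'' hypotheses needed to apply parts 3 and 4 of Theorem~\ref{ts:propriedade} are automatically available; this also guarantees $f^{\diamondsuit}(t)$ exists, which is the existence half of the statement.

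\textbf{Case $t$ not dense.} Here part 3 of Theorem~\ref{ts:propriedade} gives immediately
\[
f^{\diamondsuit}(t)=\frac{f^{\sigma}(t)-f^{\rho}(t)}{\sigma(t)-\rho(t)}.
\]
I would then write $f^{\sigma}(t)-f^{\rho}(t)=\bigl(f^{\sigma}(t)-f(t)\bigr)+\bigl(f(t)-f^{\rho}(t)\bigr)=\mu(t)f^{\Delta}(t)+\nu(t)f^{\nabla}(t)$, using the identities $f^{\sigma}(t)=f(t)+\mu(t)f^{\Delta}(t)$ and $f^{\rho}(t)=f(t)-\nu(t)f^{\nabla}(t)$ from the delta/nabla derivative theorem. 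Dividing by $\sigma(t)-\rho(t)=\mu(t)+\nu(t)$ yields $f^{\diamondsuit}(t)=\gamma_1(t)f^{\Delta}(t)+(1-\gamma_1(t))f^{\nabla}(t)$ with $\gamma_1(t)=\mu(t)/(\mu(t)+\nu(t))$; and I would check that this matches the stated limit, since for $t$ not dense $\sigma(t)$ and $\rho(t)$ are fixed and $s\to t$ forces $\sigma(t)-s\to\mu(t)$ and $\sigma(t)+2t-2s-\rho(t)\to\mu(t)+\nu(t)$ (one must take a little care that this remains valid when $t$ is, say, only left-scattered or only right-scattered, but in each sub-case the limit of the quotient is well defined and equals the corresponding ratio of graininesses).

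\textbf{Case $t$ dense.} Now $\sigma(t)=\rho(t)=t$, $\mu(t)=\nu(t)=0$, and $f^{\Delta}(t)=f^{\nabla}(t)=\lim_{s\to t}\frac{f(t)-f(s)}{t-s}$ by parts 3 and 6 of the delta/nabla theorem. Part 4 of Theorem~\ref{ts:propriedade} gives $f^{\diamondsuit}(t)=\lim_{s\to t}\frac{-f(s)+f(2t-s)}{2t-2s}$; I would split this quotient as $\frac12\cdot\frac{f(t)-f(s)}{t-s}+\frac12\cdot\frac{f(2t-s)-f(t)}{2t-2s}\cdot 2$... more cleanly, writing $-f(s)+f(2t-s)=\bigl(f(2t-s)-f(t)\bigr)+\bigl(f(t)-f(s)\bigr)$ and using the substitution $s\mapsto 2t-s$ in the first summand, both pieces tend to the common value $f^{\Delta}(t)=f^{\nabla}(t)$, so $f^{\diamondsuit}(t)$ equals that common value, which is trivially $\gamma_1(t)f^{\Delta}(t)+(1-\gamma_1(t))f^{\nabla}(t)$ for \emph{any} choice of $\gamma_1(t)$. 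Finally I would verify that the prescribed limit $\gamma_1(t)=\lim_{s\to t}\frac{\sigma(t)-s}{\sigma(t)+2t-2s-\rho(t)}=\lim_{s\to t}\frac{t-s}{2t-2s}=\tfrac12$ exists in the dense case, so the formula is consistent. The main obstacle I anticipate is not any single computation but the bookkeeping in the not-dense case: one must confirm that $\gamma_1(t)$ as \emph{defined by the limit} genuinely exists and equals $\mu(t)/(\mu(t)+\nu(t))$ across all the scattered/dense sub-configurations of $t$, and handle the degenerate possibility $\mu(t)+\nu(t)=0$ (which only happens when $t$ is dense, already covered) — so the argument must be organized so that the two cases together exhaust $\mathbb{T}_{\kappa}^{\kappa}$ with no gap.
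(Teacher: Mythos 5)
Your proof is correct, but it takes a genuinely different route from the paper's. The paper does not split into cases: it starts from the limit characterization of $f^{\diamondsuit}(t)$ (the remark following the definition) and decomposes the symmetric difference quotient, for $s$ near $t$, as
\[
\frac{\sigma(t)-s}{\sigma(t)+2t-2s-\rho(t)}\cdot\frac{f^{\sigma}(t)-f(s)}{\sigma(t)-s}
\;+\;\frac{(2t-s)-\rho(t)}{\sigma(t)+2t-2s-\rho(t)}\cdot\frac{f(2t-s)-f^{\rho}(t)}{(2t-s)-\rho(t)},
\]
recognizing the two difference quotients as converging to $f^{\Delta}(t)$ and $f^{\nabla}(t)$ and the two weights as converging to $\gamma_{1}(t)$ and $\gamma_{2}(t)=1-\gamma_{1}(t)$; only afterwards does it evaluate $\gamma_{1}$ separately at dense and non-dense points. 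Your argument instead leans on the already-proved parts 3 and 4 of Theorem~\ref{ts:propriedade} to get existence of $f^{\diamondsuit}(t)$ for free (continuity at $t$ being supplied by delta/nabla differentiability), and then in the scattered case reduces the identity to pure algebra via $f^{\sigma}(t)=f(t)+\mu(t)f^{\Delta}(t)$ and $f^{\rho}(t)=f(t)-\nu(t)f^{\nabla}(t)$, giving $\gamma_{1}(t)=\mu(t)/(\mu(t)+\nu(t))$ directly. What each buys: the paper's single uniform limit computation exhibits $\gamma_{1}$ and $\gamma_{2}$ as the natural weights and avoids any case split in the main identity; your version is more elementary at non-dense points (no limit manipulation beyond checking that the weight $\gamma_{1}$, as defined by the limit, exists and equals the ratio of graininesses) but must carry the dense/non-dense dichotomy and the sub-cases for the weight explicitly, as you correctly anticipate. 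Both arguments cover all of $\mathbb{T}_{\kappa}^{\kappa}$, and your handling of the dense case (splitting $-f(s)+f(2t-s)$ through $f(t)$ and using $f^{\Delta}(t)=f^{\nabla}(t)$ there) is sound.
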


\begin{proof}
Note that
\begin{eqnarray*}
f^{\diamondsuit }\left( t\right)  &=&\lim_{s\rightarrow t}\frac{f^{\sigma}\left(
t\right) -f\left( s\right) +f\left( 2t-s\right) -f^{\rho }\left(
t\right) }{\sigma \left( t\right) +2t-2s-\rho \left( t\right) } \\
&=&\lim_{s\rightarrow t}\bigg{(}\frac{\sigma \left( t\right) -s}{\sigma
\left( t\right) +2t-2s-\rho \left( t\right) }\frac{f^{\sigma }\left(
t\right) -f\left( s\right) }{\sigma \left( t\right) -s} \\
&&+\frac{\left( 2t-s\right) -\rho \left( t\right) }{\sigma \left( t\right)
+2t-2s-\rho \left( t\right) }\frac{f\left( 2t-s\right) -f^{\rho }\left(
t\right) }{\left( 2t-s\right) -\rho \left( t\right) }\bigg{)} \\
&=&\lim_{s\rightarrow t}\left( \frac{\sigma \left( t\right) -s}{\sigma
\left( t\right) +2t-2s-\rho \left( t\right) }f^{\Delta }\left( t\right)
+ \frac{\left( 2t-s\right) -\rho \left( t\right) }{\sigma \left( t\right)
+2t-2s-\rho \left( t\right) }f^{\nabla }\left( t\right) \right).
\end{eqnarray*}
Let
\begin{eqnarray*}
\gamma _{2}\left( t\right)  &:=&\lim_{s\rightarrow t}\frac{\left(
2t-s\right) -\rho \left( t\right) }{\sigma \left( t\right) +2t-2s
-\rho\left( t\right)}.
\end{eqnarray*}
It is clear that
\begin{equation*}
\gamma _{1}\left( t\right) +\gamma _{2}\left( t\right) =1.
\end{equation*}
Note that if $t\in \mathbb{T}$ is dense, then
\begin{eqnarray*}
\gamma _{1}\left( t\right)  &=&\lim_{s\rightarrow t}\frac{\sigma \left(
t\right) -s}{\sigma \left( t\right) +2t-2s-\rho \left( t\right) } \\
&=&\lim_{s\rightarrow t}\frac{t-s}{2t-2s} \\
&=&\frac{1}{2}
\end{eqnarray*}
and therefore
\begin{equation*}
\gamma _{2}\left( t\right) =\frac{1}{2}.
\end{equation*}
On the other hand, if $t\in \mathbb{T}$ is not dense, then
\begin{eqnarray*}
\gamma _{1}\left( t\right)  &=&\lim_{s\rightarrow t}\frac{\sigma \left(
t\right) -s}{\sigma \left( t\right) +2t-2s-\rho \left( t\right) } \\
&=&\frac{\sigma \left( t\right) -t}{\sigma \left( t\right) -\rho \left(
t\right) }
\end{eqnarray*}
and
\begin{equation*}
\gamma _{2}\left( t\right) =\frac{t-\rho \left( t\right) }{\sigma \left(
t\right) -\rho \left( t\right) }.
\end{equation*}
Hence, the functions $\gamma_{1}$, $\gamma_{2}:\mathbb{T}\rightarrow \mathbb{R}$
are well defined and if $f$ is delta and nabla differentiable we have
\begin{eqnarray*}
f^{\diamondsuit }\left( t\right)  &=&\gamma _{1}\left( t\right) f^{\Delta}\left(
t\right) +\gamma _{2}\left( t\right) f^{\nabla }\left( t\right)  \\
&=&\gamma _{1}\left( t\right) f^{\Delta }\left( t\right) +\bigg{(} 1
-\gamma_{1}\left( t\right) \bigg{)} f^{\nabla }\left( t\right).
\end{eqnarray*}
\end{proof}

\begin{remark}
\begin{enumerate}
\item Suppose that $f$ is delta and nabla differentiable.
When a point $t\in\mathbb{T}_{\kappa}^{\kappa}$ is right-scattered
and left-dense, then its symmetric derivative is equal to its delta
derivative and when $t$ is left-scattered and right-dense,
its symmetric derivative is equal to its nabla derivative.

\item Note that if all the points in $\mathbb{T}_{\kappa}^{\kappa}$
are dense and $f$ is delta (or nabla) differentiable, then
\begin{equation*}
f^{\diamondsuit}\left ( t \right)
= \frac{1}{2}f^{\Delta}\left ( t \right )
+ \frac{1}{2}f^{\nabla}\left ( t \right )\\
=f'\left( t \right), \text{   } t \in \mathbb{T}_{\kappa}^{\kappa}.
\end{equation*}

\item If $f$ is delta and nabla differentiable and if $\gamma_{1}$ is
constant, then the symmetric derivative coincides with the diamond-$\alpha$
derivative. The symmetric derivative tell us exactly the weight of the delta
and nabla derivative at each point. In the definition of the
diamond-$\alpha$ derivative we choose the influence of the nabla and delta
derivative. Moreover, this influence of the nabla and delta derivative does
not depend of the point we choose in the diamond-$\alpha$ derivative.
\end{enumerate}
\end{remark}

\begin{proposition}
The functions $\gamma_{1}$, $\gamma_{2}:\mathbb{T}\rightarrow \mathbb{R}$
are bounded and nonnegative. Moreover,
\begin{equation*}
0\leqslant \gamma _{i}\leqslant 1\text{ ,  \  } i=1,2.
\end{equation*}
\end{proposition}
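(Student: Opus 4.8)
The plan is to read off the claim directly from the explicit formulas for $\gamma_1$ and $\gamma_2$ established in the proof of Proposition~\ref{ts:delta nabla}. Recall that there we showed: if $t\in\mathbb{T}$ is dense, then $\gamma_1(t)=\gamma_2(t)=\frac{1}{2}$; while if $t\in\mathbb{T}$ is not dense (i.e. right-scattered or left-scattered, so that $\sigma(t)>\rho(t)$), then
\[
\gamma_1(t)=\frac{\sigma(t)-t}{\sigma(t)-\rho(t)}\quad\text{and}\quad
\gamma_2(t)=\frac{t-\rho(t)}{\sigma(t)-\rho(t)}.
\]
So the first step is simply to invoke these two formulas as the two cases of the argument.

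Next I would treat the dense case trivially: there $\gamma_1(t)=\gamma_2(t)=\tfrac12\in[0,1]$, so both functions are nonnegative and bounded by $1$ at such points. For the non-dense case, the key observation is the chain of inequalities $\rho(t)\leqslant t\leqslant\sigma(t)$, which holds for every $t\in\mathbb{T}$ (since $\rho(t)=\sup\{s\in\mathbb{T}:s<t\}\leqslant t$ and $\sigma(t)=\inf\{s\in\mathbb{T}:s>t\}\geqslant t$). Hence the numerators $\sigma(t)-t$ and $t-\rho(t)$ are both nonnegative, and since $t$ is not dense the denominator $\sigma(t)-\rho(t)$ is strictly positive; therefore $\gamma_1(t)\geqslant0$ and $\gamma_2(t)\geqslant0$. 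Moreover each numerator is bounded above by the denominator, because $\sigma(t)-t\leqslant\sigma(t)-\rho(t)$ (as $\rho(t)\leqslant t$) and $t-\rho(t)\leqslant\sigma(t)-\rho(t)$ (as $t\leqslant\sigma(t)$), which gives $\gamma_i(t)\leqslant1$ for $i=1,2$. Combining both cases yields $0\leqslant\gamma_i(t)\leqslant1$ for all $t\in\mathbb{T}$ and $i=1,2$, which is exactly the assertion; boundedness and nonnegativity are immediate consequences.

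There is essentially no obstacle here — the statement is a short corollary of the computations already carried out in Proposition~\ref{ts:delta nabla}. The only point that deserves a word of care is making sure the functions $\gamma_1,\gamma_2$ are genuinely well-defined on all of $\mathbb{T}$ (not just on $\mathbb{T}_\kappa^\kappa$), but this was already noted at the end of the proof of Proposition~\ref{ts:delta nabla}, where it is observed that ``the functions $\gamma_1$, $\gamma_2:\mathbb{T}\rightarrow\mathbb{R}$ are well defined.'' One could also remark, as a sanity check, that $\gamma_1+\gamma_2\equiv1$ (also established there), so that the bound on one of them automatically transfers to the other; I would mention this only in passing to keep the proof to a few lines.
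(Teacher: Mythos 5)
Your proof is correct and follows essentially the same route as the paper, whose entire argument is the single observation that $\rho(t)\leqslant t\leqslant\sigma(t)$ for every $t\in\mathbb{T}$; you have simply spelled out the case analysis (dense versus non-dense) using the explicit formulas for $\gamma_1$ and $\gamma_2$ obtained in the proof of Proposition~\ref{ts:delta nabla}. The extra detail is harmless and arguably clearer, but it is not a different method.
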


\begin{proof}
The result follows from the fact that
\begin{equation*}
\rho \left( t\right)\leqslant t \leqslant\sigma \left( t\right)
\end{equation*}
for every $t\in \mathbb{T}$.
\end{proof}

\begin{theorem}
Let $f,g:\mathbb{T}\rightarrow \mathbb{R}$
be functions symmetric differentiable at
$t\in \mathbb{T}_{\kappa}^{\kappa}$
and let $\lambda \in \mathbb{R}$. Then:
\begin{enumerate}
\item $f+g$ is symmetric differentiable at $t$ with
\begin{equation*}
\left( f+g\right) ^{\diamondsuit }\left( t\right) =f^{\diamondsuit }\left(
t\right) +g^{\diamondsuit }\left( t\right) \text{;}
\end{equation*}

\item $\lambda f$ is symmetric differentiable at $t$ with
\begin{equation*}
\left( \lambda f\right) ^{\diamondsuit }\left( t\right) =\lambda
f^{\diamondsuit }\left( t\right) \text{;}
\end{equation*}

\item $fg$ is symmetric differentiable at $t$ with
\begin{equation*}
\left( fg\right) ^{\diamondsuit }\left( t\right) =f^{\diamondsuit }\left(
t\right) g^{\sigma }\left( t\right) +f^{\rho }\left( t\right)
g^{\diamondsuit }\left( t\right) \text{,}
\end{equation*}
provided that $f$ and $g$ are continuous;

\item $\displaystyle \frac{1}{f}$ is symmetric differentiable at $t$ with
\begin{equation*}
\left( \frac{1}{f}\right) ^{\diamondsuit }\left( t\right)
= -\frac{f^{\diamondsuit }\left( t\right) }{f^{\sigma }\left( t\right)
f^{\rho}\left( t\right)},
\end{equation*}
provided that $f$ is continuous and $f^{\sigma }\left( t\right)
f^{\rho}\left( t\right) \neq 0$;

\item $\displaystyle \frac{f}{g}$ is symmetric differentiable at $t$ with
\begin{equation*}
\left( \frac{f}{g}\right) ^{\diamondsuit }\left( t\right)
= \frac{f^{\diamondsuit }\left( t\right) g^{\rho }\left( t\right) -f^{\rho }\left(
t\right) g^{\diamondsuit }\left( t\right)}{g^{\sigma }\left( t\right)
g^{\rho }\left( t\right)}
\end{equation*}
provided that $f$ and $g$ are continuous and $g^{\sigma }\left( t\right)
g^{\rho }\left( t\right) \neq 0$.
\end{enumerate}
\end{theorem}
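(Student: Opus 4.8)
The five assertions in the theorem are the symmetric-calculus analogues of the classical derivative rules, so the plan is to reduce each one to the computational formulas already at our disposal. The key observation is that, by Theorem~\ref{ts:propriedade}(5), whenever $f$ is symmetric differentiable and continuous at $t$ one has the ``$\sigma$--$\rho$'' expansion
\[
f^{\sigma}(t)=f^{\rho}(t)+f^{\diamondsuit}(t)\left[\sigma(t)-\rho(t)\right],
\]
and similarly by Theorem~\ref{ts:propriedade}(3)-(4) the symmetric derivative is realized either as the difference quotient $\dfrac{f^{\sigma}(t)-f^{\rho}(t)}{\sigma(t)-\rho(t)}$ (at non-dense points, assuming continuity) or as the limit $\lim_{s\to t}\dfrac{-f(s)+f(2t-s)}{2t-2s}$ (at dense points). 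So for each rule I would split into the two cases ``$t$ dense'' and ``$t$ not dense'' and verify the identity by direct substitution into the appropriate one of these two formulas.

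\textbf{Additivity and homogeneity (1)--(2).} These are immediate and need no case split: from the limit definition in the Remark after Theorem~\ref{ts:propriedade},
\[
f^{\diamondsuit}(t)=\lim_{s\to t}\frac{f^{\sigma}(t)-f(s)+f(2t-s)-f^{\rho}(t)}{\sigma(t)+2t-2s-\rho(t)},
\]
the numerator is linear in $f$ and the denominator does not involve $f$, so linearity of limits gives $(f+g)^{\diamondsuit}=f^{\diamondsuit}+g^{\diamondsuit}$ and $(\lambda f)^{\diamondsuit}=\lambda f^{\diamondsuit}$ directly; in particular the limits on the right exist, which is what ``symmetric differentiable at $t$'' means.

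\textbf{Product, reciprocal, quotient (3)--(5).} For the product rule, with $f,g$ continuous I would treat the non-dense case by writing $(fg)^{\diamondsuit}(t)=\dfrac{(fg)^{\sigma}(t)-(fg)^{\rho}(t)}{\sigma(t)-\rho(t)}$ and inserting $\pm f^{\rho}(t)g^{\sigma}(t)$ into the numerator, exactly mimicking the algebra already carried out for $D_{\alpha,\beta}[fg]$ in Chapter~\ref{A Symmetric Quantum Calculus} and for the delta/nabla product rules; this yields $f^{\diamondsuit}(t)g^{\sigma}(t)+f^{\rho}(t)g^{\diamondsuit}(t)$. In the dense case, $\sigma(t)=\rho(t)=t$ so $f^{\sigma}(t)=f(t)=f^{\rho}(t)$ and $f^{\diamondsuit}$ is the classical symmetric derivative, and the identity collapses to $(fg)^{\diamondsuit}(t)=f^{\diamondsuit}(t)g(t)+f(t)g^{\diamondsuit}(t)$, which follows from the $h\to0$ symmetric product rule by adding and subtracting $f(t-h)g(t+h)$ inside $\dfrac{f(t+h)g(t+h)-f(t-h)g(t-h)}{2h}$ and using continuity of $f,g$ at $t$ to pass to the limit. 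The reciprocal rule (4) is then obtained by the same split: in the non-dense case, $\left(\tfrac{1}{f}\right)^{\diamondsuit}(t)=\dfrac{\tfrac{1}{f^{\sigma}(t)}-\tfrac{1}{f^{\rho}(t)}}{\sigma(t)-\rho(t)}=\dfrac{f^{\rho}(t)-f^{\sigma}(t)}{(\sigma(t)-\rho(t))f^{\sigma}(t)f^{\rho}(t)}=-\dfrac{f^{\diamondsuit}(t)}{f^{\sigma}(t)f^{\rho}(t)}$, and analogously via the $h$-limit in the dense case, the hypothesis $f^{\sigma}(t)f^{\rho}(t)\neq0$ together with continuity guaranteeing $f$ is nonzero in a neighborhood so that $1/f$ is defined and symmetric differentiable there. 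Finally (5) follows by applying the product rule to $\dfrac{f}{g}=f\cdot\dfrac{1}{g}$ and simplifying $f^{\diamondsuit}(t)\cdot\dfrac{1}{g^{\sigma}(t)}+f^{\rho}(t)\cdot\left(-\dfrac{g^{\diamondsuit}(t)}{g^{\sigma}(t)g^{\rho}(t)}\right)$ over the common denominator $g^{\sigma}(t)g^{\rho}(t)$.

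\textbf{Main obstacle.} The computations themselves are routine; the delicate point is the role of the continuity hypotheses. At a non-dense point, Theorem~\ref{ts:propriedade}(3) already gives symmetric differentiability of any continuous function, so the product and quotient of continuous functions are automatically symmetric differentiable and the formulas are pure algebra. The genuinely careful step is the dense-point case of (3)--(5): there one must justify that the relevant limit of difference quotients actually exists (not merely manipulate it formally), which is where continuity of $f$ and $g$ at $t$ is used to split the symmetric difference quotient of a product/quotient into a sum of terms each converging to a factor times the other's symmetric derivative. I would state this limit-splitting lemma for the classical symmetric derivative once (it is essentially in \cite{Thomson}) and then invoke it in the dense case of all three rules, so as not to repeat the $\varepsilon$-estimates three times.
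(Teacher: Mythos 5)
Your proposal is correct and follows essentially the same route as the paper: items (1)--(2) by linearity of the defining limit, item (3) by the dense/non-dense case split (classical symmetric quotient with the add-and-subtract of $f(t-h)g(t+h)$ in the dense case, the $\sigma$--$\rho$ difference quotient in the scattered case), and item (5) by applying the product rule to $f\cdot\frac{1}{g}$.

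The one genuine divergence is item (4). The paper obtains the reciprocal rule by applying the product rule to the identity $\left(\frac{1}{f}\cdot f\right)(t)=1$ and solving $0=f^{\diamondsuit}(t)\left(\frac{1}{f}\right)^{\sigma}(t)+f^{\rho}(t)\left(\frac{1}{f}\right)^{\diamondsuit}(t)$ for $\left(\frac{1}{f}\right)^{\diamondsuit}(t)$; this presupposes that $\frac{1}{f}$ is already symmetric differentiable at $t$, which the paper does not establish separately. Your direct computation --- the $\sigma$--$\rho$ quotient at non-dense points and the $h$-limit at dense points, using continuity and $f^{\sigma}(t)f^{\rho}(t)\neq 0$ to keep $f$ nonzero near $t$ --- proves existence of the symmetric derivative of $\frac{1}{f}$ at the same time as it computes it, so your route for (4) is slightly longer but logically self-contained where the paper's is not.
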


\begin{proof}
\begin{enumerate}
\item For $t\in \mathbb{T}_{\kappa}^{\kappa}$ we have
\begin{eqnarray*}
\left( f+g\right) ^{\diamondsuit }\left( t\right)
&=&\lim_{s\rightarrow t}\frac{\left( f+g\right) ^{\sigma }\left( t\right)
-\left( f+g\right) \left( s\right) +\left( f+g\right) \left( 2t-s\right)
-\left( f+g\right) ^{\rho }\left( t\right) }{\sigma \left( t\right)
+2t-2s-\rho \left( t\right) } \\
&=&\lim_{s\rightarrow t}\frac{f^{\sigma }\left( t\right) -f\left( s\right)
+f\left( 2t-s\right) -f^{\rho }\left( t\right) }{\sigma \left( t\right)
+2t-2s-\rho \left( t\right) }\\
&&+\lim_{s\rightarrow t}\frac{g^\sigma\left( t\right)
-g\left( s\right) +g\left( 2t-s\right) -g^{\rho }\left( t\right) }{\sigma
\left( t\right) +2t-2s-\rho \left( t\right) } \\
&=&f^{\diamondsuit }\left( t\right) +g^{\diamondsuit }\left( t\right) .
\end{eqnarray*}

\item Let $t\in \mathbb{T}_{\kappa}^{\kappa}$ and
$\lambda \in \mathbb{R}$, then
\begin{eqnarray*}
\left( \lambda f\right) ^{\diamondsuit }\left( t\right)
&=&\lim_{s\rightarrow t}\frac{\left( \lambda f\right) ^{\sigma }\left(
t\right) -\left( \lambda f\right) \left( s\right) +\left( \lambda f\right)
\left( 2t-s\right) -\left( \lambda f\right) ^{\rho }\left( t\right) }{\sigma
\left( t\right) +2t-2s-\rho \left( t\right) } \\
&=&\lambda \lim_{s\rightarrow t}\frac{f^{\sigma }\left( t\right) -f\left(
s\right) +f\left( 2t-s\right) -f^{\rho }\left( t\right) }{\sigma \left(
t\right) +2t-2s-\rho \left( t\right) } \\
&=&\lambda f^{\diamondsuit }\left( t\right) .
\end{eqnarray*}

\item Let us assume that $t\in \mathbb{T}_{\kappa}^{\kappa}$ and $f$ and $g$ are
continuous. If $t$ is dense, then
\begin{eqnarray*}
\left( fg\right) ^{\diamondsuit }\left( t\right)
&=&\lim_{h\rightarrow 0}\frac{\left( fg\right) \left( t+h\right) -\left(
fg\right) \left( t-h\right) }{2h} \\
&=&\lim_{h\rightarrow 0}\left(\frac{f\left( t+h\right) -f\left( t-h\right) }{2h}
g\left( t+h\right)\right) +\lim_{h\rightarrow 0}\left(\frac{g\left( t+h\right)
-g\left(t-h\right) }{2h}f\left( t-h\right)\right)  \\
&=&f^{\diamondsuit }\left( t\right) g\left( t\right) +f\left( t\right)
g^{\diamondsuit }\left( t\right)  \\
&=&f^{\diamondsuit }\left( t\right) g^{\sigma }\left( t\right)
+f^{\rho}\left( t\right) g^{\diamondsuit }\left( t\right) .
\end{eqnarray*}
If $t$ is not dense, then
\begin{eqnarray*}
\left( fg\right) ^{\diamondsuit }\left( t\right)
&=&\frac{\left( fg\right)^{\sigma}\left( t\right)
-\left( fg\right) ^{\rho }\left( t\right) }{\sigma
\left( t\right) -\rho \left( t\right) } \\
&=&\frac{f^{\sigma }\left( t\right) -f^{\rho }\left( t\right) }{\sigma
\left( t\right) -\rho \left( t\right) }g^{\sigma }\left( t\right)
+\frac{g^{\sigma }\left( t\right) -g^{\rho }\left( t\right)}{\sigma \left(
t\right) -\rho \left( t\right) }f^{\rho }\left( t\right)\\
&=&f^{\diamondsuit }\left( t\right) g^{\sigma }\left( t\right)
+ f^{\rho}\left( t\right) g^{\diamondsuit }\left( t\right)
\end{eqnarray*}
proving the intended result.

\item Because
\begin{equation*}
\left( \frac{1}{f}\times f\right) \left( t\right) =1
\end{equation*}
one has
\begin{eqnarray*}
0 &=&\left( \frac{1}{f}\times f\right) ^{\diamondsuit }\left( t\right) \\
&=&f^{\diamondsuit }\left( t\right) \left( \frac{1}{f}\right)^{\sigma}\left(
t\right) +f^{\rho }\left( t\right) \left( \frac{1}{f}\right)^{\diamondsuit }\left( t\right).
\end{eqnarray*}
Therefore,
\begin{equation*}
\left( \frac{1}{f}\right) ^{\diamondsuit }\left( t\right)
= -\frac{f^{\diamondsuit}\left( t\right)}{f^{\sigma }\left( t\right)
f^{\rho}\left( t\right)}.
\end{equation*}

\item Let $t\in \mathbb{T}_{\kappa}^{\kappa}$, then
\begin{eqnarray*}
\left( \frac{f}{g}\right) ^{\diamondsuit }\left( t\right) &=&\left( f\times
\frac{1}{g}\right) ^{\diamondsuit }\left( t\right) \\
&=&f^{\diamondsuit }\left( t\right) \left( \frac{1}{g}\right) ^{\sigma
}\left( t\right) +f^{\rho }\left( t\right) \left( \frac{1}{g}\right)
^{\diamondsuit }\left( t\right) \\
&=&\frac{f^{\diamondsuit }\left( t\right) }{g^{\sigma }\left( t\right)}
+f^{\rho }\left( t\right) \left( -\frac{g^{\diamondsuit }\left(
t\right)}{g^{\sigma }\left( t\right) g^{\rho }\left( t\right) }\right) \\
&=&\frac{f^{\diamondsuit }\left( t\right) g^{\rho }\left( t\right) -f^{\rho
}\left( t\right) g^{\diamondsuit }\left( t\right) }{g^{\sigma }\left(
t\right) g^{\rho }\left( t\right) }.
\end{eqnarray*}
\end{enumerate}
\end{proof}

\begin{ex}
\begin{enumerate}
\item The symmetric derivative of $f\left( t\right) =t^{2}$ is
\begin{equation*}
f^{\diamondsuit }\left( t\right) =\sigma \left( t\right) +\rho \left(
t\right) .
\end{equation*}

\item The symmetric derivative of $f\left( t\right) =1/t$ is
\begin{equation*}
f^{\diamondsuit }\left( t\right) =-\frac{1}{\sigma \left( t\right) \rho
\left( t\right) }.
\end{equation*}
\end{enumerate}
\end{ex}

\begin{remark}
In the classical case, it can be proved that
``A continuous function is necessarily increasing in any interval in which its
symmetric derivative exists and is positive'' \cite{Thomson}.
However, it should be noted that this result is not valid for the symmetric
derivative on time scales. For instance, consider the time scale
$\mathbb{T}=\mathbb{N}$ and the function
\[
f\left( n\right) =\left \{
\begin{array}{ccc}
n & \text{ \ if \ } & n\text{ is odd } \\
&  &  \\
10n & \text{ \ if \ } & n\text{ is even.}
\end{array}
\right.
\]
The symmetric derivative of $f$ for $n$ odd is given by
\begin{eqnarray*}
f^{\diamondsuit }\left( n\right)  &=&\frac{f^{\sigma }\left( n\right)
-f^{\rho }\left( n\right) }{\sigma \left( n\right) -\rho \left( n\right) } \\
&=&\frac{10\left( n+1\right) -10\left( n-1\right) }{\left( n+1\right)
-\left( n-1\right) } \\
&=&10
\end{eqnarray*}
and for $n$ even is given by
\begin{eqnarray*}
f^{\diamondsuit }\left( n\right)  &=&\frac{f^{\sigma }\left( n\right)
-f^{\rho }\left( n\right) }{\sigma \left( n\right) -\rho \left( n\right) } \\
&=&\frac{\left( n+1\right) -\left( n-1\right) }{\left( n+1\right) -\left(
n-1\right) } \\
&=&1.
\end{eqnarray*}
Clearly, the function is not increasing although its symmetric derivative is always positive.

In this example the symmetric derivative coincides with the diamond-$\alpha$ derivative
with $\alpha=\frac{1}{2}$, and this shows that there is an inconsistency in \cite{Ozkan}.
Indeed, Corollary 2.1. of \cite{Ozkan} is not valid.
\end{remark}


\section{The diamond integral}
\label{ts:sec:int}

In the classical calculus, there are some attempts to define a symmetric integral:
see, for example, \cite{Thomson:2}. However, those integrals invert
only ``approximately'' the symmetric derivatives. In discrete time we have some examples
of symmetric integrals, namely the $q$-symmetric integral: see, for example,
\cite{Brito:da:Cruz:2,Kac}. In the context of quantum calculus,
in \cite{Brito:da:Cruz:5} the authors introduced the Hahn symmetric integral that inverts
the Hahn symmetric derivative. In time scale calculus,
the problem of determining a symmetric integral is an interesting open question. In our opinion,
the diamond-$\alpha $ integral is by now the nearest idea to construct a symmetric
integral. Regarding the similarities and the advantages of the symmetric
derivative towards the diamond-$\alpha $ derivative, we think that the
construction of the diamond integral bring us closer to the construction of
a genuine symmetric integral on time scales.

\begin{definition}
Let $f:\mathbb{T} \rightarrow \mathbb{R}$ and $a,b\in \mathbb{T}$, $a<b$.
We define the \emph{ diamond integral}\index{$\diamondsuit$-integral}
(or \emph{ $\diamondsuit$-integral}\index{Diamond integral})
of $f$ from $a$ to $b$ (or on $[a,b]_\mathbb{T}$) by
\begin{equation*}
\int_{a}^{b}f\left( t\right) \diamondsuit t=\int_{a}^{b}\gamma _{1}\left(
t\right) f\left( t\right) \Delta t+\int_{a}^{b}\gamma _{2}\left( t\right)
f\left( t\right) \nabla t,
\end{equation*}
provided $\gamma _{1}f$ is delta integrable and $\gamma_{2}f$ is nabla
integrable on $[a,b]_{\mathbb{T}}$. We say that the function $f$ is diamond integrable
(or $\diamondsuit$-integrable) on $\mathbb{T}$
if it is $\diamondsuit$-integrable for all $a,b \in \mathbb{T}$.
\end{definition}

\begin{ex}
\begin{enumerate}
\item Let $f: \mathbb{Z} \rightarrow \mathbb{R}$ be defined by
\begin{equation*}
f\left( t\right) =t^{2}.
\end{equation*}
Then,
\begin{eqnarray*}
\int_{0}^{2}f\left( t\right) \diamondsuit t &=&\frac{1}{2}
\int_{0}^{2}f\left( t\right) \Delta t+\frac{1}{2}\int_{0}^{2}f\left(
t\right) \nabla t \\
&=&\frac{1}{2}\sum_{t=0}^{1}f\left( t\right) +\frac{1}{2}\sum_{t=1}^{2}f\left( t\right) \\
&=&\frac{1}{2}\left( 0+1\right) +\frac{1}{2}\left( 1+4\right) \\
&=&3
\end{eqnarray*}

\item Let $f:\left[ 0,1\right] \cup \left \{ 2,4\right \}
\rightarrow \mathbb{R}$ be defined by
\begin{equation*}
f\left( t\right) =1.
\end{equation*}
Then
\begin{eqnarray*}
\int_{0}^{4}f\left( t\right) \diamondsuit t &=&\int_{0}^{1}f\left( t\right)
\diamondsuit t+\int_{1}^{4}f\left( t\right) \diamondsuit t \\
&=&\int_{0}^{1}1dt+\int_{1}^{4}\gamma _{1}\left( t\right) \Delta
t+\int_{1}^{4}\gamma _{2}\left( t\right) \nabla t \\
&=&1+\gamma _{1}\left( 1\right) \left( \sigma \left( 1\right) -1\right)
+\gamma _{1}\left( 2\right) \left( \sigma \left( 2\right) -2\right) \\
&&+\gamma _{2}\left( 2\right) \left( 2-\rho \left( 2\right) \right)
+\gamma_{2}\left( 4\right) \left( 4-\rho \left( 4\right) \right) \\
&=&1+1+\frac{4}{3}+\frac{1}{3}+2 \\
&=&\frac{17}{3}.
\end{eqnarray*}
Note that the diamond-$\alpha$ integral of the same function is
\begin{eqnarray*}
\int_{0}^{4}f\left( t\right) \diamondsuit _{\alpha }t &=&\int_{0}^{1}f\left(
t\right) \diamondsuit t+\alpha \int_{1}^{4}1\Delta t+\left( 1-\alpha \right)
\int_{1}^{4}1\nabla t \\
&=&1+3\alpha +\left( 1-\alpha \right) 3 \\
&=&4.
\end{eqnarray*}
\end{enumerate}
\end{ex}

The $\diamondsuit$-integral has the following properties.

\begin{theorem}
\label{ts:propriedades integral}
Let $f,g:\mathbb{T}\rightarrow \mathbb{R}$ be $\diamondsuit$-integrable on
$\left[ a,b\right]_{\mathbb{T}} $. Let $c\in \left[a,b\right]_{\mathbb{T}}$
and $\lambda \in \mathbb{R}$. Then,
\begin{enumerate}
\item $\displaystyle \int_{a}^{a}f\left( t\right) \diamondsuit t=0$;

\item $\displaystyle \int_{a}^{b}f\left( t\right) \diamondsuit
t=\int_{a}^{c}f\left( t\right) \diamondsuit t+\int_{c}^{b}f\left( t\right)
\diamondsuit t$;

\item $\displaystyle \int_{a}^{b}f\left( t\right) \diamondsuit t
=-\displaystyle \int_{b}^{a}f\left( t\right) \diamondsuit t$;

\item $f+g$ is $\diamondsuit$-integrable on $\left[ a,b\right]_{\mathbb{T}}$ and
\begin{equation*}
\int_{a}^{b}\left( f+g\right) \left( t\right) \diamondsuit
t=\int_{a}^{b}f\left( t\right) \diamondsuit t+\int_{a}^{b}g\left( t\right)
\diamondsuit t\text{;}
\end{equation*}

\item $\lambda f$ is $\diamondsuit$-integrable on $\left[ a,b\right]_{\mathbb{T}}$ and
\begin{equation*}
\int_{a}^{b}\lambda f\left( t\right) \diamondsuit t=\lambda
\int_{a}^{b}f\left( t\right) \diamondsuit t\text{;}
\end{equation*}

\item $fg$ is $\diamondsuit$-integrable on $\left[ a,b\right]_{\mathbb{T}}$;

\item for $p>0$, $|f|^{p}$ is $\diamondsuit$-integrable on $\left[ a,b\right]_{\mathbb{T}}$;

\item if $f\left(t\right) \leqslant g\left( t\right)$
for all $t\in \left[ a,b\right]_{\mathbb{T}}$, then
\begin{equation*}
\int_{a}^{b}f\left( t\right) \diamondsuit t
\leqslant \int_{a}^{b}g\left(t\right) \diamondsuit t;
\end{equation*}

\item $\left \vert f\right \vert$ is
$\diamondsuit$-integrable on $\left[ a,b\right]_{\mathbb{T}}$ and
\begin{equation*}
\left \vert \int_{a}^{b}f\left( t\right) \diamondsuit t\right \vert
\leqslant \int_{a}^{b}\left \vert f\left( t\right) \right \vert \diamondsuit t.
\end{equation*}
\end{enumerate}
\end{theorem}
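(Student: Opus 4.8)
The plan is to reduce every assertion about the $\diamondsuit$-integral to the corresponding (already available) facts about the delta and nabla integrals, via the defining identity
\[
\int_{a}^{b}f(t)\diamondsuit t=\int_{a}^{b}\gamma_{1}(t)f(t)\Delta t+\int_{a}^{b}\gamma_{2}(t)f(t)\nabla t,
\]
keeping in mind that $\gamma_{1},\gamma_{2}$ are bounded, nonnegative, and satisfy $\gamma_{1}+\gamma_{2}\equiv 1$ (proved earlier). First I would dispatch items 1--5: property~1 is immediate since both the $\Delta$- and $\nabla$-integrals from $a$ to $a$ vanish; property~2 follows by applying additivity over $[a,c]\cup[c,b]$ (Theorem~\ref{ts:props integral delta} and its nabla analogue) to each of the two terms and regrouping; property~3 is the definition of the integral with reversed limits applied termwise; properties~4 and~5 follow by linearity of the $\Delta$- and $\nabla$-integrals, observing that $\gamma_{1}(f+g)=\gamma_{1}f+\gamma_{1}g$ and $\gamma_{1}(\lambda f)=\lambda(\gamma_{1}f)$, and similarly for $\gamma_{2}$; the integrability of the pieces is exactly the hypothesis that $f,g$ are $\diamondsuit$-integrable.

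Next I would handle the integrability-closure statements 6 and 7. For~6, note that $\gamma_{1}fg$ is delta integrable: write it as $\gamma_{1}f\cdot g$ where $\gamma_{1}f$ is delta integrable and $g$ is delta integrable (since $g$ is $\diamondsuit$-integrable, hence both $\gamma_1 g$ and $\gamma_2 g$ are integrable; but here one needs $g$ itself $\Delta$-integrable --- this is the subtle point, see below), and invoke item~6 of Theorem~\ref{ts:props integral delta} (product of delta integrable functions is delta integrable); similarly $\gamma_{2}fg$ is nabla integrable. For~7, $|f|^{p}$ is handled by the same bootstrapping together with item~7 of Theorem~\ref{ts:props integral delta} (for $p>0$, $|f|^{p}$ is delta integrable) applied to $\gamma_{1}|f|^{p}$, rewriting $\gamma_{1}|f|^{p}=\gamma_{1}\cdot|f|^{p}$ and using that products of delta integrable functions are delta integrable, and likewise on the nabla side.

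Finally I would prove the order and absolute-value estimates 8 and 9. For~8, from $f\le g$ pointwise and $\gamma_{1}\ge 0$ we get $\gamma_{1}f\le\gamma_{1}g$ on $[a,b]_{\mathbb{T}}$, so $\int_{a}^{b}\gamma_{1}f\,\Delta t\le\int_{a}^{b}\gamma_{1}g\,\Delta t$ by items~10--11 of Theorem~\ref{ts:props integral delta}; the analogous nabla inequality holds because $\gamma_{2}\ge 0$; adding the two yields $\int_{a}^{b}f\diamondsuit t\le\int_{a}^{b}g\diamondsuit t$. For~9, $\diamondsuit$-integrability of $|f|$ follows from item~7 with $p=1$; then, using the triangle inequality on the defining sum,
\[
\left\vert\int_{a}^{b}f(t)\diamondsuit t\right\vert
\le\left\vert\int_{a}^{b}\gamma_{1}(t)f(t)\Delta t\right\vert
+\left\vert\int_{a}^{b}\gamma_{2}(t)f(t)\nabla t\right\vert
\le\int_{a}^{b}\gamma_{1}(t)|f(t)|\Delta t+\int_{a}^{b}\gamma_{2}(t)|f(t)|\nabla t
=\int_{a}^{b}|f(t)|\diamondsuit t,
\]
where the middle inequality uses item~11 of Theorem~\ref{ts:props integral delta} (with $g=|f|$, noting $|\gamma_i f|=\gamma_i|f|$ since $\gamma_i\ge0$) and its nabla counterpart. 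The main obstacle I anticipate is a definitional gap in steps 6--7: the product/power rules of Theorem~\ref{ts:props integral delta} require the factors to be individually $\Delta$-integrable, whereas the hypothesis only gives $\diamondsuit$-integrability of $f$, i.e. integrability of $\gamma_1 f$ and $\gamma_2 f$; to close this cleanly one should either strengthen the hypothesis to ``$f$ is both $\Delta$- and $\nabla$-integrable'' (the natural setting, mirroring the diamond-$\alpha$ case) or argue that $\gamma_i$ are themselves sufficiently regular (piecewise of the form $\mu/(\mu+\nu)$) that $\gamma_i f$ integrable forces $f$ integrable on the relevant pieces; I would state this regularity assumption explicitly at the start of the section so that the product and power closure steps go through verbatim from the delta/nabla theory.
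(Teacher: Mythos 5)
Your reduction to the delta and nabla integral properties via the defining identity $\int_a^b f\,\diamondsuit t=\int_a^b\gamma_1 f\,\Delta t+\int_a^b\gamma_2 f\,\nabla t$ is exactly the paper's approach --- the paper's entire proof is the single sentence that the results ``follow straightforwardly from the properties of the nabla and delta integrals,'' so your write-up supplies the details the paper omits. The subtlety you flag in items 6--7 (that $\diamondsuit$-integrability of $f$ only yields integrability of $\gamma_1 f$ and $\gamma_2 f$, not of $f$ itself, so the product and power closure rules of Theorem~\ref{ts:props integral delta} do not apply verbatim) is a genuine point, but it is a gap in the paper's own one-line proof rather than in your argument, and your proposed remedies (strengthening the hypothesis to delta- and nabla-integrability of $f$ and $g$, or exploiting the explicit form of $\gamma_1,\gamma_2$) are the natural ways to close it.
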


\begin{proof}
The results follow straightforwardly from
the properties of the nabla and delta integrals.
\end{proof}

\begin{remark}
It is clear that the $\diamondsuit$-integral coincides with the $\diamondsuit_{\alpha}$-integral
when the functions $\gamma_{1}$ and $\gamma_{2}$ are constant and we choose
a proper $\alpha$. There are several and important time scales where this happens.
For instance, when $\mathbb{T} = \mathbb{R}$ or $\mathbb{T} = h\mathbb{Z}$,
$h>0$, we have that the $\diamondsuit$-integral is equal to the $\diamondsuit_{\frac{1}{2}}$-integral.
Since the Fundamental Theorem of Calculus is not valid for the $\diamondsuit_{\alpha}$-integral \cite{Sheng},
then it is clear that the Fundamental Theorem of Calculus is also not valid for the
$\diamondsuit$-integral.
\end{remark}

Next we prove some integral inequalities which are similar to the
$\diamondsuit_{\alpha}$-integral counterparts found in \cite{Ammi,Malinowska:2}.

\begin{theorem}[Mean value theorem for the diamond integral]
Let\index{Mean value theorem for the diamond integral}
$f,g:\mathbb{T}\rightarrow \mathbb{R}$ be bounded and $\diamondsuit$-integrable
functions on $[a,b]_\mathbb{T}$, and let $g$ be nonnegative or nonpositive on
$\left[ a,b\right]_{\mathbb{T}}$. Let $m$ and $M$ be the infimum and supremum,
respectively, of function $f$. Then, there exists
a real number $K$ satisfying the inequalities
\begin{equation*}
m\leqslant K\leqslant M
\end{equation*}
such that
\begin{equation*}
\int_{a}^{b}\left( fg\right) \left( t\right) \diamondsuit t
=K\int_{a}^{b}g\left( t\right) \diamondsuit t.
\end{equation*}
\end{theorem}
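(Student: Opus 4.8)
The plan is to mimic the classical proof of the mean value theorem for integrals, adapting it to the diamond integral. The essential tool is Theorem~\ref{ts:propriedades integral}, in particular the linearity (items 4 and 5), the monotonicity (item 8), and the fact that the product of two $\diamondsuit$-integrable functions is $\diamondsuit$-integrable (item 6). I would begin by treating the case where $g$ is nonnegative on $[a,b]_{\mathbb{T}}$; the nonpositive case follows by replacing $g$ with $-g$ and adjusting signs at the end, or is handled symmetrically.

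First I would record that, since $m = \inf f$ and $M = \sup f$, one has $m \leqslant f(t) \leqslant M$ for all $t \in [a,b]_{\mathbb{T}}$, and therefore, because $g(t) \geqslant 0$,
\begin{equation*}
m g(t) \leqslant f(t) g(t) \leqslant M g(t), \quad t \in [a,b]_{\mathbb{T}}.
\end{equation*}
All three functions $mg$, $fg$, $Mg$ are $\diamondsuit$-integrable on $[a,b]_{\mathbb{T}}$: $fg$ by item 6 of Theorem~\ref{ts:propriedades integral}, and $mg$, $Mg$ by item 5. Applying the monotonicity property (item 8) together with linearity (item 5) gives
\begin{equation*}
m \int_{a}^{b} g(t) \diamondsuit t \leqslant \int_{a}^{b} f(t) g(t) \diamondsuit t \leqslant M \int_{a}^{b} g(t) \diamondsuit t.
\end{equation*}
Next I would split into two cases according to the value of $\int_{a}^{b} g(t)\diamondsuit t$, which is nonnegative by item 8 applied to $0 \leqslant g$. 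If $\int_{a}^{b} g(t) \diamondsuit t = 0$, then the displayed double inequality forces $\int_{a}^{b} f(t) g(t)\diamondsuit t = 0$, and any $K \in [m,M]$ works (such a $K$ exists since $m \leqslant M$); in particular one may take $K = m$. If $\int_{a}^{b} g(t)\diamondsuit t > 0$, then dividing the double inequality through by this positive quantity yields
\begin{equation*}
m \leqslant \frac{\int_{a}^{b} f(t) g(t)\diamondsuit t}{\int_{a}^{b} g(t) \diamondsuit t} \leqslant M,
\end{equation*}
so setting $K$ equal to the middle quotient gives a number in $[m,M]$ with $\int_{a}^{b} (fg)(t)\diamondsuit t = K \int_{a}^{b} g(t)\diamondsuit t$, as required.

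There is no serious obstacle here: the proof is a routine transcription of the classical argument, and it parallels almost verbatim the proof of Theorem~\ref{hi:thm:mvt} given earlier for the $\alpha,\beta$-symmetric integral. The only point that needs a moment of care is the nonpositive case for $g$ and the degenerate case $\int_a^b g\diamondsuit t = 0$, where one must explicitly exhibit an admissible $K$ rather than divide; this is dispatched by the observation that $[m,M]$ is nonempty. One should also make sure all the cited properties of the $\diamondsuit$-integral (boundedness hypotheses guaranteeing integrability of the products, monotonicity) are in force, which they are under the stated assumptions that $f$ and $g$ are bounded and $\diamondsuit$-integrable.
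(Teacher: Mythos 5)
Your proof is correct and follows essentially the same route as the paper's: bound $mg \leqslant fg \leqslant Mg$, integrate using the monotonicity and linearity properties of the $\diamondsuit$-integral, and split into the cases $\int_a^b g\,\diamondsuit t = 0$ and $\int_a^b g\,\diamondsuit t > 0$. Your explicit handling of the nonpositive case (replacing $g$ by $-g$) fills in a step the paper dismisses with ``without loss of generality,'' but otherwise the arguments coincide.
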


\begin{proof}
Without loss of generality, we suppose that $g$ is nonnegative
on $\left[ a,b\right]_{\mathbb{T}}$.
Since, for all $t\in\left[ a,b\right]_{\mathbb{T}}$
\begin{equation*}
m\leqslant f\left( t\right) \leqslant M
\end{equation*}
and\thinspace $g\left( t\right) \geqslant 0$, then
\begin{equation*}
mg\left( t\right) \leqslant f\left( t\right) g\left( t\right) \leqslant
Mg\left( t\right)
\end{equation*}
for all $t\in \left[ a,b\right]_{\mathbb{T}}$. Each of the functions $mg$, $fg$ and $Mg$ is
$\diamondsuit$-integrable from $a$ to $b$ and,
by Theorem~\ref{ts:propriedades integral}, one has
\begin{equation*}
m\int_{a}^{b}g\left( t\right) \diamondsuit t\leqslant \int_{a}^{b}f\left(
t\right) g\left( t\right) \diamondsuit t\leqslant M\int_{a}^{b}g\left(
t\right) \diamondsuit t.
\end{equation*}
If $\displaystyle \int_{a}^{b}g\left( t\right) \diamondsuit t=0$, then
$\displaystyle \int_{a}^{b}f\left( t\right) g\left( t\right)
\diamondsuit t=0$ and can choose any $K\in[m,M]$.
If \\$\displaystyle \int_{a}^{b}g\left( t\right) \diamondsuit t>0$, then
\begin{equation*}
m\leqslant \frac{\int_{a}^{b}f\left( t\right) g\left( t\right) \diamondsuit t}{
\int_{a}^{b}g\left( t\right) \diamondsuit t}\leqslant M
\end{equation*}
and we choose $K$ as
\begin{equation*}
K:=\frac{\int_{a}^{b}f\left( t\right) g\left( t\right)
\diamondsuit t}{\int_{a}^{b}g\left( t\right) \diamondsuit t}.
\end{equation*}
\end{proof}

We now present  $\diamondsuit $-versions
of H\"{o}lder's, Cauchy-Schwarz's
and Minkowski's inequalities.

\begin{theorem}[H\"{o}lder's inequality for the diamond integral]
\label{ts:Holders Inequality TS}
Let\index{H\"{o}lder's inequality for the diamond integral}
$f,g:\mathbb{T}\rightarrow \mathbb{R}$ be functions $\diamondsuit$-integrable on $\left[ a,b\right]_{\mathbb{T}}$.
Then
\begin{equation*}
\int_{a}^{b}\left\vert f\left( t\right) g\left( t\right) \right \vert
\diamondsuit t\leqslant \left( \int_{a}^{b}\left \vert f\left( t\right)
\right \vert ^{p}\diamondsuit t\right) ^{\frac{1}{p}}\left(
\int_{a}^{b}\left \vert g\left( t\right)
\right\vert^{q}\diamondsuit t\right)^{\frac{1}{q}},
\end{equation*}
where $p>1$ and $q=\displaystyle \frac{p}{p-1}$.
\end{theorem}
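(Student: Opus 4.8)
The plan is to follow the classical route to Hölder's inequality, namely the application of a suitable form of Young's inequality pointwise followed by integration, exactly as in the proof of the $\alpha,\beta$-symmetric version (Theorem~\ref{hi:Holders Inequality}) earlier in the excerpt. First I would dispose of trivial cases: if either $\int_a^b |f(t)|^p \diamondsuit t = 0$ or $\int_a^b |g(t)|^q \diamondsuit t = 0$, then, since $|f|^p$ and $|g|^q$ are $\diamondsuit$-integrable and nonnegative, Theorem~\ref{ts:propriedades integral} (monotonicity and the triangle-type inequality for the $\diamondsuit$-integral) forces $|f|$ or $|g|$ to essentially vanish on $[a,b]_{\mathbb{T}}$, so the left-hand side is $0$ and the inequality holds. (One must be a little careful here: vanishing of the $\diamondsuit$-integral of a nonnegative function need not force the integrand to vanish pointwise on a general time scale, so I would instead argue directly that $\int_a^b |f(t)g(t)| \diamondsuit t \le 0$ via the pointwise bound below, avoiding any pointwise-vanishing claim.)

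Next, assuming $A:=\left(\int_a^b |f(t)|^p \diamondsuit t\right)^{1/p} > 0$ and $B:=\left(\int_a^b |g(t)|^q \diamondsuit t\right)^{1/q} > 0$, I would set
\[
\xi(t) := \frac{|f(t)|^p}{\int_a^b |f(\tau)|^p \diamondsuit \tau}, \qquad
\gamma(t) := \frac{|g(t)|^q}{\int_a^b |g(\tau)|^q \diamondsuit \tau},
\]
both of which are $\diamondsuit$-integrable on $[a,b]_{\mathbb{T}}$ by Theorem~\ref{ts:propriedades integral} (parts 5 and 7, closure under products and $p$-th powers, together with scalar multiplication). Young's inequality in the form $u^{1/p} v^{1/q} \le \frac{u}{p} + \frac{v}{q}$ for $u,v \ge 0$ then gives, for every $t \in [a,b]_{\mathbb{T}}$,
\[
\frac{|f(t)|}{A}\,\frac{|g(t)|}{B} = \xi(t)^{1/p}\gamma(t)^{1/q} \le \frac{\xi(t)}{p} + \frac{\gamma(t)}{q}.
\]
Now I would integrate this inequality over $[a,b]_{\mathbb{T}}$ with respect to $\diamondsuit t$, using the monotonicity of the $\diamondsuit$-integral (Theorem~\ref{ts:propriedades integral}, part 8) and its linearity (parts 4 and 5). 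The right-hand side integrates to $\frac{1}{p} + \frac{1}{q} = 1$ because $\int_a^b \xi(t)\diamondsuit t = 1$ and $\int_a^b \gamma(t)\diamondsuit t = 1$ by construction. Hence $\frac{1}{AB}\int_a^b |f(t)g(t)| \diamondsuit t \le 1$, which is precisely the claimed inequality after multiplying through by $AB$.

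The main obstacle, and the only place requiring genuine care, is the justification that all the integrands appearing are actually $\diamondsuit$-integrable and that monotonicity applies — in particular that $|f|^p$, $|g|^q$, and the product $\frac{|f(t)|}{A}\frac{|g(t)|}{B}$ lie in the class of $\diamondsuit$-integrable functions on $[a,b]_{\mathbb{T}}$ when $f$ and $g$ are. This is supplied by Theorem~\ref{ts:propriedades integral} (closure of the $\diamondsuit$-integrable functions under sums, scalar multiples, products, $p$-th powers, and absolute values), so the argument goes through without needing any new machinery; the only subtlety is to phrase the trivial case via the pointwise Young bound rather than via a spurious "integrand vanishes" claim, since the Fundamental Theorem of Calculus fails for the $\diamondsuit$-integral in general. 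Everything else is routine and parallels the $\alpha,\beta$-symmetric proof verbatim.
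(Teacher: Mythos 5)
Your proposal is correct and follows essentially the same route as the paper's proof: normalize by $A$ and $B$, apply Young's inequality $u^{1/p}v^{1/q}\leqslant \frac{u}{p}+\frac{v}{q}$ pointwise to $\xi(t)$ and $\gamma(t)$, and integrate using the linearity and monotonicity properties of Theorem~\ref{ts:propriedades integral}. The only difference is cosmetic: the paper simply assumes without loss of generality that the product of the two integrals is nonzero, whereas you take the (welcome) extra care to justify the degenerate case without a spurious pointwise-vanishing claim.
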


\begin{proof}
For $\lambda,\gamma\in \mathbb{R}_{0}^{+}$ and
$p,q$ such that $p>1$ and $\frac{1}{p}+\frac{1}{q}=1$, the
following inequality holds (Young's inequality):
\begin{equation*}
\lambda ^{\frac{1}{p}}\gamma ^{\frac{1}{q}}\leqslant \frac{\lambda }{p}
+ \frac{\gamma}{q}.
\end{equation*}
Without loss of generality, let us suppose that
\begin{equation*}
\left( \int_{a}^{b}\left \vert f\left( t\right) \right \vert^{p}\diamondsuit
t\right) \left( \int_{a}^{b}\left \vert g\left(
t\right) \right \vert^{q}\diamondsuit t\right) \neq 0
\end{equation*}
(note that both integrals exist by Theorem~\ref{ts:propriedades integral}). Define
\begin{equation*}
\lambda \left( t\right) :=\frac{\left \vert f\left( t\right)
\right \vert ^{p}}{\int_{a}^{b}\left \vert
f\left( \tau \right) \right \vert ^{p}\diamondsuit \tau}
\text{ \ and \ }\gamma \left( t\right) :=\frac{\left \vert g\left( t\right)
\right \vert ^{q}}{\int_{a}^{b}\left \vert g\left( \tau \right) \right \vert
^{q}\diamondsuit \tau}.
\end{equation*}
Since both functions $\lambda $ and $\gamma $ are $\diamondsuit$-integrable
on $\left[ a,b\right]_\mathbb{T} $, then
\begin{align*}
& \int_{a}^{b}\frac{\left \vert f\left( t\right) \right \vert }{\left(
\int_{a}^{b}\left \vert f\left( \tau \right) \right \vert^{p}\diamondsuit\tau \right) ^{\frac{1}{p}}}\frac{\left \vert g\left( t\right)
\right\vert}{\left( \int_{a}^{b}\left \vert g\left( \tau \right) \right \vert ^{q}\diamondsuit\tau \right) ^{\frac{1}{q}}}\diamondsuit t \\
& =\int_{a}^{b}\left(\lambda \left( t\right) ^{\frac{1}{p}}\right)\left(\gamma \left( t\right)
^{\frac{1}{q}}\right)\diamondsuit t \\
& \leqslant \int_{a}^{b}\left( \frac{\lambda \left( t\right) }{p}
+\frac{\gamma \left( t\right)}{q}\right) \diamondsuit t \\
& =\int_{a}^{b}\left( \frac{1}{p}\frac{\left \vert f\left( t\right)
\right \vert ^{p}}{\int_{a}^{b}\left \vert f\left( \tau \right) \right \vert
^{p}\diamondsuit \tau}+\frac{1}{q}\frac{\left \vert g\left( t\right) \right \vert
^{q}}{\int_{a}^{b}\left \vert g\left( \tau \right) \right \vert
^{q}\diamondsuit \tau}\right) \diamondsuit t \\
& =\frac{1}{p}\int_{a}^{b}\left( \frac{\left \vert f\left( t\right)
\right \vert ^{p}}{\int_{a}^{b}\left \vert f\left( \tau \right) \right \vert
^{p}\diamondsuit \tau}\right) \diamondsuit t+\frac{1}{q}\int_{a}^{b}\left(
\frac{\left \vert g\left( t\right) \right \vert ^{q}}{\int_{a}^{b}\left \vert
g\left( \tau \right) \right \vert ^{q}\diamondsuit \tau}\right) \diamondsuit t \\
& =\frac{1}{p}+\frac{1}{q} \\
& =1.
\end{align*}
Hence
\begin{equation*}
\int_{a}^{b}\left \vert f\left( t\right) g\left( t\right) \right \vert
\diamondsuit t\leqslant \left( \int_{a}^{b}\left \vert f\left( t\right)
\right \vert ^{p}\diamondsuit t\right) ^{\frac{1}{p}}\left(
\int_{a}^{b}\left \vert g\left( t\right) \right \vert ^{q}\diamondsuit
t\right)^{\frac{1}{q}}.
\end{equation*}
\end{proof}

\begin{corollary}[Cauchy-Schwarz inequality for the diamond integral]
If\index{Cauchy-Schwarz inequality for the diamond integral}
$f,g:\mathbb{T}\rightarrow \mathbb{R}$ are $\diamondsuit$-integrable
on $\left[ a,b\right]_{\mathbb{T}}$, then
\begin{equation*}
\int_{a}^{b}\left \vert f\left( t\right) g\left( t\right) \right \vert
\diamondsuit t\leqslant \sqrt{\left( \int_{a}^{b}\left \vert f\left( t\right)
\right \vert ^{2}\diamondsuit t\right) \left( \int_{a}^{b}\left \vert g\left(
t\right) \right \vert ^{2}\diamondsuit t\right) }\text{.}
\end{equation*}
\end{corollary}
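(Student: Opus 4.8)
The plan is to derive the Cauchy--Schwarz inequality for the diamond integral directly from H\"older's inequality for the diamond integral (Theorem~\ref{ts:Holders Inequality TS}), which has just been proved. This is the standard specialization, so the proof will be extremely short: it is simply the case $p = q = 2$ of the more general statement.

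First I would observe that $p = 2$ satisfies $p > 1$, and that the conjugate exponent is $q = \frac{p}{p-1} = \frac{2}{2-1} = 2$. Thus the pair $(p,q) = (2,2)$ is admissible in Theorem~\ref{ts:Holders Inequality TS}. Second, I would note that the hypotheses match: $f$ and $g$ are assumed $\diamondsuit$-integrable on $[a,b]_{\mathbb{T}}$, and by item~7 of Theorem~\ref{ts:propriedades integral} (with $p = 2 > 0$) the functions $|f|^{2}$ and $|g|^{2}$ are also $\diamondsuit$-integrable, so both integrals appearing on the right-hand side exist; moreover $fg$ is $\diamondsuit$-integrable by item~6 of Theorem~\ref{ts:propriedades integral}, so the left-hand side makes sense as well. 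Then I would simply apply Theorem~\ref{ts:Holders Inequality TS} with these choices and rewrite $\left(\cdots\right)^{\frac12}\left(\cdots\right)^{\frac12}$ as a single square root of the product.

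There is no real obstacle here; the only thing to be careful about is bookkeeping of integrability hypotheses, already handled by the cited properties. A clean write-up:

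\begin{proof}
Apply Theorem~\ref{ts:Holders Inequality TS} with $p = 2$. Then $q = \dfrac{p}{p-1} = 2$, and since $f$ and $g$ are $\diamondsuit$-integrable on $\left[ a,b\right]_{\mathbb{T}}$, so are $|f|^{2}$ and $|g|^{2}$ (Theorem~\ref{ts:propriedades integral}, item~7). Hence
\begin{equation*}
\int_{a}^{b}\left \vert f\left( t\right) g\left( t\right) \right \vert
\diamondsuit t\leqslant \left( \int_{a}^{b}\left \vert f\left( t\right)
\right \vert ^{2}\diamondsuit t\right) ^{\frac{1}{2}}\left(
\int_{a}^{b}\left \vert g\left( t\right) \right \vert ^{2}\diamondsuit
t\right)^{\frac{1}{2}}
=\sqrt{\left( \int_{a}^{b}\left \vert f\left( t\right)
\right \vert ^{2}\diamondsuit t\right) \left( \int_{a}^{b}\left \vert g\left(
t\right) \right \vert ^{2}\diamondsuit t\right) }\text{,}
\end{equation*}
which is the desired inequality.
\end{proof}
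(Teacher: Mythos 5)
Your proof is correct and takes exactly the same route as the paper, which also obtains the result as the particular case $p=q=2$ of H\"older's inequality for the diamond integral. Your additional bookkeeping of the integrability of $|f|^{2}$, $|g|^{2}$ and $fg$ via Theorem~\ref{ts:propriedades integral} is a harmless (and slightly more careful) elaboration of what the paper leaves implicit.
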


\begin{proof}
This is a particular case of
Theorem~\ref{ts:Holders Inequality TS} where $p=2=q$.
\end{proof}

\begin{theorem}[Minkowski's inequality for diamond integral]\index{Minkowski's inequality for diamond integral}
If $f,g:\mathbb{T}\rightarrow \mathbb{R}$ are
$\diamondsuit$-integrable on $\left[ a,b\right]_{\mathbb{T}} $ and $p>1$, then
\begin{equation*}
\left( \int_{a}^{b}\left \vert f\left( t\right) +g\left( t\right) \right \vert^{p}
\diamondsuit t\right) ^{\frac{1}{p}}\leqslant \left(
\int_{a}^{b}\left \vert f\left( t\right) \right \vert ^{p}\diamondsuit
t\right) ^{\frac{1}{p}}+\left( \int_{a}^{b}\left \vert g\left( t\right)
\right \vert ^{p}\diamondsuit t\right) ^{\frac{1}{p}}.
\end{equation*}
\end{theorem}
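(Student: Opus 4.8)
The plan is to prove Minkowski's inequality for the diamond integral by mimicking the classical argument, reducing it to an application of the $\diamondsuit$-version of H\"older's inequality (Theorem~\ref{ts:Holders Inequality TS}) together with the basic linearity, monotonicity and triangle-type properties of the diamond integral collected in Theorem~\ref{ts:propriedades integral}. First I would dispose of the trivial case: if $\int_{a}^{b}\left\vert f\left(t\right)+g\left(t\right)\right\vert^{p}\diamondsuit t=0$, then the left-hand side of the desired inequality is zero and there is nothing to prove, since the right-hand side is nonnegative by property~8 of Theorem~\ref{ts:propriedades integral}. So assume $\int_{a}^{b}\left\vert f\left(t\right)+g\left(t\right)\right\vert^{p}\diamondsuit t\neq 0$; note that this integral exists and is finite because $\left\vert f+g\right\vert^{p}$ is $\diamondsuit$-integrable on $\left[a,b\right]_{\mathbb{T}}$ — this follows from properties 4, 5, 7 of Theorem~\ref{ts:propriedades integral} applied to $f+g$.

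The key step is the pointwise factorization
\[
\left\vert f\left(t\right)+g\left(t\right)\right\vert^{p}
=\left\vert f\left(t\right)+g\left(t\right)\right\vert^{p-1}\left\vert f\left(t\right)+g\left(t\right)\right\vert
\leqslant\left\vert f\left(t\right)\right\vert\left\vert f\left(t\right)+g\left(t\right)\right\vert^{p-1}
+\left\vert g\left(t\right)\right\vert\left\vert f\left(t\right)+g\left(t\right)\right\vert^{p-1},
\]
where the inequality is just the ordinary triangle inequality for real numbers. I would integrate this from $a$ to $b$ using monotonicity (property~8) and linearity (properties 4 and 5) of the diamond integral to obtain
\[
\int_{a}^{b}\left\vert f\left(t\right)+g\left(t\right)\right\vert^{p}\diamondsuit t
\leqslant\int_{a}^{b}\left\vert f\left(t\right)\right\vert\left\vert f\left(t\right)+g\left(t\right)\right\vert^{p-1}\diamondsuit t
+\int_{a}^{b}\left\vert g\left(t\right)\right\vert\left\vert f\left(t\right)+g\left(t\right)\right\vert^{p-1}\diamondsuit t.
\]
Then I apply Theorem~\ref{ts:Holders Inequality TS} to each of the two integrals on the right with exponents $p$ and $q=p/(p-1)$; since $\left(p-1\right)q=p$, both factors involving $\left\vert f+g\right\vert^{(p-1)q}=\left\vert f+g\right\vert^{p}$ coincide, and factoring them out gives
\[
\int_{a}^{b}\left\vert f\left(t\right)+g\left(t\right)\right\vert^{p}\diamondsuit t
\leqslant\left[\left(\int_{a}^{b}\left\vert f\left(t\right)\right\vert^{p}\diamondsuit t\right)^{\frac{1}{p}}
+\left(\int_{a}^{b}\left\vert g\left(t\right)\right\vert^{p}\diamondsuit t\right)^{\frac{1}{p}}\right]
\left(\int_{a}^{b}\left\vert f\left(t\right)+g\left(t\right)\right\vert^{p}\diamondsuit t\right)^{\frac{1}{q}}.
\]
Finally I divide both sides by $\left(\int_{a}^{b}\left\vert f+g\right\vert^{p}\diamondsuit t\right)^{1/q}$, which is legitimate and nonzero by the case assumption, and use $1-\frac{1}{q}=\frac{1}{p}$ to conclude
\[
\left(\int_{a}^{b}\left\vert f\left(t\right)+g\left(t\right)\right\vert^{p}\diamondsuit t\right)^{\frac{1}{p}}
\leqslant\left(\int_{a}^{b}\left\vert f\left(t\right)\right\vert^{p}\diamondsuit t\right)^{\frac{1}{p}}
+\left(\int_{a}^{b}\left\vert g\left(t\right)\right\vert^{p}\diamondsuit t\right)^{\frac{1}{p}}.
\]

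I do not expect any serious obstacle here: the only points requiring a line of justification are that all the relevant powers and products are $\diamondsuit$-integrable (so that the right-hand sides are well defined — these come directly from Theorem~\ref{ts:propriedades integral}, parts 4--7, applied to $f$, $g$ and $f+g$), and that applying H\"older's inequality to $\left\vert f\right\vert\cdot\left\vert f+g\right\vert^{p-1}$ is valid, which requires only that $\left\vert f\right\vert$ and $\left\vert f+g\right\vert^{p-1}$ be $\diamondsuit$-integrable. Since the argument is essentially a transcription of the classical proof and every analytic tool it needs is already established in the preceding theorems for the diamond integral, the whole thing is routine once the case split and the exponent bookkeeping $\left(p-1\right)q=p$ are handled carefully; this last arithmetic identity is the one place where a sign or exponent slip would break the proof, so I would state it explicitly.
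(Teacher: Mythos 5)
Your proposal is correct and follows essentially the same route as the paper's proof: dispose of the trivial case where $\int_{a}^{b}\left\vert f+g\right\vert^{p}\diamondsuit t=0$, split $\left\vert f+g\right\vert^{p}=\left\vert f+g\right\vert^{p-1}\left\vert f+g\right\vert$ via the triangle inequality, apply the diamond H\"older inequality with $q=p/(p-1)$ so that $(p-1)q=p$, factor, and divide. No discrepancies with the paper's argument.
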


\begin{proof}
If $\displaystyle \int_{a}^{b}\left \vert f\left( t\right)
+g\left( t\right) \right \vert^{p}\diamondsuit t=0$ the result is trivial.
Suppose that $\displaystyle \int_{a}^{b}\left \vert f\left( t\right)
+g\left( t\right) \right \vert^{p}\diamondsuit t\neq0$.
Since
\begin{align*}
& \int_{a}^{b}\left \vert f\left( t\right) +g\left( t\right) \right \vert^{p}
\diamondsuit t=\int_{a}^{b}\left \vert f\left( t\right) +g\left( t\right)
\right \vert ^{p-1}\left \vert f\left( t\right) +g\left( t\right) \right \vert
\diamondsuit t \\
& \leqslant \int_{a}^{b}\left \vert f\left( t\right) \right \vert \left \vert
f\left( t\right) +g\left( t\right) \right \vert ^{p-1}\diamondsuit
t+\int_{a}^{b}\left \vert g\left( t\right) \right \vert \left \vert f\left(
t\right) +g\left( t\right) \right \vert ^{p-1}\diamondsuit t,
\end{align*}
then by H\"{o}lder's inequality (Theorem~\ref{ts:Holders Inequality TS}) with
$q=\displaystyle \frac{p}{p-1}$, we obtain
\begin{align*}
& \int_{a}^{b}\left \vert f\left( t\right) +g\left( t\right) \right \vert
^{p}\diamondsuit t  \\
\leqslant &\left( \int_{a}^{b}\left \vert f\left( t\right) \right \vert
^{p}\diamondsuit t\right) ^{\frac{1}{p}}\left( \int_{a}^{b}\left \vert
f\left( t\right) +g\left( t\right) \right \vert ^{\left( p-1\right)
q}\diamondsuit t\right) ^{\frac{1}{q}}  \\
&+\left( \int_{a}^{b}\left \vert g\left( t\right) \right \vert
^{p}\diamondsuit t\right) ^{\frac{1}{p}}\left( \int_{a}^{b}\left \vert
f\left( t\right) +g\left( t\right) \right \vert ^{\left( p-1\right)
q}\diamondsuit t\right) ^{\frac{1}{q}} \\
=& \left[ \left( \int_{a}^{b}\left \vert f\left( t\right) \right \vert
^{p}\diamondsuit t\right) ^{\frac{1}{p}}+\left( \int_{a}^{b}\left \vert
g\left( t\right) \right \vert ^{p}\diamondsuit t\right) ^{\frac{1}{p}}\right]
 \left( \int_{a}^{b}\left \vert f\left( t\right) +g\left( t\right)
\right \vert ^{\left( p-1\right) q}\diamondsuit t\right) ^{\frac{1}{q}}.
\end{align*}
Hence,
\begin{align*}
\int_{a}^{b}\left \vert f\left( t\right) +g\left( t\right) \right \vert^{p}\diamondsuit t
\leqslant  \left[ \left( \int_{a}^{b}\left \vert f\left( t\right) \right \vert^{p}
\diamondsuit t\right)^{\frac{1}{p}}+\left( \int_{a}^{b}\left \vert
g\left( t\right) \right \vert ^{p}\diamondsuit t\right) ^{\frac{1}{p}}\right]
\left( \int_{a}^{b}\left \vert f\left( t\right) +g\left( t\right)
\right \vert ^{ p}\diamondsuit t\right) ^{\frac{1}{q}}
\end{align*}
and, dividing both sides by
\begin{equation*}
\left(
\int_{a}^{b}\left \vert f\left( t\right) +g\left( t\right) \right \vert^{p}
\diamondsuit t\right) ^{\frac{1}{q}},
\end{equation*}
we obtain
\begin{equation*}
\left( \int_{a}^{b}\left \vert f\left( t\right) +g\left( t\right) \right \vert^{p}
\diamondsuit t\right) ^{\frac{1}{p}}\leqslant \left(
\int_{a}^{b}\left \vert f\left( t\right) \right \vert ^{p}\diamondsuit
t\right) ^{\frac{1}{p}}+\left( \int_{a}^{b}\left \vert g\left( t\right)
\right \vert ^{p}\diamondsuit t\right) ^{\frac{1}{p}}.
\end{equation*}
\end{proof}


\section{State of the Art}

The time scale theory is quite new and is under strong current research.
We now summarize the references already given in the Chapter~\ref{ts:sec:pre}.
For the time scale calculus we refer to \cite{Agarwal:2,Agarwal,Bohner,Bohner:2,Hilger},
for the calculus of variations within the time scale setting we refer to
\cite{Almeida,Bartosiewicz05,Bohner:3,Ferreira:3,Ferreira,Ferreira:2,Hilscher:2,Malinowska35,Malinowska:4,Martins,Martins15,Martins:4,Torres},
while for the diamond-$\alpha$ integral we suggest \cite{Ammi,Malinowska:2,Sheng:2,Sheng:3,Sheng}.

The results of this chapter were presented by the author at
The International Meeting on Applied Mathematics in Errachidia,
Morocco, April, 2012 and are published in \cite{Brito:da:Cruz:6}.


\clearpage{\thispagestyle{empty}\cleardoublepage}


\chapter{Conclusions and Future Work}
\label{Conclusions}

The goal of this thesis was the development of a symmetric variational calculus.
We studied some symmetric quantum calculus and the symmetric time scale calculus and,
whenever possible, we introduced the calculus of variations within the set of study.

For a first experience on quantum calculus, we began our work on Hahn's (nonsymmetric)
quantum calculus (see Chapter~\ref{Higher-order Hahn's Quantum Variational Calculus}).
We contributed with a necessary optimality condition of Euler--Lagrange type involving
Hahn's derivatives of higher-order (Theorem~\ref{hoHigher order E-L}).

For the quantum symmetric calculus, we established and proved results for the
$\alpha,\beta$-calculus, $q$-calculus and Hahn's calculus.

In the $\alpha,\beta$-symmetric calculus, Chapter~\ref{A Symmetric Quantum Calculus},
we defined and proved some properties of the $\alpha,\beta$-symmetric derivative
and the $\alpha,\beta$-symmetric N\"{o}rlund sum. In Section~\ref{hs:sec:3.3},
we derived some mean value theorems for the $\alpha,\beta$-symmetric derivative
and we presented $\alpha,\beta$-symmetric versions of Fermat's theorem for stationary points,
Rolle's, Lagrange's, and Cauchy's mean value theorems.
In Section~\ref{hi:sec:ineq} we obtained some $\alpha,\beta$-symmetric N\"{o}rlund sum inequalities,
namely $\alpha,\beta$-symmetric versions of H\"{o}lder's, Cauchy-Scharwz's
and Minkowski's integral inequalities.

In the symmetric $q$-calculus, Chapter~\ref{The $q$-Symmetric Variational Calculus},
we were able not only to develop this calculus for any real interval,
instead the quantum lattice $\{a,aq,aq^{2},aq^{2},\ldots\}$,
but also to introduce the $q$-symmetric variational calculus.
We proved a necessary optimality condition of Euler--Lagrange type
(Theorem~\ref{qs:Euler}) and a sufficient optimality condition (Theorem~\ref{qs:suff})
for $q$-symmetric variational problems. It should be noted that in the $q$-symmetric calculus
we were able to introduce the calculus of variations because we proved
the fundamental theorem for the $q$-symmetric calculus (Theorem~\ref{qs:Fundamental}).

The Hahn symmetric calculus, Chapter~\ref{Hahn's Symmetric Quantum Variational Calculus},
is a generalization of the $q$-symmetric calculus. This calculus has good properties,
like the fundamental theorem of Hahn's symmetric integral calculus
(Theorem~\ref{qhs:Fundamental}) and hence we were able to introduce
the Hahn symmetric variational calculus. In Section~\ref{qhs:sec:o:c} we presented necessary
(Euler--Lagrange type equation) and sufficient optimality conditions for the Hahn symmetric calculus.
Moreover, we were able to apply Leitmann's direct method in the Hahn symmetric variational calculus
(see Section~\ref{qhs:L}).

Right from the start, we wanted to define the symmetric calculus on time scales.
In Chapter~\ref{The Symmetric Calculus on Time Scales} we successfully defined
a symmetric derivative on time scales and derived some of its properties.
Although we did not define the symmetric integral, we defined the diamond integral,
which is a refined version of the diamond-$\alpha$ integral. We proved a mean value theorem
for the diamond integral and we proved versions of H\"{o}lder's, Cauchy-Schwarz's,
and Minkowski's inequalities (see Section~\ref{ts:sec:int}).

In this thesis we defined several new quantum calculus and, as in every new type of calculus,
there are more questions than answers. Some possible directions for future work are:
\begin{itemize}
\item to develop and derive new properties for symmetric quantum derivatives and integrals,
for example, to study first- and second-order equations, linear systems,
and higher-order differential equations;

\item to explore what good properties one gets if we study the limits
of symmetric quantum calculus;

\item to study optimality conditions for more general variable endpoint
variational problems and isoperimetric problems;

\item to extend the results on symmetric quantum variational problems
for higher-order problems of the calculus of variations;

\item to obtain a Legendre's necessary condition for Hahn's variational calculus
and for the Hahn symmetric variational calculus.
\end{itemize}

We would like to be able to construct the symmetric integral for an arbitrary time scale.
We trust that after this work we are several steps closer to a solution.
However, such question remains an open problem, and it will be one direction
for our future research.

This Ph.D. thesis comes to an end with the list of the author publications during his Ph.D.:
\cite{MyID:178,Brito:da:Cruz:2,Brito:da:Cruz,Brito:da:Cruz:3,Brito:da:Cruz:4,Brito:da:Cruz:5,Brito:da:Cruz:6,Brito:da:Cruz:BMMSS}.


\clearpage{\thispagestyle{empty}\cleardoublepage}



\clearpage{\thispagestyle{empty}\cleardoublepage}


\printindex


\clearpage{\thispagestyle{empty}\cleardoublepage}


\end{document}